\documentclass[10pt]{memoir}
\pdfoutput=1


\usepackage[english]{babel}

\usepackage[utf8]{inputenc}
\usepackage[T1]{fontenc}

\usepackage{enumitem}

\usepackage[margin = 0em, caption = off]{subfig}

\usepackage{longtable}

\usepackage{graphicx}


\usepackage[mode = image]{standalone}

\usepackage{todonotes}

\usepackage[reset = true, backref = true]{enotez}
\DeclareInstance{enotez-list}{plain}{paragraph}
{%
    heading = \subsection*{Notes},
    format = \normalsize,
    notes-sep = 0.4\baselineskip
}%

\usepackage{float}

\usepackage{siunitx}

\usepackage[nomessages]{fp}


\usepackage{mathtools}


\usepackage{dsfont}
\usepackage{amsfonts}
\usepackage{bm}

\usepackage{xparse}

\usepackage{amssymb}

%

\newcommand{\unit}{u}
\newcommand{\dequal}{\overset{\scriptscriptstyle\textup{d}}{=}}
\newcommand{\euler}{\mathrm{e}}
\newcommand{\BigO}{\mathrm{O}}
\newcommand{\SmallO}{\mathrm{o}}
\newcommand{\defi}{\coloneqq}
\newcommand{\ifed}{\eqqcolon}

\newcommand{\LSTold}[2]{\expandafter\tilde#1(#2)}
\newcommand{\PGFold}[2]{\expandafter\hat#1(#2)}
\newcommand{\PGFvar}{P}
\newcommand{\LSTvar}{L}
\newcommand{\LSTarg}{\omega} 
\newcommand{\PGFarg}{z} 
\NewDocumentCommand \PGF { m g }{%
    \IfNoValueTF{#2}
        {\PGFvar(#1)}
        {\PGFvar_{#1}(#2)} 
    }%
\NewDocumentCommand \PGFder { m g }{%
    \IfNoValueTF{#2}
        {\PGFvar'(#1)}
        {\PGFvar_{#1}'(#2)} 
    }%
\NewDocumentCommand \PGFvec { m g }{%
    \IfNoValueTF{#2}
        {\mathbf{\PGFvar}(#1)}
        {\mathbf{\PGFvar}_{#1}(#2)} 
    }%
\NewDocumentCommand \PGFvecder { m g }{%
    \IfNoValueTF{#2}
        {\mathbf{\PGFvar}'(#1)}
        {\mathbf{\PGFvar}_{#1}'(#2)} 
    }%
\NewDocumentCommand \LST {m g }{%
    \IfNoValueTF{#2}
        {\LSTvar(#1)}
        {\LSTvar_{#1}(#2)} 
    }%
\NewDocumentCommand \LSTder {m g }{%
    \IfNoValueTF{#2}
        {\LSTvar'(#1)}
        {\LSTvar_{#1}'(#2)} 
    }%
\NewDocumentCommand \LSTbig {m g }{%
    \IfNoValueTF{#2}
        {\LSTvar\bigl(#1\bigr)}
        {\LSTvar_{#1}\bigl(#2\bigr)} 
    }%

\newcommand{\approximate}[1]{\tilde{#1}} 

\newcommand{\RealPart}[1]{\mathrm{Re}(#1)}
\newcommand{\RealPartLargerBrackets}[1]{\mathrm{Re}\bigl( #1 \bigr)}
\newcommand{\ImagPart}[1]{\mathrm{Im}(#1)}
\newcommand{\complexunit}{\mathrm{i}}

\newcommand{\ind}[1]{\mathds{1}\{#1\}} 
\newcommand{\sprad}[1]{\operatorname{sp}(#1)} 
\newcommand{\trace}[1]{\operatorname{trace}(#1)} 

\newcommand{\dinf}{\textup{d}} 

\newcommand{\Nat}{\mathbb{N}} 
\newcommand{\Int}{\mathbb{Z}} 
\newcommand{\Real}{\mathbb{R}} 
\newcommand{\Complex}{\mathbb{C}} 
\newcommand{\set}[1]{\mathcal{#1}} 
\newcommand{\setUncountable}[1]{\mathcal{#1}} 

\newcommand{\closedunitdisc}{\setUncountable{U}} 
\newcommand{\unitcircle}{\partial \setUncountable{U}} 

\NewDocumentCommand \E { m g }{
    \IfNoValueTF{#2}
        {\mathbb{E}[#1]}
        {\mathbb{E}_{#1}[#2]} 
    }%
\NewDocumentCommand \Efxd { m g }{
    \IfNoValueTF{#2}
        {\mathbb{E}\Bigl[#1\Bigr]}
        {\mathbb{E}_{#1}\!\Bigl[#2\Bigr]} 
    }%
\NewDocumentCommand \Prob { m g }{
    \IfNoValueTF{#2}
        {\mathbb{P}(#1)}
        {\mathbb{P}_{#1}(#2)} 
    }%
\NewDocumentCommand \Probfxd { m g }{
    \IfNoValueTF{#2}
        {\mathbb{P}\Bigl(#1\Bigr)}
        {\mathbb{P}_{#1}\!\Bigl(#2\Bigr)} 
    }%
\newcommand{\Var}[1]{\operatorname{Var}(#1)} 
\newcommand{\Std}[1]{\sigma(#1)} 

\newcommand{\Exp}[1]{\textup{Exp}(#1)}
\newcommand{\Erl}[2]{\textup{Erl}_{#1}(#2)}
\newcommand{\Poisson}[1]{\textup{Poi}(#1)}
\newcommand{\Geo}[1]{\textup{Geo}(#1)}

\newcommand{\Ber}[1]{\textup{Ber}(#1)}

\newcommand{\statespace}{\set{S}} 
\newcommand{\q}[1]{q_{#1}} 
\newcommand{\alt}[1]{\expandafter\bar#1} 
\newcommand{\htt}[1]{\tau_{#1}} 

\newcommand{\bld}[1]{\bm{#1}} 
\newcommand{\vc}[1]{\bld{#1}} 
\newcommand{\vca}[1]{\mathbf{#1}} 
\newcommand{\oneb}{\vca{1}} 
\NewDocumentCommand \eb { m g }{%
    \IfNoValueTF{#2}
        {\vca{e}_{#1}}
        {\vca{e}^{(#1)}_{#2}} 
    }%
\NewDocumentCommand \zerob { g }{%
    \IfNoValueTF{#1}
        {\vca{0}}
        {\vca{0}^{(#1)}} 
    }%
\newcommand{\transpose}{\intercal} 
\newcommand{\diag}[1]{\operatorname{diag}(#1)} 
\newcommand{\I}{I} 


\newcommand{\pb}{\vca{p}}
\newcommand{\lvl}[1]{\set{L}_{#1}}
\newcommand{\lvls}[1]{\set{L}_{\le #1}}

\newcommand{\la}{\lambda}
\newcommand{\La}{\Lambda}
\newcommand{\al}{\alpha}
\newcommand{\be}{\beta}
\newcommand{\ga}{\gamma}
\newcommand{\Ga}{\Gamma}

\newcommand{\rootx}{\xi} 
\newcommand{\rooty}{\upsilon} 
\newcommand{\Rooty}{\Upsilon} 
\newcommand{\radiusNI}{r} 
\newcommand{\discretizationError}{\gamma} 

\newcommand{\smallminus}{{\scriptscriptstyle -}}
\newcommand{\smallplus}{{\scriptscriptstyle +}}
\newcommand{\smallplusminus}{{\scriptscriptstyle \pm}}

\newcommand{\makeExercise}{}



\setstocksize{235mm}{155mm}
\settrimmedsize{235mm}{155mm}{*}


\settrims{0mm}{0mm}

\settypeblocksize{185mm}{118mm}{*}

\setlrmargins{18.5mm}{*}{*}

\setulmargins{24mm}{*}{*}

\setheaderspaces{*}{7mm}{*}

\checkandfixthelayout


\setsecnumdepth{subsection}

\numberwithin{equation}{chapter}

\maxtocdepth{section}




\setpnumwidth{3em}
\setrmarg{4em}




\nouppercaseheads


\aliaspagestyle{title}{empty}

\aliaspagestyle{part}{empty}


\newif\ifusehyperref
\usehyperreftrue

\ifusehyperref
    \usepackage[colorlinks]{hyperref}

    \usepackage{refcount}
    \usepackage{bookmark}

    \usepackage{xcolor}

    \colorlet{coloreqref}{cyan}
    \colorlet{colorcref}{magenta}
    \colorlet{colorcite}{orange}

    \AtBeginDocument{\hypersetup{%
    linkcolor  = colorcref,
    citecolor  = colorcite
    }}%
\fi
\usepackage[capitalise,noabbrev]{cleveref}

\ifusehyperref
    \makeatletter
    \renewcommand{\toclevel@part}{10}
    \makeatother

    \newcommand*{\SavedEqref}{}
    \let\SavedEqref\eqref
    \renewcommand*{\eqref}[1]{%
    \begingroup
    \hypersetup{
      linkcolor=coloreqref
    }%
    \SavedEqref{#1}%
    \endgroup
    }%
\fi



\usepackage{amsthm}
\usepackage{thmtools}

%
%

\declaretheoremstyle[%
    spaceabove = 5pt,
    spacebelow = 5pt,
    postheadspace = 0.5em,
    headfont = \bfseries,
    bodyfont = \itshape,
    notefont = \normalfont\small\scshape,
    notebraces = {\!\textup{(}}{\textup{)}},
    headpunct = {},
    headformat = {\NAME~\NUMBER.\@\NOTE}
    ]{italic}


\declaretheoremstyle[%
    spaceabove = 5pt,
    spacebelow = 5pt,
    postheadspace = 0.5em,
    headfont = \bfseries,
    bodyfont = \upshape,
    notefont = \normalfont\small\scshape,
    notebraces = {\textup{(}}{\textup{)}},
    headpunct = {},
    headformat = {\NAME~\NUMBER.\@\NOTE}
    ]{upright}

\declaretheoremstyle[%
    spaceabove = 5pt,
    spacebelow = 5pt,
    postheadspace = 0.5em,
    headfont = \bfseries,
    bodyfont = \upshape,
    notefont = \normalfont\small\scshape,
    notebraces = {\textup{(}}{\textup{)}},
    headpunct = {},
    headformat = {\NAME.\@\NOTE}
    ]{upright_no_number}

\declaretheoremstyle[%
    spaceabove = 5pt,
    spacebelow = 5pt,
    postheadspace = 0.5em,
    headfont = \bfseries,
    bodyfont = \upshape,
    notefont = \normalfont\small\scshape,
    notebraces = {\textup{(}}{\textup{)}},
    headpunct = {},
    headformat = {\NAME~\NUMBER.\@\NOTE},
    qed = $\triangle$
    ]{upright_qed}

%
%

\declaretheorem[style = italic, parent = chapter]{theorem}
\declaretheorem[style = italic, sibling = theorem]{lemma}
\declaretheorem[style = italic, sibling = theorem]{proposition}

\declaretheorem[style = upright, sibling = theorem]{definition}
\declaretheorem[style = upright_qed, sibling = theorem]{remark}

\declaretheorem[style = upright_qed, sibling = theorem]{example}

\usepackage[chapter]{algorithm}
\usepackage{algpseudocode}



\begin{document}%

\frontmatter%
\thispagestyle{empty}%
\begin{vplace}[0.6]%
\begin{center}%
{\huge Analysis of structured Markov processes} \\
\vspace{1em}
{\Large Ivo Adan, Johan van Leeuwaarden, Jori Selen} \\
\vspace{2em}
{\normalsize version September 26, 2017}
\end{center}%
\end{vplace}%
\chapter*{Preface}%
\label{ch:preface}%

Markov processes are popular mathematical models, studied by theoreticians for their intriguing properties, and applied by practitioners for their flexible structure. With this book we teach how to model and analyze Markov processes. We classify Markov processes based on their structural properties, which in turn determine which analytic methods are required for solving them. In doing so, we start in each chapter with specific examples that naturally lead up to general theory and general methods. In this way the reader learns about Markov processes on the job.

By studying this book, the reader becomes acquainted with the basic analytic methods that come into play when systems are modeled as structured Markov processes. These basic methods will likely prove useful, in real-time when studying the examples at hand, but more importantly for future encounters with Markov processes not covered in this book. Methods are more important than examples. The methods have a large scope of application, even outside the scope of Markov processes, in areas like probability theory, industrial engineering, mechanical engineering, physics and financial mathematics.

This book arose from various courses taught in the last decade at master level and postgraduate level. We thank the students and colleagues that participated in these courses for their valuable feedback.
\cleardoublepage%
{%
\ifusehyperref
    \hypersetup{hidelinks}
    \pdfbookmark[chapter]{\contentsname}{toc}
\fi
\tableofcontents*
}%

\mainmatter%

\chapter{Introduction}%
\label{ch:introduction}%

Markov processes provide essential instruments for modeling and analyzing a large variety of systems and networks, including manufacturing systems, communication networks, traffic networks and service systems such as clinics or hospitals. This book provides the basic tools you need to build models that are detailed enough to capture the essential system dynamics, but are simple enough in terms of mathematical structure to be amenable for theoretical analysis and efficient numerical evaluation. The first two parts of this book assume only prior exposure to stochastic processes, linear algebra and basic analysis at the undergraduate level. The third part is meant for graduate students, researchers and practitioners, and requires more background in probability theory and complex analysis.

Markov processes fall under the umbrella of \textit{Stochastics}, the branch of mathematics that aims to establish rigorous statements about systems that are inherently uncertain, and therefore subject to some degree of randomness. A classical example is a queue, in which jobs need to wait for service. The queue grows when new jobs arrive and shrinks when jobs complete service. Queues occur virtually everywhere and can be seen as stochastic systems that are subject to variability in arrivals and services. Under certain assumptions, a queueing system can be modeled as a Markov process and analyzed using the techniques described in this book. This analytic treatment of a queue then leads to explicit formulas or algorithms for performance measures such as the mean queue length or the probability that the queue grows beyond a certain level. Such performance measures often reveal critical dependencies between the system performance and the system utilization. In fact, many real-life systems operate in regimes that dwarf the trade-off between high system utilization and short queues, two confliction goals. The analysis of Markov processes therefore also serves the purpose of dimensioning, with the objective to balance the system capacity and demand so as to achieve a certain target performance standard or optimize a certain cost criterion.

%


\section{A balance act}%
\label{secINT:balance_act}%

This book deals with obtaining the equilibrium distribution that characterizes the long-term fractions of time that the Markov process spends in each of the possible states. Think of a queue that evolves in time. What is the long-term probability that the queue is empty? If we denote this probability by $p(0)$, we could estimate it by simply observing the queue for a very long time and divide the total time that the queue is empty by the total time we have observed the queue. We could similarly estimate the probability $p(i)$ of seeing a queue of size $i$.

\begin{figure}
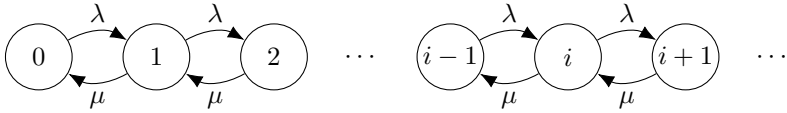
%
\centering%
\includestandalone{Chapters/INT/TikZFiles/transition_rate_diagram_MM1}%
\caption{Transition rate diagram of the Markov process of the simple queue.}%
\label{figINT:transition_rate_diagram_MM1}%
\end{figure}%

Instead of this brute-force approach to estimate $p(i)$ for all possible $i$, we will use the structure that is governed by the interaction between states. For a simple queue in which one job could leave or join, it is clear that $p(i)$ should be related to $p(i - 1)$ and $p(i + 1)$. And indeed, under some further assumptions, we could argue that the probabilities $p(i)$ should satisfy certain \textit{balance equations}. A famous example is the simplest possible queue that serves jobs at an exponential rate $\mu$ and to which new jobs arrive at exponential rate $\la$. Because of the exponential rates, at any moment in time only one event can happen: a new arrival or a service completion. The Markov process that describes the queue size evolves on the state space $\{ 0, 1, 2, \ldots \}$ according to the rates displayed in \cref{figINT:transition_rate_diagram_MM1}. \cref{figINT:transition_rate_diagram_MM1} is called a transition rate diagram and displays the states of the Markov process with the arrows depicting the rates at which the process transitions from one state to the other. Rate $\la$ should be smaller than $\mu$, otherwise the queue will grow to infinity, and under this assumption, the balance equations are given by
\begin{subequations}%
\begin{align}%
\la p(0) &= \mu p(1), \label{eqnINT:balance_equations_i=0} \\
(\la + \mu) p(i) &= \la p(i - 1) + \mu p(i + 1), \quad i \ge 1. \label{eqnINT:balance_equations_i>0}
\end{align}%
\label{eqnINT:balance_equations}%
\end{subequations}%
You can interpret these equations as what goes out should equal what comes in (either from the left or the right). These balance equations together can be written as the system of linear equations
\begin{equation}%
\pb Q = \zerob \label{eqnINT:balance_equations_vector-matrix_form}
\end{equation}%
with $\pb = \begin{bmatrix} p(0) & p(1) & p(2) & \cdots \end{bmatrix}$ and $Q$ the transition rate matrix given by
\begin{equation}%
Q = \begin{bmatrix}%
-\la & \la \\
\mu & -(\la + \mu) & \la \\
    & \mu & -(\la + \mu) & \la \\
    &     & \ddots & \ddots & \ddots
\end{bmatrix}.%
\end{equation}%
All Markov processes considered in this book can be brought into the matrix form \eqref{eqnINT:balance_equations_vector-matrix_form}. For the readers familiar with linear algebra, this makes available a powerful toolbox for numerically solving for $\pb$ as the null space of the kernel $Q$. But this is not the road we will pursue in this book. Instead, we will try to exploit additional structures that are hidden in the general matrix equation \eqref{eqnINT:balance_equations_vector-matrix_form}. For the simple queue we know for instance that $Q$ is extremely sparse and contains only elements on three diagonals. Moreover, the state space in \cref{figINT:transition_rate_diagram_MM1} and the form of \eqref{eqnINT:balance_equations} may allow an iterative solution. Indeed, using $\la p(0) = \mu p(1)$ one gets for $i = 1$ from \eqref{eqnINT:balance_equations_i>0} that $\la p(1) = \mu p(2)$, and more generally,
\begin{equation}%
\la p(i) = \mu p(i + 1), \quad i \ge 0.
\end{equation}%
Iteration then gives $p(i) = p(0) \rho^i$ with $\rho \defi \la/\mu$. Since $\sum_{i \ge 0} p(i) = 1$ we conclude that $p(0) = 1 - \rho$ to arrive at the elegant solution
\begin{equation}%
p(i) = (1 - \rho) \rho^i, \quad i \ge 0. \label{eqnINT:product-form_solution}
\end{equation}%
This is our very first product-form solution! And if you like it, many more will follow for more advanced, yet structured, Markov processes.

We call \eqref{eqnINT:product-form_solution} a product-form solution, because of the term $\rho^i$, the product of $i$ times $\rho$. Most of the Markov processes in this book are multi-dimensional, in which case we encounter multi-dimensional product forms, for instance of the types $\al^i \be^j$ or $R^i$ with $R$ some matrix (instead of a scalar $\rho$). While in most cases, finding these product forms will be less straightforward than in the case of \eqref{eqnINT:product-form_solution}, we will often use ways to exploit recurring structures. A second technique we will often use is that of making an educated guess. Through many examples we learn the reader when to expect a product form (and when not). If we return to \eqref{eqnINT:balance_equations} and we would guess that $p(i)$ is of the form $c \al^i$ with $c$ and $\al$ some unknown constants, we could simply substitute $p(i) = c \al^i$ in \eqref{eqnINT:balance_equations_i>0} to obtain
\begin{equation}%
(\la + \mu) c \al^i = \la c \al^{i - 1} + \mu c \al^{i + 1},
\end{equation}%
or equivalently,
\begin{equation}%
(\la + \mu) \al = \la + \mu \al^2, \label{eqnINT:balance_equations_educated_guess}
\end{equation}%
from which we conclude that $\al = \rho$ and $c = 1 - \rho$. Although this guessing technique appears naive at first sight, it is a mathematical rigorous way of proving that \eqref{eqnINT:product-form_solution} uniquely characterizes the equilibrium distribution. The substitution of product forms in difference equations like \eqref{eqnINT:balance_equations} is a well-known analytic technique, but when the difference equation is in fact a balance equation there are some specific features that can be exploited. For instance, we know from the start that all $p(i)$ are nonnegative and that $\sum_{i \ge 0} p(i) = 1$. The latter condition we have used in \eqref{eqnINT:balance_equations_educated_guess} to conclude that $\al = \rho$ is the unique solution, and not the other candidate solution of \eqref{eqnINT:balance_equations_educated_guess}. Indeed, only when $\al < 1$ the infinite series $\sum_{i \ge 0} p(i)$ converges to a finite constant. The final step concludes that $(1 - \al)^{-1} = c^{-1} = p(0)^{-1}$, which can be interpreted as solving the boundary condition $\sum_{i \ge 0} p(i) = 1$. For this simple Markov process this gives $p(0) \sum_{i \ge 0} \al^i = 1$. In this case this boundary condition gives one additional equation for solving $p(0)$. For the more advanced Markov processes in this book the boundary conditions give rise to an additional system of equations from which equally many remaining equilibrium probabilities need to be determined.


\section{Why this book?}%
\label{secINT:why_this_book}%

Many books have been written about general stochastic processes and Markov processes in particular. This book views Markov processes as continuous-time processes, and studies their equilibrium or long-term behavior. Finding the equilibrium distributions requires solving a system of difference or difference-differential equations. Each Markov process in this book comes with its own system of equations, and its own specific challenges. We classify the Markov processes by the analytic methods required to solve the system of equations, which in turn depends strongly on the underlying structure of Markov processes. The reader will learn to recognize these structures, and hence choosing the adequate methods for analysis. While this book puts much emphasis on basic real and complex analysis, less attention goes to the more formal or probabilistic aspects of Markov processes, for instance related to operators, function spaces, martingale characterizations, stability, and weak convergence to limiting processes. Excellent books exist that cover these topics in much depth, for example Ethier and Kurtz \cite{Ethier1986_Markov_processes}, Feller \cite{Feller1968_Probability_theory,Feller1971_Probability_theory_volume_II} and Whitt \cite{Whitt2002_Stochastic-process_limits}.

This book is not just about queueing theory. While queueing theory generates intriguing questions that can be answered using the theory of Markov processes, this book only introduces queueing models that ask for a different analytic method. More advanced queueing models---that arise for instance when relaxing Markovian assumptions---are not treated just for the sake of generalization or enhancing the scope of applicability. Books with more theory and examples of queues are for instance Cohen \cite{Cohen1969_Single_server_queue}, Gross and Harris \cite{Gross1974_Fundamentals_of_queueing_theory}, Kleinrock \cite{Kleinrock1975_Queueing_systems_volume_I}, Prabhu \cite{Prabhu1997_Foundations_of_QT}, Robert \cite{Robert2013_Stochastic_networks_and_queues} and Tak{\'a}cs \cite{Takacs1962_Theory_Queues}. Parts of the material covered in this book can also be found in text books on applied probability or Markov chains, such as Asmussen \cite{Asmussen2008_Applied_probability_and_queues}, Chung \cite{Chung2001_Course_in_probability_theory}, Grimmett and Stirzaker \cite{Grimmett2001_Probability_and_random_processes}, Karlin and Taylor \cite{Karlin1975_First_course_stochastic_processes}, Liggett \cite{Liggett2010_CTMC}, Norris \cite{Norris1997_Markov}, Resnick \cite{Resnick1992_Adventures} and Ross \cite{Ross1996_Stochastic_processes}, although the same topics are often presented in a different manner. This book is complementary, again because of the dominant role of exact analysis, product-form solutions, and structure of Markov processes.




\section{Overview}%
\label{secINT:overview}%

In \textbf{Part I} we cover the essential theory of continuous-time Markov processes and some basic methods. We furthermore introduce some common queues with their associated Markov processes and present how transforms are useful in the analysis of such Markov processes.

\cref{ch:Markov_processes} covers the foundations needed to build a Markov process. Essential ingredients are the exponential distribution and its memoryless property, which makes that after each event that takes place in the Markov process, we can forget about the past and only use the current information. We introduce notions like irreducibility, positive recurrence and regularity. Brief consideration is given to the evolution of Markov processes as a function of time, but we will focus mostly on the long-term of equilibrium behavior.

\cref{ch:queues_and_transforms} introduces the Laplace-Stieltjes transform and the probability-generating function. Both transforms play a crucial role in the analysis of the equilibrium distribution and other related quantities. We demonstrate the use of the transforms together with other important results by analyzing single-server queues that are at the heart of queueing theory. Numerical inversion algorithms are provided to retrieve the underlying probability distributions from their transforms.

\vspace*{0.5em}

In \textbf{Part II} we focus on several classes of widely studied structured Markov processes, including birth--and--death processes, queueing networks, quasi-birth--and--death processes and quasi-skip-free processes. Each chapter is dedicated to one class of processes and introduces the techniques required to obtain their equilibrium distributions.

\cref{ch:birth--and--death_processes} is devoted entirely to birth--and--death (BD) processes, a highly structured class of Markov processes. The distinguishing feature of BD processes is that the state space can be ordered on a line and that transitions occur only between neighboring states. The queue in \cref{figINT:transition_rate_diagram_MM1} is an example of a BD process. Like that queue, all BD processes have product-form equilibrium distributions that can be solved iteratively. The class of BD processes contains many classical Markov processes that occur in queueing theory or in epidemics.

\cref{ch:reversible_networks} extends the one-dimensional BD processes to multi-dimensional network models, and hence multi-dimensional Markov processes. Although these Markov processes have multiple dimensions, the equilibrium distribution can often be derived by making an educated guess.

\cref{ch:quasi-birth--and--death_processes} again extends the BD processes of \cref{ch:birth--and--death_processes}, but now by including a finite second dimension. Here we encounter product-form solutions that involve matrices instead of the scalars that we have seen in this introduction. We discuss the matrix-geometric, matrix-analytic and spectral expansion method. Deriving an explicit expression for the matrices in the product-form solution proves to be difficult in many cases, so also numerical algorithms are provided to obtain these matrices.

\cref{ch:skip-free_one_direction} considers Markov processes on the same state space as the QBD processes of \cref{ch:quasi-birth--and--death_processes}. The difference is that in \cref{ch:skip-free_one_direction} we allow the process to have larger jumps in one direction. The structure of the solution for the equilibrium distribution is similar to the one for the QBD process, but calculating the matrices of interest is more involved.

\vspace*{0.5em}

In \textbf{Part III} we tackle specific models that require advanced techniques to obtain the equilibrium distribution. Each chapter in this part is devoted to a specific model and for each model we develop multiple techniques to calculate the equilibrium distribution. The models serve as a vehicle through which we can demonstrate various techniques and allow the reader to compare methods. While applying the methods, we regularly exploit structural properties of the Markov process to obtain explicit expression for the equilibrium probabilities.

\cref{ch:single-server_priority} considers a queueing system consisting of a single server and two priority classes, where low-priority jobs are only served when there are no high-priority jobs in the system. We model this system as a Markov process with two dimensions, where the dimensions keep track of the number of jobs of each class in the system. We demonstrate a difference equations approach, the generating function approach and two approaches related to QBD processes to obtain the equilibrium distribution.

\cref{ch:gated_single-server} describes a single-server queue where waiting jobs are only allowed into the system when the system empties. We present three methods to obtain the equilibrium distribution of the associated two-dimensional Markov process: the generating function approach, the matrix-geometric approach and the compensation approach.

\cref{ch:cyclic_production_systems} covers three different production systems that give rise to two-dimensional Markov processes. The first two models are QBD processes and the third model has two countably infinite dimensions. For each system we present a tailor-made solution method to obtain the equilibrium distribution.

\cref{ch:join_the_shortest_queue} analyzes a system consisting of two single-server queues where an arriving job joins the shortest of the two queues. The dynamics of this model are described by a Markov process that takes values in the positive half-plane. We use the compensation approach to determine the equilibrium probabilities.


%


%




\part{Basic methods}%



\chapter{Markov processes}%
\label{ch:Markov_processes}%

Markov processes are stochastic processes whose future behavior only depends on the present and not on the past. This special property makes Markov processes mathematically tractable. Markov processes therefore serve as widely applied models in areas as diverse as biology, physics, chemistry, logistics, economics and social sciences.

In this book we analyze a host of Markov processes. In this chapter we present the mathematical notions that are required to define Markov processes. To that end we start with a discussion of the exponential distribution, which is essential in the construction of Markov processes. We then show how to build Markov processes and discuss some of the basic properties. We will study Markov processes as functions of time, but our main focus in this chapter and throughout the remainder of this book will be on the analysis of the long-term or equilibrium behavior.


\section{Exponential distribution}%
\label{secMP:exponential_distribution}%

The continuous random variable $X$ follows an \textit{exponential distribution} with parameter $\la > 0$, denoted by $X \sim \Exp{\la}$, if its probability density function is given by
\begin{equation}%
f_X(t) = \la \euler^{-\la t}, \quad t \ge 0,
\end{equation}%
and the associated cumulative distribution function is
\begin{equation}%
F_X(t) = \int_0^t f_X(u) \, \dinf u = 1 - \euler^{-\la t}, \quad t \ge 0.
\end{equation}%
It readily follows that the expectation and variance of $X$ are
\begin{equation}%
\E{X} = 1/\la \quad \textup{and} \quad \Var{X} = 1/\la^2.
\end{equation}%

The exponential distribution enjoys the so-called \textit{memoryless property} or \textit{Markov property}, which is arguably the most important property for analytic tractability of stochastic processes in this book. The property reads
\begin{align}%
\Prob{X > s + t \mid X > s} &= \frac{\Prob{X > s + t, X > s}}{\Prob{ X > s}} = \frac{\Prob{X > s + t}}{\Prob{ X > s}} \notag \\
&= \frac{ \euler^{-\la(s + t)}}{\euler^{-\la s}} = \euler^{-\la t} = \Prob{X > t}.
\end{align}%
Think of $X$ as the lifetime of some component. Then, the memoryless property states that the remaining lifetime of $X$, given that $X$ is still alive at time $s$, is again exponentially distributed with the same mean $1/\la$. In other words, the probability that $X$ dies in the next $t$ time units is independent of the current age $s$ of $X$. The exponential distribution is the only continuous distribution that satisfies this memoryless property. \makeExercise

Denote by $X_1,X_2,\ldots,X_n$ independent exponentially distributed random variables with parameters $\la_1,\la_2,\ldots,\la_n$. Define now the minimum over these random variables as $Y_n \defi \min(X_1,X_2,\ldots,X_n)$. We have
\begin{align}%
\Prob{Y_n \ge t} &= \Prob{ \min(X_1,X_2,\ldots,X_n) \ge t } \notag \\
&= \Prob{ X_1 \ge t, X_2 \ge t, \ldots, X_n \ge t } \notag \\
&= \Prob{ X_1 \ge t } \Prob{X_2 \ge t } \cdots \Prob{ X_n \ge t } \notag \\
&= \euler^{-\la_1 t} \euler^{-\la_2 t} \cdots \euler^{-\la_n t} = \euler^{-(\la_1 + \la_2 + \cdots + \la_n)t}, \label{eqnMP:minimum_over_exponential_is_again_exponential}
\end{align}%
where the third equality follows from the independence of the random variables. We have just proved the second important property of the exponential distribution: the minimum of $n$ exponential random variables is again an exponential random variable with parameter the sum of the $n$ parameters.

\begin{example}\label{exMP:minimum_exponential_random_variables}%
A printer can fail due to power outages, paper jams or ink shortages. Let these events be independent and occur after exponential times with rates $\la_{\textup{power}}$, $\la_{\textup{jam}}$ and $\la_{\textup{ink}}$. The up-time of the printer is the minimum time until any failure occurs and hence the up-time is exponentially distributed with parameter $\la_{\textup{power}} + \la_{\textup{jam}} + \la_{\textup{ink}}$ with mean up-time $1/(\la_{\textup{power}} + \la_{\textup{jam}} + \la_{\textup{ink}})$.
\end{example}%

Next consider the probability that an exponential random variable turns out to be the minimum among $n$ exponential random variables:
\begin{equation}%
\Prob{X_n = \min(X_1,X_2,\ldots,X_n)} \ifed \Prob{X_n = Y_n}. \label{eqnMP:exponential_rv_probability_to_be_minimum}
\end{equation}%
The event $\{ X_n = Y_n \}$ is the same as the event $\{ X_n \le Y_{n - 1} \}$ since it implies that the $n$-th exponential random variable is the minimum. Using $Y_{n - 1} \sim \Exp{\mu}$, where we abbreviated $\mu \defi \sum_{m = 1}^{n - 1} \la_m$, and conditioning on the length of $X_n$,
\begin{align}%
\Prob{X_n = Y_n} &= \Prob{X_n \le Y_{n - 1}} \notag \\
&= \int_0^\infty \Prob{Y_{n - 1} \ge t} f_{X_n}(t) \, \dinf t = \int_0^\infty \euler^{-\mu t} \la_n \euler^{-\la_n t} \, \dinf t \notag \\
&= \frac{\la_n}{\mu + \la_n} = \frac{\la_n}{\la_1 + \la_2 + \cdots + \la_n}.
\end{align}%
A similar reasoning shows that for any $k = 1,2,\ldots,n$,
\begin{equation}%
\Prob{X_k = \min(X_1,X_2,\ldots,X_n)} = \frac{\la_k}{\la_1 + \la_2 + \cdots + \la_n}.
\end{equation}%

Combining the two previous properties, one can even show that one particular $X_k$ being equal to $\min(X_1,X_2,\ldots,X_n)$ is independent of the value of $\min(X_1,X_2,\ldots,X_n)$. \makeExercise This will prove to be a very useful property when constructing Markov processes. Returning to \cref{exMP:minimum_exponential_random_variables}, this means that the printer fails due to an ink shortage with probability $\la_{\textup{ink}}/(\la_{\textup{power}} + \la_{\textup{jam}} + \la_{\textup{ink}})$.


\section{Poisson processes}%
\label{secMP:Poisson_processes}%

Before we introduce Markov processes in greater detail, we describe a specific type of Markov process called a Poisson process, a counting process that counts how many events have occurred in a time interval. For a Poisson process these events occur randomly in time and the time between two events is exponentially distributed with parameter $\la$. Denote by $\{N(t) \}_{t \ge 0}$ the Poisson process where $N(t)$ is the number of events that have occurred in the interval $[0,t]$ and set $N(0) = 0$.

We model the Poisson process as a collection of states representing the cumulative number of events that have occurred, and transitions between states that model the time needed to go to the next state, see \cref{figMP:Poisson_process}. A transition is marked with the rate at which it occurs. To be more precise, given that the process is in state $i$, a transition from state $i$ to state $i + 1$ occurs after an exponential amount of time with parameter $\la$, see \cref{figMP:Poisson_process} again. When modeling the Poisson process in this way, we have constructed a Markov process description of the Poisson process!

\begin{figure}
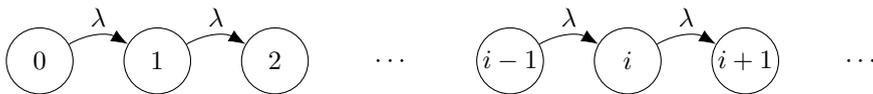
%
\centering%
\includestandalone{Chapters/MP/TikZFiles/Poisson_process}%
\caption{Modeling a Poisson process with rate $\la$.}%
\label{figMP:Poisson_process}%
\end{figure}%

Suppose that $X_1,X_2,\ldots$ are independent and identically exponentially distributed random variables with parameter $\la$. From \cref{figMP:Poisson_process} we find that the time to reach state $n$ is then $X_1 + X_2 + \cdots + X_n$. The probability that there are at most $n$ events in $[0,t]$ can be expressed as
\begin{equation}%
\Prob{N(t) \le n} = \Prob{X_1 + X_2 + \cdots + X_{n + 1} > t}, \quad n \ge 1, ~ t \ge 0. \label{eqnMP:Poisson_process_cdf}
\end{equation}%
In other words, the event to be in any of the states in $\{ 0,1,\ldots,n \}$ at time $t$ is equivalent to the event that the time it takes to reach state $n + 1$ is larger than $t$. The probability on the right-hand side of \eqref{eqnMP:Poisson_process_cdf} can be characterized further in terms of the \textit{Erlang distribution}. If $S_n \defi X_1 + X_2 + \cdots + X_n$, then $S_n$ follows an Erlang-$n$ distribution with parameter $\la$ denoted as $\Erl{n}{\la}$. The density of the $\Erl{n}{\la}$ distribution is given by
\begin{equation}%
f_{S_n}(t) = \la \frac{(\la t)^{n - 1}}{(n - 1)!} \euler^{-\la t}, \label{eqnMP:Erlang-n_distribution_pdf}
\end{equation}%
which we can prove using induction. For $n = 1$ we have $f_{S_1}(t) = \la \euler^{-\la t}$. Assume that $f_{S_n}(\cdot)$ satisfies \eqref{eqnMP:Erlang-n_distribution_pdf}. Then,
\begin{align}%
f_{S_{n + 1}}(t) &= f_{S_n + X_{n + 1}}(t) = \int_0^t f_{S_n}(u) f_{X_{n + 1}}(t - u) \, \dinf u \notag \\
&= \int_0^t \la \euler^{-\la u} \frac{(\la u)^{n - 1}}{(n - 1)!} \la \euler^{-\la(t - u)} \, \dinf u \notag \\
&= \la \euler^{-\la t} \frac{\la^n}{(n - 1)!} \int_0^t u^{n - 1} \, \dinf u \notag \\
&= \la \frac{(\la t)^n}{n!} \euler^{-\la t} ,
\end{align}%
showing that \eqref{eqnMP:Erlang-n_distribution_pdf} is correct. The cumulative distribution function we give without proof:
\begin{equation}%
F_{S_n}(t) = 1 - \sum_{m = 0}^{n - 1} \frac{(\la t)^m}{m!} \euler^{-\la t}, \quad n \ge 1, ~ t \ge 0. \label{eqnMP:Erlang-n_distribution_cdf}
\end{equation}%
Returning to \eqref{eqnMP:Poisson_process_cdf} armed with \eqref{eqnMP:Erlang-n_distribution_cdf}, we find
\begin{equation}%
\Prob{N(t) \le n} = \sum_{m = 0}^n \frac{(\la t)^m}{m!} \euler^{-\la t},
\end{equation}%
and therefore
\begin{equation}%
\Prob{N(t) = n} = \frac{(\la t)^n}{n!} \euler^{-\la t}, \quad n \ge 0, ~ t \ge 0.
\end{equation}%
The distribution of $N(t)$ is called a \textit{Poisson distribution} with parameter $\la t$. Some quick calculations show that
\begin{equation}%
\E{N(t)} = \la t \quad \textup{and} \quad \Var{N(t)} = \la t.
\end{equation}%
The expected number of events in $[0,t]$ is thus the rate $\la$ at which events occur multiplied by the length of the interval $t$.

The Poisson process is vital for modeling practical applications: to model the occurrence of software errors or machine breakdowns, the arrival of jobs at a processor, or the arrival of orders at a production system. It is empirically found that in many conditions the real-world processes can be well approximated by a Poisson process. We next establish a theoretical result that supports the assumption of Poisson processes in practical settings.

\begin{proposition}\label{propMP:binomial_distribution_to_Poisson}%
Let $X$ follow a \textup{binomial distribution} with parameters $n$ and $p$, that is
\begin{equation}%
\Prob{X = k} = \binom{n}{k} p^k (1 - p)^{n - k}, \quad k = 0,1,\ldots,n.
\end{equation}%
Let $p \to 0$ as $n \to \infty$ such that $n p = \la t$, then
\begin{equation}%
\Prob{X = k} \to \frac{(\la t)^k}{k!} \euler^{-\la t}, \quad k \ge 0, ~ t \ge 0.
\end{equation}%
\end{proposition}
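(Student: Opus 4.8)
The plan is to compute the limit directly by substituting $p = \la t/n$ into the binomial probability mass function and analyzing the asymptotics of each factor as $n \to \infty$. Writing
\begin{equation}
\Prob{X = k} = \binom{n}{k} p^k (1 - p)^{n - k} = \frac{n!}{k!(n - k)!} \left( \frac{\la t}{n} \right)^k \left( 1 - \frac{\la t}{n} \right)^{n - k},
\end{equation}
I would regroup the right-hand side into three pieces: the falling factorial ratio $\frac{n!}{(n - k)! \, n^k} = \frac{n(n-1)\cdots(n-k+1)}{n^k}$, the power $\frac{(\la t)^k}{k!}$, and the exponential-type factor $\left( 1 - \frac{\la t}{n} \right)^{n - k}$.

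First I would note that $k$ is fixed while $n \to \infty$. The falling factorial ratio is a product of $k$ terms, each of the form $1 - j/n$ for $j = 0, 1, \ldots, k - 1$, so it converges to $1$. The middle factor $\frac{(\la t)^k}{k!}$ does not depend on $n$ and is exactly the prefactor appearing in the claimed Poisson limit. For the last factor I would split it as $\left( 1 - \frac{\la t}{n} \right)^{n} \cdot \left( 1 - \frac{\la t}{n} \right)^{-k}$; the second part tends to $1$ since $k$ is fixed, and the first part tends to $\euler^{-\la t}$ by the standard limit $\left( 1 + \frac{x}{n} \right)^n \to \euler^x$. Multiplying the three limits gives $1 \cdot \frac{(\la t)^k}{k!} \cdot \euler^{-\la t}$, which is the assertion.

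The only genuinely delicate point is justifying $\left( 1 - \frac{\la t}{n} \right)^{n} \to \euler^{-\la t}$; everything else is a finite product of convergent sequences. I would treat this limit either as a known fact about the exponential function, or prove it quickly by taking logarithms: $n \log\!\left( 1 - \frac{\la t}{n} \right) = n \left( -\frac{\la t}{n} + \BigO(n^{-2}) \right) \to -\la t$, using the Taylor expansion $\log(1 - x) = -x + \BigO(x^2)$ valid for small $x$ (which holds here since $\la t/n \to 0$). Exponentiating and using continuity of $\euler^x$ then yields the claim. A minor technical caveat worth mentioning: for the expression to make sense as a probability we need $0 \le p \le 1$, i.e. $n \ge \la t$; this is automatic for all large $n$, so it does not affect the limit.

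Overall I expect no serious obstacle — this is a classical computation and the main work is simply being careful to isolate the three factors and cite or verify the $\euler$-limit. The result furnishes exactly the theoretical justification promised in the text: when a large number $n$ of independent rare events each occur with small probability $p$ such that the expected count $np$ stays fixed at $\la t$, the total count is approximately Poisson distributed, matching the distribution of $N(t)$ derived earlier.
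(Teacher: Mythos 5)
Your argument is essentially the paper's own: substitute $p = \la t/n$, split the binomial probability into the falling factorial ratio $n(n-1)\cdots(n-k+1)/n^k$, the constant $(\la t)^k/k!$, and the factor $(1-\la t/n)^{n-k} = (1-\la t/n)^n(1-\la t/n)^{-k}$, and send each piece to its limit with $k$ fixed. The paper takes the $\euler$-limit as known, whereas you additionally sketch its proof via logarithms, but the decomposition and reasoning are identical.
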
%

\begin{proof}%
Let $k$ be a fixed integer. Then we have
\begin{align}%
&\lim_{n \to \infty} \Prob{X = k} = \lim_{n \to \infty} \binom{n}{k} p^k (1 - p)^{n - k} \notag \\
&= \lim_{n \to \infty} \frac{n!}{(n - k)!k!} \bigl( \frac{\la t}{n} \bigr)^k \bigl( 1 -  \frac{\la t}{n} \bigr)^{n - k} \notag \\
&= \frac{(\la t)^k}{k!} \lim_{n \to \infty} \frac{n (n - 1) (n - 2) \cdots (n - k + 1)}{n^k} \bigl( 1 -  \frac{\la t}{n} \bigr)^{n - k} \notag \\
&= \frac{(\la t)^k}{k!} \lim_{n \to \infty} \bigl( 1 - \frac{1}{n} \bigr) \bigl( 1 - \frac{2}{n} \bigr) \cdots \bigl( 1 - \frac{k - 1}{n} \bigr) \bigl( 1 -  \frac{\la t}{n} \bigr)^n \bigl( 1 -  \frac{\la t}{n} \bigr)^{-k} \notag \\
&= \frac{(\la t)^k}{k!} 1 \cdot 1 \cdots 1 \cdot \euler^{-\la t} \cdot 1 = \frac{(\la t)^k}{k!} \euler^{-\la t},
\end{align}%
proving the statement.
\end{proof}%

Many real-world arrival processes fit into the framework of \cref{propMP:binomial_distribution_to_Poisson}. To see this, consider $n$ potential voters each having a small probability $p$ of arriving at a particular polling station in a small interval $[0,t]$. The probability that $k$ out of the $n$ voters show up in $[0,t]$ is binomially distributed: there are $\binom{n}{k}$ groups of size $k$ in a population of size $n$ and exactly $k$ voters arrive with probability $p^k (1 - p)^{n - k}$. If $n$ is large and $p$ is small, then the expression in terms of the Poisson distribution closely approximates the actual probability and is moreover easy to work with. In other words, if a large number $n$ of arrivals can occur in a time interval $[0,t]$ with a small probability $p$ and we can construct $\la$ such that $\la \approx np/t$, then the Poisson process with rate $\la$ closely approximates the arrival process at the polling station.

We next mention two important properties of a Poisson process. Suppose that $N_1(\cdot),N_2(\cdot),\ldots,N_n(\cdot)$ are independent Poisson processes with rates $\la_1,\la_2,\ldots,\la_n$. Define $N(t) \defi N_1(t) + N_2(t) + \cdots + N_n(t)$ for all $t \ge 0$. The time until a next event occurs for the counting process $N(t)$ is, by the memoryless property of the exponential distribution, the minimum over $n$ independent exponential random variables with parameters $\la_1,\la_2,\ldots,\la_n$. So, by \eqref{eqnMP:minimum_over_exponential_is_again_exponential} we have that the time until a next event is exponentially distributed with parameter $\la_1 + \la_2 + \cdots + \la_n$ and $\{ N(t) \}_{t \ge 0}$ is a Poisson process with rate $\la_1 + \la_2 + \cdots + \la_n$. This property is called the \textit{merging} property of independent Poisson processes.

For the second property, consider a Poisson process $\{ N(t) \}_{t \ge 0}$ with rate $\la$ where $X_1,X_2,\ldots$ are the times between events and each event is given a label out of $n$ possible labels. For each arrival, label $k$ is given with fixed probability $p_k > 0$ and $p_1 + p_2 + \cdots + p_n = 1$. Denote the number of events with label $k$ in the interval $[0,t]$ as $N_k(t)$. We determine the time $T$ until the next event occurs for the counting process $\{ N_k(t) \}_{t \ge 0}$. To that end, we require the total number of events that occur until the first time an event is given the label $k$ (this counts the last event with label $k$ as well). This is exactly a random variable $K$ with a \textit{geometric distribution}, that is,
\begin{equation}%
\Prob{K = i} = (1 - p_k)^{i - 1} p_k, \quad i \ge 1.
\end{equation}%
We can now express the time $T$ in terms of the time between events of the original Poisson process and $K$ as
\begin{equation}%
T \dequal \sum_{i = 1}^K X_i \sim \Exp{\la p_k}. \label{eqnMP:Poisson_process_splitting_still_need_to_prove}
\end{equation}%
%
We will show \eqref{eqnMP:Poisson_process_splitting_still_need_to_prove} in \cref{remQTF:proof_Poisson_process_probabilistic_splitting} of \cref{ch:queues_and_transforms}, since the proof requires Laplace-Stieltjes transforms. For now, we can conclude that $\{ N_k(t) \}_{t \ge 0}$ is an independent Poisson process with parameter $\la p_k$. The second property thus says that under \textit{probabilistic splitting}, a Poisson process remains a Poisson process.


\section{General Markov processes}%
\label{secMP:Markov_processes}%

A continuous-time stochastic process $\{ X(t) \}_{t \ge 0}$ is called a \textit{Markov process} if it takes values in a countably infinite or finite state space $\statespace$ and satisfies the \textit{Markov property}. Let $\mathfrak{F}(s)$ be the history of the process until and including time $s$ at which $X(s) = x$. A process satisfies the Markov property if for all $x,y \in \statespace$ and $t,s \ge 0$,
\begin{equation}%
\Prob{X(t + s) = y \mid \mathfrak{F}(s)} = \Prob{X(t + s) = y \mid X(s) = x}. \label{eqnMP:Markov_property}
\end{equation}%
The Markov property states that the future state at time $t + s$ does not depend on the past states, but only on the current state at time $s$. The right-hand side of \eqref{eqnMP:Markov_property} is called a \textit{transition function}. A Markov process for which $\Prob{X(t + s) = y \mid X(s) = x}$ does not depend on $s$ is said to have \textit{stationary} transition functions, an assumption we shall make throughout this book.

A Markov process is a jump process. This means that the Markov process  stays in a state $x \in \statespace$ a certain amount of time and after that time, makes a transition to a different state $y \in \statespace, ~ y \neq x$. A transition alters the state of the process in a sudden and radical way, hence the name jump process. Both the time spent in a state and the possible transitions (and the probabilities with which they occur) are allowed to depend on the state. Because of the jumps a sample path of a Markov process is assumed continuous from the right and having a limit from the left.\endnote{For an elaborate and technical description of Markov jump processes we refer the interested reader to Asmussen \cite[Chapter~II]{Asmussen2008_Applied_probability_and_queues}.}

Assume that the Markov process is currently in state $x \in \statespace$. The event that causes a transition from state $x$ to $y$, where $x \neq y$, takes places after an exponential amount of time with parameter $\q{x,y} \ge 0$ (where 0 indicates a transition is not possible). Let us call this the \textit{transition time} from $x$ to $y$ and refer to $\q{x,y}$ as the \textit{transition rate} from $x$ to $y$. Clearly, the time spent in state $x$ until a transition occurs (the \textit{sojourn time} $H_x$) is the minimum over all end states $y$ of the transition times from $x$ to $y$. According to the properties of exponential random variables, we have that a Markov process obeys two basic rules (see also \cref{figMP:sample_path}):
\begin{enumerate}[label = (\roman*)]%
\item The sojourn time $H_x$ in state $x$ is exponentially distributed with parameter $\q{x} \defi \sum_{y \neq x} \q{x,y}$;
\item After the sojourn time the Markov process jumps from state $x$ to $y \neq x$ with probability $\q{x,y}/\q{x}$.
\end{enumerate}%
We require the sojourn time in each state $x$ to be positive. This means that we restrict our analysis to Markov processes that satisfy $0 \le \q{x} < \infty$ for all $x \in \statespace$. A state $x$ is called \textit{absorbing} if $\q{x} = 0$. An absorbing state is a state from which the Markov process cannot leave: once it reaches this state, it will stay there indefinitely.

\begin{figure}
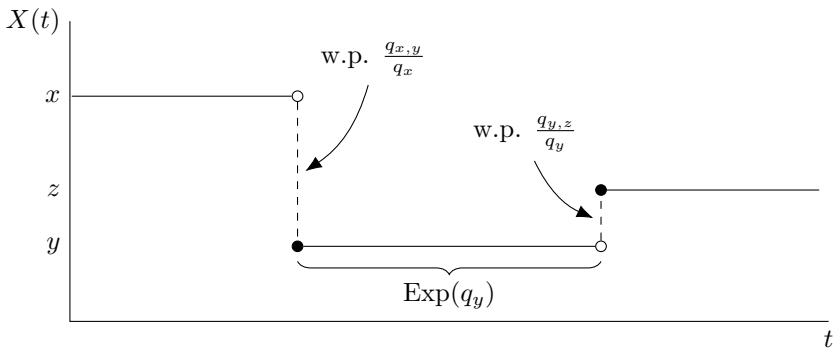
%
\centering%
\includestandalone{Chapters/MP/TikZFiles/sample_path_explanation}%
\caption{Sample path of a Markov process. With probability is abbreviated to w.p.}%
\label{figMP:sample_path}%
\end{figure}%

\begin{example}[Browsing the internet]\label{exMP:internet_browsing}%
The internet browsing behavior of a user is tracked for the purpose of ranking websites. From numerous previous observations, the behavior of this particular user has become apparent. The user starts his session at some website. He stays at each website $x$ an exponential amount of time with mean $1/\q{x}$. After that time, the user proceeds to a different website that he picks from a set of websites $n_x$, which is allowed to depend on the current website since the user might want to visit a website on a related topic. The set $n_x$ can also contain an element representing the end of the browsing session.

The browsing behavior is a Markov process $\{ X(t) \}_{t \ge 0}$. The states of the Markov process are the websites and state 0 is the end of the browsing session (with $\q{0} = 0$). Then, $X(t)$ is the website the user is on at time $t$. The sojourn time $H_x$ in state $x$ is exponentially distributed with rate $\q{x}$ and after the sojourn time, the Markov process transitions to a different website $y \in n_x$ with some probability $b_{x,y}$ that can be determined from previous browsing behavior. Notice that we require $\sum_{y \in n_x} b_{x,y} = 1$.
\end{example}%

We next discuss \textit{regularity},\endnote{A rigorous treatment of regularity and explosion can be found in Resnick \cite[Section~5.2]{Resnick1992_Adventures} and Norris \cite[Section~2.7]{Norris1997_Markov}.} the property that states that the Markov process makes a finite number of transitions in a finite length of time with probability 1. If a Markov process is not regular, we call it an explosive process. Explosive processes have the property that within a finite amount of time, an infinite number of transitions can occur. We assume throughout the book that all Markov processes are regular. This will always hold for Markov processes with a finite state space, or when $\sup_{x \in \statespace} \q{x} < \infty$. If $\sup_{x \in \statespace} \q{x} = \infty$, the Markov process might still be regular, however. Unless mentioned otherwise, we will henceforth assume that $\sup_{x \in \statespace} \q{x} < \infty$, since handling the other case introduces technical hurdles that detract from the book's main storyline.

\begin{example}[An explosive process]\label{exMP:explosive_process}%
Consider a Markov process labeled $\{ X(t) \}_{t \ge 0}$ with initial state $X(0) = 1$, transition rates $\q{x,x + 1} = x^2, ~ x \ge 1$ and all other transition rates are zero. Clearly, the Markov process proceeds through the numbered states $1,2,\ldots$ and resides in each state $x$ an exponential amount of time with mean $1/x^2$. Let $T_\infty$ be the time until the process reaches state $\infty$. Then $\E{T_\infty} = \sum_{x \ge 1} 1/x^2 = \pi^2/6$ and $\Prob{T_\infty < \infty} = 1$, showing that with probability 1 infinitely many transitions occur in a finite interval.
\end{example}%

The transition rates are the basic ingredients of the Markov process. We therefore introduce the transition rate matrix $Q$ of dimension $|\statespace| \times |\statespace|$, with as the elements the transition rates. A row of $Q$ indicates the state the process is currently in and the column is the target state. The diagonal elements are different in the sense that in row $x$, the element on the diagonal is $-\q{x}$. This makes the row sums equal to zero. For $\statespace = \Nat_0$, the transition rate matrix is then
\begin{equation}%
Q = \begin{bmatrix}%
- \q{0} & \q{0,1} & \q{0,2} & \q{0,3} & \cdots \\
\q{1,0} & - \q{1} & \q{1,2} & \q{1,3} & \cdots \\
\q{2,0} & \q{2,1} & - \q{2} & \q{2,3} & \cdots \\
\q{3,0} & \q{3,1} & \q{3,2} & - \q{3} & \cdots \\
\vdots  & \vdots  & \vdots  & \vdots  & \ddots
\end{bmatrix}.%
\end{equation}%
In general one needs to order the state space to be able to characterize the transition rate matrix $Q$.

The transition rate matrix $Q$ can be visualized in a \textit{transition rate diagram}. This diagram depicts the states of the Markov process, the possible transitions between the states and the rates at which they occur. The transition rate diagram can be incredibly helpful in recognizing the underlying structure of the transition rates of the Markov process. See \cref{figMP:transition_rate_diagram} for an example. Both the description of a Markov process in terms of the transition rate matrix $Q$ and the transition rate diagram are sufficient to fully characterize the Markov process.

\begin{figure}
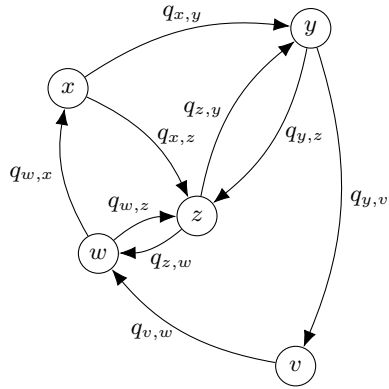
%
\centering%
\includestandalone{Chapters/MP/TikZFiles/transition_rate_diagram}%
\caption{An example of a transition rate diagram.}%
\label{figMP:transition_rate_diagram}%
\end{figure}%

A useful concept for Markov processes are \textit{stopping times}. Namely, a Markov process before a stopping time is independent of the Markov process after the stopping time. This property is called the \textit{strong Markov property}.\endnote{Establishing the strong Markov property in continuous time is actually more technical than we make it seem in \cref{secMP:Markov_processes}. We have opted for the current description to bring across the main idea of the strong Markov property without going into too much technical detail. For a technical and precise treatment of the strong Markov property in continuous time we refer the reader to Norris \cite[Section~6.5]{Norris1997_Markov}.} It essentially applies the Markov property at a `random time' with a clear definition of when this time stops. We briefly describe these two concepts.

A random variable $T$ is called a stopping time if its realization depends only on the history of the Markov process $\mathfrak{F}(T)$ until and including time $T$ and whose value is the time at which the process meets a `stopping rule'. A good example of a stopping time is the time $T$ it takes for the Markov process to go from state $x$ to state $y$. If asked to stop at time $T$, you only need to observe when the Markov process enters state $y$ for the first time. An example that is not a stopping time is the time $T$ at which the Markov process exits the set of states $\set{A}$ for the last time. Clearly, the future states of the Markov process are needed to determine if it actually was the last time the process exits the set of states $\set{A}$. So, in general, a last exit time is not a stopping time. The Markov process evaluated at a stopping time $T$, conditional on $\{ T < \infty \}$, starts anew from the state $X(T)$. More precisely, a Markov process $\{ X(t) \}_{t \ge 0}$ satisfies the strong Markov property, which says that for each stopping time $T$, conditioned on the event $\{ T < \infty \}$, we have that for each $t \ge 0$, $X(T + t)$ only depends on $X(T)$. As an example, say we have the time $T$ it takes to go from state $x$ to state $y$. Conditioning on the event that $T$ is finite,
\begin{align}%
\Prob{X(T + t) = z \mid X(0) = x} &= \Prob{X(T + t) = z \mid X(T) = y} \notag \\
&= \Prob{X(t) = z \mid X(0) = y},
\end{align}%
since $T$ is a stopping time and the Markov process has stationary transition functions.


\section{Classification of states}%
\label{secMP:classification_of_states}%

We now discuss the notions of irreducibility, recurrence and transience.
A state $y$ is said to be \textit{accessible} from state $x$ if there is a positive probability of ever reaching state $y$ given that the process starts in state $x$. If $x$ is also accessible from $y$, the states $x$ and $y$ are said to \textit{communicate} and is denoted by $x \leftrightarrow y$. Furthermore, if $x \leftrightarrow y$ and $y \leftrightarrow z$, then also $x \leftrightarrow z$.\endnote{The accessibility and communication properties are treated in many classical books, usually for Markov chains in discrete-time, see Feller \cite[Section~XV.6]{Feller1968_Probability_theory}, Karlin and Taylor \cite[Section~2.4]{Karlin1975_First_course_stochastic_processes} or Ross \cite[Section~4.2]{Ross1996_Stochastic_processes}. The concept is identical for Markov processes, however.}

States that communicate are said to be in the same \textit{equivalence class}, or class for short. This indicates that the state space of a Markov process can be partitioned into separate classes. If all states communicate with each other, then there is only one class and the Markov process is called \textit{irreducible}. Alternatively, a Markov process is irreducible if
\begin{equation}%
\Prob{X(t) = y \mid X(0) = x} > 0,
\end{equation}%
for all states $x,y \in \statespace$ and $t > 0$, indicating that there is a positive probability that the process is in state $y$ at time $t$ given it started in $x$. So, state $y$ is accessible from state $x$. Irreducibility is a direct property of the transition rate matrix $Q$, but a transition rate diagram such as the one in \cref{figMP:transition_rate_diagram}, can also be helpful in assessing if a Markov process is irreducible.

A state is said to be \textit{recurrent}\endnote{Liggett \cite[Section~2.6.2]{Liggett2010_CTMC} and Norris \cite[Section~3.4]{Norris1997_Markov} both have an excellent treatment of recurrence and transience for Markov processes.} if the Markov process returns to that state infinitely many times with probability 1. Otherwise the state is called \textit{transient}. So, a recurrent state is always visited a next time, but there exists a time at which a transient state is visited for the last time.

\begin{definition}[Recurrence and transience]\label{defMP:recurrence}%
State $x$ is recurrent if
\begin{equation}%
\Prob{x}{X(t) = x \text{ for arbitrary large $t$}} = 1
\end{equation}%
and transient otherwise, where the notation $\E{x}{f(X)}$ and $\Prob{x}{f(X)}$ are the expectation and probability of a functional of a process $\{ X(t) \}_{t \ge 0}$ given $X(0) = x$.
\end{definition}%

\begin{example}\label{exMP:four_states}%
\begin{figure}
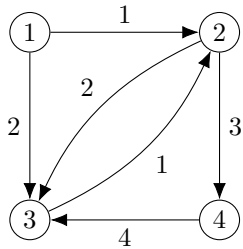
%
\centering%
\includestandalone{Chapters/MP/TikZFiles/example_Markov_process}%
\caption{A Markov process with four states.}%
\label{figMP:four_states_transition_rate_diagram}%
\end{figure}%

Consider a Markov process with state space $\statespace = \{ 1,2,3,4 \}$. Transitions can occur between these states. If the Markov process is in state 1, it transitions to state 2 after an exponentially distributed time with rate 1 and to state 3 with rate 2. From 2 the process transitions to state 3 with rate 2 and to state 4 with rate 3. With rate 1 the process transitions from state 3 to state 2 and with rate 4 from state 4 to state 3. This explanation is rather verbose and can be condensed by simply giving the transition rate matrix
\begin{equation}%
Q = \begin{bmatrix}%
-3 & 1  & 2  & 0 \\
 0 & -5 & 2  & 3 \\
 0 & 1  & -1 & 0 \\
 0 & 0  & 4  & -4
\end{bmatrix}.%
\end{equation}%
Another concise description of the behavior of the Markov process is the transition rate diagram shown in \cref{figMP:four_states_transition_rate_diagram}. The states are represented by the labeled circles and the transitions with their rates are described using the arrows. If we inspect the transition rate diagram in \cref{figMP:four_states_transition_rate_diagram}, we see that the process cannot return to state 1 since there are no transitions leading to this state and therefore state 1 is transient. The communicating class $\{ 2,3,4 \}$ is recurrent.
\end{example}%

Recurrence has a number of equivalent definitions. To that end we need the total time spent by the Markov process in a state and hitting-time random variables. Define
\begin{equation}%
T_y \defi \int_0^\infty \ind{X(t) = y} \, \dinf t
\end{equation}%
to be the total time spent in state $y$. Taking the expectation with respect to the initial state $x$ yields
\begin{equation}%
\E{x}{T_y} = \Efxd{x}{\int_0^\infty \ind{X(t) = y} \, \dinf t} = \int_0^\infty \Prob{x}{X(t) = y} \, \dinf t.
\end{equation}%
Introduce the hitting time random variables
\begin{equation}%
\htt{x,y} \defi \inf \{ t > 0 : \lim_{s \uparrow t} X(s) \neq X(t) = y \mid X(0) = x\}, \label{eqnMP:hitting_time_random_variable}
\end{equation}%
with the convention $\inf \emptyset = \infty$. Note that $\htt{x,x}$ is the time it takes the process to return to state $x$. The hitting time $\htt{x,y}$ is a stopping time.

Now, the first equivalent condition of recurrence is then as follows. A state $x$ is recurrent if
\begin{equation}%
\E{x}{T_x} = \infty
\end{equation}%
and transient otherwise. A second equivalent condition of recurrence is
\begin{equation}%
\Prob{ \htt{x,x} < \infty } = 1,
\end{equation}%
which indicates by the strong Markov property that the process returns to state $x$ unboundedly many times with probability 1. State $x$ is transient if $\Prob{ \htt{x,x} < \infty } < 1$. Both conditions can be understood from the viewpoint of the number of visits to a state. If state $x$ is transient and $X(0) = x$, then the number of visits to state $x$ follows a geometric distribution with failure probability $\Prob{x}{\text{the process returns to state $x$}} = \Prob{ \htt{x,x} < \infty }$. Given that the process starts in state $x$, the expected number of visits to state $x$ is
\begin{equation}%
\frac{1}{1 - \Prob{ \htt{x,x} < \infty }}.
\end{equation}%
Each time the process visits state $x$ it stays there, in expectation, $1/\q{x}$ time. Naturally, the expected total time spent in a transient state $x$ is finite, since $\Prob{ \htt{x,x} < \infty } < 1$. To be more precise, the total time $T_x$ spent in a transient state $x$ conditional on $X(0) = x$ is a sum of i.i.d.\@ exponential random variables with distribution $H_x$ where the number of terms in the summation is an independent geometric random variable with failure probability $\Prob{ \htt{x,x} < \infty }$. We conclude that for a transient state $x$, $T_x$ conditional on $X(0) = x$ is an exponential random variable. \makeExercise A recurrent state $x$ is visited infinitely often and thus the expected total time spent in state $x$ is infinite.

Recurrent states can be classified even further. A state is said to be \textit{positive recurrent} if the expected return time is finite and \textit{null recurrent} if the expected return time is infinite. Recurrent states in a Markov process with a finite number of states are always positive recurrent.

\begin{definition}\label{defMP:positive_null_recurrent}%
A recurrent state $x$ is positive recurrent iff $\E{\htt{x,x}} < \infty$ and null recurrent otherwise.
\end{definition}%

In \cref{exMP:four_states} it is easy to see that the expected returns times for the recurrent states 2, 3 and 4 are finite, which makes them \textit{positive} recurrent.

Recurrence and transience are class properties. If any one state in an equivalence class is (positive or null) recurrent, than all states in that class are (positive or null) recurrent. Equivalently, a transient state implies that all states in that class are transient. There are many more ways to characterize recurrence and transience, but the current level of discussion is sufficient for this book.

Most Markov processes are one of three types: (i) all states communicate and are recurrent; (ii) some transient classes and some recurrent classes and the Markov process eventually enters one of the recurrent classes; or (iii) all states in the countably infinite state space of the Markov process are transient. In this book we focus mostly on type-(i) Markov processes.


\section{Time-dependent behavior}%
\label{secMP:time-dependent_behavior}%

By the law of total probability the probability mass function of $X(t)$ satisfies
\begin{equation}%
\Prob{X(t) = y} = \sum_{x \in \statespace} \Prob{X(t) = y \mid X(0) = x} \Prob{X(0) = x} \label{eqnMP:pmf_X(t)}
\end{equation}%
and is thus uniquely characterized by the transition functions
\begin{equation}%
p_{x,y}(t) \defi \Prob{X(t) = y \mid X(0) = x}
\end{equation}%
and the matrix of transition functions
\begin{equation}%
P(t) \defi [p_{x,y}(t)]_{x,y \in \statespace}.
\end{equation}%
The transition functions satisfy the Chapman-Kolmogorov equations, which state that each transition can be split at any intermediate time. The proof of this theorem can be found in many textbooks, e.g., \cite[Sections~4.2 and 5.4]{Ross1996_Stochastic_processes}.

\begin{theorem}[Chapman-Kolmogorov equations]\label{thmMP:Chapman-Kolmogorov}%
For all $t,s \ge 0$,
\begin{equation}%
P(t + s) = P(t) P(s),
\end{equation}%
or, in scalar form with $x,y \in \statespace$,
\begin{equation}%
p_{x,y}(t + s) = \sum_{z \in \statespace} p_{x,z}(t) p_{z,y}(s).
\end{equation}%
\end{theorem}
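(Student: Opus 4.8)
The plan is to prove the Chapman-Kolmogorov equations by conditioning on the state of the process at the intermediate time $t$, using the Markov property and the assumed stationarity of the transition functions. The underlying intuition is that any path from state $x$ at time $0$ to state $y$ at time $t + s$ must pass through \emph{some} state $z$ at the intermediate time $t$, and once we know the process is in state $z$ at time $t$, its future evolution forgets the past. Summing over all possible intermediate states $z$ then assembles the full transition probability.

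First I would apply the law of total probability to the event $\{ X(t + s) = y \}$ conditioned on $\{ X(0) = x \}$, partitioning the sample space according to the value of $X(t)$, which ranges over the (finite or countably infinite) state space $\statespace$. This gives
\begin{equation}
p_{x,y}(t + s) = \sum_{z \in \statespace} \Prob{X(t + s) = y \mid X(t) = z, X(0) = x} \Prob{X(t) = z \mid X(0) = x}. \notag
\end{equation}
The second factor is immediately $p_{x,z}(t)$ by definition. For the first factor, I would invoke the Markov property \eqref{eqnMP:Markov_property}: conditioning on the full history up to time $t$ (which includes $X(0) = x$) is the same as conditioning on $X(t) = z$ alone, so the first factor equals $\Prob{X(t + s) = y \mid X(t) = z}$. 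Then, because the Markov process is assumed to have stationary transition functions, this equals $\Prob{X(s) = y \mid X(0) = z} = p_{z,y}(s)$. Substituting both identifications yields the scalar form $p_{x,y}(t + s) = \sum_{z \in \statespace} p_{x,z}(t) p_{z,y}(s)$, and recognizing the right-hand side as a matrix product gives the matrix form $P(t + s) = P(t) P(s)$.

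The main subtlety — and the step I would be most careful about — is the interchange of conditioning: rigorously, the Markov property as stated in \eqref{eqnMP:Markov_property} is a statement about conditioning on $\mathfrak{F}(s)$, and to pass to $\Prob{X(t + s) = y \mid X(t) = z, X(0) = x}$ one technically needs the tower property of conditional expectation (averaging the $\mathfrak{F}(t)$-conditional probability over the sub-$\sigma$-algebra generated by $X(0)$ and $X(t)$), together with the fact that $\{X(t) = z\} \cap \{X(0) = x\}$ is $\mathfrak{F}(t)$-measurable. In the spirit of this book I would present this as a clean conditioning argument rather than a measure-theoretic one, but I would note that a fully rigorous version routes through the tower property. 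A second, minor point worth a remark is convergence of the sum when $\statespace$ is countably infinite: since all summands are nonnegative and the partial sums are bounded by $p_{x,y}(t+s) \le 1$, the series converges absolutely, so no reordering issue arises and the matrix-product interpretation is legitimate. I would close by observing that this is exactly the semigroup property $P(t+s) = P(t)P(s)$, which is the starting point for deriving the Kolmogorov forward and backward differential equations linking $P(t)$ to the transition rate matrix $Q$.
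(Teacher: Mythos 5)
Your proposal is correct and is precisely the standard conditioning argument that the paper itself defers to (it cites Ross, Sections 4.2 and 5.4, rather than reproducing a proof). You correctly identify the three essential ingredients: the law of total probability over the intermediate state $X(t)$, the Markov property \eqref{eqnMP:Markov_property} to drop the dependence on $X(0)$, and stationarity of the transition functions to rewrite $\Prob{X(t+s)=y \mid X(t)=z}$ as $p_{z,y}(s)$; your remarks on the tower-property subtlety and on absolute convergence of the nonnegative series are both apt and, if anything, slightly more careful than a typical textbook presentation.
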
%

The transition functions satisfy two sets of differential equations called the Kolmogorov backward and forward equations. The Kolmogorov backward equations are derived from the Chapman-Kolmogorov equations by conditioning on the state at time $h$. We have
\begin{equation}%
p_{x,y}(t + h) = \sum_{z \in \statespace} p_{x,z}(h) p_{z,y}(t)
\end{equation}%
and subtracting $p_{x,y}(t)$ from both sides, dividing by $h$ and taking $h \downarrow 0$ yields
\begin{align}%
&\lim_{h \downarrow 0} \frac{p_{x,y}(t + h) - p_{x,y}(t)}{h} \notag \\
&= \lim_{h \downarrow 0} \sum_{z \neq x} \frac{p_{x,z}(h)}{h} p_{z,y}(t) - \lim_{h \downarrow 0} \frac{1 - p_{x,x}(h)}{h} p_{x,y}(t). \label{eqnMP:Kolmogorov_backward_still_need_to_interchange_limit_and_summation}
\end{align}%
By definition, the left-hand side of \eqref{eqnMP:Kolmogorov_backward_still_need_to_interchange_limit_and_summation} equals $\frac{\dinf}{\dinf t} p_{x,y}(t)$. On the right-hand side we have two limits. Since the transition functions satisfy
\begin{equation}%
p_{x,x}(t) = 1 - \q{x} t + \SmallO(t), \qquad p_{x,y}(t) = \q{x,y} t + \SmallO(t), \quad y \neq x, \label{eqnMP:behavior_transition_functions_small_time_interval}
\end{equation}%
which is proved in, e.g., \cite[Lemma~5.4.1]{Ross1996_Stochastic_processes}, these limits can be simplified. In case the state space $\statespace$ is finite, the interchange of the limit and the finite summation is clearly allowed. If the state space is countably infinite, the interchange is also allowed (see, e.g., \cite[Theorem~5.4.3]{Ross1996_Stochastic_processes}) and we obtain the Kolmogorov backward equations.

\begin{theorem}[Kolmogorov backward equations]\label{thmMP:Kolmogorov_backward}%
For all $t \ge 0$,
\begin{equation}%
\frac{\dinf}{\dinf t} P(t) = Q P(t),
\end{equation}%
or, in scalar form with $x,y \in \statespace$,
\begin{equation}%
\frac{\dinf}{\dinf t} p_{x,y}(t) = \sum_{z \neq x} \q{x,z} p_{z,y}(t) - \q{x} p_{x,y}(t)
\end{equation}%
and initial conditions $p_{x,x}(0) = 1$ and $p_{x,y}(0) = 0, ~ y \neq x$.
\end{theorem}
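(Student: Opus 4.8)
The plan is to derive the backward equations directly from the Chapman--Kolmogorov relation of \cref{thmMP:Chapman-Kolmogorov}, conditioning on the state occupied after a small time increment $h$. Writing $P(t+h) = P(h) P(t)$ in scalar form gives
\begin{equation}
p_{x,y}(t + h) = \sum_{z \in \statespace} p_{x,z}(h) p_{z,y}(t) = p_{x,x}(h) p_{x,y}(t) + \sum_{z \neq x} p_{x,z}(h) p_{z,y}(t). \notag
\end{equation}
Subtracting $p_{x,y}(t)$, dividing by $h$, and letting $h \downarrow 0$ produces exactly the right-hand side of \eqref{eqnMP:Kolmogorov_backward_still_need_to_interchange_limit_and_summation}. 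The left-hand side is, by definition of the derivative, $\frac{\dinf}{\dinf t} p_{x,y}(t)$; a brief remark is needed that this derivative exists, but this follows once the right-hand limit is shown to exist.

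Next I would evaluate the two limits on the right using the short-time expansions \eqref{eqnMP:behavior_transition_functions_small_time_interval}. From $p_{x,x}(h) = 1 - \q{x} h + \SmallO(h)$ we get $\lim_{h \downarrow 0} \frac{1 - p_{x,x}(h)}{h} = \q{x}$, so the second term contributes $-\q{x} p_{x,y}(t)$. From $p_{x,z}(h) = \q{x,z} h + \SmallO(h)$ for $z \neq x$ we get $\lim_{h \downarrow 0} \frac{p_{x,z}(h)}{h} = \q{x,z}$ for each fixed $z$, so termwise the first sum yields $\sum_{z \neq x} \q{x,z} p_{z,y}(t)$. Assembling these gives the scalar form
\begin{equation}
\frac{\dinf}{\dinf t} p_{x,y}(t) = \sum_{z \neq x} \q{x,z} p_{z,y}(t) - \q{x} p_{x,y}(t), \notag
\end{equation}
and stacking over $x,y \in \statespace$ gives the matrix form $\frac{\dinf}{\dinf t} P(t) = Q P(t)$, since the diagonal entry $-\q{x}$ of $Q$ absorbs the $-\q{x} p_{x,y}(t)$ term. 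The initial conditions $p_{x,x}(0) = 1$, $p_{x,y}(0) = 0$ for $y \neq x$ are immediate from $P(0) = \I$.

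The one genuinely delicate point — and the step I expect to be the main obstacle — is the interchange of the limit $h \downarrow 0$ with the (possibly infinite) summation $\sum_{z \neq x}$ in \eqref{eqnMP:Kolmogorov_backward_still_need_to_interchange_limit_and_summation}. When $\statespace$ is finite this is trivial. When $\statespace$ is countably infinite one needs a uniformity or dominated-convergence argument: the standing assumption $\sup_{x} \q{x} < \infty$ (regularity) is precisely what makes this work, since it gives a uniform bound $\frac{1 - p_{x,x}(h)}{h} \le \sup_x \q{x}$ and hence control of the tail $\sum_{z \neq x} \frac{p_{x,z}(h)}{h} = \frac{1 - p_{x,x}(h)}{h}$, allowing the termwise limit to be passed through. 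Rather than reproduce this argument in full, I would invoke the standard justification, citing \cite[Theorem~5.4.3]{Ross1996_Stochastic_processes} as already flagged in the text preceding the statement, and note that under our blanket assumption $\sup_{x \in \statespace} \q{x} < \infty$ all hypotheses are met.
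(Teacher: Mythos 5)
Your proof is correct and follows essentially the same route the paper takes: start from the Chapman--Kolmogorov identity $p_{x,y}(t+h) = \sum_z p_{x,z}(h) p_{z,y}(t)$, subtract, divide, take $h \downarrow 0$, plug in the short-time expansions \eqref{eqnMP:behavior_transition_functions_small_time_interval}, and handle the limit--sum interchange by citing \cite[Theorem~5.4.3]{Ross1996_Stochastic_processes} under the standing regularity assumption. The only thing you add is a brief sketch of why $\sup_x \q{x} < \infty$ controls the tail, which the paper leaves implicit but is a fine supplement.
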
%

The Kolmogorov forward equations are obtained by conditioning on the state at time $t$. We have
\begin{equation}%
p_{x,y}(t + h) = \sum_{z \in \statespace} p_{x,z}(t) p_{z,y}(h)
\end{equation}%
and subtracting $p_{x,y}(t)$ from both sides, dividing by $h$ and letting $h \downarrow 0$ gives
\begin{align}%
&\lim_{h \downarrow 0} \frac{p_{x,y}(t + h) - p_{x,y}(t)}{h} \notag \\
&= \lim_{h \downarrow 0} \sum_{z \in \statespace} p_{x,z}(t) \frac{p_{z,y}(h)}{h} - p_{x,y}(t) \lim_{h \downarrow 0} \frac{1 - p_{y,y}(h)}{h} .
\end{align}%
In this case, the interchange of limit and summation is not always allowed. For example, an explosive process does not satisfy the Kolmogorov forward equations as they are formulated in the following theorem, where we did interchange the limit and the summation. However, these equations do hold for all birth--and--death processes (see \cref{ch:birth--and--death_processes}) and for all Markov processes with a finite state space $\statespace$. We state the following theorem without a proof, since all results follow from the definition of the derivative and \eqref{eqnMP:behavior_transition_functions_small_time_interval}, assuming that the limit and summation can be interchanged.

\begin{theorem}[Kolmogorov forward equations]\label{thmMP:Kolmogorov_forward}%
For all $t \ge 0$ and under suitable regularity conditions\endnote{The conditions for which the Kolmogorov forward equations hold are formulated either fairly restrictive, as in the case in Gikhman and Skorohod \cite[Lemma~3, p.~204]{Gikhman1975_Theory_stochastic_processes}, or difficult to check, as in Liggett \cite[Theorem~2.39]{Liggett2010_CTMC}. The former conditions are $\sup_{x \in \statespace} \q{x} < \infty$ and the latter conditions are $\sum_{z \in \statespace} p_{x,z}(t) \q{z} < \infty$. If the former conditions hold, than they imply the latter conditions:
\begin{equation}%
\sum_{z \in \statespace} p_{x,z}(t) \q{z} \le \sup_{x \in \statespace} \q{x} \sum_{z \in \statespace} p_{x,z}(t) = \sup_{x \in \statespace} \q{x} \cdot 1 < \infty.
\end{equation}%
}
\begin{equation}%
\frac{\dinf}{\dinf t} P(t) = P(t) Q,
\end{equation}%
or, in scalar form with $x,y \in \statespace$,
\begin{equation}%
\frac{\dinf}{\dinf t} p_{x,y}(t) = \sum_{z \neq x} p_{x,z}(t) \q{z,y} - p_{x,y}(t) \q{y}
\end{equation}%
with initial conditions $p_{x,x}(0) = 1$ and $p_{x,y} = 0, y \neq x$.
\end{theorem}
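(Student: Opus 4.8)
\emph{Proof sketch.}
The plan is to derive the scalar identity $\frac{\dinf}{\dinf t}p_{x,y}(t)=\sum_{z\neq y}p_{x,z}(t)\q{z,y}-p_{x,y}(t)\q{y}$ directly from the Chapman--Kolmogorov equations (\cref{thmMP:Chapman-Kolmogorov}) by splitting the transition at the intermediate time $t$: $p_{x,y}(t+h)=\sum_{z\in\statespace}p_{x,z}(t)p_{z,y}(h)$. Isolating the $z=y$ term and subtracting $p_{x,y}(t)$ gives $p_{x,y}(t+h)-p_{x,y}(t)=\sum_{z\neq y}p_{x,z}(t)p_{z,y}(h)-p_{x,y}(t)\bigl(1-p_{y,y}(h)\bigr)$. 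Dividing by $h$ and letting $h\downarrow 0$, the whole problem reduces to evaluating the two limits $\lim_{h\downarrow 0}\sum_{z\neq y}p_{x,z}(t)\,p_{z,y}(h)/h$ and $\lim_{h\downarrow 0}\bigl(1-p_{y,y}(h)\bigr)/h$; the second is immediately $\q{y}$ by the small-time expansion \eqref{eqnMP:behavior_transition_functions_small_time_interval}, and since it is a single term it needs no further care.

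For the first limit, the termwise limit is again read off from \eqref{eqnMP:behavior_transition_functions_small_time_interval}, namely $p_{z,y}(h)/h\to\q{z,y}$ for each fixed $z\neq y$, which would give $\sum_{z\neq y}p_{x,z}(t)\q{z,y}$. Hence the entire substance of the theorem lies in justifying that, for a countably infinite state space, the limit may be passed inside the infinite sum. This is the step I expect to be the \emph{main obstacle}, and it is exactly the point at which the regularity hypothesis enters: for an explosive process this interchange, and with it the forward equation, can fail.

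To perform the interchange I would invoke dominated convergence for series. The key a priori bound is that, for every $z\neq y$ and every $h>0$, $p_{z,y}(h)\le 1-\euler^{-\q{z}h}\le\q{z}h$: to occupy state $y\neq z$ at time $h$ the process started in $z$ must first have left $z$, so its sojourn time $H_z\sim\Exp{\q{z}}$ satisfies $H_z<h$, whence $p_{z,y}(h)\le\Prob{H_z<h}=1-\euler^{-\q{z}h}\le\q{z}h$. Therefore $p_{x,z}(t)\,p_{z,y}(h)/h\le p_{x,z}(t)\q{z}$ uniformly in $h>0$, and under the book's standing assumption $\sup_{x}\q{x}<\infty$ this dominating sequence is summable, $\sum_{z}p_{x,z}(t)\q{z}\le\bigl(\sup_{x}\q{x}\bigr)\sum_{z}p_{x,z}(t)=\sup_{x}\q{x}<\infty$. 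Dominated convergence then yields $\lim_{h\downarrow 0}\sum_{z\neq y}p_{x,z}(t)\,p_{z,y}(h)/h=\sum_{z\neq y}p_{x,z}(t)\q{z,y}$, and assembling the two limits gives the scalar forward equation, i.e.\ $\frac{\dinf}{\dinf t}P(t)=P(t)Q$ in matrix form, with initial condition $P(0)=\I$ straight from the definition of the transition functions.

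One final point to tidy up: the computation above produces the right-hand derivative of $t\mapsto p_{x,y}(t)$. To upgrade to genuine differentiability, note that $t\mapsto p_{x,z}(t)$ is continuous (again via \cref{thmMP:Chapman-Kolmogorov} and \eqref{eqnMP:behavior_transition_functions_small_time_interval}) and, by the same domination, that $t\mapsto\sum_{z\neq y}p_{x,z}(t)\q{z,y}$ is continuous; a function on $[0,\infty)$ whose right derivative exists everywhere and is continuous is continuously differentiable, so the two-sided derivative exists and equals the displayed expression.
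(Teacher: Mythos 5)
Your proof is correct and actually goes further than the paper, which states \cref{thmMP:Kolmogorov_forward} without proof: the text preceding the theorem sketches the same Chapman--Kolmogorov splitting you use, but then simply remarks that ``the interchange of limit and summation is not always allowed'' and stops there. You supply the missing ingredient. The a priori bound $p_{z,y}(h)\le 1-\euler^{-\q{z}h}\le\q{z}h$ for $z\neq y$, obtained from the inclusion $\{X(h)=y\}\subseteq\{H_z<h\}$, gives the summable majorant $p_{x,z}(t)\q{z}$, whose summability is exactly the condition $\sup_{x\in\statespace}\q{x}<\infty$ discussed in the paper's endnote; dominated convergence then justifies the interchange, which is precisely the step the paper identifies as the obstruction. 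As a small bonus you obtain the correct index $\sum_{z\neq y}$ in the scalar equation, whereas the printed theorem reads $\sum_{z\neq x}$, evidently a slip carried over from the backward equations (where summing over $z\neq x$ is the right thing to do). The one place that deserves a touch more care is the closing continuity claim for $t\mapsto\sum_{z\neq y}p_{x,z}(t)\q{z,y}$: per-term domination alone does not yield uniform convergence of the series on compact $t$-intervals, so to upgrade the one-sided derivative you should either establish a tail estimate $\sum_{z\notin F}p_{x,z}(t)\q{z,y}\le\bigl(\sup_{x}\q{x}\bigr)\sum_{z\notin F}p_{x,z}(t)$ and invoke Dini's theorem, or integrate the right derivative and apply the fundamental theorem of calculus. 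That is a refinement, not a gap; the dominated-convergence interchange, which is the crux of the theorem, is exactly right.
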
%

The Kolmogorov forward equations are often easier to solve, since these equations express the transition functions in terms of a common initial state $X(0) = x$. Still, obtaining explicit expressions for the transition functions is notoriously difficult, and can generally be done only for toy models or Markov processes with a pronounced structure in the transition rate matrix. Let us consider such an example.

\begin{example}[Star gazing]\label{exMP:light_source_time-dependent}%
We study the visibility of a star. Statistical analysis shows that the light source is visible for an exponential amount of time with parameter $\mu$ and remains invisible for an exponential amount of time with parameter $\la$. We denote by $X(t)$ if the star is visible or not at time $t$. Under this description, $X(t)$ has a finite state space $\statespace = \{ 0,1 \}$ and transition rates $\q{0,1} = \la$ and $\q{1,0} = \mu$ (all other rates are 0). The transition functions with $X(0) = 0$ satisfy the Kolmogorov forward equations, so
\begin{align}%
\frac{\dinf}{\dinf t} p_{0,0}(t) &= \mu p_{0,1}(t) - \la p_{0,0}(t), \\
\frac{\dinf}{\dinf t} p_{0,1}(t) &= \la p_{0,0}(t) - \mu p_{0,1}(t).
\end{align}%
We can solve this system of equations by noting that at time $t$ the star has to be either visible or invisible, or, symbolically, $p_{0,0}(t) + p_{0,1}(t) = 1$. From the first equation we derive
\begin{equation}%
\frac{\dinf}{\dinf t} p_{0,0}(t) = \mu - (\la + \mu) p_{0,0}(t),
\end{equation}%
which can be turned into a separable equation by
\begin{equation}%
\frac{\dinf}{\dinf t} \bigl( \euler^{(\la + \mu)t} p_{0,0}(t) \bigr) = p_{0,0}(t) \frac{\dinf}{\dinf t} \euler^{(\la + \mu) t}  + \euler^{(\la + \mu) t} \frac{\dinf}{\dinf t} p_{0,0}(t) = \mu \euler^{(\la + \mu)t}.
\end{equation}%
Integrating the above equation and using the initial condition $p_{0,0}(0) = 1$ finally gives
\begin{equation}%
p_{0,0}(t) = \frac{\mu}{\la + \mu} + \frac{\la}{\la + \mu} \euler^{-(\la + \mu)t}.
\end{equation}%
The transition functions for all initial states are derived in an identical way. The result is
\begin{equation}%
P(t) = \begin{bmatrix}%
p_{0,0}(t) & p_{0,1}(t) \\
p_{1,0}(t) & p_{1,1}(t)
\end{bmatrix}%
= \frac{1}{\la + \mu} \begin{bmatrix}%
\mu + \la \euler^{-(\la + \mu)t} & \la - \la \euler^{-(\la + \mu)t} \\
\mu - \mu \euler^{-(\la + \mu)t} & \la + \mu \euler^{-(\la + \mu)t}
\end{bmatrix},\label{eqnMP:star_gazing_example_transition_functions}%
\end{equation}%
which has a nice symmetrical form.
\end{example}%

\begin{remark}[Numerical analysis for finite state spaces]%
For finite state spaces, a solution to the Kolmogorov backward and forward equations always exists and it is given by\endnote{There are some technical hurdles that one needs to overcome to be able to formulate $P(t) = \euler^{Qt}$ as the solution to both the Kolmogorov backward and forward equations. In particular, the derivative needs to be interchanged with an infinite series. Norris \cite[Section~2.1]{Norris1997_Markov} shows that this indeed can be done by showing that $\euler^{Qt}$ has an infinite radius of convergence. For an infinite state space, the same author derives in \cite[Section~2.8]{Norris1997_Markov} that the minimal non-negative solution of the backward equation is also the minimal non-negative solution of the forward equation. However, the solution $P(t)$ is not characterized.}
\begin{equation}%
P(t) = \euler^{Qt}, \label{eqnMP:solution_P(t)}
\end{equation}%
where the matrix exponential is defined as
\begin{equation}%
\euler^{Qt} \defi \sum_{n \ge 0} \frac{(Qt)^n}{n!}
\end{equation}%
with $(Qt)^0 = \I$ and therefore $P(0) = \I$. Indeed, \eqref{eqnMP:solution_P(t)} is a solution to both the Kolmogorov backward equation
\begin{align}%
\frac{\dinf}{\dinf t} P(t) &= Q + t Q^2 + \frac{t^2}{2!} Q^3 + \frac{t^3}{3!} Q^4 + \cdots \notag \\
&= Q \Bigl( \I + t Q + \frac{t^2}{2!} Q^2 + \frac{t^3}{3!} Q^3 + \cdots \Bigr) = Q P(t)
\end{align}%
and the forward equation
\begin{align}%
\frac{\dinf}{\dinf t} P(t) &= Q + t Q^2 + \frac{t^2}{2!} Q^3 + \frac{t^3}{3!} Q^4 + \cdots \notag \\
&= \Bigl( \I + t Q + \frac{t^2}{2!} Q^2 + \frac{t^3}{3!} Q^3 + \cdots \Bigr) Q = P(t) Q.
\end{align}%
Computing the matrix exponential is difficult, especially since the matrix $Q$ has both negative and positive elements and subtractions can cause loss of significant digits. Since the state space $\statespace$ is finite, one can truncate the series to a finite sum to obtain a numerical approximation of $P(t)$.
\end{remark}%

In \cref{secMP:Markov_processes} we have seen that the transition rate matrix $Q$ is the primary ingredient for constructing a Markov process. Coming to the end of this section, we have shown that the matrix $Q$ governs the time-dependent behavior of the Markov process as well. In the next section we show that $Q$ again plays an important role in determining probabilities of interest when $t \to \infty$ and the Markov process reaches an equilibrium.


\section{Equilibrium behavior}%
\label{secMP:equilibrium_behavior}%

With time, Markov processes that are irreducible and positive recurrent converge to an equilibrium. This means that the probability distribution of $X(t)$ (which depends on $X(0)$) tends to some other probability distribution that does not depend on $X(0)$ as $t$ tends to infinity.

An interesting object to study is the long-term fraction of time that the Markov process occupies a state $y \in \statespace$ given some initial state $x \in \statespace$, which is given by
\begin{equation}%
\lim_{t \to \infty} \frac{1}{t} \int_0^t \ind{X(s) = y \mid X(0) = x} \, \dinf s, \quad x,y \in \statespace.
\end{equation}%
It seems likely, and is indeed true, that if the Markov process is irreducible and positive recurrent, then this long-term fraction of time does not depend on the initial state $x$. If we label
\begin{equation}%
p(x) = \lim_{t \to \infty} \frac{1}{t} \int_0^t \ind{X(s) = x \mid X(0) = y} \, \dinf s, \quad x,y \in \statespace, \label{eqnMP:occupation_distribution}
\end{equation}%
then it is easy to see that $p(x) > 0$ for each $x \in \statespace$ by positive recurrence and $\sum_{x \in \statespace} p(x) = 1$ since we are talking about fractions of time. The distribution $p(x), ~ x \in \statespace$ in \eqref{eqnMP:occupation_distribution} is called the \textit{occupancy distribution}. Now, one can prove that the occupancy distribution is uniquely given by
\begin{equation}%
p(x) = \frac{1}{\q{x} \E{\htt{x,x}}} > 0, \quad x \in \statespace. \label{eqnMP:occupation_distribution_explicit_expression}
\end{equation}%
The proof of this statement uses a renewal-reward process, but we will only give an intuitive explanation. Due to the strong Markov property, we can just look at paths (or cycles) of the Markov process that start and end at state $x$. These cycles occur infinitely often because the Markov process is positive recurrent. The expected time of such a cycle is $\E{\htt{x,x}} < \infty$. Within each cycle, the expected time spent in state $x$ is $1/\q{x}$. Dividing these two quantities as in \eqref{eqnMP:occupation_distribution_explicit_expression} exactly gives the fraction of time spent in state $x$ in the long run.

\begin{example}[Occupancy distribution in a complete digraph]%
Consider a Markov process with $N + 1$ states, where from each state every other state is reachable in one transition. The transition rate diagram of this Markov process constitutes a complete digraph; every state is connected to every state. We furthermore make the simplifying assumptions that the sojourn time in each state is exponentially distribution with mean 1 and the probability of making a transition to a particular state is $1/N$.

The described Markov process is irreducible and positive recurrent since the number of states is finite. It is moreover symmetric and so we already know that the occupancy distribution $p(x) = 1/(N + 1)$. We verify this by deriving the expected return times and using \eqref{eqnMP:occupation_distribution_explicit_expression}. Fix the initial state as 1 and abbreviate $R_x = \E{\htt{x,1}}$. By a one-step analysis we derive
\begin{align}%
R_1 &= 1 + \frac{1}{N} \sum_{y \neq 1} R_y, \label{eqnMP:digraph_htt(1,1)}\\
R_x &= 1 + \frac{1}{N} \sum_{y \neq 1,x} R_y, \quad x \neq 1. \label{eqnMP:digraph_htt(x,1)}
\end{align}%
Add $R_x/N$ to both sides of \eqref{eqnMP:digraph_htt(x,1)} to get
\begin{equation}%
R_x \bigl( 1 + \frac{1}{N} \bigr) = 1 + \frac{1}{N} \sum_{y \neq 1,x} R_y + \frac{1}{N} R_x = 1 + \frac{1}{N} \sum_{y \neq 1} R_y = R_1.
\end{equation}%
Now, sum over all $x \neq 1$ to obtain
\begin{equation}%
\bigl( 1 + \frac{1}{N} \bigr) \sum_{x \neq 1} R_x = N R_1 \quad \Rightarrow \quad \sum_{x \neq 1} R_x = \frac{N^2}{N + 1} R_1. \label{eqnMP:digraph_sum_in_terms_of_htt(1,1)}
\end{equation}%
Substituting \eqref{eqnMP:digraph_sum_in_terms_of_htt(1,1)} into \eqref{eqnMP:digraph_htt(1,1)} gives
\begin{equation}%
R_1 = 1 + \frac{1}{N} \frac{N^2}{N + 1} R_1 \quad \Rightarrow \quad R_1 = N + 1
\end{equation}%
and so $p(1) = 1/(\q{1} R_1) = 1/(N + 1)$. Since we fixed an arbitrary state and the Markov process is symmetric, all expected return times are $N + 1$ and the occupancy distribution follows.
\end{example}%

So far, we derived that an irreducible and positive recurrent Markov process has a unique occupancy distribution expressed in terms of the expected sojourn times and expected return times. The expected return times are usually difficult to determine. We wish to have an easier way of computing the occupancy distribution. To that end, we introduce two concepts and relate these to the occupancy distribution.

\begin{definition}\label{defMP:stationary_distribution}%
A probability distribution $p(x), ~ x \in \statespace$ with $\sum_{x \in \statespace} p(x) = 1$ is said to be a \textit{stationary distribution} for the Markov process if it satisfies
\begin{equation}%
p(y) = \sum_{x \in \statespace} p(x) p_{x,y}(t), \quad y \in \statespace, ~ t \ge 0. \label{eqnMP:stationary_distribution}
\end{equation}%
\end{definition}%

In light of \eqref{eqnMP:pmf_X(t)}, the above definition should be interpreted as follows: if the initial state is distributed according to a stationary distribution $\pb$, then the distribution of $X(t)$ is independent of $t$ and equal to the stationary distribution $\pb$. Moreover, in that case, $\{ X(t) \}_{t \ge 0}$ is called a \textit{stationary process}.

A more natural and intuitive distribution is the limiting distribution.

\begin{definition}\label{defMP:limiting_distribution}%
A probability distribution $p(x), ~ x \in \statespace$ with $\sum_{x \in \statespace} p(x) = 1$ is said to be a \textit{limiting distribution} for the Markov process if it satisfies
\begin{equation}%
\lim_{t \to \infty} p_{x,y}(t) = p(y), \quad x,y \in \statespace,
\end{equation}%
when the limits exist.
\end{definition}%

Taking expectations on both sides of \eqref{eqnMP:occupation_distribution} shows that the occupancy distribution can be expressed in terms of transition functions:\endnote{In this case, taking expectations of \eqref{eqnMP:occupation_distribution} requires some work. It requires the use of the dominated convergence theorem and Tonelli's theorem, but we do not show it here.}
\begin{equation}%
\lim_{t \to \infty} \frac{1}{t} \int_0^t p_{x,y}(s) \, \dinf s = p(y), \quad x,y \in \statespace.
\end{equation}%
So, the existence of a limiting distribution implies the existence of an occupancy distribution. More importantly, the three distributions mentioned in this section are equivalent. We present this fact here without proof, see \cite[Sections~3.5 and 3.6]{Norris1997_Markov} for an elaborate discussion and the proof.

\begin{theorem}%
An irreducible and positive recurrent Markov process has a unique occupancy distribution, a unique stationary distribution and a unique limiting distribution and all three distributions are identical.
\end{theorem}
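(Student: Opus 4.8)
The plan is to take the occupancy distribution $p$ of \eqref{eqnMP:occupation_distribution_explicit_expression} — already shown to exist, be unique, and be strictly positive with total mass one — as the master object, and to prove successively that $p$ is a stationary distribution, that it is the \emph{only} stationary distribution, and that it is also the limiting distribution. Since the Ces\`aro identity recorded just before the theorem shows that any limiting distribution must agree with the occupancy distribution, establishing these three facts identifies all three distributions with $p$.

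To see that $p$ is stationary I would start from the Chapman--Kolmogorov equations: for any state $x$, averaging $p_{x,y}(s+t) = \sum_{z} p_{x,z}(s)\,p_{z,y}(t)$ over $s \in [0,T]$, dividing by $T$, and letting $T \to \infty$. The left-hand side tends to $p(y)$ because a time shift does not affect a Ces\`aro limit, while Fatou's lemma applied termwise on the right gives $p(y) \ge \sum_{z} p(z)\, p_{z,y}(t)$ for every $t \ge 0$. Summing over $y$ and using regularity, $\sum_{y} p_{z,y}(t) = 1$, together with $\sum_{y} p(y) = 1$, forces every inequality to be an equality, so $p$ satisfies \eqref{eqnMP:stationary_distribution}. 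Uniqueness is the easier half: if $\pi$ is any stationary distribution, then $\pi(y) = \sum_{x} \pi(x)\, p_{x,y}(t)$, and averaging over $[0,T]$, dividing by $T$, and sending $T \to \infty$ — now with dominated convergence licensed by the summable bound $x \mapsto \pi(x)$ — yields $\pi(y) = \sum_{x} \pi(x)\, p(y) = p(y)$.

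For the limiting distribution, fix a reference state $r$ and let $F(u) \defi \Prob{\htt{r,r} \le u}$. Conditioning on $\htt{r,r}$ via the strong Markov property produces the renewal equation $p_{r,y}(t) = \Prob{r}{X(t) = y,\ \htt{r,r} > t} + \int_{0}^{t} p_{r,y}(t-u)\,\dinf F(u)$; since $\E{\htt{r,r}} < \infty$ by positive recurrence and $F$ is non-lattice (the sojourn time in $r$ contributes an absolutely continuous component, while the first term is directly Riemann integrable because it is dominated by the non-increasing integrable function $\Prob{\htt{r,r} > t}$), the key renewal theorem shows $p_{r,y}(t)$ converges as $t \to \infty$. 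For an arbitrary initial state $x$, decomposing on the first visit to $r$ — a.s.\@ finite by irreducibility and recurrence — and applying dominated convergence shows $p_{x,y}(t)$ tends to the same limit, independent of $x$, so a limiting distribution exists; by the Ces\`aro identity it must equal $p$. This closes the loop.

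The step I expect to be the main obstacle is the last one: writing down the renewal equation carefully in continuous time and invoking the key renewal theorem with its non-lattice and direct-Riemann-integrability hypotheses verified, then transferring the conclusion from $r$ to an arbitrary initial state. A coupling argument — run an independent stationary copy and wait until the two processes meet — is a natural alternative, but it requires knowing that the product process on $\statespace \times \statespace$ reaches the diagonal almost surely, which is an extra lemma of comparable difficulty, so I would prefer the renewal route to keep the dependencies minimal. The only other delicate point, interchanging a limit with an infinite sum in the stationarity step, is handled not by uniform domination but by the Fatou-plus-mass-conservation argument sketched above.
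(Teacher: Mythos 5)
The paper states this theorem without proof, deferring to Norris~\cite[Sections~3.5 and 3.6]{Norris1997_Markov}, so there is no in-paper argument to compare against. Taking that cited reference as the baseline is still informative: Norris proves convergence to equilibrium by discretizing to a skeleton chain at times $nh$ and applying a discrete-time coupling lemma, whereas you stay in continuous time throughout and invoke the key renewal theorem. Both routes require one external theorem (a discrete-time coupling convergence lemma, or the key renewal theorem). The renewal route has the modest advantage that its hypothesis is almost free here: the exponential holding time at the reference state $r$ convolves an absolutely continuous component into $\htt{r,r}$, so $F$ is non-lattice with no aperiodicity bookkeeping of the kind the skeleton/coupling route has to manage.

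Your stationarity and uniqueness steps are correct as written: averaging the Chapman--Kolmogorov identity, applying Fatou termwise, and closing the inequality by mass conservation is a standard and clean way to show $p$ is stationary without needing any domination, and the uniqueness step is a routine dominated-convergence argument with $\pi$ as the dominating weight. The one place where the write-up is more confident than the argument actually supports is direct Riemann integrability of $z(t) \defi \Probfxd{r}{X(t)=y,\ \htt{r,r} > t}$. Domination by the non-increasing integrable function $\Probfxd{\ }{\htt{r,r} > t}$ is \emph{not} by itself enough for d.R.i.\@ --- a bounded nowhere-continuous indicator can be dominated by a d.R.i.\ function without being d.R.i.\@ --- so you also need to observe that $z$ is a.e.\ continuous. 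That observation is easy to supply: $z(t) = p_{r,y}(t) - \int_0^t p_{r,y}(t-u)\,\dinf F(u)$, the transition functions are continuous in $t$ (the process is regular), and $F$ is absolutely continuous, so $z$ is continuous; a non-negative, a.e.\ continuous function dominated by a d.R.i.\ function is d.R.i. With that sentence added, the renewal step is watertight, the transfer from initial state $r$ to an arbitrary $x$ via the first hit of $r$ is correct, and the Ces\`aro identity pins the limit at $p$ exactly as you say.
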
%

To calculate the occupancy distribution, we require the expected return times and to calculate the stationary and limiting distributions we require the transition functions. In most cases, this is prohibitively difficult. Thankfully, we can work with another distribution that is the unique solution to a system of linear equations called the balance equations.

\begin{theorem}\label{thmMP:equilibrium_equations_and_distribution}%
An irreducible and positive recurrent Markov process has a probability distribution $\pb = [ p(x) ]_{x \in \statespace}$ with $\pb \oneb = 1$ which is the unique solution of the \textup{balance equations}
\begin{equation}%
\pb Q = \zerob,
\end{equation}%
or, in scalar form,
\begin{equation}%
p(y) \q{y} = \sum_{x \neq y} p(x) \q{x,y}, \quad y \in \statespace.
\end{equation}%
This distribution is called the \textup{equilibrium distribution} and is equal to the occupancy, stationary and limiting distribution.
\end{theorem}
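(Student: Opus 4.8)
The plan is to reduce the theorem to the foregoing one, which already supplies a unique occupancy $=$ stationary $=$ limiting distribution $\pb$. It then suffices to prove two implications: (a) this $\pb$ solves $\pb Q = \zerob$, and (b) every probability distribution $\pb'$ with $\pb' Q = \zerob$ is a stationary distribution, hence equals $\pb$ by uniqueness. Together these yield existence and uniqueness of the probability solution of the balance equations and its identification with the three distributions above; the scalar form is just $\pb Q = \zerob$ written out coordinate by coordinate, using that the $y$-th column of $Q$ has entries $\q{x,y}$ off the diagonal and $-\q{y}$ on it.

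For (a), I would start from the defining relation of a stationary distribution, $\pb P(t) = \pb$ for all $t \ge 0$, where $P(t) = [p_{x,y}(t)]_{x,y\in\statespace}$. Differentiating in $t$ and pulling the derivative inside the sum over $\statespace$ — legitimate because, under the standing assumption $\sup_{x\in\statespace}\q{x} < \infty$, the backward equations give $\bigl| \frac{\dinf}{\dinf t} p_{x,y}(t)\bigr| \le 2\sup_{x}\q{x}$, so the series of derivatives converges uniformly and term-by-term differentiation applies — yields $\zerob = \frac{\dinf}{\dinf t}\bigl( \pb P(t)\bigr) = \pb \frac{\dinf}{\dinf t} P(t)$. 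Invoking the Kolmogorov forward equations $\frac{\dinf}{\dinf t} P(t) = P(t) Q$ (valid precisely because $\sup_x\q{x} < \infty$) and then stationarity once more, $\pb \frac{\dinf}{\dinf t} P(t) = \pb P(t) Q = \pb Q$. Hence $\pb Q = \zerob$.

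For (b), let $\pb'$ be a probability distribution with $\pb' Q = \zerob$ and set $\phi(t) \defi \pb' P(t)$. Differentiating as above and using the backward equations $\frac{\dinf}{\dinf t} P(t) = Q P(t)$ gives $\phi'(t) = \pb' \frac{\dinf}{\dinf t} P(t) = \pb' Q P(t) = \zerob$, where the associativity $\pb'(Q P(t)) = (\pb' Q) P(t)$ for a countable state space is justified by splitting the double sum into its nonnegative part $\sum_{x}\sum_{z \neq x} p'(x)\q{x,z}p_{z,y}(t)$ and the term $\sum_x p'(x)\q{x}p_{x,y}(t)$, both bounded by $\sup_x\q{x}$. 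So $\phi$ is constant, whence $\pb' P(t) = \phi(0) = \pb' P(0) = \pb' \I = \pb'$ for every $t$; that is, $\pb'$ is a stationary distribution, and by the uniqueness in the previous theorem $\pb' = \pb$.

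The main obstacle is purely analytic: for a countably infinite state space one must justify (i) differentiating the series $\sum_{x} p(x) p_{x,y}(t)$ term by term, (ii) reordering the double summations in the matrix products $\pb P(t) Q$ and $\pb' Q P(t)$, and (iii) that the Kolmogorov forward equations hold in the stated form. All three follow from the book's standing assumption $\sup_{x\in\statespace}\q{x} < \infty$: it makes the entrywise derivatives of $P(t)$ uniformly bounded, so dominated convergence and uniform convergence of the differentiated series apply, and it is exactly the regularity condition under which the forward equations are valid. For a reader content to take the Kolmogorov equations for granted, each of the two implications collapses to a one-line matrix computation, and the intuitive ``rate out equals rate in'' reading of $p(y)\q{y} = \sum_{x \neq y} p(x)\q{x,y}$ becomes a theorem rather than a heuristic.
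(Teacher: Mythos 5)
The paper states this theorem without giving a formal proof: it refers the reader to Norris for the preceding equivalence theorem and then, a few paragraphs later, offers only a \emph{heuristic} explanation, namely that the balance equations ``can be thought of'' as the $t \to \infty$ limit of the Kolmogorov forward equations, with the phrase ``we heuristically argue that $\frac{\dinf}{\dinf t}p_{x,y}(t) \to 0$.'' Your proposal is therefore not competing with a rigorous argument in the paper; it supplies one.

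Your approach is correct and genuinely different from the paper's sketch. The paper's heuristic passes to the limit $t\to\infty$ in the forward equations, needs the (not obvious) fact that $\frac{\dinf}{\dinf t}p_{x,y}(t)\to 0$, and---crucially---only addresses the \emph{existence} of a probability solution of $\pb Q = \zerob$; it says nothing about uniqueness. You instead differentiate the stationarity identity $\pb P(t) = \pb$ at \emph{finite} $t$ (avoiding the limit altogether), which cleanly gives that the stationary distribution solves $\pb Q = \zerob$; and for uniqueness you run the implication in the other direction, showing that $\pb' Q = \zerob$ forces $\phi(t) \defi \pb' P(t)$ to have zero derivative (via the backward equations), hence $\pb'$ is stationary and equals $\pb$ by the earlier uniqueness theorem. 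This two-sided argument actually justifies the word ``unique'' in the theorem statement, which the paper's heuristic does not. Your analytic justifications---the bound $\bigl|\frac{\dinf}{\dinf t}p_{x,y}(t)\bigr| \le 2\sup_x \q{x}$ from the backward equations to license term-by-term differentiation, and the absolute-convergence split to reorder the double sum in $\pb' Q P(t)$---are both sound under the paper's standing assumption $\sup_x \q{x} < \infty$, and are exactly the kind of bookkeeping the paper's intuitive explanation waves away. One cosmetic remark: in part (a) you invoke the backward equations to bound the derivatives and the forward equations to rewrite $\frac{\dinf}{\dinf t}P(t)$; it is worth being explicit that both sets hold simultaneously under $\sup_x \q{x} < \infty$, which the paper's Theorem~\ref{thmMP:Kolmogorov_forward} and its endnote already establish.
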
%

Solving the balance equations proves to be very useful since it also ensures positive recurrence of the Markov process. The following theorem is a continuous-time version of Foster's theorem \cite[Theorem~1]{Foster1953_Ergodicity_condition}.

\begin{theorem}\label{thmMP:Foster}%
If there exists a non-zero solution of the balance equations and this solution is absolutely convergent, then the Markov process is positive recurrent and the solution can be normalized to obtain the equilibrium distribution.
\end{theorem}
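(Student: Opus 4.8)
The plan is to pass to the embedded jump chain, use the standard discrete-time theory there, and transfer the conclusion back to continuous time. Write $\pb = [p(x)]_{x \in \statespace}$ for the given non-zero, absolutely convergent solution of $\pb Q = \zerob$. Irreducibility forces $\q{x} > 0$ for all $x$ unless $|\statespace| = 1$ (a trivial case), so the embedded jump chain $\{ Y_k \}_{k \ge 0}$, with transition probabilities $\q{x,y}/\q{x}$ as in \cref{secMP:Markov_processes}, is well defined and irreducible. First I would verify that $\nu(x) \defi p(x)\q{x}$ is an invariant measure for this jump chain: the scalar balance equation $p(y)\q{y} = \sum_{x \neq y} p(x)\q{x,y}$ reads exactly as $\nu(y) = \sum_{x \neq y} \nu(x)\,\q{x,y}/\q{x}$, and $\nu$ is absolutely summable, since $\sum_x |\nu(x)| = \sum_x |p(x)|\q{x} \le \bigl(\sup_x \q{x}\bigr)\sum_x |p(x)| < \infty$ by the standing assumption $\sup_x \q{x} < \infty$.

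Next I would nail down the sign. From $|\nu(y)| \le \sum_{x \neq y} |\nu(x)|\,\q{x,y}/\q{x}$ and the fact that summing the right-hand side over $y$ recovers $\sum_x |\nu(x)|$ (the jump kernel is stochastic and $\nu$ has finite total mass), the inequality must hold with equality for every $y$; hence $|\nu|$ is a non-zero, finite, non-negative invariant measure. An irreducible discrete-time chain carrying such a measure is positive recurrent, and then its invariant measure is strictly positive and unique up to a positive scalar. Applying uniqueness to the non-negative invariant measures $|\nu| - \nu$ and $|\nu| + \nu$ shows $\nu$ is a scalar multiple of $|\nu|$; replacing $\pb$ by $-\pb$ if necessary, we get $p(x) = \nu(x)/\q{x} > 0$ for all $x$, and in particular $0 < \sum_x p(x) < \infty$.

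Now I would turn positive recurrence of the jump chain into positive recurrence of the Markov process. Writing $N_x$ for the number of jumps in an excursion of $\{Y_k\}$ from $x$ and $H_{Y_k} \sim \Exp{\q{Y_k}}$ for the successive sojourn times, the return time is $\htt{x,x} = \sum_{k=0}^{N_x-1} H_{Y_k}$, so, decomposing the excursion according to the state visited and using that the expected number of visits to $y$ during an $x$-excursion equals $\nu(y)/\nu(x)$ for the positive recurrent jump chain,
\begin{equation*}
\E{x}{\htt{x,x}} = \sum_{y \in \statespace} \frac{1}{\q{y}}\cdot\frac{\nu(y)}{\nu(x)} = \frac{1}{\nu(x)}\sum_{y \in \statespace} p(y) < \infty .
\end{equation*}
This is exactly the bookkeeping behind \eqref{eqnMP:occupation_distribution_explicit_expression}, run in reverse. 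Hence $x$ is positive recurrent, and since positive recurrence is a class property and the chain is irreducible, the Markov process is positive recurrent. Then $\pb/\sum_y p(y)$ is a probability solution of $\pb Q = \zerob$, which by \cref{thmMP:equilibrium_equations_and_distribution} is the equilibrium distribution.

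The main obstacle is the discrete-time input used in the last two paragraphs: one needs the classical facts that an irreducible discrete-time chain carrying a non-zero summable invariant measure is positive recurrent with a strictly positive, essentially unique invariant measure, together with the excursion identity for expected visit counts. If one wants the book to be self-contained at this point, this is precisely where the discrete-time Foster-type theorem \cite{Foster1953_Ergodicity_condition} (or a reference such as \cite{Norris1997_Markov}) must be invoked or proved; everything else is routine manipulation of the balance equations and the boundedness assumption $\sup_x \q{x} < \infty$.
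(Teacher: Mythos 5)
The paper states this theorem without proof, citing Foster's original paper and moving straight on to the star-gazing example, so there is no in-text argument to compare yours against. Your proposal is correct. The reduction to the embedded jump chain via $\nu(x) = p(x)\q{x}$ is the natural move, and the identification of the scalar balance equation with $P$-invariance of $\nu$ is a one-line rewrite; the summability of $\nu$ does genuinely use the book's standing assumption $\sup_{x}\q{x} < \infty$ (without it, a positive-recurrent continuous-time chain can have a null-recurrent or transient jump chain, and this route would break, so it is worth pointing out explicitly that you are invoking that assumption). The sign argument — forcing equality in the triangle inequality after summing over $y$, then applying uniqueness of the invariant measure to $|\nu| \pm \nu$ — is a clean way to turn the hypothesis ``non-zero'' into ``strictly positive after a sign flip'' without circularity. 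The excursion identity $\E{x}{\htt{x,x}} = \nu(x)^{-1}\sum_y p(y)$ is the correct transfer back to continuous time and, as you note, is \eqref{eqnMP:occupation_distribution_explicit_expression} read backwards; here it is worth remarking that recurrence of the jump chain gives $N_x < \infty$ almost surely, so $\htt{x,x}$ is a finite sum of sojourn times and the computation does not secretly require an independent non-explosion argument. The discrete-time facts you invoke (a summable non-negative invariant measure for an irreducible chain forces positive recurrence; the invariant measure of an irreducible recurrent chain is unique up to scale and strictly positive; the expected-visits-per-excursion characterization) are exactly what would have to be cited, e.g.\ from \cite{Norris1997_Markov}, if the book wanted to be self-contained here.
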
%

\begin{example}[Star gazing]\label{exMP:light_source_equilibrium}%
We consider again the star of \cref{exMP:light_source_time-dependent}. Recall the transition functions in \eqref{eqnMP:star_gazing_example_transition_functions}. The Markov process is irreducible and positive recurrent. We derive that the occupancy, stationary, limiting and equilibrium distribution are identical. The occupancy distribution is given by
\begin{align}%
p(0) &= \lim_{t \to \infty} \frac{1}{t} \int_0^t p_{0,0}(s) \, \dinf s \notag \\
&= \frac{1}{\la + \mu} \lim_{t \to \infty} \frac{1}{t} \Bigl( \mu t + \la \frac{1 - \euler^{-(\la + \mu)t}}{\la + \mu} \Bigr) = \frac{\mu}{\la + \mu}
\end{align}%
and similarly for $p(1)$ to obtain $p(1) = \la/(\la + \mu)$. Let us verify that the occupancy distribution is a stationary distribution. We have
\begin{align}%
\pb P(t) &= \begin{bmatrix} p(0) & p(1) \end{bmatrix} \begin{bmatrix}%
p_{0,0}(t) & p_{0,1}(t) \\
p_{1,0}(t) & p_{1,1}(t) \end{bmatrix} \notag \\
&= \begin{bmatrix} p_{1,0}(t) + p(0)(p_{0,0}(t) - p_{1,0}(t)) & p_{0,1}(t) + p(1)(p_{1,1}(t) - p_{0,1}(t)) \end{bmatrix} \notag \\
&= \begin{bmatrix} p_{1,0}(t) + p(0) \euler^{-(\la + \mu)t} & p_{0,1}(t) + p(1) \euler^{-(\la + \mu)t} \end{bmatrix} \notag \\
&= \begin{bmatrix} p(0) & p(1) \end{bmatrix} = \pb,
\end{align}%
where we used $p(0) + p(1) = 1$. The limiting distribution is found by taking the limit $t \to \infty$ for the transition functions:
\begin{equation}%
\lim_{t \to \infty} P(t) = \lim_{t \to \infty} \begin{bmatrix}%
p_{0,0}(t) & p_{0,1}(t) \\
p_{1,0}(t) & p_{1,1}(t) \end{bmatrix}%
= \begin{bmatrix}%
p(0) & p(1) \\
p(0) & p(1) \end{bmatrix}%
= \frac{1}{\la + \mu} \begin{bmatrix}%
\mu & \la \\
\mu & \la \end{bmatrix}.%
\end{equation}%
Finally, the balance equations read
\begin{align}%
p(0) \la = p(1) \mu, \\
p(1) \mu = p(0) \la,
\end{align}%
which is a dependent system of linear equations, as is required. Using $p(0) + p(1) = 1$ we also obtain $p(0) = \mu/(\la + \mu)$ and $p(1) = \la/(\la + \mu)$. So, for this simple two-state example the four probability distributions indeed agree, in line with \cref{thmMP:equilibrium_equations_and_distribution}.
\end{example}%

One can think of the balance equations as the result of taking $t \to \infty$ in the Kolmogorov forward equations of \cref{thmMP:Kolmogorov_forward}. Intuitively, an irreducible and positive recurrent Markov process reaches an equilibrium in which the transition functions do not change anymore and we heuristically argue that $\frac{\dinf}{\dinf t} p_{x,y}(t) \to 0$ for $t \to \infty$. Since an irreducible and positive recurrent Markov process has a limiting distribution, we have $\lim_{t \to \infty} p_{x,z}(t) = p(z)$, and the interchange of the limit and infinite summation is allowed by the regularity conditions that were assumed in \cref{thmMP:Kolmogorov_forward}.

\begin{figure}
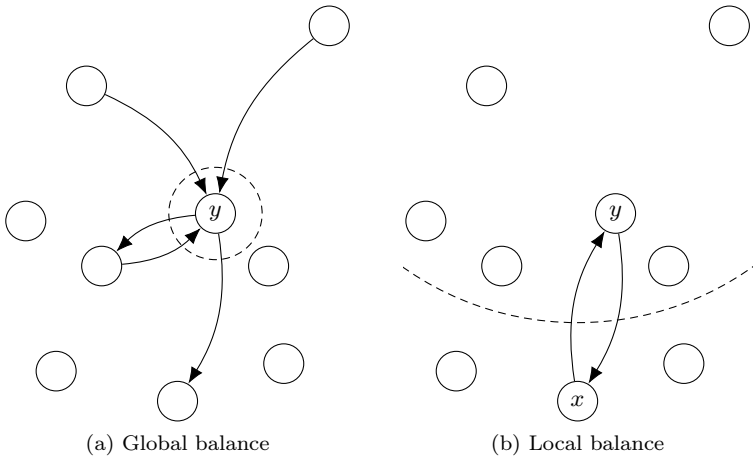
%
\centering%
\subfloat[Global balance]{%
\includestandalone{Chapters/MP/TikZFiles/global_balance}%
}%
\qquad
\subfloat[Local balance]{%
\includestandalone{Chapters/MP/TikZFiles/local_balance}%
}%
\caption{Two types of balance equations, where $\set{A}$ is the set of states inside the dashed circle.}%
\label{figMP:global_local_balance}%
\end{figure}%

A possibly more intuitive and natural interpretation of the balance equations is the following. If the Markov process is in an equilibrium, we require that the rate at which the process leaves a (set of) state(s) is equal to the rate at which the process enters the (set of) state(s). If this would not be the case, the Markov process is not in an equilibrium. Let us consider a countable set $\set{A} \subset \statespace$. Now, given that the Markov process is in state $y \in \set{A}$, the Markov process transitions to states outside $\set{A}$ with rate $\sum_{x \in \set{A}^c} \q{y,x}$. The probability that in equilibrium the Markov process is in state $y$ is given by the equilibrium distribution and is therefore equal to $p(y)$. Similarly, one derives the rate at which the Markov process transitions to states inside $\set{A}$ from a state $x \in \set{A}^c$. Balancing the two produces
\begin{equation}%
\sum_{y \in \set{A}} \sum_{x \in \set{A}^c} p(y) \q{y,x} = \sum_{x \in \set{A}^c} \sum_{y \in \set{A}} p(x) \q{x,y}.
\end{equation}%
The balance equations $\pb Q = \zerob$ follow from the above formula by taking $\set{A} = \{ y \}$. The set of equations $\pb Q = \zerob$ is also called the \textit{global} balance equations, see also \cref{figMP:global_local_balance}(a). Sometimes the set $\set{A}$ can be chosen in a way such that
\begin{equation}%
p(y) \q{y,x} = p(x) \q{x,y},
\end{equation}%
for all $x,y \in \statespace$. These equations are called the \textit{local} balance equations, see \cref{figMP:global_local_balance}(b). Local balance equations are ideal to work with. These equations make it far easier to determine the equilibrium probabilities since it allows one to express each equilibrium probability $p(y)$ in a specific other equilibrium probability, say $(0)$, and $p(0)$ follows from the normalization condition. Local balance equations do not exist in general, but they do exist for Markov processes with a specific type of structure in the transition rate matrix $Q$, such as the birth--and--death processes that we study in \cref{ch:birth--and--death_processes}, and for processes that are time-reversible. The topic of time-reversibility and its implications is studied in \cref{ch:reversible_networks}.

Choosing the set $\set{A}$ in a smart way and then invoking the balance principle is something that requires intuition, which can be trained through seeing and analyzing a variety of different Markov processes. This will be one of the goals of this book.

\begin{example}[Star topology]\label{exMP:star_topology}%
Consider a Markov process on the state space $\statespace = \Nat_0$. State 0 is central: from state 0 the process transitions to state $x$ with rate $\la^x$, but from state $x$ the process can only transition to state 0 with rate $\mu^x$, see \cref{figMP:star_topology}. Since we want all $\q{x}$ to be finite, we require $\la < 1$, otherwise the process leaves state 0 instantaneously. This gives $\q{0} = \sum_{x \ge 1} \la^x = \la/(1 - \la)$.

\begin{figure}%
\centering%
\includestandalone{Chapters/MP/TikZFiles/star_topology}%
\caption{The star topology of \protect\cref{exMP:star_topology}.}
\label{figMP:star_topology}%
\end{figure}%

The Markov process is irreducible and recurrent. It remains to see if the states are null recurrent or positive recurrent. The process transitions from state 0 to state $x$ with probability $(1 - \la)\la^{x - 1}$. If $\mu > 1$ ($\mu < 1$) the process resides in expectation a longer time at the states with a low (high) index.

We know that if an equilibrium distribution exists, the Markov process is positive recurrent, see \cref{thmMP:Foster}. We therefore investigate if a solution exists to the balance equations. This system of linear equations is given by
\begin{align}%
p(0) \frac{\la}{1 - \la} &= \sum_{x \ge 1} p(x) \mu^x, \label{eqnMP:star_topology_balance_x=0} \\
p(x) \mu^x &= p(0) \la^x, \quad x \ge 1. \label{eqnMP:star_topology_balance_x>0}
\end{align}%
Summing over all $x \ge 1$ on both sides of \eqref{eqnMP:star_topology_balance_x>0} produces \eqref{eqnMP:star_topology_balance_x=0} and the system of equations is dependent. Armed with the relation $p(x) = p(0) ( \la/\mu )^x$ and the normalization condition the equilibrium distribution can be obtained, if it exists. The normalization condition reads
\begin{equation}%
1 = \sum_{x \in \statespace} p(x) = p(0) \sum_{x \ge 0} \bigl( \frac{\la}{\mu} \bigr)^x.
\end{equation}%
We immediately see from the above equation that $\la < \mu$ is necessary for an equilibrium distribution to exist. Under this condition, the Markov process is indeed positive recurrent. Assuming $\la < \mu$, we find $p(0) = 1 - \la/\mu$ and all $p(x) = (1 - \la/\mu) ( \la/\mu )^x$. Equation~\eqref{eqnMP:occupation_distribution_explicit_expression} allows us to determine the expected return times from the occupancy distribution:
\begin{equation}%
\E{\htt{0,0}} = \frac{\mu(1 - \la)}{\la(\mu - \la)}, \quad \E{\htt{x,x}} = \frac{\mu}{\la^x (\mu - \la)}, \quad x \ge 1.
\end{equation}%
Since $\la < 1$, the expected return times grow unboundedly with increasing $x$, but for each state $x$ the expected return time is indeed finite.
\end{example}%

\begin{figure}
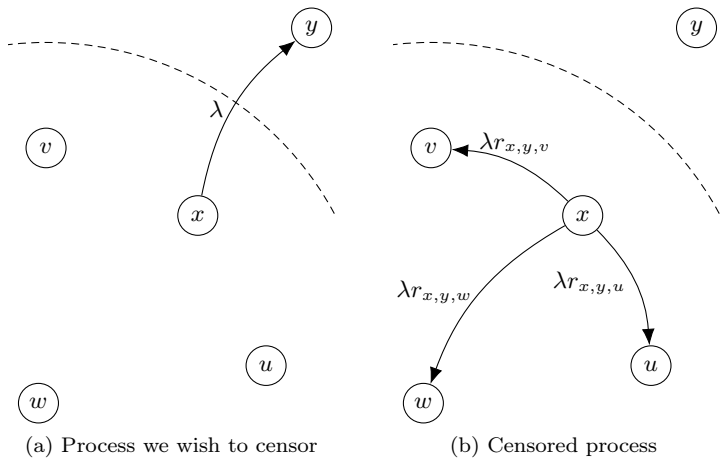
%
\centering%
\subfloat[Process we wish to censor]{%
\includestandalone{Chapters/MP/TikZFiles/want_to_censor_a_process}%
}%
\qquad
\subfloat[Censored process]{%
\includestandalone{Chapters/MP/TikZFiles/censored_process}%
}%
\caption{Example transition rate diagrams of the process that we want to censor to the set of states within the dashed circle and the censored process.}%
\label{figMP:censoring_a_process}%
\end{figure}%

A technique called \textit{censoring} can also be instrumental in calculating the equilibrium probabilities by allowing for the derivation of a different set of balance equations. We will use \cref{figMP:censoring_a_process} as a visual guide. Censoring a process to a set $\set{A}$ means that we only observe the process while it resides in this set. Practically it means that we can draw a new transition rate diagram: all transitions from states inside $\set{A}$ to states in $\set{A}^c$ are redirected to states within $\set{A}$. This redirection is done in a natural way, which we describe with an example. Say that state $x \in \set{A}$ has a single transition with rate $\la$ to a state $y$ outside $\set{A}$. With probability $r_{x,y,z}$ the process returns to $z \in \set{A}$ for the first time after leaving $\set{A}$ with a transition from state $x$ to state $y$. The transition with rate $\la$ is then split in many transitions according to these return probabilities: each new transition occurs with rate $\la r_{x,y,z}$ for all states $z \in \set{A}$. Notice that the potential transition from $x$ to $x$ does not need to be drawn, since it does not have any effect. Once the new transition rate diagram has been drawn, we can write down a different set of balance equations in the same way that we have described earlier.

\begin{example}[Censoring]\label{exMP:censoring}%
\begin{figure}
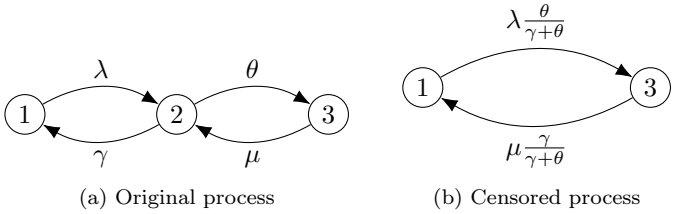
%
\centering%
\subfloat[Original process]{%
\includestandalone{Chapters/MP/TikZFiles/three-state_example_non-censored}%
}%
\qquad
\subfloat[Censored process]{%
\includestandalone{Chapters/MP/TikZFiles/three-state_example_censored}%
}%
\caption{Transition rate diagrams of the Markov process of \protect\cref{exMP:censoring}.}%
\label{figMP:example_censoring}%
\end{figure}%

Consider the Markov process with three states as shown in \cref{figMP:example_censoring}(a). The balance equations that we can derive from \cref{figMP:example_censoring}(a) are
\begin{align}%
p(1) \la &= p(2) \ga, \label{eqnMP:censoring_example_p(1)}\\
p(2) (\ga + \theta) &= p(1) \la + p(3) \mu, \\
p(3) \mu &= p(2) \theta.
\end{align}%
Let us censor the process to the set $\set{A} = \{ 1,3 \}$. So, we need to redirect all transitions that lead to state 2 to a state in $\set{A}$, since state 2 is outside this set. From state 1 the process can transition to state 2 with rate $\la$ and it returns to $\set{A}$ in state 1 with probability $\ga / (\ga + \theta)$ (but we do not need to draw that transition since it returns to the same state) and it returns to $\set{A}$ in state 3 with probability $\theta / (\ga + \theta)$. So from state 1 we need to draw a transition to state 3 with rate $\la \theta / (\ga + \theta)$. The same reasoning for state 3 leads to the transition rate diagram in \cref{figMP:example_censoring}(b). From \cref{figMP:example_censoring}(b) we derive another balance equation:
\begin{equation}%
p(1) \la \frac{\theta}{\ga + \theta} = p(3) \mu \frac{\ga}{\ga + \theta},
\end{equation}%
which gives us $p(1)= p(3) \mu \ga / (\theta \la)$ and therefore by \eqref{eqnMP:censoring_example_p(1)} shows that $p(2) = p(3) \mu / \theta$. The normalization condition $p(1) + p(2) + p(3) = 1$ then gives us that
\begin{equation}%
p(1) = \frac{\mu \ga}{\theta \la + \mu \ga + \mu \la}, \quad p(2) = \frac{\mu \la}{\theta \la + \mu \ga + \mu \la}, \quad p(3) = \frac{\theta \la}{\theta \la + \mu \ga + \mu \la}.
\end{equation}%
This simple example demonstrates how you can use censoring to derive new balance equations. This technique will prove useful when tackling more advanced processes.
\end{example}%



\section{Manufacturing examples}%
\label{secMP:manufacturing_example}%
%


We now apply our knowledge of Markov processes to some realistic manufacturing examples.

\begin{example}[A failing component]\label{exMP:single_failing_component}%
We assume that the quality of a component deteriorates through a total of $N$ phases where in each phase the component resides for an exponential amount of time with parameter $\theta$. After $N$ phases the component fails completely. So, the lifetime of a component has an $\Erl{N}{\theta}$ distribution. A lower quality component has a negative influence on the production capacity of the machine it resides in and therefore an operator visits the machine to check the quality of the component and replaces or repairs it whenever it is below perfect condition. The time between two visits of the operator is approximated by an exponential distribution with parameter $\ga$. Both replacing and repairing a component is assumed to take no time as it is short compared to the time between two successive visits of the operator.

Denote the quality of the component at time $t$ as $X(t)$. The process $\{ X(t) \}_{t \ge 0}$ is a Markov process with state space $\statespace \defi \{ 0,1,\ldots,N \}$ and transition rate matrix
\begin{equation}%
Q = \begin{bmatrix}%
-\theta & \theta \\
\ga     & -(\theta + \ga) & \theta \\
\ga     &                 & -(\theta + \ga) & \theta \\
\vdots  &                 &                 & \ddots & \ddots \\
\ga     &                 &                 &        & -(\theta + \ga) & \theta \\
\ga     &                 &                 &        &                 & -\ga \\
\end{bmatrix},%
\end{equation}%
where unspecified elements are zero. This Markov process is irreducible and positive recurrent because its state space is finite. So, the Markov process has an equilibrium distribution that we denote by $\pb \defi [ p(i) ]_{0 \le i \le N}$.

The global balance equations $\pb Q = \zerob$ read
\begin{align}%
p(0) \theta &= \ga \sum_{n = 1}^N p(n), \label{eqnMP:manufacturing_example_single_component_eq_eqs_p0}\\
p(i) (\theta + \ga) &= \theta p(i - 1), \quad i = 1,2,\ldots,N - 1, \\
p(N) \ga &= \theta p(N - 1).
\end{align}%
With the help of the normalization condition $\sum_{n = 0}^N p(n) = 1$ we are able to derive $p(0)$ from \eqref{eqnMP:manufacturing_example_single_component_eq_eqs_p0} as
\begin{equation}%
p(0) \theta = \ga (1 - p(0)) \quad \Rightarrow \quad p(0) = \frac{\ga}{\theta + \ga}.
\end{equation}%
The remaining balance equations are iterated to obtain
\begin{align}%
p(i) &= \bigl( \frac{\theta}{\theta + \ga} \bigr)^i p(0) = \bigl( \frac{\theta}{\theta + \ga} \bigr)^i \frac{\ga}{\theta + \ga}, \quad i = 0,1,\ldots,N - 1, \\
p(N) &= \frac{\theta}{\ga} \bigl( \frac{\theta}{\theta + \ga} \bigr)^{N - 1} \frac{\ga}{\theta + \ga}.
\end{align}%
From these equilibrium probabilities we see that if $\theta$ is large in comparison to $\gamma$, then $p(N)$ is large, which means that the component has deteriorated through all of its phases and has now completely failed. From these equilibrium expressions, an operator can, e.g., determine how often on average he needs to inspect the component so that with 99\% certainty it does not reach deterioration phases 5 and higher.
\end{example}%

\begin{example}[Multiple failing components]%
A machine naturally consists of multiple components that can be replaced or repaired if they are not in perfect condition. Let us consider a situation in which there are two components with each their own failure process. The behavior of the operator is the same as before, but now he replaces or repairs all components that are not in mint condition. Replacing or repairing both components at the same time makes the two failure processes dependent: if we know that one of the two components is in phase 0, then it is probable that both components were replaced or repaired recently, which shows that we also have information on the failure process of the other component. The time until failure for component 1 is $\Erl{N_1}{\theta_1}$ and $\Erl{N_2}{\theta_2}$ for component 2. Let $X_1(t)$ and $X_2(t)$ denote the quality level of component 1 and component 2 at time $t$ and let $X(t) \defi (X_1(t),X_2(t))$ describe the configuration of quality levels at time $t$. $\{ X(t) \}_{t \ge 0}$ describes an irreducible and positive recurrent Markov process with finite state space
\begin{equation}%
\statespace \defi \{ (i,j) \in \Nat_0^2 : 0 \le j \le N_1, ~ 0 \le j \le N_2 \}.
\end{equation}%
A transition rate diagram for a specific instance of $N_1$ and $N_2$ is shown in \cref{figMP:manufacturing_example_two_components_transition_rate_diagram}.

\begin{figure}
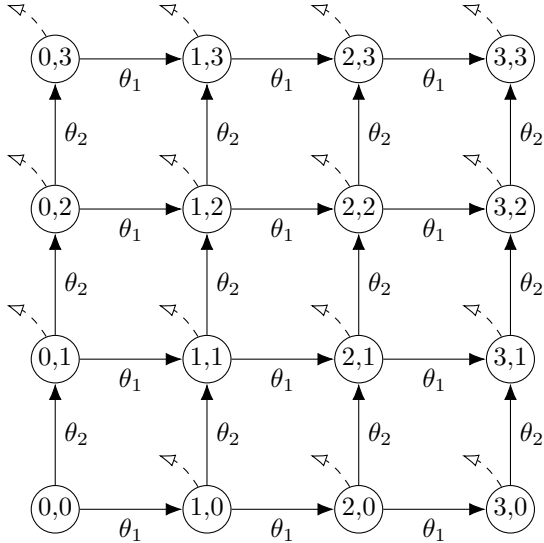
%
\centering%
\includestandalone{Chapters/MP/TikZFiles/manufacturing_example_two_components_transition_rate_diagram}%
\caption{Transition rate diagram of the Markov process associated with the failure processes of two components with $N_1 = N_2 = 3$. The dashed transitions are towards $(0,0)$ and occur with rate $\gamma$.}%
\label{figMP:manufacturing_example_two_components_transition_rate_diagram}%
\end{figure}%

Equilibrium probabilities of a two-dimensional Markov process are denoted as $p(i,j)$. For ease of exposition, we assume that $N_1$ and $N_2$ are large (so as to not worry about boundary behavior), but this approach works for any $N_1$ and $N_2$. The equilibrium probabilities $p(i,j)$ with $(i,j) \in \statespace$ can be solved in a recursive fashion. To start,
\begin{equation}%
p(0,0) (\theta_1 + \theta_2) = \ga \sum_{(i,j) \in \statespace \setminus \{ (0,0) \}} p(i,j),
\end{equation}%
which implies by the normalization condition $\sum_{(i,j) \in \statespace} p(i,j) = 1$ that
\begin{equation}%
p(0,0) = \frac{\ga}{\theta_1 + \theta_2 + \ga}.
\end{equation}%
Now that we have the equilibrium probability of state $(0,0)$ we can exploit the structure of the transition rate diagram in \cref{figMP:manufacturing_example_two_components_transition_rate_diagram}. In particular, we proceed along diagonals: the equilibrium probabilities of states $(1,0)$ and $(0,1)$ are expressed in terms of $(0,0)$ as
\begin{align}%
p(1,0)(\theta_1 + \theta_2 + \ga) &= p(0,0) \theta_1, \\
p(0,1)(\theta_1 + \theta_2 + \ga) &= p(0,0) \theta_2.
\end{align}%
Along the next diagonal, the equilibrium probabilities of states $(2,0)$, $(1,1)$ and $(0,2)$ are expressed in terms of the states on the previous diagonal:
\begin{align}%
p(2,0)(\theta_1 + \theta_2 + \ga) &= p(1,0) \theta_1, \\
p(1,1)(\theta_1 + \theta_2 + \ga) &= p(1,0) \theta_2 + p(0,1) \theta_1, \\
p(0,2)(\theta_1 + \theta_2 + \ga) &= p(0,1) \theta_2.
\end{align}%
Clearly, the equilibrium probabilities of the states on one diagonal can be expressed in terms of the equilibrium probabilities of the states on the preceding diagonal. When proceeding in this manner the complete equilibrium distribution can be obtained explicitly.

The recursive calculation of the equilibrium probabilities is not restricted to a system of two components, but can actually be applied to a system with an arbitrary number of components. For example, for a system with three components we can first determine $p(0,0,0)$ and from that find $p(1,0,0)$, $p(0,1,0)$ and $p(0,0,1)$ which leads to $p(1,1,0)$, $p(1,0,1)$ and $p(0,1,1)$ and ultimately gives $p(1,1,1)$. For the three-component example the sets of states are not diagonals but rather triangles.
\end{example}%

\begin{example}[Production capacity]%
We now study the impact of a single deteriorating component on the production capacity of a machine. Products arrive at the machine according to a Poisson process with rate $\la$ and are served in order of arrival. If the machine is already occupied, the products wait in a queue. The rate at which the machine serves a product depends on the quality level of the deteriorating product: if the component is in phase $n$ then the service rate is $\mu_n$ for $n = 0,1,\ldots,N$ with $\mu_1 \ge \mu_2 \ge \cdots \ge \mu_N = 0$. The operator behaves the same as before and replaces or repairs the component after an $\Exp{\gamma}$ amount of time and the lifetime of the component has an $\Erl{N}{\theta}$ distribution.

\begin{figure}
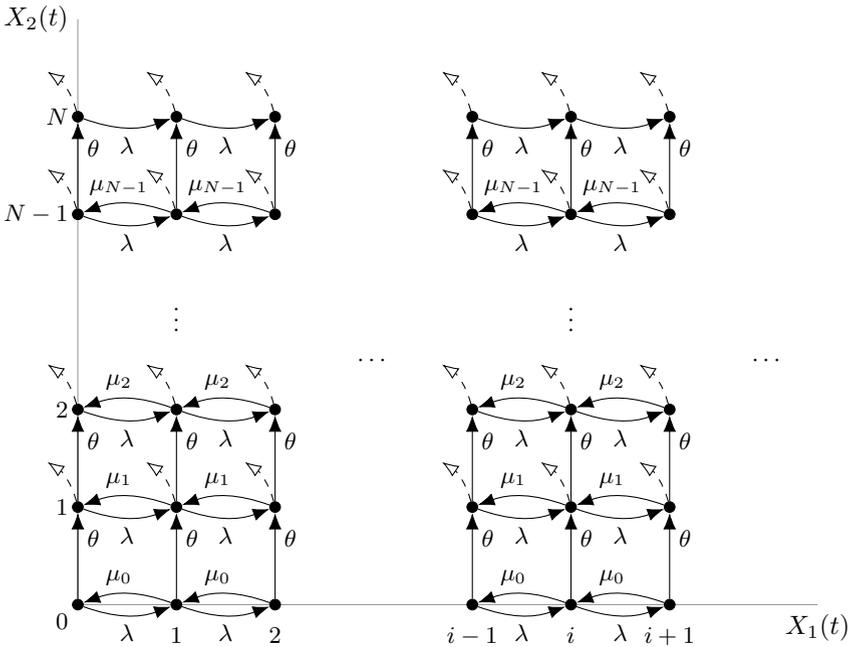
%
\centering%
\includestandalone{Chapters/MP/TikZFiles/manufacturing_example_machine_single_component_transition_rate_diagram}%
\caption{Transition rate diagram of the Markov process associated with the machine and the single component. The dashed transitions within $\lvl{i}$ are towards $(i,0)$ and occur with rate $\gamma$.}%
\label{figMP:manufacturing_example_machine_single_component_transition_rate_diagram}%
\end{figure}%

The Markov process associated with this system is two-dimensional: $X_1(t)$ denotes the number of products in the system at time $t$ and $X_2(t)$ is the quality level of the component at time $t$ and $X(t) \defi (X_1(t),X_2(t))$ is the state of the system at time $t$. The state space of this irreducible Markov process is
\begin{equation}%
\statespace \defi \{ (i,j) \in \Nat_0^2 : 0 \le j \le N \}
\end{equation}%
and the transition rate diagram is given in \cref{figMP:manufacturing_example_machine_single_component_transition_rate_diagram}. We note that the state space of this Markov process is countably infinite. A convenient way to partition the state space is by introducing \textit{levels}. A level is a vertically aligned set of states. Specifically, level $i$ is
\begin{equation}%
\lvl{i} \defi \{ (i,0),(i,1),\ldots,(i,N) \}, \quad i \ge 0,
\end{equation}%
so that
\begin{equation}%
\statespace = \lvl{0} \cup \lvl{1} \cup \lvl{2} \cup \cdots
\end{equation}%

At this point we will not determine the equilibrium distribution since it requires the theory of \cref{ch:quasi-birth--and--death_processes}. Rather, we derive the condition for which the process is positive recurrent. Intuitively, the states are positive recurrent if the Markov does not diverge `towards infinity', by which we mean that $X_1(t)$ does not grow without bound. For $X_1(t)$ to not grow without bound, we require that the average transition rate from level $i$ to level $i - 1$ (to the left) is greater than the average transition rate from level $i$ to level $i + 1$ (to the right). We can make this statement without specifying the exact level $i$ since the transition rate behavior is the same for any level greater than level 0. The average transition rate to the left is sum over $i$ of the the proportion of time spent in phase $i$ multiplied by $\mu_i$. We can similarly calculate the average transition rate to the right. Clearly the proportions sum to 1, so that the average transition rate to the right is exactly $\la$. So, to determine the average transition rate to the left we require to determine the fractions of time spent in each of the phases.

If we only observe transitions in the vertical direction, then we end up with exactly the Markov process of the failure process of a single component. Let us denote the equilibrium distribution of the phase process by $\pi(i), ~ i = 0,1,\ldots,N$ (we reserve $\pb$ for the equilibrium distribution of the Markov process). From our earlier analysis of the single component we know that
\begin{align}%
\pi(i) &= \bigl( \frac{\theta}{\theta + \ga} \bigr)^i \frac{\ga}{\theta + \ga}, \quad i = 0,1,\ldots,N - 1, \\
\pi(N) &= \frac{\theta}{\ga} \bigl( \frac{\theta}{\theta + \ga} \bigr)^{N - 1} \frac{\ga}{\theta + \ga}.
\end{align}%
The average transition rate to the right is therefore
\begin{equation}%
\sum_{i = 0}^N \pi(i) \mu_i = \frac{\ga}{\theta + \ga} \sum_{i = 0}^{N - 1} \bigl( \frac{\theta}{\theta + \ga} \bigr)^i \mu_i.
\end{equation}%
Under the \textit{stability condition}
\begin{equation}%
\la < \frac{\ga}{\theta + \ga} \sum_{i = 0}^{N - 1} \bigl( \frac{\theta}{\theta + \ga} \bigr)^i \mu_i,
\end{equation}%
$X_1(t)$ does not grow without bound and therefore the Markov process is positive recurrent. Compare this with the single-server system of \cref{secINT:balance_act}, where the stability condition is $\la < \mu$. This inequality also says that the average transition rate to the left is greater than the average transition rate to the right.
\end{example}%


\section{Takeaways}%
\label{secMP:takeaways}%

Markov processes can describe the evolution in time of many systems. This chapter discussed some of the prerequisites needed to define Markov processes in a mathematical way. For analyzing Markov processes, in order to quantify their behavior, we discussed three basic systems of equations: the Kolmogorov backward and forward equations, and the balance equations. The Kolmogorov equations capture the dynamics of the Markov process, over all time, while the balance equations describe long-term behavior. The focus of this book lies primarily with balance equations, although for all Markov processes discussed in the subsequent chapters one could state the Kolmogorov equations and study these as well. We do this in \cref{ch:birth--and--death_processes}, where we treat birth--and--death processes that have an exceptionally nice structure, leading to analytic solutions for both the balance and the Kolmogorov equations. In general, however, solving the Kolmogorov equations is more challenging than solving the balance equations. Solving the balance equations alone is challenging enough to write an entire book about.

From the theory side, much more can be said about the mathematics of Markov processes. While this chapter is restricted to the bare minimum needed to work with the mathematics in this book, there is a wealth of mathematical theory for Markov processes to be discovered. We encourage the interested reader to study for instance the books of Br{\'e}maud \cite{Bremaud1999_Markov_chains}, Chung \cite{Chung1967_Markov}, Ethier and Kurtz \cite{Ethier1986_Markov_processes}, Feller \cite{Feller1968_Probability_theory,Feller1971_Probability_theory_volume_II}, Jacod and Shiryaev \cite{Jacod2003_Limit_theorems_stochastic_processes}, Karlin and Taylor \cite{Karlin1975_First_course_stochastic_processes}, Liggett \cite{Liggett2010_CTMC}, Norris \cite{Norris1997_Markov}, Resnick \cite{Resnick1992_Adventures}, and Rogers and Williams \cite{Rogers1987_Diffusions_Markov_processes}.

From the practical side, much more can be said about the applications of Markov processes. Throughout the book we give examples of practical flavor, but these examples only serve the purpose of illustrating and practicing the mathematical methods. Those who want to learn more about modeling real-world applications as Markov processes can find many inspirational examples in books like Asmussen \cite{Asmussen2008_Applied_probability_and_queues}, Bruneel and Kim \cite{Bruneel1993_Discrete-time_communication}, Buzacott and Shantikumar \cite{Buzacott1993_Stochastic_models_of_manufacturing_systems},
Harchol-Balter \cite{Harchol2013_Computer_systems}, Kelly and Yudovina \cite{Kelly2014_Stochastic_Networks}, Kiss, Miller and Simon \cite{Kiss2017_Epidemics_on_networks} and Van Mieghem \cite{Mieghem2009_Performance_analysis_communications}.

If there is one thing we have learned from this chapter is that defining the Markov process in terms of its transition rate matrix or diagram is only the beginning. In order to study the Markov process, we are confronted with solving systems of equations. This challenge does not only require basic analysis or linear algebra, but should be combined with recognizing the structure hidden in the transition matrix. It is only then that the Markov process will reveal its beautiful properties, most notably the product-form solutions for the balance equations we encountered in \cref{exMP:star_topology,exMP:single_failing_component}. Many chapters now will follow, about classes of Markov processes, each with their specific structures and specific mathematical challenges. In all but a few cases we will be able to construct product-form solutions.


\printendnotes%


\chapter{Queues and transforms}%
\label{ch:queues_and_transforms}%
%

This book is centered around analytic methods for finding the equilibrium distribution of a Markov process. So far, we have discussed methods targeted at directly solving the balancing equations, for instance by exploiting recursive structures or by substituting product forms. Transforms arise as an alternative method when an infinite system of linear equations---such as the balance equations---is converted into a single functional equation for the transform. The mathematical challenge then becomes to find the transform as the solution of the functional equation, which in some cases might prove the easiest or only method to tackle the problem. Once a transform is obtained, all information about the underlying distribution can be extracted from it. Taking derivatives of the transforms readily gives all moments. The underlying distribution can be retrieved by more advanced algorithms that invert the transform. This chapter covers the basics of transforms. For discrete random variables we introduce the \textit{probability generating function} and for continuous random variables the \textit{Laplace-Stieltjes transform}. We then learn how to work with these transforms by applying transform techniques to several classical queueing systems. We also introduce several numerical algorithms for transform inversion, which are largely based on Cauchy's formula. The transform technique and associated algorithms introduced in this chapter have a large scope of application, not only in later chapters in this book on more advanced Markov processes, but also in probability theory \cite{Feller1968_Probability_theory}, combinatorics \cite{Flajolet2009_Analytic_combinatorics} and digital signal processing \cite{Lyons2010_Understanding_digital_signal_processing}.


\section{Basic transforms}%
\label{secQTF:basic_transforms}%

We introduce basic properties of the probability generating function (PGF) for discrete random variables and the Laplace-Stieltjes transform (LST) for continuous random variables. We also give a first demonstration of how to use these transforms in the context of the basic single-server queue covered in \cref{ch:introduction}.


\subsection{Probability generating functions}%
\label{subsecQTF:PGF}%

The PGF of a non-negative random variable $X$ that takes values in the set $\{ 0,1,2,\ldots \}$ is defined as
\begin{equation}%
\PGF{X}{\PGFarg} \defi \E{\PGFarg^X} = \sum_{i \ge 0} p(i) \PGFarg^i,
\end{equation}%
where $p(i) = \Prob{X = i}$ is the probability mass function of $X$. A PGF of a random variable with a countably infinite support gives rise to an infinite series. Since we know that $p(\cdot)$ is a probability distribution and therefore $\sum_{i \ge 0} p(i) = 1$, we can conclude for $|\PGFarg| \le 1$ that
\begin{equation}%
|\PGF{X}{\PGFarg}| = \bigl| \sum_{i \ge 0} p(i) \PGFarg^i \bigr| \le \sum_{i \ge 0} p(i) |\PGFarg|^i \le \sum_{i \ge 0} p(i) = 1
\end{equation}%
and therefore the PGF converges for any $\PGFarg$ that is inside the closed unit disk. Depending on the form of $p(i)$ the PGF might converge for other values of $\PGFarg$ as well. Specifically, there exists an $r \ge 1$ such that the PGF converges absolutely for all $|\PGFarg| < r$ and diverges for all $|\PGFarg| > r$. This $r$ is called the \textit{radius of convergence} of the PGF.

\begin{example}[Geometric distribution]\label{exQTF:PGF_geometric_distribution}%
The probability mass function of the geometric distribution with failure probability $\rho$ is given by
\begin{equation}%
p(i) = (1 - \rho)\rho^i, \quad i \ge 0
\end{equation}%
and therefore its PGF is
\begin{equation}%
\PGF{\PGFarg} = \sum_{i \ge 0} p(i) \PGFarg^i = (1 - \rho) \sum_{i \ge 0} (\rho \PGFarg)^i = \frac{1 - \rho}{1 - \rho \PGFarg}. \label{eqnQTF:PGF_geometric_distribution}
\end{equation}%
The last equality only holds if $|\PGFarg| < 1/\rho$, which ensures that the series converges. Note that the radius of convergence $r$ is $1/\rho$.
\end{example}%

\begin{example}[Poisson distribution]\label{exQTF:PGF_Poisson_distribution}%
The probability mass function of the Poisson distribution with parameter $\la$ is given by
\begin{equation}%
p(i) = \frac{\la^i}{i!} \euler^{-\la}, \quad i \ge 0
\end{equation}%
and therefore its PGF is
\begin{equation}%
\PGF{\PGFarg} = \sum_{i \ge 0} p(i) \PGFarg^i = \euler^{-\la} \sum_{i \ge 0} \frac{(\la \PGFarg)^i}{i!} = \euler^{-\la(1 - \PGFarg)}. \label{eqnQTF:PGF_Poisson_distribution}
\end{equation}%
The last equality holds for all $\PGFarg \in \Complex$. So, the radius of convergence of the PGF of a random variable with a Poisson distribution with parameter $\la$ is infinite.
\end{example}%

\begin{remark}\label{remQTF:PGF_analytic_function}%
A PGF $\PGF{\cdot}$ is said to have radius of convergence $r$ when $\PGF{\PGFarg}$ is an \textit{analytic function} for all $\PGFarg \in \Complex$ satisfying $|\PGFarg| < r$ and has at least one singularity on the circle $|\PGFarg| = r$. A function that is analytic in a region $\setUncountable{A} \subset \Complex$ is a function that is complex differentiable at every $\PGFarg \in \setUncountable{A}$, or equivalently, if it has a convergent series expansion in an open disk around every $\PGFarg \in \setUncountable{A}$. For the mathematical definition of these terms we refer the reader to \cite{Marsden1998_Basic_complex_analysis}; we will only use the property that $\PGF{\PGFarg}$ is analytic for $|\PGFarg| < r$. Returning to \cref{exQTF:PGF_geometric_distribution}, we see that $\PGF{\PGFarg}$ in \eqref{eqnQTF:PGF_geometric_distribution} is an analytic function for all $\PGFarg \in \Complex$ satisfying $|\PGFarg| < 1/\rho$. This function has a pole (a simple singularity) at $\PGFarg = 1/\rho$. The PGF in \eqref{eqnQTF:PGF_Poisson_distribution} is called an \textit{entire function} because it is analytic for all $\PGFarg \in \Complex$.
\end{remark}%

The probability mass function can be retrieved from the PGF $\PGF{X}{\cdot}$ through $p(0) = \PGF{X}{0}$ and
\begin{equation}%
p(i) = \frac{1}{i!} \frac{\dinf^i}{\dinf \PGFarg^i} \PGF{X}{\PGFarg} \Big\vert_{\PGFarg = 0}, \quad i \ge 1.
\end{equation}%
All probabilities $\{ p(i) \}_{i \ge 0}$ thus follow by taking derivatives of the PGF at $\PGFarg = 0$. This observation leads to one of the most important properties of a PGF, which is that if $\PGF{X}{\PGFarg} = \PGF{Y}{\PGFarg}$, then $X \dequal Y$, and vice versa, if $X \dequal Y$, then $\PGF{X}{\PGFarg} = \PGF{Y}{\PGFarg}$. Moreover, since the derivatives are evaluated at $\PGFarg = 0$, we conclude that if two PGFs are equal on any real interval containing the value 0, then the underlying probability mass functions are equal. We should mention that taking derivatives can become computationally cumbersome, either because of the complexity of the symbolic expressions of the derivatives, or because of numerical inaccuracies, particularly for $p(i)$ with $i$ large. We therefore also present an alternative method for PGF inversion in \cref{secQTF:numerical_inversion} based on contour integrals.

One of the great advantages of using PGFs is that the moments of the random variables are easy to determine. For example,
\begin{equation}%
\frac{\dinf}{\dinf \PGFarg} \PGF{X}{\PGFarg} \Big\vert_{\PGFarg = 1} = \frac{\dinf}{\dinf \PGFarg} \sum_{i \ge 0} p(i) \PGFarg^i \Big\vert_{\PGFarg = 1} = \sum_{i \ge 0} p(i) \frac{\dinf}{\dinf \PGFarg} \PGFarg^i \Big\vert_{\PGFarg = 1} = \sum_{i \ge 0} i p(i) = \E{X},
\end{equation}%
where the interchange of derivative and summation is allowed because the series converges uniformly. More generally, the factorial moments are given by
\begin{equation}%
\E{X(X - 1) \cdots (X - k + 1)} = \frac{\dinf^k}{\dinf \PGFarg^k} \PGF{X}{\PGFarg} \Big\vert_{\PGFarg = 1}, \quad k \ge 1.
\end{equation}%
A PGF is also useful when considering sums of random variables. For example, if we set $Z \defi X + Y$ and $X$ and $Y$ are independent, then
\begin{equation}%
\PGF{Z}{\PGFarg} = \E{\PGFarg^Z} = \E{\PGFarg^{X + Y}} = \E{\PGFarg^X} \E{\PGFarg^Y} = \PGF{X}{\PGFarg} \PGF{Y}{\PGFarg}.
\end{equation}%
%



\subsection{Laplace-Stieltjes transforms}%
\label{subsecQTF:LST}%

The LST of a non-negative random variable $X$ is defined as
\begin{equation}%
\LST{X}{\LSTarg} \defi \E{\euler^{-\LSTarg X}} = \int_0^\infty \euler^{-\LSTarg t} \, \dinf F(t).
\end{equation}%
When the random variable $X$ has a density $f(\cdot)$, then the transform simplifies to
\begin{equation}%
\LST{X}{\LSTarg} = \int_0^\infty \euler^{-\LSTarg t} f(t) \, \dinf t.
\end{equation}%
The region of convergence of an LST is at least the complex numbers $\LSTarg$ that satisfy $\RealPart{\LSTarg} > 0$, but in most cases this region is larger. Notice that $|\LST{X}{\LSTarg}| \le 1$ for $\RealPart{\LSTarg} > 0$.

\begin{example}[Exponential distribution]\label{exQTF:LST_exponential_distribution}%
The exponential distribution with rate $\la$ has the probability density function $f(t) = \la \euler^{-\la t}$ for all $t \ge 0$. The LST of this distribution is therefore
\begin{equation}%
\LST{\LSTarg} = \int_0^\infty \euler^{-\LSTarg t} f(t) \, \dinf t = \la \int_0^\infty \euler^{-(\la + \LSTarg) t} \, \dinf t = \frac{\la}{\la + \LSTarg}.
\end{equation}%
The last integral is finite if $\RealPart{\LSTarg} > -\la$. So, the region of convergence of the LST associated with the exponential distribution with rate $\la$ is described by all $\LSTarg \in \Complex$ satisfying $\RealPart{\LSTarg} > -\la$.
\end{example}%

An LST uniquely determines the underlying distribution just as a PGF does: if $\LST{X}{\LSTarg} = \LST{Y}{\LSTarg}$, then $X \dequal Y$ and vice versa if $X \dequal Y$, then $\LST{X}{\LSTarg} = \LST{Y}{\LSTarg}$. In \cref{subsecQTF:numerical_inversion_univariate_LSTs} we show how to retrieve the probability distribution function $f(\cdot)$ using the Bromwich line integral or using an algorithm.

An LST satisfies many useful properties; some of the most important ones include
\begin{equation}%
\LST{X}{0} = 1, \quad \frac{\dinf}{\dinf \LSTarg} \LST{X}{\LSTarg} \big\vert_{\LSTarg = 0} = -\E{X}, \quad \frac{\dinf^k}{\dinf \LSTarg^k} \LST{X}{\LSTarg} \big\vert_{\LSTarg = 0} = (-1)^k \E{X^k}.
\end{equation}%
Furthermore, if $Z \defi X + Y$ and $X$ and $Y$ are independent, then
\begin{equation}%
\LST{Z}{\LSTarg} = \LST{X}{\LSTarg} \LST{Y}{\LSTarg}.
\end{equation}%
%

\begin{remark}[Probabilistic splitting of Poisson processes]\label{remQTF:proof_Poisson_process_probabilistic_splitting}%
Now that we have introduced LSTs, we are able to prove the second property of Poisson processes: under probabilistic splitting, a Poisson process remains a Poisson process. We are required to prove \eqref{eqnMP:Poisson_process_splitting_still_need_to_prove}.
Taking the LST on the right-hand side of \eqref{eqnMP:Poisson_process_splitting_still_need_to_prove} and conditioning on $K$,
\begin{align}%
\E{ \euler^{- \LSTarg \sum_{i = 1}^K X_i} } &= \sum_{j \ge 1} \E{ \euler^{- \LSTarg \sum_{i = 1}^K X_i} \mid K = j} \, \Prob{K = j} \notag \\
&= \sum_{j \ge 1} \E{ \euler^{- \LSTarg \sum_{i = 1}^j X_i}} \, \Prob{K = j} = \sum_{j \ge 1} \E{ \euler^{- \LSTarg X_1}}^j \, \Prob{K = j} \notag \\
&= \sum_{j \ge 1} \bigl( \frac{\la}{\la + \LSTarg} \bigr)^j (1 - p_k)^{j - 1} p_k = \frac{\la p_k }{\la + \LSTarg} \sum_{j \ge 0} \bigl( \frac{\la(1 - p_k)}{\la + \LSTarg} \bigr)^j \notag \\
&= \frac{\la p_k}{\la + \LSTarg} \frac{\la + \LSTarg}{\la + \LSTarg - \la(1 - p_k)} = \frac{\la p_k}{\la p_k + \LSTarg},
\end{align}%
which is exactly the LST of an exponential random variable with parameter $\la p_k$.
\end{remark}%


\subsection{Applying the transforms to a simple queue}%
\label{subsecQTF:simple_queue}%

In \cref{ch:introduction} we have introduced the simple queue where jobs arrive according to a Poisson process with rate $\la$ and are served by a single server with exponential rate $\mu$. This queueing system is denoted in Kendall's notation as the $M/M/1$ system. Here, $M$ stands for Markovian or memoryless (so exponentially distributed). In later sections we will also encounter the letter $G$, which stands for general. The order in which the letters appear in Kendall's notation matters: the first entry describes the distribution of the inter-arrival times, the second entry the distribution of the service times and the third entry the number of servers in the system.

In \cref{ch:introduction} we have demonstrated how to obtain the equilibrium distribution of the $M/M/1$ system in two ways. We now demonstrate a third way using transforms. Recall that the balance equations are given by
\begin{subequations}%
\label{eqnQTF:MM1_balance_equations}%
\begin{align}%
\la p(0) &= \mu p(1), \label{eqnQTF:MM1_balance_equations_i=0} \\
(\la + \mu) p(i) &= \la p(i - 1) + \mu p(i + 1), \quad i \ge 1. \label{eqnQTF:MM1_balance_equations_i>0}
\end{align}%
\end{subequations}%

We aim to find an expression for $\PGF{\PGFarg} \defi \sum_{i \ge 0} p(i) \PGFarg^i$ by manipulating the balance equations \eqref{eqnQTF:MM1_balance_equations}.

Multiply both sides of \eqref{eqnQTF:MM1_balance_equations_i>0} by $\PGFarg^i$ and sum on both sides over all $i \ge 1$ to obtain
\begin{equation}%
(\la + \mu) \sum_{i \ge 1} p(i) \PGFarg^i = \la \sum_{i \ge 1} p(i - 1) \PGFarg^i + \mu \sum_{i \ge 1} p(i + 1) \PGFarg^i. \label{eqnQTF:MM1_balance_equations_i>0_PGF}
\end{equation}%
By appropriately adding and subtracting terms on both sides of \eqref{eqnQTF:MM1_balance_equations_i>0_PGF} and multiplying by $\PGFarg$, we obtain
\begin{equation}%
(\la + \mu) \PGFarg ( \PGF{\PGFarg} - p(0) ) = \la \PGFarg^2 \PGF{\PGFarg} + \mu ( \PGF{\PGFarg} - p(1) \PGFarg - p(0) ).
\end{equation}%
Use \eqref{eqnQTF:MM1_balance_equations_i=0} to express $p(1)$ in terms of $p(0)$ and obtain the relation
\begin{equation}%
\bigl(1 - (1 + \rho) \PGFarg + \rho \PGFarg^2 \bigr) \PGF{\PGFarg} = (1 - \PGFarg) p(0).
\end{equation}%
Noticing that $1 - (1 + \rho) \PGFarg + \rho \PGFarg^2 = (1 - \PGFarg)(1 - \rho \PGFarg)$ gives
\begin{equation}%
\PGF{\PGFarg} = \frac{p(0)}{1 - \rho \PGFarg}.
\end{equation}%
Since $\PGF{1} = \sum_{i \ge 0} p(i) = 1$, we find that $p(0) = 1 - \rho$ and
\begin{equation}%
\PGF{\PGFarg} = \frac{1 - \rho}{1 - \rho \PGFarg}. \label{eqnQTF:MM1_PGF_explicit_expression}
\end{equation}%
From the geometric series $\sum_{i \ge 0} x^i = 1/(1 - x)$ if $|x| < 1$, we deduce that
\begin{equation}%
\PGF{\PGFarg} = (1 - \rho) \sum_{i \ge 0} (\rho \PGFarg)^i = \sum_{i \ge 0} (1 - \rho) \rho^i \PGFarg^i,
\end{equation}%
and hence $p(i) = (1 - \rho) \rho^i$. The generating function approach is a powerful approach that works well even if an explicit expression for $p(i)$ is difficult to obtain. In fact, if the expression of the PGF was not as nice as in \eqref{eqnQTF:MM1_PGF_explicit_expression}, then we could have stopped at that point and used algorithms that can numerically invert the PGF to calculate values for any $p(i)$, see \cref{secQTF:numerical_inversion}.

For queueing systems with Poisson arrivals, so for $M/\cdot/\cdot$ systems, the unusual property holds that arriving jobs find on average the same situation as an outside observer looking at the system at an arbitrary point in time. More precisely, the fraction of jobs finding on arrival the system in some state $i$ is exactly the same as the fraction of time the system is in state $i$. This is called the \textit{Poisson arrivals see time averages} (PASTA) property \cite{Wolff1982_PASTA}. This property is only true for Poisson arrivals, and can be explained intuitively by the fact that Poisson arrivals occur completely random in time. If we label the probability that an arriving job sees $i$ jobs in the system (excluding itself) as $a(i)$, then we conclude that $a(i) = p(i)$.

The PASTA property can be used to determine the distribution of how much time a job spends in the system, which is also called the \textit{sojourn time} $S$. With probability $a(i)$ an arriving job finds $i$ jobs in the system. Since the service times are exponentially distributed, we know that the sojourn time of the arriving jobs is the sum of $i + 1$ exponential phases, each with rate $\mu$. By conditioning on the number of jobs seen on arrival, we therefore find that
\begin{align}%
\LST{S}{\LSTarg} &= \sum_{i \ge 0} a(i) \bigl( \frac{\mu}{\mu + \LSTarg} \bigr)^{i + 1} = \frac{\mu(1 - \rho)}{\mu + \LSTarg} \sum_{i \ge 0} \bigl( \frac{\mu \rho}{\mu + \LSTarg} \bigr)^i \notag \\
&= \frac{\mu(1 - \rho)}{\mu + \LSTarg} \frac{1}{1 - \frac{\mu \rho}{\mu + \LSTarg}} = \frac{\mu(1 - \rho)}{\mu(1 - \rho) + \LSTarg}.
\end{align}%
Since we have learned earlier that an LST uniquely determines the distribution of a random variable, we conclude that the sojourn time is exponentially distributed with rate $\mu(1 - \rho)$.


\section{Single-server queue with general service times}%
\label{secQTF:MG1_queue}%

Consider a single-server queueing system where jobs arrive according to a Poisson process with rate $\la$---so with exponentially distributed inter-arrival times---and service times that are i.i.d.~copies of some random variable $B$. Assume that $B$ has a cumulative distribution function $F_B(\cdot)$ and a probability density function $f_B(\cdot)$. We require for stability that $\rho \defi \la \E{B} < 1$. This queueing system is denoted in Kendall's notation as the $M/G/1$ system.


\subsection{Departure distribution}%
\label{subsecQTF:MG1_departure_distribution}%

The state of the queueing system can be described by $(i,t)$ with $i$ the number of jobs in the system and $t$ the service time already received by the job in service. This state description is then two-dimensional with one discrete dimension and one continuous dimension. The continuous dimension makes the analysis prohibitively difficult, so we will look for another state description. If we observe the number of jobs in the system at the instant just after a job departs, then we know that $t = 0$, which essentially removes the second continuous dimension in the state description. In equilibrium, we denote by $d(i)$ the probability that a departing job leaves behind $i$ jobs. In other words, $d(i)$ is the fraction of departing jobs that leaves behind $i$ jobs.

From one departure instant to the next the number of jobs in the system reduces by one, but increases by the number of jobs that have arrived during its service time. We specify the probability $r_i$ that a change of size $i$ occurs in the number of jobs from one departure instant to the next. By conditioning on the length of the service time and using that the number of arrivals within the interval $[0,t]$ is Poisson distributed with parameter $\la t$, we establish that
\begin{equation}%
r_i = \int_0^\infty \frac{(\la t)^{i + 1}}{(i + 1)!} \, \euler^{-\la t} f_B(t) \, \dinf t, \quad i \ge - 1.
\end{equation}%
A departing job can leave behind zero jobs. In that state, we first wait for a job to arrive and depart before observing the number of jobs in the system. This means that from state 0, we return to state 0 with probability $r_{-1}$ and move to state $i \ge 1$ with probability $r_{i - 1}$.

By specifying the states and the transition probabilities $r_i$, we have in fact constructed an embedded Markov chain. It is called embedded because we only observe the process at embedded points in time (at departure instants) and the term `chain' indicates that it has transition probabilities instead of transition rates and that the time spent in each state is equal. The transition probability diagram of this Markov chain is presented in \cref{figQTF:transition_probability_diagram_MG1}.

\begin{figure}
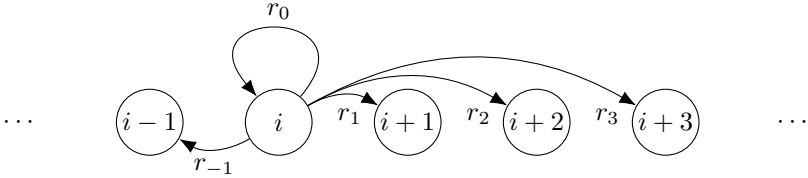
%
\centering%
\includestandalone{Chapters/QTF/TikZFiles/transition_probability_diagram_MG1}%
\caption{Transition probability diagram of the embedded Markov chain associated with the $M/G/1$ queue. Only transitions from state $i$ to other states are shown.}%
\label{figQTF:transition_probability_diagram_MG1}%
\end{figure}%

Each state has incoming transitions from states below itself, from itself, and one incoming transition from one state higher. This gives the following balance equations:
\begin{align}%
d(i) &=  r_i d(0) + r_{i - 1} d(1) + \cdots + r_0 d(i) + r_{-1} d(i + 1)  \notag \\
&= \sum_{j = 0}^{i + 1} r_{i - j} d(j). \label{eqnQTF:balance_equations_MG1}
\end{align}%
We manipulate the balance equations \eqref{eqnQTF:balance_equations_MG1} by making use of PGFs. Define
\begin{equation}%
\PGF{d}{\PGFarg} \defi \sum_{i \ge 0} d(i) \PGFarg^i, \quad \PGF{r}{\PGFarg} \defi \sum_{i \ge 0} r_{i - 1} \PGFarg^i, \quad |\PGFarg| \le 1. \label{eqnQTF:MG1_PGF_definition}
\end{equation}%
Multiply both sides of \eqref{eqnQTF:balance_equations_MG1} by $\PGFarg^i$ and sum over all $i$ to obtain
\begin{align}%
\PGF{d}{\PGFarg} &= \sum_{i \ge 0} \sum_{j = 0}^{i + 1} r_{i - j} d(j) \PGFarg^i \notag = \sum_{i \ge 0} r_i d(0) \PGFarg^i + \sum_{i \ge 0} \sum_{j = 1}^{i + 1} r_{i - j} d(j) \PGFarg^i \notag \\
&= d(0) \PGF{r}{\PGFarg} + \sum_{i \ge 0} \sum_{j = 1}^{i + 1} r_{i - j} d(j) \PGFarg^i.
\end{align}%
By changing the order of the double summation and writing $\PGFarg^i = \PGFarg^j \PGFarg^{i - j}$, we get
\begin{equation}%
\PGF{d}{\PGFarg} = d(0) \PGF{r}{\PGFarg} + \sum_{j \ge 1} d(j) \PGFarg^j \sum_{i \ge j - 1} r_{i - j}  \PGFarg^{i - j}.
\end{equation}%
Changing the summation index of the inner summation to $k = i - j + 1$ yields
\begin{align}%
\PGF{d}{\PGFarg} &= d(0) \PGF{r}{\PGFarg} + \sum_{j \ge 1} d(j) \PGFarg^j \sum_{k \ge 0} r_{k - 1}  \PGFarg^{k - 1} \notag \\
&= d(0) \PGF{r}{\PGFarg} + \sum_{j \ge 1} d(j) \PGFarg^j \frac{\PGF{r}{\PGFarg}}{\PGFarg} \notag \\
&= d(0) \PGF{r}{\PGFarg} + \bigl( \PGF{d}{\PGFarg} - d(0) \bigr) \frac{\PGF{r}{\PGFarg}}{\PGFarg},
\end{align}%
so that
\begin{equation}%
\PGF{d}{\PGFarg} = d(0) \PGF{r}{\PGFarg} \frac{1 - \frac{1}{\PGFarg}}{1 - \frac{\PGF{r}{\PGFarg}}{\PGFarg}} = d(0) \PGF{r}{\PGFarg} \frac{1 - \PGFarg}{\PGF{r}{\PGFarg} - \PGFarg}. \label{eqnQTF:MG1_PGF_almost_explicit_1}
\end{equation}%
It remains to determine $d(0)$ and $\PGF{r}{\PGFarg}$. We first find an expression for $\PGF{r}{\PGFarg}$:
\begin{align}%
\PGF{r}{\PGFarg} &= \sum_{i \ge 0} r_{i - 1} \PGFarg^i = \sum_{i \ge 0} \int_0^\infty \frac{(\la t)^{i}}{i!} \, \euler^{-\la t} f_B(t) \, \dinf t \, \PGFarg^i \notag \\
&= \int_0^\infty \sum_{i \ge 0} \frac{(\la \PGFarg t)^{i}}{i!} \, \euler^{-\la t} f_B(t) \, \dinf t \notag \\
&= \int_0^\infty \euler^{-\la(1 - \PGFarg)t} f_B(t) \, \dinf t = \LST{B}{\la(1 - \PGFarg)}, \label{eqnQTF:MG1_PGF_transition_probabilities_in_terms_of_service_time}
\end{align}%
where $\LST{B}{\cdot}$ is the LST of the service time $B$. Substituting \eqref{eqnQTF:MG1_PGF_transition_probabilities_in_terms_of_service_time} into \eqref{eqnQTF:MG1_PGF_almost_explicit_1} yields
\begin{equation}%
\PGF{d}{\PGFarg} = d(0) \LST{B}{\la(1 - \PGFarg)} \frac{1 - \PGFarg}{\LST{B}{\la(1 - \PGFarg)} - \PGFarg}, \label{eqnQTF:MG1_PGF_almost_explicit_2}
\end{equation}%
where $d(0)$ follows from $\lim_{\PGFarg \to 1} \PGF{d}{\PGFarg} = 1$. If we apply this limit to the right-hand side of \eqref{eqnQTF:MG1_PGF_almost_explicit_2}, then we get an indeterminate form. By taking $\PGFarg \to 1$ in \eqref{eqnQTF:MG1_PGF_almost_explicit_2} and applying l'H\^opital's rule to the fraction on the right-hand side, we obtain
\begin{equation}%
1 = d(0) \lim_{\PGFarg \to 1} \frac{1 - \PGFarg}{\LST{B}{\la(1 - \PGFarg)} - \PGFarg} = d(0) \lim_{\PGFarg \to 1} \frac{-1}{-\la \LSTder{B}{\la(1 - \PGFarg)} - 1} = \frac{d(0)}{1 - \rho}, \label{eqnQTF:MG1_PGF_almost_explicit_3}
\end{equation}%
so that $d(0) = 1 - \rho$. We finally obtain
\begin{equation}%
\PGF{d}{\PGFarg} = (1 - \rho) \frac{(1 - \PGFarg)\LST{B}{\la(1 - \PGFarg)}}{\LST{B}{\la(1 - \PGFarg)} - \PGFarg}, \label{eqnQTF:MG1_PGF_explicit}
\end{equation}%
which connects the PGF of the departure distribution to the LST of the service time distribution. This formula is referred to as the \textit{Pollaczek-Khinchin formula}\endnote{The Pollaczek-Khinchin formula comes in several variants. Accounts of this formula are published by Pollaczek \cite{Pollaczek1930_P-K_formula} in German and two years later by Khinchin in Russian (see \cite{Khinchin1967_P-K_formula} for a translation). Equation~\eqref{eqnQTF:MG1_PGF_explicit} can be found in \cite[Section~5.1.2]{Gross1974_Fundamentals_of_queueing_theory} or \cite[Section~5.6]{Kleinrock1975_Queueing_systems_volume_I}, whereas the variant for the LST of the sojourn time distribution presented in \eqref{eqnQTF:MG1_LST_sojourn_time_explicit} can be found in the classical textbooks \cite[Section~II.4.5]{Cohen1969_Single_server_queue}, \cite[Section~5.7]{Kleinrock1975_Queueing_systems_volume_I} or \cite[Section~VIII.5b]{Asmussen2008_Applied_probability_and_queues}.}. By differentiating \eqref{eqnQTF:MG1_PGF_explicit} we can determine the moments of the number of jobs in the system at a departure instant. To find its distribution, however, we have to invert \eqref{eqnQTF:MG1_PGF_explicit}, which under general conditions is not straightforward. If the LST $\LST{B}{\LSTarg}$ is a rational function---which means that it is a quotient of polynomials in $\LSTarg$---then the right-hand side of \eqref{eqnQTF:MG1_PGF_explicit} can be decomposed into partial fractions and the inverse transform can be easily determined. We now show this by example.

\begin{example}[Erlang services]\label{exQTF:MG1_Erlang_services_departure_distribution}%
Assume that the service times $B$ follow an Erlang distribution consisting of two exponential phases with rate $\mu$ in each phase. Label the two exponential phases as $B_1$ and $B_2$. The LST of $B$ is given by
\begin{equation}%
\LST{B}{\LSTarg} = \E{\euler^{-\LSTarg B}} = \E{\euler^{-\LSTarg (B_1 + B_2)}} = \E{\euler^{-\LSTarg B_1}} \E{\euler^{-\LSTarg B_2}} = \bigl( \frac{\mu}{\mu + \LSTarg} \bigr)^2,
\end{equation}%
and when evaluated in $\la(1 - \PGFarg)$ we can write
\begin{equation}%
\LST{B}{\la(1 - \PGFarg)} = \bigl( \frac{1}{1 + \frac{\rho}{2}(1 - \PGFarg)} \bigr)^2,
\end{equation}%
where in this case $\rho = 2\la/\mu$. Substituting this expression into \eqref{eqnQTF:MG1_PGF_explicit} gives
\begin{equation}%
\PGF{d}{\PGFarg} = (1 - \rho) \bigl( \frac{1}{1 + \frac{\rho}{2}(1 - \PGFarg)} \bigr)^2 \frac{1 - \PGFarg}{(\frac{1}{1 + \frac{\rho}{2}(1 - \PGFarg)})^2 - \PGFarg}.
\end{equation}%
Multiplying the numerator and denominator of the second fraction by the term $(1 + (\rho/2)(1 - \PGFarg))^2$ and simplifying gives
\begin{equation}%
\PGF{d}{\PGFarg} = \frac{4(1 - \rho)}{4 - (4\rho + \rho^2) \PGFarg + \rho^2 \PGFarg^2}.
\end{equation}%
If we now pick a value for $\rho$, then we can easily decompose $\PGF{d}{\PGFarg}$ into partial fractions. For example, let us choose $\rho = 1/3$ to obtain
\begin{align}%
\PGF{d}{\PGFarg} &= \frac{\frac{8}{3}}{4 - \frac{13}{9} \PGFarg + \frac{1}{9} \PGFarg^2} = \frac{24}{36 - 13 \PGFarg + \PGFarg^2} = \frac{24}{(4 - \PGFarg)(9 - \PGFarg)} \notag \\
&= \frac{6}{5} \frac{4}{4 - \PGFarg} - \frac{8}{15} \frac{9}{9 - \PGFarg} = \frac{6}{5} \frac{1}{1 - \frac{\PGFarg}{4}} - \frac{8}{15} \frac{1}{1 - \frac{\PGFarg}{9}} \notag \\
&= \frac{6}{5} \sum_{i \ge 0} \frac{1}{4^i} \PGFarg^i - \frac{8}{15} \sum_{i \ge 0} \frac{1}{9^i} \PGFarg^i = \sum_{i \ge 0} \Bigl( \frac{6}{5} \frac{1}{4^i} - \frac{8}{15} \frac{1}{9^i} \Bigr) \PGFarg^i.
\end{align}%
From this expression for $\PGF{d}{\PGFarg}$ we conclude that
\begin{equation}%
d(i) = \frac{6}{5} \frac{1}{4^i} - \frac{8}{15} \frac{1}{9^i}, \quad i \ge 0. \label{eqnQTF:MG1_example_d(i)}
\end{equation}%
Notice that \eqref{eqnQTF:MG1_example_d(i)} agrees with $d(0) = 1 - \rho = 2/3$, since $6/5 - 8/15 = 2/3$.
\end{example}%

We have determined the PGF of the departure distribution. However, as usual we are interested in the equilibrium probability $p(i)$ of having $i$ jobs in the system. We know from the PASTA property for $M/\cdot/\cdot$ systems that $a(i) = p(i)$ for all $i$. We now argue that $d(i)$ is also equal to $a(i)$. Taking the number of jobs in the system as the state of the queueing system, the changes in state are of a nearest-neighbor type: if the system is in state $i$, then an arrival of a job leads to a transition to state $i + 1$ and a departure of a job leads to a transition to state $i - 1$. Now, if the system is in equilibrium, then the number of transitions per unit time from state $i$ to $i + 1$ is equal to the number of transitions per unit time from state $i + 1$ to $i$. The former transitions correspond to jobs finding upon arrival $i$ jobs already in the system, which occurs at rate $\la a(i)$. The latter transitions correspond to departing jobs leaving behind $i$ jobs in the system, which occurs at rate $\la d(i)$ (under the stability condition $\rho < 1$, jobs depart at rate $\la$). Since these two transition rates are equal, we establish that $a(i) = d(i)$ and therefore $d(i) = p(i)$. Notice that in the argument establishing $a(i) = d(i)$ we did not use the distribution of the inter-arrival times or service times nor the number of servers; we only used that jobs depart one by one. So, the equality $a(i) = d(i)$ even holds for systems such as $G/G/c$ queues.

\begin{remark}[Partial fraction decomposition]\label{exQTF:MG1_PK_series_expression}%
In some case, we are able to write the Pollaczek-Khinchin formula \eqref{eqnQTF:MG1_PGF_explicit} as the ratio
\begin{equation}%
\PGF{d}{\PGFarg} = \frac{N(\PGFarg)}{D(\PGFarg)}
\end{equation}%
with both $N(\PGFarg)$ and $D(\PGFarg)$ polynomials without any common roots. Let $\PGFarg_1,\PGFarg_2,\ldots,\PGFarg_k$ be the roots of $D(\PGFarg) = 0$. Since the radius of convergence of $\PGF{d}{\PGFarg}$ is at least 1, we know that all $|\PGFarg_j| > 1$. We can write $D(\PGFarg)$ as
\begin{equation}%
D(\PGFarg) = (\PGFarg - \PGFarg_1)(\PGFarg - \PGFarg_2) \cdots (\PGFarg - \PGFarg_k),
\end{equation}%
which means that we can use a partial fraction decomposition to write
\begin{equation}%
\PGF{d}{\PGFarg} = \sum_{j = 1}^k \frac{n_j}{\PGFarg_j - \PGFarg},
\end{equation}%
where we still need to determine the coefficients $n_j$. If we restrict $\PGFarg$ to $|\PGFarg| < |\PGFarg_j|$ for all $\PGFarg_j$, then we can write
\begin{equation}%
\PGF{d}{\PGFarg} = \sum_{j = 1}^k \frac{n_j}{\PGFarg_j} \frac{1}{1 - \frac{\PGFarg}{\PGFarg_j}} = \sum_{j = 1}^k \frac{n_j}{\PGFarg_j} \sum_{i \ge 0} \bigl( \frac{\PGFarg}{\PGFarg_j} \bigr)^i = \sum_{i \ge 0} \Bigl( \sum_{j = 1}^k \frac{n_j}{\PGFarg_j^{i + 1}} \Bigr) \PGFarg^i.
\end{equation}%
Comparing this expression to \eqref{eqnQTF:MG1_PGF_definition} we conclude that
\begin{equation}%
d(i) = \sum_{j = 1}^k \frac{n_j}{\PGFarg_j^{i + 1}}. \label{eqnQTF:MG1_partial_fraction_decomposition_P-F_solution}
\end{equation}%
The coefficients $n_j$ follow from
\begin{align}%
n_j &= \lim_{\PGFarg \to \PGFarg_j} (\PGFarg_j - \PGFarg) \PGF{d}{\PGFarg} = \lim_{\PGFarg \to \PGFarg_j} (\PGFarg_j - \PGFarg) \frac{N(\PGFarg)}{D(\PGFarg)} \notag \\
&= \lim_{\PGFarg \to \PGFarg_j} - \frac{N(\PGFarg)}{(\PGFarg - \PGFarg_1) \cdots (\PGFarg - \PGFarg_{j - 1})(\PGFarg - \PGFarg_{j + 1}) \cdots (\PGFarg - \PGFarg_k)} = - \frac{N(\PGFarg_j)}{D'(\PGFarg_j)},
\end{align}%
where $D'(\cdot)$ is the derivative with respect to $\PGFarg$ of $D(\cdot)$.
\end{remark}%


\subsection{Sojourn time distribution}%
\label{subsecQTF:MG1_sojourn_time_distribution}%

We now ask how much time a job spends in the system and we show that there is a nice relationship between the transforms of the time spent in the system and the departure distribution.

Consider a job arriving to the system in equilibrium. Denote the sojourn time of this job by the random variable $S$ with cumulative distribution function $F_S(\cdot)$ and probability density function $f_S(\cdot)$. If we assume that jobs are served in first-come first-served order, then we know that a departing job leaves behind exactly those jobs that arrived during its sojourn time. By conditioning on the length of the sojourn time, we can construct the departure distribution:
\begin{equation}%
d(i) = \int_0^\infty \frac{(\la t)^i}{i!} \, \euler^{-\la t} f_S(t) \, \dinf t. \label{eqnQTF:MG1_d(i)_in_terms_of_sojourn_time}
\end{equation}%
Multiply both sides of \eqref{eqnQTF:MG1_d(i)_in_terms_of_sojourn_time} by $\PGFarg^i$ and sum over all $i$ to retrieve the PGF of the departure distribution on the left-hand side and the LST $\LST{S}{\cdot}$ of the sojourn time on the right-hand side (similar to the derivation in \eqref{eqnQTF:MG1_PGF_transition_probabilities_in_terms_of_service_time}):
\begin{equation}%
\PGF{d}{\PGFarg} = \LST{S}{\la(1 - \PGFarg)}. \label{eqnQTF:MG1_distributional_Little's_law_for_Poisson_arrivals}
\end{equation}%
Substituting this relation into \eqref{eqnQTF:MG1_PGF_explicit} and introducing $\LSTarg = \la(1 - \PGFarg)$, we finally arrive at
\begin{equation}%
\LST{S}{\LSTarg} = (1 - \rho) \LST{B}{\LSTarg} \frac{\LSTarg}{\la \LST{B}{\LSTarg} + \LSTarg - \la}, \label{eqnQTF:MG1_LST_sojourn_time_explicit}
\end{equation}%
which is, like \eqref{eqnQTF:MG1_PGF_explicit}, a form of the Pollaczek-Khinchin formula.

\begin{example}[Erlang services]\label{exQTF:MG1_Erlang_services_sojourn_time}%
Consider again the model described in \cref{exQTF:MG1_Erlang_services_departure_distribution}, where the service times $B$ follow an Erlang distribution consisting of two exponential phases with rate $\mu$ in each phase. We determine $\LST{S}{\LSTarg}$ and invert it to obtain $F_S(\cdot)$. From \eqref{eqnQTF:MG1_LST_sojourn_time_explicit} we find that
\begin{equation}%
\LST{S}{\LSTarg} = (1 - \rho) \bigl( \frac{\mu}{\mu + \LSTarg} \bigr)^2 \frac{\LSTarg}{\la (\frac{\mu}{\mu + \LSTarg})^2 + \LSTarg - \la}
\end{equation}%
Multiplying the numerator and denominator of the second fraction by the term $(\mu + \LSTarg)^2$ and simplifying gives
\begin{equation}%
\LST{S}{\LSTarg} = \frac{(1 - \rho)}{1 - \rho + \frac{1}{\mu}(2 - \frac{\rho}{2}) \LSTarg + \frac{1}{\mu^2} \LSTarg^2}.
\end{equation}%
Choose $\mu = 6$ and $\rho = 1/3$ so that
\begin{align}%
\LST{S}{\LSTarg} &= \frac{\frac{2}{3}}{\frac{2}{3} + \frac{1}{6}(2 - \frac{1}{6}) \LSTarg + \frac{1}{36} \LSTarg^2} = \frac{\frac{2}{3}}{\frac{2}{3} + \frac{11}{36} \LSTarg + \frac{1}{36} \LSTarg^2} \notag \\
&= \frac{24}{24 + 11 \LSTarg + \LSTarg^2} = \frac{24}{(8 + \LSTarg)(3 + \LSTarg)} = \frac{8}{8 + \LSTarg} \frac{3}{3 + \LSTarg}.
\end{align}%
From the LST of $S$ we can deduce that $S$ is the sum of two exponential random variables with rates 3 and 8, denoted by $X_1$ and $X_2$, respectively. Obtaining the cumulative distribution function $F_S(\cdot)$ requires some more work:
\begin{equation}%
F_S(t) = \Prob{S \le t} = \Prob{X_1 + X_2 \le t} = \int_0^t \Prob{X_1 \le t - x} f_{X_2}(x) \, \dinf x.
\end{equation}%
Solving the integral finally gives
\begin{equation}%
F_S(t) = \frac{8}{5}(1 - \euler^{-3t}) - \frac{3}{5}(1 - \euler^{-8t}), \quad t \ge 0.
\end{equation}%
where we recognize the cumulative distribution functions of $X_1$ and $X_2$ multiplied by some weights.
\end{example}%


\subsection{Distributional Little's law}%
\label{subsecQTF:Little's_law}%

The relation \eqref{eqnQTF:MG1_distributional_Little's_law_for_Poisson_arrivals} between the PGF of the number of jobs left behind upon departure and the LST of the sojourn time is a special case of \textit{distributional Little's law}\endnote{The distributional variant of Little's law is due to \cite{Haji1971_Distributional_Little's_law}. For the distributional law in the context of Poisson arrivals, we refer the reader to \cite{Keilson1988_Distributional_Little's_law}. A discussion of this law under milder conditions can be found in \cite{Bertsimas1995_Distributional_Little's_law} and the references therein. Many of the results presented in \cref{subsecQTF:Little's_law} on distributional Little's law are adapted from \cite{Bertsimas1995_Distributional_Little's_law}.}. This fundamental law holds under a number of conditions, namely
\begin{enumerate}[label = (\roman*)]%
\item All arriving jobs enter the system one at a time, remain in the system until served and leave one at a time;
\item Jobs leave the system in the order of arrival;
\item Jobs that arrive later in time do not affect the time spent in the system of jobs that arrived earlier in time.
\end{enumerate}%
Here, a system can be used to mean only the queue, only the server, or the complete queueing system. To formulate distributional Little's law, define $N(t)$ as the number of arrivals up to time $t$, where the first inter-arrival time is distributed as a residual inter-arrival time and all other inter-arrival times are distributed according to the stationary inter-arrival time. The residual inter-arrival time is the time between any given time $t$ and the next arrival epoch of the arrival process. We describe distributional Little's law in terms of the equilibrium number of jobs in the system $L$ and the equilibrium sojourn time $S$. Let $F_S(\cdot)$ denote the cumulative distribution function of $S$.

\begin{theorem}[Distributional Little's law \cite{Bertsimas1995_Distributional_Little's_law,Haji1971_Distributional_Little's_law}]\label{thmQTF:distributional_Little's_law}%
Under the conditions mentioned above and under the further assumption that $L$ and $S$ exist,
\begin{equation}%
L \dequal N(S),
\end{equation}%
or, in terms of the \textup{PGF}s of $L$ and $N$,,
\begin{equation}%
\PGF{L}{\PGFarg} = \int_0^\infty \PGF{N}{\PGFarg,t} \, \dinf F_S(t), \label{eqnQTF:distributional_Little's_law_transforms}
\end{equation}%
where
\begin{equation}%
\PGF{L}{\PGFarg} \defi \sum_{n \ge 0} \Prob{L = n} \PGFarg^n, \quad \PGF{N}{\PGFarg,t} \defi \sum_{n \ge 0} \Prob{N(t) = n} \PGFarg^n.
\end{equation}%
\end{theorem}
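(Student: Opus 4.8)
The plan is to reduce the transform identity \eqref{eqnQTF:distributional_Little's_law_transforms} to the distributional statement $L \dequal N(S)$, and then to establish that statement by a sample-path argument that compares long-run time averages with long-run customer averages. The reduction is the easy part: tag a job arriving in equilibrium and let $S$ be its sojourn time; by condition (iii) the value of $S$ is determined by that job together with the state it finds on arrival, hence is independent of the part of the arrival stream occurring \emph{after} its arrival epoch --- which is exactly the stream generating $N(\cdot)$ in $L \dequal N(S)$. Therefore $\E{\PGFarg^{N(S)} \mid S = t} = \PGF{N}{\PGFarg,t}$, and averaging over $S$ gives
\[ \PGF{L}{\PGFarg} = \E{\PGFarg^{L}} = \E{\PGFarg^{N(S)}} = \int_0^\infty \PGF{N}{\PGFarg,t} \, \dinf F_S(t). \]
So everything comes down to proving $L \dequal N(S)$.

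For that I would fix a long horizon $[0,T]$ and compare frequencies as $T \to \infty$. Let $A(t), D(t)$ count the arrivals and departures in $[0,t]$, with epochs $\alpha_1 < \alpha_2 < \cdots$ and $\delta_1 < \delta_2 < \cdots$, the departure epochs being ordered thanks to the FIFO assumption (ii). The structural observation is that, since jobs enter and leave one at a time (i), leave in arrival order (ii), and a later arrival never delays an earlier job's departure (iii), at every instant $t$ the jobs present form the consecutive block $\{D(t)+1,\dots,A(t)\}$; hence $L(t) = A(t) - D(t)$, and if $\beta(t)$ denotes the arrival epoch of the oldest job present at $t$, then every job present at $t$ arrived during $[\beta(t),t]$, so $L(t)$ equals the number of arrival epochs in $[\beta(t),t]$. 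In particular the set where $L(t) = n$ sits between a shifted copy of the arrival step-function and the departure step-function.

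The crux is to convert this \emph{time}-stationary count $\Prob{L = n}$ into the \emph{customer}-stationary quantity $\Prob{N(S) = n}$, and this is precisely where the leading inter-arrival time building $N(\cdot)$ must be a \emph{residual} one: a time average weights each job's sojourn by its length and ``enters'' it at an instant that is itself not an arrival epoch, so the wait until the next arrival after that instant is a stationary residual inter-arrival time, while subsequent gaps are ordinary. Concretely I would write $\tfrac1T\int_0^T \ind{L(t) = n} \, \dinf t$ using the busy-/idle-period decomposition, use $L(t) = A(t) - D(t)$ to localise $\{\,t : L(t) = n\,\}$ relative to the arrival epochs following the oldest present job, divide by $T$, bring in the arrival rate $\la$, and then apply elementary renewal identities (relating the ordinary and equilibrium renewal counts of the arrival process) to identify the limit with $\Prob{N(S) = n} = \Prob{\hat A_1 + A_2 + \dots + A_n \le S < \hat A_1 + A_2 + \dots + A_{n+1}}$, where $\hat A_1$ is a residual and $A_2, A_3, \dots$ are ordinary inter-arrival times, independent of $S$ by (iii).

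The main obstacle is exactly this last passage --- the time-average-to-customer-average conversion together with the identification of the first gap as a residual rather than a full inter-arrival time. This is the genuinely delicate point of the theorem: the tempting shortcut ``a departing job leaves behind precisely the jobs that arrived during its sojourn'' produces the distribution of the number of jobs left behind at a departure, a customer-stationary quantity that in general differs from the time-stationary $L$ (they agree only under Poisson arrivals, by PASTA), and it is the residual-inter-arrival ingredient that bridges the two. The remaining points --- existence of the limiting frequencies, and the treatment of idle periods and of jobs straddling them --- are routine bookkeeping.
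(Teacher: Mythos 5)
Your reduction of the transform identity to the distributional claim $L \dequal N(S)$ matches the theorem's own phrasing, so the substance is proving $L \dequal N(S)$, and there you take a genuinely different route from the paper. The paper works directly at a random stationary observation epoch $t$: it numbers the jobs present backward from the most recent arrival, observes via conditions (i)--(ii) that $L \ge n$ iff the $n$-th most recent arrival is still present, i.e.\ iff $S_n > t - t_n = A_1^* + A_2 + \cdots + A_n$ with $A_1^*$ residual and the remaining gaps ordinary, invokes $S_n \dequal S$ and condition (iii) for independence, and conditions on $S$ to get $\Prob{L \ge n} = \Prob{N(S) \ge n}$; no ergodic limits or renewal-reward calculations appear. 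You instead propose a long-run time-average $\lim_{T\to\infty}\frac{1}{T}\int_0^T \ind{L(t)=n}\,\dinf t$, localising $\{t : L(t)=n\}$ relative to arrival epochs via $L(t)=A(t)-D(t)$ and invoking renewal identities to surface the residual first gap. Both routes hinge on the same structural facts --- FIFO makes the jobs present a contiguous block and the leading gap is residual --- and your diagnosis of that residual gap and of the departure-epoch pitfall as the crux is exactly right. What the paper's approach buys is brevity: conditioning at a stationary random epoch replaces the ergodic theorem and the renewal calculation that your route would have to carry out explicitly; what your route buys is that it makes the time-average versus customer-average distinction visible, which is precisely what underlies the residual gap. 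One small inconsistency to clean up: in your opening paragraph you describe $N(\cdot)$ as the part of the arrival stream occurring after the tagged job's arrival epoch, which would have an ordinary rather than residual first gap; the theorem's $N$ has a residual first gap, as you yourself recognize two paragraphs later, so the opening description should be stated in terms of a random observation epoch rather than a tagged arrival.
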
%

\begin{proof}
This proof can be found in \cite{Bertsimas1995_Distributional_Little's_law,Haji1971_Distributional_Little's_law}. Define $t$ to be a random observation epoch and let $t_n$ be the arrival time of the $n$-th job still in the system at time $t$ and $S_n$ its sojourn time in the system. The order in which we number the jobs is important. Job 1 is the job that arrived most recently in time with respect to the random observation time $t$ and is therefore at the end of the queue. The job with the highest index is the one currently in service. So, the job with index $n$ departs the system at time $t_n + S_n$. The $t_n$ and $S_n$ are ordered in reverse time direction. Next, define the inter-arrival times as $A_1^* \defi t - t_1$ and $A_n \defi t_{n - 1} - t_n, ~ n \ge 1$. We note that $A_1^*$ is a residual inter-arrival time. \cref{figQTF:proof_distributional_Little's_law} displays the notation and indexing used.

\begin{figure}
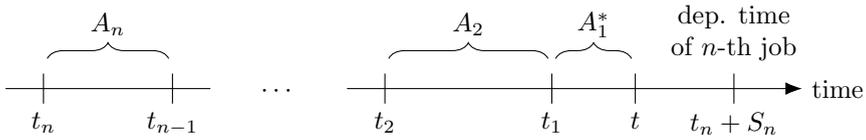
%
\centering%
\includestandalone{Chapters/QTF/TikZFiles/proof_distributional_Littles_law}%
\caption{Notation and indexing used for the proof of \protect\cref{thmQTF:distributional_Little's_law}.}%
\label{figQTF:proof_distributional_Little's_law}%
\end{figure}%

If, at the random observation time $t$, the observer sees at least $n$ jobs in the system, then the $n$-th most recently arrived job is still in the system at the observation time $t$. In particular, this means that the departure time $t_n + S_n$ of the $n$-th job is larger than $t$. So, $L \ge n$ if and only if $S_n > t - t_n$. This indicates that
\begin{equation}%
\Prob{L \ge n} = \Prob{S_n > t - t_n} = \Prob{S_n > A_1^* + \sum_{m = 2}^n A_m},
\end{equation}%
where we used a telescoping sum to derive the last equality. In equilibrium $S_n \dequal S$, so that conditioning on the length of the sojourn time leads to
\begin{equation}%
\Prob{L \ge n} = \int_0^\infty \Prob{t > A_1^* + \sum_{m = 2}^n A_m} \, \dinf F_S(t).
\end{equation}%
Finally, the probability inside the integral is exactly the probability that at least $n$ arrivals occur in $[0,t]$ where the first inter-arrival time is distributed according to the residual inter-arrival time and the other inter-arrival times are distributed according to the stationary inter-arrival time. Therefore,
\begin{equation}%
\Prob{L \ge n} = \int_0^\infty \Prob{N(t) \ge n} \, \dinf F_S(t) = \Prob{N(S) \ge n}, \label{eqnQTF:distributional_Little's_law_proof_1}
\end{equation}%
proving the first statement of the theorem. The probability that there are exactly $n$ jobs in equilibrium easily follows from \eqref{eqnQTF:distributional_Little's_law_proof_1} as
\begin{align}%
\Prob{L = n} &= \Prob{L \ge n} - \Prob{L \ge n + 1} \notag \\
&= \int_0^\infty \Prob{N(t) \ge n} \, \dinf F_S(t) - \int_0^\infty \Prob{N(t) \ge n + 1} \, \dinf F_S(t) \notag \\
&= \int_0^\infty \Prob{N(t) = n} \, \dinf F_S(t). \label{eqnQTF:distributional_Little's_law_proof_2}
\end{align}%
Multiplying both sides of \eqref{eqnQTF:distributional_Little's_law_proof_2} by $\PGFarg^n$, summing over all $n \ge 0$ and applying Tonelli's theorem to interchange the summation and integral on the right-hand side produces the second statement of the theorem.
\end{proof}%

For a Poisson arrival process, both the residual and stationary inter-arrival times are exponentially distributed with parameter $\la$. The PGF $\PGF{N}{\PGFarg,t}$ then reads
\begin{equation}%
\PGF{N}{\PGFarg,t} = \sum_{n \ge 0} \PGFarg^n \frac{(\la t)^n}{n!} \euler^{-\la t} = \euler^{-\la (1 - \PGFarg) t}.
\end{equation}%
Substituting this simplification into \eqref{eqnQTF:distributional_Little's_law_transforms} yields
\begin{equation}%
\PGF{L}{\PGFarg} = \int_0^\infty \euler^{-\la (1 - \PGFarg) t} \, \dinf F_S(t) = \LST{S}{\la(1 - \PGFarg)},
\end{equation}%
which we have seen before in \eqref{eqnQTF:MG1_distributional_Little's_law_for_Poisson_arrivals}.

Note that \cref{thmQTF:distributional_Little's_law} does not hold in general for the number of jobs in an $M/G/c$ system with $c > 1$ servers and a FCFS service discipline, since jobs may overtake other jobs and therefore violate the second condition. On the other hand, it does hold for the number of jobs in an $M/D/c$ system with a FCFS service discipline, since being taking into service guarantees a certain departure time.


\section{Single-server queue with general inter-arrival times}%
\label{secQTF:GM1_queue}%

The dual of the $M/G/1$ system discussed in \cref{secQTF:MG1_queue} is the $G/M/1$ system, which is a single-server queueing system with generally distributed inter-arrival times and exponential service times with rate $\mu$. We assume that the inter-arrival times have a cumulative distribution function $F_A(\cdot)$, a probability density function $f_A(\cdot)$ and have mean $1/\la$. For stability we require that $\rho \defi \la/\mu < 1$.

The state of the $G/M/1$ system can be described by a pair $(i,t)$ with $i$ the number of jobs in the system and $t$ the elapsed time since the last arrival. As we have argued for the $M/G/1$ systen, this state description leads to complications and the analysis simplifies considerably if we focus on special points in time. In this case, we look at the system at arrival instants so that in the state description $t$ is always 0 and we only keep track of the number of jobs in the system at an arrival instant. We denote by $a(i)$ the equilibrium probability that an arriving job encounters $i$ jobs in the system (excluding itself).

Unfortunately, since the arrivals do not follow a Poisson process, we cannot use PASTA to relate $a(i)$ to the equilibrium distribution $p(i)$ of the number of jobs in the system at arbitrary times. Nonetheless, we are still able to derive the distribution of the sojourn time using $a(i)$.


\subsection{Arrival distribution}%
\label{subsecQTF:GM1_arrival_distribution}%

We now derive the equilibrium probability $a(i)$ of encountering $i$ jobs in the system just before the arrival of a job. From one arrival instant to the next the number of jobs in the system increases by one, but decreases by the number of jobs that have arrived during its inter-arrival time. The number of jobs cannot decrease by more than the one plus the number of jobs present at the previous arrival instant. So, from state $i$ we can transition to any of the states $0,1,\ldots,i + 1$. Denote by $r_i$ the probability that a change of size $i$ occurs, under the assumption that this change does not bring us to state 0 (state 0 requires special treatment). We reuse the notation $r_i$ from the $M/G/1$ system because this probability has the same interpretation. By conditioning on the length of the inter-arrival time, we find that
\begin{equation}%
r_i = \int_0^\infty \frac{(\mu t)^{1 - i}}{(1 - i)!} \euler^{-\mu t} f_A(t) \, \dinf t, \quad i \le 1. \label{eqnQTF:GM1_transition_probabilities}
\end{equation}%
The transition probability from state $i$ to 0 is denoted by $q_{-i}$. Since the transition probabilities for each state sum to 1, it is easy to see that we must have
\begin{equation}%
q_{-i} = 1 - \sum_{j = -1}^{i - 1} r_{-i}.
\end{equation}%

By specifying the states and the transition probabilities, we have constructed the Markov chain associated with the $G/M/1$ system embedded at arrival instants. The transition probability diagram of this Markov chain is presented in \cref{figQTF:transition_probability_diagram_GM1}

\begin{figure}
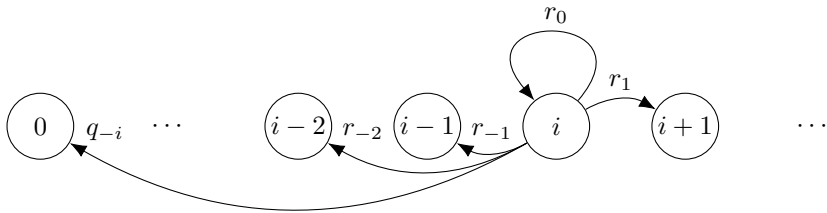
%
\centering%
\includestandalone{Chapters/QTF/TikZFiles/transition_probability_diagram_GM1}%
\caption{Transition probability diagram of the embedded Markov chain associated with the $G/M/1$ system. Only transitions from state $i$ to other states are shown.}%
\label{figQTF:transition_probability_diagram_GM1}%
\end{figure}%

The balance equations of this Markov chain are
\begin{equation}%
a(0) = a(0) q_0 + a(1) q_{-1} + a(2) q_{-2} + \cdots = \sum_{j \ge 0} a(j) q_{-j}, \label{eqnQTF:MG1_balance_equations_i=0}
\end{equation}%
and for $i \ge 1$,
\begin{equation}%
a(i) = a(i - 1) r_1 + a(i) r_0 + a(i + 1) r_{-1} + \cdots = \sum_{j \ge 0} a(i - 1 + j) r_{1 - j}. \label{eqnQTF:MG1_balance_equations_i>0}
\end{equation}%
It appears that the generating function approach does not work here. Instead, we guess that the solution to these balance equations is of the form
\begin{equation}%
a(i) = \al^i, \quad i \ge 0. \label{eqnQTF:GM1_educated_guess}
\end{equation}%
Substitution of \eqref{eqnQTF:GM1_educated_guess} into \eqref{eqnQTF:MG1_balance_equations_i>0} and dividing by $\al^{i - 1}$ yields
\begin{equation}%
\al = \sum_{j \ge 0} \al^j r_{1 - j}.
\end{equation}%
By also substituting \eqref{eqnQTF:GM1_transition_probabilities} for $r_{1 - j}$ we obtain
\begin{align}%
\al &= \sum_{j \ge 0} \al^j \int_0^\infty \frac{(\mu t)^j}{j!} \euler^{-\mu t} f_A(t) \, \dinf t \notag \\
&= \int_0^\infty \sum_{j \ge 0} \frac{(\al \mu t)^j}{j!} \euler^{-\mu t} f_A(t) \, \dinf t = \int_0^\infty \euler^{-\mu(1 - \al) t} f_A(t) \, \dinf t.
\end{align}%
The last integral can be recognized as the LST of the inter-arrival time and we obtain the equation
\begin{equation}%
\al = \LST{A}{\mu(1 - \al)}. \label{eqnQTF:GM1_alpha_satisfies}
\end{equation}%
Since $\LST{A}{0} = 1$, it is easy to see that $\al = 1$ is a root of \eqref{eqnQTF:GM1_alpha_satisfies}. However, this root is of no interest, since it does not produce a solution that can be normalized to obtain the equilibrium distribution. We show that you can obtain another root $\al \in (0,1)$, that does lead to a solution that can be normalized. Define
\begin{equation}%
f(\al) \defi \LST{A}{\mu(1 - \al)}.
\end{equation}%
We derive some properties of $f(\al)$ to show that it must intersect with the function $g(\al) = \al$ for $\al \in (0,1)$. First, it is easy to see that
\begin{equation}%
f(0) = \LST{A}{\mu} = \int_0^\infty \euler^{-\mu t} f_A(t) \, \dinf t > 0
\end{equation}%
and $f(1) = \LST{A}{0} = 1$, as we have already established. The derivative $f'(\al)$ of $f(\al)$ is given by
\begin{align}%
f'(\al) &= \frac{\dinf}{\dinf \al} \int_0^\infty \euler^{-\mu(1 - \al) t} f_A(t) \, \dinf t = \int_0^\infty \Bigl( \frac{\partial}{\partial \al} \euler^{-\mu(1 - \al) t} \Bigr) f_A(t) \, \dinf t \notag \\
&= \mu \int_0^\infty \euler^{-\mu(1 - \al) t} t f_A(t) \, \dinf t, \label{eqnQTF:GM1_derivative_root_equation}
\end{align}%
where the interchange of the derivative and the integral is allowed in this case by Leibniz's integral rule (see \cite{Flanders1973_Differentiation_under_the_integral_sign}) if we assume that $\rho < 1$. We will not discuss this interchange here. Substituting $\al = 1$ in \eqref{eqnQTF:GM1_derivative_root_equation} gives $f'(1) = 1/\rho > 1$ if $\rho < 1$. Pick $\al_1$ and $\al_2$ such that $0 \le \al_1 < \al_2 \le 1$, so that
\begin{equation}%
\euler^{-\mu(1 - \al_1) t} < \euler^{-\mu(1 - \al_2) t}, \quad t > 0.
\end{equation}%
By using this inequality, we see that $f'(\al)$ is increasing in $\al$ for $\al \in [0,1]$, we then say that $f(\al)$ is strictly convex for $\al \in [0,1]$. The properties of $f(\al)$ and $g(\al)$ are shown in \cref{figQTF:GM1_properties_of_equation_for_root}. Combining these properties we conclude that \eqref{eqnQTF:GM1_alpha_satisfies} has a single root $\al \in (0,1)$, which satisfies \eqref{eqnQTF:MG1_balance_equations_i>0} for $a(i) = \al^i$. Notice that the remaining balance equation \eqref{eqnQTF:MG1_balance_equations_i=0} is also satisfied, since the balance equations are dependent and one equation can therefore be omitted. We finally normalize the proposed solution to arrive at
\begin{equation}%
a(i) = (1 - \al) \al^i, \quad i \ge 0. \label{eqnQTF:GM1_P-F_solution}
\end{equation}%
Hence, the equilibrium number of jobs in the system just before arrival instants follows a geometric distribution with parameter $\al$, where $\al$ is the unique root of \eqref{eqnQTF:GM1_alpha_satisfies} in the interval $(0,1)$.

\begin{figure}
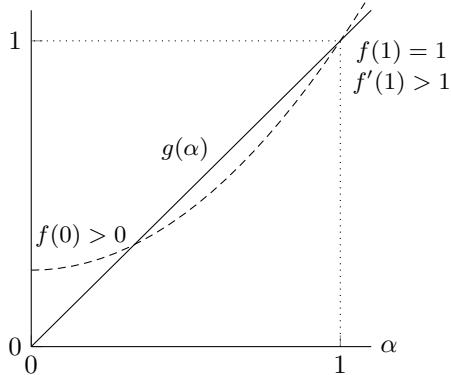
%
\centering%
\includestandalone{Chapters/QTF/TikZFiles/GM1_properties_of_equation_for_root}%
\caption{Properties of the functions $f(\al)$ (dashed) and $g(\al)$ (solid).}%
\label{figQTF:GM1_properties_of_equation_for_root}%
\end{figure}%

\begin{example}[Erlang arrivals]\label{exQTF:GM1_Erlang_arrivals_arrival_distribution}%
Suppose that the inter-arrival times $A$ follow an Erlang distribution consisting of two phases, where each exponential phase has rate $\la$. So, $\E{A} = 2/\la$ and $\rho = \la/(2\mu)$, where we assume that $\rho < 1$. The LST of $A$ is given by
\begin{equation}%
\LST{A}{\LSTarg} = \bigl( \frac{\la}{\la + \LSTarg} \bigr)^2,
\end{equation}%
and \eqref{eqnQTF:GM1_alpha_satisfies} becomes
\begin{equation}%
\al = \bigl( \frac{\la}{\la + \mu(1 - \al)} \bigr)^2,
\end{equation}%
which can be rewritten as
\begin{equation}%
\al (\la + \mu(1 - \al))^2 - \la^2 = 0.
\end{equation}%
Since we know that $\al = 1$ is a solution of this equation, we can write
\begin{equation}%
(\al - 1)\bigl( \al^2 \mu^2 - \al (\mu^2 + 2\la\mu) + \la^2 \bigr) = 0.
\end{equation}%
If we choose $\la = 3$ and $\mu = 4$, then we arrive at the solutions $\al = 1/4$, $\al = 1$ and $\al = 9/4$, so that
\begin{equation}%
a(i) = \frac{3}{4} \bigl( \frac{1}{4} \bigr)^i, \quad i \ge 0,
\end{equation}%
for this specific $G/M/1$ system.
\end{example}%


\subsection{Sojourn time distribution}%
\label{subsecQTF:GM1_sojourn_time_distribution}%

Since the arrival distribution is geometric, it is easy to determine the distribution of the sojourn time of a job. With probability $a(i)$ an arriving job finds $i$ jobs in the system. Because the service times are exponentially distributed, we know that the sojourn time of the arriving jobs is the sum of $i + 1$ exponential phases, each with rate $\mu$. By conditioning on the number of jobs seen on arrival, we therefore find that
\begin{align}%
\LST{S}{\LSTarg} &= \sum_{i \ge 0} a(i) \bigl( \frac{\mu}{\mu + \LSTarg} \bigr)^{i + 1} = \frac{\mu(1 - \al)}{\mu + \LSTarg} \sum_{i \ge 0} \bigl( \frac{\mu \al}{\mu + \LSTarg} \bigr)^i \notag \\
&= \frac{\mu(1 - \al)}{\mu + \LSTarg} \frac{1}{1 - \frac{\mu \al}{\mu + \LSTarg}} = \frac{\mu(1 - \al)}{\mu(1 - \al) + \LSTarg}.
\end{align}%
So the sojourn time is exponentially distributed with rate $\mu(1 - \al)$:
\begin{equation}%
F_S(t) = \Prob{S \le t} = 1 - \euler^{-\mu(1 - \al)t}, \quad t \ge 0.
\end{equation}%
%


\section{A reflected random walk}%
\label{secQTF:reflected_random_walk}%

In this section we introduce the reflected random walk, which can be seen as an extension of the embedded Markov chains associated with the $M/G/1$ and $G/M/1$ system. This reflected random walk can be modeled by a Markov chain with state space the non-negative integers $\Nat_0$. The term reflected refers to the fact that the Markov chain is reflected in state 0 back to the positive values. Let $X_n$ be the position of this random walk after $n$ steps with $X_0 \defi 0$ and satisfying the recursion
\begin{equation}%
X_{n + 1} = \max(0,X_n + A_n), \quad n \ge 0, \label{eqnQTF:random_walk_MC_recursion}
\end{equation}%
with $\{ A_n \}_{n \ge 0}$ a sequence of i.i.d.~discrete random variables that share the same distribution as some common random variable $A$. If we allow $A$ to take values in $\{ -1,0,1,2,\ldots \}$ then the Markov chain described by the recursion \eqref{eqnQTF:random_walk_MC_recursion} has the same transition structure as the embedded Markov chain associated with the $M/G/1$ system (see \cref{figQTF:transition_probability_diagram_MG1}) If we allow $A$ to take values in $\{ \ldots,-2,-1,0,1 \}$ then it has the same transition structure as the embedded Markov chain associated with the $G/M/1$ system (see \cref{figQTF:transition_probability_diagram_GM1}). To demonstrate some important techniques, we instead focus on the case where
\begin{equation}%
A \in \{ -s,-s + 1,\ldots,-1,0,1,2\ldots \}
\end{equation}%
with $s$ a positive integer and $\Prob{A = -s} > 0$. Notice that we can also write $A = B - s$ so that $B$ has as support the non-negative integers $\Nat_0$. The PGF of $A$ is therefore given by
\begin{equation}%
\PGF{A}{\PGFarg} = \E{\PGFarg^A} = \E{\PGFarg^{B - s}} = \frac{\PGF{B}{\PGFarg}}{\PGFarg^s}.
\end{equation}%

Assuming $\E{A} < 0$, which is equivalent to $\rho \defi \E{B}/s < 1$, the Markov chain is positive recurrent and we can study the equilibrium distribution. Denote by $X$ the equilibrium version of $X_n$. In equilibrium, the recursion \eqref{eqnQTF:random_walk_MC_recursion} becomes
\begin{equation}%
X \dequal \max(0,X + A) = \max(0,X + B - s).
\end{equation}%
From this relation we deduce that
\begin{equation}%
\Prob{X = 0} = \Prob{\max(0,X + B - s) = 0} = \sum_{i = 0}^s \Prob{X + B = i}
\end{equation}%
and for $k \ge 1$,
\begin{equation}%
\Prob{X = k} = \Prob{\max(0,X + B - s) = k} = \Prob{X + B = k + s}. \label{eqnQTF:random_walk_balance_equations_k>0}
\end{equation}%
Multiplying \eqref{eqnQTF:random_walk_balance_equations_k>0} by $\PGFarg^k$ and summing over all $k$ produces an expression for the PGF of $X$:
\begin{align}%
\PGF{X}{\PGFarg} &= \sum_{i = 0}^s \Prob{X + B = i} + \frac{1}{\PGFarg^s} \sum_{k \ge 1} \Prob{X + B = k + s} \PGFarg^{k + s} \notag \\
&= \sum_{i = 0}^s \Prob{X + B = i} + \frac{1}{\PGFarg^s} \Bigl[ \sum_{i \ge 0} \Prob{X + B = i} \PGFarg^i - \sum_{i = 0}^s \Prob{X + B = i} \PGFarg^i \Bigr]. \label{eqnQTF:random_walk_PGF_in_terms_of_summations}
\end{align}%
Recognizing the PGF of $X + B$, we can rewrite \eqref{eqnQTF:random_walk_PGF_in_terms_of_summations} as
\begin{equation}%
\PGF{X}{\PGFarg} = \frac{\sum_{i = 0}^{s - 1} \Prob{X + B = i}(\PGFarg^s - \PGFarg^i)}{\PGFarg^s - \PGF{B}{\PGFarg}}. \label{eqnQTF:random_walk_PGF_expression_1}
\end{equation}%
This expression still involves the $s$ unknowns $\Prob{X + B = i}, ~ 0 \le i \le s - 1$. Factorize the polynomial in $\PGFarg$ of degree $s$ in the numerator of \eqref{eqnQTF:random_walk_PGF_expression_1} as
\begin{equation}%
\sum_{i = 0}^{s - 1} \Prob{X + B = i}(\PGFarg^s - \PGFarg^i) = \gamma \prod_{k = 1}^s (\PGFarg - \PGFarg_k), \label{eqnQTF:random_walk_numerator_simplification}
\end{equation}%
where $\PGFarg_k$ are the $s$ roots of the polynomial and $\gamma$ is a constant. The values of the roots are still unknown, but we return to this issue later in \cref{subsecQTF:random_walk_finding_roots_z_k}. However, it is immediate that one of the roots, say $\PGFarg_s$, takes the value 1, so that we obtain
\begin{equation}%
\sum_{i = 0}^{s - 1} \Prob{X + B = i}(\PGFarg^s - \PGFarg^i) = \gamma (\PGFarg - 1) \prod_{k = 1}^{s - 1} (\PGFarg - \PGFarg_k). \label{eqnQTF:random_walk_PGF_expression_1_numerator}
\end{equation}%
What remains is to determine the constant $\gamma$. Taking derivatives with respect to $\PGFarg$ and substituting $\PGFarg = 1$ on both sides of \eqref{eqnQTF:random_walk_PGF_expression_1_numerator} yields
\begin{equation}%
\sum_{i = 0}^{s - 1} \Prob{X + B = i}(s - i) = \gamma \prod_{k = 1}^{s - 1} (1 - \PGFarg_k).
\end{equation}%
Now the function $\PGFarg^s - \PGF{B}{\PGFarg}$ (sometimes called the \textit{kernel}) comes into play. Since we know that $\PGF{X}{1} = 1$, we can apply l'H\^opital's rule to \eqref{eqnQTF:random_walk_PGF_expression_1} to find
\begin{equation}%
\sum_{i = 0}^{s - 1} \Prob{X + B = i}(s - i) = s - \PGFder{B}{1}, \label{eqnQTF:random_walk_numerator_normalization_condition}
\end{equation}%
which shows that
\begin{equation}%
\gamma = \frac{s - \PGFder{B}{1}}{\prod_{k = 1}^{s - 1} (1 - \PGFarg_k)}.
\end{equation}%
Returning to \eqref{eqnQTF:random_walk_PGF_expression_1} we finally obtain
\begin{equation}%
\PGF{X}{\PGFarg} = \frac{(s - \PGFder{B}{1})(\PGFarg - 1)}{\PGFarg^s - \PGF{B}{\PGFarg}}  \prod_{k = 1}^{s - 1} \frac{\PGFarg - \PGFarg_k}{1 - \PGFarg_k}. \label{eqnQTF:random_walk_PGF_explicit_expression}
\end{equation}%
%


\subsection{Finding the roots $\PGFarg_k$}%
\label{subsecQTF:random_walk_finding_roots_z_k}%

The roots $\PGFarg_k$ in \eqref{eqnQTF:random_walk_PGF_explicit_expression} are still unknown. We do not directly study $\PGFarg_k$, but instead focus on the properties of the PGF $\PGF{X}{\PGFarg}$. In particular, we show that $\PGFarg^s - \PGF{B}{\PGFarg}$ has $s$ roots in the closed unit disk and invoke the general properties of the PGF to conclude that these roots must coincide with the $\PGFarg_k$ in the numerator: otherwise $\PGF{X}{\PGFarg}$ would tend to infinity at those points, invalidating the analyticity of the function.

Recall from \cref{remQTF:PGF_analytic_function} that $\PGF{B}{\PGFarg}$ is an analytic function for all $\PGFarg \in \Complex$ satisfying $|\PGFarg| < 1$ and is moreover continuous up to the unit circle. We introduce Rouch\'e's theorem to show that $\PGFarg^s - \PGF{B}{\PGFarg} = 0$ has $s$ roots in the closed unit disk.

\begin{theorem}[Rouch\'e]\label{thm:Rouche}%
Consider a bounded region $\setUncountable{L}$ with continuous boundary $\partial \setUncountable{L}$ and two complex-valued functions $f(\cdot)$ and $g(\cdot)$ that are analytic on $\setUncountable{L}$. If
\begin{equation}%
|f(\PGFarg)| > |g(\PGFarg)|, \quad \PGFarg \in \partial \setUncountable{L},
\end{equation}%
then $f(\cdot)$ and $f(\cdot) + g(\cdot)$ have the same number of zeros in the interior of $\setUncountable{L}$.
\end{theorem}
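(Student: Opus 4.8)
The plan is to deduce Rouché's theorem from the \emph{argument principle}: if $h$ is analytic in the interior of $\setUncountable{L}$, continuous on $\setUncountable{L}$, and has no zeros on $\partial\setUncountable{L}$, then the number $N_h$ of zeros of $h$ in the interior of $\setUncountable{L}$, counted with multiplicity, equals
\begin{equation*}
N_h = \frac{1}{2\pi\complexunit} \oint_{\partial\setUncountable{L}} \frac{h'(z)}{h(z)} \, \dinf z ,
\end{equation*}
which is the winding number about the origin of the image curve $z \mapsto h(z)$, $z \in \partial\setUncountable{L}$. First I would observe that the hypothesis $|f(z)| > |g(z)|$ on $\partial\setUncountable{L}$ immediately gives $f(z) \neq 0$ there, and also $|f(z) + g(z)| \ge |f(z)| - |g(z)| > 0$, so that neither $f$ nor $f + g$ vanishes on $\partial\setUncountable{L}$; in particular $f \not\equiv 0$, hence $f$ (and likewise $f + g$) has only finitely many zeros in the bounded set $\setUncountable{L}$, and $N_f$, $N_{f+g}$ are well-defined nonnegative integers.

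The heart of the argument is a homotopy. For $t \in [0,1]$ put $h_t \defi f + t g$, which is analytic on the interior of $\setUncountable{L}$. On $\partial\setUncountable{L}$ one has $|h_t(z)| \ge |f(z)| - t\,|g(z)| \ge |f(z)| - |g(z)| > 0$, and since $\partial\setUncountable{L}$ is compact this lower bound is uniform in $t$; hence
\begin{equation*}
N(t) \defi \frac{1}{2\pi\complexunit} \oint_{\partial\setUncountable{L}} \frac{h_t'(z)}{h_t(z)} \, \dinf z
\end{equation*}
is, by the argument principle, an integer for every $t$. The integrand depends jointly continuously on $(t,z)$ with denominator bounded away from $0$, so $N(t)$ is continuous in $t$; a continuous integer-valued function on $[0,1]$ is constant, so $N(0) = N(1)$, that is $N_f = N_{f+g}$, which is the assertion. (An equivalent shortcut avoids the homotopy: write $f + g = f\,(1 + g/f)$; since $|g/f| < 1$ on $\partial\setUncountable{L}$, the curve $1 + g/f$ stays in the disc $\{\,|w - 1| < 1\,\}$, which is simply connected and avoids $0$, so $\oint_{\partial\setUncountable{L}} \dinf\log(1 + g/f) = 0$, and adding $\oint_{\partial\setUncountable{L}} \dinf\log f$ gives $N_{f+g} = N_f$.)

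The point needing the most care is the regularity of $\partial\setUncountable{L}$: the phrase ``continuous boundary'' alone does not justify writing the contour integrals above, which need $\partial\setUncountable{L}$ to be a finite union of rectifiable closed curves --- the situation in every application in this book, where $\setUncountable{L}$ is a disc. In full generality I would reduce to that situation by exhausting $\setUncountable{L}$ from the inside: because $|f| > |g|$ holds strictly on $\partial\setUncountable{L}$ and both functions are continuous on $\setUncountable{L}$, the inequality persists on a neighbourhood of $\partial\setUncountable{L}$, and that neighbourhood contains none of the (finitely many, isolated) zeros of $f$ or of $f + g$; picking a smooth subregion $\setUncountable{L}_\varepsilon$ that sits inside $\setUncountable{L}$, still has $|f| > |g|$ on its boundary, and captures all those zeros, one applies the smooth case to $\setUncountable{L}_\varepsilon$ and is done. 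I expect this boundary bookkeeping, rather than the winding-number argument itself, to be the only genuine obstacle.
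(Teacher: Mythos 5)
The paper states Rouch\'e's theorem without proof --- it is invoked as a classical result from complex analysis (with \cite{Marsden1998_Basic_complex_analysis} cited elsewhere for general complex-analytic background), so there is no proof in the text to compare against. Your argument is a correct and standard proof of the theorem, and both variants you give are sound. The homotopy $h_t = f + tg$ together with the argument principle is the textbook route: the lower bound $|h_t| \ge |f| - |g| > 0$ on $\partial\setUncountable{L}$ (uniform in $t$ by compactness) keeps $N(t)$ well-defined and integer-valued; continuity in $t$ then forces $N(0) = N(1)$. The ``shortcut'' via $f + g = f\,(1 + g/f)$ and the observation that the image curve $1 + g/f$ lies in the simply connected disc $\{\,|w - 1| < 1\,\}$, hence has winding number zero about the origin, is equally valid and is arguably closer to the way the theorem is used in the book (one typically tracks the winding number of a perturbation of a simple function). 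Your closing remark about the imprecision of ``continuous boundary'' is well taken: the contour-integral form of the argument principle needs $\partial\setUncountable{L}$ to be a finite union of rectifiable closed curves, and the reduction you sketch --- shrink to a smooth subregion $\setUncountable{L}_\varepsilon$ whose boundary still satisfies $|f| > |g|$ and which captures all the (finitely many, interior) zeros --- is exactly the right repair. In the applications in this book $\setUncountable{L}$ is always a disc, so the issue never bites, but noting it is good mathematical hygiene.
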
%

When the radius of convergence of $\PGF{B}{\PGFarg}$ exceeds 1, we can prove the following result concerning the number of zeros on and within the unit circle of $\PGFarg^s - \PGF{B}{\PGFarg}$ by using Rouch\'e's theorem.

\begin{lemma}\label{lemQTF:random_walk_location_zeros_PGF}%
Let $\PGF{B}{\PGFarg}$ be a PGF that is analytic in $|\PGFarg| \le 1 + \nu, ~ \nu > 0$. Assume that the condition $\PGFder{B}{1}< s$ for positive recurrence is satisfied. Then the function $\PGFarg^s - \PGF{B}{\PGFarg}$ has exactly $s$ zeros in $|\PGFarg| \le 1$.
\end{lemma}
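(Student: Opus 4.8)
The plan is to apply Rouch\'e's theorem (\cref{thm:Rouche}), but \emph{not} on the unit circle itself. Since $\PGF{B}{1} = 1$, the point $\PGFarg = 1$ is always a zero of $\PGFarg^s - \PGF{B}{\PGFarg}$, so on $|\PGFarg| = 1$ one only has the non-strict bound $|\PGF{B}{\PGFarg}| \le \PGF{B}{1} = 1 = |\PGFarg^s|$, which does not suffice for Rouch\'e. Instead I would run the comparison on a circle of radius $r$ slightly larger than $1$ — this is exactly where the hypothesis that $\PGF{B}{\cdot}$ is analytic in $|\PGFarg| \le 1 + \nu$ gets used.

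First I would examine the real function $h(r) \defi r^s - \PGF{B}{r}$ for $r \ge 0$. Then $h(1) = 1 - \PGF{B}{1} = 0$, while $h'(1) = s - \PGFder{B}{1} > 0$ by the positive-recurrence assumption $\PGFder{B}{1} < s$. Hence $h$ is strictly increasing near $1$, so there is a $\delta > 0$ with $h(\rho) > 0$ for all $\rho \in (1, 1 + \delta)$. I would then fix once and for all an $r$ with $1 < r < \min(1 + \delta, 1 + \nu)$; for this $r$, $\PGF{B}{\cdot}$ is analytic on the closed disk $|\PGFarg| \le r$, and $h(\rho) > 0$ for every $\rho \in (1, r]$.

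Next, on the circle $|\PGFarg| = r$ the triangle inequality gives $|\PGF{B}{\PGFarg}| = \bigl| \sum_{i \ge 0} p_i \PGFarg^i \bigr| \le \sum_{i \ge 0} p_i r^i = \PGF{B}{r} < r^s = |\PGFarg^s|$, where the strict step is $h(r) > 0$. Applying \cref{thm:Rouche} with $f(\PGFarg) = \PGFarg^s$ and $g(\PGFarg) = -\PGF{B}{\PGFarg}$ on the region $|\PGFarg| < r$ then shows that $\PGFarg^s - \PGF{B}{\PGFarg} = f + g$ has the same number of zeros in $|\PGFarg| < r$ as $\PGFarg^s$, namely exactly $s$, counted with multiplicity. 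Finally, for any $\PGFarg$ with $1 < |\PGFarg| \le r$ the same estimate yields $|\PGF{B}{\PGFarg}| \le \PGF{B}{|\PGFarg|} < |\PGFarg|^s$, so $\PGFarg^s - \PGF{B}{\PGFarg} \ne 0$ throughout the annulus $1 < |\PGFarg| \le r$. Combining these two facts, all $s$ zeros must lie in $|\PGFarg| \le 1$, which is the assertion (and $\PGFarg = 1$ is one of them, on the boundary).

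The only genuine obstacle is the boundary degeneracy noted at the outset: the forced equality at $\PGFarg = 1$ rules out a direct application of Rouch\'e on $|\PGFarg| = 1$. It is resolved precisely by the two hypotheses — analyticity past the closed unit disk lets us move out to radius $r > 1$, and $\PGFder{B}{1} < s$ makes $h(r) > 0$ there so that the strict inequality Rouch\'e needs holds. Everything else (the triangle-inequality bound, the count of zeros of $\PGFarg^s$, the annulus argument) is routine.
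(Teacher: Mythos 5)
Your proof is correct and follows essentially the same route as the paper's: apply Rouch\'e with $f(\PGFarg)=\PGFarg^s$, $g(\PGFarg)=-\PGF{B}{\PGFarg}$ on a circle of radius slightly larger than $1$, use $|\PGF{B}{\PGFarg}|\le \PGF{B}{|\PGFarg|}$ by the triangle inequality, and invoke $\PGFder{B}{1}<s$ to secure the strict inequality just outside the unit circle (your $h'(1)>0$ is the paper's first-order Taylor comparison). The only presentational difference is that the paper concludes by letting $\epsilon\downarrow 0$, whereas you explicitly rule out zeros in the annulus $1<|\PGFarg|\le r$; both disposals are valid and equivalent.
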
%

\begin{proof}%
Define the functions $f(\PGFarg) \defi \PGFarg^s$ and $g(\PGFarg) \defi - \PGF{B}{\PGFarg}$. Notice that both functions are analytic for $|\PGFarg| \le 1 + \nu$. It is clear that $f(\PGFarg)$ has $s$ roots within the closed unit circle. We aim to show that $|f(\PGFarg)| > |g(\PGFarg)|$ along the circle $|\PGFarg| = 1 + \epsilon$ for $0 < \epsilon < \nu$ so that by Rouch\'e's theorem $f(\cdot) + g(\cdot)$ has $s$ zeros inside the circle $|\PGFarg| = 1 + \epsilon$. Then, finally letting $\epsilon \downarrow 0$ proves the statement.

Observe that $|f(\PGFarg)| = f(|\PGFarg|)$ and $|g(\PGFarg)| = |\PGF{B}{\PGFarg}| \le \PGF{B}{|\PGFarg|}$ by the triangle inequality. So, instead we prove $f(|\PGFarg|) > \PGF{B}{|\PGFarg|}$ for $|\PGFarg| = 1 + \epsilon$. The Taylor series of $f(\PGFarg)$ and $\PGF{B}{|\PGFarg|}$ at $\PGFarg = 1$ evaluated in the point $\PGFarg = 1 + \epsilon$ are
\begin{align}%
f(1 + \epsilon) &= 1 + \epsilon s + \SmallO(\epsilon), \\
\PGF{B}{1 + \epsilon} &= 1 + \epsilon \PGFder{B}{1} + \SmallO(\epsilon).
\end{align}%
From the assumption $\PGFder{B}{1}< s$ and these Taylor expansions we conclude for sufficiently small $\epsilon$ satisfying $0 < \epsilon < \nu$ that $f(1 + \epsilon) > \PGF{B}{1 + \epsilon}$. Letting $\epsilon$ tend to zero yields the proof.
\end{proof}%

Note that the application of \cref{lemQTF:random_walk_location_zeros_PGF} is limited to the class of functions $\PGF{B}{\PGFarg}$ with a radius of convergence larger than 1, so random variables $B$ of which all moments (derivatives of $\PGF{B}{\PGFarg}$ at $\PGFarg = 1$) exist.

$\PGF{X}{\PGFarg}$ is an analytic function for at least all $|\PGFarg| < 1$. However, from \cref{lemQTF:random_walk_location_zeros_PGF} we see that term $\PGFarg^s - \PGF{B}{\PGFarg}$ in the denominator of \eqref{eqnQTF:random_walk_PGF_explicit_expression} approaches zero for $s$ values inside the closed unit disk. An analytic function in the region $|\PGFarg| < 1$ does not have singularities in that region, so at the $s$ values at which $\PGFarg^s - \PGF{B}{\PGFarg} = 0$, the numerator of \eqref{eqnQTF:random_walk_PGF_explicit_expression} must also approach zero. It is clear that one of the $s$ roots is $\PGFarg = 1$ and the other $s - 1$ roots must equal the $\PGFarg_k$ present in the numerator of \eqref{eqnQTF:random_walk_PGF_explicit_expression}.

When $\PGF{B}{\PGFarg}$ is assumed to not equal zero for all $|\PGFarg| \le 1$, we know that the $s$ roots of $\PGFarg^s = \PGF{B}{\PGFarg}$ in $|\PGFarg| \le 1$ satisfy
\begin{equation}%
\PGFarg = \unit \PGF{B}{\PGFarg}^{\frac{1}{s}}, \label{eqnQTF:random_walk_root_k}
\end{equation}%
where $\unit^s = 1$ are the roots of unity. For each unit root $\unit$, \eqref{eqnQTF:random_walk_root_k} can be shown to have a single root inside the closed unit disk $|\PGFarg| \le 1$. One could try to solve \eqref{eqnQTF:random_walk_root_k} by successive substitutions as
\begin{equation}%
\PGFarg_k^{(n + 1)} = \unit_k \PGF{B}{\PGFarg_k^{(n)}}^{\frac{1}{s}}, \quad k = 1,2,\ldots,s, \label{eqnQTF:random_walk_root_k_successive_substitutions}
\end{equation}%
with starting values $\PGFarg_k^{(0)} = 0$ and $\unit_k = \euler^{2\pi \complexunit k/s}$. Under the additional condition that for $|\PGFarg| \le 1$, the derivative $| \frac{\dinf}{\dinf \PGFarg} \PGF{B}{\PGFarg}^{\frac{1}{s}} | < 1$, it can be shown that \eqref{eqnQTF:random_walk_root_k_successive_substitutions} indeed converges to the desired roots $\PGFarg_k$ for $n \to \infty$.

\begin{example}[Poisson distribution]\label{exQTF:random_walk_Poisson_distribution}%
\begin{figure}%
\centering%
\includestandalone{Chapters/QTF/TikZFiles/roots_Szego_curve_random_walk_Poisson_distribution}%
\caption{Using successive substitution to approximate the roots $\PGFarg_k$ in \protect\cref{exQTF:random_walk_Poisson_distribution}.}%
\label{figQTF:roots_random_walk_Poisson_distribution}%
\end{figure}%
Let us assume that $B$ follows a Poisson distribution with rate $\la < s$ so that $\PGF{B}{\PGFarg} = \euler^{-\la(1 - \PGFarg)}$. It is readily seen that $\PGF{B}{\PGFarg}$ does not equal zero anywhere and $| \frac{\dinf}{\dinf \PGFarg} \PGF{B}{\PGFarg}^{\frac{1}{s}} | < 1$ for $|\PGFarg| \le 1$, so that the successive substitutions \eqref{eqnQTF:random_walk_root_k_successive_substitutions} converges to the correct root $\PGFarg_k$. In \cref{figQTF:roots_random_walk_Poisson_distribution} we show for various $\la$ and $s = 10$ the iterates $\PGFarg_k^{(100)}, ~ k = 1,2,\ldots,s$ and the curve on which they lie.
\end{example}%

\begin{remark}[Bounded support]\label{remQTF:random_walk_bounded_support}%
When $B$ has a bounded support, i.e., $B \le s + m$ with $m \ge 1$, we know that $\PGF{B}{\PGFarg}$ is a polynomial of degree $s + m$. From \cref{lemQTF:random_walk_location_zeros_PGF} it immediately follows that $\PGFarg^s = \PGF{B}{\PGFarg}$ has $m$ roots outside the closed unit disk, to be denoted by $\PGFarg_{s + 1},\PGFarg_{s + 2},\ldots,\PGFarg_{s + m}$. Write
\begin{equation}%
\PGFarg^s - \PGF{B}{\PGFarg} = \xi \prod_{k = 1}^{s + m} (\PGFarg - \PGFarg_k)
\end{equation}%
with $\xi$ a constant. Substituting this expression in \eqref{eqnQTF:random_walk_PGF_explicit_expression} provides an alternative expression for $\PGF{X}{\PGFarg}$ in terms of the roots outside the closed unit disk:
\begin{equation}%
\PGF{X}{\PGFarg} = \frac{(s - \PGFder{B}{1})}{\xi \prod_{k = s + 1}^{s + m} (\PGFarg - \PGFarg_k)} \prod_{k = 1}^{s - 1} \frac{1}{1 - \PGFarg_k}.
\end{equation}%
The constant $\xi$ is determined by setting $\PGFarg = 1$ and using $\PGF{X}{1} = 1$, which finally yields
\begin{equation}%
\PGF{X}{\PGFarg} = \prod_{k = s + 1}^{s + m} \frac{1 - \PGFarg_k}{\PGFarg - \PGFarg_k}.
\end{equation}%
This expression is amenable for explicit inversion. In particular, using partial fraction expansion gives
\begin{equation}%
\PGF{X}{\PGFarg} = \prod_{k = s + 1}^{s + m} \frac{1 - \PGFarg_k}{\PGFarg - \PGFarg_k} = \sum_{l = s + 1}^{s + m} \frac{x_l}{\PGFarg - \PGFarg_l}, \label{eqnQTF:random_walk_bounded_support_PGF}
\end{equation}%
where
\begin{equation}%
x_l = \lim_{\PGFarg \to \PGFarg_l} (\PGFarg - \PGFarg_l) \prod_{k = s + 1}^{s + m} \frac{1 - \PGFarg_k}{\PGFarg - \PGFarg_k} = \frac{\prod_{k = s + 1}^{s + m} (1 - \PGFarg_k)}{\prod_{k = s + 1, \, k \neq l}^{s + m} (\PGFarg_l - \PGFarg_k)}.
\end{equation}%
Dividing the numerator and denominator in \eqref{eqnQTF:random_walk_bounded_support_PGF} by $-\PGFarg_l$, we get
\begin{equation}%
\PGF{X}{\PGFarg} = - \sum_{l = s + 1}^{s + m} \frac{x_l}{\PGFarg_l} \frac{1}{1 - \frac{\PGFarg}{\PGFarg_l}} = - \sum_{l = s + 1}^{s + m} \frac{x_l}{\PGFarg_l} \sum_{k \ge 0} \bigl( \frac{\PGFarg}{\PGFarg_l} \bigr)^k, \label{eqnQTF:random_walk_bounded_support_PGF_explicit}
\end{equation}%
which gives
\begin{equation}%
\Prob{X = k} = - \sum_{l = s + 1}^{s + m} \frac{x_l}{\PGFarg_l^{k + 1}}, \quad k \ge 0. \label{eqnQTF:random_walk_bounded_support_eq_dist}
\end{equation}%
For $k$ large enough, the sum on the right-hand side of \eqref{eqnQTF:random_walk_bounded_support_eq_dist} is dominated by the pole of $\PGF{X}{\PGFarg}$ with the smallest modulus, to be denoted without loss of generality by $\PGFarg_{s + 1}$. Omitting all fractions in \eqref{eqnQTF:random_walk_bounded_support_eq_dist} other than the one that corresponds to $\PGFarg_{s + 1}$ gives the following approximation for the tail probabilities:
\begin{equation}%
\Prob{X = k} \approx - x_{s + 1} \bigl( \frac{1}{\PGFarg_{s + 1}} \bigr)^{k + 1}, \quad \textup{as~} k \to \infty.
\end{equation}%

By expressing the PGF of $X$ in terms of the roots outside the closed unit disk, we are able to obtain an explicit product-form solution for the equilibrium distribution.
\end{remark}%

\begin{remark}[Boundary probabilities]%
Armed with the values of $\PGFarg_k$ inside the closed unit disk, we can return to \eqref{eqnQTF:random_walk_numerator_simplification} to construct a linear system of equations for the $s$ unknown boundary probabilities $\Prob{X + B = i}$. In particular, we can substitute $\PGFarg_k$ for $k = 1,2,\ldots,s - 1$ into \eqref{eqnQTF:random_walk_numerator_simplification} to obtain the $s - 1$ equations
\begin{equation}%
\sum_{i = 0}^{s - 1} \Prob{X + B = i}(\PGFarg_k^s - \PGFarg_k^i) = 0, \quad k = 1,2,\ldots,s - 1,
\end{equation}%
which, together with the normalization condition \eqref{eqnQTF:random_walk_numerator_normalization_condition}, constitutes a system of $s$ linear equations for the $s$ unknowns $\Prob{X + B = i}$.
\end{remark}%

\section{Numerical inversion of transforms}%
\label{secQTF:numerical_inversion}%

In some cases it is difficult or even impossible to explicitly retrieve the probability mass function from a PGF or the probability density function from an LST. In this section we describe numerical inversion algorithms that approximate these probability mass and density functions to an arbitrary precision.


\subsection{Inverting univariate generating functions}%
\label{subsecQTF:numerical_inversion_univariate_PGFs}%

Recall that we denote the PGF by $\PGF{\PGFarg} \defi \sum_{k \ge 0} p(k) \PGFarg^k$, where $\PGFarg$ can be complex-valued, $p(k) \ge 0$ and $\sum_{k \ge 0} p(k) = 1$. To retrieve the probabilities $p(k)$ from $\PGF{\PGFarg}$, we use the fact that $\PGF{\PGFarg}$ is an analytic function for at least all $\PGFarg \in \Complex$ satisfying $|\PGFarg| < 1$ (see \cref{remQTF:PGF_analytic_function}), which allows us to apply the Cauchy contour integral. The Cauchy contour integral reads
\begin{equation}%
p(k) = \frac{1}{2\pi \complexunit} \oint_{C_{\radiusNI(k)}} \frac{\PGF{\PGFarg}}{\PGFarg^{k + 1}} \, \dinf \PGFarg \label{eqnQTF:numerical_inversion_PGF_Cauchy_contour_integral}
\end{equation}%
with $\complexunit$ the complex unit and $C_{\radiusNI(k)}$ a circle of radius $\radiusNI(k) \in (0,1)$ that depends on $k$. We make the change of variables $\PGFarg = \radiusNI(k) \euler^{\pi \theta}$ so that the contour integral \eqref{eqnQTF:numerical_inversion_PGF_Cauchy_contour_integral} can be written as
\begin{equation}%
p(k) = \frac{1}{2\pi \radiusNI(k)^k} \int_0^{2\pi} \PGF{\radiusNI(k) \, \euler^{\complexunit \theta}} \, \euler^{-\complexunit k \theta} \, \dinf \theta.
\end{equation}%
Use $\euler^{-\complexunit \PGFarg} = \cos(\PGFarg) - \complexunit \sin(\PGFarg)$ and $\PGF{\PGFarg} = \RealPart{\PGF{\PGFarg}} + \complexunit \, \ImagPart{\PGF{\PGFarg}}$ to rewrite the integral as
\begin{align}%
p(k) &= \frac{1}{2\pi \radiusNI(k)^k} \int_0^{2\pi} \Bigl[ \bigl(\RealPart{\PGF{\radiusNI(k) \, \euler^{\complexunit \theta}}} + \complexunit \, \ImagPart{\PGF{\radiusNI(k) \, \euler^{\complexunit \theta}}} \bigr) \notag \\
&\hphantom{= \frac{1}{2\pi \radiusNI(k)^k} \int_0^{2\pi}} \cdot \bigl( \cos(k \theta) - \complexunit \sin(k \theta) \bigr) \Bigr] \, \dinf \theta. \notag \\
&= \frac{1}{2\pi \radiusNI(k)^k} \Bigl[ \int_0^{2\pi} \bigl( \cos(k \theta) \RealPart{\PGF{\radiusNI(k) \, \euler^{\complexunit \theta}}} + \sin(k \theta) \ImagPart{\PGF{\radiusNI(k) \, \euler^{\complexunit \theta}}} \bigr) \, \dinf \theta \notag \\
&\hspace*{-1ex} + \complexunit \! \int_0^{2\pi} \bigl( \cos(k \theta) \ImagPart{\PGF{\radiusNI(k) \, \euler^{\complexunit \theta}}} - \sin(k \theta) \RealPart{\PGF{\radiusNI(k) \, \euler^{\complexunit \theta}}} \bigr) \, \dinf \theta \Bigr]. \label{eqnQTF:numerical_inversion_p(k)_in_terms_of_two_integrals}
\end{align}%
The last integral in \eqref{eqnQTF:numerical_inversion_p(k)_in_terms_of_two_integrals} equals zero because $\cos(\cdot)$ is an even function, $\sin(\cdot)$ is an odd function, $\ImagPart{\PGF{\PGFarg}} = - \ImagPart{\PGF{\bar{\PGFarg}}}$ and $\RealPart{\PGF{\PGFarg}} = \RealPart{\PGF{\bar{\PGFarg}}}$, where $\bar{\PGFarg}$ is the complex conjugate of $\PGFarg$.

It remains to determine the other integral in \eqref{eqnQTF:numerical_inversion_p(k)_in_terms_of_two_integrals}. We follow the approach outlined in \cite{Abate1992_Inversion_pgf}, which ultimately leads to an approximation $\approximate{p}(k)$ and a bound on the error $\approximate{e}(k)$, see \cite[Theorem~1]{Abate1992_Inversion_pgf}. We can use the trapezoidal rule to approximate the integral. If we use a step size of $\pi/k$, then we can write
\begin{equation}%
p(k) \approx \approximate{p}(k) = \frac{1}{2 k \radiusNI(k)^k} \sum_{l = 1}^{2k} (-1)^l \RealPart{\PGF{\radiusNI(k) \, \euler^{\complexunit \pi \frac{l}{k} }}}.
\end{equation}%
By using the inherent symmetry, we finally arrive at the following expression for the approximation, for $k \ge 1$,
\begin{align}%
\approximate{p}(k) = \frac{1}{2 k \radiusNI(k)^k} \Bigl(& \PGF{\radiusNI(k)} + (-1)^k \PGF{-\radiusNI(k)} \notag \\
&+ 2 \sum_{l = 1}^{k - 1} (-1)^l \RealPart{\PGF{\radiusNI(k) \, \euler^{\complexunit \pi \frac{l}{k}}}} \Bigr), \label{eqnQTF:univariate_PGF_numerical_inversion}
\end{align}%
where $\radiusNI(k) \in (0,1)$ is actually a tunable parameter that controls the error term $\approximate{e}(k) = p(k) - \approximate{p}(k)$, since
\begin{equation}%
|\approximate{e}(k)| \le \frac{\radiusNI(k)^{2k}}{1 - \radiusNI(k)^{2k}} \approx r(k)^{2k}. \label{eqnQTF:univariate_PGF_numerical_inversion_error_bound}
\end{equation}%
The approximate equality is valid if $r(k)^{2k}$ is small. Observe that $p(0)$ does not need to be approximated, since it easily follows from $p(0) = \PGF{0}$. With $\radiusNI(k) = 10^{-d/(2k)}$ we find that $|\approximate{e}(k)| \le 10^{-d}$ and therefore the approximation $\approximate{p}(k)$ in \eqref{eqnQTF:univariate_PGF_numerical_inversion} is accurate until at least the $d$-th decimal.

For reference in later chapters, we present in full the algorithm to numerically invert PGFs.

\begin{algorithm}%
\caption{Numerical inversion univariate PGF}%
\label{algQTF:numerical_inversion_univariate_PGF}%
\begin{algorithmic}[1]%
\State Input $\PGF{\PGFarg}$
\State Decide for which $k \ge 1$ you wish to approximate $p(k)$
\State Decide on the minimum number of correct decimals $d$
\State Set $\radiusNI(k) = 10^{-d/(2k)}$
\State Initialize
    \begin{equation}%
    \approximate{p}(k) = \frac{1}{2 k \radiusNI(k)^k} \bigl( \PGF{\radiusNI(k)} + (-1)^k \PGF{-\radiusNI(k)} \bigr)
    \end{equation}%
\For{$l = 1,2,\ldots,k - 1$} \Comment{Skip the for loop if $k = 1$}
    \State Set $\PGFarg_l = \radiusNI(k) \, \euler^{\complexunit \pi l/k}$
    \State Update
        \begin{equation}%
        \approximate{p}(k) = \approximate{p}(k) + \frac{1}{k \radiusNI(k)^k} (-1)^l \RealPart{\PGF{\PGFarg_l}}
        \end{equation}%
\EndFor
\State Approximate $p(k)$ as $p(k) \approx \approximate{p}(k)$
\end{algorithmic}%
\end{algorithm}%

\begin{example}[Gamma distributed service times]\label{exQTF:MG1_numerical_inversion_PGF}%
Consider the $M/G/1$ system with arrival rate $\la$ and service times $B$ that are distributed according to a gamma distribution with shape parameter $\al > 0$ and rate parameter $\be > 0$. Specifically, the probability density function of $B$ is given by
\begin{equation}%
f_B(t) = \frac{\be^\al t^{\al - 1}}{\Gamma(\al)} \euler^{-\be t}, \quad t \ge 0,
\end{equation}%
where $\Gamma(\al)$ is the complete gamma function. The mean is given by
\begin{equation}%
\E{B} = \frac{\al}{\be}
\end{equation}%
and the LST is
\begin{equation}%
\LST{B}{\LSTarg} = \bigl( \frac{\be}{\be + \LSTarg} \bigr)^\al.
\end{equation}%
The Pollaczek-Khinchin formula \eqref{eqnQTF:MG1_PGF_explicit} says that the PGF $\PGF{\PGFarg}$ of the equilibrium number of jobs in the system can be calculated from
\begin{equation}%
\PGF{\PGFarg} = (1 - \rho) \frac{(1 - \PGFarg) \bigl( \frac{\be}{\be + \la(1 - \PGFarg)} \bigr)^\al}{\bigl( \frac{\be}{\be + \la(1 - \PGFarg)} \bigr)^\al - \PGFarg},
\end{equation}%
where $\rho = \la \E{B}$. It is not immediate how we can explicitly invert this expression to obtain the equilibrium probabilities $p(k)$, especially if $\al$ is not an integer. To demonstrate the numerical inversion algorithm, we take $\al = 2\sqrt{2}$ and $\be = \sqrt{2}$ and invert the PGF to derive the equilibrium distribution. Notice that the load is given by $\rho = 2\la$. We select $d = 8$ in \cref{algQTF:numerical_inversion_univariate_PGF} and obtain for various values of $\la$ the equilibrium distribution, see \cref{figQTF:numerical_inversion_PGF_MG1_gamma_distribution}.
\begin{figure}
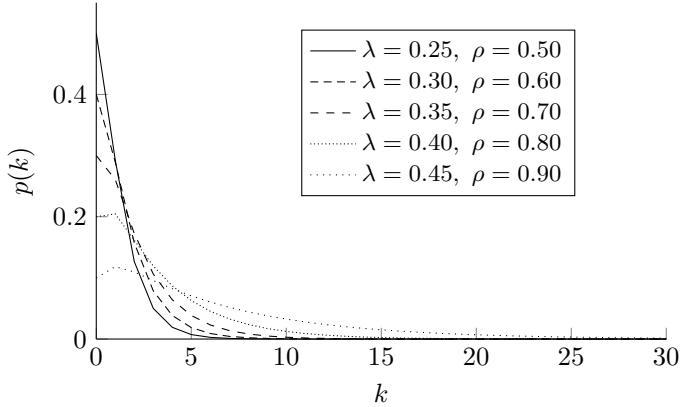
%
\centering%
\includestandalone{Chapters/QTF/TikZFiles/numerical_inversion_PGF_MG1_gamma_distribution}%
\caption{Equilibrium distribution of the $M/G/1$ system of \protect\cref{exQTF:MG1_numerical_inversion_PGF} with gamma distributed services times and varying arrival rate.}%
\label{figQTF:numerical_inversion_PGF_MG1_gamma_distribution}
\end{figure}%
%
\end{example}%


\subsection{Inverting bivariate generating functions}%
\label{subsecQTF:numerical_inversion_bivariate_PGFs}%

A \textit{bivariate} PGF is a PGF of the joint probability mass function of two random variables and therefore takes two arguments. We encounter bivariate PGFs in some of the more advanced chapters, where we would like to numerically invert them. So, we present a numerical inversion algorithm for PGFs of two variables. The bivariate PGF is defined as
\begin{equation}%
\PGF{x,y} \defi \sum_{k \ge 0} \sum_{l \ge 0} p(k,l) x^k y^l, \label{eqnQTF:bivariate_PGF}
\end{equation}%
where $x$ and $y$ can be complex-valued, $p(k,l) \ge 0$ and $\sum_{k,l \ge 0} p(k,l) = 1$. The bivariate PGF satisfies $\PGF{1,1} = 1$ and converges for at least all $|x|,|y| \le 1$ and is therefore analytic for at least all $x,y \in \Complex$ satisfying $|x|,|y| < 1$.

One of the standard numerical inversion algorithms is described in \cite[Section~3]{Choudhury1994_Multidimensional_transform_inversion}. Here we present a version of that algorithm with specific parameter choices so that it resembles the univariate case. The algorithm approximates $p(k,l)$ by
\begin{equation}%
p(k,l) = \approximate{p}(k,l) - \approximate{e}(k,l).
\end{equation}%
The approximation is given by
\begin{align}%
\approximate{p}(k,l) &= \frac{1}{4 j_1 j_2 \radiusNI_1(k)^k \radiusNI_2(l)^l} \notag \\
&\quad \cdot \sum_{m = - j_1}^{j_1 - 1} \sum_{n = - j_2}^{j_2 - 1} \euler^{- \complexunit \pi (k \frac{m}{j_1} + l \frac{n}{j_2})} \PGF{\radiusNI_1(k) \, \euler^{\complexunit \pi \frac{m}{j_1}}, \radiusNI_2(l) \, \euler^{\complexunit \pi \frac{n}{j_2}}}, \label{eqnQTF:bivariate_PGF_numerical_inversion}
\end{align}%
where $j_1,j_2 \in \Nat$, and $0 < \radiusNI_1(k),\radiusNI_2(l) < 1$ are tunable parameters that control the error:
\begin{align}%
|\approximate{e}(k,l)| &\le \frac{\radiusNI_1(k)^{2 j_1} + \radiusNI_2(l)^{2 j_2} - \radiusNI_1(k)^{2 j_1} \radiusNI_2(l)^{2 j_2}}{(1 - \radiusNI_1(k)^{2 j_1})(1 - \radiusNI_2(l)^{2 j_2})} \notag \\
&\approx \radiusNI_1(k)^{2 j_1} + \radiusNI_2(l)^{2 j_2}, \label{eqnQTF:bivariate_PGF_numerical_inversion_error_bound}
\end{align}%
where the approximate equality is a valid approximation if both $\radiusNI_1(k)^{2 j_1}$ and $\radiusNI_2(l)^{2 j_2}$ are small. When we are interested in $p(k,l)$ for $k,l \ge 1$, then we can set $j_1 = k$ and $j_2 = l$ to simplify the approximation and the bound on the error term. Moreover, if we then choose $\radiusNI_1(k) = 10^{-d/(2k)} / 2$ and $\radiusNI_2(l) = 10^{-d/(2l)} / 2$, then the resulting approximation is accurate until at least the $d$-th decimal.

\cref{algQTF:numerical_inversion_bivariate_PGF} summarizes the numerical scheme for inverting bivariate PGFs.

\begin{algorithm}%
\caption{Numerical inversion bivariate PGF}%
\label{algQTF:numerical_inversion_bivariate_PGF}%
\begin{algorithmic}[1]%
\State Input $\PGF{x,y}$
\State Decide for which $k,l \ge 0$ you wish to approximate $p(k,l)$
\State Pick $j_1,j_2 \in \Nat$ and $0 < \radiusNI_1(k),\radiusNI_2(l) < 1$
\State Initialize $\approximate{p}(k,l) = 0$
\For{$m = -j_1,-j_1 + 1,\ldots,j_1 - 1$}
    \State Set $x_m = \radiusNI_1(k) \, \euler^{\complexunit \pi m/j_1}$
    \For{$n = -j_2,-j_2 + 1,\ldots,j_2 - 1$}
        \State Set $y_n = \radiusNI_2(l) \, \euler^{\complexunit \pi n/j_2}$
        \State Update
            \begin{equation}%
            \approximate{p}(k,l) = \approximate{p}(k,l) + \euler^{- \complexunit \pi (k \frac{m}{j_1} + l \frac{n}{j_2})} \PGF{x_m,y_n}
            \end{equation}%
    \EndFor
\EndFor
\State Normalize
    \begin{equation}%
    \approximate{p}(k,l) = \frac{\approximate{p}(k,l)}{4 j_1 j_2 \radiusNI_1(k)^k \radiusNI_2(l)^l}
    \end{equation}%
\State Approximate $p(k,l)$ as $p(k,l) \approx \approximate{p}(k,l)$
\end{algorithmic}%
\end{algorithm}%


\subsection{Inverting univariate Laplace-Stieltjes transforms}%
\label{subsecQTF:numerical_inversion_univariate_LSTs}%

Most of the continuous random variables that we consider in this book are non-negative and have a continuous probability density function. With these characteristics the LST is given by
\begin{equation}%
\LST{\LSTarg} \defi \int_0^\infty \euler^{-\LSTarg t} f(t) \, \dinf t, \quad \RealPart{\LSTarg} > 0, \label{eqnQTF:univariate_LST}
\end{equation}%
where $f(\cdot)$ is a probability density function that we often wish to retrieve from $\LST{\cdot}$. An integral formula for the inverse Laplace transform called the Bromwich integral provides an expression for $f(t)$ in terms of a contour integral:
\begin{equation}%
f(t) = \frac{1}{2\pi \complexunit} \oint_{C_r} \euler^{\LSTarg t} \LST{\LSTarg} \, \dinf \LSTarg, \label{eqnQTF:numerical_inversion_LST_Bromwich_integral}
\end{equation}%
where $C_r$ is the vertical line in the complex plane with constant real part equal to $r$. The value of $r$ must be chosen such that all singularities of $\LST{\cdot}$ are to the left of the vertical line. Since we are dealing with LSTs, we can safely pick any positive value for $r$. Notice that \eqref{eqnQTF:numerical_inversion_LST_Bromwich_integral} establishes that an LST uniquely defines the underlying probability distribution function.

One of the standard inversion algorithm for LSTs is called the Euler method and is presented in \cite[Section~1]{Abate1995_LT_inversion_probability_distributions}. The derivation of the approximation resembles the derivation of the approximation for the univariate PGF presented in \cref{subsecQTF:numerical_inversion_univariate_PGFs}, so we omit it here. The algorithm approximates $f(t)$ by $\approximate{f}(t)$. To construct the approximation $\approximate{f}(t)$ we require the definition
\begin{equation}%
s_n(t) \defi \frac{\euler^{\discretizationError/2}}{2t} \RealPartLargerBrackets{L\bigl( \frac{\discretizationError}{2t} \bigr)} + \frac{\euler^{\discretizationError/2}}{t} \sum_{k = 1}^n (-1)^k \RealPartLargerBrackets{L\bigl( \frac{\discretizationError}{2t} + \frac{\complexunit \pi k}{t} \bigr)}, \label{eqnQTF:univariate_LST_definition_s_n(t)}
\end{equation}%
where we still need to choose $\discretizationError$. In \cite[Equation~(13)]{Abate1995_LT_inversion_probability_distributions} it is explained that $s_n(t)$ is an approximation of a more accurate infinite series expression for $f(t)$ by truncating the infinite series to $n$ terms. By increasing $n$ in \eqref{eqnQTF:univariate_LST_definition_s_n(t)}, the accuracy of the approximation increases. Euler summation can be used to accelerate convergence of the approximation (to get more accurate results with fewer computations):
\begin{equation}%
\approximate{f}(t) = \sum_{k = 0}^m \binom{m}{k} 2^{-m} s_{n + k}(t). \label{eqnQTF:bivariate_LST_numerical_inversion}
\end{equation}%
Since $\sum_{k = 0}^m \binom{m}{k} 2^{-m} = 1$ and the summands are positive, we see that $\approximate{f}(t)$ is the weighted average of the terms $s_n(t),s_{n + 1}(t),\ldots,s_{n + m}(t)$. More specifically, it is the \textit{binomial} average of those terms, since the weights are in terms of binomial coefficients.

It still remains to choose $\discretizationError$, $m$ and $n$. Typically, $m = 11$ and $n = 15$ produce accurate results. If more accurate results are required, the value of $n$ can be increased, but $m$ can usually remain fixed. There are various types of errors that decrease the quality of the approximation. One of those errors is the discretization error, which occurs when we replace an integral by a series, as was done here. The value of $\discretizationError$ directly influences the magnitude of this discretization error $\approximate{e}_{\textup{d}}(t)$, since
\begin{equation}%
|\approximate{e}_{\textup{d}}(t)| \le \frac{\euler^{-\discretizationError}}{1 - \euler^{-\discretizationError}} \approx \euler^{-\discretizationError},
\end{equation}%
where the approximate equality holds if $\euler^{-\discretizationError}$ is small. If we choose $\discretizationError$ too large, then we can run into computational difficulties, such as loss of significant digit, or roundoff errors. There is no exact error bound on the approximation \eqref{eqnQTF:bivariate_LST_numerical_inversion}, but in most cases, we can select $\discretizationError = d \log{10}$ to get $d - 1$ correct decimals. We often select $d = 8$ and use $\discretizationError = 8 \log{10} \approx 18.4$.

For reference in the following chapters, we present in full the algorithm to numerically invert univariate LSTs.

\begin{algorithm}%
\caption{Numerical inversion univariate LST}%
\label{algQTF:numerical_inversion_univariate_LST}%
\begin{algorithmic}[1]%
\State Input $\LST{\LSTarg}$
\State Decide for which $t > 0$ you wish to approximate $f(t)$
\State Pick $m,n \in \Nat$ and $\discretizationError \in \Real_+$ \Comment{$m = 11$, $n = 15$ and $\discretizationError = 18.4$ works well}
\For{$k = 0,1,\ldots,m + n$}
    \State Calculate $\LST{ \discretizationError/(2t) + \complexunit \pi k/t }$
\EndFor
\State Compute $s_n(t)$ from \eqref{eqnQTF:univariate_LST_definition_s_n(t)}
\For{$k = 1,2,\ldots,m$}
    \State Compute
        \begin{equation}%
        s_{n + k}(t) = s_{n + k - 1}(t) + \frac{\euler^{\discretizationError/2}}{t} (-1)^{n + k} \RealPartLargerBrackets{\LSTbig{ \frac{\discretizationError}{2t} + \frac{\complexunit \pi (n + k)}{t} }}
        \end{equation}%
\EndFor
\State Compute $\approximate{f}(t)$ using the binomial average \eqref{eqnQTF:bivariate_LST_numerical_inversion}
\State Approximate $f(t)$ as $f(t) \approx \approximate{f}(t)$
\end{algorithmic}%
\end{algorithm}%

The inversion algorithm also works for distributions that have discontinuities, but the results might be distorted due to some oscillations around the points of discontinuity. By increasing the accuracy of the method by, e.g., increasing $m$ and $n$ in \cref{algQTF:numerical_inversion_univariate_LST}, one can damp these oscillations. We treat an example to show how this works in practice.

\begin{example}[Uniform services]\label{exQTF:MG1_numerical_inversion_LST}%
Consider the $M/G/1$ system with arrival rate $\la = 0.35$ and service times $B$ that are distributed according to a uniform distribution on the interval $[1,3]$ and mean 2. Specifically, the probability density function of $B$ is given by
\begin{equation}%
f_B(t) = \frac{1}{2}, \quad t \in [1,3]
\end{equation}%
and the LST is
\begin{equation}%
\LST{B}{\LSTarg} = \frac{\euler^{-\LSTarg} - \euler^{-3\LSTarg}}{2\LSTarg}.
\end{equation}%

The LST of the sojourn time of an $M/G/1$ queue is given in \eqref{eqnQTF:MG1_LST_sojourn_time_explicit} and is in this case
\begin{equation}%
\LST{S}{\LSTarg} = (1 - \rho) \frac{\euler^{-\LSTarg} - \euler^{-3\LSTarg}}{2\LSTarg} \frac{\LSTarg}{\la \frac{\euler^{-\LSTarg} - \euler^{-3\LSTarg}}{2\LSTarg} + \LSTarg - \la},
\end{equation}%
where $\rho = \la \E{B} = 0.7$. Explicitly inverting this LST to obtain $f_S(\cdot)$ proves to be difficult due to the exponential functions. We therefore turn to the numerical inversion techniques presented in \cref{algQTF:numerical_inversion_univariate_LST}. We will see that the uniform service time distribution causes numerical inaccuracies due to the discontinuities of $f_B(t)$ at $t = 1$ and $t = 3$. For the algorithm settings, we will fix $\discretizationError = 18.4$ and show the influence of $m$ and $n$.

\begin{figure}
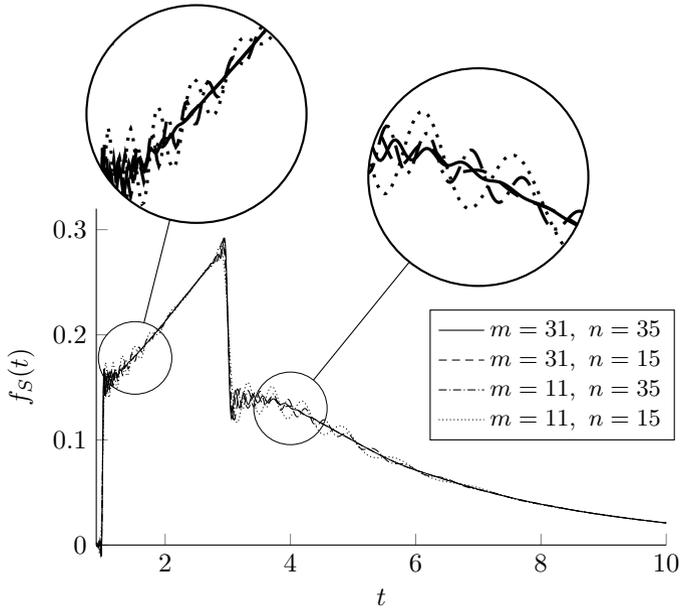
%
\centering%
\includestandalone{Chapters/QTF/TikZFiles/numerical_inversion_LST_MG1_uniform_distribution}%
\caption{Probability density function of the sojourn time of the $M/G/1$ queue of \protect\cref{exQTF:MG1_numerical_inversion_LST} with uniformly distributed services times for various inputs $m$ and $n$ of \protect\cref{algQTF:numerical_inversion_univariate_LST}.}%
\label{figQTF:numerical_inversion_LST_MG1_uniform_distribution}
\end{figure}%

\begin{table}%
\centering%
\begin{tabular}{cc|ccc}%
    &    & \multicolumn{3}{c}{$m$} \\
    &    & 11 & 21 & 31 \\
\hline
    & 15 & 6.001 & 8.803 & 11.61 \\
$n$ & 25 & 7.657 & 10.48 & 13.31 \\
    & 35 & 9.273 & 12.03 & 14.67
\end{tabular}%
\caption{Computation times (in seconds) required to numerically invert the LST of the sojourn time using \protect\cref{algQTF:numerical_inversion_univariate_LST} for the queueing system and algorithm settings described in \protect\cref{exQTF:MG1_numerical_inversion_LST} and varying $m$ and $n$.}%
\label{tblQTF:computation_times_numerical_inversion_LST_MG1_uniform_distribution}%
\end{table}%

\cref{figQTF:numerical_inversion_LST_MG1_uniform_distribution} shows that at the points of discontinuity, the approximation obtained from \cref{algQTF:numerical_inversion_univariate_LST} oscillates. This oscillations is damped when the values of $m$ and $n$ increase. It is important that the inverted function is checked for irregularities such as the one we encounter now. In \cref{tblQTF:computation_times_numerical_inversion_LST_MG1_uniform_distribution} we display the time required to compute $f_S(t)$ for each $t$ from 0.9 to 10 in steps of size 0.001 (so 9101 times) for each combination of algorithm settings.
\end{example}%


\section{Takeaways}%
\label{secQTF:takeaways}%

Transforms are powerful tools that can simplify and facilitate calculating with distributions. Transforms enjoy the property that they uniquely characterize probability distributions. Once the PGF or LST of a random variable is known, all moments and the probability distribution often readily follows. Another advantage of transforms, of particular use in this book, is that an infinite system of linear balance equations can be converted into a single functional equation for the PGF; see \cref{subsecQTF:simple_queue,subsecQTF:MG1_departure_distribution}.

Transforms need to be inverted. This can be done by differentiation or  integration. Both methods can be useful and will be applied in later chapters. Sometimes a PGF can be written in the form of an infinite sum involving powers of $\PGFarg$. In those cases, the coefficients of $\PGFarg^k$ together constitute the probability mass function. 

In this chapter we have embedded the $M/G/1$ queue at departure instants and the $G/M/1$ queue at arrival instants. Both approaches lead to a state space $\Nat_0$ with a particular transition structure for each queue. The linear systems of balance equations associated with these embedded Markov chains are amenable to transform analysis and lead to some canonical relations such as the Pollaczek-Khinchin formula and distributional Little's law. The embedding technique is not restricted to the $M/G/1$ or $G/M/1$ queue and can be used for many stochastic models.

The embedded Markov chains associated with the $M/G/1$ and $G/M/1$ system are skip-free to the left and right, respectively. In \cref{ch:skip-free_one_direction} we introduce processes that also possess the skip-free property, but each state is replaced by a finite set of states. For the skip-free to the right variant of these processes, the transform analysis that was used in this chapter can be extended to determine the equilibrium distribution. For the other variant we turn to matrix-analytic methods.

In this chapter we have encountered various product-form solutions. For the Erlang service time distribution, \eqref{eqnQTF:MG1_example_d(i)} shows that the departure distribution of the $M/G/1$ queue has a product-form solution. If we are able to write the PGF of the departure distribution in an $M/G/1$ as a ratio of polynomial without any common roots, then the departure distribution is given by a sum of product-form solutions, see \eqref{eqnQTF:MG1_partial_fraction_decomposition_P-F_solution}. For any inter-arrival time distribution, the arrival distribution is given by the product-form solution \eqref{eqnQTF:GM1_P-F_solution}. In case of bounded jumps in both directions in the random walk setting, we find the product-form solution \eqref{eqnQTF:random_walk_bounded_support_eq_dist} for the equilibrium distribution.


\printendnotes%
%


\part{Basic processes}%

\chapter{Birth--and--death processes}%
\label{ch:birth--and--death_processes}%

In this chapter we introduce a structured class of Markov processes called the birth--and--death processes. This structure allows for local balance equations to be used in the derivation of the equilibrium distribution.


\section{General birth--and--death processes}%
\label{secBD:birth--and--death_processes}%

We start by defining the birth--and--death process.

\begin{definition}%
A {\itshape birth--and--death \textup{(BD)} process} is a Markov process on the state space $\statespace = \{0,1,\ldots,S\}$ with $S$ possibly infinite, where transitions are between adjacent states: from state $i$ to state $i + 1$ (a \textit{birth}) and to state $i - 1$ (a \textit{death}).
\end{definition}%
Unless stated otherwise, we focus on BD processes that have an infinite state space $\statespace = \Nat_0$ and all transition rates are strictly positive, leading to an irreducible Markov process. Birth rates are commonly denoted as $\la_i$ and death rates as $\mu_i$. This leads to the following transition rate matrix of the BD process:
\begin{equation}\label{eqnBD:Q_infinite_state_space}%
Q = \begin{bmatrix}%
-\la_0 & \la_0 \\
\mu_1  & -(\la_1 + \mu_1) & \la_1 \\
       & \mu_2            & -(\la_2 + \mu_2) & \la_2 \\
       &                  & \mu_3            & -(\la_3 + \mu_3) & \la_3 \\
       &                  &                  & \ddots           & \ddots & \ddots
\end{bmatrix},%
\end{equation}%
where unspecified elements are zero. A BD process with rates $\la_i = \la$ and $\mu_i = \mu$ is called \textit{homogeneous} and $\textit{inhomogeneous}$ otherwise. The transition rate diagram of the BD process is depicted in \cref{figBD:BD_infinite_state_space}.

\begin{figure}%
\centering%
\includestandalone{Chapters/BD/TikZFiles/transition_rate_diagram_infinite_state_space}%
\caption{A BD process on the state space $\statespace = \Nat_0$.}%
\label{figBD:BD_infinite_state_space}
\end{figure}%

The sojourn time in state $i$ is the minimum of the time to transit to state $i + 1$ and the time to transit to state $i - 1$. Since both of these times are exponentially distributed, the time spent in state $i$ until a transition occurs is exponentially distributed with parameter $\la_i + \mu_i$. Given that a transition occurs, we have a birth with probability $\la_i/(\la_i + \mu_i)$ or a death with probability $\mu_i/(\la_i + \mu_i)$.

The above reasoning indicates that for simulation purposes one needs to repeat these two steps: sample a sojourn time and flip a biased coin to determine to which state the process transitions. This simple procedure is summarized in \cref{algBD:simulation}.

\begin{algorithm}%
\caption{Simulation of a BD process (with $\mu_0 = 0$).}%
\label{algBD:simulation}%
\begin{algorithmic}[1]%
\State Input $t_{\textup{max}}$, $X(0)$, and birth and death rates
\State $t = 0$
\While{$t < t_{\textup{max}}$}
    \State Sample the sojourn time $t^*$ in state $X(t)$ from $\Exp{\la_{X(t)} + \mu_{X(t)}}$
    \State Sample $d$ from $\Ber{\mu_{X(t)}/(\la_{X(t)} + \mu_{X(t)})}$
    \State $X(t + t^*) = X(t) + (-1)^d$
    \State $t = t + t^*$
\EndWhile
\end{algorithmic}%
\end{algorithm}%

Using \cref{algBD:simulation}, we simulate one sample path each for three different homogeneous BD processes. These sample paths are depicted in \cref{figBD:simulation}. Notice that for $\la - \mu < 0$ the process seems to have a drift towards zero. On the other hand, if $\la - \mu > 0$, $X(t)$ seems to increase as time passes. For the case $\la - \mu = 0$ no clear conjectures can be made. Intuitively these three statements make sense, $\la$ is the rate at which the process transitions upwards and $\mu$ is the rate at which the process transitions downwards. So if $\la > \mu$ there is a net rate upwards and vice versa for $\la < \mu$. We formalize this intuition and extend it to inhomogeneous BD processes in \cref{secBD:equilibrium_distribution}. We will see that this net rate decides if the Markov process is transient or recurrent.

\begin{figure}
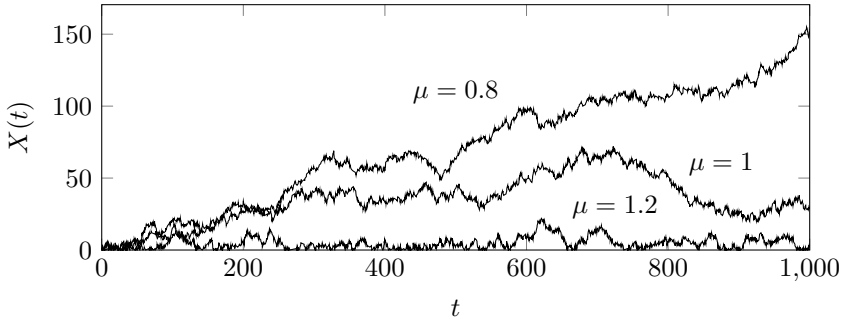
%
\centering%
\includestandalone{Chapters/BD/TikZFiles/simulation_BD_processes}%
\caption{Sample paths of a BD process with $\la_i = 1$, $\mu_i = \mu$ and $X(0) = 0$.}%
\label{figBD:simulation}%
\end{figure}%


\section{Time-dependent behavior}%
\label{secBD:time_dependent}%

Analyzing time-dependent behavior of BD processes is difficult. Explicit expressions for the transition functions
\begin{equation}%
p_{i,j}(t) \defi \Prob{ X(t) = j \mid X(0) = i}
\end{equation}%
exist, but only for special cases and often involve special functions related to orthogonal polynomials (see Karlin and McGregor \cite{Karlin1959_Random_walks} and Karlin and Taylor \cite{Karlin1975_First_course_stochastic_processes}). Nonetheless, we review some of the techniques used.

The transition functions satisfy both the Kolmogorov forward and backward equation, see \cref{thmMP:Kolmogorov_backward,thmMP:Kolmogorov_forward}. The Kolmogorov forward equation in case of a BD process reads in scalar form
\begin{align}%
\frac{\dinf}{\dinf t} p_{i,0}(t) &= -\underbrace{\la_0 p_{i,0}(t)}_{\textup{rate out}} + \underbrace{\mu_1 p_{i,1}(t)}_{\textup{rate in}}, \label{eqnBD:forward_Kolmogorov_p_i0} \\
\frac{\dinf}{\dinf t} p_{i,j}(t) &= \underbrace{\la_{j - 1} p_{i,j - 1}(t) + \mu_{j + 1} p_{i,j + 1}(t)}_{\textup{rate in}} - \underbrace{(\la_j + \mu_j) p_{i,j}(t)}_{\textup{rate out}}, \quad j \ge 1, \label{eqnBD:forward_Kolmogorov_p_ij}
\end{align}%
with the initial condition $p_{i,i}(0) = 1$.

\begin{example}[Poisson process]\label{exBD:Poisson_process_time-dependent}%
The homogeneous Poisson process can be seen as a BD process with $\la_i = \la$, $\mu_i = 0$ and $X(0) = 0$. This pure birth process will drift off towards infinity since all states are transient. The transition function $p_{0,j}(t)$ is the probability that $j$ births have occurred in the interval $[0,t]$. Obviously, the number of births in the interval $[0,t]$ is distributed according to a Poisson distribution with parameter $\la t$. We will verify this statement through \eqref{eqnBD:forward_Kolmogorov_p_i0}--\eqref{eqnBD:forward_Kolmogorov_p_ij}, which for $p_{0,j}(t)$ read
\begin{align}%
\frac{\dinf}{\dinf t} p_{0,0}(t) &= -\la p_{0,0}(t), \\
\frac{\dinf}{\dinf t} p_{0,j}(t) &= \la p_{0,j - 1}(t) - \la p_{0,j}(t), \quad j \ge 1. \label{eqnBD:Poisson_process_Kolmogorov_forward_equation_p_0,j}
\end{align}%
Together with $p_{0,0}(0) = 1$ this leads to $p_{0,0}(t) = \euler^{-\la t}$. Equation~\eqref{eqnBD:Poisson_process_Kolmogorov_forward_equation_p_0,j} is separable using
\begin{equation}%
\frac{\dinf}{\dinf t} \bigl( \euler^{\la t} p_{0,j}(t) \bigr) = p_{0,j}(t) \frac{\dinf}{\dinf t} \euler^{\la t}  + \euler^{\la t} \frac{\dinf}{\dinf t} p_{0,j}(t) = \la \euler^{\la t} p_{0,j - 1}(t).
\end{equation}%
So, by direct integration we obtain
\begin{equation}%
\euler^{\la t} p_{0,j}(t) = \la \int_0^t \euler^{\la u} p_{0,j - 1}(u) \, \dinf u.
\end{equation}%
The transition functions can be solved recursively starting from $p_{0,0}(t)$. Let us determine the first few terms. For $j = 1$, we derive
\begin{equation}%
p_{0,1}(t) = \la \euler^{-\la t} \int_0^t \euler^{\la u} p_{0,0}(u) \, \dinf u = \la \euler^{-\la t} \int_0^t \euler^{\la u} \euler^{-\la u} \, \dinf u = (\la t) \euler^{-\la t}.
\end{equation}%
The expression for $p_{0,1}(t)$ is used to determine the second term:
\begin{equation}%
p_{0,2}(t) = \la \euler^{-\la t} \int_0^t \euler^{\la u} (\la u)\euler^{-\la u} \, \dinf u = \frac{(\la t)^2}{2} \euler^{-\la t}.
\end{equation}%
The third term is
\begin{equation}%
p_{0,3}(t) = \la \euler^{-\la t} \int_0^t \euler^{\la u} \frac{(\la u)^2}{2} \euler^{-\la u} \, \dinf u = \frac{(\la t)^3}{3!} \euler^{-\la t}.
\end{equation}%
A pattern starts to show itself. Induction on $j$ is used to show that the explicit expression
\begin{equation}%
p_{0,j}(t) = \frac{(\la t)^j}{j!} \euler^{-\la t}, \quad j \in \statespace, ~ t \ge 0
\end{equation}%
is correct. This verifies that the number of births in the interval $[0,t]$ is indeed $\Poisson{\la t}$.
\end{example}%

\begin{example}[$M/M/\infty$ queue]\label{exBD:MMinf_time-dependent}%
We now set $\la_i = \la$ and $\mu_i = i \mu$. This BD process models for example a population that grows exclusively through immigration with rate $\la$ and all individuals die independently of each other with rate $\mu$ \cite[Section~4.6]{Karlin1975_First_course_stochastic_processes}; or packets arriving according to a Poisson process with rate $\la$ that are routed to their next destination after an exponential amount of time with rate $\mu$. In the queueing context we refer to a birth as an arrival of a job and a death as a departure of a job.

Suppose $X(0) = 0$ and we are interested in the transition functions $p_{0,j}(t)$. For the event $X(t) = j$ to occur, we require at least $j$ arrivals. If $k \ge j$ jobs arrive, we furthermore require $k - j$ departures. The probability that $k$ jobs arrive in the time interval $[0,t]$ follows from the Poisson distribution and is
\begin{equation}%
\euler^{-\la t} \frac{(\la t)^k}{k!}.
\end{equation}%
Conditioning on the fact that there are $k$ arrivals in the time interval $[0,t]$, we know that the arrival instant of each job is independent of the arrival instants of other jobs and is moreover uniformly distributed in the interval $[0,t]$. So, the probability $q(t)$ that a job is still in the system at time $t$ follows by conditioning on the arrival time:
\begin{equation}%
q(t) = \int_0^t \euler^{-\mu u} \frac{1}{t} \, \dinf u = \frac{1 - \euler^{-\mu t}}{\mu t}.
\end{equation}%
The probability that $j$ jobs remain at time $t$ conditioned on $k$ arriving in the interval $[0,t]$ follows a Bernoulli distribution and leads to an explicit expression for $p_{0,j}(t)$:
\begin{align}%
p_{0,j}(t) &= \Prob{0}{X(t) = j} \notag \\
&= \sum_{k \ge j} \Prob{0}{X(t) = j \mid \text{ $k$ arrivals in $[0,t]$}} \Prob{\text{$k$ arrivals in $[0,t]$}} \notag \\
&= \sum_{k \ge j} \binom{k}{j} (1 - q(t))^{k - j} q(t)^j \euler^{-\la t} \frac{(\la t)^k}{k!} \notag \\
&= \euler^{-\la t q(t)} \frac{(\la t q(t))^j}{j!} = \euler^{- \frac{\la}{\mu} (1 - \euler^{-\mu t})} \frac{\bigl( \frac{\la}{\mu} (1 - \euler^{-\mu t}) \bigr)^j}{j!}.
\end{align}%
The explicit expression for $p_{0,j}(t)$ allows for a simple determination of the transient mean as
\begin{equation}%
\E{0}{X(t)} = \sum_{j \ge 0} j p_{0,j}(t) = \frac{\la}{\mu} \bigl( 1 - \euler^{-\mu t} \bigr).
\end{equation}%
In conclusion, $X(t)$ conditional on $X(0) = 0$ is a Poisson distribution at each time $t$ with parameter $(\la/\mu) ( 1 - \euler^{-\mu t})$.\endnote{For a similar discussion of the $M/G/\infty$ queue, where the service time distribution is allowed to be any distribution (general, hence the G), see \cite[Section~3.2]{Takacs1962_Theory_Queues}.}
\end{example}%

We now consider the first time at which the BD process $\{ X(t) \}_{t \ge 0}$ enters a state $j$, starting from a state $i$. We recall the definition of a hitting time random variable in \eqref{eqnMP:hitting_time_random_variable} as
\begin{equation}%
\htt{i,j} \defi \inf\{ t > 0 : \lim_{s \uparrow t} X(s) \neq X(t) = j \mid X(0) = i \}, \label{eqnBD:hitting_time_random_variable}
\end{equation}%
We will make use of the LST
\begin{equation}%
\LST{i,j}{\LSTarg} \defi \E{\euler^{- \LSTarg \htt{i,j}}}, \quad \RealPart{\LSTarg} > 0.
\end{equation}%
Recall that a LST uniquely characterizes the distribution of a random variable.

\begin{example}[Regenerative structure]\label{exBD:MM1_busy_period}%
\begin{figure}
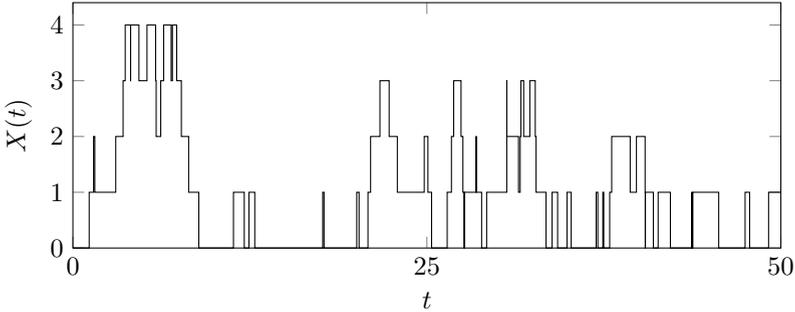
%
\centering%
\includestandalone{Chapters/BD/TikZFiles/simulation_homogeneous_BD_process}%
\caption{Sample path of a BD process with $\la_i = 1$, $\mu_i = 1.5$ and $X(0) = 0$.}%
\label{figBD:simulation_homogeneous}%
\end{figure}%
An irreducible BD process has a regenerative structure. Assume that at a particular time the BD process is in state 0. The process stays in state 0 for an exponential amount of time with parameter $\la_0$. After this time it transitions to state 1. Under the condition that the BD process is recurrent, it returns to state 0 after some time with probability 1. The time spent in state 0 is called an \textit{idle period} and the time it takes to go from state 1 to state 0 is called the \textit{busy period}. So, an irreducible BD process with recurrent states alternates between idle and busy periods, see \cref{figBD:simulation_homogeneous}. The terminology idle and busy period comes from the interpretation of a homogeneous BD process as the $M/M/1$ single server queue. In state 0 the server is idle and in all other states the server is busy serving jobs.

The length of a busy period is the hitting time random variable $\htt{1,0}$ with LST $\LST{1,0}{\LSTarg}$. Let us assume that the BD process is homogeneous with $\la_i = \la$ and $\mu_i = \mu$. Note that $\htt{1,0}$ is the sojourn time in state 1 plus the time it takes to reach state 0 from the state the process transitions to. We derive $\LST{1,0}{\LSTarg}$ using this observation, a one-step analysis and the strong Markov property:
\begin{equation}%
\LST{1,0}{\LSTarg} = \underbrace{\frac{\mu}{\la + \mu}}_{ \substack{\textup{prob. to jump}\\ \textup{to state 0}}} \underbrace{\frac{\la + \mu}{\la + \mu + \LSTarg}}_{\textup{LST of }H_1} \cdot \underbrace{1}_{\E{\euler^{-\LSTarg\cdot 0}}} + \underbrace{\frac{\la}{\la + \mu}}_{ \substack{\textup{prob. to jump}\\ \textup{to state 2}}} \frac{\la + \mu}{\la + \mu + \LSTarg} \LST{2,0}{\LSTarg}.
\end{equation}%
Due to the BD structure of the Markov process, we have $\htt{2,0} = \htt{2,1} + \htt{1,0}$, where $\htt{2,1}$ and $\htt{1,0}$ are independent random variables. More importantly, for homogeneous BD processes, the time it takes to go from state 2 to state 1 is exactly the same as the time it takes to go from state 1 to state 0 and in general the time it takes to go from state $n \ge 1$ to state $n - 1$. So, $\LST{2,0}{\LSTarg} = \LST{1,0}{\LSTarg}^2$ and we know that $\LST{1,0}{\LSTarg}$ is a solution to the polynomial
\begin{equation}%
\la x^2 - (\la + \mu + \LSTarg) x + \mu = 0.
\end{equation}%
This equation has the two roots
\begin{equation}%
x_{\pm}(\LSTarg) = \frac{\la + \mu + \LSTarg \pm \sqrt{(\la + \mu + \LSTarg)^2 - 4 \la \mu}}{2\la}.
\end{equation}%
A LST of a non-negative random variable has absolute value less than one for all $\LSTarg$ with $\RealPart{\LSTarg} > 0$. Since $0 < |x_-(\LSTarg)| < 1 < |x_+(\LSTarg)|$ for $\RealPart{\LSTarg} > 0$,
\begin{equation}%
\LST{1,0}{\LSTarg} = \frac{\la + \mu + \LSTarg - \sqrt{(\la + \mu + \LSTarg)^2 - 4 \la \mu}}{2\la}.
\end{equation}%
The expectation of the length of the busy period is determined from its LST
\begin{equation}%
\E{\htt{1,0}} = - \frac{\dinf}{\dinf \LSTarg} \LST{1,0}{\LSTarg} \Big\vert_{\LSTarg = 0} = \begin{cases}%
\frac{1}{\mu - \la}, & \la < \mu, \\
\infty, & \la = \mu,
\end{cases}%
\end{equation}%
and we agree to write $\E{\htt{1,0}} = \infty$ if $\int_0^\infty f_{\htt{1,0}}(t) \, \dinf t < 1$, which indicates that starting in state 1, there is a non-zero probability that state 0 will never be hit. This is the case if $\la > \mu$; we do not prove this statement. Here we already see the relation with positive recurrence ($\la < \mu$), null recurrence ($\la = \mu$) and transience ($\la > \mu$), that was observed in \cref{figBD:simulation}.
\end{example}%

If the system initially is in state 0 and the target state is $n$, we can write the hitting time $\htt{0,n}$ as a sum of \textit{independent} random variables:
\begin{equation}%
\htt{0,n} = \htt{0,1} + \htt{1,2} + \cdots + \htt{n - 1,n}.
\end{equation}%
The independence property is crucial in the analysis that will follow. Clearly, $\htt{0,1}$ is an exponential random variable with parameter $\la_0$. More importantly, $\htt{0,n}$ turns out to be a sum of $n$ exponential random variables. Albeit true, this result is rather counterintuitive. Consider for example $\htt{0,2} = \htt{0,1} + \htt{1,2}$. Here $\htt{0,1}$ is still an exponential random variable with parameter $\la_0$, while $\htt{1,2}$ is definitely not an exponential random variable, yet their sum is. The crux lies in the fact that $\htt{0,2}$ is the sum of two exponential random variables where both parameters are \textit{different} from $\la_0$.

\begin{theorem}\label{thmBD:first_passage_time}%
The hitting time $\htt{0,n}$ is distributed as the sum of $n$ exponential random variables:\endnote{Keilson provides two analytical proofs of \cref{thmBD:first_passage_time} in \cite[Section~1]{Keilson1971_Passage_time_densities} and \cite[Section~5.1]{Keilson1979_Markov_Chain_Models}, but does not characterize the parameters of the exponential random variables. Fill \cite[Theorem~1.1]{Fill2009_BD_passage_time_distribution} succeeds in proving the same result using probabilistic arguments and moreover characterizes the parameters of the exponential random variables.}
\begin{equation}%
\htt{0,n} = X_0^{(n)} + X_1^{(n)} + \cdots + X_{n - 1}^{(n)},
\end{equation}%
with $X_i^{(n)} \sim \Exp{\theta_i^{(n)}}$ and $\theta_i^{(n)}$ the $n$ positive eigenvalues of $-Q^{(n)}$, where $Q^{(n)}$ is the transition rate matrix of the BD process on the states $\{ 0,1,\ldots,n \}$ with $n$ an absorbing state.
\end{theorem}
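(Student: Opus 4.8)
The plan is to realize $\htt{0,n}$ as the time to absorption of a finite continuous-time Markov chain and to compute its Laplace--Stieltjes transform in closed form, reading off the convolution-of-exponentials structure from a factorization of the denominator. Write $A$ for the $n\times n$ matrix obtained from $Q^{(n)}$ by striking the row and column of the absorbing state $n$; this is the tridiagonal sub-generator on the transient states $\{0,1,\ldots,n-1\}$, with diagonal entries $-(\la_i+\mu_i)$ (and $\mu_0=0$), super-diagonal entries $\la_i$ and sub-diagonal entries $\mu_i$. Starting from state $0$, the survival function of $\htt{0,n}$ is $\eb{1}^\transpose\euler^{At}\oneb$ (the probability of still being among the transient states), so differentiating and transforming gives, for $\RealPart{\LSTarg}>0$,
\[
\LST{0,n}{\LSTarg}=\eb{1}^\transpose(\LSTarg\I-A)^{-1}(-A\oneb).
\]
Since the only way to leave $\{0,\ldots,n-1\}$ is the jump from $n-1$ to $n$ at rate $\la_{n-1}$, one has $-A\oneb=\la_{n-1}\eb{n}$, hence $\LST{0,n}{\LSTarg}=\la_{n-1}\bigl[(\LSTarg\I-A)^{-1}\bigr]_{1,n}$.

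Next I would evaluate this resolvent entry by Cramer's rule. Deleting the last row and first column of the tridiagonal matrix $\LSTarg\I-A$ leaves a lower-triangular matrix with diagonal $-\la_0,\ldots,-\la_{n-2}$, so the relevant cofactor equals $\prod_{a=0}^{n-2}\la_a$ (the two sign factors cancel). Therefore
\[
\LST{0,n}{\LSTarg}=\frac{\prod_{a=0}^{n-1}\la_a}{\det(\LSTarg\I-A)}=\frac{\prod_{a=0}^{n-1}\la_a}{\prod_{i=0}^{n-1}\bigl(\LSTarg+\theta_i^{(n)}\bigr)},
\]
where $\theta_0^{(n)},\ldots,\theta_{n-1}^{(n)}$ denote the eigenvalues of $-A$, equivalently the nonzero eigenvalues of $-Q^{(n)}$. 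Putting $\LSTarg=0$ forces $\prod_i\theta_i^{(n)}=\prod_a\la_a$, so $\LST{0,n}{\LSTarg}=\prod_{i=0}^{n-1}\theta_i^{(n)}/(\theta_i^{(n)}+\LSTarg)$, which is exactly the LST of $X_0^{(n)}+\cdots+X_{n-1}^{(n)}$ with independent $X_i^{(n)}\sim\Exp{\theta_i^{(n)}}$; uniqueness of LSTs then finishes the proof.

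The substantive point is that this last formula genuinely describes a sum of exponentials, i.e. that the $\theta_i^{(n)}$ are real and strictly positive (and distinct, for the sharp form of the statement). Here I would use the structure of $A$: the products of its paired off-diagonal entries are $\la_i\mu_{i+1}>0$, so a diagonal similarity transforms $-A$ into a symmetric tridiagonal (Jacobi) matrix with the same eigenvalues; these are therefore real, and, the off-diagonal entries being nonzero, simple. Positivity follows from Sylvester's criterion once each leading principal minor $d_k$ of $-A$ is shown to be positive: expanding along the bottom row gives $d_k=(\la_{k-1}+\mu_{k-1})d_{k-1}-\la_{k-2}\mu_{k-1}d_{k-2}$ with $d_1=\la_0$, which solves to $d_k=\prod_{a=0}^{k-1}\la_a>0$. (The case $k=n$ also re-derives $\prod_i\theta_i^{(n)}=\prod_a\la_a$, and invertibility of $-A$ gives $\Prob{\htt{0,n}<\infty}=1$ for free.)

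I expect the spectral step to be the only real obstacle: everything else is bookkeeping with tridiagonal determinants, but pinning down that the denominator of $\LST{0,n}{\LSTarg}$ splits into real factors $\LSTarg+\theta_i^{(n)}$ with $\theta_i^{(n)}>0$ is where the balanced-tridiagonal (Jacobi) structure and the sign pattern of $Q^{(n)}$ are essential. A more probabilistic alternative, in the spirit of Fill's argument, would instead decompose $\htt{0,n}=\htt{0,1}+\htt{1,2}+\cdots+\htt{n-1,n}$ into independent one-step first-passage times, derive a continued-fraction recursion for the $\LST{i,i+1}{\LSTarg}$ by conditioning on the first jump (as in \cref{exBD:MM1_busy_period}), and match its denominator polynomials --- which are, up to normalization, the orthogonal polynomials of the BD process --- against the characteristic polynomial of $-Q^{(n)}$; this route ultimately rests on the same spectral facts.
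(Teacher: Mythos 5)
Your proposal is correct, and it takes a genuinely different route from the paper's. The paper works probabilistically: it (implicitly) decomposes $\htt{0,n}=\htt{0,1}+\htt{1,2}+\cdots+\htt{n-1,n}$, derives a continued-fraction recursion for $\LST{n,n+1}{\LSTarg}$ from a one-step analysis and the strong Markov property, and inducts to show $\LST{0,n+1}{\LSTarg}$ is a rational function with a degree-$(n+1)$ denominator --- but it then simply \emph{cites} Keilson (1964) for the fact that the poles are real, simple and negative, and defers the identification of those poles with the eigenvalues of $-Q^{(n)}$ entirely to Fill. Your resolvent calculation closes both gaps: computing $\LST{0,n}{\LSTarg}=\la_{n-1}\bigl[(\LSTarg\I-A)^{-1}\bigr]_{1,n}$ by Cramer's rule exhibits the denominator as $\det(\LSTarg\I-A)$, which is by definition $\prod_i(\LSTarg+\theta_i^{(n)})$ with $\theta_i^{(n)}$ the eigenvalues of $-A$ (equivalently the nonzero eigenvalues of $-Q^{(n)}$), so the eigenvalue connection is immediate rather than outsourced; the Jacobi symmetrization of the tridiagonal $-A$ (valid because $\la_i\mu_{i+1}>0$) gives real, simple eigenvalues; and your Sylvester argument with the minor recursion $d_k=\prod_{a=0}^{k-1}\la_a$ proves positivity and simultaneously yields the normalization $\prod_i\theta_i^{(n)}=\prod_a\la_a$ needed to put the LST in product form. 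The trade-off: the paper's proof is shorter because it leans on two references for the spectral facts, whereas yours is longer but self-contained and, unlike the paper's, actually proves the eigenvalue characterization in the theorem statement. The ``more probabilistic alternative'' you sketch at the end is precisely the paper's argument. One minor clean-up: state explicitly that $\mu_0=0$ when you define the sub-generator $A$, so the base case $d_1=\la_0$ of your minor recursion and the first diagonal entry of $-A$ are visibly consistent.
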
%

\begin{proof}%
In terms of the Laplace transform, we require to prove
\begin{equation}%
\LST{0,n}{\LSTarg} = \prod_{i = 0}^{n - 1} \frac{\theta_i^{(n)}}{\theta_i^{(n)} + \LSTarg} \ifed \frac{N^{(n)}}{D^{(n)}}. \label{eqnBD:hitting_time_LST}
\end{equation}%
A one-step analysis and the strong Markov property gives
\begin{equation}%
\LST{n,n + 1}{\LSTarg} = \frac{\la_n}{\la_n + \mu_n + \LSTarg} + \frac{\mu_n}{\la_n + \mu_n + \LSTarg} \LST{n - 1,n + 1}{\LSTarg}, \quad n \ge 1.
\end{equation}%
Using $\LST{n - 1,n + 1}{\LSTarg} = \LST{n - 1,n}{\LSTarg} \LST{n,n + 1}{\LSTarg}$ the above equation results in the recursion
\begin{equation}%
\LST{n,n + 1}{\LSTarg} = \frac{\la_n}{\la_n + \mu_n + \LSTarg - \mu_n \LST{n - 1,n}{\LSTarg}}, \quad n \ge 1. \label{eqnBD:hitting_time_recursion}
\end{equation}%
Next, multiply \eqref{eqnBD:hitting_time_recursion} by $\LST{0,n}{\LSTarg}$ and use $\LST{0,n}{\LSTarg} = \LST{0,n - 1}{\LSTarg} \LST{n - 1,n}{\LSTarg}$ to obtain
\begin{equation}%
\LST{0,n + 1}{\LSTarg} = \frac{\la_n \LST{0,n}{\LSTarg}}{\la_n + \mu_n + \LSTarg - \mu_n \frac{\LST{0,n}{\LSTarg}}{\LST{0,n - 1}{\LSTarg}}}, \quad n \ge 1. \label{eqnBD:hitting_time_recursion_used_for_induction}
\end{equation}%
We proceed by induction. The claim \eqref{eqnBD:hitting_time_LST} is true for $n = 1$, since $\LST{0,1}{\LSTarg} = \la_0/(\la_0 + \LSTarg)$. Assume the claim is true for $n$, then \eqref{eqnBD:hitting_time_recursion_used_for_induction} reads
\begin{equation}%
\LST{0,n + 1}{\LSTarg} = \frac{ \la_n N^{(n)} N^{(n - 1)} }{ (\la_n + \mu_n + \LSTarg) N^{(n - 1)} D^{(n)}  - \mu_n  N^{(n)} D^{(n - 1)} }.
\end{equation}%
The denominator of $\LST{0,n + 1}{\LSTarg}$ will be a polynomial of degree $n + 1$. Moreover, \cite[Sections~3, 4 and 5]{Keilson1964_Transient_behavior_BD} establishes that $\LST{0,n + 1}{\LSTarg}$ has $n + 1$ negative real simple poles. Thus, $\LST{0,n + 1}{\LSTarg}$ also has the form \eqref{eqnBD:hitting_time_LST}, proving the claim.

Proving that the $\theta_i^{(n)}$ are the $n$ positive eigenvalues of $-Q^{(n)}$ is outside the scope of this book, an interested reader is pointed to \cite{Fill2009_BD_passage_time_distribution}.
\end{proof}%

Hitting times and transition functions are inherently linked. Let $\{ Y(t) \}_{t \ge 0}$ be a modified process which is identical to the BD process $\{ X(t) \}_{t \ge 0}$, except that the target state $j$ is absorbing. Since state $j$ in the process $\{ Y(t) \}_{t \ge 0}$ is absorbing, we know that if $Y(t)$ reaches state $j$, it stays there forever. In other words, if $Y(t)$ reaches state $j$ at time $t^* < t$, it will still be in state $j$ at time $t$. This leads to a relation between the hitting time $\htt{i,j}$ defined in \eqref{eqnBD:hitting_time_random_variable} and the transition functions of the modified BD process $\{ Y(t) \}_{t \ge 0}$:\endnote{The intuition for \eqref{eqnBD:link_hitting_time_transition_functions} appears in \cite[Section~3.1]{Crawford2014_BD}, but also follows from the standard probabilistic reasoning outlined above the equation.}
\begin{equation}%
\Prob{\htt{i,j} < t} = \Prob{i}{Y(t) = j}. \label{eqnBD:link_hitting_time_transition_functions}
\end{equation}%
%


\section{Equilibrium distribution}%
\label{secBD:equilibrium_distribution}%

The BD processes that we study are irreducible. The irreducibility property implies that the BD process can go from any state $i$ to any state $j$. For an irreducible Markov process the unique equilibrium distribution exists if it is positive recurrent. For BD processes we derive a necessary and sufficient condition for positive recurrence and examine the equilibrium distribution.

In \cref{secMP:equilibrium_behavior} the concepts of global and local balance are introduced. In the context of a BD process, the global balance equations are constructed by equating the rate into and out of state $i$, yielding
\begin{align}%
\la_0 p(0) &=  \mu_1 p(1), \label{eqnBD:global_balance_equations_p_0}\\
(\la_i + \mu_i) p(i) &= \la_{i - 1} p(i - 1) + \mu_{i + 1} p(i + 1), \quad i \ge 1. \label{eqnBD:global_balance_equations_p_i}
\end{align}%
The latter equation is a second-order linear recurrence equation. Whenever possible, a relation between $p(i)$ and $p(i - 1)$ is far more convenient to work with and often leads to simple ways to determine explicit expressions for the equilibrium distribution $\pb = [ p(i) ]_{i \ge 0}$. Local balance equations give this relation between $p(i)$ and $p(i - 1)$. They are derived by equating the flow into and out of the set of states $\set{A}_{i - 1} = \{0,1,\ldots,i - 1\}$. Since this set of states has a single state through which the process can enter and leave, the local balance equations result in the simple expression
\begin{equation}%
\mu_i p(i) = \la_{i - 1} p(i - 1), \quad i \ge 1. \label{eqnBD:local_balance_equations}
\end{equation}%
Note that the local balance equations can also be obtained from the global balance equations by substitutions. The local balance equations may be solved in a recursive fashion, yielding
\begin{equation}%
p(i) = p(0) \prod_{j = 1}^i \frac{\la_{j - 1}}{\mu_j}, \quad i \ge 0, \label{eqnBD:local_balance_solution_p_i}
\end{equation}%
where the empty product $\prod_{j = 1}^0 = 1$. All equilibrium probabilities $p(i)$ are expressed in terms of $p(0)$. Finally, the normalization condition allows for the determination of $p(0)$ from
\begin{equation}%
1 = \sum_{i \ge 0} p(i)  = p(0) \sum_{i \ge 0} \prod_{j = 1}^i \frac{\la_{j - 1}}{\mu_j}. \label{eqnBD:normalization_condition_p_0}
\end{equation}%
The following theorem now summarizes when an irreducible Markov process is positive recurrent.

\begin{theorem}\label{thmBD:positive_recurrence}%
A necessary and sufficient condition for an irreducible \textup{(BD)} process to be positive recurrent is
\begin{equation}%
\sum_{i \ge 0} \prod_{j = 1}^i \frac{\la_{j - 1}}{\mu_j} < \infty, \label{eqnBD:stability_condition}
\end{equation}%
and ensures that all $p(i) > 0$.
\end{theorem}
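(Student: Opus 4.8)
The plan is to invoke the general equivalence already established in \cref{thmMP:Foster}: an irreducible Markov process is positive recurrent if and only if the balance equations admit a non-zero, absolutely convergent solution, which can then be normalized. So the strategy reduces to showing that condition \eqref{eqnBD:stability_condition} is exactly the condition for \eqref{eqnBD:local_balance_solution_p_i} to be normalizable. First I would observe that the local balance equations \eqref{eqnBD:local_balance_equations} always possess the solution given in \eqref{eqnBD:local_balance_solution_p_i}, obtained by the recursion, for \emph{any} choice of strictly positive rates $\la_i,\mu_i$; one verifies directly that this vector also satisfies the global balance equations \eqref{eqnBD:global_balance_equations_p_0}--\eqref{eqnBD:global_balance_equations_p_i}, since summing the local balance relations telescopes into the global ones. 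Because all $\la_{j-1}/\mu_j > 0$, every term in \eqref{eqnBD:local_balance_solution_p_i} is strictly positive once we fix $p(0) > 0$, so the proposed solution is non-zero and has all entries positive.

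Next I would treat the two directions of the equivalence. For sufficiency, assume \eqref{eqnBD:stability_condition} holds: then $\sum_{i \ge 0} \prod_{j=1}^i \la_{j-1}/\mu_j$ is a finite positive number, so choosing $p(0)$ to be its reciprocal makes $\pb = [p(i)]_{i \ge 0}$ a genuine probability distribution; all its entries are positive, and it is (trivially, being non-negative and summing to $1$) absolutely convergent. By \cref{thmMP:Foster} the BD process is positive recurrent and $\pb$ is its equilibrium distribution. For necessity, suppose the BD process is positive recurrent. Then by \cref{thmMP:equilibrium_equations_and_distribution} it has an equilibrium distribution $\pb$ solving $\pb Q = \zerob$ with $\pb \oneb = 1$; writing out these equations is writing out \eqref{eqnBD:global_balance_equations_p_0}--\eqref{eqnBD:global_balance_equations_p_i}, and by irreducibility (all rates strictly positive) every $p(i) > 0$, in particular $p(0) > 0$. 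The recursive solution of the balance equations forces $p(i) = p(0)\prod_{j=1}^i \la_{j-1}/\mu_j$ as in \eqref{eqnBD:local_balance_solution_p_i} --- this is because the difference equation \eqref{eqnBD:global_balance_equations_p_i} together with \eqref{eqnBD:global_balance_equations_p_0} pins down the whole sequence from $p(0)$. Since $\sum_{i \ge 0} p(i) = 1 < \infty$ and $p(0) > 0$, we conclude $\sum_{i \ge 0} \prod_{j=1}^i \la_{j-1}/\mu_j = 1/p(0) < \infty$, which is \eqref{eqnBD:stability_condition}.

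The one point needing a little care --- and the main obstacle, such as it is --- is the claim that in the positive-recurrent case the balance-equation solution is \emph{unique} up to the scalar $p(0)$, i.e. that the second-order recurrence \eqref{eqnBD:global_balance_equations_p_i} does not introduce a spurious second degree of freedom. The resolution is that \eqref{eqnBD:global_balance_equations_p_0} supplies the extra relation: it expresses $p(1)$ in terms of $p(0)$, and then \eqref{eqnBD:global_balance_equations_p_i} for $i=1,2,\ldots$ determines $p(2),p(3),\ldots$ successively, so the entire vector is a scalar multiple of $\bigl(\prod_{j=1}^i \la_{j-1}/\mu_j\bigr)_{i \ge 0}$. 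Equivalently, one observes that the local balance equations \eqref{eqnBD:local_balance_equations} are implied by the global ones (by the telescoping argument), and they manifestly give a first-order recursion with a one-dimensional solution space. With that in hand, both directions close, and the statement that all $p(i) > 0$ follows immediately from positivity of each ratio $\la_{j-1}/\mu_j$ together with $p(0) > 0$.
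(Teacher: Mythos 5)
Your proof is correct and rests on the same foundation as the paper's, namely \cref{thmMP:Foster}. The paper's proof is in fact terser than yours: it states only the sufficiency direction (a normalizable solution to the balance equations implies positive recurrence) and leaves the necessity direction implicit, presumably relying on \cref{thmMP:equilibrium_equations_and_distribution}. You make both directions explicit, and you also carefully close the small gap about why the second-order global recursion \eqref{eqnBD:global_balance_equations_p_i}--\eqref{eqnBD:global_balance_equations_p_0} has only a one-dimensional solution space (boundary equation fixes $p(1)$, then telescoping reduces to the first-order local balance). That extra detail is a genuine improvement over the paper's one-line proof, and the argument for positivity of every $p(i)$ from positivity of the rate ratios and $p(0)>0$ is exactly right.
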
%

\begin{proof}%
By \cref{thmMP:Foster}, if an irreducible Markov process has a solution $\pb, ~ \pb \oneb = 1$ to the balance equations, then the Markov process is positive recurrent.
\end{proof}%

Condition \eqref{eqnBD:stability_condition} is sometimes referred to as the \textit{stability condition}. Note that this condition is trivially satisfied when the state space is finite, which is not surprising since we know that a finite irreducible Markov process possesses a unique equilibrium distribution.

Returning to the homogeneous BD processes, we see that the stability condition \eqref{eqnBD:stability_condition} reduces to
\begin{equation}%
\sum_{i \ge 0} \Bigl( \frac{\la}{\mu} \Bigr)^i < \infty. \label{eqnBD:stability_condition_homogeneous}
\end{equation}%
So, if $\la/\mu < 1$ the BD process is positive recurrent and an equilibrium distribution exists. The condition $\la/\mu < 1$ makes the intuition for the sample paths in \cref{figBD:simulation} rigorous.

The local balance equations \eqref{eqnBD:local_balance_equations} can be derived by censoring the parts of the sample path of the Markov process when it is not inside the set $\set{A}_i = \{0,1,\ldots,i\}$ with $i \ge 1$. The rate at which the process leaves state $i$ is $p(i) (\la_i + \mu_i)$. The rate at which we enter state $i$ is $p(i - 1) \la_{i - 1}$ plus the rate at which the process transitions to states outside $\set{A}_i$ that return to state $i$. There is only one transition from a state in $\set{A}_i$, state $i$, to a state outside $\set{A}_i$, state $i + 1$. Taking into account the above observations, the balance equations of the censored process are
\begin{equation}%
p(i) (\la_i + \mu_i) = p(i - 1) \la_{i - 1} + p(i) \la_i \Prob{\textup{return to $\set{A}_i$ in state $i$}}. \label{eqnBD:balance_equations_censored_process}
\end{equation}%
In this case, $\Prob{\textup{return to $\set{A}_i$ in state $i$}} = G_{i + 1,i}$, which is the probability that, starting from state $i + 1$, the Markov process reaches state $i$ in finite time. Assuming the Markov process is irreducible and positive recurrent, we know that the process always returns to $\set{A}_i$. More importantly, due to the BD structure the process always returns to $\set{A}_i$ via state $i$. Combining these two properties we derive $G_{i + 1,i} = 1$. The balance equations \eqref{eqnBD:balance_equations_censored_process} for the censored process reduce to
\begin{equation}%
p(i) \mu_i = p(i - 1) \la_{i - 1}, \label{eqnBD:balance_equations_censored_process_equal_to_local_balance_equations}
\end{equation}%
which is a local balance equation. The index $i$ in $\set{A}_i$ was arbitrary, so \eqref{eqnBD:balance_equations_censored_process_equal_to_local_balance_equations} holds for all $i \ge 1$.

\begin{example}[$M/M/\infty$ queue]\label{exBD:MMinf_equilibrium_distribution}%
We return to \cref{exBD:MMinf_time-dependent} concerning the $M/M/\infty$ queue. Regardless of the value of $\la$ and $\mu$, as long as they are finite, this BD process is positive recurrent:
\begin{equation}%
\sum_{i \ge 0} \prod_{j = 1}^i \frac{\la_{j - 1}}{\mu_j} = \sum_{i \ge 0} \prod_{j = 1}^i \frac{\rho}{j} = \sum_{i \ge 0} \frac{\rho^i}{i!} = \euler^{\rho} < \infty,
\end{equation}%
with $\rho \defi \la/\mu$. Since the BD process is positive recurrent, the limiting distribution exists and is found by taking $t \to \infty$ in the transition functions of \cref{exBD:MMinf_time-dependent}, resulting in
\begin{equation}%
p(j) = \lim_{t \to \infty} p_{0,j}(t) = \lim_{t \to \infty} \euler^{- \rho (1 - \euler^{-\mu t})} \frac{\bigl( \rho (1 - \euler^{-\mu t}) \bigr)^j}{j!} = \euler^{-\rho} \frac{\rho^j}{j!}. \label{eqnBD:MMinf_limiting_dist}
\end{equation}%
We showed in \cref{exBD:MMinf_time-dependent} that for each time $t$, $X(t)$ has a Poisson distribution and also in equilibrium it follows a Poisson distribution.

If the transition functions are not available, the equilibrium distribution can be derived using the balance equations. Let us do that now. Each BD process satisfies the local balance equations. In this case they read
\begin{equation}%
p(j) j \mu = \la p(j - 1) \quad \Rightarrow \quad p(j) = \frac{\rho}{j} p(j - 1) = \cdots = \frac{\rho^j}{j!} p(0). \label{eqnBD:MMinf_equilibrium_dist_p_j}
\end{equation}%
We have an expression for $p(j)$ in terms of $p(0)$. The probability of being in state 0 follows from the normalization condition as follows
\begin{equation}%
1 = \sum_{j \ge 0} p(j) = p(0) \sum_{j \ge 0} \frac{\rho^j}{j!} \quad \Rightarrow \quad p(0) = \euler^{-\rho}. \label{eqnBD:MMinf_equilibrium_dist_p_0}
\end{equation}%
Combining \eqref{eqnBD:MMinf_equilibrium_dist_p_j} and \eqref{eqnBD:MMinf_equilibrium_dist_p_0} shows that the equilibrium distribution is also given by \eqref{eqnBD:MMinf_limiting_dist}.
\end{example}%

\begin{example}[$M/M/s/s$ queue]\label{exBD:MMss_equilibrium_dist}%
We examine the $M/M/\infty$ queue but set $\la_i = 0$ for $i \ge s$. The interpretation of this queueing system is that of the $M/M/\infty$ queue, but if $s$ servers are occupied, no arriving jobs are allowed into the system. These jobs may be considered blocked, or lost, and correspondingly this system is referred to as the \textit{Erlang loss} or \textit{Erlang-B} system. An alternative interpretation is that of a system with $s$ servers that allows a maximum of $s$ jobs to be in the system simultaneously. The birth rates are $\la_i = \la, ~ 0 \le i < s$ and the death rates are $\mu_i = i \mu, ~ 1 \le i \le s$. Since it is an irreducible BD process with a finite state space, all states are positive recurrent. The local balance equations are, with $\rho \defi \la/\mu$,
\begin{equation}
p(j) j \mu = p(j - 1) \la, \quad 1 \le j \le s \quad \Rightarrow \quad p(j) = \frac{\rho^j}{j!} p(0), \quad 1 \le j \le s.
\end{equation}%
So, the equilibrium probabilities of the $M/M/s/s$ model have the same structure as the ones of the $M/M/\infty$ model seen in \cref{exBD:MMinf_equilibrium_distribution}, expect for the normalization. The normalization condition in this case is
\begin{equation}
1 = \sum_{j = 0}^s p(j) = p(0) \sum_{j = 0}^s \frac{\rho^j}{j!} \quad \Rightarrow \quad p(0) = \Bigl( \sum_{j = 0}^s \frac{\rho^j}{j!} \Bigr)^{-1}
\end{equation}%
and the equilibrium distribution has been determined. If $s \to \infty$ then $p(0)$ converges to $\euler^{-\rho}$ and the equilibrium distribution coincides with the one from the $M/M/\infty$ model.

A quantity of special significance is the probability that an arriving job is lost or blocked, which, by PASTA (see \cref{subsecQTF:simple_queue}), is given by
\begin{equation}%
B(s,\rho) = p(s) = \frac{\rho^s}{s!} \Bigl( \sum_{j = 0}^s \frac{\rho^j}{j!} \Bigr)^{-1}.
\end{equation}%
This is often termed the Erlang-B formula. It is easily verified that the probability of blocking satisfies the recurrence relation \makeExercise
\begin{equation}%
B(s + 1,\rho) = \frac{\rho B(s,\rho)}{\rho B(s,\rho) + s + 1},
\end{equation}%
which is useful for numerical computation.
\end{example}%

\begin{example}[$M/M/1$ queue]\label{exBD:MM1_equilibrium_dist}%
We consider a homogeneous BD process with $\la_i = \la$ and $\mu_i = \mu$. This is also called an $M/M/1$ queue in queueing terminology. From \eqref{eqnBD:stability_condition_homogeneous} we require $\la < \mu$ for the states to be positive recurrent. The equilibrium distribution is derived from \eqref{eqnBD:local_balance_solution_p_i}--\eqref{eqnBD:normalization_condition_p_0} and found to be
\begin{equation}%
p(i) = (1 - \rho) \rho^i, \quad i \ge 0,
\end{equation}%
with $\rho \defi \la/\mu$.
\end{example}%

\begin{example}[$M/M/s$ queue]\label{exBD:MMs_equilibrium_dist}%
Consider a queueing system consisting of $s$ servers and a common queue. Jobs arrive according to a Poisson process with rate $\la$ and enter service if a server is idle. Serving a job takes $\Exp{\mu}$ time. If all servers are occupied, the job joins the end of the waiting line in the common queue. When a server finishes serving a job, he takes the first job from the waiting line and starts serving that job. If there is no waiting job, the server becomes idle. This model is often referred to as the \textit{Erlang-C} model. Both \cref{exBD:MMinf_equilibrium_distribution} and \cref{exBD:MM1_equilibrium_dist} are special cases of this model.

The total number of jobs in the system at time $t$, labeled $X(t)$, evolves according to a BD process with $\la_i = \la$ and
\begin{equation}%
\mu_i = \begin{cases}%
i \mu, & 0 \le i \le s - 1, \\
s \mu, & i \ge s.
\end{cases}%
\end{equation}%
Applying \cref{thmBD:positive_recurrence}, the BD process is positive recurrent iff, with $\rho \defi \la/\mu$,
\begin{equation}%
\sum_{i \ge 0} \prod_{j = 1}^i \frac{\la_{j - 1}}{\mu_j} = \sum_{i = 0}^{s - 1} \prod_{j = 1}^i \frac{\la_{j - 1}}{\mu_j} + \sum_{i \ge s} \prod_{j = 1}^i \frac{\la_{j - 1}}{\mu_j} = \sum_{i = 0}^{s - 1} \frac{\rho^i}{i!} + \frac{\rho^s}{s!} \sum_{i \ge 0} \frac{\rho^i}{s^i} < \infty.
\end{equation}%
So, $\rho/s < 1$ ensures that an equilibrium distribution exists. From the local balance equations \eqref{eqnBD:local_balance_equations} the equilibrium distribution is
\begin{equation}%
p(i) = \begin{cases}%
p(0) \displaystyle \frac{\rho^i}{i!}, & 0 \le i \le s - 1, \\
p(0) \displaystyle \frac{\rho^i}{s!s^{i - s}}, & i \ge s,
\end{cases}%
\end{equation}%
with
\begin{equation}%
p(0) = \Bigl( \sum_{i = 0}^{s - 1} \frac{\rho^i}{i!} + \sum_{i \ge s}  \frac{\rho^i}{s!s^{i - s}} \Bigr)^{-1} = \Bigl( \sum_{i = 0}^{s - 1} \frac{\rho^i}{i!} + \frac{\rho^s}{s!} \frac{1}{1 - \frac{\rho}{s}} \Bigr)^{-1}
\end{equation}%
representing the probability that the system is empty.

A quantity of great importance is the probability that an arriving job must wait, which is, by the PASTA property,
\begin{equation}%
C(s,\rho) = \sum_{i \ge s} p(i) = \Bigl( 1 + \frac{s!}{\rho^s} (1 - \frac{\rho}{s}) \sum_{i = 0}^{s - 1} \frac{\rho^i}{i!} \Bigr)^{-1}.
\end{equation}%
This is often referred to as the Erlang-C formula. It is easily verified that the probability of waiting satisfies the recurrence relation \makeExercise
\begin{equation}%
C(s + 1,\rho) = \frac{(1 - \frac{\rho}{s}) C(s,\rho)}{s + 1 - \rho - \frac{\rho}{s} C(s,\rho)},
\end{equation}%
which is convenient for numerical calculations.

The waiting time $W$ of a job is the time between his arrival and the time he is taken into service, assuming an equilibrium state for the queueing system. We analyze the waiting time distribution of an arbitrary job. Note that, given that an arriving job must wait, the number of waiting jobs in front of him is geometrically distributed with parameter $\rho/s$. So, the number of service completions the arriving job must wait for is $G + 1$, where $G$ is a geometrically distributed random variable with parameter $\rho/s$. Also note that the times between successive service completions are independent and exponentially distributed random variables with
parameter $s \mu$. Now observe that the sum of $G + 1$ independent and exponentially distributed random variables with parameter $\nu$, where $G$
itself is a geometrically distributed random variable with parameter $p$, is again exponentially distributed with parameter $\nu (1 - p)$. \makeExercise So given that an arriving job must wait, its waiting time is
exponentially distributed with parameter $s \mu (1 - \rho / s) = s \mu - \la$. Therefore the unconditional waiting-time distribution is given by
\begin{equation}%
\Prob{W > t} = C(s,\rho) \euler^{- (s \mu - \la) t},
\end{equation}%
since the probability that an arbitrary job needs to wait is $C(s,\rho)$.
\end{example}%

Denote by $R_{i,j}, ~ j > i$ the expected time spent in state $j$ multiplied by $\la_i + \mu_i$ between two successive visits to state $i$. Conditioning on the state visited after the first jump of the Markov process gives, for $i \ge 1$,
\begin{align}%
R_{i,i + 1} &= (\la_i + \mu_i) \Efxd{ i }{\int_0^{\htt{i,i}} \ind{X(t) = i + 1} \, \dinf t} \notag \\
&= (\la_i + \mu_i) \Bigl( \frac{\la_i}{\la_i + \mu_i} \Efxd{ i + 1 }{\int_0^{\htt{i + 1,i}} \ind{X(t) = i + 1} \, \dinf t} \notag \\
&\hspace{0.17\linewidth} + \frac{\mu_i}{\la_i + \mu_i} \Efxd{ i - 1 }{\int_0^{\htt{i - 1,i}} \ind{X(t) = i + 1} \, \dinf t} \Bigr) \notag \\
&= \la_i \Efxd{ i + 1 }{\int_0^{\htt{i + 1,i}} \ind{X(t) = i + 1} \, \dinf t} \ifed \la_i M_{i + 1,i}.
\end{align}%
$M_{i + 1,i}$ is interpreted as the expected time spent in state $i + 1$ before the process reaches state $i$, given that the process starts in state $i + 1$. This quantity is determined from a one-step analysis,
\begin{equation}%
M_{i + 1,i} = \underbrace{\frac{1}{\la_{i + 1} + \mu_{i + 1}}}_{\E{H_{i + 1}}} + \frac{\la_{i + 1}}{\la_{i + 1} + \mu_{i + 1}} G_{i + 1,i} M_{i + 1,i}.
\end{equation}%
A positive recurrent Markov process has $G_{i + 1,i} = 1$ and therefore
\begin{equation}%
M_{i + 1,i} = \frac{1}{\mu_{i + 1}} \quad \Rightarrow \quad R_{i,i + 1} = \frac{\la_i}{\mu_{i + 1}},
\end{equation}%
which holds for all $i$. It seems that
\begin{equation}%
p(i) = p(i - 1) R_{i - 1,i} = \cdots = p(0) \prod_{j = 0}^{i - 1} R_{j,j + 1} = p(0) \prod_{j = 0}^{i - 1} \frac{\la_{j - 1}}{\mu_j}, \quad i \ge 1,
\end{equation}%
which can be proven to hold.\endnote{A formal proof of the relation $p(i) = p(0) \prod_{j = 0}^{i - 1} R_{j,j + 1}$ can be found in Latouche and Ramaswami \cite[Section~4.5]{Latouche1999_Matrix-analytic}. Section~4.6 in the same reference derives more properties of the $R_{j,j + 1}$. Neuts also discusses this quantity in \cite{Neuts1994_Matrix-geometric}.} Plugging $p(i) = p(0) \prod_{j = 0}^{i - 1} R_{j,j + 1}$ into the global balance equations \eqref{eqnBD:global_balance_equations_p_0}--\eqref{eqnBD:global_balance_equations_p_i} gives
\begin{align}%
\la_0 p(0) &= \mu_1 p(0) R_{0,1}, \label{eqnBD:R_0,1} \\
(\la_i + \mu_i) p(0) \prod_{j = 0}^{i - 1} R_{j,j + 1} &= \la_{i - 1} p(0) \prod_{j = 0}^{i - 2} R_{j,j + 1} \notag \\
&\quad + \mu_{i + 1} p(0) \prod_{j = 0}^{i} R_{j,j + 1}, \quad i \ge 1. \label{eqnBD:R_i,i+1}
\end{align}%
$R_{0,1}$ is determined from \eqref{eqnBD:R_0,1}. Dividing \eqref{eqnBD:R_i,i+1} by $p(0) \prod_{j = 0}^{i - 2} R_{j,j + 1}$ shows that $R_{i,i + 1}$ satisfies
\begin{equation}%
\mu_{i + 1} R_{i - 1,i} R_{i,i + 1} - (\la_i + \mu_i) R_{i - 1,i} + \la_{i - 1} = 0, \quad i \ge 1,
\end{equation}%
or
\begin{equation}%
R_{i,i + 1} = \frac{\la_i + \mu_i}{\mu_{i + 1}} - \frac{\la_{i - 1}}{\mu_{i + 1}} \frac{1}{R_{i - 1,i}}, \quad i \ge 1.
\end{equation}%
If the BD process is homogeneous with $\la_i = \la$ and $\mu_i = \mu$, then from the definition of $R_{i,j}$ we deduce that all $R_{i,i + 1}$ are identical and we denote it by $R$. This implies that $R$ is the solution to the quadratic equation
\begin{equation}%
\mu R^2 - (\la + \mu) R + \la = 0, \quad i \ge 1. \label{eqnBD:R}
\end{equation}%
If the BD process is positive recurrent, then $R$ is the minimal non-negative solution to \eqref{eqnBD:R}. We return to these equations for $R_{i,i + 1}$ and $R$ in \cref{ch:quasi-birth--and--death_processes}.


\section{Takeaways}%
\label{secBD:takeaways}%

Many probability text books cover birth--and--death (BD) processes, ranging from short descriptions of the balance equations and equilibrium distribution, to extensive chapters including many special cases and time-dependent analysis \cite{Asmussen2008_Applied_probability_and_queues,Karlin1975_First_course_stochastic_processes,Robert2013_Stochastic_networks_and_queues}. In fact, we also decided to include some time-dependent analysis starting from the Kolmogorov forward equations that describe the relations between transition functions. The time-dependent analysis of all Markov processes, also the ones treated in this book, can depart from Kolmogorov equations, but only exceptional cases like BD processes lead to equations that are amenable to analysis, let alone result in compact solutions like in some of the examples. For a more extensive treatment of the time-dependent analysis of BD process, including some deep connections with orthogonal polynomials, we refer to the classic work of Karlin and McGregor \cite{Karlin1962_Orthogonal_polynomials}.

BD processes give rise to Markov process with states that can be arranged on a half-line. This special structures makes that instead of global balance, it suffices to work with local balance, which considerably reduces the complexity of the system of equations. While we see more examples in this book where local balance suffices (\cref{ch:reversible_networks}), for BD processes the local balance equations are particularly neat, and solved by the product-form solution in \eqref{eqnBD:local_balance_solution_p_i}. This solution can be obtained by a recursive argument that starts in state zero and follows the half-line from one state to the other. We will exploit such recursive structures more often, for instance in developing the theory of matrix-geometric methods presented in \cref{ch:quasi-birth--and--death_processes,ch:skip-free_one_direction}.

We saw that the equilibrium distribution of a BD process can also be found using the global balance equations, for instance using generating functions. For BD processes this is a naive method that forgets to exploit the unique state space structure, but still gives the product-form solutions. In this book we see more examples that can be approached by either global or (more) local views. In these more involved examples of \cref{ch:reversible_networks,ch:join_the_shortest_queue}, the global view leads nowhere, while the local view (not necessarily local balance, but at least a flow argument between a reduced number of states) provides a handle for finding a product-form solution.


\printendnotes

\chapter{Queueing networks}%
\label{ch:reversible_networks}%

This chapter deals with structured classes of Markov processes that arise from considering queueing networks, so systems of queues in which jobs or customers following routes to traverse multiple stations. The structure of these Markov processes shows strong dependencies between customers and stations, but nevertheless product-form solutions arise for some classes of networks.


\section{Reversibility}%
\label{secRN:reversibility}%

For the purpose of introducing \textit{reversibility}, or \textit{time-reversibility}, we assume that the time index $t$ belongs to $\Real$, so that a Markov process is referred to as $\{ X(t) \}_{t \in \Real}$. In this context, a stationary process has $\Prob{X(0) = x} = p(x)$, where $\pb = [p(x)]_{x \in \statespace}$ is the stationary distribution.

\begin{theorem}\label{thmRN:stationary_Markov_processes_-t}%
Consider a stationary Markov process $\{ X(t) \}_{t \in \Real}$. Then the process $\{ X(-t) \}_{t \in \Real}$ is a stationary Markov process with the same equilibrium distribution $\pb = [ p(x) ]_{x \in \statespace}$ and transition rates, for $x \neq y$,
\begin{equation}%
\alt{\q{x,y}} \defi \frac{p(y)}{p(x)} \q{y,x}, \quad x,y \in \statespace,
\end{equation}%
and $\alt{\q{x}} \defi \sum_{y \neq x} \alt{\q{x,y}} = \sum_{y \neq x} \q{x,y} \ifed \q{x}$.
\end{theorem}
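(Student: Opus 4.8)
The plan is to verify the three defining properties of the time-reversed process $\{ X(-t) \}_{t \in \Real}$ directly: that it is Markov, that its stationary distribution is again $\pb$, and that its transition rates are the $\alt{\q{x,y}}$ claimed. The natural starting point is the transition functions. Writing $\alt{p}_{x,y}(t)$ for the transition function of the reversed process, I would first establish the reversal identity
\begin{equation}
p(x) \, \alt{p}_{x,y}(t) = p(y) \, p_{y,x}(t), \quad x,y \in \statespace, ~ t \ge 0,
\end{equation}
which says that in the stationary regime, the probability of being in $x$ at time $0$ and $y$ at time $t$ equals the probability of being in $y$ at time $-t$ and $x$ at time $0$; this is just a rewriting of $\Prob{X(0) = x, X(t) = y}$ using stationarity, $\Prob{X(0)=x}=p(x)$. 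From this identity the Markov property of $\{X(-t)\}$ follows because $\{X(t)\}$ is Markov: one checks that $\alt{p}_{x,y}(t+s) = \sum_{z} \alt{p}_{x,z}(t) \alt{p}_{z,y}(s)$ by substituting the identity into the Chapman--Kolmogorov equations of \cref{thmMP:Chapman-Kolmogorov} for the forward process, and similarly stationarity of the reversed process follows from $\sum_x p(x) \alt{p}_{x,y}(t) = \sum_x p(y) p_{y,x}(t) = p(y)$, using that $\pb$ is stationary for $\{X(t)\}$ (\cref{defMP:stationary_distribution}).

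Next I would extract the transition rates from the reversal identity by differentiating at $t = 0$, invoking the small-time behavior of transition functions in \eqref{eqnMP:behavior_transition_functions_small_time_interval}: for $x \neq y$,
\begin{equation}
\alt{\q{x,y}} = \lim_{t \downarrow 0} \frac{\alt{p}_{x,y}(t)}{t} = \frac{p(y)}{p(x)} \lim_{t \downarrow 0} \frac{p_{y,x}(t)}{t} = \frac{p(y)}{p(x)} \q{y,x},
\end{equation}
which is exactly the claimed formula. Finally, for the statement $\alt{\q{x}} = \q{x}$, I would sum the rates out of state $x$ in the reversed process and recognize the result as a balance equation: $\alt{\q{x}} = \sum_{y \neq x} \alt{\q{x,y}} = \frac{1}{p(x)} \sum_{y \neq x} p(y) \q{y,x} = \frac{1}{p(x)} p(x) \q{x} = \q{x}$, where the middle equality is precisely the global balance equation $p(x) \q{x} = \sum_{y \neq x} p(y) \q{y,x}$ from \cref{thmMP:equilibrium_equations_and_distribution}.

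The main obstacle I anticipate is not any single computation but making the interchange of limits and summations rigorous when passing from transition functions to rates for a countably infinite state space --- exactly the subtlety flagged around \eqref{eqnMP:Kolmogorov_backward_still_need_to_interchange_limit_and_summation} and in the hypotheses of \cref{thmMP:Kolmogorov_forward}. Under the standing assumption $\sup_{x} \q{x} < \infty$ this is fine, and the author will likely either invoke that assumption or simply present the formal argument; in a proof sketch I would note that the regularity assumptions in force throughout the book cover this and not belabor it. A secondary, purely bookkeeping point is to confirm that $\alt{\q{x,y}} \ge 0$ and that the reversed process is well-defined (no issues with absorbing states), which is immediate since all $p(x) > 0$ by positive recurrence and all forward rates are nonnegative.
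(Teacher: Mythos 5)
Your proof is correct and takes essentially the same route as the paper's: establish the reversal identity $p(x)\,\alt{p}_{x,y}(t) = p(y)\,p_{y,x}(t)$ (the paper phrases this as the computation of the conditional probability $\Prob{Y(t+h)=y \mid Y(t)=x} = p(y)p_{y,x}(h)/p(x)$, which is the same statement), divide by $h$, and let $h \downarrow 0$ using \eqref{eqnMP:behavior_transition_functions_small_time_interval}. You additionally spell out the bookkeeping step $\alt{\q{x}} = \q{x}$ via the global balance equation of \cref{thmMP:equilibrium_equations_and_distribution}, which the paper leaves implicit in ``gives the result.'' One small caution on a side remark: verifying Chapman--Kolmogorov for $\alt{p}_{x,y}$ does not by itself establish the Markov property of the reversed process (Chapman--Kolmogorov is necessary but not sufficient); the clean justification is that the Markov property is equivalent to conditional independence of past and future given the present, a statement symmetric under time reversal. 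The paper's proof does not address this at all, so your version is, if anything, slightly more attentive to it.
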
%

\begin{proof}%
$\{ X(-t) \}_{t \in \Real}$ is a stationary process since $\Prob{X(-t) = x} = p(x)$. Define $Y(t) \defi X(-t)$. Now, for $x \neq y$,
\begin{align}%
\Prob{Y(t + h) = y \mid Y(t) = x} &= \frac{\Prob{Y(t + h) = y, ~ Y(t) = x}}{\Prob{Y(t) = x}} \notag \\
&= \frac{\Prob{X(-t - h) = y, ~ X(-t) = x}}{\Prob{X(-t) = x}} \notag \\
&= \frac{p(y) p_{y,x}(h)}{p(x)}.
\end{align}%
Dividing both sides by $h$, letting $h \downarrow 0$ and recalling \eqref{eqnMP:behavior_transition_functions_small_time_interval} gives the result.\endnote{A similar proof of \cref{thmRN:stationary_Markov_processes_-t} appears in Chen and Yao \cite[Lemma~1.3]{Chen2001_Queueing_Networks}.}
\end{proof}%

\begin{definition}[Reversibility]\label{defRN:reversibility}%
If a Markov process satisfies, for $x \neq y$,
\begin{equation}%
p(x) \q{x,y} = p(y) \q{y,x}, \quad x,y \in \statespace
\end{equation}%
then the process is reversible.
\end{definition}%

This definition implicates that all Markov processes that have a solution to the local balance equations are reversible Markov processes. In particular, all BD processes are reversible.

\begin{example}[$M/M/s/s$ queue]\label{exRN:MMss_reversibility}%
Recall the Erlang-B model, which is a BD process with $\q{i,i + 1} = \la$ and $\q{i + 1,i} = (i + 1) \mu$ for $0 \le i \le s - 1$. The equilibrium distribution was derived in \cref{exBD:MMss_equilibrium_dist} and is
\begin{equation}%
p(i) = p(0) \frac{(\la/\mu)^i}{i!}, \quad 0 \le i \le s
\end{equation}%
with $p(0)$ given in \cref{exBD:MMss_equilibrium_dist}. Using \cref{defRN:reversibility}, for $0 \le i \le s - 1$,
\begin{equation}%
p(i) \q{i,i + 1} = p(0) \frac{(\la/\mu)^i}{i!} \la = p(0) \frac{(\la/\mu)^{i + 1}}{(i + 1)!} (i + 1)\mu = p(i + 1) \q{i + 1,i},
\end{equation}%
verifying that the Markov process associated with the $M/M/s/s$ queue is reversible.
\end{example}%

The following theorem plays a key part in the analysis of stochastic networks that are reversible.

\begin{theorem}\label{thmRN:truncating_reversible_process}%
A reversible Markov process with state space $\statespace$ and equilibrium distribution $\pb = [p(x)]_{x \in \statespace}$ that is truncated to $\set{A} \subset \statespace$ is again a reversible Markov process with equilibrium distribution
\begin{equation}%
\alt{p(x)} = \frac{p(x)}{\sum_{y \in \set{A}} p(y)}, \quad x \in \set{A}. \label{eqnRN:equilibrium_distribution_truncated_reversible_process}
\end{equation}%
\end{theorem}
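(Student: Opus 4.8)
The plan is to verify the two defining properties directly: first that the truncated process with rates $\alt{\q{x,y}} \defi \q{x,y}$ for $x,y \in \set{A}$ (and all other rates zero) is reversible with respect to the candidate distribution $\alt{p(\cdot)}$ in \eqref{eqnRN:equilibrium_distribution_truncated_reversible_process}, and second that $\alt{p(\cdot)}$ is indeed a probability distribution on $\set{A}$ satisfying the balance equations, so that by \cref{thmMP:equilibrium_equations_and_distribution} it is the equilibrium distribution of the truncated process.

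First I would make precise what ``truncated to $\set{A}$'' means: the truncated Markov process lives on $\set{A}$, retains all transition rates $\q{x,y}$ between states $x,y \in \set{A}$, and simply deletes all transitions leading out of $\set{A}$. We should briefly note that irreducibility of the truncated process on $\set{A}$ is assumed (or follows from the hypothesis), so that \cref{thmMP:equilibrium_equations_and_distribution} applies. Then I would check reversibility. For $x,y \in \set{A}$ with $x \neq y$, writing $Z \defi \sum_{z \in \set{A}} p(z)$, we have
\begin{equation}
\alt{p(x)} \alt{\q{x,y}} = \frac{p(x)}{Z} \q{x,y} = \frac{p(y)}{Z} \q{y,x} = \alt{p(y)} \alt{\q{y,x}},
\end{equation}
where the middle equality is exactly the reversibility of the original process from \cref{defRN:reversibility}. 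This is the crux of the argument, and it is essentially immediate once the definitions are unpacked.

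Next I would deduce that $\alt{p(\cdot)}$ satisfies the (global) balance equations for the truncated process: summing the local balance relation $\alt{p(x)} \alt{\q{x,y}} = \alt{p(y)} \alt{\q{y,x}}$ over all $y \in \set{A}$, $y \neq x$, gives $\alt{p(x)} \sum_{y \in \set{A}, y \neq x} \alt{\q{x,y}} = \sum_{y \in \set{A}, y \neq x} \alt{p(y)} \alt{\q{y,x}}$, which is precisely $\pb^{\set{A}} Q^{\set{A}} = \zerob$ in scalar form, with $Q^{\set{A}}$ the generator of the truncated process. Finally, $\sum_{x \in \set{A}} \alt{p(x)} = \frac{1}{Z} \sum_{x \in \set{A}} p(x) = 1$, so $\alt{p(\cdot)}$ is a genuine probability distribution; by uniqueness in \cref{thmMP:equilibrium_equations_and_distribution} it is \emph{the} equilibrium distribution, and since it satisfies the local balance equations, the truncated process is reversible by \cref{defRN:reversibility}.

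I do not anticipate a serious obstacle here — the result is a short formal consequence of the definitions. The only point requiring a little care is bookkeeping: making sure the summation in the balance-equation step ranges only over $\set{A}$ (the deleted out-transitions contribute nothing precisely because we also deleted them from the left-hand side), and flagging the implicit irreducibility/positive-recurrence hypothesis on $\set{A}$ needed to invoke uniqueness of the equilibrium distribution. If one wanted to avoid appealing to uniqueness, an alternative is to cite \cref{thmMP:Foster}: $\alt{p(\cdot)}$ is a non-zero, absolutely convergent (indeed finite-sum or normalized) solution of the balance equations, hence the truncated process is positive recurrent and $\alt{p(\cdot)}$ is its equilibrium distribution.
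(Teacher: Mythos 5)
Your argument is correct and matches the paper's approach: the key step — that detailed balance $\alt{p(x)}\q{x,y} = \alt{p(y)}\q{y,x}$ for $x,y \in \set{A}$ follows immediately from reversibility of the original process, since the normalization constant cancels — is exactly the paper's one-line proof. The additional bookkeeping you supply (summing local balance to get global balance, normalization, and the appeal to \cref{thmMP:Foster} or uniqueness) is a sound and helpful expansion of what the paper leaves implicit.
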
%

\begin{proof}%
Note that $\alt{p(x)} \q{x,y} = \alt{p(y)} \q{y,x}$ by reversibility of the original process, so detailed balance is satisfied.\endnote{See also Kelly and Yudovina \cite[Section~3.3]{Kelly2014_Stochastic_Networks} for some examples of the uses of truncating a reversible Markov process.}
\end{proof}%

\begin{example}[$M/M/s/s$ queue]\label{exRN:MMss_truncation}%
Employing \cref{thmRN:truncating_reversible_process}, the equilibrium distribution of the Markov process associated with the $M/M/s/s$ queue is the same as both the equilibrium distribution of the reversible Markov processes of the $M/M/s$ queue and of the $M/M/\infty$ queue truncated to the set $\set{A} = \{ 0,1,\ldots,s \}$. From \cref{exBD:MMs_equilibrium_dist,exBD:MMinf_equilibrium_distribution}, we know that the equilibrium probabilities are
\begin{equation}%
p(i)^{(M/M/s)} = \begin{cases}%
p(0)^{(M/M/s)} \frac{\rho^i}{i!}, & 0 \le i \le s - 1, \\
p(0)^{(M/M/s)} \frac{\rho^i}{s!s^{i - s}}, & i \ge s,
\end{cases}
\end{equation}%
and
\begin{equation}%
p(i)^{(M/M/\infty)} = p(0)^{(M/M/\infty)} \frac{\rho^i}{i!},
\end{equation}%
with $\rho \defi \la/\mu$. Plugging both equilibrium probabilities into the right-hand side of \eqref{eqnRN:equilibrium_distribution_truncated_reversible_process} produces the equilibrium distribution of the Markov process associated with the $M/M/s/s$ queue.
\end{example}%

The queueing systems that we consider in this book have Poisson arrival processes. For many of these systems, the departure process is also a Poisson process where the departure rate is equal to the arrival rate, which we show in the following theorem. In queueing networks, the departure process of one queue can be the arrival process of another queue. Knowing that this arrival process is again a Poisson process makes the analysis of the network a lot easier.

\begin{theorem}\label{thmRN:Poisson_departure_process_state-dependent_service_times}%
Consider a queue where jobs arrive according to a Poisson process with rate $\la$ and leave at rate $\mu_i$ when $i$ jobs are in the system. In equilibrium, the inter-departure times of jobs are exponentially distributed with mean $1/\la$ and are independent of the number of jobs in the system.
\end{theorem}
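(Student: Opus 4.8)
The plan is to exploit the reversibility of the birth--and--death process that models this queue. The queue described in \cref{thmRN:Poisson_departure_process_state-dependent_service_times} is precisely a BD process on $\statespace = \Nat_0$ with birth rates $\la_i = \la$ and death rates $\mu_i$; by the remark following \cref{defRN:reversibility} every BD process is reversible. So I would begin by invoking \cref{thmRN:stationary_Markov_processes_-t}: if $\{ X(t) \}_{t \in \Real}$ denotes the stationary version of this process, then the time-reversed process $\{ X(-t) \}_{t \in \Real}$ is a stationary Markov process with the \emph{same} transition rates, hence statistically indistinguishable from the original.

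The key observation is then that a departure of the forward process at time $t$ corresponds exactly to an arrival of the reversed process at time $-t$: a downward jump $i+1 \to i$ of $X(\cdot)$ at time $t$ is an upward jump $i \to i+1$ of $X(-\cdot)$ at time $-t$. Since the reversed process has the same law as the forward process, and the forward process has Poisson arrivals with rate $\la$ (upward jumps occur at rate $\la$ regardless of state), the arrival process of the reversed process is Poisson$(\la)$. Therefore the departure process of the forward process, read in reverse time, is Poisson$(\la)$; and a Poisson process reversed in time is again a Poisson process with the same rate. This gives that inter-departure times are i.i.d.\ $\Exp{\la}$.

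For the independence claim, I would argue as follows. Fix a time $t_0$ and consider the event that a departure occurs at $t_0$ leaving $X(t_0^+) = i$ jobs behind. In the reversed process this is an arrival at $-t_0$ finding $X((-t_0)^-) = i$ jobs present. The departures of the forward process \emph{after} time $t_0$ correspond to arrivals of the reversed process \emph{before} time $-t_0$; by the Markov property and the fact that the reversed process is an ordinary queue with Poisson input, the arrivals before $-t_0$ are determined by the Poisson input stream, which is independent of the state $X((-t_0)^-)$ and of the future service mechanism. Hence the past departure stream of the forward process (equivalently the future arrival stream of the reversed process) is independent of $X(t_0^+)$. Translating back: the sequence of inter-departure times is independent of the number of jobs left behind at each departure epoch, which is the assertion.

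The main obstacle is making the reverse-time correspondence between departures and arrivals fully rigorous at the level of point processes and sample paths, rather than just at the level of marginal rates --- in particular, carefully justifying that ``the reversed process is again the same queue'' really does imply its arrival stream is an honest Poisson process and that conditioning on $X(t_0^+)=i$ does not disturb it. I expect this to require a clean statement of the strong Markov property (\cref{secMP:Markov_processes}) applied to the reversed process at the stopping time $-t_0$, plus the elementary fact --- which I would state and use without belaboring --- that time-reversal of a Poisson process yields a Poisson process of the same rate. The rest is bookkeeping about which jumps of $X(\cdot)$ map to which jumps of $X(-\cdot)$.
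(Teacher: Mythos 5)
Your proposal is the classical Burke's theorem argument via reversibility, which is a genuinely different route from the paper's proof. The paper proceeds by a direct transform calculation: it defines $R_i(\LSTarg,\PGFarg) \defi \E{\euler^{-\LSTarg T}\PGFarg^{X(T)} \mid X(0)=i}$ for $T$ the time of the first departure, derives a one-step functional equation for the $R_i$, sums against the stationary distribution, and invokes local balance to collapse the result into $\E{\euler^{-\LSTarg T}\PGFarg^{X(T)}} = \frac{\la}{\la+\LSTarg}\PGF{X}{\PGFarg}$; the factorization simultaneously delivers $T\sim\Exp{\la}$, $X(T)\sim\pb$, and independence. Your route buys a more conceptual, calculation-free proof that also immediately yields the stronger statement that the entire departure stream is a Poisson process; the paper's route buys a self-contained argument that does not require having established reversibility first and that pins down the joint transform exactly.

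However, the independence step in your sketch has a time-direction error that needs to be repaired. You write that departures of $X$ \emph{after} $t_0$ correspond to arrivals of $Y = X(-\cdot)$ \emph{before} $-t_0$, and then assert these arrivals are ``independent of the state $X((-t_0)^-)$.'' That assertion is false: the arrivals of $Y$ in $(-\infty,-t_0)$ are exactly what build up the state $Y((-t_0)^-)$, so they are certainly not independent of it. What you actually need --- and what your final sentence correctly states, despite not following from the preceding reasoning --- is the opposite pairing: departures of $X$ \emph{before} $t_0$ correspond to arrivals of $Y$ \emph{after} $-t_0$, and \emph{those} future arrivals are independent of $Y(-t_0)=X(t_0)$ because the birth rate of $Y$ is constantly $\la$ regardless of state, so the post-$(-t_0)$ birth epochs are a Poisson process whose law does not depend on $Y(-t_0)$. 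With that fix, you obtain the Burke-style statement that the departure process on $(-\infty,t]$ is Poisson$(\la)$ and independent of $X(t)$ for every $t$. Note also that the paper's theorem, as proved, is the slightly different claim that $T\perp X(T)$ where $T$ is the time \emph{forward} to the next departure from a stationary start and $X(T)$ is the state \emph{after} that departure; converting the Burke statement into this precise form requires one more step (a Palm/PASTA-for-departures argument, or an appeal to the level-crossing identity $a(i)=d(i)$ already established in \cref{subsecQTF:MG1_departure_distribution}), which you should spell out rather than leave as bookkeeping.
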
%

\begin{proof}%
Denote by $X(t)$ the number of jobs in the system at time $t$. The system is in equilibrium, which is equivalent to $X(0)$ being distributed according to the equilibrium distribution $\pb$. Let $T$ be the time at which the first departure occurs and recall that $X(T)$ is the number of jobs left behind by the first departure. Define the conditional joint transform
\begin{equation}%
R_i(\LSTarg,\PGFarg) \defi \E{\euler^{-\LSTarg T} \PGFarg^{X(T)} \mid X(0) = i}, \quad i \ge 0.
\end{equation}%
For $i \ge 1$, either the first event is an arrival with probability $\la/(\la + \mu_i)$ or a departure with probability $\mu/(\la + \mu_i)$. So, by the strong Markov property, for $i \ge 1$,
\begin{align}%
R_i(\LSTarg,\PGFarg) &= \frac{\la + \mu_i}{\la + \mu_i + \LSTarg} \Bigl( \frac{\la}{\la + \mu_i} R_{i + 1}(\LSTarg,\PGFarg) + \frac{\mu_i}{\la + \mu_i} \PGFarg^{i - 1} \Bigr) \notag \\
&=  \frac{\la}{\la + \mu_i + \LSTarg} R_{i + 1}(\LSTarg,\PGFarg) + \frac{\mu_i}{\la + \mu_i + \LSTarg} \PGFarg^{i - 1}
\end{align}%
and
\begin{equation}%
R_0(\LSTarg,\PGFarg) = \frac{\la}{\la + \LSTarg} R_1(\LSTarg,\PGFarg).
\end{equation}%
This gives the functional equations
\begin{align}%
(\la + \LSTarg) R_0(\LSTarg,\PGFarg) &= \la R_1(\LSTarg,\PGFarg), \label{eqnRN:Poisson_departure_process_state-dependent_service_times_functional_equation_i=0}\\
(\la + \mu_i + \LSTarg) R_i(\LSTarg,\PGFarg) &= \la R_{i + 1}(\LSTarg,\PGFarg) + \mu_i \PGFarg^{i - 1}, \quad i \ge 1. \label{eqnRN:Poisson_departure_process_state-dependent_service_times_functional_equation_i>0}
\end{align}%
Define the PGF
\begin{equation}%
\PGF{X}{\PGFarg} \defi \sum_{i \ge 0} p(i) \PGFarg^i
\end{equation}%
and consider
\begin{equation}%
\E{\euler^{-\LSTarg T} \PGFarg^{X(T)}} = \sum_{i \ge 0} p(i) R_i(\LSTarg,\PGFarg).
\end{equation}%
Multiply the $i$-th equation of \eqref{eqnRN:Poisson_departure_process_state-dependent_service_times_functional_equation_i>0} by $p(i)$ and sum over all $i \ge 1$ to obtain
\begin{align}%
&(\la + \LSTarg) \sum_{i \ge 1} p(i) R_i(\LSTarg,\PGFarg) + \sum_{i \ge 1} \mu_i p(i) R_i(\LSTarg,\PGFarg) \notag \\
&= \la \sum_{i \ge 1} p(i) R_{i + 1}(\LSTarg,\PGFarg) + \sum_{i \ge 1} \mu_i p(i) \PGFarg^{i - 1}.
\end{align}%
Adding and subtracting $p(0) R_0(\LSTarg,\PGFarg)$ on the left-hand side, using the local balance equations $\la p(i - 1) = \mu_i p(i), ~ i \ge 1$ and \eqref{eqnRN:Poisson_departure_process_state-dependent_service_times_functional_equation_i=0}, results in
\begin{equation}%
\E{\euler^{-\LSTarg T} \PGFarg^{X(T)}} = \frac{\la}{\la + \LSTarg} \PGF{X}{\PGFarg}.
\end{equation}%
So, the inter-departure time is exponentially distributed with parameter $\la$ and is moreover independent of the number of jobs that are left behind by the departing job.
\end{proof}%


\section{Loss networks}%
\label{secRN:loss_networks}%

A loss network is a stochastic network consisting of nodes with links between nodes and jobs travelling over routes in the network. Jobs for each route arrive according to a Poisson process. A route is described by a number of links and for each link the number of capacity unit that is required to serve the job. A job holds the capacities in each link of its route simultaneously for an exponential amount of time, leaving the system afterwards. The capacity on each link is finite, however. So, an arriving job does not enter the network if it finds that a link on its route does not have enough free capacity. Such a job is lost, and therefore the network is called a \textit{loss network}. Besides the equilibrium distribution, a key quantity in these networks is the probability that a job is lost.

\begin{example}[A loss network]\label{exRN:simple_loss_network}%
\begin{figure}
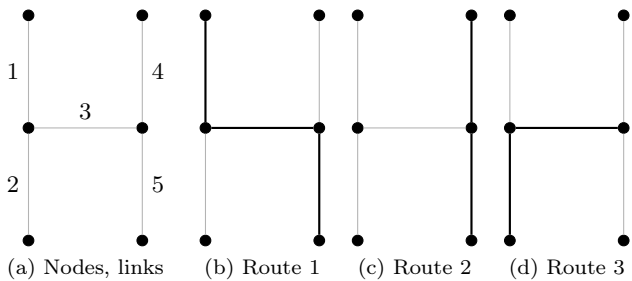
%
\centering%
\subfloat[Nodes, links]{%
\includestandalone{Chapters/RN/TikZFiles/loss_network_example}%
}%
\quad%
\subfloat[Route $1$]{%
\includestandalone{Chapters/RN/TikZFiles/loss_network_example_route_A}%
}%
\quad%
\subfloat[Route $2$]{%
\includestandalone{Chapters/RN/TikZFiles/loss_network_example_route_B}%
}%
\quad%
\subfloat[Route $3$]{%
\includestandalone{Chapters/RN/TikZFiles/loss_network_example_route_C}%
}%
\caption{A simple loss network.}%
\label{figRN:loss_network_example}%
\end{figure}%
Consider a network of six nodes and links with capacities as shown in \cref{figRN:loss_network_example}(a). There are three different routes in this network, see \cref{figRN:loss_network_example}(b)-(d). Jobs for route 1 use the links 1, 3 and 5, arrive according to a Poisson process with rate $\la_1$ and, if admitted, hold simultaneously one unit of capacity on all three links in its route for an exponential amount of time with parameter $\mu_1$. So, an arriving route-1 job is lost if there is no capacity available on links 1, 3 or 5. Route-2 jobs share link 5 with route-1 jobs and route-3 jobs share link 3 with route-1 jobs, but route-2 and route-3 jobs do not share a link. Nonetheless, there is still a large influence of route-2 jobs on the performance of route-3 jobs and vice versa. For example, if the arrival rate of route-3 jobs is large, all the capacity units of link 3 will be occupied. This means that almost all route-1 jobs will be lost and as a result, almost all route-2 jobs are admitted.
\end{example}%

In the following subsection we treat an example in greater detail.


\subsection{Multi-class Erlang-B model}%
\label{subsecRN:multi-class_Erlang-B_model}%

Consider a pool of $c$ identical servers offered traffic from $M$ job classes and denote the set of classes as $\set{M} \defi \{1,2,\ldots,M\}$. Class-$m$ jobs arrive according to a Poisson process with rate $\la_m$ and require an exponentially distributed service time with parameter $\mu_m$. Denote by $\rho_m \defi \la_m/\mu_m$ the offered traffic from class-$m$ jobs. A class-$m$ job requires the simultaneous use of $b_m$ servers for the duration of its service. Arriving jobs for which there are not sufficiently many servers available leave the system immediately.


The state of the system at time $t$ may be described by a vector $X(t) \defi (X_1(t),X_2(t),\ldots,X_M(t))$ with $X_m(t)$ representing the number of class-$m$ jobs in the system at time $t$. Define
\begin{equation}%
\statespace \defi \{ \vca{x} \in \Nat_0^M : \sum_{m = 1}^M b_m x_m \le c \}
\end{equation}%
as the set of all feasible states. The process $\{ X(t) \}_{t \ge 0}$ is an irreducible Markov process with state space $\statespace$. Since its state space is finite, the equilibrium probabilities, now denoted as $p(\vca{x})$, exist.

Let $\eb{m}$ be a vector of dimension $M$ with a $1$ at position $m$, where indexing starts at 1. The equilibrium distribution satisfies the global balance equations
\begin{align}%
&\Bigl( \sum_{m = 1}^M \la_m \ind{\vca{x} + \eb{m} \in \statespace} + \sum_{m = 1}^M x_m \mu_m \Bigr) p(\vca{x}) \notag \\
&= \sum_{m = 1}^M \la_m \ind{x_m > 0} p(\vca{x} - \eb{m}) \notag \\
&\quad + \sum_{m = 1}^M (x_m + 1) \mu_m \ind{\vca{x} + \eb{m} \in \statespace} p(\vca{x} + \eb{m}), \label{eqnRN:multi-class_Erlang-B_global_balance}
\end{align}%
for all states $\vca{x} \in \statespace$, together with the normalization condition
\begin{equation}%
\sum_{\vca{x} \in \statespace} p(\vca{x}) = 1. \label{eqnRN:multi-class_Erlang-B_normalization_condition}
\end{equation}%
Let us try to solve for $p(\vca{x})$ using an educated guess. If there would be infinite number of servers, then jobs of all classes are allowed to enter the system, removing dependencies between classes and we would expect a product-form solution. So, let us see if a product-form solution works here as well. In particular, use the form
\begin{equation}%
p(\vca{x}) = \frac{1}{G(c,M)} \prod_{m = 1}^M \frac{y_m^{x_m}}{x_m!}, \label{eqnRN:multi-class_Erlang-B_educated_guess}
\end{equation}%
where $y_m$ still needs to be determined and $G(c,M)$ is a normalization constant. Assume all indicator functions in \eqref{eqnRN:multi-class_Erlang-B_global_balance} evaluate to 1. This indicates that we are in the interior of the state space $\statespace$. Plugging \eqref{eqnRN:multi-class_Erlang-B_educated_guess} into \eqref{eqnRN:multi-class_Erlang-B_global_balance} and multiplying both sides by $G(c,M)$ gives
\begin{align}%
&\sum_{m = 1}^M \la_m \prod_{n = 1}^M \frac{y_n^{x_n}}{x_n!} + \sum_{m = 1}^M x_m \mu_m \prod_{n = 1}^M \frac{y_n^{x_n}}{x_n!} \notag \\
&= \sum_{m = 1}^M \la_m \frac{y_m^{x_m - 1}}{(x_m - 1)!} \prod_{n \in \set{M} \setminus \{ m \}} \frac{y_n^{x_n}}{x_n!} \notag \\
&\quad + \sum_{m = 1}^M (x_m + 1) \mu_m \frac{y_m^{x_m + 1}}{(x_m + 1)!} \prod_{n \in \set{M} \setminus \{ m \}} \frac{y_n^{x_n}}{x_n!}.
\end{align}%
If we now choose $y_m = \rho_m$, then the first summation on the left-hand side is equal to the second summation on the right-hand side and the second summation on the left-hand side is equal to the first summation on the right-hand side. We conclude that
\begin{equation}%
p(\vca{x}) = \frac{1}{G(c,M)} \prod_{m = 1}^M \frac{\rho_m^{x_m}}{x_m!}, \quad \vca{x} \in \statespace \label{eqnRN:multi-class_Erlang-B_educated_guess_correct}
\end{equation}%
satisfies \eqref{eqnRN:multi-class_Erlang-B_global_balance} if all indicator functions evaluate to 1, but can also be shown to satisfy \eqref{eqnRN:multi-class_Erlang-B_global_balance} if this assumption is dropped. The normalization constant follows from the normalization condition \eqref{eqnRN:multi-class_Erlang-B_normalization_condition} and is
\begin{equation}%
G(c,M) = \sum_{\vca{x} \in \statespace} \prod_{m = 1}^M \frac{\rho_m^{x_m}}{x_m!}. \label{eqnRN:multi-class_Erlang-B_normalization_constant}
\end{equation}%

We now consider the system occupancy in terms of the number of busy servers. Denote by $p(i)$ the probability that $i$ servers are busy for $i = 0,1,\ldots,c$. Define
\begin{equation}%
\statespace_i \defi \{ \vca{x} \in \statespace : \sum_{m = 1}^M b_m x_m = i \}
\end{equation}%
as the set of all states with exactly $i$ servers busy. The probabilities $p(i)$ may then be formally expressed in terms of the probabilities $p(\vca{x})$ as
\begin{equation}%
p(i) = \sum_{\vca{x} \in \statespace_i} p(\vca{x}).
\end{equation}%
The probability that a class-$m$ job is blocked can directly be obtained from the probabilities $p(i)$ as
\begin{equation}%
B_m = \sum_{i = c - b_m + 1}^c p(i).
\end{equation}%
Of course, the blocking probability $B_m$ may also be directly expressed in terms of the probabilities $p(\vca{x})$ as
\begin{equation}%
B_m = \sum_{\vca{x} \in \statespace : \vca{x} + \eb{m} \notin \statespace} p(\vca{x}) = 1 - \sum_{\vca{x} + \eb{m} \in \statespace} p(\vca{x}).
\end{equation}%
This last summation can be rewritten as
\begin{equation}%
\sum_{\vca{x} + \eb{m} \in \statespace} p(\vca{x}) = \frac{1}{G(c,M)} \sum_{\vca{x} + \eb{m} \in \statespace} \prod_{n = 1}^M \frac{\rho_n^{x_n}}{x_n!} = \frac{G(c - b_m,M)}{G(c,M)}.
\end{equation}%
Summarizing, the blocking probabilities $B_m$ can be obtained from the ratio of the normalization constants for two systems with a different number of servers.

\begin{remark}[Insensitivity property]%
In fact, the equilibrium distribution given above holds for \textit{any} service time distribution with mean $1/\mu_m$ (without proof). This means that the stationary distribution only depends on the service time distribution through its mean, and not on any higher moments. This is called an \textit{insensitivity} property that is also encountered in the ordinary Erlang-B model, but also the Erlang-C model.
\end{remark}%

Despite the elegant form, the expression \eqref{eqnRN:multi-class_Erlang-B_normalization_constant} is typically impractical for computing the probabilities $p(i)$ and $B_m$. The number of feasible states in the above model and therefore also the number of terms in the normalization constant, grows rapidly with $c$ and $M$. This makes the numerical evaluation of the normalization constant directly through brute-force summation prohibitively demanding for even moderately large values of $c$ and $M$.

We now discuss an alternative procedure for calculating the probabilities $p(i)$ and the blocking probabilities first described in Kaufman \cite{Kaufman1981_Blocking_recursion} and Roberts \cite{Roberts1981_Blocking_Recursion}.

\begin{lemma}[Kaufman-Roberts recursion]\label{lemRN:multi-class_Erlang-B_Kaufman-Roberts_recursion}%
The probabilities $p(i)$ satisfy the recurrence relation
\begin{equation}%
i p(i) = \sum_{m = 1}^M \rho_m b_m \ind{i \ge b_m} p_{i - b_m}, \quad i = 0,1,\ldots,c.
\end{equation}%
\end{lemma}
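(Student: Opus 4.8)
The plan is to work directly from the explicit product-form equilibrium distribution \eqref{eqnRN:multi-class_Erlang-B_educated_guess_correct}, which we may assume, together with the definition $p(i) = \sum_{\vca{x} \in \statespace_i} p(\vca{x})$, where $\statespace_i = \{ \vca{x} \in \statespace : \sum_{m=1}^M b_m x_m = i \}$. The starting observation is that on $\statespace_i$ the quantity $i$ equals $\sum_{m=1}^M b_m x_m$, so that
\[
i\, p(i) = \sum_{\vca{x} \in \statespace_i} \Bigl( \sum_{m=1}^M b_m x_m \Bigr) p(\vca{x}) = \sum_{m=1}^M b_m \sum_{\vca{x} \in \statespace_i} x_m\, p(\vca{x}).
\]
Thus the task reduces to evaluating the inner sum $\sum_{\vca{x} \in \statespace_i} x_m\, p(\vca{x})$ for each fixed class $m$.

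The key algebraic step is the identity $x_m\, p(\vca{x}) = \rho_m\, p(\vca{x} - \eb{m})$, valid for every $\vca{x}$ with $x_m \ge 1$; when $x_m = 0$ the left-hand side is simply $0$. This follows at once from \eqref{eqnRN:multi-class_Erlang-B_educated_guess_correct} by cancelling one factor $x_m$ against the $x_m!$ in the denominator and splitting off one factor $\rho_m$ from $\rho_m^{x_m}$, noting that the normalization constant $G(c,M)$ is common to both sides. Since $\vca{x} - \eb{m}$ occupies exactly $i - b_m$ servers, this contribution is nonzero only when $i \ge b_m$, and then $\vca{x} - \eb{m} \in \statespace_{i - b_m}$.

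It then remains to recognize that the shift $\vca{x} \mapsto \vca{x} - \eb{m}$ is a bijection from $\{ \vca{x} \in \statespace_i : x_m \ge 1 \}$ onto $\statespace_{i - b_m}$ whenever $i \ge b_m$, the inverse being $\vca{y} \mapsto \vca{y} + \eb{m}$, which lands back in $\statespace_i$ because $i \le c$ ensures feasibility. Summing the identity over $\vca{x} \in \statespace_i$ therefore gives
\[
\sum_{\vca{x} \in \statespace_i} x_m\, p(\vca{x}) = \ind{i \ge b_m}\, \rho_m \sum_{\vca{y} \in \statespace_{i - b_m}} p(\vca{y}) = \ind{i \ge b_m}\, \rho_m\, p(i - b_m),
\]
and substituting this back into the decomposition of $i\, p(i)$ yields precisely the claimed recursion; the case $i = 0$ is consistent since both sides vanish (each $b_m \ge 1$).

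The one point deserving care is the bookkeeping at the boundary of $\statespace$: one must be sure that no feasible state is lost or double-counted when passing from $\vca{x}$ to $\vca{x} - \eb{m}$, which is exactly what the bijection above settles. No verification of balance equations is needed, since the product form is already granted; the argument is purely combinatorial, reshuffling terms in the finite sums defining the $p(i)$.
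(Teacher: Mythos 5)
Your proof is correct and follows essentially the same route as the paper's: decompose $i\,p(i)$ via $i = \sum_m b_m x_m$, apply the shift identity $x_m p(\vca{x}) = \rho_m p(\vca{x} - \eb{m})$, and reindex the sum over $\statespace_i$ to one over $\statespace_{i - b_m}$. The only (welcome) addition is that you spell out the bijection $\vca{x} \mapsto \vca{x} - \eb{m}$ explicitly, a step the paper leaves implicit.
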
%

\begin{proof}%
Define
\begin{equation}%
R_m(i) \defi \sum_{\vca{x} \in \statespace_i} x_m p(\vca{x}). \label{eqnRN:multi-class_Erlang-B_Kaufman-Roberts_recursion_proof_1}
\end{equation}%
First observe that, from the definition of $p(i)$ and $\statespace_i$,
\begin{align}%
i p(i) &= \sum_{\vca{x} \in \statespace_i} i p(\vca{x}) = \sum_{\vca{x} \in \statespace_i} \sum_{m = 1}^M b_m x_m p(\vca{x}) \notag \\
&= \sum_{m = 1}^M b_m \sum_{\vca{x} \in \statespace_i} x_m p(\vca{x}) = \sum_{m = 1}^M b_m R_m(i). \label{eqnRN:multi-class_Erlang-B_Kaufman-Roberts_recursion_proof_2}
\end{align}%
From \eqref{eqnRN:multi-class_Erlang-B_educated_guess_correct}, we have
\begin{align}%
x_n p(\vca{x}) &= \frac{x_n}{G(c,M)} \prod_{m = 1}^M \frac{\rho_m^{x_m}}{x_m!} \notag \\
&= \frac{\rho_n}{G(c,M)} \frac{\rho_n^{x_n - 1}}{(x_n - 1)!} \prod_{m \in \set{M} \setminus \{ n \}} \frac{\rho_m^{x_m}}{x_m!} = \rho_n p(\vca{x} - \eb{n}). \label{eqnRN:multi-class_Erlang-B_Kaufman-Roberts_recursion_proof_3}
\end{align}%
Substituting \eqref{eqnRN:multi-class_Erlang-B_Kaufman-Roberts_recursion_proof_3} into \eqref{eqnRN:multi-class_Erlang-B_Kaufman-Roberts_recursion_proof_1} yields
\begin{equation}%
R_m(i) = \rho_m \sum_{\vca{x} \in \statespace_i} p(\vca{x} - \eb{m}) = \rho_m \sum_{\vca{x} \in \statespace_{i - b_m}} p(\vca{x}) = \rho_m \ind{i \ge b_m} p_{i - b_m}. \label{eqnRN:multi-class_Erlang-B_Kaufman-Roberts_recursion_proof_4}
\end{equation}%
Plugging \eqref{eqnRN:multi-class_Erlang-B_Kaufman-Roberts_recursion_proof_4} into \eqref{eqnRN:multi-class_Erlang-B_Kaufman-Roberts_recursion_proof_2} proves the claim.
\end{proof}%


\subsection{Equilibrium distributions for loss networks}%
\label{subsecRN:loss_networks_equilibrium_distributions}%

The multi-class Erlang-B model described in \cref{subsecRN:multi-class_Erlang-B_model} may be interpreted as a single `link' or transmission resource with $c$ `circuits' or `trunks' (represented by the servers) offered `calls' or `connections' (represented by the jobs) from $M$ classes. The single-link model may be generalized to networks of multiple links, where the various classes correspond to jobs that may traverse different routes (subsets of links), require different numbers of circuits, or a combination of these two features. Specifically, consider a network consisting of $L$ links indexed by the set $\set{L} \defi \{ 1,2,\ldots,L \}$, offered traffic from $M$ distinct job classes. Denote by $c_l$ the capacity of, or, number of circuits in, link $l$. Class-$m$ jobs arrive as a Poisson process with rate $\lambda_m$, and have exponentially distributed holding times with parameter $\mu_m$. Denote by $\rho_m \defi \la_m/\mu_m$ the offered traffic from class-$m$ jobs. Class-$m$ jobs require the simultaneous use of $b_{m,l}$ circuits on link $l$ for the duration of their holding time. Arriving jobs for which there are not sufficiently many circuits available leave the system immediately. The set of links $R_m \defi \{l \in \set{L} : b_{m,l} > 0 \}$ may be interpreted as the route of class-$k$ jobs. The route sets $R_m$ need to satisfy certain `logical' constraints in order for routes to be contiguous paths in some underlying physical network topology. However, the subsequent analysis applies for completely arbitrary values of $b_{m,l}$.

It is easily verified that the analysis in \cref{subsecRN:multi-class_Erlang-B_model} for the single-link model, in particular the equilibrium distribution given in \eqref{eqnRN:multi-class_Erlang-B_educated_guess_correct}, readily extends to the above network scenario, with state space now replaced by
\begin{equation}%
\statespace \defi \{ \vca{x} \in \Nat_0^M : \sum_{m = 1}^M b_{m,l} x_m \le c_l \textup{ for all } l \in \set{L} \}.
\end{equation}%

\begin{theorem}\label{thmRN:loss_networks_equilibrium_distribution}%
The equilibrium distribution of a loss network is given by
\begin{equation}%
p(\vca{x}) = \frac{1}{G} \prod_{m = 1}^M \frac{\rho_m^{x_m}}{x_m!},
\end{equation}%
with normalization constant
\begin{equation}%
G = \sum_{\vca{x} \in \statespace} \prod_{m = 1}^M \frac{\rho_m^{x_m}}{x_m!}.
\end{equation}%
\end{theorem}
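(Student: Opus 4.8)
The plan is to identify the loss network as a truncation of a reversible Markov process and then invoke \cref{thmRN:truncating_reversible_process}. First I would introduce the auxiliary \emph{uncapacitated} network, obtained by letting every $c_l$ be infinite: there each class $m$ behaves as an independent $M/M/\infty$ queue, so $X_m(t)$ is a BD process with constant birth rate $\la_m$ and death rate $x_m \mu_m$ in state $x_m$, and the joint process $\{ (X_1(t),\ldots,X_M(t)) \}_{t \ge 0}$ lives on $\Nat_0^M$. Each coordinate process is a BD process, hence reversible (see the remark following \cref{defRN:reversibility}), with equilibrium distribution $\euler^{-\rho_m} \rho_m^{x_m}/x_m!$ by \cref{exBD:MMinf_equilibrium_distribution}. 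Since the $M$ coordinate processes are independent, the joint process is reversible with product equilibrium distribution $p^\infty(\vca{x}) = \prod_{m=1}^M \euler^{-\rho_m} \rho_m^{x_m}/x_m!$: a transition of the joint process changes exactly one coordinate $m$, and the detailed balance relation $p^\infty(\vca{x}) \q{\vca{x},\vca{x} + \eb{m}} = p^\infty(\vca{x} + \eb{m}) \q{\vca{x} + \eb{m},\vca{x}}$ collapses to detailed balance for the $m$-th BD process.

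Next I would observe that the loss network process is precisely this joint process truncated to $\statespace = \{ \vca{x} \in \Nat_0^M : \sum_{m=1}^M b_{m,l} x_m \le c_l \text{ for all } l \in \set{L} \}$: a class-$m$ arrival in state $\vca{x}$ triggers the transition $\vca{x} \to \vca{x} + \eb{m}$ at rate $\la_m$ exactly when $\vca{x} + \eb{m} \in \statespace$ and is lost otherwise, while the departure rates $x_m \mu_m$ are unchanged and never lead out of $\statespace$. This is the transition structure of the truncated chain in the sense of \cref{thmRN:truncating_reversible_process}. Because $\statespace$ is coordinate-convex — it contains $\zerob$ and is closed under decreasing any positive coordinate — every state communicates with $\zerob$ via departures and $\zerob$ communicates with every state via a sequence of admitted arrivals, so the truncated process is irreducible; it has finite state space and is therefore positive recurrent, so its equilibrium distribution is the unique one.

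Applying \cref{thmRN:truncating_reversible_process} with the ambient space $\Nat_0^M$ and the subset equal to $\statespace$ then yields
\begin{equation*}
p(\vca{x}) = \frac{p^\infty(\vca{x})}{\sum_{\vca{y} \in \statespace} p^\infty(\vca{y})} = \frac{\prod_{m=1}^M \euler^{-\rho_m} \rho_m^{x_m}/x_m!}{\sum_{\vca{y} \in \statespace} \prod_{m=1}^M \euler^{-\rho_m} \rho_m^{y_m}/y_m!} = \frac{1}{G} \prod_{m=1}^M \frac{\rho_m^{x_m}}{x_m!},
\end{equation*}
the common factor $\prod_{m=1}^M \euler^{-\rho_m}$ cancelling between numerator and denominator, which is the asserted distribution with $G = \sum_{\vca{x} \in \statespace} \prod_{m=1}^M \rho_m^{x_m}/x_m!$.

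The work, such as it is, lies entirely in the first two steps: one must justify that a product of independent reversible BD processes is reversible with the product distribution, and one must argue carefully that ``blocking an arrival'' is the same operation as ``truncating the state space'' in the precise sense required by \cref{thmRN:truncating_reversible_process} (including the coordinate-convexity of $\statespace$ that gives irreducibility). A more pedestrian alternative — the one hinted at in the text — is to dispense with the auxiliary process and substitute the proposed product form directly into the network's global balance equations, i.e.\ the obvious multi-link analogue of \eqref{eqnRN:multi-class_Erlang-B_global_balance}, matching arrival terms against departure terms class by class exactly as in \cref{subsecRN:multi-class_Erlang-B_model}; there the only delicate point is verifying that the identity persists when some admissibility indicators vanish at the boundary of $\statespace$.
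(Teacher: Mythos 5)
Your proposal is correct and follows essentially the same route as the paper: introduce the infinite-capacity (uncapacitated) network, observe that it is a reversible process with the $M/M/\infty$ product-form equilibrium distribution, and apply \cref{thmRN:truncating_reversible_process} to restrict to $\statespace$. The paper's proof states this in three sentences; you supply the same argument with the intermediate justifications (reversibility of the product of independent BD processes, coordinate-convexity of $\statespace$, cancellation of the $\euler^{-\rho_m}$ factors) made explicit.
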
%

\begin{proof}%
We now give a proof using the concept of reversibility. Consider the case with infinite capacity $c_1 = c_2 = \cdots = c_L = \infty$ (abbreviated as ic). In this case all jobs are accepted to the system and jobs in different classes are independent of each other. By this independence we have a product-form solution originating from the $M/M/\infty$ queue:
\begin{equation}%
p_{\textup{ic}}(\vca{x}) = \prod_{m = 1}^M \euler^{-\rho_m} \frac{\rho_m^{x_m}}{x_m!}.
\end{equation}%
Truncating the state space from $\Nat_0^M$ to $\statespace$ and using \cref{thmRN:truncating_reversible_process} gives the result.
\end{proof}%

Clearly, the evaluation of the normalization constant $G$ will be even more computationally demanding than in the single-link model. In general, there is no efficient numerical equivalent of the Kaufman-Roberts recursion presented in \cref{lemRN:multi-class_Erlang-B_Kaufman-Roberts_recursion}. In the important special case where $b_{m,l} \in \{ 0, 1 \}$ for all $m \in \set{M}$ and $l \in \set{L}$, the blocking probabilities for the various classes may be approximated using the so-called Erlang fixed-point approximation.

Denote the blocking probability on link $l$ as $B_l'$. Then the probability of a class-$m$ job being blocked is expressed in terms of these link blocking probabilities as
\begin{equation}%
1 - B_m = \prod_{l \in \set{L} \,:\, b_{m,l} = 1} (1 - B_l'),
\end{equation}%
since each link in the class-$m$ route needs to have at least one unit of capacity available. Now, assume that the blocking probabilities $B_l'$ of blocking on link $l$ are independent from link to link (which in a real network they are not!). In that case, the traffic offered to link $l$ would be Poisson with rate
\begin{equation}%
\sigma_l = \sum_{m = 1}^M b_{m,l} \rho_m \prod_{k \in \set{L} \setminus \{ l \} \,:\, b_{m,k} = 1} (1 - B_k'). \label{eqnRN:loss_networks_relation_link_class_blocking_probabilities}
\end{equation}%
By the Erlang-B formula, see also \cref{exBD:MMss_equilibrium_dist}, the link blocking probabilities satisfy
\begin{equation}%
B_l' = B(c_l,\sigma_l) = \frac{\sigma_l^{c_l}}{c_l!} \Bigl( \sum_{j = 0}^{c_l} \frac{\sigma_l^j}{j!} \Bigr)^{-1}, \quad l \in \set{L}. \label{eqnRN:Erlang_fixed-point_approximation}
\end{equation}%
A unique solution to these equations exists and therefore we are able to obtain the blocking probabilities for each class of jobs.\endnote{The Brouwer fixed-point theorem states that a continuous map from a compact, convex set to itself has at least one fixed point. In our case \eqref{eqnRN:Erlang_fixed-point_approximation} defines a continuous map $F : [0,1]^L \to [0,1]^L$ and $[0,1]^L$ is compact and convex, so at least one solution to \eqref{eqnRN:Erlang_fixed-point_approximation} exists, see also \cite[Section~3.2]{Kelly2014_Stochastic_Networks}. The uniqueness of this solution is established in \cite[Theorem~3.20]{Kelly2014_Stochastic_Networks}.}

\begin{example}[Blocking probabilities in a simple loss network]\label{exRN:simple_loss_network_blocking_probabilities}%
\begin{table}%
\centering%
\begin{tabular}{*{6}{c}}%
Iteration & $B_1'$ & $B_2'$ & $B_3'$ & $B_4'$ & $B_5'$ \\
\hline
1 & 6.25\% & 6.25\% & 21.05\% & 6.25\% & 21.05\% \\
2 & 2.17\% & 3.76\% & 16.15\% & 3.76\% & 15.15\% \\
5 & 2.70\% & 4.16\% & 17.47\% & 4.16\% & 17.47\% \\
10 & 2.68\% & 4.15\% & 17.44\% & 4.15\% & 17.44\% \\
20 & 2.68\% & 4.15\% & 17.44\% & 4.15\% & 17.44\%
\end{tabular}%
\caption{Solving \protect\eqref{eqnRN:Erlang_fixed-point_approximation} for \protect\cref{exRN:simple_loss_network_blocking_probabilities} using successive substitutions.}%
\label{tblRN:iterations_Erlang_fixed-point_simple_loss_network}%
\end{table}%

Consider again the loss network of \cref{exRN:simple_loss_network} and \cref{figRN:loss_network_example}, where all three classes require one unit of capacity at each link in their route. Set $c_l = 3, ~ l \in \set{L}$ and $\rho_m = 1, ~ m \in \set{M}$. Under the assumption of independent blocking probabilities, the Poisson traffic offered to each link is
\begin{gather}%
\sigma_1 = (1 - B_3')(1 - B_5'), ~ \sigma_2 = (1 - B_3'), ~ \sigma_3 = (1 - B_1')(1 - B_5') + (1 - B_2'), \notag \\
\sigma_4 = (1 - B_5'), ~ \sigma_5 = (1 - B_1')(1 - B_3') + (1 - B_4').
\end{gather}%
We wish to determine these link blocking probabilities through the Erlang fixed-point equations \eqref{eqnRN:Erlang_fixed-point_approximation}. A possible method of obtaining the solution is through straightforward successive substitutions. This method, however, does not guarantee convergence to the solution, but usually works in practice. Let us take this approach and use as an initial guess $B_l' = 0, ~ l \in \set{L}$, see \cref{tblRN:iterations_Erlang_fixed-point_simple_loss_network}. From this approximation we find that link 3 and 5 are blocked most often and there are two pairs of links that have the same blocking probabilities. The last observation can be explained by the fact that both pairs of links are on a route consisting of three links and a route of two links, and furthermore, the load offered by each class is the same. The blocking probabilities for a class (or route) are calculated from \eqref{eqnRN:loss_networks_relation_link_class_blocking_probabilities}: $B_1 = 33.67\%$, $B_2 = B_3 = 20.87\%$.
\end{example}%


\section{Jackson networks}%
\label{secRN:jackson_networks}%

In this section we consider the class of so-called Jackson networks, named after the queueing theorist J.R.~Jackson. A Jackson network consists of $M$ queues (or stations) with possibly state-dependent service rates. Specifically, when there are a total of $x_m$ jobs at queue $m$, the service rate is $v_m(x_m)$, with $v_m(0) = 0, ~ m = 1,2,\ldots,M$. Note that for example $v_m(x_m) = \min(x_m,s_m)$ models a situation where queue $m$ has $s_m$ identical servers. The service times at queue $m$ are independent and exponentially distributed with parameter $\mu_m$. At each of the queues, the jobs are served in order of arrival. Upon service completion at queue $m$, jobs either proceed to queue $n$ with probability $r_{m,n}$ or leave the system with probability $r_{m,0} = 1 - \sum_{n = 1}^M p_{m,n}$, where `0' refers to outside the network. The probabilities $r_{m,n}$ are commonly called routing probabilities, and the $M \times M$ matrix $R = [r_{m,n}]_{m,n = 1,2,\ldots,M}$ the routing matrix. Jobs can arrive from outside the network to any of the queues in the network.

Let us first treat two examples of Jackson networks.


\subsection{Tandem queues}%
\label{subsecRN:tandem_queues}%

Consider a system of $M$ queues in series with $s_m$ servers at the $m$-th queue. Jobs arrive to the first queue according to a Poisson process with rate $\la$ and require independent and exponentially distributed service times with parameter $\mu_m$ at the $m$-th queue. Upon service completion at the $m$-th queue, jobs proceed to the $(m + 1)$-th queue, $m = 1,2,\ldots,M - 1$ and a service completion at the final queue leads to the job departing the system. Define $\rho_m \defi \la/\mu_m$ as the offered load at the $m$-th queue. For stability, assume $\rho_m < s_m$ for all $m = 1,2,\ldots,M$.here should a homogeneous structure in terms of the transitions. That is, the transition structure and the rate at which these transitions occur should be the same for all states in the interior; for all states on the vertical boundary; and for all states on the horizontal boundary.

The state of the system at time $t$ may be described by a vector $X(t) = (X_1(t),X_2(t),\ldots,X_M(t))$ with $X_m(t)$ representing the number of jobs at the $m$-th queue at time $t$. It is easily verified that the process $\{ X(t) \}_{t \ge 0}$ is a Markov process with state space $\statespace = \Nat_0^M$. Denote by $p(\vca{x})$ the equilibrium probability of being in state $\vca{x} \in \statespace$. The equilibrium probabilities satisfy the global balance equations
\begin{align}%
&\Bigl( \la + \sum_{m = 1}^M \min( x_m, s_m ) \mu_m \Bigr) p(\vca{x}) = \la \ind{x_1 > 0} p(\vca{x} - \eb{1}) \notag \\
&\quad + \sum_{m = 1}^{M - 1} \min( x_m + 1, s_m ) \mu_m \ind{x_{m + 1} > 0} p(\vca{x} + \eb{m} - \eb{m + 1}) \notag \\
&\quad + \min( x_M + 1, s_M ) \mu_M p(\vca{x} + \eb{M}), \label{eqnRN:tandem_queues_global_balance}
\end{align}%
for all states $\vca{x} \in \statespace$, and the normalization condition
\begin{equation}%
\sum_{\vca{x} \in \statespace} p(\vca{x}) = 1.
\end{equation}%
It is easily verified through substitution, as we did in \cref{subsecRN:multi-class_Erlang-B_model}, that the equilibrium distribution is a product-form solution
\begin{equation}%
p(\vca{x}) = \prod_{m = 1}^M p_m(x_m),
\end{equation}%
with
\begin{equation}%
p_m(i) = \begin{cases}%
p_m(0) \frac{\rho_m^i}{i!}, & 0 \le i \le s_m - 1, \\
p_m(0) \frac{\rho_m^i}{s_m! s_m^{i - s_m}}, & i \ge s_m,
\end{cases}%
\end{equation}%
and
\begin{equation}%
p_m(0) = \Bigl( \sum_{i = 0}^{s_m - 1} \frac{\rho_m^i}{i!} + \frac{\rho_m^{s_m}}{s_m!} \frac{1}{1 - \frac{\rho_m}{s_m}} \Bigr)^{-1}.
\end{equation}%
This indicates that the number of jobs at the various stations are independent, and the number of jobs at the $m$-th station is distributed as the number of jobs in an isolated queue with $s_m$ servers, Poisson arrival at rate $\la$ and exponentially distributed service times with parameter $\mu_m$. Looking back at \cref{thmRN:Poisson_departure_process_state-dependent_service_times}, the departure process of the first queue, which is the arrival process of the second queue, is Poisson with rate $\la$ and moreover independent of the number of jobs in the first queue. So, we could have expected the product-form equilibrium distribution.


\subsection{Closed tandem queues}%
\label{subsecRN:closed_tandem_queues}%

Suppose that the previous tandem queue is modified as follows. Instead of a Poisson arrival process, we assume that there is a finite population of $K$ jobs circulating through the tandem queues. Upon service completion at the $M$-th queue, jobs return to the first queue. To avoid trivialities, $K > \min(s_1,s_2,\ldots,s_M)$, because otherwise there is no interaction between jobs, and each of the simply cycle through the $M$ queues, independently of all others. We no longer need to assume that $\rho_m < s_m$ since this system is always stable. This system is called a \textit{closed} system because no outside arrivals are allowed into the system. The previous tandem queue is then aptly named \textit{open}.

As before, let $X(t)$ be the vector of the number of jobs at time $t$ at each queue. The process $\{ X(t) \}_{t \ge 0}$ is a Markov process on the state space
\begin{equation}%
\statespace = \{ \vca{x} \in \Nat_0^M : \sum_{m = 1}^M x_m = K \},
\end{equation}%
with equilibrium probabilities $p(\vca{x})$. These probabilities satisfy the global balance equations
\begin{align}%
&\sum_{m = 1}^M \min( x_m, s_m ) \mu_m p(\vca{x}) \notag \\
&= \sum_{m = 1}^{M - 1} \min( x_m + 1, s_m ) \mu_m \ind{x_{m + 1} > 0} p(\vca{x} + \eb{m} - \eb{m + 1}) \notag \\
&\quad + \min( x_M + 1, s_M ) \mu_M \ind{x_1 > 0} p(\vca{x} + \eb{M} - \eb{1}), \label{eqnRN:closed_tandem_queues_global_balance}
\end{align}%
for all states $\vca{x} \in \statespace$, and the normalization condition
\begin{equation}%
\sum_{\vca{x} \in \statespace} p(\vca{x}) = 1.
\end{equation}%
For the closed tandem queuing network, the equilibrium distribution is given by
\begin{equation}%
p(\vca{x}) = \frac{1}{G} \prod_{m = 1}^M p_m(x_m),
\end{equation}%
with
\begin{equation}%
p_m(i) = \begin{cases}%
\frac{\rho_m^i}{i!}, & 0 \le i \le s_m - 1, \\
\frac{\rho_m^i}{s_m! s_m^{i - s_m}}, & i \ge s_m,
\end{cases}%
\end{equation}%
with the normalization constant
\begin{equation}%
G = \sum_{\vca{x} \in \statespace} \prod_{m = 1}^M p_m(x_m),
\end{equation}%
and $\rho_m \defi \gamma/\mu_m$ for some arbitrary constant $\gamma > 0$. This constant is arbitrary since it appears only as $\gamma^K$ in both the numerator and denominator of $p(\vca{x})$.

Obviously, the number of jobs at the various stations are no longer independent, but the equilibrium distribution retains a remarkably simple structure. It looks as if we applied \cref{thmRN:truncating_reversible_process} to the equilibrium distribution of the open tandem queue to obtain the equilibrium distribution of the closed tandem queue. However, \cref{thmRN:truncating_reversible_process} requires the Markov process to be reversible, but that is not the case here. Consider $M = 3$ and examine the transition rate from state $\vca{y} = (1,0,0)$ to state $\vca{z} = (0,1,0)$, which is $\mu_1$. For the Markov process to be reversible, we require
\begin{equation}%
p(\vca{y}) q_{\vca{y},\vca{z}} = p(\vca{z}) q_{\vca{z},\vca{y}},
\end{equation}%
but $q_{\vca{z},\vca{y}} = 0$. So, the Markov process associated with the open tandem queue is in general not reversible and \cref{thmRN:truncating_reversible_process} cannot be applied.


\subsection{Open Jackson networks}%
\label{subsecRN:open_Jackson_networks}%

The tandem queue of \cref{subsecRN:tandem_queues} belongs to the class of open Jackson networks. As it turns out, open Jackson networks have a similar product-form solution for the equilibrium distribution. Here we treat open Jackson networks in full.

In open Jackson networks jobs arrive from the external environment, and eventually leave the system. Specifically, jobs are assumed to arrive at queue $m$ as a Poisson process with rate $\la_m$, $m = 1,2,\ldots,M$. We will assume that $r_{m,0} > 0$ for at least one value of $m$, because otherwise it would be impossible for jobs to leave, and the system would definitely be unstable.

Denote by $\La_m$ the total arrival rate at queue $m$, including both external arrivals and transitions from other queues or queue $m$ itself. In case the system is stable, $\La_m$ must equal the total departure rate at queue $m$, including both external departures and transitions to other queues or queue $m$ itself, and will also be called the \textit{throughput} of queue $m$. In case the system is stable, the throughputs satisfy the following set of linear equations describing the flow of jobs through the system, the so-called traffic equations,
\begin{equation}%
\La_m = \la_m + \sum_{n = 1}^M \La_n r_{n,m}, \quad m = 1,2,\ldots,M,
\end{equation}%
which may be written in vector-matrix notation as $\vc{\La} = \vc{\la} + \vc{\La} R$, or equivalently $\vc{\La} (\I - R) = \vc{\la}$, with $\I$ the identity matrix, $\vc{\La} = [\La_m]_{m = 1,2,\ldots,M}$ the throughput vector, and $\vc{\la} = [\la_m]_{m = 1,2,\ldots,M}$ the vector of exogenous arrival rates. The assumption that $r_{m,0} > 0$ for at least one value of $m$ implies that the matrix $R$ has spectral radius strictly less than unity, and ensures that the matrix $\I - R$ has a positive inverse, so that the throughput vector may be expressed as $\vc{\La} = \vc{\la} (\I - R)^{-1}$. Note that the service rates $v_m(\cdot)$ and parameters $\mu_m$ do not occur in the traffic equations, but of course they do determine whether or note the system is stable, and in turn determine when the traffic equations actually apply. Without proof, we state that the system is stable if $\La_m < \mu_m v_m^*$ for all $m = 1,2,\ldots,M$, with $v_m^* \defi \liminf_{x \to \infty} v_m(x)$. In particular, when $r_m(x) = \min(x,s_m)$, the system is stable when $\La_m < \mu_m s_m$.

The state of the system at time $t$ may be described by a vector $X(t) \defi (X_1(t),X_2(t),\ldots,X_M(t))$, with $X_m(t)$ the total number of jobs present at queue $m$ at time $t$. It is easily verified that the process $\{ X(t) \}_{t \ge 0}$ is a Markov process with state space $\statespace = \Nat_0^M$. Assuming the stability condition to be fulfilled, denote by $p(\vca{x})$ the equilibrium probability that the system is in state $\vca{x}$. These probabilities satisfy the global balance equations
\begin{align}%
&\sum_{m = 1}^M \bigl( \la_m + \mu_m r_m(x_m) \bigr) p(\vca{x}) = \sum_{m = 1}^M \la_m \ind{x_m > 0} p(\vca{x} - \eb{m}) \notag \\
&\quad + \sum_{m = 1}^M \mu_m r_m(x_m + 1) r_{m,0} p(\vca{x} + \eb{m}) \notag \\
&\quad + \sum_{m = 1}^M \sum_{n = 1}^M \mu_m r_m(x_m + 1) r_{m,n} \ind{x_n > 0} p(\vca{x} + \eb{m} - \eb{n})
\end{align}%
for all states $\vca{x} \in \statespace$, along with the normalization condition
\begin{equation}%
\sum_{\vca{x} \in \statespace} p(\vca{x}) = 1.
\end{equation}%
The equilibrium distribution has the product form
\begin{equation}%
p(\vca{x}) = \frac{1}{G} \prod_{m = 1}^M g_m(x_m),
\end{equation}%
with $\rho_m \defi \La_m/\mu_m$, $g_m(x) \defi \rho_m^x / (\prod_{n = 1}^x v_m(n))$, and
\begin{equation}%
G = \sum_{\vca{x} \in \statespace} \prod_{m = 1}^M g_m(x_m) = \prod_{m = 1}^M \sum_{x_m \ge 0} g_m(x_m) \ifed \prod_{m = 1}^M G_m.
\end{equation}%
Since $\liminf_{x_m \to \infty} v_m(x_m) = v_m^*$ and $\rho_m < v_m^*$ we know that there exists an $\epsilon > 0$ and $N < \infty$ such that $v_m(x_m) > v_m^* - \epsilon > \rho_m$ for $x_m > M$. Now,
\begin{align}%
G_m &= \sum_{k \ge 0} \frac{\rho_m^{k}}{\prod_{l = 1}^{k} v_m(l)} \notag \\
&= \sum_{k = 0}^N \frac{\rho_m^{k}}{\prod_{l = 1}^{k} v_m(l)} + \sum_{k \ge N + 1} \frac{\rho_m^{k}}{\bigl( \prod_{l = 1}^{N} v_m(l) \bigr) \bigl( \prod_{n = N + 1}^{k} v_m(n) \bigr)} \notag \\
&= \sum_{k = 0}^N \frac{\rho_m^{k}}{\prod_{l = 1}^{k} v_m(l)} + \frac{\rho_m^N}{\prod_{l = 1}^{N} v_m(l)} \sum_{k \ge 1} \frac{\rho_m^{k}}{\prod_{n = N + 1}^{N + k} v_m(n)} \notag \\
&< \sum_{k = 0}^N \frac{\rho_m^{k}}{\prod_{l = 1}^{k} v_m(l)} + \frac{\rho_m^N}{\prod_{l = 1}^{N} v_m(l)} \sum_{k \ge 1} \bigl( \frac{\rho_m}{v_m^* - \epsilon} \bigr)^k < \infty.
\end{align}%
So, the assumption $\La_m < \mu_m v_m^* \Leftrightarrow \rho_m < v_m^*$ ensures that $G_m < \infty$. Note that $p(\vca{x})$ may be written as $p(\vca{x}) = \prod_{m = 1}^M \psi_m(x_m)$, where $\psi_m(x) \defi g_m(x)/G_m$. Further observe that $\psi_m(\cdot)$ is the equilibrium distribution of the number of jobs at an isolated queue with a Poisson arrival process with rate $\La_m$, exponentially distributed service times with parameter $\mu_m$ and state-dependent service rate $v_m(\cdot)$.

In case queue $m$ has just a single unit-rate ($v_m(x) = 1$) server, the factor $\rho_m$ represent the utilization of the server at queue $m$, and $G_m = 1/(1 - \rho_m)$, so that its equilibrium distribution (indexed by a subscript $m$) is given by
\begin{equation}%
p_m(x_m) = (1 - \rho_m) \rho_m^{x_m}.
\end{equation}%
In case there are infinitely many servers at queue $m$ ($v_m(x) = x$), we obtain that $G_m = \euler^{\rho_m}$, which means that
\begin{equation}%
p_m(x_m) = \euler^{-\rho_m} \frac{\rho_m^{x_m}}{x_m!}.
\end{equation}%

Now let $p_m(x_m)$ be the equilibrium probability that there are a total of $x_m$ jobs present at queue $m$. Since $\psi_m(\cdot)$ is a probability distribution, it follows that
\begin{align}%
p_m(x_m) &= \sum_{\vca{y} \in \statespace \,:\, y_m = x_m} p(\vca{y}) = \sum_{\vca{y} \in \statespace \,:\, y_m = x_m} \prod_{n = 1}^M \psi_n(y_n) \notag \\
&= \psi_m(x_m) \prod_{n \neq m} \sum_{y_n \ge 0} \psi_n(y_n) = \psi_m(x_m),
\end{align}%
and therefore $p(\vca{x}) = \prod_{m = 1}^M p_m(x_m)$.

This implies two important properties of open Jackson networks. First of all, the total number of jobs present at queue $m$ has the same equilibrium distribution as that of an isolated queue with a Poisson arrival process of rate $\La_m$, exponentially distributed service times with parameter $\mu_m$, and state-dependent service rate $v_m(\cdot)$. Second, the numbers of jobs present at the various queues are independent in equilibrium. These are two crucial properties that however need to be applied and interpreted with caution. For example, the first property might suggest that the aggregate arrival process at queue $m$, including both external arrivals and transitions from other queues, is Poisson with rate $\La_m$. This is indeed the case in some particular Jackson networks such as the open tandem queues considered in this chapter. However, in general this is \textit{not} the case. Also, the second property is rather striking in view of the strong interaction due to the transitions among the various queues. The interaction in fact implies that the state of one queue can influence the state of other queues at future time instants, which might seem to contradict the stated independence. In order to resolve the paradoxical situation, it is critical to note that the independence only holds for the joint number of jobs at each queue at the same time epoch in equilibrium, and \textit{not} for the states of
different queues at different instants in time or in transient regimes.


\subsection{Closed Jackson networks}%
\label{subsecRN:closed_Jackson_networks}%

In closed Jackson networks there are no external arrivals or departures. Instead, there is a fixed population of $K$ jobs which circulate through the system. In contrast to the case of open networks, we now have
$\sum_{m = 1}^{M} r_{m,n} = 1$ for all $m = 1,2,\ldots,M$; the
routing matrix $R = [r_{m,n}]_{m,n = 1,2,\ldots,M}$ is stochastic. In order to ensure that the equilibrium distribution does not depend on the initial state, we assume that the matrix $R$ is irreducible, which means that the matrix $\I - R$ has rank $M - 1$.

Like in the case of open networks, denote by $\La_m$ the total
arrival rate at queue $m$, now however consisting exclusively of
transitions from other queues or queue $m$ itself. Without any further assumptions, $\La_m$ will be equal to the total departure rate or throughput at queue $m$, again now consisting however exclusively of transitions to other queues or queue $m$ itself. The throughputs satisfy the following set of linear equations, the so-called traffic equations,
\begin{equation}%
\La_m = \sum_{n = 1}^M \La_n r_{n,m}, \quad m = 1,2,\ldots,M, \label{eqnRN:closed_Jackson_networks_traffic_equations}
\end{equation}%
which may be written in vector-matrix notation as $\vc{\La} = \vc{\La} R$, or equivalently $\vc{\La} (\I - R) = \zerob$, with $\vc{\La} = [\La_m]_{m = 1,2,\ldots,M}$ the throughput vector. In contrast to the case of open networks, the traffic equations no longer have a unique solution. Note that scaling a solution, that is, multiplying all throughputs with a common scalar value, will again yield a solution since the traffic equations are first-order homogeneous. Because the matrix $\I - R$ has rank $M - 1$, the traffic equations do however uniquely determine the \textit{relative} values of the throughputs: they determine the throughputs up to a common scaling factor.

As in the case of open networks, the state of the system at time $t$ may be described by $X(t) \defi (X_1(t),X_2(t),\ldots,X_M(t))$, with $X_m(t)$ representing the total number of jobs present at queue $m$ at time $t$. It is easily verified that the process $\{ X(t) \}_{t \ge 0}$ is a Markov process with state space
\begin{equation}%
\statespace \defi \{ \vca{x} \in \Nat_0^M : \sum_{m = 1}^{M} x_m = K \}.
\end{equation}%
Denote by $p(\vca{x})$ the equilibrium probability that the system is in state $\vca{x} \in \statespace$. These probabilities satisfy the global balance equations
\begin{align}%
&\sum_{m = 1}^M \mu_m v_m(x_m) p(\vca{x}) \notag \\
&= \sum_{m = 1}^M \sum_{n = 1}^M \mu_m v_m(x_m + 1) r_{m,n} \ind{x_n > 0} p(\vca{x} + \eb{m} - \eb{n})
\end{align}%
for all states $\vca{x} \in \statespace$, along with the normalization condition
\begin{equation}%
\sum_{\vca{x} \in \statespace} p(\vca{x}) = 1.
\end{equation}%
Through substitution it can be verified that the equilibrium distribution is
\begin{equation}%
p(\vca{x}) = \frac{1}{G} \prod_{m = 1}^M g_m(x_m),
\end{equation}%
with $\rho_m \defi \kappa \La_m/\mu_m$, $g_m(x) \defi \rho_m^x / (\prod_{n = 1}^x v_m(n))$, and
\begin{equation}%
G = \sum_{\vca{x} \in \statespace} \prod_{m = 1}^M g_m(x_m). \label{eqnRN:closed_Jackson_networks_normalization_constant}
\end{equation}%
Here $\La_m$ is the relative throughput at queue $m$, so that $\vca{\La}$ is the solution to the traffic equations satisfying $\vca{\La} \oneb = 1$. The scaling factor $\kappa$ may be chosen arbitrarily, for example so as to obtain `convenient' $\rho_m$ values. In order to see that $\kappa$ may be chosen arbitrarily, observe that the numerator and denominator of $p(\vca{x})$ both have the term $\kappa^K$ and therefore cancels.

Note that the equilibrium distribution has a product form, just like in the case of open networks. While the various terms in the product look similar, they are no longer distributions, and hence the two important properties that we observed for open Jackson networks no longer hold. Some reflection indeed shows that it is not possible for the number of jobs present at queue $m$ to have the same equilibrium distribution as that in an isolated queue with a Poisson arrival process, for the simple reason that the number of jobs at queue $m$ is at most $K$, whereas the number of jobs in the latter situation is unbounded. Likewise, it follows that it is not possible for the various numbers of jobs at the queues to be independent, for the simple reason that if there are $K$ jobs at one of the queues for example, all the other queues are known to be empty.


\subsection{Normalization constant in closed Jackson networks}%
\label{subsecRN:normalization_constant_closed_Jackson_networks}%

Although \eqref{eqnRN:closed_Jackson_networks_normalization_constant} provides a simple expression for the normalization constant $G$ in closed networks, brute-force numerical evaluation is prohibitively demanding for all but the smallest networks. The number of terms in the summation is
\begin{equation}%
\binom{M + K - 1}{M - 1}
\end{equation}%
which rapidly grows with the values of $M$ and $K$.

We now describe a more efficient numerical procedure for calculating the normalization constant. For convenience, we assume that the various queues either have a single server ($v_m(x) = 1$) or infinitely many servers ($v_m(x) = x$), and are labeled such that queues $1,2,\ldots,J$ are infinite-server queues while queues $J + 1,J + 2,\ldots,M$ are single-server queues. Infinite-server queues are not really `queues', in the sense that jobs never need to wait but immediately enter service upon
arrival. However, they provide a useful paradigm for modeling various kinds of delays, such as think times of users, availability periods of
machines, or transit times among queues. The normalization constant may then be expressed as
\begin{equation}%
G = \sum_{\vca{x} \in \statespace} \Bigl( \prod_{m = 1}^J \frac{\rho_m^{x_m}}{x_m!} \Bigr) \prod_{m = J + 1}^M \rho_m^{x_m} = \sum_{\vca{x} \in \statespace} \Bigl( \prod_{m = 1}^J \frac{1}{x_m!} \Bigr) \prod_{m = 1}^M \rho_m^{x_m}.
\end{equation}%
Now define $\statespace_{j,k} \defi \{ \vca{x} \in \Nat_0^j : \sum_{m = 1}^j x_m = k \}$ and let
\begin{equation}%
G(j,k) \defi \sum_{\vca{x} \in \statespace_{j,k}} \prod_{m = 1}^j \frac{\rho_m^{x_m}}{x_m!}, \quad j = 0,1,\ldots,J,
\end{equation}%
and
\begin{align}%
G(j,k) &\defi \sum_{\vca{x} \in \statespace_{j,k}} \Bigl( \prod_{m = 1}^J \frac{\rho_m^{x_m}}{x_m!} \Bigr) \prod_{m = J + 1}^j \rho_m^{x_m} \notag \\
&= \sum_{\vca{x} \in \statespace_{j,k}} \Bigl( \prod_{m = 1}^J \frac{1}{x_m!} \Bigr) \prod_{m = 1}^j \rho_m^{x_m}, \quad j = J + 1,J + 2,\ldots,M.
\end{align}%
Note that $G = G(M,K)$. Observe that for $j = 1,2,\ldots,J$,
\begin{align}%
G(j,k) &= \sum_{\substack{\vca{x} \in \Nat_0^j :\\ \vca{x} \oneb = k}} \Bigl( \prod_{m = 1}^j \frac{1}{x_m!} \Bigr) \prod_{m = 1}^j \rho_m^{x_m} \notag \\
&= \frac{1}{k!} \sum_{\substack{\vca{x} \in \Nat_0^j :\\ \vca{x} \oneb = k}} \binom{k}{x_1,\ldots,x_j} \prod_{m = 1}^j \rho_m^{x_m} = \frac{1}{k!} \Bigl( \sum_{m = 1}^j \rho_m \Bigr)^k,
\end{align}%
where we used the multinomial theorem. For $j = J + 1,J + 2,\ldots,M$ we obtain a recursion instead of an explicit expression:
\begin{align}%
G(j,k) &= \sum_{\substack{\vca{x} \in \Nat_0^j :\\ \vca{x} \oneb = k}} \Bigl( \prod_{m = 1}^J \frac{1}{x_m!} \Bigr) \prod_{m = 1}^j \rho_m^{x_m} \notag \\
&= \sum_{\substack{\vca{x} \in \Nat_0^j :\\ \vca{x} \oneb = k, \, x_j = 0}} \Bigl( \prod_{m = 1}^J \frac{1}{x_m!} \Bigr) \prod_{m = 1}^j \rho_m^{x_m} + \sum_{\substack{\vca{x} \in \Nat_0^j :\\ \vca{x} \oneb = k, \, x_j > 0}} \Bigl( \prod_{m = 1}^J \frac{1}{x_m!} \Bigr) \prod_{m = 1}^j \rho_m^{x_m} \notag \\
&= \sum_{\substack{\vca{x} \in \Nat_0^{j - 1} :\\ \vca{x} \oneb = k}} \Bigl( \prod_{m = 1}^J \frac{1}{x_m!} \Bigr) \prod_{m = 1}^{j - 1} \rho_m^{x_m} + \rho_j \sum_{\substack{\vca{x} \in \Nat_0^j :\\ \vca{x} \oneb = k - 1}} \Bigl( \prod_{m = 1}^J \frac{1}{x_m!} \Bigr) \prod_{m = 1}^j \rho_m^{x_m} \notag \\
&= G(j - 1,k) + \rho_j G(j,k - 1).
\end{align}%
Using the above recursive relationship, $G(M,K)$ can be efficiently computed starting from $G(J,k) = 1/k! ( \sum_{m = 1}^j \rho_m )^k$ and $G(j,0) = 1, ~ j = J + 1,J + 2,\ldots,M$.


\subsection{Mean-value analysis in closed Jackson networks}%
\label{subsecRN:MVA_closed_Jackson_networks}%

Previously, we described an efficient numerical procedure for calculating the normalization constant associated with the equilibrium distribution in closed Jackson networks with only single-server and infinite-server queues. In case we are not interested in the entire equilibrium distribution, but only in mean number of jobs at each queue or the mean \textit{sojourn} times (time spent by a job in a station), there exists an even more efficient recursive procedure, called mean-value analysis (MVA).\endnote{The mean-value analysis algorithm was developed by Reiser and Lavenberg \cite{Reiser1980_MVA} and relies heavily on the arrival theorem. The MVA algorithm as presented in \cite{Reiser1980_MVA} is more general than what is presented here. For example, different service mechanism are allowed and the authors consider networks of multiple closed systems that share stations (queues), yet still retain the product-form equilibrium distribution.} Just like in the previous section, we assume that queues $1,2,\ldots,J$ are infinite-server queues, while queues $J + 1,J + 2,\ldots,M$ are single-server queues.

Mean-value analysis is based on the following property, often referred to as `arrival theorem', which we state without proof. Consider an arbitrary arrival instant at queue $m$, that is, a time epoch where a job makes a transition to queue $m$ (possibly coming from queue $m$ itself after a service completion). Then the joint equilibrium distribution at that time instant, not counting the arriving job, is the same as the joint equilibrium distribution of the same system, but with $K - 1$ rather than $K$ jobs. In other words, when the job arrives at queue $m$, it sees the
system \textit{as if} it had never been present.\endnote{The arrival theorem originated from Lavenberg and Reiser \cite{Lavenberg1980_Arrival_Theorem}, see also Sevcik and Mitrani \cite{Sevcik1981_Arrival_Theorem}. In open Jackson networks, the distribution of the number of jobs at each queue is identical at arrival instants, departure instants and random points in time.}

Since we are interested in results in stationarity, we abuse notation and remove the time index $t$ from the state variables. In order to formally state and use the above property, it is convenient to add a superscript $a$ to indicate state variables at arrival instants (excluding the arriving job itself), and further explicitly indicate the population size in brackets. Then the above property may be written as
\begin{equation}%
\Prob{X^{\textup{a}}(K)= \vca{x}} = \Prob{X(K - 1) = \vca{x}},
\end{equation}%
for all $\vca{x} \in \statespace$, or equivalently,
\begin{equation}%
\Prob{X^{\textup{a}}(K + 1) = \vca{x}} = \Prob{X(K) = \vca{x}},
\end{equation}%
which implies for example
\begin{equation}%
\E{X_m^{\textup{a}}(K + 1)} = \E{X_m(K)}.
\end{equation}%
In case of a infinite-server queue, the mean sojourn time $S_m$ of a job at queue $m$ is simply the mean service time
\begin{equation}%
\E{S_m(K)} = 1/\mu_m, \quad m = 1,2,\ldots,J. \label{eqnRN:MVA_sojourn_times_infinite-server}
\end{equation}%
In case of a single-server queue, the mean sojourn time of a job at queue $m$ can be easily related to the number of jobs found upon arrival:
\begin{align}%
\E{S_m(K)} &= (\E{X_m^{\textup{a}}(K)} + 1)/\mu_m \notag \\
&= (\E{X_m(K - 1)} + 1)/\mu_m, \quad m = J + 1,J + 2,\ldots,M. \label{eqnRN:MVA_sojourn_times_single-server}
\end{align}%
In turn, the mean sojourn time is related to the mean queue length via Little's law:
\begin{equation}%
\La_m(K) \E{S_m(K)} = \E{X_m(K)} \label{eqnRN:MVA_Little's_law}
\end{equation}%
with $\La_m(K)$ the throughput at queue $m$ given that there are $K$ jobs in total in the system. The throughputs may be determined from the traffic equations \eqref{eqnRN:closed_Jackson_networks_traffic_equations}, up to a common scaling factor $\kappa(K)$, namely
\begin{equation}%
\vca{\La}(K) = \kappa(K) \vca{\La}, \label{eqnRN:MVA_throughputs}
\end{equation}%
where $\vca{\La}$ represents the vector of relative throughput with $\vca{\La} \oneb = 1$, which can be uniquely determined from the traffic equations, and $\kappa(K)$ is a common scaling factor depending on the total number of jobs in the system. Now, the number of jobs in the system is constant, so by summing over all $m = 1,2,\ldots,M$ on both sides of \eqref{eqnRN:MVA_Little's_law} we obtain
\begin{equation}%
\sum_{m = 1}^M \La_m(K) \E{S_m(K)} = \sum_{m = 1}^M \E{X_m(K)} = K
\end{equation}%
which gives an expression for $\kappa(K)$:
\begin{equation}%
\kappa(K) = K \Bigl( \sum_{m = 1}^M \La_m \E{S_m(K)} \Bigr)^{-1}. \label{eqnRN:MVA_scaling_factor_throughputs}
\end{equation}%
Together, the above relationships provide a recursive procedure for calculating $\E{S_m(K)}$ and $\E{X_m(K)}$ for any desired value of $K$, starting from $\E{X_m(0)} = 0, ~ m = 1,2,\ldots,M$. We summarize the mean-value analysis in \cref{algRN:MVA}.

\begin{algorithm}%
\caption{Mean-value analysis for closed Jackson networks.}%
\label{algRN:MVA}%
\begin{algorithmic}[1]%
\State Calculate $\vca{\La}$ such that $\vca{\La} \oneb = 1$ from \eqref{eqnRN:closed_Jackson_networks_traffic_equations}
\State Choose a desired population $K_{\textup{max}}$ and set $K = 1$
\While{$K < K_{\textup{max}}$}
    \State Calculate $\E{S_m(K)}, ~ m = 1,2,\ldots,M$ from \eqref{eqnRN:MVA_sojourn_times_infinite-server} and \eqref{eqnRN:MVA_sojourn_times_single-server}
    \State Calculate $\kappa(K)$ from \eqref{eqnRN:MVA_scaling_factor_throughputs}
    \State Calculate $\E{X_m(K)}, ~ m = 1,2,\ldots,M$ from \eqref{eqnRN:MVA_Little's_law}
    \State $K = K + 1$
\EndWhile
\end{algorithmic}%
\end{algorithm}%

\begin{example}[A trucking company]\label{exRN:MVA_trucking_company}%
\begin{figure}%
\centering%
\includestandalone{Chapters/RN/TikZFiles/closed_Jackson_network_MVA_example}%
\caption{The closed Jackson network of \protect\cref{exRN:MVA_trucking_company}.}
\label{figRN:MVA_trucking_company}
\end{figure}%

A large international trucking company has to move its spare parts from warehouse $A$ to warehouse $B$. Upon arriving to warehouse $A$, the trucks wait to be served by a crew that loads the spare parts into the truck. The crew takes an exponential amount of time with mean 1 to load a single truck. The loaded truck then drives to warehouse $B$ where another crew unloads the truck, taking an exponential amount of time with mean $6/5$ per truck. The time it takes to drive from warehouse $A$ to warehouse $B$ (or back) takes is exponential distributed with mean 4. The trucking company does not comply with the regulations and laws and allows the truck drivers to make as many trips in a row as they want to earn some extra money. After unloading at warehouse $B$ one third of the truck drivers decides to drive back to warehouse $A$ and make another trip. The remaining fraction of the drivers goes to a motel nearby warehouse $B$ and starts the drive to warehouse $A$ after an exponential amount of time with mean 12. The trucking company is interested in the impact of the number of trucks (and drivers) on the number of trucks per time unit that unload at warehouse $B$.

The moving operation can be modeled as a closed Jackson network. We identify two single-server queues (warehouses $A$ and $B$) and three infinite-server queues (drive from $A$ to $B$, drive from $B$ to $A$ and stay at the motel). See \cref{figRN:MVA_trucking_company} for the numbering of the stations. The trucking company is interested in $\La_5(K)$ for various values of $K$.

The routing matrix is
\begin{equation}%
R = \begin{bmatrix}%
0 & 0   & 0   & 0 & 1 \\
0 & 0   & 0   & 1 & 0 \\
0 & 1   & 0   & 0 & 0 \\
1 & 0   & 0   & 0 & 0 \\
0 & 1/3 & 2/3 & 0 & 0
\end{bmatrix}.%
\end{equation}%
The relative throughputs $\vca{\La}$ are determined from the traffic equations \eqref{eqnRN:closed_Jackson_networks_traffic_equations} with $\vca{\La} \oneb = 1$ and we find $\vca{\La} = 1/14 \begin{bmatrix} 3 & 3 & 2 & 3 & 3 \end{bmatrix}$. So, $\La_1(K) = \La_2(K) = \La_4(K) = \La_5(K)$ and $\E{S_1(K)} = \E{S_2(K)} = 4$ and $\E{S_3(K)} = 12$ for all $K$. So, we only report $\E{S_4(K)}$, $\E{S_5(K)}$, $\La_3(K)$, $\La_5(K)$ and $\E{X_m(K)}, ~ m = 1,2,\ldots,5$. Applying \cref{algRN:MVA} produces the results in \cref{tblRN:MVA_trucking_company}.

\begin{table}%
\centering%
\begin{tabular}{c|*{5}{c}}%
             & \multicolumn{5}{c}{Number of trucks $K$} \\
             & 1 & 2 & 5 & 10 & 20 \\
\hline
$\E{S_4(K)}$ & 1.0000 & 1.0549 & 1.2533 & 1.7367 & 3.4488 \\
$\E{S_5(K)}$ & 1.2000 & 1.2791 & 1.5775 & 2.3917 & 6.5797 \\
$\La_3(K)$   & 0.0366 & 0.0727 & 0.1770 & 0.3312 & 0.5123 \\
$\La_5(K)$   & 0.0549 & 0.1091 & 0.2655 & 0.4968 & 0.7684 \\
$\E{X_1(K)}$ & 0.2198 & 0.4363 & 1.0621 & 1.9872 & 3.0735 \\
$\E{X_2(K)}$ & 0.2198 & 0.4363 & 1.0621 & 1.9872 & 3.0735 \\
$\E{X_3(K)}$ & 0.4396 & 0.8727 & 2.1242 & 3.9745 & 6.1471 \\
$\E{X_4(K)}$ & 0.0549 & 0.1151 & 0.3328 & 0.8628 & 2.6500 \\
$\E{X_5(K)}$ & 0.0659 & 0.1395 & 0.4188 & 1.1882 & 5.0558 \\
\end{tabular}%
\caption{Results from \protect\cref{algRN:MVA} for \protect\cref{exRN:MVA_trucking_company}.}%
\label{tblRN:MVA_trucking_company}%
\end{table}%

There are many scenarios to consider that could improve on this situation. The trucking company can train the unloading crew to become faster, if the mean unloading time reduces to $5/6$, the throughput for $K = 20$ trucks increases by 13.7\% to $\La_5(20) = 0.8738$. If the trucking company were to increase the money earned per trip, a fraction 1/2 does another trip. In that case, the throughput for $K = 20$ trucks only slightly increases by 2.5\% to $\La_5(20) = 0.7876$. The first option seems better, but it does increase the mean sojourn time for the loading station by approximately 1 time unit.
\end{example}%


\section{Takeaways}%
\label{secRN:takeaways}%

Starting from a basic birth--and--death process, and the concept of reversibility, we were able to find in an elegant manner the equilibrium distribution for the rich class of loss networks. While loss networks give rise to multi-dimensional Markov processes, their state space allows for local balance arguments with balance equations that are readily solved, leading to the beautiful product-form solution in \cref{thmRN:loss_networks_equilibrium_distribution}. As pointed out, the catch here is the normalization constant, whose computation requires the enumeration of all states in the state space and needs tailor-made algorithms.

Markov processes intimately related to loss networks are also studied in statistical mechanics, in the form of interacting particle systems. While the terminology is different (Markovian assumptions become Glauber dynamics, product-form solution is called Gibbs measure and the normalization constant is known as the partition function), the Markov process description and analytic methods are largely the same. For thorough treatments of such interacting particle systems we refer to Liggett \cite{Liggett1985_Interacting_particle_systems}.

We then proceeded to queueing networks, again giving rise to multi-dimensional Markov processes. But for these processes, local balance fails, and the global balance equations are then the unavoidable point of departure. Nevertheless, structure was there to be discovered, the first glimpse captured by Burke’s Theorem, telling us that the output process of one queue with Poisson arrivals is again Poisson. This property then naturally leads to the guess that networks of queues with external Poisson arrivals can be decomposed into isolated queues with arrival rates that match in expectation the arrival rates in the networks. Mathematically, such an educated guess translates into substituting a product of product forms into the global balance equations, and showing that indeed this is the unique solution and hence the unique equilibrium distribution. Although elegant and sound, this educated guess approach is somewhat unsatisfying, because it is non-constructive. In analysis, however, solving a difference or differential equations by clever substitutions is one of the key techniques. We shall continue to work with educated guesses for finding product-form solution whenever this is necessary, e.g., for more advanced Markov processes in \cref{ch:gated_single-server,ch:join_the_shortest_queue,ch:cyclic_production_systems}.

The network models in this chapter make it possible to consider real-world networked systems with a host of applications. Loss networks were used for instance to describe the topology and performance of the internet \cite{Kelly1979_Reversibility_and_stochastic_networks} and queueing models can describe complex manufacturing processes \cite{Buzacott1993_Stochastic_models_of_manufacturing_systems}.

Loss networks and queueing networks are examples of stochastic networks, one of the richest topics in the field of applied probability. Text books with prominent roles for such network are Buzacott and Shantikumar \cite{Buzacott1993_Stochastic_models_of_manufacturing_systems}, Chen and Yao \cite{Chen2001_Queueing_Networks}, Kelly and Yudovina \cite{Kelly2014_Stochastic_Networks} and Whittle \cite{Whittle2007_Networks}.


\printendnotes

\chapter{Quasi-birth--and--death processes}%
\label{ch:quasi-birth--and--death_processes}%

Quasi-birth--and--death (QBD) processes are the natural two-dimensional generalization of the birth--and--death process. QBDs live on a countable state space that consists of one infinite dimension and one finite dimension. The finite dimension is added on top of what would otherwise be a BD process. Before we develop the general theory of a QBD process, let us treat some examples that show the extension of a BD process to a QBD process.


\section{Variations of birth--and--death processes}%
\label{secQBD:variations}%
%


\subsection{Machine with setup times}%
\label{subsecQBD:setup_times}%

Let us consider a machine processing jobs in order of arrival. Jobs arrive according to a Poisson process with rate $\la$ and the processing times are exponential with mean $1/\mu$. For stability we assume that $\rho \defi \la/\mu < 1$. The machine is turned off when the system is empty and it is turned on again when a new job arrives. The setup time is exponentially distributed with mean $1/\theta$. Turning off the machine takes no time. We are interested in the effect of the setup time on the sojourn time of a job.

The state of the system may be described by $X(t) \defi (X_1(t),X_2(t))$ with $X_1(t)$ representing the number of jobs in the system at time $t$ and $X_2(t)$ describes if the machine is turned off (0) or on (1) at time $t$. The process $\{ X(t) \}_{t \ge 0}$ is a Markov process with state space $\statespace \defi \{ (i,j) \in \Nat_0 \times \{ 0,1 \} \}$. The transition rate diagram is displayed in \cref{figQBD:transition_rate_diagram_setup_times}. It looks similar to the one for the BD process, in for example \cref{figBD:BD_infinite_state_space}, except that each state $i$ has been replaced by a set of states $\{ (i,0),(i,1) \}$. This set of states is called \textit{level} $i$. In BD processes transitions are restricted to neighbouring states, while in QBD processes the transitions are restricted to neighbouring levels. The horizontally aligned set of states $\{ (0,j),(1,j),\ldots \}$ is often referred to as \textit{phase} $j$.

\begin{figure}
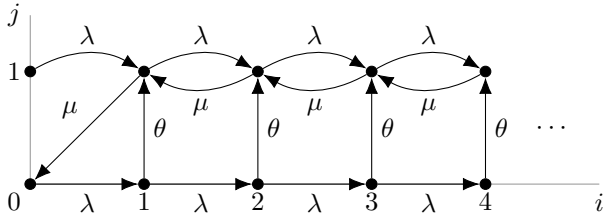
%
\centering%
\includestandalone{Chapters/QBD/TikZFiles/transition_rate_diagram_setup_times}%
\caption{Transition rate diagram of the machine with setup times.}
\label{figQBD:transition_rate_diagram_setup_times}%
\end{figure}%

For the current model, define $\lvl{i} = \{ (i,0),(i,1) \}, ~ i \ge 0$ as the set of states with $i$ jobs in the system, that is $\lvl{i}$ is level $i$. We can then write the state space as
\begin{equation}%
\statespace \defi \lvl{0} \cup \lvl{1} \cup \lvl{2} \cup \cdots
\end{equation}%

Let $p(i,j)$ denote the equilibrium probability of state $(i,j) \in \statespace$. Clearly, $p(0,1) = 0$ since state $(0,1)$ is transient. State $(0,1)$ is included in the state space for notational convenience: all levels consists of two states. From the transition rate diagram we obtain by equating the flow out of a state and the flow into that state the following set of global balance equations,
\begin{align}%
\la p(0,0)  &= \mu p(1,1), \label{eqnQBD:setup_times_global_balance_(0,0)} \\
(\la + \theta) p(i,0) &= \la p(i - 1,0), \quad i \ge 1, \label{eqnQBD:setup_times_global_balance_phase_0} \\
(\la + \mu) p(i,1) &= \la p(i - 1,0) + \theta p(i,0) + \mu p(i + 1,1), \quad i \ge 1. \label{eqnQBD:setup_times_global_balance_phase_1}
\end{align}%
The structure of the equations \eqref{eqnQBD:setup_times_global_balance_(0,0)}--\eqref{eqnQBD:setup_times_global_balance_phase_1} is closely related to balance equations of the $M/M/1$ queueing model, see for example \cref{exBD:MM1_equilibrium_dist}. This becomes more striking by introducing vectors of equilibrium probabilities $\pb_i = \begin{bmatrix} p(i,0) & p(i,1) \end{bmatrix}$ and writing \eqref{eqnQBD:setup_times_global_balance_(0,0)}--\eqref{eqnQBD:setup_times_global_balance_phase_1} in vector-matrix notation:
\begin{align}%
\pb_0 \La_{0}^{(0)} + \pb_1 \La_{-1}^{(1)} &= \zerob, \label{eqnQBD:setup_times_global_balance_vector-matrix_level_0} \\
\pb_{i - 1} \La_1 + \pb_i \La_0 + \pb_{i + 1} \La_{-1} &= \zerob, \quad i \ge 1, \label{eqnQBD:setup_times_global_balance_vector-matrix}
\end{align}%
where
\begin{gather}%
\La_{-1} = \begin{bmatrix}%
0 & 0 \\ 0 & \mu
\end{bmatrix},%
\quad
\La_0 = \begin{bmatrix}%
-(\la + \theta) & \theta \\ 0 & -(\la + \mu)
\end{bmatrix},%
\quad
\La_1 = \begin{bmatrix}%
\la & 0 \\ 0 & \la
\end{bmatrix},\\%
\La_{0}^{(0)} = -\La_1,
\quad
\La_{-1}^{(1)} = \begin{bmatrix}%
0 & 0 \\ \mu & 0
\end{bmatrix}.%
\end{gather}%
Obviously, if we can determine the equilibrium probabilities $p(i,j)$, then we also compute the mean number of jobs in the system, and by Little's law, the mean sojourn time. We now present three methods to determine the equilibrium probabilities. The first one is known as the \textit{matrix-geometric method}, the second is referred to as the \textit{spectral expansion method}, and the third one employs partial generating functions. Let us start with the matrix-geometric approach. We will introduce the first two methods in greater detail in the later sections of this chapter. The last method will appear in various places of this book, but is more well-known overall.

We first simplify the balance equations \eqref{eqnQBD:setup_times_global_balance_vector-matrix} by eliminating the vector $\pb_{i + 1}$. By equating the flow from level $i$ to level $i + 1$ to the flow from level $i + 1$ to level $i$ we obtain
\begin{equation}%
(p(i,0) + p(i,1))\la = p(i + 1,1)\mu,
\end{equation}%
or, in vector-matrix notation,
\begin{equation}%
\pb_i \La^* = \pb_{i + 1} \La_{-1}
\end{equation}%
where
\begin{equation}%
\La^* = \begin{bmatrix}%
0 & \la \\ 0 & \la
\end{bmatrix}.%
\end{equation}%
Substituting this relation into \eqref{eqnQBD:setup_times_global_balance_vector-matrix} produces
\begin{equation}%
\pb_{i - 1} \La_1 + \pb_i \bigl( \La_0 + \La^* \bigr) = \zerob, \quad i \ge 1,
\end{equation}%
which allows us to express $\pb_i$ in terms of $\pb_{i - 1}$:
\begin{equation}%
\pb_i = - \pb_{i - 1} \La_1 \bigl( \La_0 + \La^* \bigr)^{-1} = \pb_{i - 1} R, \label{eqnQBD:setup_times_relation_level_i_level_i-1}
\end{equation}%
where
\begin{equation}%
R \defi - \La_1 \bigl( \La_0 + \La^* \bigr)^{-1} = \begin{bmatrix}%
\frac{\la}{\la + \theta} & \frac{\la}{\mu} \\ 0 & \frac{\la}{\mu}
\end{bmatrix}.%
\end{equation}%
Iterating \eqref{eqnQBD:setup_times_relation_level_i_level_i-1} leads to the matrix-geometric solution
\begin{equation}%
\pb_i = \pb_0 R^i, \quad i \ge 0. \label{eqnQBD:setup_times_equilibrium_distribution_vector-matrix}
\end{equation}%
Notice that this is very similar to the solution for the $M/M/1$ model, which is $p(i) = p(0) \rho^i, ~ i \ge 0$. Finally, $\pb_0$ follows from the equations \eqref{eqnQBD:setup_times_global_balance_vector-matrix_level_0} and the normalization condition
\begin{equation}%
\sum_{(i,j) \in \statespace} p(i,j) = \pb_0( \I - R )^{-1} \oneb = 1.
\end{equation}%
From \eqref{eqnQBD:setup_times_equilibrium_distribution_vector-matrix} we obtain the mean number of jobs in the system as
\begin{equation}%
\E{X_1} = \sum_{i \ge 1} i \pb_i \oneb = \sum_{i \ge 1} i \pb_0 R^i \oneb = \pb_0 R (I - R)^{-2} \oneb,
\end{equation}%
and the mean sojourn time is $\E{S} = \E{X_1}/\la$.

The matrix $R$ is critical in the matrix-geometric approach. It is called the \textit{rate matrix} and has an interesting and useful probabilistic interpretation. Element $(j,k)$ of $R$ is the expected time spent in state $(i + 1,k)$ multiplied by element $(j,j)$ of $-\La_0$ before the first transition to a state in level $i$, given the initial state $(i,j)$. This immediately means that zero rows in $\La_1$ lead to zero rows in $R$. Recall the hitting-time random variables of \cref{ch:Markov_processes}, which we now use with a slight modification. For any set $\set{A} \subset \statespace$,
\begin{equation}%
\htt{\set{A}} \defi \inf \{ t > 0 : \lim_{s \uparrow t} X(s) \neq X(t) \in \set{A} \}.
\end{equation}%
Note that we suppress the dependence on the initial state, since that will be clear from the expectation that we are determining. Using the hitting-time random variable, we can write $(R)_{j,k}$ as
\begin{equation}%
(R)_{j,k} = (-\La_0)_{j,j} \Efxd{ (i,j) }{ \int_0^{\htt{\lvl{i}}} \ind{X(t) = (i + 1,k)} \, \dinf t }.
\end{equation}%
Let us derive element $(0,0)$ of $R$. Using a one-step analysis and the strong Markov property at the sojourn time in state $(i,0)$ we have
\begin{align}%
(R)_{0,0} &= (\la + \theta) \Efxd{ (i,0) }{ \int_0^{\htt{\lvl{i}}} \ind{X(t) = (i + 1,0)} \, \dinf t } \notag \\
&= (\la + \theta) \frac{\la}{\la + \theta} \Efxd{ (i + 1,0) }{ \int_0^{\htt{\lvl{i}}} \ind{X(t) = (i + 1,0)} \, \dinf t } \notag \\
&= \la \Efxd{ (i + 1,0) }{\int_0^{H_{(i + 1,0)}} 1 \, \dinf t} = \frac{\la}{\la + \theta},
\end{align}%
where $H_x$ was defined as the sojourn time in state $x$. Possibly more interesting is the derivation of element $(1,1)$ of $R$. Using a similar analysis as above we get
\begin{align}%
(R)_{1,1} &= (\la + \mu) \Efxd{ (i,1) }{ \int_0^{\htt{\lvl{i}}} \ind{X(t) = (i + 1,1)} \, \dinf t } \notag \\
&= \la \Efxd{ (i + 1,1) }{ \int_0^{\htt{\lvl{i}}} \ind{X(t) = (i + 1,1)} \, \dinf t }. \label{eqnQBD:setup_times_R_11}
\end{align}%
We continue by conditioning on the number of times the process visits state $(i + 1,1)$ before reaching level $i$. The probability $q(n)$ that state $(i + 1,1)$ is visited $n$ times (where the initial visit is counted) before reaching level $i$ is
\begin{equation}%
q(n) = \frac{\mu}{\la + \mu} \Bigl( \frac{\la}{\la + \mu} \Bigr)^{n - 1}, \quad n \ge 1,
\end{equation}%
since if the process transitions to state $(i + 2,1)$, it returns to state $(i + 1,1)$ with probability 1 by positive recurrence due to $\rho < 1$. If the process visits state $(i + 1,1)$ a total of $n$ times before reaching level $i$, then it spends in expectation $n/(\la + \mu)$ time in state $(i + 1,1)$. Combining these observations, we find
\begin{align}%
(R)_{1,1} &= \la \sum_{n \ge 1} \frac{n}{\la + \mu} \cdot \frac{\mu}{\la + \mu} \Bigl( \frac{\la}{\la + \mu} \Bigr)^{n - 1} \notag \\
&= \frac{\mu}{\la + \mu} \sum_{n \ge 1} n \Bigl( \frac{\la}{\la + \mu} \Bigr)^n = \frac{\la}{\mu}.
\end{align}%
Element $(R)_{0,1}$ is equal to $(R)_{1,1}$, because if the process transitions from state $(i,0)$ to $(i + 1,0)$, it reaches state $(i + 1,1)$ before level $i$ with probability 1, allowing for the exact same analysis and result.

We now demonstrate the spectral expansion method. This method first seeks solutions of the equations \eqref{eqnQBD:setup_times_global_balance_vector-matrix} of the simple form
\begin{equation}%
\pb_i = \vca{y} x^i, \quad i \ge 0,
\end{equation}%
where $\vca{y} = \begin{bmatrix} y(0) & y(1) \end{bmatrix}$ is a non-zero vector and $|x| < 1$. The latter is required, since we want to be able to normalize the solution afterwards. Substitution of this form into \eqref{eqnQBD:setup_times_global_balance_vector-matrix} and dividing by common powers of $x$ gives
\begin{equation}%
\vca{y} \bigl( \La_1 + x \La_0 + x^2 \La_{-1} \bigr) = \zerob.
\end{equation}%
So, the desires values of $x$ are the roots inside the unit circle of the determinant equation
\begin{equation}%
\det( \La_1 + x \La_0 + x^2 \La_{-1} ) = 0.
\end{equation}%
In this case we have
\begin{equation}%
\det( \La_1 + x \La_0 + x^2 \La_{-1} ) = (\la - (\la + \theta)x)(\mu x - \la)(x - 1) = 0.
\end{equation}%
We can read of the roots with $|x| < 1$, which are
\begin{equation}%
x_1 = \frac{\la}{\la + \theta}, \quad x_2 = \frac{\la}{\mu}.
\end{equation}%
For $i = 1,2$, let $\vca{y}_i$ be the non-zero solution of
\begin{equation}%
\vca{y}_i \bigl( \La_1 + x_i \La_0 + x_i^2 \La_{-1} \bigr) = \zerob.
\end{equation}%
Solving this linear system of equations gives the solutions
\begin{equation}%
\vca{y}_1 = \begin{bmatrix}%
1 & \displaystyle\frac{-\theta x_1}{\la - (\la + \mu)x_1 + \mu x_1^2}
\end{bmatrix},%
\quad
\vca{y}_2 = \begin{bmatrix}%
0 & 1
\end{bmatrix}.%
\end{equation}%
Note that, since the balance equations are linear, any linear combination of the two simple solutions satisfies \eqref{eqnQBD:setup_times_global_balance_vector-matrix}. Now the final step of the spectral expansion method is to determine a linear combination that also satisfies the boundary equations \eqref{eqnQBD:setup_times_global_balance_vector-matrix_level_0}. So we set
\begin{equation}%
\pb_i = \xi_1 \vca{y}_1 x_1^i + \xi_2 \vca{y}_2 x_2^i, \quad i \ge 0, \label{eqnQBD:setup_times_equilibrium_distribution_spectral_expansion}
\end{equation}%
where the coefficients $\xi_1$ and $\xi_2$ follow from the boundary equations \eqref{eqnQBD:setup_times_global_balance_vector-matrix_level_0} and the normalization condition
\begin{equation}%
\sum_{(i,j) \in \statespace} p(i,j) = \sum_{i \ge 0} \Bigl( \xi_1 \vca{y}_1 x_1^i + \xi_2 \vca{y}_2 x_2^i \Bigr) \oneb = \frac{\xi_1 \vca{y}_1 \oneb}{1 - x_1} + \frac{\xi_2 \vca{y}_2 \oneb}{1 - x_2} = 1.
\end{equation}%
Since the balance equations are dependent, we may omit one of the equations of \eqref{eqnQBD:setup_times_global_balance_vector-matrix_level_0}, and, for example, only use
\begin{equation}%
0 = p(0,1) = \xi_1 y_1(1) + \xi_2 y_2(1),
\end{equation}%
together with the normalization condition to determine the (unique) coefficients $\xi_1$ and $\xi_2$.

Using representation \eqref{eqnQBD:setup_times_equilibrium_distribution_spectral_expansion} we obtain
\begin{equation}%
\E{X_1} = \sum_{i \ge 1} i \pb_i \oneb = \frac{\xi_1 \vca{y}_1 x_1 \oneb}{(1 - x_1)^2} + \frac{\xi_2 \vca{y}_2 x_2 \oneb}{(1 - x_2)^2}.
\end{equation}%

The two methods presented above are closely related: $x_1$ and $x_2$ are the eigenvalues of the rate matrix $R$ and $\vca{y}_1$ and $\vca{y}_2$ are the corresponding left eigenvectors.

The third and final method uses generating functions. Introduce the partial generating functions
\begin{equation}%
\PGF{j}{\PGFarg} \defi \sum_{i \ge 0} p(i,j) \PGFarg^i, \quad j = 0,1,
\end{equation}%
defined for all $|\PGFarg| \le 1$. Multiplying \eqref{eqnQBD:setup_times_global_balance_phase_0} and \eqref{eqnQBD:setup_times_global_balance_phase_1} by $\PGFarg^i$ and summing over all $i \ge 1$ yields
\begin{align}%
(\la + \mu)(\PGF{0}{\PGFarg} - p(0,0)) &= \la \PGFarg \PGF{0}{\PGFarg} , \\
(\la + \mu)(\PGF{1}{\PGFarg} - p(0,1)) &= \la \PGFarg \PGF{1}{\PGFarg} + \theta (\PGF{0}{\PGFarg} - p(0,0)) \notag \\
&\quad + \frac{\mu}{\PGFarg} (\PGF{1}{\PGFarg} - p(0,1) - p(1,1) \PGFarg).
\end{align}%
Using $p(0,1) = 0$ and \eqref{eqnQBD:setup_times_global_balance_(0,0)}, we get
\begin{equation}%
\PGF{0}{\PGFarg} = \frac{p(0,0)}{1 - \frac{\la}{\la + \theta} \PGFarg}, \label{eqnQBD:setup_times_partial_GF_phase_0}
\end{equation}%
and
\begin{align}%
\PGF{1}{\PGFarg} &= \frac{\theta \PGF{0}{\PGFarg} - (\la + \theta) p(0,0)}{(\PGFarg - 1)(\frac{\mu}{\PGFarg} - \la)} = \frac{\rho \PGFarg p(0,0)}{(1 - \frac{\la}{\la + \theta} \PGFarg)(1 - \rho \PGFarg)} \notag \\
&= \frac{p(0,0)\rho}{\rho - \frac{\la}{\la + \theta}} \Bigl( \frac{1}{1 - \rho \PGFarg} - \frac{1}{1 - \frac{\la}{\la + \theta} \PGFarg} \Bigr). \label{eqnQBD:setup_times_partial_GF_phase_1}
\end{align}%
The probability $p(0,0)$ follows from the normalization condition
\begin{equation}%
\PGF{0}{1} + \PGF{1}{1} = 1,
\end{equation}%
which results in
\begin{equation}%
p(0,0) = (1 - \rho) \frac{\theta}{\la + \theta}.
\end{equation}%
From \eqref{eqnQBD:setup_times_partial_GF_phase_0} and \eqref{eqnQBD:setup_times_partial_GF_phase_1} and $1/(1 - x) = \sum_{i \ge 0} x^i, ~ |x| < 1$ we conclude that for $i \ge 0$,
\begin{align}%
p(i,0) &= p(0,0) \Bigl( \frac{\la}{\la + \theta} \Bigr)^i, \\
p(i,1) &= \frac{p(0,0)\rho}{\rho - \frac{\la}{\la + \theta}} \Bigl( \rho^i - \Bigl( \frac{\la}{\la + \theta} \Bigr)^i \Bigr),
\end{align}%
which agrees with the form \eqref{eqnQBD:setup_times_equilibrium_distribution_spectral_expansion}.

\begin{remark}[Mean value analysis]%
The mean number of jobs in the system $\E{X_1}$ and the mean sojourn time $\E{S}$ can also be determined by combining the PASTA property and Little's law. Based on PASTA we know that the average number of jobs in the system seen by an arriving job equals $\E{X_1}$, and each of them (also the one being processed) has a (residual) processing time with mean $1/\mu$. With probability $1 - \rho$ the machine is not in operation on arrival, so that the job also has to wait for the setup phase with mean $1/\theta$. Further, the job has to wait for its own processing time. Combining these observations, we have
\begin{equation}%
\E{S} = (1 - \rho) \frac{1}{\theta} + \E{X_1} \frac{1}{\mu} + \frac{1}{\mu},
\end{equation}%
and together with Little's law
\begin{equation}%
\E{X_1} = \la \E{S},
\end{equation}%
we find
\begin{equation}%
\E{S} = \frac{\frac{1}{\mu}}{1 - \rho} + \frac{1}{\theta}.
\end{equation}%
The first term at the right-hand side is the mean sojourn time in the system without setup times (the machine is always on). The second term is the mean setup time. Clearly, the mean setup time is exactly the extra mean delay caused by turning off the machine when there is no work. In fact, it can be shown (by using, for example, a sample path argument) that the extra delay is an exponential time with mean $1/\theta$.
\end{remark}%


\subsection{Erlang services}%
\label{subsecQBD:Erlang_services}%

We consider a single-server queue. Jobs arrive according to a Poisson process with rate $\la$ and they are served in order of arrival. The service times are Erlang-$r$ distributed with mean $r/\mu$. For stability we require that the occupation rate
\begin{equation}%
\rho \defi \la \frac{r}{\mu}
\end{equation}%
is less than one. This system can be described by a QBD process $\{ X(t) \}_{t \ge 0}$ with levels $\lvl{0} = \{(0,0),(0,1),\ldots,(0,r) \}$ and $\lvl{i} = \{ (i,1),(i,2),\ldots,(i,r) \}$, with $i \ge 1$, where level $i$ indicates the number of jobs waiting in the queue and phase $j$ is the remaining number of service phases of the job in service. The state space is denoted by $\statespace \defi \lvl{0} \cup \lvl{1} \cup \cdots$. The transition rate diagram is shown in \cref{figQBD:transition_rate_diagram_Erlang_services}. Note that by setting $r = 1$ we get a homogeneous BD process on the states $\Nat_0$, modeling the $M/M/1$ queue.

\begin{figure}
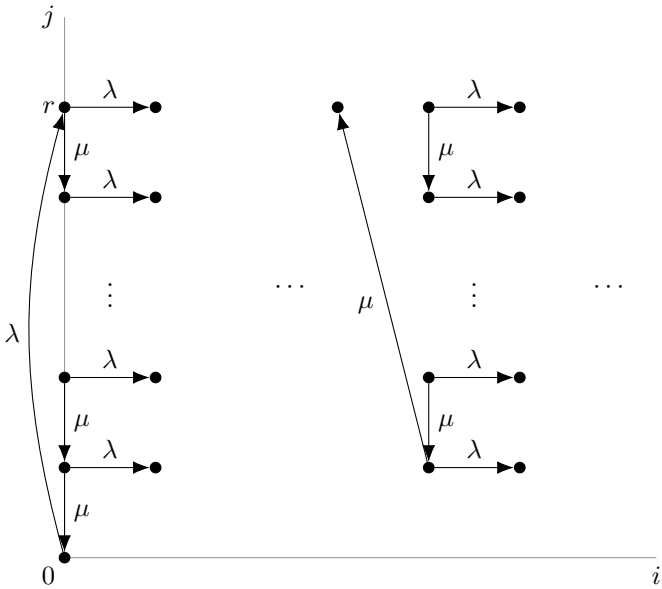
%
\centering%
\includestandalone{Chapters/QBD/TikZFiles/transition_rate_diagram_Erlang_services}%
\caption{Transition rate diagram of the single-server system with Erlang services.}
\label{figQBD:transition_rate_diagram_Erlang_services}%
\end{figure}%

Let $p(i,j)$ denote the equilibrium probability of state $(i,j) \in \statespace$. From the transition rate diagram we get the following balance equations for the states $(i,j)$ with $i \ge 1$,
\begin{align}%
(\la + \mu) p(i,j) &= \la p(i - 1,j) + \mu p(i,j + 1), \quad j = 1,2,\ldots,r - 1, \label{eqnQBD:Erlang_services_global_balance_middle_phases} \\
(\la + \mu) p(i,r) &= \la p(i - 1,r) + \mu p(i + 1,1), \label{eqnQBD:Erlang_services_global_balance_top_phase}
\end{align}%
or, in vector-matrix notation,
\begin{equation}%
\pb_{i - 1} \La_1 + \pb_i \La_0 + \pb_{i + 1}\La_{-1} = \zerob, \quad i \ge 1, \label{eqnQBD:Erlang_services_global_balance}
\end{equation}%
where $\pb_i = \begin{bmatrix} p(i,1) & p(i,2) & \cdots & p(i,r) \end{bmatrix}$,
\begin{equation}%
\La_{-1} = \begin{bmatrix}%
 & \cdots & 0 & \mu \\
 &        &   &  0  \\
 &        &   & \vdots \\
 &        &   &
\end{bmatrix},%
\quad
\La_0 = \begin{bmatrix}%
0   & \\
\mu & 0 \\
    & \ddots & \ddots \\
    &        & \mu    & 0
\end{bmatrix} - (\la + \mu)\I,
\end{equation}%
and $\La_1 = \la \I$, where all unlabeled entries are 0. We first determine the probabilities $p(i,j)$ using the \textit{matrix-analytic method}. Define an \textit{excursion} as a sample path of the process starting in level $i$, moving to levels higher than $i$ and ending at first return to level $i$. From the transition rate diagram we see that the number of excursions per time unit that end in state $(i,r)$ is $p(i + 1,1) \mu$. Note that this is the only state in which an excursion can end. On the other hand, the number of excursions per time unit that start in state $(i,k)$, immediately go to state $(i + 1,k)$ and ultimately end the excursion in state $(i,r)$ is $p(i,k) \la$. The number of excursions per time unit that end in state $(i,r)$ is found by summing over all possible starting states, so we get $\sum_{k = 1}^r p(i,k) \la$. In vector-matrix form this leads to
\begin{equation}%
\pb_{i + 1} \La_{-1} = \pb_i \La_1 G, \label{eqnQBD:Erlang_services_excursions}
\end{equation}%
where the matrix $G$ is called the \textit{auxiliary matrix} of the matrix-analytic method and element $(j,k)$ of $G$ is interpreted as the probability that, starting in state $(i,j), ~ i \ge 1$, the first passage to level $i - 1$ happens in state $(i - 1,k)$. This immediately means that zero columns in $\La_{-1}$ lead to zero columns in $G$. For the model at hand
\begin{equation}%
G = \begin{bmatrix}%
0       & \cdots & 0       & 1 \\
\vdots  &         & \vdots & \vdots \\
0       & \cdots & 0       & 1
\end{bmatrix}.%
\end{equation}%
We can substitute the relation \eqref{eqnQBD:Erlang_services_excursions} into \eqref{eqnQBD:Erlang_services_global_balance} to obtain
\begin{equation}%
\pb_i = - \pb_{i - 1} \La_1 ( \La_0 + \La_1 G)^{-1},
\end{equation}%
where we note that the inverse exists. By iterating this equation we get
\begin{equation}%
\pb_i = \pb_0 \bigl( - \La_1 ( \La_0 + \La_1 G)^{-1} \bigr)^i, \quad i \ge 0.
\end{equation}%
Finally the probabilities $p(0,0)$ and $\pb_0$ follow from the balance equations for the states in $\lvl{0}$ and the normalization condition. The above relation also shows that the matrix-geometric and matrix-analytic methods for QBD processes are closely related: the rate matrix $R = - \La_1 ( \La_0 + \La_1 G)^{-1}$, since a relation like \eqref{eqnQBD:setup_times_relation_level_i_level_i-1} of the previous example also holds for the current model.

We again apply the spectral expansion method to find the equilibrium distribution. We substitute the simple form
\begin{equation}%
p(i,j) = y(j) x^i, \quad i \ge 0, ~ j = 1,2,\ldots,r,
\end{equation}%
into the balance equations \eqref{eqnQBD:Erlang_services_global_balance_middle_phases}--\eqref{eqnQBD:Erlang_services_global_balance_top_phase} and divide by common powers of $x$ to find
\begin{align}%
(\la + \mu) y(j) x &= \la y(j) + \mu y(j + 1) x, \quad j = 1,2,\ldots,r - 1, \label{eqnQBD:Erlang_services_spectral_expansion_middle_phases} \\
(\la + \mu) y(r) x &= \la y(r) + \mu y(1) x^2, \label{eqnQBD:Erlang_services_spectral_expansion_top_phase}
\end{align}%
From \eqref{eqnQBD:Erlang_services_spectral_expansion_middle_phases} we deduce
\begin{equation}%
\frac{y(j + 1)}{y(j)} = \frac{(\la + \mu) x - \la}{\mu x} = \textup{constant} \ifed \be,
\end{equation}%
so we can set $y(j) = \be^j, ~ j = 1,2,\ldots,r$. Substituting this back into \eqref{eqnQBD:Erlang_services_spectral_expansion_middle_phases}--\eqref{eqnQBD:Erlang_services_spectral_expansion_top_phase} gives
\begin{align}%
(\la + \mu) x &= \la + \mu \be x, \\
(\la + \mu) x &= \la + \frac{\mu x^2}{\be^{r - 1}}.
\end{align}%
This set of equations is equivalent to
\begin{align}%
x &= \beta^r, \\
0 &= (\la + \mu) \beta^r - (\la + \mu \beta^{r + 1}). \label{eqnQBD:Erlang_services_apply_Rouche_on}
\end{align}%
We will apply Rouch\'e's theorem, see \cref{thm:Rouche}, to establish that \eqref{eqnQBD:Erlang_services_apply_Rouche_on} has $r$ roots inside the unit disk. Define $f(\beta) \defi (\la + \mu) \beta^r$ and $g(\beta) \defi - (\la + \mu \beta^{r + 1})$. Since both functions are polynomials, they are analytic functions for all $\beta \in \Complex$. Clearly, $f(\beta)$ has $r$ roots in the complex unit disk and we wish to establish $|f(\beta)| > |g(\beta)|$ for $|\beta| = 1$ so that by Rouch\'e's theorem, we have that \eqref{eqnQBD:Erlang_services_apply_Rouche_on} has $r$ roots in the complex unit disk. Observe that $|f(\beta)| = f(|\beta|)$ and $|g(\beta)| \le - g(|\beta|)$. Therefore we only require to that $f(|\beta|) > - g(|\beta|)$ for $|\beta| = 1$, but, for $|\beta| = 1$,
\begin{equation}%
f(|\beta|) = f(1) = \la + \mu = - g(1) = - g(|\beta|).
\end{equation}%
To resolve this issue, we essentially evaluate $f(|\beta|)$ and $g(|\beta|)$ along the circle $|\beta| = 1 - \epsilon$. We use the Taylor expansion at $|\beta| = 1$ to get $f(1 - \epsilon) = f(1) - \epsilon f'(1) + o(\epsilon)$ and similarly for $g(1 - \epsilon)$. So, we require to show that $f(1 - \epsilon) > g(1 - \epsilon)$ for $\epsilon$ small. However, since $f(1) = - g(1)$ the only thing we need is $f'(1) < - g'(1)$, with $f'(1) = r(\la + \mu)$ and $-g'(1) = (r + 1)\mu$, which is indeed the case by the stability condition $r \la < \mu$. Finally, we have established that \eqref{eqnQBD:Erlang_services_apply_Rouche_on} has $r$ roots in the complex unit disk. In \cite[Appendix~A]{Adan1996_Erlang-Erlang-c} the authors establish for a more general model that these roots are unique.

Label the $r$ roots inside the unit disk of \eqref{eqnQBD:Erlang_services_apply_Rouche_on} as $\beta_1,\beta_2,\ldots,\beta_r$ with corresponding $x_k = \beta_k^r, ~ k = 1,2,\ldots,r$. We have $r$ basis solutions of the form
\begin{equation}%
p(i,j) = \beta_k^j x_k^i, \quad k = 1,2,\ldots,r.
\end{equation}%
The next step is to take a linear combination of these basis solutions
\begin{equation}%
p(i,j) = \sum_{k = 1}^r \al_k \beta_k^j x_k^i, \quad i \ge 0, ~ j = 1,2,\ldots,r
\end{equation}%
and determine the coefficients $\al_1,\al_2,\ldots,\al_r$ and $p(0,0)$ from the balance equations of level 0 and the normalization condition.


\subsection{Tandem queue with blocking}%
\label{subsecQBD:tandem_queue}%

The final example is related to both loss networks and open Jackson networks, but is not an example of either of the two. The network consists of two stations. Jobs arrive to the first station according to a Poisson process with rate $\la$. The first station is an single-server queue where jobs are served in order of arrival and service takes an exponential amount of time with mean $1/\mu_1$. Jobs leaving the first station are routed to the second station. The second station is an Erlang-B model with $r$ servers. A service at station 2 is exponentially distributed with mean $1/\mu_2$. After receiving service at station 2, the job leaves the system. A departing job from station 1 that finds all servers occupied in station 2 is blocked and leaves the system as well. The first station is an $M/M/1$ queue and is therefore stable if $\la < \mu$; the second station is always stable.

The state of the system may be described by $X(t) \defi (X_1(t),X_2(t))$ with $X_i(t)$ the number of jobs at station $i$ at time $t$. The state space of this QBD process is $\statespace \defi \lvl{0} \cup \lvl{1} \cup \cdots$ with levels $\lvl{i} = \{ (i,0),(i,1),\ldots,(i,r) \}, ~ i \ge 0$. The transition rate diagram is shown in \cref{figQBD:transition_rate_diagram_tandem_queue}.

\begin{figure}
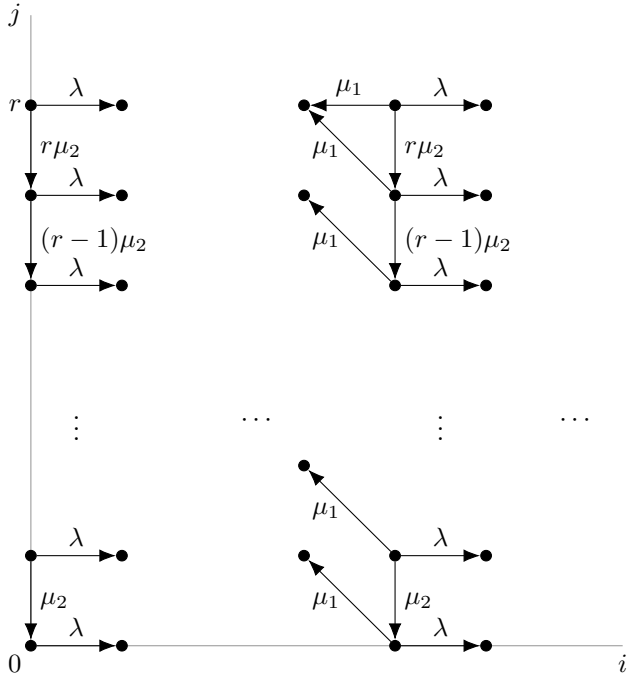
%
\centering%
\includestandalone{Chapters/QBD/TikZFiles/transition_rate_diagram_tandem_queues}%
\caption{Transition rate diagram of the tandem queue with blocking.}
\label{figQBD:transition_rate_diagram_tandem_queue}%
\end{figure}%

Let $p(i,j)$ denote the equilibrium probability of state $(i,j) \in \statespace$. From the transition rate diagram we obtain the global balance equations for level 0, with $0 < j < r$,
\begin{subequations}%
\label{eqnQBD:tandem_queue_global_balance_level_0}%
\begin{align}%
\la p(0,0) &= \mu_2 p(0,1), \\
(\la + j \mu_2) p(0,j) &= \mu_1 p(1,j - 1) + (j + 1) \mu_2 p(0,j + 1), \\
(\la + r \mu_2) p(0,r) &= \mu_1 \bigl( p(1,r - 1) + p(1,r) \bigr),
\end{align}%
\end{subequations}%
and for level $i \ge 1$, with $0 < j < r$,
\begin{subequations}%
\label{eqnQBD:tandem_queue_global_balance_level_i}%
\begin{align}%
(\la + \mu_1) p(i,0) &= \la p(i - 1,0) + \mu_2 p(i,1), \\
(\la + \mu_1 + j \mu_2) p(i,j) &= \la p(i - 1,j) + \mu_1 p(i + 1,j - 1) \notag \\
&\quad + (j + 1) \mu_2 p(i,j + 1), \\
(\la + \mu_1 +  r \mu_2) p(i,r) &= \la p(i - 1,r) \notag \\
&\quad + \mu_1 \bigl( p(i + 1,r - 1) + p(i + 1,r) \bigr),
\end{align}%
\end{subequations}%
We have learned from \cref{thmRN:Poisson_departure_process_state-dependent_service_times} that the output process of the first station is a Poisson process with rate $\la$. So it is not at all unreasonable to think that both stations operate independently and therefore the equilibrium distribution is a product of the equilibrium distributions of the first and second station. Both equilibrium distributions were already derived in \cref{ch:birth--and--death_processes}, see \cref{exBD:MMss_equilibrium_dist,exBD:MM1_equilibrium_dist}. Define $\rho_1 \defi \la/\mu_1$ and $\rho_2 \defi \la/\mu_2$, and let us validate if
\begin{equation}%
p(i,j) = \frac{1 - \rho_1}{\sum_{k = 0}^{r} \frac{\rho_2^k}{k!}} \rho_1^i \frac{\rho_2^j}{j!} \label{eqnQBD:tandem_queue_equilibrium_distribution}
\end{equation}%
is the equilibrium distribution of the tandem queue. Note that the normalization condition $\sum_{i \ge 0} \sum_{j = 0}^r p(i,j) = 1$ is satisfied. It can be easily verified that \eqref{eqnQBD:tandem_queue_equilibrium_distribution} is a solution to the global balance equations by substituting \eqref{eqnQBD:tandem_queue_equilibrium_distribution} into \eqref{eqnQBD:tandem_queue_global_balance_level_0} and \eqref{eqnQBD:tandem_queue_global_balance_level_i}. In conclusion, even though this tandem queue network has state-dependent routing, it still retains the explicit product-form equilibrium distribution that was encountered in the open Jackson networks of \cref{secRN:jackson_networks}.

The approach of making an educated guess for the equilibrium distribution and verifying its correctness through the global balance equations and the normalization condition is a powerful approach that can quickly lead to the solution. However, it is crucial that the problem is well understood so that intuition can lead to a correct guess for the form of the equilibrium distribution. Alternatively, the spectral expansion method leads to the same result, but more computations should be done to get there. For this method, we substitute the simple form $p(i,j) = y(j) x^i$ into the global balance equations and try to determine both parameters. This takes considerably more work than immediately guessing the correct expression for $p(i,j)$ as we have done for the current model.


\section{General quasi-birth--and--death processes}%
\label{secQBD:QBD}%

From the previous three examples we have seen that a QBD process consists of one infinite dimension and one finite dimension. The state space of a QBD processes can be partitioned in levels, where level 0 sometimes has a different number of states. This structure holds for the QBD processes that we are interested in. In particular,
\begin{equation}%
\lvl{0} \defi \{ (0,0),(0,1),\ldots,(0,b) \}, \quad \lvl{i} \defi \{ (i,0),(i,1),\ldots,(i,r) \}, ~ i \ge 1,
\end{equation}%
with $b$ and $r$ non-negative finite integers, so that the state space is given by
\begin{equation}%
\statespace \defi \lvl{0} \cup \lvl{1} \cup \lvl{2} \cup \cdots
\end{equation}%
We denote the state of the QBD process at time $t$ as $X(t) \defi (X_1(t),X_2(t))$ where $X_1(t)$ describes the level and $X_2(t)$ describes the phase at time $t$.

Throughout this chapter we focus on \textit{homogeneous} QBD processes, which means that transition rates are level-independent, possibly except for the transition rates from and to level 0.\endnote{QBD processes that are not homogeneous are called level-dependent QBD processes. These processes admit similar solutions to those presented for the three examples in \cref{secQBD:variations}, but now require level-dependent $R$ or $G$ matrices. For a detailed description of the methods involved see Bright and Taylor \cite{Bright1995_Level-DependentQBD} or Kharoufeh \cite{Kharoufeh2011_Level-DependentQBD}.} As stated in \cref{subsecQBD:setup_times}, the analogy with a BD process follows from the fact that transitions from a state within level $i \ge 1$ can only go to a state within level $i - 1$, $i$, or $i + 1$. The transition rate matrix of a homogeneous QBD process has block-tridiagonal structure
\begin{equation}%
Q = \begin{bmatrix}%
\La_{0}^{(0)}  & \La_{1}^{(0)}  \\
\La_{-1}^{(1)} & \La_{0}^{(1)} & \La_1 \\
               & \La_{-1}      & \La_0    & \La_1  \\
               &               & \La_{-1} & \La_0  & \La_1  \\
               &               &          & \ddots & \ddots & \ddots \\
\end{bmatrix}, \label{eqnQBD:definition_transition_rate_matrix}%
\end{equation}%
when the states are ordered according to their level and in increasing order within a level. The subscript $n$ of $\La_n$ denotes the change in levels for a transition. Element $(j,k)$ of $\La_n$ is the transition rate from state $(i,j)$ to state $(i + n,k)$ with $i \ge 2$. Note that elements $(j,j), ~ 0 \le j \le r$ of $\La_0$ are the exception to this rule; element $(j,j)$ is negative, but it's absolute value is exactly the rate at which the process leaves $(i,j), ~ i \ge 2$. This makes the row sums of $Q$ zero. The additional superscript $l$ in $\La_{n}^{(l)}$ indicates the dependence on the level $l$.

The matrix $\La_{0}^{(0)}$ is of dimension $(b + 1) \times (b + 1)$; $\La_{1}^{(0)}$ is of dimension $(b + 1) \times (r + 1)$; $\La_{-1}^{(1)}$ is of dimension $(r + 1) \times (b + 1)$; and $\La_{0}^{(1)}$, $\La_{-1}$, $\La_0$ and $\La_1$ are square matrices of dimension $r + 1$. Note that $\La \defi \La_{-1} + \La_0 + \La_1$ is a transition rate matrix that describes the behavior of the QBD process in the vertical direction only. The matrix $\La$ has negative entries on the main diagonal and non-negative entries elsewhere with row sums equal to zero.


\section{Modeling QBD processes}%
\label{secQBD:modeling_QBD_processes}%

In this section we present some examples in various application fields of Markov processes that are QBD processes. We focus mainly on the modeling aspect: the translation of a problem description to a QBD process with a state space and transition matrices.

\begin{example}[An insurance company]%
Claims arrive to an insurance company according to a Poisson process with rate $\la$. A claim is important with probability $p$. To achieve low waiting times for important claims, the insurance company is allowed to hold at most $r$ of these important claims at the same time; if new important claims arrive, they are diverted to a different insurance company. Standard claims are resolved one-by-one independently of the important claims and take $\Exp{\mu_1}$ time each. Important claims are also resolved one-by-one and take $\Exp{\mu_2}$ time each.

Denote by $X_1(t)$ and $X_2(t)$ the number of standard and important claims at time $t$ and by $X(t) \defi (X_1(t),X_2(t))$ the state of the system. Then, $\{ X(t) \}_{t \ge 0}$ is a QBD process with $b = r$, $\La_{-1} = \mu_1 \I$, $\La_1 = \la(1 - p) \I$, and
\[%
\La_0 = \begin{bmatrix}%
-\mu_1 & \la p \\
\mu_2  & -\mu  & \la p \\
       & \mu_2 & -\mu   & \la p \\
       &       & \ddots & \ddots & \ddots \\
       &       &        & \mu_2  & -\mu   & \la p \\
       &       &        &        & \mu_2  & -\mu + \la p
\end{bmatrix} - \la \I,%
\]%
where $\mu \defi \mu_1 + \mu_2$, $\I$ is the identity matrix and unlabeled elements of $\La_0$ are zero.
\end{example}%

\begin{example}[Experiments that require setup]%
A scientist is performing experiments. Requests for an additional experiment arrive according to a Poisson process with rate $\la$. An experiment requires two phases of setup; both take $\Exp{\theta}$ time. Once setup is completed, experiments can be performed one after the other, where an experiment takes $\Exp{\mu}$ time. However, when a request for an additional experiment arrives, the scientist gets distracted and the current experiment and the setup process have to be redone. Knowing this, the scientist does no setup when there are no experiments to be done.

Denote by $X_1(t)$ the number of experiments that still need to be done at time $t$, let $X_2(t)$ be the number of setup phases completed and let the state of the system be described by $X(t) \defi (X_1(t),X_2(t))$. Then, $\{ X(t) \}_{t \ge 0}$ is a QBD process with $b = 0$, $r = 2$,
\[%
\La_{-1} = \begin{bmatrix}%
0 & 0 & 0 \\
0 & 0 & 0 \\
0 & 0 & \mu
\end{bmatrix},%
\quad
\La_{0} = \begin{bmatrix}%
-\theta & \theta  & 0 \\
0       & -\theta & \theta \\
0       & 0       & -\mu
\end{bmatrix} - \la \I,%
\quad
\La_1 = \begin{bmatrix}%
\la & 0 & 0 \\
\la & 0 & 0 \\
\la & 0 & 0
\end{bmatrix}. \qedhere
\]%
\end{example}%

\begin{example}[A single-server queue in a random environment]%
A single server with an infinite capacity queue is serving jobs one at a time. An exogenous process (the random environment) changes the parameters of the system, where this process can be in any of $r + 1$ phases. If the random environment is in phase $n$, then jobs arrive according to a Poisson process with rate $\la_n$ and are served with exponential rate $\mu_n$. The only restriction required on the exogenous process is that transitions occur after some exponential time and that all states within a level can be reached.

Let the state of the system be denoted by $X(t) \defi (X_1(t),X_2(t)$, where $X_1(t)$ is the total number of jobs in the system at time $t$ and $X_2(t)$ is the phase of the random environment at time $t$. Then, $\{ X(t) \}_{t \ge 0}$ is a QBD process with $b = r$, $\La_{-1} = \diag{\mu_0,\mu_1,\ldots,\mu_r}$, $\La_{1} = \diag{\la_0,\la_1,\ldots,\la_r}$, and $\La_0 = E - \La_{-1} - \La_1$, where $E$ is the generator of the exogenous process and $\diag{\vca{x}}$ is a square matrix with the vector $\vca{x}$ on the main diagonal.
\end{example}%

\begin{example}[Make to order and make to stock \textup{\cite{Adan1998_MTO_MTS}}]%
Standard products and customer-specific prototypes are produced by the same high-tech company. Demand for standard products arrives according to a Poisson process with rate $\la_1$ and demand for prototypes according to a Poisson process with rate $\la_2$. If the company has no outstanding orders, it makes standard products to stock. The company is willing to have at most $r$ standard products on stock to avoid high holding costs. A demand for a standard product is immediately satisfied whenever stock is available, otherwise the standard product is produced to order. Prototypes are customer specific and are therefore made to order. Since prototypes yield higher monetary returns, producing these products has preemptive priority over producing standard products. Producing either product takes $\Exp{\mu}$ time.

If we denote by $X_1(t)$ the total number of outstanding orders (standard plus prototypes) at time $t$, by $X_2(t)$ the number of standard products on stock at time $t$ and by $X(t) \defi (X_1(t),X_2(t))$ the state of the system, then $\{ X(t) \}_{t \ge 0}$ is a QBD process with $b = r$, $\La_{-1} = \mu \I$, $\La_1 = \diag{\la_1 + \la_2,\la_2,\ldots,\la_2}$, and
\[%
\La_0 = \begin{bmatrix}%
0 \\
\la_1 & 0 \\
      & \la_1 & 0 \\
      &       & \ddots & \ddots \\
      &       &        & \la_1 & 0
\end{bmatrix} -(\la + \mu)\I,%
\quad
\La_{0}^{(0)} = \La_0 + \begin{bmatrix}%
0 & \mu \\
  & 0 & \mu \\
  &   & \ddots & \ddots \\
  &   &        & 0      & \mu \\
  &   &        &        & 0
\end{bmatrix},%
\]%
where $\la \defi \la_1 + \la_2$.
\end{example}%

\begin{example}[An encryption server with inspection]\label{exQBD:encryption_server}%
A computing facility has a single server that encrypts files. Tasks arrive according to a Poisson process with rate $\la$ and wait in an infinite queue if the server is busy. Before encrypting a file, the server inspects the contents of the file and decides on a certain technology to use. Inspecting a file takes $\Exp{\theta}$ time. The server uses three different encryption types: Data Encryption Standard (type 1), Advanced Encryption Standard (type 2), and RC4 (type 3). A file requires type $n$ encryption with probability $p_n$ and $p_1 + p_2 + p_3 = 1$. Type $n$ encryption takes $\Exp{\mu_n}$ time.

Let $X_1(t)$ be the number of files that still need to be encrypted at time $t$ and let $X_2(t)$ be the encryption type of the file being encrypted at time $t$, where $X_2(t) = 0$ indicates that the server is still in the process of inspecting the file (or idle, if $X_1(t) = 0$ as well). The state of the system is denoted by $X(t) \defi (X_1(t),X_2(t))$. The Markov process $\{ X(t) \}_{t \ge 0}$ is a QBD process with $b = 0$, $r = 3$, $\La_1 = \la \I$,
\[%
\La_{-1} = \begin{bmatrix}%
0 & 0 & 0 & 0 \\
\mu_1 & 0 & 0 & 0 \\
\mu_2 & 0 & 0 & 0 \\
\mu_3 & 0 & 0 & 0
\end{bmatrix},%
\quad
\La_0 = \begin{bmatrix}%
-\theta & \theta p_1 & \theta p_2 & \theta p_3 \\
0       & -\mu_1     & 0          & 0 \\
0       & 0          & -\mu_2     & 0 \\
0       & 0          & 0          & -\mu_3
\end{bmatrix} - \la \I. \qedhere%
\]%
\end{example}%


\section{Stability condition}%
\label{secQBD:stability_condition}%

From here on we will assume that the QBD process $\{ X(t) \}_{t \ge 0}$ is irreducible and that the transition rate matrix $\La$ has exactly one communicating class. The condition for this Markov process to be positive recurrent is an intuitive one and is easily extended from the homogeneous BD processes case. In the homogenous BD case the process is positive recurrent (or stable) if the birth rate is smaller than the death rate. This implies that the process does not drift off to infinity, because the net drift (birth rate minus death rate) is negative. For the QBD case we establish a similar condition for positive recurrence based on the net drift.

The QBD process adds a finite number of phases to the BD process and transition rates to the left and right can vary from phase to phase. Just like the BD case, the QBD process should be stable if the mean drift to the left is larger than the mean drift to the right. This way, the process does not drift off to higher and higher levels. Now, the mean drift to the left or right depends on the transition rates at each phase, and more importantly, depends on the fraction of time the process spends in each of its phases. The last quantity is determined from the transition rate matrix $\La$ of the vertical direction. Let $\vca{x}$ be the equilibrium distribution of the vertical direction:
\begin{equation}%
\vca{x} \La = \zerob, \quad \vca{x} \oneb = 1.
\end{equation}%
Element $j$ of $\vca{x}$ is interpreted as the fraction of time that the QBD process is in phase $j$ when it is far away from level 0 (so that boundary effects do not play a role). With this in mind, the mean drift from level $i$ to level $i - 1$ is $\vca{x} \La_{-1} \oneb$ and the mean drift from level $i$ to level $i + 1$ is $\vca{x} \La_1 \oneb$. The net mean drift is then $\vca{x} \La_1 \oneb - \vca{x} \La_{-1} \oneb$ and the process is positive recurrent---also called stable---if the net mean drift is negative. This condition is known as Neuts' \textit{mean drift condition} \cite[Theorem~3.1.1]{Neuts1994_Matrix-geometric} or the stability condition and we present it here as a theorem.

\begin{theorem}[Stability condition]\label{thmQBD:stability_condition}%
The \textup{QBD} process $\{ X(t) \}_{t \ge 0}$ is positive recurrent if and only if
\begin{equation}%
\vca{x} \La_1 \oneb < \vca{x} \La_{-1} \oneb \label{eqnQBD:stability_condition}
\end{equation}%
with $\vca{x} = \begin{bmatrix} x(0) & x(1) & \cdots & x(r) \end{bmatrix}$  the equilibrium distribution of the Markov process with transition rate matrix $\La \defi \La_{-1} + \La_0 + \La_1$\textup{:}
\begin{equation}%
\vca{x} \La = \zerob, \quad \vca{x} \oneb = 1. \label{eqnQBD:stability_condition_vertical_eq_dist}
\end{equation}%
\end{theorem}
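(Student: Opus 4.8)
The plan is to prove the stability condition by connecting the QBD process to the matrix $G$ (the auxiliary matrix already used in \cref{subsecQBD:Erlang_services}), whose entries record first-passage probabilities to the level below, and then to invoke the matrix-geometric structure. The key object is the rate matrix $R$, which is the minimal non-negative solution of the matrix-quadratic equation $\La_1 + R \La_0 + R^2 \La_{-1} = \zerob$; positive recurrence of the QBD process is equivalent to the spectral radius $\sprad{R} < 1$, since the equilibrium vectors satisfy $\pb_i = \pb_1 R^{i - 1}$ and these are summable precisely when $\sprad{R} < 1$. So the entire statement reduces to showing that $\sprad{R} < 1$ if and only if the mean-drift inequality \eqref{eqnQBD:stability_condition} holds.

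First I would set up the censored/embedded machinery: the matrix $A \defi \La_{-1} + \La_0 + \La_1 = \La$ is the generator of the phase process, irreducible by assumption, so it has a unique stationary vector $\vca{x}$ with $\vca{x}\La = \zerob$, $\vca{x}\oneb = 1$, by \cref{thmMP:equilibrium_equations_and_distribution}. Next I would bring in the standard fact (provable via a Perron--Frobenius / M-matrix argument on $A$, or via the probabilistic interpretation of $G$ as a sub-stochastic matrix that is stochastic exactly in the recurrent case) that $\sprad{R} < 1$, $\sprad{R} = 1$, or $\sprad{R} > 1$ according to the sign of the scalar $\vca{x}\La_1\oneb - \vca{x}\La_{-1}\oneb$. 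The cleanest route is: differentiate the characteristic relation. Consider $\chi(\omega) \defi \det\bigl(\La_1 + \omega \La_0 + \omega^2 \La_{-1}\bigr)$ (which appeared in the spectral expansion method in \cref{subsecQBD:setup_times}); one shows $\omega = 1$ is always a root because $(\La_1 + \La_0 + \La_{-1})\oneb = \zerob$, and then a perturbation analysis of the root near $\omega = 1$, using left/right null vectors $\vca{x}$ and $\oneb$ of $A$, yields that the relevant root crosses the unit circle exactly when $\vca{x}\La_1\oneb = \vca{x}\La_{-1}\oneb$. The derivative computation gives $\frac{\dinf}{\dinf \omega}\chi(\omega)\big|_{\omega=1}$ proportional to $\vca{x}(\La_0 + 2\La_{-1})\oneb = \vca{x}(-\La_1 - \La_{-1} + 2\La_{-1})\oneb = \vca{x}(\La_{-1} - \La_1)\oneb$, which is the drift difference; its sign controls whether the number of roots strictly inside the unit disk matches what positive recurrence requires.

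The two directions then go as follows. For sufficiency, assuming $\vca{x}\La_1\oneb < \vca{x}\La_{-1}\oneb$, I would exhibit a non-negative, absolutely convergent solution of the balance equations $\pb Q = \zerob$ — namely the matrix-geometric form $\pb_i = \pb_1 R^{i-1}$ with $\sprad{R} < 1$ guaranteeing $\sum_i \pb_i \oneb < \infty$ — and then invoke \cref{thmMP:Foster} (the continuous-time Foster's theorem) to conclude positive recurrence. For necessity, I would argue the contrapositive: if the drift inequality fails, a Lyapunov / mean-drift argument (comparing the QBD to a homogeneous random walk on levels with non-negative drift) shows the level component $X_1(t)$ is not positive recurrent, hence neither is $\{X(t)\}_{t \ge 0}$; alternatively, one shows $\sprad{R} \ge 1$ so the geometric series diverges and no summable invariant vector exists, so by uniqueness (\cref{thmMP:equilibrium_equations_and_distribution}) the process cannot be positive recurrent.

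The main obstacle I anticipate is the rigorous perturbation argument pinning down exactly how many zeros of $\chi(\omega)$ lie strictly inside the unit disk as a function of the drift sign — this requires either a careful Rouché-type count (\cref{thm:Rouche}) on the boundary $|\omega| = 1$, handling the always-present root at $\omega = 1$ with care, or the full Perron--Frobenius theory of the matrix $R$ and its relation to $G$. A secondary subtlety is the necessity direction: showing that failure of the drift condition genuinely forces non-positive-recurrence (not merely null recurrence versus transience being indistinguishable) needs the mean-drift / Foster--Lyapunov converse, which I would likely cite from Neuts \cite{Neuts1994_Matrix-geometric} rather than reprove, since the excerpt itself attributes \cref{thmQBD:stability_condition} to Neuts' mean drift condition \cite[Theorem~3.1.1]{Neuts1994_Matrix-geometric}.
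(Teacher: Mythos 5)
The paper does not prove this theorem: it states it as Neuts' mean drift condition and cites \cite[Theorem~3.1.1]{Neuts1994_Matrix-geometric}, so there is no in-text argument for you to match. Your proposal follows the standard route from the matrix-geometric literature (positive recurrence $\Leftrightarrow \sprad{R} < 1 \Leftrightarrow$ negative mean drift), and your Jacobi-formula computation that $\chi'(1)$ is proportional to $\vca{x}(\La_{-1} - \La_1)\oneb$ --- using $\text{adj}(\La) = c\,\oneb\,\vca{x}$ because $\La$ has rank $r$ --- is correct and is exactly the mechanism that makes the drift scalar control the root location.

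There is, however, a real gap in your reduction. You invoke ``positive recurrence $\Leftrightarrow \sprad{R} < 1$'' as an available fact, but the paper has not established it: \cref{thmQBD:matrix-geometric_distribution} only asserts the matrix-geometric form \emph{under the hypothesis} of positive recurrence, and that theorem appears after \cref{thmQBD:stability_condition} in the text, so leaning on it risks circularity. The direction $\sprad{R} < 1 \Rightarrow$ positive recurrence is indeed quick via \cref{thmMP:Foster}, as you say, but the converse --- that positive recurrence forces $\sprad{R} < 1$, so that failure of the drift condition precludes positive recurrence --- is the substantive half; it requires minimality of $R$ together with uniqueness of the stationary measure, or the censoring relation between $R$ and $G$, and is precisely Neuts' contribution. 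Your spectral route has the parallel subtlety you already flag: the sign of $\chi'(1)$ only tells you which way the boundary root at $\omega = 1$ moves under perturbation; converting that local fact into a global count of roots of $\det(\La_1 + \omega\La_0 + \omega^2\La_{-1})$ strictly inside the unit disk (and from there into the position of $\sprad{R}$ relative to $1$) still needs a Rouch\'e/argument-principle step with careful treatment of the root on the boundary. Neither point is a fatal flaw --- both are exactly where Neuts' proof does its work --- but as written the proposal does not discharge them, and the paper itself defers that weight to the reference.
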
%

\begin{example}[An encryption server with inspection]%
We derive the stability condition of the QBD process of \cref{exQBD:encryption_server}. The transition rate matrix of the vertical direction is
\begin{equation}%
\La = \begin{bmatrix}%
-\theta & \theta p_1 & \theta p_2 & \theta p_3 \\
\mu_1   & -\mu_1     & 0          & 0 \\
\mu_2   & 0          & -\mu_2     & 0 \\
\mu_3   & 0          & 0          & -\mu_3
\end{bmatrix}.\label{eqnQBD:encryption_server_vertical_generator}%
\end{equation}%
The dependent system of equations \eqref{eqnQBD:stability_condition_vertical_eq_dist} can easily be solved by replacing the left-most column in the generator \eqref{eqnQBD:encryption_server_vertical_generator} by ones (we briefly refer to this modified generator as $\La^*$) and solve the system $\vca{x} \La^* = \begin{bmatrix} 1 & 0 & 0 & 0 \end{bmatrix}$ to obtain
\begin{equation}%
\vca{x} = \frac{1}{1 + \sum_{n = 1}^3 \frac{\theta p_n}{\mu_n}} \begin{bmatrix}%
1 & \frac{\theta p_1}{\mu_1} & \frac{\theta p_2}{\mu_2} & \frac{\theta p_3}{\mu_3}
\end{bmatrix}.%
\end{equation}%
So, for this process, the stability condition \eqref{eqnQBD:stability_condition} reads
\begin{equation}%
\la < \frac{\theta}{1 + \sum_{n = 1}^3 \frac{\theta p_n}{\mu_n}} = \frac{1}{\frac{1}{\theta} + \sum_{n = 1}^3 p_n \frac{1}{\mu_n}}.
\end{equation}%
The mean drift to the right is clear: from every phase an arrival can occur with rate $\la$. The mean drift to the left is the inverse of the mean service time. The service time consists of the setup phase (exponential with mean $1/\theta$) plus the encryption, where type-$n$ encryption occurs with probability $p_n$ and is exponential with mean $1/\mu_n$.
\end{example}%


\section{Matrix-geometric method}%
\label{secQBD:matrix-geometric_method}%

The aim of the \textit{matrix-geometric method}\endnote{The matrix-geometric method was pioneered by Evans \cite{Evans1967_Matrix-geometric} and Wallace \cite{Wallace1969_QBD}, fully developed by Neuts \cite{Neuts1994_Matrix-geometric}, and discussed at length in the classical work of Latouche and Ramaswami \cite{Latouche1999_Matrix-analytic}.} is to characterize the equilibrium probabilities
\begin{equation}%
p(i,j) \defi \lim_{t \to \infty} \Prob{X_1(t) = i, X_2(t) = j}, \quad (i,j) \in \statespace,
\end{equation}%
as a matrix-geometric distribution in terms of the levels. In the examples of \cref{secQBD:variations} we have seen that the rate matrix $R$ plays a key role. This is also true for the general class of QBD processes. Recall from \cref{subsecQBD:setup_times} that element $(R)_{j,k}$ is the expected time spent in state $(i + 1,k)$ multiplied by $-(\La_0)_{j,j}$ before the first return to level $i$, given the initial state $(i,j)$ with $i \ge 1$. Note that $-(\La_0)_{j,j}$ is the rate at which the process leaves state $(i,j), ~ i \ge 1$. From the interpretation of $R$ we directly conclude that zero rows of $\La_{1}$ correspond to zero rows in $R$.

We denote the equilibrium probability vectors as
\begin{align*}%
\pb_0 &\defi \begin{bmatrix} p(0,0) & p(0,1) & \cdots & p(0,b) \end{bmatrix}, \\
\pb_i &\defi \begin{bmatrix} p(i,0) & p(i,1) & \cdots & p(i,r) \end{bmatrix}, \quad i \ge 1,
\end{align*}%
and $\pb = \begin{bmatrix} \pb_0 & \pb_1 & \cdots \end{bmatrix}$. The balance equations for the QBD process with transition rate matrix $Q$ partitioned by levels are given by
\begin{align}%
\pb_0 \La_{0}^{(0)} + \pb_1 \La_{-1}^{(1)} &= \zerob, \label{eqnQBD:balance_equations_level_0} \\
\pb_0 \La_{1}^{(0)} + \pb_1 \La_{0}^{(1)} + \pb_2 \La_{-1} &= \zerob, \label{eqnQBD:balance_equations_level_1} \\
\pb_{i - 1} \La_1 + \pb_i \La_0 + \pb_{i + 1} \La_{-1} &= \zerob, \quad i \ge 2. \label{eqnQBD:balance_equations_level_i}
\end{align}%
The next result, appearing in \cite[Theorem~3.1.1]{Neuts1994_Matrix-geometric}, describes the matrix-geometric structure of the equilibrium probability vectors. Notice that \eqref{eqnQBD:R_matrix_equation} is obtained by substituting \eqref{eqnQBD:eq_dist_R} into \eqref{eqnQBD:balance_equations_level_i}.

\begin{theorem}\label{thmQBD:matrix-geometric_distribution}%
Provided the \textup{QBD} process $\{ X(t) \}_{t \ge 0}$ is irreducible and positive recurrent, the equilibrium probability vector $\pb$, satisfying $\pb Q = \zerob, ~ \pb \oneb = 1$, is given by
\begin{equation}%
\pb_{i + 1} = \pb_i R = \pb_1 R^i, \quad i \ge 1, \label{eqnQBD:eq_dist_R}
\end{equation}%
where $R$, called the \textup{rate matrix}, is the minimal non-negative solution of the matrix-quadratic equation
\begin{equation}%
R^2 \La_{-1} + R \La_0 + \La_1 = 0. \label{eqnQBD:R_matrix_equation}
\end{equation}%
The equilibrium probability vectors $\pb_0$ and $\pb_1$ follow from the system of equations
\begin{align*}%
\pb_0 \La_{0}^{(0)} + \pb_1 \La_{-1}^{(1)} &= \zerob, \\
\pb_0 \La_{1}^{(0)} + \pb_1 \bigl( \La_{0}^{(1)} + R \La_{-1} \bigr) &= \zerob,
\end{align*}%
and the normalization condition $\pb_0 \oneb + \pb_1 (\I - R)^{-1} \oneb = 1$.
\end{theorem}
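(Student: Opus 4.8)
The plan is to verify the three claims of the theorem separately: (i) the matrix-geometric form $\pb_{i+1} = \pb_i R = \pb_1 R^i$ for $i \ge 1$ together with the fact that $R$ solves the matrix-quadratic equation \eqref{eqnQBD:R_matrix_equation} and is its minimal non-negative solution; (ii) that the boundary vectors $\pb_0$ and $\pb_1$ are determined by the stated reduced system; and (iii) that the normalization condition takes the claimed form. Since the excerpt instructs us to cite \cite[Theorem~3.1.1]{Neuts1994_Matrix-geometric} for the deep part, the obstacle is not to reprove Neuts' theorem from scratch but to assemble the pieces cleanly and justify the algebraic manipulations.

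First I would establish existence and the defining equation for $R$ probabilistically. Using the interpretation of $(R)_{j,k}$ as the expected sojourn time in state $(i+1,k)$, weighted by the exit rate $-(\La_0)_{j,j}$ out of $(i,j)$, before the first return to level $i$ — exactly as carried out by explicit one-step analysis in \cref{subsecQBD:setup_times} — one argues by the strong Markov property that $R$ is finite (by positive recurrence, since the expected excursion lengths are finite) and non-negative, and that censoring the QBD on levels $\{i, i+1, i+2, \dots\}$ yields the relation $\pb_{i+1} = \pb_i R$ for every $i \ge 1$; here homogeneity of the transition blocks above level $1$ is what makes a single matrix $R$ work for all $i \ge 1$. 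Iterating gives $\pb_{i+1} = \pb_1 R^i$. Substituting $\pb_{i-1} = \pb_1 R^{i-2}$, $\pb_i = \pb_1 R^{i-1}$, $\pb_{i+1} = \pb_1 R^i$ into the interior balance equations \eqref{eqnQBD:balance_equations_level_i} gives $\pb_1 R^{i-2}(\La_1 + R\La_0 + R^2 \La_{-1}) = \zerob$ for all $i \ge 2$; since this must hold for the equilibrium vector with $\pb_1 \ne \zerob$ and the structure is level-independent, one concludes $R^2 \La_{-1} + R \La_0 + \La_1 = 0$. Minimality among non-negative solutions is the part I would attribute to Neuts: the probabilistic $R$ is dominated entrywise by any non-negative solution of the quadratic equation, and this is precisely \cite[Theorem~3.1.1]{Neuts1994_Matrix-geometric}. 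I would also note uniqueness of the equilibrium vector follows from irreducibility and positive recurrence, invoked earlier in the chapter.

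Next, for the boundary vectors: equations \eqref{eqnQBD:balance_equations_level_0} and \eqref{eqnQBD:balance_equations_level_1} are the balance equations at levels $0$ and $1$. In \eqref{eqnQBD:balance_equations_level_1} substitute $\pb_2 = \pb_1 R$ to obtain $\pb_0 \La_1^{(0)} + \pb_1(\La_0^{(1)} + R\La_{-1}) = \zerob$, which together with \eqref{eqnQBD:balance_equations_level_0} is the stated system. One should remark that this homogeneous system has a one-dimensional solution space (again by irreducibility/positive recurrence), so $\pb_0$ and $\pb_1$ are pinned down up to a scalar, which the normalization then fixes.

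Finally, for normalization: $\pb \oneb = \pb_0 \oneb + \sum_{i \ge 1} \pb_i \oneb = \pb_0 \oneb + \pb_1 \sum_{i \ge 0} R^i \oneb$. The series $\sum_{i \ge 0} R^i$ converges to $(\I - R)^{-1}$ because $R$ has spectral radius strictly less than $1$ — this is where the stability condition \cref{thmQBD:stability_condition} enters, and I would cite Neuts for $\sprad{R} < 1$ under positive recurrence. Hence $\pb \oneb = \pb_0 \oneb + \pb_1 (\I - R)^{-1} \oneb = 1$, completing the proof. The main obstacle, beyond bookkeeping, is the careful justification that the probabilistically-defined $R$ both satisfies $\pb_{i+1} = \pb_i R$ and is the minimal solution of the matrix-quadratic equation; I would handle this by leaning on the censoring/excursion argument sketched in \cref{subsecQBD:setup_times} for the concrete examples and citing \cite{Neuts1994_Matrix-geometric,Latouche1999_Matrix-analytic} for the general statement rather than reconstructing it.
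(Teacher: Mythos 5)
The paper actually gives no proof of this theorem at all: it states the result, cites Neuts \cite[Theorem~3.1.1]{Neuts1994_Matrix-geometric}, and adds only the parenthetical remark that \eqref{eqnQBD:R_matrix_equation} ``is obtained by substituting'' the matrix-geometric form into the interior balance equations. Your proposal therefore fills in more than the paper does, and in spirit it is consistent with it: probabilistic interpretation of $R$, the flow-balance (you phrase it as censoring) argument that yields $\pb_{i+1} = \pb_i R$, boundary bookkeeping, and the normalization series — all with the deep parts (existence and minimality of $R$, $\sprad{R} < 1$) attributed to Neuts, which is exactly where the paper leans as well.

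There is, however, one concrete gap in your chain of reasoning. You write that substituting $\pb_{i-1} = \pb_1 R^{i-2}$, $\pb_i = \pb_1 R^{i-1}$, $\pb_{i+1} = \pb_1 R^i$ into \eqref{eqnQBD:balance_equations_level_i} gives $\pb_1 R^{i-2}(\La_1 + R\La_0 + R^2\La_{-1}) = \zerob$ for all $i \ge 2$, and that one may then ``conclude'' the matrix-quadratic equation \eqref{eqnQBD:R_matrix_equation}. That inference is not valid: knowing that the vectors $\pb_1 R^k$ (for all $k \ge 0$) lie in the left null space of $\La_1 + R\La_0 + R^2\La_{-1}$ does \emph{not} force this matrix to vanish unless those vectors span $\Real^{r+1}$, which fails in general — for example whenever $R$ has a zero row (which happens precisely when $\La_1$ has a zero row, as noted in the chapter). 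The paper's ``Notice that'' should be read as a consistency remark, not a derivation. The correct logical direction is the reverse: first establish, via a one-step (first-transition) analysis on the probabilistic $R$ — or by taking Neuts's characterization as a definition — that $R$ itself satisfies \eqref{eqnQBD:R_matrix_equation}; then \emph{verify} that $\pb_i = \pb_1 R^{i-1}$ satisfies \eqref{eqnQBD:balance_equations_level_i} for all $i \ge 2$, which is the easy substitution. Your own probabilistic excursion argument already contains the material needed for the first step, so the fix is mainly to re-order the logic and not present the substitution into the balance equations as what pins down $R$.

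One minor point: the relation $\pb_{i+1} = \pb_i R$ in \cref{subsecQBD:setup_times} is derived there by equating the flow between levels $i$ and $i+1$, not by censoring the process to the upper levels $\{i, i+1, \dots\}$ as you describe; censoring to a truncated set $\{0,\dots,i\}$ is the device associated with the auxiliary matrix $G$ in \cref{secQBD:matrix-analytic_method}. The excursion interpretation you give is the right intuition, but the mechanism is flow balance rather than censoring.
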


For the computation of the rate matrix $R$ we may rewrite \eqref{eqnQBD:R_matrix_equation} in the form
\begin{equation}%
R = - \bigl( \La_1 + R^2 \La_{-1} \bigr) \La_0^{-1}. \label{eqnQBD:fixed_point_equation_R}
\end{equation}%
The matrix $\La_0$ is indeed invertible, since it is a transient generator, which means that it is the transition rate matrix of a transient Markov process. More precisely, for a transient (substochastic) transition probability matrix $P$ associated with a Markov chain, we know that $P^n \to 0$ as $n \to 0$. This convergence is geometric, the decay parameter of which is the largest eigenvalue of $P$, which is less than 1. So, the series $\sum_{n \ge 0} P^n = (\I - P)^{-1}$ converges and therefore the inverse of $I - P$ exists. Now, in the continuous time setting we can construct the transient transition probability matrix $P$ as $P = \I + \Delta Q$, where $0 < \Delta < \max_i - (Q)_{i,i}$. Since $P$ is a transient transition probability matrix, the series $\sum_{n \ge 0} P^n = (\I - P)^{-1} = (-\Delta Q)^{-1}$, and therefore $Q$ is invertible.

The above fixed point equation \eqref{eqnQBD:fixed_point_equation_R} may be solved by straightforward successive substitutions, so
\begin{equation}%
R_{n + 1} = - \bigl( \La_1 + R_n^2 \La_{-1} \bigr) \La_0^{-1}, \quad n \ge 0, \label{eqnQBD:R_successive_substitutions}
\end{equation}%
starting with $R_0 = 0$ and $R_n \uparrow R$ as $n \to \infty$.\endnote{Neuts shows that $R_n \uparrow R$ as $n \to \infty$, see \cite[Lemma~1.2.3]{Neuts1994_Matrix-geometric} for the proof in case of a discrete-time QBD chain. More sophisticated and efficient numerical schemes for determining $R$ have been developed, in particular, cyclic reduction \cite{Bini1996_Cyclic_Reduction} and logarithmic reduction \cite{Latouche1993_Logarithmic_Reduction}.} For computational purposes a stopping criterion is required: we show one choice for a stopping criterion in \cref{algQBD:matrix-geometric_successive_substitutions}. Note that for a matrix $A$ the max norm is $\| A \|_{\textup{max}} \defi \max_{i,j} |(A)_{i,j}|$.

\begin{algorithm}%
\caption{Calculating $R$ using successive substitutions}%
\label{algQBD:matrix-geometric_successive_substitutions}%
\begin{algorithmic}[1]%
\State Pick $\epsilon$ small and positive
\State Set $R_0 = 0$, $R_1 = - \La_1 \La_0^{-1}$ and $n = 1$
\While{$\| R_n - R_{n - 1} \|_{\textup{max}} > \epsilon$}
    \State Compute $R_{n + 1}$ according to \eqref{eqnQBD:R_successive_substitutions}
    \State Update $n = n + 1$
\EndWhile
\end{algorithmic}%
\end{algorithm}%


\subsection{Explicit solutions for the rate matrix}%
\label{subsecQBD:R_explicit_solutions}%

We have seen in \cref{subsecQBD:setup_times} that the rate matrix $R$ can be determined explicitly. Also the example of \cref{subsecQBD:Erlang_services} admits an explicit solution for $R$. This is not always the case, however. We now review two cases in which the rate matrix $R$ can be determined explicitly.\endnote{Ramaswami \cite{Ramaswami1986_Explicit_R} treats the two cases in which the $R$ matrix can be obtained explicitly.}

The first case assumes that the transition rate matrix $\La_{-1}$ with transitions to the left is of the form
\begin{equation}%
\La_{-1} = \vc{\al} \vc{\be}, \label{eqnQBD:R_explicit_La_-1}
\end{equation}%
where $\vc{\al}$ is a column vector and $\vc{\be}$ is a \textit{stochastic} row vector, both of dimension $r + 1$:
\begin{equation}%
\vc{\al} = \begin{bmatrix}%
\al_0 \\
\al_1 \\
\vdots \\
\al_r
\end{bmatrix},%
\quad
\vc{\be} = \begin{bmatrix}%
\be_0 & \be_1 & \cdots & \be_r
\end{bmatrix},%
\quad
\vc{\be} \oneb = 1, ~ \vc{\al},\vc{\be} > \zerob
\end{equation}%
where $\vca{x} > \zerob$ indicates that all elements of $\vca{x}$ are non-negative and at least one element is positive. This means that all rows of $\La_{-1}$ are the same up to some scaling: from all states $(i + 1,j), ~ j = 0,1,\ldots,r$ the probability of jumping to state $(i,k), ~ k = 0,1,\ldots,r$ is \textit{independent} of the starting state in level $i + 1$. We investigate the consequences for the rate matrix $R$.

Substitution of \eqref{eqnQBD:R_explicit_La_-1} into the balance equations for level $i > 1$ yields
\begin{equation}%
\pb_{i - 1} \La_1 + \pb_i \La_0 + \pb_{i + 1} \vc{\al} \vc{\be} = \zerob. \label{eqnQBD:R_explicit_La_-1_eq_eqs}
\end{equation}%
To eliminate $\pb_{i + 1}$ from this equation we derive a relation between $\pb_i$ and $\pb_{i + 1}$ by equating the flow between level $i$ and level $i + 1$:
\begin{equation}%
\pb_i \La_1 \oneb = \pb_{i + 1} \La_{-1} \oneb = \pb_{i + 1} \vc{\al} \vc{\be} \oneb = \pb_{i + 1} \vc{\al}. \label{eqnQBD:R_explicit_La_-1_relation_level_i_and_i+1}
\end{equation}%
Substituting \eqref{eqnQBD:R_explicit_La_-1_relation_level_i_and_i+1} into \eqref{eqnQBD:R_explicit_La_-1_eq_eqs}, we obtain
\begin{equation}%
\pb_{i - 1} \La_1 + \pb_i \La_0 + \pb_i \La_1 \oneb \vc{\be} = \zerob,
\end{equation}%
which can be rewritten as
\begin{equation}%
\pb_i = \pb_{i - 1} R, \quad i > 1,
\end{equation}%
with the explicit formulation
\begin{equation}%
R = - \La_1 \bigl( \La_0 + \La_1 \oneb \vc{\be} \bigr)^{-1},
\end{equation}%
where $\La_0 + \La_1 \oneb \vc{\be}$ is invertible, since it is a transient generator.

The second case for which $R$ can be solved explicitly is when $\La_1$ is of the form
\begin{equation}%
\La_1 = \vc{\al} \vc{\be}.
\end{equation}%
Similarly to the first case, this means that all rows of $\La_1$ are the same up to some scaling.

From the recursive scheme \eqref{eqnQBD:R_successive_substitutions} we obtain
\begin{equation}%
R_0 = 0, \quad R_1 = -\La_1 \La_0^{-1} = - \vc{\al} \vc{\be} \La_0^{-1} = \vc{\al} \vc{\ga}_1,
\end{equation}%
with row vector $\vc{\ga}_1 = - \vc{\be} \La_0^{-1}$. Repeating the iteration shows that all $R_n$'s are of the form $R_n = \vc{\al} \vc{\ga}_n$, where $\vc{\ga}_n > \zerob$ is a row vector of dimension $r + 1$. Since $R_n \uparrow R$ as $n \to \infty$, we conclude that $\vc{\ga}_n \uparrow \vc{\ga}$ and
\begin{equation}%
R = \vc{\al} \vc{\ga}, \label{eqnQBD:R_explicit_La_1_rank_1}
\end{equation}%
for some vector $\vc{\ga} > \zerob$. So, $R$ is a matrix of rank 1 and has a single non-zero eigenvalue which is equal to $\trace{R}$. This implies that
\begin{equation}%
R^i = (\vc{\ga} \vc{\al})^{i - 1} R = \eta^{i - 1} R, \quad i \ge 1 \label{eqnQBD:R_explicit_La_1_sprad}
\end{equation}%
where $\eta \defi \vc{\ga} \vc{\al} = \trace{R}$ and equal to the spectral radius $\sprad{R}$ defined as
\begin{equation}%
\sprad{A} \defi \max \{ |\eta_0|,|\eta_1|,\ldots,|\eta_r| \},
\end{equation}%
where $\eta_0,\eta_1,\ldots,\eta_r$ are the eigenvalues of a matrix $A$ of dimension $r + 1$.

Observing \eqref{eqnQBD:R_explicit_La_1_sprad}, the matrix-geometric form in \cref{thmQBD:matrix-geometric_distribution} reduces to
\begin{equation}%
\pb_{i + 1} = \pb_1 R^i = \eta^{i - 1} \pb_1 R = \eta^{i - 1} \pb_2, \quad i \ge 1. \label{eqnQBD:R_explicit_La_1_eq_dist}
\end{equation}%
What remains is to determine $\eta$.

\begin{proposition}%
The spectral radius $\eta$ of $R$ for the case $\La_1 = \vc{\al} \vc{\be}$ can be characterized as the unique root in $(0,1)$ of the determinant equation
\begin{equation}%
\det \bigl( \La_1 + \eta \La_0 + \eta^2 \La_{-1} \bigr) = 0.
\end{equation}%
\end{proposition}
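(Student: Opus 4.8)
The plan is to exploit the rank-one structure $\La_1 = \vc{\al}\vc{\be}$ together with the already-established fact that $R = \vc{\al}\vc{\ga}$ for some positive row vector $\vc{\ga}$, so that $R$ has exactly one nonzero eigenvalue $\eta = \vc{\ga}\vc{\al} = \trace{R} = \sprad{R}$. First I would substitute $R = \vc{\al}\vc{\ga}$ into the matrix-quadratic equation \eqref{eqnQBD:R_matrix_equation}, namely $R^2 \La_{-1} + R \La_0 + \La_1 = 0$. Using $R^2 = \vc{\al}(\vc{\ga}\vc{\al})\vc{\ga} = \eta\,\vc{\al}\vc{\ga}$ and $\La_1 = \vc{\al}\vc{\be}$, the equation collapses to $\vc{\al}\bigl( \eta \vc{\ga}\La_{-1} + \vc{\ga}\La_0 + \vc{\be} \bigr) = 0$. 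Since $\vc{\al} > \zerob$ is a fixed nonzero column vector, this forces the row vector identity
\begin{equation}
\vc{\be} + \vc{\ga}\bigl( \La_0 + \eta \La_{-1} \bigr) = \zerob. \label{eqnQBD:R_explicit_La_1_gamma_identity}
\end{equation}

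Next I would multiply \eqref{eqnQBD:R_explicit_La_1_gamma_identity} on the right by $\eta$ and add $\vc{\be}$ appropriately: more precisely, from \eqref{eqnQBD:R_explicit_La_1_gamma_identity} we get $\vc{\ga} = -\vc{\be}\bigl(\La_0 + \eta\La_{-1}\bigr)^{-1}$ (the matrix $\La_0 + \eta\La_{-1}$ is invertible for $\eta \in (0,1)$ since it is a strictly substochastic-type generator, which can be checked exactly as in the argument following \eqref{eqnQBD:fixed_point_equation_R}). Substituting this expression for $\vc{\ga}$ back into the scalar relation $\eta = \vc{\ga}\vc{\al}$ and recalling $\vc{\al}\vc{\be} = \La_1$, I would massage the resulting scalar equation into the form $\vc{\be}\bigl( \La_1 + \eta\La_0 + \eta^2\La_{-1} \bigr)(\cdots) = 0$; alternatively, and more cleanly, I would show directly that the matrix $\La_1 + \eta\La_0 + \eta^2\La_{-1}$ is singular. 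Indeed, from \eqref{eqnQBD:R_explicit_La_1_gamma_identity} multiplied by $\eta$ we have $\eta\vc{\be} + \eta\vc{\ga}\La_0 + \eta^2\vc{\ga}\La_{-1} = \zerob$, and using $\La_1 = \vc{\al}\vc{\be}$ together with $\vc{\ga}\vc{\al} = \eta$ one obtains $\vc{\ga}\bigl(\La_1 + \eta\La_0 + \eta^2\La_{-1}\bigr) = \vc{\ga}\vc{\al}\vc{\be} + \eta\vc{\ga}\La_0 + \eta^2\vc{\ga}\La_{-1} = \eta\vc{\be} + \eta\vc{\ga}\La_0 + \eta^2\vc{\ga}\La_{-1} = \zerob$. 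Since $\vc{\ga} > \zerob$ is nonzero, this exhibits a nontrivial left null vector, hence $\det\bigl(\La_1 + \eta\La_0 + \eta^2\La_{-1}\bigr) = 0$, proving that $\eta$ is a root.

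For the characterization as the \emph{unique} root in $(0,1)$, I would argue as follows. The polynomial $\phi(z) \defi \det\bigl(\La_1 + z\La_0 + z^2\La_{-1}\bigr)$ has $z = 1$ as a root because $\La_1 + \La_0 + \La_{-1} = \La$ is a generator (row sums zero, so $\oneb$ is a right null vector). By the stability condition \cref{thmQBD:stability_condition}, the drift $\vc{x}\La_1\oneb - \vc{x}\La_{-1}\oneb < 0$, which by a standard sign/derivative argument at $z=1$ (evaluating $\phi'(1)$ via the adjugate and the equilibrium vector $\vc{x}$) shows that $z=1$ is a simple root and that exactly the number of roots of $\phi$ inside the open unit disk equals the number of phases on which $\La_1$ acts nontrivially minus the multiplicity accounting — in the rank-one case this count is exactly one. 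Equivalently, and more elementarily, since $R = \vc{\al}\vc{\ga}$ has rank one, its only eigenvalues are $\eta$ (once) and $0$ (with multiplicity $r$); the spectral mapping applied to \eqref{eqnQBD:R_matrix_equation} shows the nonzero roots of $\phi$ inside the unit disk are precisely the nonzero eigenvalues of $R$ (this is the content of the spectral expansion method), so there is exactly one such root, namely $\eta$, and $\eta \in (0,1)$ because $R$ is the minimal nonnegative solution of a positive recurrent QBD, whose spectral radius is strictly less than $1$ by \cref{thmQBD:matrix-geometric_distribution} (the series $\sum_i R^i = (\I - R)^{-1}$ must converge for normalization).

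The main obstacle I anticipate is the uniqueness and location claim rather than the "is a root" claim: pinning down that $\phi$ has exactly one root in $(0,1)$ requires either invoking the spectral correspondence between roots of the determinant equation and eigenvalues of $R$ (which the text has only sketched informally in the examples), or running a Rouché-type counting argument on $\phi$ on the circle $|z| = 1$ analogous to \cref{lemQTF:random_walk_location_zeros_PGF} and \cref{thm:Rouche}, handling the root at $z = 1$ by a perturbation to $|z| = 1 - \epsilon$. I would lean on the rank-one structure to make this painless: $R = \vc{\al}\vc{\ga}$ forces $\det(zI - R) = z^r(z - \eta)$, and combining this with \eqref{eqnQBD:R_matrix_equation} gives a clean factorization of $\phi(z)$ exhibiting $\eta$ as its only root in the open unit disk, with $\eta \in (0,1)$ following from positive recurrence.
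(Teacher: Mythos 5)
Your ``is a root'' argument is correct and takes a genuinely different route from the paper's. The paper starts from the eigenvalue identity $\det(R - \eta\I) = 0$, inserts the closed form $R = -\La_1(\La_0 + \eta\La_{-1})^{-1}$ obtained from \eqref{eqnQBD:R_matrix_equation} and $R^2 = \eta R$, and factors the resulting determinant as $\det\bigl(\La_1 + \eta\La_0 + \eta^2\La_{-1}\bigr)\det\bigl((-\La_0 - \eta\La_{-1})^{-1}\bigr)$; the nonsingularity of $-\La_0 - \eta\La_{-1}$ for $\eta < 1$ then forces the first factor to vanish. You instead substitute $R = \vc{\al}\vc{\ga}$ and $\La_1 = \vc{\al}\vc{\be}$ directly into the matrix-quadratic, peel off the common column $\vc{\al}$ to get the row identity $\vc{\be} + \vc{\ga}(\La_0 + \eta\La_{-1}) = \zerob$, and then exhibit $\vc{\ga}$ as an explicit nontrivial left null vector of $\La_1 + \eta\La_0 + \eta^2\La_{-1}$ using $\vc{\ga}\La_1 = \vc{\ga}\vc{\al}\vc{\be} = \eta\vc{\be}$. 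Your route is more constructive (you see exactly which vector kills the matrix), and the ``cleaner'' variant avoids needing nonsingularity of $-\La_0 - \eta\La_{-1}$ altogether for this direction, which is a small advantage over the paper's manipulation. Both proposals treat the uniqueness-in-$(0,1)$ claim non-rigorously: the paper defers it to \cite[Proof of Theorem~4]{Ramaswami1986_Explicit_R}, while you appeal to the correspondence between roots of the determinant equation inside the unit disk and eigenvalues of $R$ (which the book asserts in \cref{secQBD:spectral_expansion_method} but does not itself prove) or sketch a Rouch\'e count without carrying it out. So you are gap-for-gap comparable to the paper on uniqueness, and what is fully proved in each is sound; your self-assessment that this is the genuinely hard part of the proposition is accurate.
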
%

\begin{proof}%
Using \eqref{eqnQBD:R_explicit_La_1_sprad} in \eqref{eqnQBD:R_matrix_equation} results in
\begin{equation}%
R = - \La_1 \bigl( \La_0 + \eta \La_{-1} \bigr)^{-1}.
\end{equation}%
The eigenvalue $\eta$ then follows from
\begin{align}%
0 &= \det \bigl( R - \eta \I \bigr) \notag \\
  &= \det \bigl( - \La_1 \bigl( \La_0 + \eta \La_{-1} \bigr)^{-1}   - \eta \I \bigr) \notag \\
  &= \det \Bigl( \bigl( \La_1 + \eta ( \La_0 + \eta \La_{-1}) \bigr) \bigl( - \La_0 - \eta \La_{-1} \bigr)^{-1} \Bigr) \notag \\
  &= \det \bigl( \La_1 + \eta \La_0 + \eta^2 \La_{-1} \bigr) \det \bigl( \bigl( - \La_0 - \eta \La_{-1} \bigr)^{-1} \bigr).
\end{align}%
Since $\eta < 1$, $- \La_0 - \eta \La_{-1}$ is nonsingular. So, $\eta$ satisfies
\begin{equation}%
0 = \det \bigl( \La_1 + \eta \La_0 + \eta^2 \La_{-1} \bigr).
\end{equation}%
Establishing that there is a single $\eta \in (0,1)$ follows from \cite[Proof of Theorem~4]{Ramaswami1986_Explicit_R}.
\end{proof}%


\subsection{Exact solutions for the rate matrix}%
\label{subsecQBD:R_exact_solutions}%

If the transition matrices $\La_{-1}$, $\La_0$, and $\La_1$ are all upper or all lower triangular, then the interpretation of the elements of $R$ shows us that the rate matrix is also upper or lower triangular. Determining $R$ in this case is made easier by exploiting its structure.\endnote{This class of QBD processes (or slight variations of it) is briefly mentioned in \cite[Section~6.5]{Neuts1994_Matrix-geometric} and has hence been studied extensively in the literature and has lead to many variations of exact or explicit solutions for $R$. Some examples include exploiting structural properties of \eqref{eqnQBD:R_matrix_equation} in \cite{Houdt2011_Triangular_R}; lattice path counting solutions \cite{Leeuwaarden2009_Lattice_path_counting,Leeuwaarden2006_Explicit_R}; and renewal-reward based approaches in \cite{Gandhi2014_RRR}.}

We will highlight the application of the methodology in \cite{Houdt2011_Triangular_R} to QBD processes with upper triangular transition matrices. Since $R$ is upper triangular, $R^2$ has the same upper triangular structure and has elements $(R^2)_{i,j} = \sum_{k = i}^j (R)_{i,k} (R)_{k,j}, ~ 0 \le i \le j \le r$. From \eqref{eqnQBD:R_matrix_equation} we deduce that the diagonal elements are the minimal non-negative solution of
\begin{equation}%
(R)_{i,i}^2 (\La_{-1})_{i,i} + (R)_{i,i} (\La_0)_{i,i} + (\La_1)_{i,i} = 0, \quad 0 \le i \le r.
\end{equation}%
So,
\begin{equation}%
(R)_{i,i} = \frac{ -(\La_0)_{i,i} - \sqrt{ \bigl( (\La_0)_{i,i} \bigr)^2 - 4 (\La_{-1})_{i,i} (\La_1)_{i,i} } }{2(\La_{-1})_{i,i}}.
\end{equation}%
The elements on the superdiagonal of $R$ are determined recursively and also follow from \eqref{eqnQBD:R_matrix_equation}, for $0 \le i < j \le r$:
\begin{equation}%
\sum_{k = i}^j \sum_{l = i}^k (R)_{i,l} (R)_{l,k} (\La_{-1})_{k,j} + \sum_{k = i}^j (R)_{i,k} (\La_0)_{i,j} + (\La_1)_{i,j} = 0.
\end{equation}%
Solving the above equation for $(R)_{i,j}$ yields
\begin{equation}%
(R)_{i,j} = - \frac{R_{\textup{nmr}}(i,j) + (\La_1)_{i,j}}{\bigl( (R)_{i,i} + (R)_{j,j} \bigr) (\La_{-1})_{j,j} + (\La_0)_{j,j} }, \quad 0 \le i < j \le r,
\end{equation}%
where
\begin{align}%
R_{\textup{nmr}}(i,j) &= \sum_{k = i}^{j - 1} \Bigl( \sum_{l = i}^k (R)_{i,l} (R)_{l,k} (\La_{-1})_{k,j} + (R)_{i,k}(\La_0)_{k,j} \Bigr) \notag \\
&\quad + \sum_{k = i + 1}^{j - 1} (R)_{i,k} (R)_{k,j} (\La_{-1})_{j,j},
\end{align}%
with the convention $\sum_{n = n_0}^{n_1} f(n) = 0$ if $n_0 > n_1$. The above equation is a recursion along the superdiagonals of $R$, which should be solved starting at the superdiagonal closest to the main diagonal and moving to the top right corner of the matrix.

We finally note that the inverse of an upper triangular matrix is again upper triangular; the same applies for lower triangular matrices. For the determination of $\pb_0$ and $\pb_1$ the inverse $(I - R)^{-1}$ is required. Provided the diagonal elements of an upper triangular matrix $A$ of dimension $r + 1$ are non-zero, the inverse can be determined as
\begin{align}%
(A^{-1})_{i,i} &= \frac{1}{(A)_{i,i}}, \quad 0 \le i \le r, \\
(A^{-1})_{i,j} &= - (A^{-1})_{i,i} \sum_{k = i + 1}^j (A)_{i,k} (A^{-1})_{k,j}, \quad 0 \le i < j \le r,
\end{align}%
where the recursion should be solved along the superdiagonals, starting at the main diagonal, followed by the superdiagonal closest to the main diagonal, and so forth, exactly as for $R$.


\section{Matrix-analytic method}%
\label{secQBD:matrix-analytic_method}%

The main object of study in the \textit{matrix-analytic method}\endnote{The \textit{matrix-analytic} method was pioneered by Neuts \cite{Neuts1989_M-G-1_type,Neuts1994_Matrix-geometric} in the 1980's and developed further in \cite{Latouche1999_Matrix-analytic}. As we will see in later chapters, the matrix-geometric and matrix-analytic methods have their own area of application, but these areas overlap in case of QBD processes. A review of the matrix-analytic method and recent tutorial on the application of this method can be found in \cite{Neuts1984_Matrix-analytic_review} and \cite{Riska2002_M-G-1_tutorial}, respectively.} is the auxiliary matrix $G$. Element $(G)_{j,k}$ is a \textit{first passage probability}, defined as the probability that, starting at level $i \ge 2$ in state $(i,j)$, the first passage to levels $i - 1$ and below happens in state $(i - 1,k)$. Note that indeed, if $i \ge 2$, the first passage probabilities do not depend on $i$ due to the homogeneous transition structure. Moreover, if the QBD process is positive recurrent and irreducible, then $G$ is a right stochastic matrix.

Similar to the derivation of $G$ in \cref{subsecQBD:Erlang_services}, define an \textit{excursion} as a sample path of the process starting at level $i$, moving to levels higher than $i$ and ending at first return to level $i$. Clearly, the number of excursions per time unit that end in state $(i,j)$ is equal to $\sum_{k = 0}^r p(i + 1,k) (\La_{-1})_{k,j}$. Next, the number of excursions per time unit that start from state $(i,k)$, immediately go to state $(i + 1,l)$, and ultimately end the excursion in state $(i,j)$ is $p(i,k) (\La_1)_{k,l} (G)_{l,j}$, where we exploit the interpretation of the elements of $G$. Summing over all possible starting states in level $i$ and the state first visited in level $i + 1$ also gives us the number of excursions per time unit that end in state $(i,j)$, namely $\sum_{k = 0}^r p(i,k) \sum_{l = 0}^r (\La_1)_{k,l} (G)_{l,j}$. Equating both expressions for the number of excursions per time unit that end in state $(i,j), ~ 0 \le j \le r$ and writing it in vector-matrix form yields
\begin{equation}%
\pb_{i + 1} \La_{-1} = \pb_i \La_1 G.
\end{equation}%
Substituting this relation in \eqref{eqnQBD:balance_equations_level_i} produces
\begin{equation}%
\pb_{i - 1} \La_1 + \pb_i ( \La_0 + \La_1 G) = \zerob.
\end{equation}%
Based on this probabilistic derivation we have the following equivalent result to \cref{thmQBD:matrix-geometric_distribution} for the matrix-analytic method.\endnote{The formal proof of \cref{thmQBD:matrix-analytic_distribution} follows from \cref{thmQBD:matrix-geometric_distribution} in combination with \cite[Proposition~6.4.2]{Latouche1999_Matrix-analytic}.}

\begin{theorem}\label{thmQBD:matrix-analytic_distribution}%
Provided the \textup{QBD} process $\{ X(t) \}_{t \ge 0}$ is irreducible and positive recurrent, the stationary probability vector $\pb$, satisfying $\pb Q = \zerob, ~ \pb \oneb = 1$, is given by
\begin{equation}%
\pb_{i + 1} = \pb_i \La_1(- \La_0 - \La_1 G)^{-1} = \pb_1 \bigl( \La_1(- \La_0 - \La_1 G)^{-1} \bigr)^i, \quad i \ge 1, \label{eqnQBD:eq_dist_G}
\end{equation}%
where $G$ is the right stochastic solution of the matrix-quadratic equation
\begin{equation}%
\La_{-1} + \La_0 G + \La_1 G^2 = 0.
\end{equation}%
The equilibrium probability vectors $\pb_0$ and $\pb_1$ follow from the system of equations
\begin{align*}%
\pb_0 \La_{0,0} + \pb_1 \La_{1,-1} &= \zerob, \\
\pb_0 \La_{0,1} + \pb_1 \bigl( \La_{1,0} + \La_1 G \bigr) &= \zerob,
\end{align*}%
and the normalization condition $\pb_0 \oneb + \pb_1 (\I - R)^{-1} \oneb = 1$.
\end{theorem}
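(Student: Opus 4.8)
\textbf{Proof plan for Theorem \ref{thmQBD:matrix-analytic_distribution}.}

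The plan is to reduce this theorem to the already-established matrix-geometric result in Theorem \ref{thmQBD:matrix-geometric_distribution} together with the probabilistic derivation that was sketched just above the statement. First I would justify the equation $\La_{-1} + \La_0 G + \La_1 G^2 = 0$ directly, by a first-step analysis on the sojourn in a state $(i,j)$ with $i \ge 2$: conditioning on the first transition out of $(i,j)$, the process either jumps down to level $i-1$ (contributing $\La_{-1}$), or moves within level $i$ (contributing $\La_0 G$, since from there it still has to reach level $i-1$), or jumps up to level $i+1$, from which it must first return to level $i$ — landing somewhere according to $G$ — and then descend to level $i-1$ according to $G$ again, contributing $\La_1 G^2$. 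Normalizing by the exit rates turns this into the stated quadratic. That $G$ is the \emph{right stochastic} solution is exactly the statement that, under irreducibility and positive recurrence, the first passage to the level below occurs in finite time with probability one; this is the property invoked in the paragraph preceding the theorem and may be cited.

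Next I would establish the recursion \eqref{eqnQBD:eq_dist_G}. Using the excursion-counting argument already given in the text — equating, for each target state $(i,j)$, the rate of excursions \emph{ending} in $(i,j)$ (which is $\pb_{i+1}\La_{-1}$ in vector form) with the rate of excursions \emph{starting} from level $i$, going up, and eventually ending in $(i,j)$ (which is $\pb_i \La_1 G$) — yields $\pb_{i+1}\La_{-1} = \pb_i \La_1 G$ for $i \ge 1$. Substituting this into the interior balance equation \eqref{eqnQBD:balance_equations_level_i} (for $i \ge 2$) gives $\pb_{i-1}\La_1 + \pb_i(\La_0 + \La_1 G) = \zerob$, and since $\La_0 + \La_1 G$ is a transient generator (the invertibility argument is the same one given for $\La_0$ after \eqref{eqnQBD:fixed_point_equation_R}, now applied to the censored process on a single level), it is nonsingular, so $\pb_i = -\pb_{i-1}\La_1(\La_0 + \La_1 G)^{-1}$ for $i \ge 2$; the same substitution into \eqref{eqnQBD:balance_equations_level_1} extends this to $i = 1$. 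Iterating produces \eqref{eqnQBD:eq_dist_G}. Alternatively — and this is the cleaner route — one observes that the matrix $R$ of Theorem \ref{thmQBD:matrix-geometric_distribution} and $G$ are linked by $R = \La_1(-\La_0 - \La_1 G)^{-1}$, which follows by substituting $\pb_{i+1}\La_{-1} = \pb_i \La_1 G$ into the definition of $R$ exactly as remarked at the end of \cref{subsecQBD:Erlang_services}; then \eqref{eqnQBD:eq_dist_G} is just \eqref{eqnQBD:eq_dist_R} rewritten.

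The boundary equations and the normalization condition are then inherited verbatim from Theorem \ref{thmQBD:matrix-geometric_distribution}: the first two balance equations \eqref{eqnQBD:balance_equations_level_0}--\eqref{eqnQBD:balance_equations_level_1} are rewritten using $\pb_2 \La_{-1} = \pb_1 \La_1 G$ to eliminate $\pb_2$, giving the stated $2\times 2$ block system for $(\pb_0, \pb_1)$, and the normalization $\pb_0 \oneb + \pb_1(\I - R)^{-1}\oneb = 1$ uses the geometric sum $\sum_{i\ge 1}\pb_i = \pb_1 \sum_{i\ge 0} R^i = \pb_1(\I-R)^{-1}$, valid because $\sprad{R} < 1$ under positive recurrence. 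I expect the main obstacle to be the careful bookkeeping in the excursion argument — getting the $G$ versus $G^2$ factors right and making sure the rate-balance identities hold state-by-state rather than only in aggregate — together with the clean justification that $-\La_0 - \La_1 G$ is invertible. Everything else is algebraic reshuffling of results already proved.
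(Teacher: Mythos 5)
Your approach matches the paper's: the excursion-counting identity $\pb_{i+1}\La_{-1} = \pb_i\La_1 G$, substituted into the interior balance equations, gives the recursion, and the boundary and normalization conditions are lifted directly from the matrix-geometric theorem; the paper in fact relegates the formal details to a citation of Latouche and Ramaswami, so your filling in the one-step derivation of the quadratic for $G$ and the invertibility of $-\La_0 - \La_1 G$ simply makes that sketch explicit. One small slip: the remark that the same substitution into the level-1 balance equation ``extends this to $i = 1$'' is both unnecessary (the theorem only asserts $\pb_{i+1} = \pb_i R$ for $i \ge 1$, i.e.\ from $\pb_2$ onward, which your level-$\ge 2$ argument already covers) and in general incorrect, since level 1 involves the boundary blocks $\La_1^{(0)}$ and $\La_0^{(1)}$ rather than $\La_1$ and $\La_0$, so no $\pb_1 = \pb_0 R$-type relation follows from it.
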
%

We can immediately conclude from \cref{thmQBD:matrix-geometric_distribution,thmQBD:matrix-analytic_distribution} and (or \cite[Proposition~6.4.2]{Latouche1999_Matrix-analytic}) that there exist multiple relations between the rate matrix $R$ and the auxiliary matrix $G$. We have $\La_1 G = R \La_{-1}$, $R = \La_1(- \La_0 - \La_1 G)^{-1}$ and $G = (-\La_0 - R \La_{-1})^{-1} \La_{-1}$.

In a similar fashion as for the rate matrix $R$, the auxiliary matrix $G$ can be determined by successive substitution\endnote{Faster and more efficient algorithms than successive substitution exist to determine $G$; a good example is cyclic reduction \cite{Bini2000_Cyclic_reduction_M-G-1}.}, see \cref{algQBD:matrix-analytic_successive_substitutions}. Explicit and exact results exist also for the auxiliary matrix $G$, derived in an analogous manner to \cref{subsecQBD:R_explicit_solutions,subsecQBD:R_exact_solutions}.

\begin{algorithm}%
\caption{Calculating $G$ using successive substitutions}%
\label{algQBD:matrix-analytic_successive_substitutions}%
\begin{algorithmic}[1]%
\State Pick $\epsilon$ small and positive
\State Set $G_0 = 0$, $G_1 = - \La_0^{-1} \La_{-1}$ and $n = 1$
\While{$\| G_n - G_{n - 1} \|_{\textup{max}} > \epsilon$}
    \State Compute
            \begin{equation}%
            G_{n + 1} = - \La_0^{-1} \bigl( \La_{-1} + \La_1 G_n^2 \bigr)
            \end{equation}%
    \State Update $n = n + 1$
\EndWhile
\end{algorithmic}%
\end{algorithm}%


\section{Spectral expansion method}%
\label{secQBD:spectral_expansion_method}%

A third approach to determining the equilibrium probabilities does not make use of matrices but rather eigenvectors and eigenvalues. This approach is called the \textit{spectral expansion method}.\endnote{The spectral expansion method was developed by Mitrani \cite{Mitrani1995_Spectral_expansion,Mitrani1992_Spectral_expansion}.}
The basic idea of this method is to first try and find \textit{basis solutions} of the form
\begin{equation}%
\pb_i = \vca{y} x^{i - 1}, \quad i \ge 1, \label{eqnQBD:spectral_expansion_basis_solution}
\end{equation}%
where $\vca{y} = \begin{bmatrix} y(0) & y(1) & \cdots & y(r) \end{bmatrix} \neq \zerob$ and $|x| < 1$, satisfying the balance equations \eqref{eqnQBD:balance_equations_level_i} for $i \ge 2$. We require that $|x| < 1$, since we want to be able to normalize the equilibrium distribution. Substitution of \eqref{eqnQBD:spectral_expansion_basis_solution} in \eqref{eqnQBD:balance_equations_level_i} and dividing by common powers of $x$ yields
\begin{equation}%
\vca{y} \bigl( \La_{-1} + x \La_0 + x^2 \La_1 \bigr) = \zerob. \label{eqnQBD:spectral_expansion_y_and_x}
\end{equation}%
These equations have a non-zero solution for $\vca{y}$ if
\begin{equation}%
\det \bigl( \La_{-1} + x \La_0 + x^2 \La_1 \bigr) = 0. \label{eqnQBD:spectral_expansion_determinant_equation}
\end{equation}%
The desired values of $x$ are the roots $x$ with $|x| < 1$ of the determinant equation \eqref{eqnQBD:spectral_expansion_determinant_equation}. Equation \eqref{eqnQBD:spectral_expansion_determinant_equation} is a polynomial equation of degree $2(r + 1)$. Suppose that $\tilde{r} + 1$ roots $x$ satisfy $|x| < 1$ and for now let us assume that these roots are different. Let $\vca{y}_k, ~ k = 0,1,\ldots,\tilde{r}$ be the corresponding non-zero solutions of \eqref{eqnQBD:spectral_expansion_y_and_x} for $x = x_k$. Each solution $\pb_i = \vca{y}_k x_k^i, ~ k = 0,1,\ldots,\tilde{r}$ satisfies the global balance equations \eqref{eqnQBD:balance_equations_level_i}. These solutions are moreover linearly independent. We can linearly combine the $\tilde{r} + 1$ solutions to obtain a solution that satisfies the global balance equations \eqref{eqnQBD:balance_equations_level_i}:
\begin{equation}%
\pb_i = \sum_{k = 0}^{\tilde{r}} \xi_k \vca{y}_k x_k^i, \quad i \ge 1,
\end{equation}%
where $\xi_k, ~ k = 0,1,\ldots,\tilde{r}$ are arbitrary constants. So far, we have obtained expressions for $\pb_1,\pb_2,\ldots$, which still contains $\tilde{r} + 1$ unknowns $\xi_k$. Now, to determine these unknowns and $\pb_0$, we turn to the global balance equations for levels 0 and 1. Equations \eqref{eqnQBD:balance_equations_level_0} and \eqref{eqnQBD:balance_equations_level_1} are a set of $b + 1 + r + 1$ linear equations involving the $b + 1$ unknown probabilities of level 0 and the $\tilde{r} + 1$ unknowns constants $\xi_k$. The set of equations \eqref{eqnQBD:balance_equations_level_0} and \eqref{eqnQBD:balance_equations_level_1} only has $b + 1 + r$ linearly independent equations, but an additional independent equation is provided by the normalization condition. In conclusion, the set of equations \eqref{eqnQBD:balance_equations_level_0} and \eqref{eqnQBD:balance_equations_level_1} only has a unique solution if the number of equations and unknowns match, which means that $\tilde{r}$ is required to be equal to $r$. Since an irreducible and positive recurrent (by \cref{thmQBD:stability_condition}) QBD process has a unique solution to the global balance equations, we have the following theorem.

\begin{theorem}\label{thmQBD:spectral_expansion}%
An irreducible and positive recurrent \textup{QBD} process has $r + 1$ solutions $x$ with $|x| < 1$ of \eqref{eqnQBD:spectral_expansion_determinant_equation}. Assume these roots are different and label them $x_0,x_1,\ldots,x_r$. Let $\vca{y}_k, ~ k = 0,1,\ldots,r$ be the non-zero solution of \eqref{eqnQBD:spectral_expansion_y_and_x} for $x = x_k$. The linear combination of basis solutions
\begin{equation}%
\pb_i = \sum_{k = 0}^r \xi_k \vca{y}_k x_k^i, \quad i \ge 1, \label{eqnQBD:spectral_expansion_equilibrium_probability_vectors}
\end{equation}%
is the unique equilibrium distribution. The coefficients $\xi_0,\xi_1,\ldots,\xi_r$ and $\pb_0$ are the unique solution to the global balance equations of levels $0$ and $1$,
\begin{align}%
\pb_0 \La_{0,0} + \sum_{k = 0}^r \xi_k \vca{y}_k \La_{1,-1} &= \zerob, \\
\pb_0 \La_{0,1} + \sum_{k = 0}^r \xi_k \vca{y}_k \La_{1,0} + \sum_{k = 0}^r \xi_k \vca{y}_k x_k \La_{-1} &= \zerob,
\end{align}%
and the normalization condition
\begin{equation}%
\pb_0 \oneb + \sum_{k = 0}^r \xi_k \frac{\vca{y}_k \oneb}{1 - x_k} = 1.
\end{equation}%
\end{theorem}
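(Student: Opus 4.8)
The plan is to establish \cref{thmQBD:spectral_expansion} by combining the structural discussion that immediately precedes the statement with the already-proven existence and uniqueness of the equilibrium distribution for irreducible, positive recurrent QBD processes (which follows from \cref{thmMP:equilibrium_equations_and_distribution} and \cref{thmQBD:stability_condition}). The key point is that the theorem is essentially a ``dimension count'' argument: every linear combination of basis solutions automatically satisfies the homogeneous balance equations \eqref{eqnQBD:balance_equations_level_i} for $i \ge 2$, and the remaining freedom is exactly consumed by the boundary equations \eqref{eqnQBD:balance_equations_level_0}--\eqref{eqnQBD:balance_equations_level_1} together with normalization; since a solution is known to exist and be unique, the count must come out right, forcing $\tilde r = r$.

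First I would set up the characteristic equation. Substituting the ansatz $\pb_i = \vca{y} x^{i-1}$ into \eqref{eqnQBD:balance_equations_level_i} and cancelling $x^{i-2}$ gives $\vca{y}(\La_{-1} + x\La_0 + x^2\La_1) = \zerob$, so nontrivial $\vca{y}$ exists iff $\det(\La_{-1} + x\La_0 + x^2\La_1) = 0$. I would note this is a polynomial in $x$ of degree at most $2(r+1)$. Call its roots inside the open unit disk $x_0, \dots, x_{\tilde r}$ (assumed distinct per the hypothesis), with corresponding left null vectors $\vca{y}_0, \dots, \vca{y}_{\tilde r}$; these vectors are linearly independent because they correspond to distinct eigenvalues of the companion-type pencil $(\La_{-1}, \La_0, \La_1)$. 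Any combination $\pb_i = \sum_{k=0}^{\tilde r} \xi_k \vca{y}_k x_k^i$ then solves \eqref{eqnQBD:balance_equations_level_i} for all $i \ge 2$, and with $|x_k| < 1$ the tail $\sum_{i \ge 1} \pb_i \oneb$ converges, so such a combination is normalizable.

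Next I would count. The unknowns are the $b+1$ entries of $\pb_0$ and the $\tilde r + 1$ coefficients $\xi_k$, for a total of $b + \tilde r + 2$. The constraints are the $b+1$ scalar equations in \eqref{eqnQBD:balance_equations_level_0} and the $r+1$ scalar equations in \eqref{eqnQBD:balance_equations_level_1}, a total of $b + r + 2$; but since the full system $\pb Q = \zerob$ is dependent (rows of $Q$ sum to zero), exactly one of these is redundant, leaving $b + r + 1$ independent linear constraints, to which normalization adds one more, giving $b + r + 2$. For the inhomogeneous system to have a unique solution the number of unknowns must equal the number of independent equations, i.e.\ $b + \tilde r + 2 = b + r + 2$, hence $\tilde r = r$. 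The logical direction I would emphasize is that we do \emph{not} prove $\tilde r = r$ by a direct algebraic analysis of the polynomial; rather, we invoke \cref{thmQBD:stability_condition} and \cref{thmMP:equilibrium_equations_and_distribution} to know a unique normalized solution $\pb$ exists, observe that any such $\pb$ restricted to levels $\ge 1$ must be a combination of the tail-decaying basis solutions (this needs a brief argument: the space of solutions of the homogeneous recursion \eqref{eqnQBD:balance_equations_level_i} that decay to zero is spanned precisely by the $\vca{y}_k x_k^i$), and then the existence and uniqueness of $(\pb_0, \xi_0, \dots, \xi_{\tilde r})$ pins down $\tilde r = r$ and yields formula \eqref{eqnQBD:spectral_expansion_equilibrium_probability_vectors} with the stated linear system and normalization condition.

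The main obstacle I anticipate is the claim that the decaying solution space of the second-order matrix recursion \eqref{eqnQBD:balance_equations_level_i} is exactly $(r+1)$-dimensional and spanned by the $\vca{y}_k x_k^i$ with $|x_k| < 1$. This is where positive recurrence is genuinely used: one must rule out solutions decaying to zero that are not built from in-disk roots (e.g.\ arising from roots on the unit circle, or from Jordan blocks, or from the non-diagonalizable structure of the pencil), and one must know that there are neither too few nor too many in-disk roots. I would handle this by appealing to the matrix-geometric characterization of \cref{thmQBD:matrix-geometric_distribution}: the rate matrix $R$ satisfies \eqref{eqnQBD:R_matrix_equation}, its eigenvalues are exactly the in-disk roots $x_k$ (with the $\vca{y}_k$ as left eigenvectors), and $\sprad{R} < 1$ by positive recurrence, so $\pb_i = \pb_1 R^{i-1}$ expands in the eigenbasis as $\sum_k \xi_k \vca{y}_k x_k^{i-1}$ — which simultaneously shows $R$ is diagonalizable with $r+1$ in-disk eigenvalues (under the distinctness hypothesis) and that the spectral expansion and matrix-geometric descriptions coincide. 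With that bridge in place the counting argument closes cleanly, and the displayed linear system for $\xi_0, \dots, \xi_r$ and $\pb_0$ follows by substituting \eqref{eqnQBD:spectral_expansion_equilibrium_probability_vectors} into \eqref{eqnQBD:balance_equations_level_0}--\eqref{eqnQBD:balance_equations_level_1} and into $\pb_0 \oneb + \sum_{i \ge 1} \pb_i \oneb = 1$.
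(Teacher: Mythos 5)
Your proposal follows the same dimension-counting argument that the paper uses to justify the theorem (the paper presents this as the discussion immediately preceding the statement rather than as a proof block): substitute the geometric ansatz into the interior balance equations to obtain the characteristic/determinant equation, form the linear combination of in-disc basis solutions, and then observe that the boundary equations for levels $0$ and $1$ together with normalization give exactly $b+r+2$ independent constraints on $b+\tilde{r}+2$ unknowns, forcing $\tilde{r}=r$ by the known existence and uniqueness of the equilibrium distribution. Where you go beyond the paper is in explicitly flagging the gap in that count: the argument only pins down $\tilde{r}=r$ if one already knows that the equilibrium distribution restricted to levels $\ge 1$ \emph{lies in the span} of the in-disc basis solutions, and the paper simply assumes this. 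Your proposed fix---bridge to \cref{thmQBD:matrix-geometric_distribution}, note that $\pb_i=\pb_1 R^{i-1}$, and expand in the (left) eigenbasis of $R$ whose eigenvalues are precisely the in-disc roots $x_k$ with left eigenvectors $\vca{y}_k$---is the right move; the paper only mentions this correspondence as an afterthought in the closing remark of the section (``the roots $x_0,\dots,x_r$ are the eigenvalues of $R$ with corresponding left eigenvectors $\vca{y}_0,\dots,\vca{y}_r$'') rather than invoking it inside the derivation. So the two arguments are the same in strategy, but yours is more complete, at the cost of introducing a small dependency on the matrix-geometric theorem. One thing you assert but do not establish, and that is worth isolating as its own short lemma if you write this up, is that the eigenvalues of $R$ coincide exactly (as a set, and under the distinctness hypothesis, with the right multiplicities) with the in-disc solutions of the determinant equation: the forward direction (eigenpairs of $R$ give in-disc roots via $\vca{y}R=x\vca{y}$ and \eqref{eqnQBD:R_matrix_equation}) is a one-line check, but the converse (every in-disc root arises this way, so there are no ``extra'' roots the eigenbasis of $R$ misses) requires an argument about the number of in-disc roots being exactly $r+1$. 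Also, a minor point: your expansion of $\pb_1 R^{i-1}$ produces $x_k^{i-1}$ rather than the $x_k^i$ in \eqref{eqnQBD:spectral_expansion_equilibrium_probability_vectors}; this is harmless since the factor $x_k$ can be absorbed into $\xi_k$, but it should be stated explicitly to avoid confusion.
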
%

The roots $x_0,x_1,\ldots,x_r$ do not have to be different. If we assume that, when a root $x$ occurs $k$ times, it is possible to find $k$ linearly independent solutions of \eqref{eqnQBD:spectral_expansion_y_and_x}, then the analysis proceeds in exactly the same way. In case there are less than $k$ independent solutions, we would also have to consider more complicated basis solutions of the form $i \vca{y} x^{i - 1}$ (or even higher powers of $i$).

The relation between the matrix-geometric representation \eqref{eqnQBD:eq_dist_R} and the spectral expansion \eqref{eqnQBD:spectral_expansion_equilibrium_probability_vectors} is clear: the roots $x_0,x_1,\ldots,x_r$ are the eigenvalues of $R$ with corresponding left eigenvectors $\vca{y}_0,\vca{y}_1,\ldots,\vca{y}_r$. 

\section{Takeaways}%
\label{secQBD:takeaways}%

Birth--and--death (BD) processes live on the positive half-line and move either to the left or right after exponential times. These basic features of BD processes, laid out in \cref{ch:birth--and--death_processes}, proved essential for the theory developed in this part of the book. In \cref{ch:reversible_networks} we exploited the BD structure to construct multi-dimensional versions, first for loss networks and then for queueing networks.

In this chapter we added a finite number of states to each state of that BD half-line, to construct quasi-birth--and--death (QBD) processes that live on a semi-infinite strip of states. The basic recursion method for BD processes was then lifted to the more general QBD setting to obtain the equilibrium distribution. Where BD processes result in geometric equilibrium distributions, QBD processes obey similar geometric forms, but with scalars replaced by matrices. This also explains why the main analytic technique introduced in this chapter is called the matrix-geometric method.

The matrix-geometric method exploits the fact that the QBD process has a highly structured state space, which allows for describing the balance equations in terms of transitions in the horizontal direction only. All transitions in the vertical directions are described in terms of finite matrices that appear in the balance equations. The matrix-geometric method is than the analytic methods for solving the system of matrix equations. The central step is to prove the existence of the unique rate matrix $R$ in \cref{thmQBD:matrix-geometric_distribution}. We have also presented efficient algorithms to determine $R$ numerically. Taken together, this provides a powerful computational framework for QBD processes.

Besides the matrix-geometric method, we have also demonstrated the matrix-analytic method with its auxiliary matrix $G$ and the spectral expansion method. The matrix-analytic method is similar in scope to the matrix-geometric method, but its focus is on the transitions to the left with as a result the first passage probabilities in the $G$ matrix. The spectral expansion method decomposes the $R$ matrix into its eigenvectors and eigenvalues and linearly combines them to construct the product-form solution. 

At first sight the extension from BD processes to QBD processes might seem less spectacular than the extension to the network models in \cref{ch:reversible_networks}. To fully appreciate the wide scope of QBD processes, the key insight is that the computational complexity of the matrix-geometric method is determined by the finite dimension, i.e., the dimension of the rate matrix $R$. This is remarkable, because without exploiting the special QBD structure, we would face a Markov process living on an infinite state space, and simply trying to solve the global balance equations would in many cases be prohibitively difficult. So if a Markov process can be brought into a QBD form, this brings enormous computational advantages.

Take as an example a single-server queue with generally distributed i.i.d.\@ inter-arrival times and generally distributed i.i.d.\@ service times. Approximate the inter-arrival and service times by phase-type distributions and use the finite dimension to keep track of these phases. Only arrivals and service completions then result in horizontal transitions, while all other events trigger transitions in the vertical direction. The fairly intractable general queueing systems is then converted into a QBD process, and performance analysis of the systems becomes straightforward.

This example shows that effort should be put in constructing the QBD process, and then one can reap the benefits of reduced complexity. More generally, the additional finite dimension in QBDs can keep track of enormous amounts of information and this partly explains why so many real-world systems can be modeled as QBD processes \cite{Latouche1999_Matrix-analytic,Neuts1994_Matrix-geometric}. In the next chapter we discuss the extension of QBD processes to Markov process of a similar structure, but with the possibility to take large steps in the horizontal direction.


\printendnotes

\chapter{Quasi-skip-free processes}%
\label{ch:skip-free_one_direction}%

Quasi-skip-free (QSF) processes are the generalization to two dimensions of the Markov processes associated with the $M/G/1$ system and the $G/M/1$ system. The QSF process has the same state space as the QBD process, but its transition structure is different. Whereas the QBD process is skip-free in both directions, the QSF process allows for transitions of larger size in one of the two directions. A distinction is made for processes that are QSF to the right and to the left, since each process requires a different solution approach for the equilibrium distribution.


\section{Variations of skip-free processes}%
\label{secQSF:variations}%

In this section we analyze two QSF processes that both are constructed from a QBD process. The resulting QSF processes are skip-free in different directions and therefore require different solution methods to obtain their equilibrium distributions.


\subsection{Machine with setup times and batch arrivals}%
\label{subsecQSF:setup_times}%

Let us consider an adaptation of the machine with setup times as mentioned before in \cref{subsecQBD:setup_times}. The machine processes jobs in order of arrival. Jobs arrive in batches to the system: batches of size 1 and 2 arrive according to Poisson processes with rates $\la_1$ and $\la_2$. The processing time of a job is exponentially distributed with mean $1/\mu$. For stability we assume that $\rho \defi (\la_1 + 2\la)/\mu < 1$. The machine is turned off when the system is empty and it is turned on again when a new batch of jobs arrives. The setup time is exponentially distributed with mean $1/\theta$. Turning off the machine takes an exponential amount of time with mean $1/\gamma$.

The state of the system may be described by $X(t) \defi (X_1(t),X_2(t))$ with $X_1(t)$ representing the number of jobs in the system at time $t$ and $X_2(t)$ indicates whether the machine is turned off (0) or on (1) at time $t$. The process $\{ X(t) \}_{t \ge 0}$ is a Markov process with state space $\statespace \defi \{ (i,j) \in \Nat_0 \times \{ 0,1 \} \}$. The transition rate diagram is displayed in \cref{figQSF:transition_rate_diagram_setup_times_batch_arrivals} and resembles the QBD variant in \cref{figQBD:transition_rate_diagram_setup_times}, depicting the transition rate diagram of the system where jobs arrive one by one and turning off the machine takes no time.

\begin{figure}
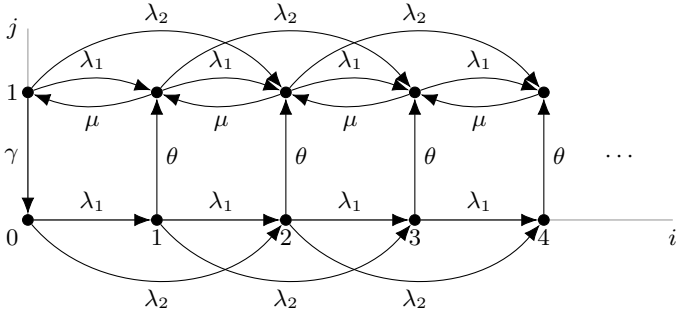
%
\centering%
\includestandalone{Chapters/QSF/TikZFiles/transition_rate_diagram_setup_times_batch_arrivals}%
\caption{Transition rate diagram of the machine with setup times and batch arrivals of size 2.}
\label{figQSF:transition_rate_diagram_setup_times_batch_arrivals}%
\end{figure}%

For the current model, define $\lvl{i} = \{ (i,0),(i,1) \}, ~ i \ge 0$ as the set of states with $i$ jobs in the system, that is, $\lvl{i}$ is level $i$. We can then partition the state space as
\begin{equation}%
\statespace \defi \lvl{0} \cup \lvl{1} \cup \lvl{2} \cup \cdots
\end{equation}%

The Markov process $\{ X(t) \}_{t \ge 0}$ is QSF to the left: the process cannot skip any levels when transitioning to the left, whereas it can skip a level when transitioning to the right due to a batch arrival of size 2. We demonstrate how the matrix-analytic method can be applied to determine the equilibrium distribution of this QSF process.

Let $p(i,j)$ denote the equilibrium probability of state $(i,j) \in \statespace$. From the transition rate diagram we can obtain the balance equations by equating the flow out of a state and the flow into that state. For the boundary states we have, with $\la \defi \la_1 + \la_2$,
\begin{align}%
\la p(0,0)  &= \gamma p(0,1), \label{eqnQSF:setup_times_batch_arrivals_global_balance_(0,0)} \\
(\la + \gamma) p(0,1)  &= \mu p(1,1), \label{eqnQSF:setup_times_batch_arrivals_global_balance_(0,1)} \\
(\la + \theta) p(1,0)  &= \la_1 p(0,0), \label{eqnQSF:setup_times_batch_arrivals_global_balance_(1,0)} \\
(\la + \mu) p(1,1)  &= \la_1 p(0,1) + \theta p(1,0) + \mu p(2,1), \label{eqnQSF:setup_times_batch_arrivals_global_balance_(1,1)}
\end{align}%
and for $i \ge 2$,
\begin{align}%
(\la + \theta) p(i,0) &= \la_1 p(i - 1,0) + \la_2 p(i - 2,0), \label{eqnQSF:setup_times_batch_arrivals_global_balance_phase_0} \\
(\la + \mu) p(i,1) &= \la_1 p(i - 1,0) + \la_2 p(i - 2,0) + \theta p(i,0) + \mu p(i + 1,1). \label{eqnQSF:setup_times_batch_arrivals_global_balance_phase_1}
\end{align}%
Let us introduce the vectors of equilibrium probabilities $\pb_i = \begin{bmatrix} p(i,0) & p(i,1) \end{bmatrix}$ and write \eqref{eqnQSF:setup_times_batch_arrivals_global_balance_(0,0)}--\eqref{eqnQSF:setup_times_batch_arrivals_global_balance_phase_1} in vector-matrix notation:
\begin{align}%
\pb_0 \La_{0}^{(0)} + \pb_1 \La_{-1} &= \zerob, \label{eqnQSF:setup_times_batch_arrivals_global_balance_vector-matrix_level_0} \\
\pb_0 \La_1 + \pb_1 \La_0 + \pb_2 \La_{-1} &= \zerob, \label{eqnQSF:setup_times_batch_arrivals_global_balance_vector-matrix_level_1} \\
\pb_{i - 2} \La_2 + \pb_{i - 1} \La_1 + \pb_i \La_0 + \pb_{i + 1} \La_{-1} &= \zerob, \quad i \ge 2, \label{eqnQSF:setup_times_batch_arrivals_global_balance_vector-matrix}
\end{align}%
where
\begin{gather}%
\La_{-1} = \begin{bmatrix}%
0 & 0 \\ 0 & \mu
\end{bmatrix},%
\quad
\La_0 = \begin{bmatrix}%
-(\la + \theta) & \theta \\ 0 & -(\la + \mu)
\end{bmatrix}, \notag \\
\La_k = \begin{bmatrix}%
\la_k & 0 \\ 0 & \la_k
\end{bmatrix}, ~ k = 1,2, \quad%
\La_{0}^{(0)} = \begin{bmatrix}%
-\la & 0 \\ \ga & -(\la + \ga)
\end{bmatrix}.%
\end{gather}%
The balance equations \eqref{eqnQSF:setup_times_batch_arrivals_global_balance_vector-matrix_level_0}--\eqref{eqnQSF:setup_times_batch_arrivals_global_balance_vector-matrix_level_1}  are referred to as the boundary equations. We show how the matrix-analytic method for QBD processes can be applied to processes that are QSF to the left.

The auxiliary matrix $G$ plays a key role in the matrix-analytic method. Element $(j,k)$ of $G$ is interpreted as the probability that, starting in state $(i,j), ~ i \ge 2$, the first passage to level $i - 1$ happens in state $(i - 1,k)$. More generally, element $(j,k)$ of $G^n$ is interpreted as the probability that, starting in state $(i,j), ~ i \ge n + 1$, the first passage to level $i - n$ happens in state $(i - n,k)$. This immediately implies that zero columns in $\La_{-1}$ lead to zero columns in $G$. For the model at hand
\begin{equation}%
G = \begin{bmatrix}%
0 & 1 \\
0 & 1 \\
\end{bmatrix}.
\end{equation}%
This matrix appears when we censor the Markov process to particular sets of states. Define the union of levels $0,1,\ldots,i$ as $\lvls{i} \defi \lvl{0} \cup \lvl{1} \cup \cdots \cup \lvl{i}$. Censoring the Markov process to $\lvls{i}$ means that we only observe the Markov process when it resides in a state in $\lvls{i}$ and transitions to states outside this set are redirected appropriately to states inside the set.

Suppose we censor the Markov process to $\lvls{i}, ~ i \ge 2$ and write down the balance equations for level $\lvl{i}$. From $\lvl{i - 2}$ the process transitions to $\lvl{i}$ with rate $\pb_{i - 2} \La_2$. From $\lvl{i - 1}$ the process transitions to $\lvl{i}$ with rate $\pb_{i - 1} \La_1$, but also with rate $\pb_{i - 1} \La_2 G$, since in that case the process transitions to $\lvl{i + 1}$ and returns to $\lvl{i}$ according to the probabilities described in $G$. Similarly, the contribution of $\lvl{i}$ to the balance equations is $\pb_i (\La_0 + \La_1 G + \La_2 G^2)$. So, for the censored process the balance equations for $\lvl{i}$ are
\begin{equation}%
\pb_{i - 2} \La_2 + \pb_{i - 1} \bigl( \La_1 + \La_2 G \bigr) + \pb_i \bigl( \La_0 + \La_1 G + \La_2 G^2 \bigr) = \zerob, \quad i \ge 2. \label{eqnQSF:setup_times_batch_arrivals_censored_level_i}
\end{equation}%
We can derive similar balance equations for $\lvl{1}$ when censoring the process to $\lvls{1}$ and for $\lvl{0}$ when censoring the process to $\lvls{0} = \lvl{0}$:
\begin{align}%
\pb_{0} \bigl( \La_1 + \La_2 G \bigr) + \pb_1 \bigl( \La_0 + \La_1 G + \La_2 G^2 \bigr) &= \zerob, \label{eqnQSF:setup_times_batch_arrivals_censored_level_1}\\
\pb_0 \bigl( \La_0^{(0)} + \La_1 G + \La_2 G^2 \bigr) &= \zerob. \label{eqnQSF:setup_times_batch_arrivals_censored_level_0}
\end{align}%
Equation~\eqref{eqnQSF:setup_times_batch_arrivals_censored_level_0} is a homogeneous system of equations which does not have a unique solution, but  we can at least conclude that $\pb_0$ is proportional to $\begin{bmatrix} \ga & \la \end{bmatrix}$. We supplement this system of equations with the normalization condition to be able to uniquely determine $\pb_0$.

We examine the series $\sum_{i \ge 0} \pb_i$ with the goal of finding another expression involving $\pb_0$. Abbreviate
\begin{equation}%
\Ga_0 = \La_0 + \La_1 G + \La_2 G^2, \quad \Ga_1 = \La_1 + \La_2 G, \quad \Ga_2 = \La_2.
\end{equation}%
Use \eqref{eqnQSF:setup_times_batch_arrivals_censored_level_i}--\eqref{eqnQSF:setup_times_batch_arrivals_censored_level_1} to rewrite the summation $\sum_{i \ge 0} \pb_i$:
\begin{align}%
\sum_{i \ge 0} \pb_i &= \pb_0 + \pb_1 + \sum_{i \ge 2} \pb_i \notag \\
&= \pb_0 - \pb_0 \Ga_1 \Ga_0^{-1} - \sum_{i \ge 2} \bigl( \pb_{i - 2} \Ga_2 + \pb_{i - 1} \Ga_1 \bigr) \Ga_0^{-1} \notag \\
&= \pb_0 - \pb_0 \Ga_1 \Ga_0^{-1} - \sum_{i \ge 0} \pb_{i - 2} \Ga_2 \Ga_0^{-1} - \sum_{i \ge 1} \pb_{i - 1} \Ga_1 \Ga_0^{-1} \notag \\
&= \pb_0 - \sum_{i \ge 0} \pb_i \bigl( \Ga_1 + \Ga_2 \bigr) \Ga_0^{-1}.
\end{align}%
Hence,
\begin{equation}%
\sum_{i \ge 0} \pb_i \Bigl( \I + \bigl( \Ga_1 + \Ga_2 \bigr) \Ga_0^{-1} \Bigr) = \pb_0,
\end{equation}%
which gives
\begin{equation}%
\sum_{i \ge 0} \pb_i = \pb_0 \Ga_0 \bigl( \Ga_0 + \Ga_1 + \Ga_2 \bigr)^{-1}, \label{eqnQSF:setup_times_batch_arrivals_expression_for_p_0}
\end{equation}%
where the inverse is given by
\begin{equation}%
\bigl( \Ga_0 + \Ga_1 + \Ga_2 \bigr)^{-1} = \begin{bmatrix}%
-\frac{1}{\theta} & \frac{1}{\theta}\bigl( 1 + \frac{\theta + \mu}{\la_1 + 2\la_2 - \mu} \bigr) \\
 0                & \frac{1}{\la_1 + 2\la_2 - \mu}
\end{bmatrix}.%
\end{equation}%
So, the normalization condition is
\begin{equation}%
1 = \sum_{i \ge 0} \pb_i \oneb = \pb_0 \Ga_0 \bigl( \Ga_0 + \Ga_1 + \Ga_2 \bigr)^{-1} \oneb. \label{eqnQSF:setup_times_batch_arrivals_normalization_condition}
\end{equation}%
Substituting the normalization condition \eqref{eqnQSF:setup_times_batch_arrivals_normalization_condition} for any of the equations in \eqref{eqnQSF:setup_times_batch_arrivals_censored_level_0} allows us to calculate $\pb_0$. Armed with $\pb_0$ we are able to solve \eqref{eqnQSF:setup_times_batch_arrivals_censored_level_1} for $\pb_1$, after which we can iteratively calculate $\pb_i, ~ i \ge 2$ from \eqref{eqnQSF:setup_times_batch_arrivals_censored_level_i}, stopping when the accumulated probability mass is close to 1.

\subsection{A batch machine subject to breakdowns}%
\label{subsecQSF:batch_machine_breakdowns}%

Consider a batch machine that is subject to breakdowns. Depending on the details of the jobs, the batch machine can sometimes serve two jobs at the same time, but sometimes only a single job. For simplicity we assume that with probability $1/2$ the batch machine serves a single job and with the same probability two jobs. The service time is independent of the number of jobs in service and is exponentially distributed with rate $2\mu$. Jobs arrive to the system according to a Poisson process with rate $\la$. The machine breaks down after an exponential amount of time with rate $\ga$ (irrespective of whether it is serving a job or not) and repair takes an exponential amount of time with rate $\theta$. Every time the machine breaks down, the repair is started immediately. If there is only a single job in the system, the machine serves this single job with probability 1.

Notice that the machine works a fraction $\theta/(\ga + \theta)$ of the time and is in repair a fraction $\ga/(\ga + \theta)$ of the time. So, the rate at which the server can serve jobs is $\theta/(\ga + \theta) \cdot 2\mu (1 \cdot 1/2 + 2 \cdot 1/2)$ and therefore the stability condition is
\begin{equation}%
\rho \defi \frac{\la}{\frac{\theta}{\ga + \theta} (\mu + 2\mu)} < 1.
\end{equation}%

The state of the system may be described by $X(t) \defi (X_1(t),X_2(t))$ with $X_1(t)$ representing the number of jobs in the system at time $t$ and $X_2(t)$ describes if the machine is working (1) or not (0) at time $t$. The process $\{ X(t) \}_{t \ge 0}$ is a Markov process with state space $\statespace \defi \{ (i,j) \in \Nat_0 \times \{ 0,1 \} \}$. The transition rate diagram is displayed in \cref{figQSF:transition_rate_diagram_batch_services_breakdowns}. We use the same levels as in the previous example, i.e., $\lvl{i} = \{ (i,0),(i,1) \}, ~ i \ge 0$.

\begin{figure}
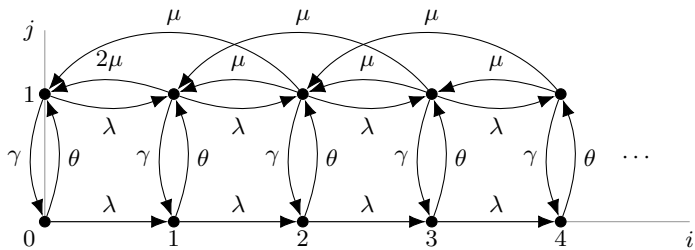
%
\centering%
\includestandalone{Chapters/QSF/TikZFiles/transition_rate_diagram_batch_services_breakdowns}%
\caption{Transition rate diagram of the batch machine subject to breakdowns.}
\label{figQSF:transition_rate_diagram_batch_services_breakdowns}%
\end{figure}%

The Markov process $\{ X(t) \}_{t \ge 0}$ is QSF to the right: the process cannot skip any levels when transitioning to the right, whereas it can skip a level when transitioning to the left with a batch service of size 2. We demonstrate how the matrix-geometric method and spectral expansion method can be adapted to determine the equilibrium distribution of this QSF process.

Let $p(i,j)$ denote the equilibrium probability of state $(i,j) \in \statespace$. From the transition rate diagram we can obtain the balance equations by equating the flow out of a state and the flow into that state. For the boundary states we have,
\begin{align}%
(\la + \theta) p(0,0)  &= \ga p(0,1), \label{eqnQSF:batch_services_breakdowns_global_balance_(0,0)} \\
(\la + \ga) p(0,1)  &= 2\mu p(1,1) + \mu p(2,1) + \theta p(0,0), \label{eqnQSF:batch_services_breakdowns_global_balance_(0,1)} \\
(\la + \theta) p(1,0)  &= \la p(0,0) + \ga p(1,1), \label{eqnQSF:batch_services_breakdowns_global_balance_(1,0)} \\
(\la + 2\mu + \ga) p(1,1)  &= \la p(0,1) + \theta p(1,0) + \mu (p(2,1) + p(3,1)), \label{eqnQSF:batch_services_breakdowns_global_balance_(1,1)}
\end{align}%
and for $i \ge 2$,
\begin{align}%
(\la + \theta) p(i,0) &= \la p(i - 1,0) + \ga p(i,1), \label{eqnQSF:batch_services_breakdowns_global_balance_phase_0} \\
(\la + 2\mu + \ga) p(i,1) &= \la p(i - 1,0) + \theta p(i,0) + \mu (p(i + 1,1) + p(i + 2,1)). \label{eqnQSF:batch_services_breakdowns_global_balance_phase_1}
\end{align}%
Let us introduce the vectors of equilibrium probabilities $\pb_i = \begin{bmatrix} p(i,0) & p(i,1) \end{bmatrix}$ and write \eqref{eqnQSF:batch_services_breakdowns_global_balance_(0,0)}--\eqref{eqnQSF:batch_services_breakdowns_global_balance_phase_1} in vector-matrix notation:
\begin{align}%
\pb_0 \La_{0}^{(0)} + \pb_1 \La_{-1}^{(1)} + \pb_2 \La_{-2} &= \zerob, \label{eqnQSF:batch_services_breakdowns_global_balance_vector-matrix_level_0} \\
\pb_{i - 1} \La_1 + \pb_i \La_0 + \pb_{i + 1} \La_{-1} + \pb_{i + 2} \La_{-2} &= \zerob, \quad i \ge 1, \label{eqnQSF:batch_services_breakdowns_global_balance_vector-matrix}
\end{align}%
where
\begin{gather}%
\La_{-2} = \La_{-1} = \begin{bmatrix}%
0 & 0 \\ 0 & \mu
\end{bmatrix},%
\quad
\La_0 = \begin{bmatrix}%
-(\la + \theta) & \theta \\ \ga & -(\la + 2\mu + \ga)
\end{bmatrix}, \notag \\
\La_1 = \begin{bmatrix}%
\la & 0 \\ 0 & \la
\end{bmatrix},
\quad
\La_{0}^{(0)} = \begin{bmatrix}%
-(\la + \theta) & \theta \\ \ga & -(\la + \ga)
\end{bmatrix},
\quad
\La_{-1}^{(1)} = \begin{bmatrix}%
0 & 0 \\ 0 & 2\mu
\end{bmatrix}.%
\end{gather}%
The balance equations \eqref{eqnQSF:batch_services_breakdowns_global_balance_vector-matrix_level_0} are referred to as the boundary equations. We will first show how the matrix-geometric method for QBD processes can be applied to processes that are QSF to the right.

Recall from the QBD processes that the matrix-geometric method expresses the equilibrium probability vectors as
\begin{equation}%
\pb_i = \pb_0 R^i, \quad i \ge 0. \label{eqnQSF:batch_services_breakdowns_matrix-geometric_relation}
\end{equation}%
Substituting \eqref{eqnQSF:batch_services_breakdowns_matrix-geometric_relation} into \eqref{eqnQSF:batch_services_breakdowns_global_balance_vector-matrix} gives
\begin{equation}%
\pb_{i - 1} \Bigl( \La_1 + R \La_0 + R^2 \La_{-1} + R^3 \La_{-2} \Bigr) = \zerob, \quad i \ge 1,
\end{equation}%
which holds if the rate matrix $R$ is the solution to the matrix equation
\begin{equation}%
\La_1 + R \La_0 + R^2 \La_{-1} + R^3 \La_{-2} = 0. \label{eqnQSF:batch_services_breakdowns_matrix_equation_R}
\end{equation}%
It can be shown that $R$ is the unique minimal non-negative solution of \eqref{eqnQSF:batch_services_breakdowns_matrix_equation_R} and has spectral radius less than 1, which shows that $(\I - R)^{-1}$ exists. Equation~\eqref{eqnQSF:batch_services_breakdowns_matrix_equation_R} can be solved via successive substitutions in a similar fashion as \cref{algQBD:matrix-geometric_successive_substitutions}.

The boundary equilibrium probability vector $\pb_0$ can be determined by substituting \eqref{eqnQSF:batch_services_breakdowns_matrix-geometric_relation} in the balance equations  \eqref{eqnQSF:batch_services_breakdowns_global_balance_vector-matrix_level_0}:
\begin{equation}%
\pb_0 \bigl( \La_{0}^{(0)} + R \La_{-1}^{(1)} + R^2 \La_{-2} \bigr) = \zerob.
\end{equation}%
This homogeneous system of equations does not have a unique solution. However, if we substitute any of its equations by the normalization condition
\begin{equation}%
1 = \sum_{i \ge 0} \pb_i \oneb = \pb_0 \oneb + \sum_{i \ge 1} \pb_1 R^{i - 1} \oneb = \pb_0 \oneb + \pb_1 ( \I - R )^{-1} \oneb,
\end{equation}%
we get a non-homogeneous system of equations with unique solution $\pb_0$, and through \eqref{eqnQSF:batch_services_breakdowns_matrix-geometric_relation} we find all $\pb_i$.

We now demonstrate the spectral expansion method. Recall that this method tries to find basis solutions of the form
\begin{equation}%
\pb_i = \vca{y} x^i, \quad i \ge 0, \label{eqnQSF:batch_services_breakdowns_spectral_expansion_basis_solutions}
\end{equation}%
where $\vca{y} = \begin{bmatrix} y(0) & y(1) \end{bmatrix} \neq \zerob$ and $|x| < 1$, satisfying the balance equations \eqref{eqnQSF:batch_services_breakdowns_global_balance_vector-matrix}. We require that $|x| < 1$, since we want to be able to normalize the $\pb_i$. Substitution of \eqref{eqnQSF:batch_services_breakdowns_spectral_expansion_basis_solutions} in \eqref{eqnQSF:batch_services_breakdowns_global_balance_vector-matrix} and dividing by common powers of $x$ yields
\begin{equation}%
\vca{y} \bigl( \La_1 + x \La_0 + x^2 \La_{-1} + x^3 \La_{-2} \bigr) = \zerob. \label{eqnQSF:batch_services_breakdowns_spectral_expansion_y_and_x}
\end{equation}%
These equations have a non-zero solution for $\vca{y}$ if
\begin{equation}%
\det \bigl( \La_1 + x \La_0 + x^2 \La_{-1} + x^3 \La_{-2} \bigr) = 0. \label{eqnQSF:batch_services_breakdowns_spectral_expansion_determinant_equation}
\end{equation}%
The desired values of $x$ are the roots $x$ with $|x| < 1$ of the determinant equation \eqref{eqnQSF:batch_services_breakdowns_spectral_expansion_determinant_equation}. In this case \eqref{eqnQSF:batch_services_breakdowns_spectral_expansion_determinant_equation} is a polynomial of degree four in $x$. One of the solutions of \eqref{eqnQSF:batch_services_breakdowns_spectral_expansion_determinant_equation}  is $x = 1$, since $\La_1 + \La_0 + \La_{-1} + \La_{-2}$ is the transition rate matrix of the Markov process that describes the phase transitions. Now, \eqref{eqnQSF:batch_services_breakdowns_spectral_expansion_determinant_equation} reads
\begin{equation}%
(x - 1)\bigl( x^3(\la + \theta)\mu + x^2 (\la + 2\theta)\mu - x\la (\la + 2\mu + \theta + \ga ) + \la^2 \bigr) = 0,
\end{equation}%
which has two roots $x_1$ and $x_2$ inside the open unit disk. 
These roots  have an explicit expression, but are more difficult to write down because they originate from a cubic equation. For $k = 1,2$, let $\vca{y}_k$ be the non-zero solution of
\begin{equation}%
\vca{y}_k \bigl( \La_1 + x_k \La_0 + x_k^2 \La_{-1} + x_k^3 \La_{-2} \bigr) = \zerob. \label{eqnQSF:batch_services_breakdowns_spectral_expansion_linear_combination}
\end{equation}%
Note that, since the balance equations are linear, any linear combination of the two solutions satisfies \eqref{eqnQSF:batch_services_breakdowns_global_balance_vector-matrix}. Now the final step of the spectral expansion method is to determine a linear combination that also satisfies the boundary equations \eqref{eqnQSF:batch_services_breakdowns_global_balance_vector-matrix_level_0}. So we set
\begin{equation}%
\pb_i = \xi_1 \vca{y}_1 x_1^i + \xi_2 \vca{y}_2 x_2^i, \quad i \ge 0. \label{eqnQSF:batch_services_breakdowns_equilibrium_distribution_spectral_expansion}
\end{equation}%
We can determine the coefficients $\xi_1$ and $\xi_2$ by substituting \eqref{eqnQSF:batch_services_breakdowns_equilibrium_distribution_spectral_expansion} into \eqref{eqnQSF:batch_services_breakdowns_global_balance_vector-matrix_level_0}, which gives
\begin{equation}%
\sum_{k = 1}^2 \xi_k \vca{y}_k \bigl( \La_0^{(0)} + x_k \La_{-1}^{(1)} + x_k^2 \La_{-2} \bigr) = \zerob. \label{eqnQSF:batch_services_breakdowns_equilibrium_distribution_spectral_expansion_boundary_level_0} \\
\end{equation}%
%
We can substitute the normalization condition for one of the above homogeneous equations to uniquely determine the coefficients $\xi_1$ and $\xi_2$. The normalization condition states that
\begin{equation}%
1 = \sum_{i \ge 0} \pb_i = \frac{\xi_1 \vca{y}_1}{1 - x_1} + \frac{\xi_2 \vca{y}_2}{1 - x_2}.
\end{equation}%
Determination of the coefficients is now a straightforward task.


\section{General quasi-skip-free processes}%
\label{secQSF:QSF_processes}%

From the previous examples we have seen that processes that are QSF to the left or right share the same state space as the QBD process that we have encountered in \cref{ch:quasi-birth--and--death_processes}. We can therefore use the similar level definitions as before:
\begin{equation}%
\lvl{0} \defi \{ (0,0),(0,1),\ldots,(0,b) \}, \quad \lvl{i} \defi \{ (i,0),(i,1),\ldots,(i,r) \}, ~ i \ge 1,
\end{equation}%
with $b$ and $r$ non-negative finite integers and partition the state space as
\begin{equation}%
\statespace \defi \lvl{0} \cup \lvl{1} \cup \lvl{2} \cup \cdots
\end{equation}%
We denote the state of the QSF process at time $t$ as $X(t) \defi (X_1(t),X_2(t))$ where $X_1(t)$ describes the level and $X_2(t)$ describes the phase at time $t$.

Throughout this chapter we focus on \textit{homogeneous} QSF processes, which means that transition rates are level-independent, possibly except for the transition rates from and to level 0. We can now identify the two types of QSF processes. Using the level-independent $\La_n$ and level-dependent $\La_n^{(m)}$ transition sub-matrices, we have that the transition rate matrix $Q$ of a process that is QSF to the left is of the form
\begin{equation}%
Q = \begin{bmatrix}%
\La_0^{(0)}    & \La_1^{(0)} & \La_2^{(0)} & \La_3^{(0)} & \La_4^{(0)} & \cdots \\
\La_{-1}^{(1)} & \La_{0}     & \La_1       & \La_2       & \La_3       & \cdots \\
               & \La_{-1}    & \La_0       & \La_1       & \La_2       & \cdots \\
               &             & \La_{-1}    & \La_0       & \La_1       & \cdots \\
               &             &             & \ddots      & \ddots      & \ddots
\end{bmatrix}. \label{eqnQSF:transition_rate_matrix_QSF_to_the_left}%
\end{equation}%
The transition rate matrix $Q$ of a process that is QSF to the right is given by
\begin{equation}%
Q = \begin{bmatrix}%
\La_0^{(0)}    & \La_1^{(0)} &          &        &        &  \\
\La_{-1}^{(1)} & \La_{0}     & \La_1    &        &        &  \\
\La_{-2}^{(2)} & \La_{-1}    & \La_0    & \La_1  &        &  \\
\La_{-3}^{(3)} & \La_{-2}    & \La_{-1} & \La_0  & \La_1  &  \\
\vdots         & \vdots      & \vdots   & \vdots & \vdots & \ddots
\end{bmatrix}. \label{eqnQSF:transition_rate_matrix_QSF_to_the_right}%
\end{equation}%

By comparing these transition rate matrices \eqref{eqnQSF:transition_rate_matrix_QSF_to_the_left}--\eqref{eqnQSF:transition_rate_matrix_QSF_to_the_right} with the transition rate matrix of the QBD process in \eqref{eqnQBD:definition_transition_rate_matrix}, we see that the QBD process is a process that is QSF to both the left and the right.

The balance equations $\pb Q = \zerob$ in case of a process that is QSF to the left are given by
\begin{align}%
\pb_0 \La_0^{(0)} + \pb_1 \La_{-1}^{(1)} &= \zerob, \label{eqnQSF:QSF_left_balance_equations_level_0} \\
\pb_0 \La_i^{(0)} + \sum_{k = 1}^{i + 1} \pb_k \La_{i - k} &= \zerob, \quad i \ge 1, \label{eqnQSF:QSF_left_balance_equations_level_i}
\end{align}%
and for a process that is QSF to the right we have
\begin{align}%
\sum_{k \ge 0} \pb_k \La_{-k}^{(k)} &= \zerob, \label{eqnQSF:QSF_right_balance_equations_level_0} \\
\pb_0 \La_1^{(0)} + \sum_{k \ge 1} \pb_k \La_{1 - k} &= \zerob, \label{eqnQSF:QSF_right_balance_equations_level_1} \\
\sum_{k \ge i - 1} \pb_k \La_{i - k} &= \zerob, \quad i \ge 2. \label{eqnQSF:QSF_right_balance_equations_level_i}
\end{align}%

Each of the two examples in \cref{secQSF:variations} are analyzed by using different methods. The success of a method depends on the skip-free direction of the QSF process. That is, the matrix-geometric method or the spectral expansion method do not work for processes that are QSF to the left and one cannot use the matrix-analytic method to analyze processes that are QSF to the right. The application of these methods to QSF processes is similar to their application to QBD processes, so the treatment of these methods in the next sections will be brief.


\section{Stability condition}%
\label{secQSF:stability_condition}%

There is a natural extension of the stability condition for QBD processes seen in \cref{thmQBD:stability_condition} to the stability condition for processes that are QSF to the left or right. We summarize the results for both types in a single theorem.

Recall that the transition rate matrix $\La$ describes the transition behavior of the phases. We unify both types of QSF processes by setting
\begin{equation}%
\La \defi \sum_{i = -\infty}^\infty \La_i.
\end{equation}%
From here on we will assume that the QSF process $\{ X(t) \}_{t \ge 0}$ is irreducible and that the transition rate matrix $\La$ has exactly one communicating class. Let $\vca{x}$ be the equilibrium distribution of the Markov process with transition rate matrix $\La$:
\begin{equation}%
\vca{x} \La = \zerob, \quad \vca{x} \oneb = 1.
\end{equation}%
Using distribution $\vca{x}$ we can formulate a mean drift condition that generalizes the one for QBD processes. It asserts that the mean drift to the right is smaller than the mean drift to the left.

\begin{theorem}[Stability condition]\label{thmQSG:stability_condition}%
The \textup{QSF} process $\{ X(t) \}_{t \ge 0}$ is positive recurrent if and only if
\begin{equation}%
\vca{x} \sum_{i \ge 1} \La_i \oneb < \vca{x} \sum_{i \le -1} \La_i \oneb \label{eqnQSF:stability_condition}
\end{equation}%
with $\vca{x} = \begin{bmatrix} x(0) & x(1) & \cdots & x(r) \end{bmatrix}$ the equilibrium distribution of the Markov process with transition rate matrix $\La$\textup{:}
\begin{equation}%
\vca{x} \La = \zerob, \quad \vca{x} \oneb = 1. \label{eqnQSF:stability_condition_vertical_eq_dist}
\end{equation}%
\end{theorem}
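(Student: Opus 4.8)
The plan is to reduce the stability condition for quasi-skip-free processes to the stability condition for quasi-birth--and--death processes, which is already established in \cref{thmQBD:stability_condition}, by constructing an auxiliary QBD process with the same drift structure. The key observation is that the inequality \eqref{eqnQSF:stability_condition} depends on the transition matrices $\La_i$ only through the aggregated quantities $\La = \sum_i \La_i$, the aggregate right-drift rate $\sum_{i \ge 1} \La_i$, and the aggregate left-drift rate $\sum_{i \le -1} \La_i$. Since positive recurrence is a qualitative property that should only depend on whether the process drifts towards or away from the boundary, collapsing all the larger jumps onto nearest-neighbour jumps (while preserving total outward and inward rates) ought not to change the classification. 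So first I would define $\hat{\La}_1 \defi \sum_{i \ge 1} \La_i$, $\hat{\La}_{-1} \defi \sum_{i \le -1} \La_i$, and $\hat{\La}_0 \defi \La_0 + (\La - \hat{\La}_1 - \hat{\La}_{-1}) = \La - \hat{\La}_1 - \hat{\La}_{-1}$ --- wait, more carefully: $\hat{\La}_0$ must be chosen so that $\hat{\La}_{-1} + \hat{\La}_0 + \hat{\La}_1 = \La$, hence $\hat{\La}_0 \defi \La_0$; one checks that $\hat{\La}_0$ still has nonnegative off-diagonal entries and the correct row-sum structure for a valid QBD generator block, because lumping positive rates into $\hat{\La}_{\pm 1}$ only removes positive mass from off-diagonal positions of the higher-index blocks.

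Second, I would argue that the auxiliary homogeneous QBD process with blocks $(\hat{\La}_{-1}, \hat{\La}_0, \hat{\La}_1)$ is positive recurrent if and only if the original QSF process is. For the ``only if'' direction one can use a coupling or a Lyapunov-function argument on the level component $X_1(t)$: the expected one-step (or infinitesimal) drift of the level in state $(i,j)$, for $i$ large enough that boundary effects are absent, equals $\sum_{k} k (\La_k)_{j,\cdot}\oneb$, and averaging against the phase-stationary distribution $\vca{x}$ gives exactly $\vca{x}\sum_{i\ge 1} i\La_i\oneb - \vca{x}\sum_{i\le -1} |i|\La_i\oneb$ for the QSF process versus $\vca{x}\hat{\La}_1\oneb - \vca{x}\hat{\La}_{-1}\oneb$ for the QBD process. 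These two expressions have the \emph{same sign} is \emph{not} automatic --- the QSF drift weights jumps by their size $|i|$ --- so the cleaner route is to invoke the known general result (Neuts-type mean-drift criterion for M/G/1-type and G/M/1-type Markov chains) that positive recurrence of a QSF process holds iff the \emph{mean} drift $\vca{x}\sum_i i\La_i\oneb < 0$, and then observe that $\sum_i i\La_i = \sum_{i\ge1} i\La_i - \sum_{i\le-1}|i|\La_i$, so negativity of the mean drift is \emph{equivalent} to $\vca{x}\sum_{i\ge1} i\La_i\oneb < \vca{x}\sum_{i\le-1}|i|\La_i\oneb$. The statement as written in the theorem, $\vca{x}\sum_{i\ge1}\La_i\oneb < \vca{x}\sum_{i\le-1}\La_i\oneb$, is the weighted-by-$1$ version, which is the correct criterion precisely when only one of the two directions has jumps of size larger than one --- which is exactly the QSF assumption. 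So I would split into the two cases (QSF to the left: only $\La_{-1}$ present on the negative side, so $\sum_{i\le-1}\La_i = \sum_{i\le-1}|i|\La_i$; QSF to the right: symmetric) and in each case note that one of the two sums in \eqref{eqnQSF:stability_condition} already coincides with its size-weighted version, and the other can be bounded: e.g.\ for QSF-to-the-left, $\vca{x}\sum_{i\ge1}\La_i\oneb < \vca{x}\sum_{i\ge1}i\La_i\oneb$ would go the wrong way, so instead I reduce directly via the auxiliary QBD.

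Concretely, then, the cleanest proof: build the auxiliary QBD $(\hat{\La}_{-1}, \hat{\La}_0, \hat{\La}_1)$ as above; show it has the same stationary phase distribution $\vca{x}$ (immediate, since $\hat{\La}_{-1}+\hat{\La}_0+\hat{\La}_1 = \La$); apply \cref{thmQBD:stability_condition} to get that the auxiliary QBD is positive recurrent iff $\vca{x}\hat{\La}_1\oneb < \vca{x}\hat{\La}_{-1}\oneb$, which is literally \eqref{eqnQSF:stability_condition}; and finally prove that the QSF process and its auxiliary QBD have the same recurrence classification. This last equivalence I would establish by a stochastic-domination / comparison argument on the level process: for a QSF process that is skip-free to the left, the level can jump up by more than one but never down by more than one, so the level process is stochastically larger than that of a nearest-neighbour chain having the same upward rate to each higher level collapsed to a single step --- but since positive recurrence is about the tail behaviour of return times to level $0$, and the excursions have the same \emph{expected} net area, one can invoke Foster--Lyapunov with the linear test function $V(i,j) = i$ plus a bounded phase-correction term $w(j)$ solving $\La\vca{w} = (\text{mean drift}) \oneb - \vca{d}$ where $\vca{d}_j$ is the phasewise drift, exactly as in the standard proof of \cref{thmQBD:stability_condition}.

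The main obstacle I anticipate is justifying the phase-correction term rigorously when the jumps are unbounded: the Lyapunov drift computation $Q V(i,j) = \vca{d}_j + \sum_k (\La_k\vca{w})_j$ requires $\sum_i |i| (\La_i)_{j,k} < \infty$ and $\sum_i i^2(\La_i)_{j,k} < \infty$ (the latter if one wants finiteness of the drift of $V^2$, needed for a clean positive-recurrence conclusion rather than merely recurrence). For QSF processes, one of the two directions is bounded, so only one tail sum needs a moment assumption, but the statement as given does not spell this out. I would therefore either (a) add the implicit standing assumption that $\sum_{i} |i|\,\|\La_i\| < \infty$ (automatic when the jumps are bounded on one side and the matrices are finite-dimensional, which they are here: $r$ is finite), and note that finite-dimensionality plus the fact that row sums of $\La_i$ must sum appropriately forces the unbounded-side tail to be summable if the process is to have finite transition rates --- in fact $\q{x} = \sum_{i}(\text{row sum contributions}) < \infty$ is assumed throughout the book, which already gives $\sum_i \|\La_i\| < \infty$, and one needs the slightly stronger $\sum_i i\|\La_i\| < \infty$ for the drift to be well-defined; I would state this as a hypothesis. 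Everything else is a routine transcription of the QBD proof, so the write-up would be: reduce to auxiliary QBD, cite \cref{thmQBD:stability_condition}, and carry out the Foster--Lyapunov comparison, flagging the moment condition as the one genuinely new ingredient.
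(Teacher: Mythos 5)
Your reduction to an auxiliary QBD with blocks $\hat{\La}_1 \defi \sum_{i\ge1}\La_i$, $\hat{\La}_{-1} \defi \sum_{i\le-1}\La_i$, $\hat{\La}_0 \defi \La_0$ does not work, and you actually diagnose the reason yourself midway and then abandon the diagnosis. The recurrence classification of the QSF process is governed by the \emph{mean drift} $\vca{x}\sum_i i\,\La_i\oneb$, while the auxiliary QBD has drift $\vca{x}(\hat{\La}_1 - \hat{\La}_{-1})\oneb = \vca{x}\bigl(\sum_{i\ge1}\La_i - \sum_{i\le-1}\La_i\bigr)\oneb$; these can have opposite signs even under the QSF hypothesis. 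A one-dimensional counterexample: $r = 0$, $\La_2 = q$, $\La_{-1} = p$, all other $\La_i = 0$ (QSF to the left). The auxiliary QBD is positive recurrent iff $q < p$, but the original process has mean drift $2q - p$ and is positive recurrent iff $2q < p$; for $q < p < 2q$ the two disagree. Your remark that the unweighted criterion is ``correct precisely when only one of the two directions has jumps of size larger than one'' is therefore wrong, and ``reduce directly via the auxiliary QBD'' just re-asserts the false lemma you had just refuted.

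A further issue you brush against but do not state plainly: the theorem as printed is itself incorrect. The displayed condition should read $\vca{x}\sum_{i\ge1} i\,\La_i\oneb < \vca{x}\sum_{i\le-1}|i|\,\La_i\oneb$, i.e.\ jump sizes must be weighted. The paper's own batch-arrival example in \cref{subsecQSF:setup_times} uses the weighted version (stability $\la_1 + 2\la_2 < \mu$, modulo the typo $2\la$ for $2\la_2$), whereas the stated theorem would give $\la_1 + \la_2 < \mu$. The paper gives no proof of \cref{thmQSG:stability_condition}, so there is nothing to compare against; a correct proof of the corrected statement would follow the Foster--Lyapunov outline in your final paragraph, with test function $V(i,j) = i + w(j)$ where $\vca{w}$ solves a Poisson equation $\La\vca{w} = (\vca{x}\vca{d})\oneb - \vca{d}$ for the phasewise drift vector $\vca{d}$ with $d_j = \sum_k k(\La_k\oneb)_j$, under the moment hypothesis $\sum_i |i|\,\|\La_i\| < \infty$ that you correctly flag as the genuinely new analytic requirement relative to the QBD case.
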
%


\section{Matrix-geometric method}%
\label{secQSF:matrix-geometric_method}%

The matrix-geometric method is applicable to processes that are QSF to the right and have as their transition rate matrix the one shown in \eqref{eqnQSF:transition_rate_matrix_QSF_to_the_right}. Instrumental for the approach is the rate matrix $R$ that gives rise to the matrix-geometric relation
\begin{equation}%
\pb_i = \pb_1 R^{i - 1}, \quad i \ge 1. \label{eqnQSF:matrix-geometric_relation}
\end{equation}%
In the QBD case, the matrix $R$ is the minimal non-negative solution of the matrix-quadratic equation \eqref{eqnQBD:R_matrix_equation}. In the QSF case this equation is no longer quadratic. In fact, $R$ is the minimal non-negative solution of
\begin{equation}%
\sum_{i \ge 0} R^i \La_{1 - i} = 0. \label{eqnQSF:rate_matrix_R_satisfies}
\end{equation}%
The largest eigenvalue (in terms of absolute value) of the matrix $R$ is less than one, which ensures that $\I - R$ is invertible. Of course, given that $R$ satisfies \eqref{eqnQSF:rate_matrix_R_satisfies}, it readily follows that the matrix-geometric representation \eqref{eqnQSF:matrix-geometric_relation} satisfies the balance equations \eqref{eqnQSF:QSF_right_balance_equations_level_i}: substitution of \eqref{eqnQSF:matrix-geometric_relation} into \eqref{eqnQSF:QSF_right_balance_equations_level_i} gives
\begin{equation}%
\pb_{i - 1} \sum_{k \ge 0} R^k \La_{1 - k} = 0, \quad i \ge 2,
\end{equation}%
which is valid because of \eqref{eqnQSF:rate_matrix_R_satisfies}. Equation~\eqref{eqnQSF:rate_matrix_R_satisfies} can be rewritten as
\begin{equation}%
R = - \bigl( \La_1 + \sum_{i \ge 2} R^i \La_{1 - i} \bigr) \La_0^{-1}.
\end{equation}%
To numerically solve this equation we first have to truncate the infinite sum at $K$ say, and then compute an approximation for $R$ by successive substitutions as done in \cref{algQBD:matrix-geometric_successive_substitutions}. The larger $K$, the better the resulting approximation for $R$, but also the higher the computational effort to compute this approximation. We finally mention that the rate matrix $R$ has the same probabilistic interpretation as in a QBD process.

\begin{algorithm}%
\caption{Calculating $R$ using successive substitutions}%
\label{algQSF:matrix-geometric_successive_substitutions}%
\begin{algorithmic}[1]%
\State Pick $\epsilon$ small and positive and $K$ a large integer
\State Set $R_0 = 0$, $R_1 = - \La_1 \La_0^{-1}$ and $n = 1$
\While{$\| R_n - R_{n - 1} \|_{\textup{max}} > \epsilon$}
    \State Compute $R_{n + 1}$ from
    \begin{equation}%
    R_{n + 1} = - \bigl( \La_1 + \sum_{i = 2}^K R_n^i \La_{1 - i} \bigr) \La_0^{-1}
    \end{equation}%
    \State Update $n = n + 1$
\EndWhile
\end{algorithmic}%
\end{algorithm}%

Once we have determined $R$, we can solve for the remaining equilibrium probability vectors $\pb_0$ and $\pb_1$. Substituting \eqref{eqnQSF:matrix-geometric_relation} into the balance equations \eqref{eqnQSF:QSF_right_balance_equations_level_0}--\eqref{eqnQSF:QSF_right_balance_equations_level_1} for levels 0 and 1 gives
\begin{align}%
\pb_0 \La_0^{(0)} + \pb_1 \sum_{k \ge 1} R^{k - 1} \La_{-k}^{(k)} &= \zerob, \label{eqnQSF:matrix-geometric_method_boundary_equations_level_0} \\
\pb_0 \La_1^{(0)} + \pb_1 \sum_{k \ge 1} R^{k - 1} \La_{1 - k} &= \zerob. \label{eqnQSF:matrix-geometric_method_boundary_equations_level_1}
\end{align}%
%
Replacing one of the boundary equations with the normalization condition
\begin{equation}%
1 = \sum_{i \ge 0} \pb_i \oneb = \pb_0 \oneb + \pb_1 \sum_{i \ge 1} R^{i - 1} \oneb = \pb_0 \oneb + \pb_1 (\I - R)^{-1} \oneb
\end{equation}%
allow us to uniquely determine $\pb_0$ and $\pb_1$. We do, however, need to truncate the infinite series in \eqref{eqnQSF:matrix-geometric_method_boundary_equations_level_0}--\eqref{eqnQSF:matrix-geometric_method_boundary_equations_level_1} to be able to numerically determine $\pb_0$ and $\pb_1$.


\section{Matrix-analytic method}%
\label{secQSF:matrix-analytic_method}%

Processes that are QSF to the left do not have a matrix-geometric representation of the equilibrium probability vectors. In \cref{subsecQSF:setup_times} we have developed a recursive scheme involving the auxiliary matrix $G$ to determine the equilibrium probability vectors. In general, for processes that are QSF to the left, $G$ is the minimal non-negative solution of
\begin{equation}%
\sum_{i \ge 0} \La_{-1 + i} G^i = 0.
\end{equation}%
Similarly as for the calculation of $R$, we are required to truncate the infinite sum at $K$ say, and then approximate $G$ using successive substitutions, which leads to \cref{algQSF:matrix-analytic_successive_substitutions}. The matrix $G$ has the same probabilistic interpretation as in a QBD process.

\begin{algorithm}%
\caption{Calculating $G$ using successive substitutions}%
\label{algQSF:matrix-analytic_successive_substitutions}%
\begin{algorithmic}[1]%
\State Pick $\epsilon$ small and positive and $K$ a large integer
\State Set $G_0 = 0$, $G_1 = -\La_0^{-1} \La_{-1}$ and $n = 1$
\While{$\| G_n - G_{n - 1} \|_{\textup{max}} > \epsilon$}
    \State Compute $G_{n + 1}$ from
    \begin{equation}%
    G_{n + 1} = -\La_0^{-1} \bigl( \La_{-1} + \sum_{i = 2}^K \La_{-1 + i} G_n^i \bigr)
    \end{equation}%
    \State Update $n = n + 1$
\EndWhile
\end{algorithmic}%
\end{algorithm}%

In \cref{subsecQSF:setup_times} we censored the QSF process to $\lvls{i}$ for all $i$ and wrote down the balance equations for $\lvl{i}$. We can use the same principal in the present case to develop a recursive scheme for $\pb_i$. We will use the following notation to develop that scheme:
\begin{equation}%
\Gamma_i^{(0)} \defi \sum_{k \ge 0} \La_{i + k}^{(0)} G^k, ~ i \ge 0, \quad \Gamma_i \defi \sum_{k \ge 0} \La_{i + k} G^k, ~ i \ge 0.
\end{equation}%
The matrix $\Gamma_i^{(0)}$ describes the rates at which the QSF process enters $\lvl{i}$ from $\lvl{0}$ in a single transition when the process is censored to $\lvls{i}$. The matrix $\Gamma_i$ describes the rates at which the QSF process enters $\lvl{i + j}$ from $\lvl{j}, ~ j \ge 1$ in a single transition when the process is censored to $\lvls{i + j}$.

Using these definitions, we can censor the QSF process to $\lvl{0}$ to develop the relation
\begin{equation}%
\pb_0 \Gamma_0^{(0)} =  \zerob. \label{eqnQSF:matrix-analytic_method_censored_balance_level_0}
\end{equation}%
Censoring the QSF process to $\lvls{i}, ~ i \ge 1$, provides the recursive relation\endnote{The recursive scheme in the matrix-analytic method for levels $i \ge 1$ is called Ramaswami's formula and was developed by Ramaswami in \cite{Ramaswami1988_Matrix-analytic_stable_recursion}. This formula is numerically stable because it involves only additions and multiplications of non-negative matrices and vectors; subtractions could lead to numerical instability due to the loss of significant figures.}
\begin{equation}%
\pb_0 \Gamma_i^{(0)} + \sum_{j = 1}^i \pb_j \Gamma_{i - j} = \zerob, \quad i \ge 1. \label{eqnQSF:matrix-analytic_method_Ramaswami_formula}
\end{equation}%
Now, if we are able to determine $\pb_0$, then we can use \eqref{eqnQSF:matrix-analytic_method_Ramaswami_formula} to determine any $\pb_i$. The homogeneous system of equations \eqref{eqnQSF:matrix-analytic_method_censored_balance_level_0} does not have a unique solution, so we aim to supplement this system with the normalization condition. Let us add \eqref{eqnQSF:matrix-analytic_method_Ramaswami_formula} over all $i \ge 1$, which gives
\begin{equation}%
\pb_0 \sum_{i \ge 1} \Gamma_i^{(0)} + \sum_{i \ge 1} \sum_{j = 1}^i \pb_j \Gamma_{i - j} = \zerob.
\end{equation}%
Interchanging the double summation yields
\begin{equation}%
\pb_0 \sum_{i \ge 1} \Gamma_i^{(0)} + \sum_{j \ge 1} \pb_j \sum_{i \ge j} \Gamma_{i - j} = \zerob,
\end{equation}%
or
\begin{equation}%
\pb_0 \sum_{i \ge 1} \Gamma_i^{(0)} + \sum_{j \ge 1} \pb_j \sum_{i \ge 0} \Gamma_i = \zerob.
\end{equation}%
Post-multiplying by the inverse of $\sum_{i \ge 0} \Gamma_i$ (assuming it exists) and then post-multiplying by $\oneb$ gives
\begin{equation}%
\pb_0 \sum_{i \ge 1} \Gamma_i^{(0)} \Bigl( \sum_{i \ge 0} \Gamma_i \Bigr)^{-1} \oneb + \sum_{j \ge 1} \pb_j \oneb = 0,
\end{equation}%
and by the normalization condition $\sum_{i \ge 0} \pb_i \oneb = 1$ this leads to
\begin{equation}%
\pb_0 \sum_{i \ge 1} \Gamma_i^{(0)} \Bigl( \sum_{i \ge 0} \Gamma_i \Bigr)^{-1} \oneb + 1 - \pb_0 \oneb  = 0,
\end{equation}%
and therefore
\begin{equation}%
\pb_0 \Bigl( \oneb - \sum_{i \ge 1} \Gamma_i^{(0)} \Bigl( \sum_{i \ge 0} \Gamma_i \Bigr)^{-1} \oneb \Bigr) = 1. \label{eqnQSF:matrix-analytic_method_normalization_condition_p_0}
\end{equation}%
By substituting \eqref{eqnQSF:matrix-analytic_method_normalization_condition_p_0} for any of the equations in \eqref{eqnQSF:matrix-analytic_method_censored_balance_level_0} the value of $\pb_0$ can be uniquely determined.

Many of the equations required to determine the equilibrium probability vectors involve infinite sums that need to be truncated for actual computations. Furthermore, there needs to be a criterion for when the computations can be stopped. A natural stopping criterion is by examining the accumulated probability mass. In \cref{algQSF:matrix-analytic_method} we demonstrate the implementation of the matrix-analytic approach.

\begin{algorithm}%
\caption{Matrix-analytic method}%
\label{algQSF:matrix-analytic_method}%
\begin{algorithmic}[1]%
\State Pick $\epsilon$ small and positive and $K$ a large integer
\State Approximate $G$ using \cref{algQSF:matrix-analytic_successive_substitutions}
\State Approximate and store $\{ \Gamma_i^{(0)} \}_{0 \le i \le K}$ and $\{ \Gamma_i \}_{0 \le i \le K}$. Use only the first $K$ terms in each infinite sum
\State Calculate $\pb_0$ from \eqref{eqnQSF:matrix-analytic_method_censored_balance_level_0} and \eqref{eqnQSF:matrix-analytic_method_normalization_condition_p_0} by truncating each infinite sum to $K$ terms
\State Set $n = 0$
\While{$\sum_{m = 0}^n \pb_m \oneb < 1 - \epsilon$}
    \State Compute $\pb_{n + 1}$ from the truncated version of \eqref{eqnQSF:matrix-analytic_method_Ramaswami_formula}, which reads
    \begin{equation}%
    \pb_0 \Gamma_i^{(0)} + \sum_{j = i - K}^i \pb_j \Gamma_{i - j} = \zerob
    \end{equation}%
    \State Update $n = n + 1$
\EndWhile
\end{algorithmic}%
\end{algorithm}


\section{Spectral expansion method}%
\label{secQSF:spectral_expansion_method}%

The spectral expansion method only works for processes that are QSF to the right. This follows naturally from the fact that the spectral expansion method uses the eigenvalues and left eigenvectors of the rate matrix $R$ of the matrix-geometric method, which is also only applicable to processes that are QSF to the right. Recall that the aim of the spectral expansion method is to linearly combine basis solutions of the form
\begin{equation}%
\pb_i = \vca{y} x^{i - 1}, \quad i \ge 1. \label{eqnQSF:spectral_expansion_basis_solution}
\end{equation}%
By substituting \eqref{eqnQSF:spectral_expansion_basis_solution} into \eqref{eqnQSF:QSF_right_balance_equations_level_i} and dividing by common powers of $x$ we obtain
\begin{equation}%
\vca{y} \bigl( \La_1 + x \La_0 + x^2 \La_{-1} + x^3 \La_{-2} + \cdots \bigr) = \zerob. \label{eqnQSF:spectral_expansion_y_and_x}
\end{equation}%
As we have argued before in the QBD case, these equations have a non-zero solution for $\vca{y}$ if
\begin{equation}%
\det \bigl( \La_1 + x \La_0 + x^2 \La_{-1} + x^3 \La_{-2} + \cdots \bigr) = 0. \label{eqnQSF:spectral_expansion_determinant_equation}
\end{equation}%
Even though this determinant equation involves unbounded powers of $x$, it still provides us with exactly $r + 1$ solutions for $x$ that lie inside the open unit disk. For a detailed discussion as to why this is the case, see \cref{secQBD:spectral_expansion_method}. For numerical calculation purposes the determinant equation needs to be truncated. A rule of thumb could be to discard any terms with powers higher $3r$, for example.

Label the roots $x$ of \eqref{eqnQSF:spectral_expansion_determinant_equation} inside the closed unit disk as $x_0,x_1,\ldots,x_r$ and associate with the roots the corresponding non-zero eigenvectors $\vca{y}_0,\vca{y}_1,\ldots,\vca{y}_r$ found from \eqref{eqnQSF:spectral_expansion_y_and_x}. We assume that the eigenvectors are linearly independent, which is the case if roots $x_k$ are different, but independence can also be the case even if some of the roots are identical. Now, each solution $\pb_i = \vca{y}_k x_k^i, ~ k = 0,1,\ldots,r$ satisfies the global balance equations \eqref{eqnQSF:QSF_right_balance_equations_level_i}. We can linearly combine these solutions as
\begin{equation}%
\pb_i = \sum_{k = 0}^r \xi_k \vca{y}_k x_k^{i - 1}, \quad i \ge 1, \label{eqnQSF:spectral_expansion_linear_combination_of_basis_solutions}
\end{equation}%
where $\xi_k, ~ k = 0,1,\ldots,r$ are constants that we still need to determine. Substituting \eqref{eqnQSF:spectral_expansion_linear_combination_of_basis_solutions} into the balance equations for levels 0 and 1 gives
\begin{align}%
\pb_0 \La_0^{(0)} + \sum_{l = 0}^r \xi_l \vca{y}_l \sum_{k \ge 1} x_l^{k - 1} \La_{-k}^{(k)} &= \zerob, \label{eqnQSF:spectral_expansion_boundary_equation_level_0} \\
\pb_0 \La_1^{(0)} + \sum_{l = 0}^r \xi_l \vca{y}_l \sum_{k \ge 1}  x_l^{k - 1} \La_{1 - k} &= \zerob. \label{eqnQSF:spectral_expansion_boundary_equation_level_1}
\end{align}%
We can simplify the infinite sum in \eqref{eqnQSF:spectral_expansion_boundary_equation_level_1} by using \eqref{eqnQSF:spectral_expansion_y_and_x}:
\begin{equation}%
\pb_0 \La_1^{(0)} - \sum_{l = 0}^r \xi_l \vca{y}_l \frac{1}{x_l} \La_1 = \zerob.
\end{equation}%
Replacing one of the boundary equations with the normalization condition
\begin{equation}%
1 = \sum_{i \ge 0} \pb_i \oneb = \pb_0 \oneb + \sum_{i \ge 1} \sum_{k = 0}^r \xi_k \vca{y}_k x_k^{i - 1} \oneb = \pb_0 \oneb + \pb_1 \sum_{k = 0}^r \frac{\xi_k \vca{y}_k}{1 - x_k} \oneb
\end{equation}%
allow us to uniquely determine $\pb_0$ and $\pb_1$. We do, however, need to truncate the infinite series in \eqref{eqnQSF:spectral_expansion_boundary_equation_level_0} to be able to numerically determine $\pb_0$ and $\pb_1$.

\section{Takeaways}%
\label{secQSF:takeaways}%

This chapter exploited two structural properties we have encountered in earlier chapters. In \cref{ch:queues_and_transforms} we saw how constructing embedded Markov processes could help analyze processes with larger jumps to the left or to the right. In \cref{ch:quasi-birth--and--death_processes} we saw how the skip-free structure of birth--and--death processes could be extended to two-dimensional quasi-birth--and--death processes that remained skip-free in one dimension, but could skip in the other. This chapter combined both features in quasi-skip-free (QSF) processes that can skip states in both dimensions.

This additional flexibility comes with mathematical challenges, but the main methods used in earlier chapters again work, albeit in a more advanced form. The matrix-geometric and matrix-analytic method involved solutions of matrix equations with unbounded powers of $R$ and $G$. The spectral expansion method worked after considering the determinant equation over infinitely many powers. For pratical purposes, these techniques require truncation of the infinite sums to be amenable for numerical calculations. The three techniques together provide a good handle, both analytically and algorithmically, on analyzing the rich class of QSF processes.


\printendnotes


\part{Advanced processes}%

\chapter{Priority systems}%
\label{ch:single-server_priority}%

In this chapter we consider a priority system\endnote{Priority queueing systems have been studied for a long time. Cobham \cite{Cobham1954_Priority_assignment}, Davis \cite{Davis1966_M_n-M-c_N-class_prio_waiting_time_distributions} and Jaiswal \cite{Jaiswal1961_Preemptive_resume_priority,Jaiswal1968_Priority_queues} are among the first to formulate and analyze priority models. The earliest studies derive expressions for the marginal distribution of each class, whereas we are interested in the joint distribution of the underlying Markov. More recently, attention has been given to systems with an arbitrary number of priority classes in Sleptchenko et al  \cite{Sleptchenko2015_M-M-1_priority} or to multi-server priority systems in Wang, Baron, Scheller-Wolf \cite{Wang2015_M-M-c_2-class_prio} and Selen and Fralix \cite{Selen2017_MMc_prio}.}  with a single exponential server that serves jobs of high and low priority arriving according to Poisson streams. High-priority jobs are served before low-priority jobs, low-priority jobs are only served when there are no high-priority jobs in the system. Whenever a high-priority job enters the system and a low-priority job is in service, the low-priority job is removed from service and placed at the head of the line while the high-priority job is immediately taken into service. This priority rule is referred to as \textit{preemptive priority}: the high-priority job preempts the service of the low-priority job.

To determine the equilibrium distribution of the two-dimensional Markov process of the number of high- and low-priority jobs associated with this single-server priority system we take three approaches. The first approaches recursively solves the balance equations by using second-order difference equations. The second approach translates the balance equations into a quadratic (functional) equation to find the bivariate PGF of the joint equilibrium distribution of high- and low-priority jobs. The third approach casts the balance equations into a QBD matrix structure, and uses the matrix-geometric and matrix-analytic methods to find a product-form solution for the equilibrium distribution. Due to the structure of the transition rate diagram the elements of the infinite-dimensional rate matrix $R$ and auxiliary matrix $G$ are easily determined.


\section{Model description and balance equations}%
\label{secPRIO:model_description}%

We distinguish the two job classes by numbering them: class-1 jobs have preemptive priority over class-2 jobs. The arrival process of class-$n$ jobs is a Poisson process with rate $\la_n$. Each class-$n$ job requires an exponentially distributed service time with rate $\mu_n$. Since the service requirements are exponentially distributed and thus memoryless, the residual service time of a class-2 job that was removed from service again has an exponential distribution with the same rate $\mu_2$. Denote by $\rho_n \defi \la_n/\mu_n$ the amount of work brought into the system per time unit by class-$n$ jobs.

Let $X_n(t)$ be the number of class-$n$ jobs in the system at time $t$ and denote the state of the system by $X(t) \defi (X_1(t),X_2(t))$. Then $\{ X(t) \}_{t \ge 0}$ is a Markov process on the state space $\statespace \defi \Nat_0^2$. It is apparent from the transition rate diagram in \cref{figPRIO:transition_rate_diagram_single-server_prio} that the state space is irreducible. The system is stable if the total amount of work brought into the system per time unit is strictly less than one. We therefore assume
\begin{equation}%
\rho \defi \rho_1 + \rho_2 < 1,
\end{equation}%
to guarantee positive recurrence and the existence of the equilibrium distribution. Denote the equilibrium probability of being in state $(i,j)$ as $p(i,j)$.

\begin{figure}
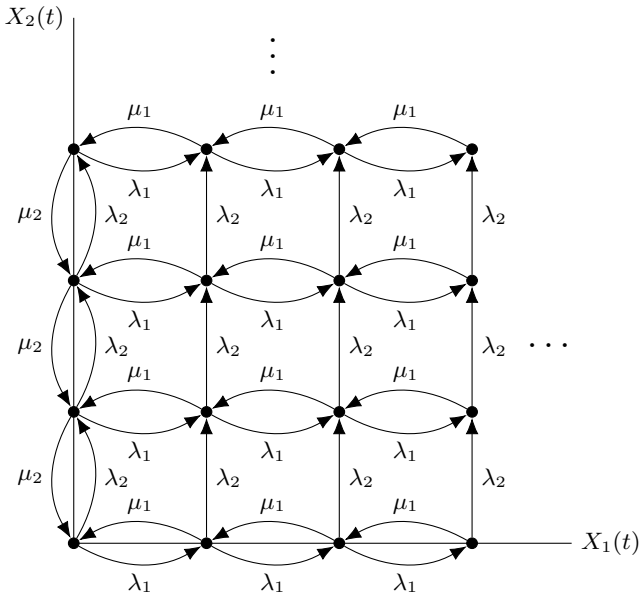
%
\centering%
\includestandalone{Chapters/PRIO/TikZFiles/transition_rate_diagram_single-server_prio}%
\caption{Transition rate diagram of the Markov process associated with the single-server priority system.}%
\label{figPRIO:transition_rate_diagram_single-server_prio}
\end{figure}%

The balance equations for the interior of the state space are given for $i,j \ge 1$ by
\begin{align}%
(\la + \mu_1) p(i,j) &= \la_1 p(i - 1,j) + \mu_1 p(i + 1,j) + \la_2 p(i,j - 1) \label{eqnPRIO:balance_equations_interior}
\end{align}%
with $\la \defi \la_1 + \la_2$. For the horizontal and vertical axis we have
\begin{align}%
(\la + \mu_1) p(i,0) &= \la_1 p(i - 1,0) + \mu_1 p(i + 1,0), \quad i \ge 1, \label{eqnPRIO:balance_equations_horizontal} \\
(\la + \mu_2) p(0,j) &= \la_2 p(0,j - 1) + \mu_2 p(0,j + 1) + \mu_1 p(1,j), \quad j \ge 1. \label{eqnPRIO:balance_equations_vertical}
\end{align}%
Finally, for the origin the balance equation is
\begin{equation}%
\la p(0,0) = \mu_1 p(1,0)+ \mu_2 p(0,1). \label{eqnPRIO:balance_equations_origin}
\end{equation}%
Since the server always works at unit rate whenever there is work to do, $p(0,0) = 1 - \rho$.


\section{Difference equations approach}%
\label{secPRIO:difference_equations}%

We exploit the upward structure of the transition rate diagram by first solving the balance equations for $j = 0$ and working our way up by increasing $j$ one step at a time. For $j = 0$, \eqref{eqnPRIO:balance_equations_horizontal} is a homogeneous difference equation of order two:
\begin{equation}%
\mu_1 p(i + 1,0) - (\la + \mu_1) p(i,0) + \la_1 p(i - 1,0) = 0, \quad i \ge 1. \label{eqnPRIO:diff_j=0}
\end{equation}%
We have the general solution
\begin{equation}%
p(i,0) = c_{0,0} x_1^i + c_{0,1} x_2^i, \quad i \ge 0 \label{eqnPRIO:diff_p(i,0)_guess}
\end{equation}%
where $x_1$ and $x_2$ are the roots of the quadratic equation
\begin{equation}%
\mu_1 x^2 - (\la + \mu_1) x + \la_1 = 0 \label{eqnPRIO:diff_roots_x}
\end{equation}%
satisfying $0 < x_1 < 1 < x_2$. Since the solution of \eqref{eqnPRIO:diff_p(i,0)_guess} needs to be normalized, and $\sum_{i \ge 1} x_2^i = \infty$, we set $c_{0,1} = 0$. This gives with $x \defi x_1$,
\begin{equation}%
p(i,0) = c_{0,0} x^i, \quad i \ge 0,
\end{equation}%
where we leave $c_{0,0}$ undetermined for now.

For $j = 1$, \eqref{eqnPRIO:balance_equations_interior} is a nonhomogeneous difference equation of order two:
\begin{equation}%
\mu_1 p(i + 1,1) - (\la + \mu_1) p(i,1) + \la_1 p(i - 1,1) = - \la_2 p(i,0), \quad i \ge 1 \label{eqnPRIO:diff_j=1}
\end{equation}%
and its solution will be a combination of the general solution of the homogeneous equation and a particular solution of the nonhomogeneous equation. Clearly, the solution of the homogeneous equation is $c_{1,0} x^i$, where $c_{1,0}$ is a constant that we determine later. For the solution to the nonhomogeneous equation we guess that it is of the form $p(i,1) = c_{1,1} \binom{i + 1}{1} x^i$. Substituting this guess into \eqref{eqnPRIO:diff_j=1} and dividing by $x^{i - 1}$ gives
\begin{equation}%
\mu_1 c_{1,1} \binom{i + 2}{1} x^2 - (\la + \mu_1) c_{1,1} \binom{i + 1}{1} x + \la_1 \binom{i}{1} = - \la_2 c_{0,0} x.
\end{equation}%
Since $x$ satisfies \eqref{eqnPRIO:diff_roots_x}, we obtain
\begin{equation}%
c_{1,1} = \frac{\la_2 c_{0,0}}{\la + \mu_1 - 2 \mu_1 x}
\end{equation}%
and
\begin{equation}%
p(i,1) = c_{1,0} \binom{i + 1}{0} x^i + c_{1,1} \binom{i + 1}{1} x^i.
\end{equation}%

For $j = 2$, \eqref{eqnPRIO:balance_equations_interior} gives
\begin{equation}%
\mu_1 p(i + 1,2) - (\la + \mu_1) p(i,2) + \la_1 p(i - 1,2) = - \la_2 p(i,1), \quad i \ge 1. \label{eqnPRIO:diff_j=2}
\end{equation}%
The solution of the homogeneous version of \eqref{eqnPRIO:diff_j=2} is $c_{2,0} x^i$, where $c_{2,0}$ is a constant that we determine later. Substituting the guess $p(i,2) = c_{2,1} \binom{i + 2}{1} x^i + c_{2,2} \binom{i + 2}{2} x^i$ into \eqref{eqnPRIO:diff_j=2} and dividing by $x^{i - 1}$ gives
\begin{align}%
&\mu_1 x^2 \Bigl( c_{2,1} \binom{i + 3}{1} + c_{2,2} \binom{i + 3}{2} \Bigr) - (\la + \mu_1) x \Bigl( c_{2,1} \binom{i + 2}{1} + c_{2,2} \binom{i + 2}{2} \Bigr) \notag \\
&\quad + \la_1 \Bigl( c_{2,1} \binom{i + 1}{1} + c_{2,2} \binom{i + 1}{2} \Bigr) = -\la_2 x \Bigl( c_{1,0} \binom{i + 1}{0} + c_{1,1} \binom{i + 1}{1} \Bigr). \label{eqnPRIO:diff_j=2_subs_1}
\end{align}%
We use
\begin{align*}%
\binom{i + 3}{1} &= \bigl( 1 + \frac{2}{i + 1} \bigr) \binom{i + 1}{1}, \quad \binom{i + 3}{2} = \bigl( 1 + \frac{4}{i} + \frac{2}{i(i + 1)} \bigr) \binom{i + 1}{2}, \\
\binom{i + 2}{1} &= \bigl( 1 + \frac{1}{i + 1} \bigr) \binom{i + 1}{1}, \quad \binom{i + 2}{2} = \bigl(1 + \frac{2}{i} \bigr) \binom{i + 1}{2},
\end{align*}%
and the fact that $x$ satisfies \eqref{eqnPRIO:diff_roots_x} to simplify \eqref{eqnPRIO:diff_j=2_subs_1} to
\begin{align}%
&c_{2,1} \frac{1}{i + 1} \binom{i + 1}{1} (2 \mu_1 x - (\la + \mu_1)) \notag \\
&+ c_{2,2} \binom{i + 1}{2} \Bigl( \bigl( \frac{4}{i} + \frac{2}{i(i + 1)} \bigr) \mu_1 x - \frac{2}{i} (\la + \mu_1) \Bigr) \notag \\
&= -\la_2 \Bigl( c_{1,0} \binom{i + 1}{0} + c_{1,1} \binom{i + 1}{1} \Bigr).
\end{align}%
Grouping terms in front of the binomial coefficients gives
\begin{align}%
&\binom{i + 1}{0} \bigl( c_{2,1} ( 2 \mu_1 x - (\la + \mu_1) ) + c_{2,2} \mu_1 x \bigr) + \binom{i + 1}{1} c_{2,2} \bigl( 2 \mu_1 x - (\la + \mu_1) \bigr) \notag \\
&= - \binom{i + 1}{0} c_{1,0} \la_1 - \binom{i + 1}{1} c_{1,1} \la_1.
\end{align}%
Matching the coefficients of the binomial coefficients finally shows that
\begin{equation}%
c_{2,2} = \frac{\la_2 c_{1,1}}{\la + \mu_1 - 2 \mu_1 x}, \quad c_{2,1} = \frac{\la_2 c_{1,0} + \mu_1 x c_{2,2}}{\la + \mu_1 - 2 \mu_1 x}.
\end{equation}%

Repeating this procedure leads to the general expression
\begin{equation}%
p(i,j) = \sum_{k = 0}^j c_{j,k} \binom{i + j}{k} x^i, \quad i,j \ge 0, \label{eqnPRIO:diff_p(i,j)}
\end{equation}%
where the coefficients satisfy the recursion, for $j \ge 1$,
\begin{align}%
c_{j,j} &= \frac{\la_2 c_{j - 1,j - 1}}{\la + \mu_1 - 2 \mu_1 x}, \label{eqnPRIO:diff_c_j,j} \\
c_{j,k} &= \frac{\la_2 c_{j - 1,k - 1} + \mu_1 x c_{j,k + 1}}{\la + \mu_1 - 2 \mu_1 x}, \quad 1 \le k \le j - 1. \label{eqnPRIO:diff_c_j,k}
\end{align}%
The coefficients $c_{j,0}, ~ j \ge 0$ still need to be determined. Since $p(0,0) = 1 - \rho$ we have $c_{0,0} = 1 - \rho$. Substituting \eqref{eqnPRIO:diff_p(i,j)} into the balance equation \eqref{eqnPRIO:balance_equations_origin} gives
\begin{equation}%
c_{1,0} = \frac{(\la - \mu_1 x)}{\mu_2} c_{0,0} - c_{1,1}. \label{eqnPRIO:diff_c_1,0}
\end{equation}%
From \eqref{eqnPRIO:balance_equations_vertical} we obtain, for $j \ge 1$,
\begin{align}%
c_{j + 1,0} &= \frac{1}{\mu_2} \sum_{k = 0}^j c_{j,k} \Bigl( \mu_1 x \binom{j + 1}{k} - (\la + \mu_2) \binom{j}{k} \Bigr) \notag \\
&\quad + \rho_2 \sum_{k = 0}^{j - 1} c_{j - 1,k} \binom{j - 1}{k} - \sum_{k = 1}^{j + 1} c_{j + 1,k} \binom{j + 1}{k}. \label{eqnPRIO:diff_c_j,0}
\end{align}%
We outline the computation of the coefficients in \cref{algPRIO:diff_computation_coefficients}.

\begin{algorithm}%
\caption{Calculating the coefficients}%
\label{algPRIO:diff_computation_coefficients}%
\begin{algorithmic}[1]%
\State Pick a positive integer $j_{\textup{max}}$
\State Set $c_{0,0} = 1 - \rho$
\State Calculate $c_{1,1}$ from \eqref{eqnPRIO:diff_c_j,j}
\State Calculate $c_{1,0}$ from \eqref{eqnPRIO:diff_c_1,0}
\For{$j = 2,3,\ldots,j_{\textup{max}}$}
    \State Calculate $c_{j,j}$ from \eqref{eqnPRIO:diff_c_j,j}
    \For{$k = j - 1,j - 2,\ldots,1$}
        \State Calculate $c_{j,k}$ from \eqref{eqnPRIO:diff_c_j,k}
    \EndFor
    \State Calculate $c_{j,0}$ from \eqref{eqnPRIO:diff_c_j,0}
\EndFor
\end{algorithmic}%
\end{algorithm}%


\section{Generating function approach}%
\label{secPRIO:pgf_approach}%

Define the bivariate PGF
\begin{equation}%
\PGF{x,y} \defi \sum_{i \ge 0} \sum_{j \ge 0} p(i,j) x^i y^j, \quad |x| \le 1, ~ |y| \le 1.
\end{equation}%
Note that $\PGF{x,0} = \sum_{i \ge 0} p(i,0) x^i$ and $\PGF{0,y} = \sum_{j \ge 0} p(0,j) y^j$ are the probability generating functions of the equilibrium probabilities of the states on the horizontal and vertical axis. Furthermore, $\PGF{x,1}$ and $\PGF{1,y}$ are the probability generating functions of the number of class-1 and class-2 jobs in the system, respectively.

We shall now perform a series of operations on the balance equations to obtain an expression for $\PGF{x,y}$. Multiply both sides of \eqref{eqnPRIO:balance_equations_interior} by $x^i y^j$ and sum over all $i,j \ge 1$ to obtain \eqref{eqnPRIO:pgf_balance_equations_summed_interior}. Multiply both sides of \eqref{eqnPRIO:balance_equations_horizontal} by $x^i$ and sum over all $i \ge 1$ to obtain \eqref{eqnPRIO:pgf_balance_equations_summed_horizontal}. Finally, multiply both sides of \eqref{eqnPRIO:balance_equations_vertical} by $y^j$ and sum over all $j \ge 1$ to obtain \eqref{eqnPRIO:pgf_balance_equations_summed_vertical}.
\begin{align}%
(\la + \mu_1) \sum_{i \ge 1} \sum_{j \ge 1} p(i,j) x^i y^j &= \la_1 \sum_{i \ge 1} \sum_{j \ge 1} p(i - 1,j) x^i y^j \notag \\
&\quad + \mu_1 \sum_{i \ge 1} \sum_{j \ge 1} p(i + 1,j) x^i y^j \notag \\
&\quad + \la_2 \sum_{i \ge 1} \sum_{j \ge 1} p(i,j - 1) x^i y^j, \label{eqnPRIO:pgf_balance_equations_summed_interior} \\
(\la + \mu_1) \sum_{i \ge 1} p(i,0) x^i &= \la_1 \sum_{i \ge 1} p(i - 1,0) x^i \notag \\
&\quad + \mu_1 \sum_{i \ge 1} p(i + 1,0) x^i, \label{eqnPRIO:pgf_balance_equations_summed_horizontal} \\
(\la + \mu_2) \sum_{j \ge 1} p(0,j) y^j &= \la_2 \sum_{j \ge 1} p(0,j - 1) y^j + \mu_2 \sum_{j \ge 1} p(0,j + 1) y^j \notag \\
&\quad + \mu_1 \sum_{j \ge 1} p(1,j) y^j.
\label{eqnPRIO:pgf_balance_equations_summed_vertical}
\end{align}%
Summing \eqref{eqnPRIO:pgf_balance_equations_summed_interior}--\eqref{eqnPRIO:pgf_balance_equations_summed_vertical} and \eqref{eqnPRIO:balance_equations_origin} and using simplifications based on the definition of $\PGF{x,y}$ such as
\begin{align}%
\sum_{i \ge 1} \sum_{j \ge 1} p(i,j) x^i y^j &= \sum_{i \ge 0} \sum_{j \ge 0} p(i,j) x^i y^j - \sum_{i \ge 0} p(i,0) x^i - \sum_{j \ge 0} p(0,j) y^j + p(0,0) \notag \\
&= \PGF{x,y} - \PGF{x,0} - \PGF{y,0} + \PGF{0,0},
\end{align}%
shows that $\PGF{x,y}$ satisfies the functional equation
\begin{equation}%
h_1(x,y) \PGF{x,y} = h_2(x,y) \PGF{0,y} + h_3(x,y) \PGF{0,0} \label{eqnPRIO:pgf_functional}
\end{equation}%
with
\begin{align}%
h_1(x,y) &\defi \la_1 x y (1 - x) + \la_2 x y (1 - y) - \mu_1 y (1 - x), \\
h_2(x,y) &\defi - \mu_1 y (1 - x) + \mu_2 x (1 - y), \\
h_3(x,y) &\defi - \mu_2 x (1 - y).
\end{align}%
The question is whether we can solve functional equation \eqref{eqnPRIO:pgf_functional}. Recall that $\PGF{0,0} = p(0,0) = 1 - \rho$.

\begin{remark}[Distribution of the number of class-1 jobs]%
Setting $y = 1$ in \eqref{eqnPRIO:pgf_functional} gives
\begin{equation}%
\PGF{x,1} = \frac{\PGF{0,1}}{1 - \rho_1 x},
\end{equation}%
where $\PGF{0,1}$ is the probability that there are no class-1 jobs in the system. Clearly, $\PGF{0,1} = 1 - \rho_1$ and therefore
\begin{equation}%
\PGF{x,1} = \frac{1 - \rho_1}{1 - \rho_1 x} = \sum_{i \ge 0} (1 - \rho_1) \rho_1^i x^i,
\end{equation}%
which shows that the number of class-1 jobs follows a geometric distribution with parameter $\rho_1$. Due to the preemptive priority, class-1 jobs do not have to wait for class-2 jobs and therefore experience the system as if it were a standard $M/M/1$ queue.
\end{remark}%

Now choose $x$ so that the left-hand side of \eqref{eqnPRIO:pgf_functional} vanishes to obtain an expression for $\PGF{0,y}$. For a fixed $y$ with $0 < |y| \le 1$, $h_1(x,y)$ is a second degree polynomial in $x$, and hence
\begin{equation}%
0 = h_1(x,y) \quad \Leftrightarrow \quad 0 = \rho_1 x^2 - \bigl(1 + \rho_1 + \frac{\la_2}{\mu_1}(1 - y) \bigr) x + 1. \label{eqnPRIO:pgf_kernel}
\end{equation}%

\begin{lemma}%
For a fixed $y$ with $0 < |y| \le 1$, \eqref{eqnPRIO:pgf_kernel} has a unique solution $x = \rootx(y)$ with $|x| \le 1$.
\end{lemma}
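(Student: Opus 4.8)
The plan is to apply Rouch\'e's theorem (\cref{thm:Rouche}) to the quadratic $f(x) \defi \rho_1 x^2 - \bigl(1 + \rho_1 + \frac{\la_2}{\mu_1}(1 - y)\bigr) x + 1$ on the closed unit disk, exactly as was done for the determinant equations in \cref{lemQTF:random_walk_location_zeros_PGF} and \cref{subsecQBD:Erlang_services}. Write $f(x) = g(x) + h(x)$ with $g(x) \defi -\bigl(1 + \rho_1 + \frac{\la_2}{\mu_1}(1 - y)\bigr) x$, the dominant linear term, and $h(x) \defi \rho_1 x^2 + 1$, the remainder. Since $g$ has a single zero (at $x = 0$) inside the unit disk, it suffices to show $|g(x)| > |h(x)|$ for $|x| = 1$; then $f = g + h$ also has exactly one zero in $|x| < 1$, and that zero is the desired $\rootx(y)$. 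Because $f$ is a genuine quadratic (its leading coefficient $\rho_1 > 0$), its two roots multiply to $1/\rho_1 > 1$ by Vieta, so the second root automatically lies outside the unit disk, confirming there is no ambiguity about uniqueness.

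First I would verify the strict inequality on $|x| = 1$ for $0 < |y| < 1$ (the strict-interior case), where the argument is cleanest: on $|x| = 1$ we have $|h(x)| = |\rho_1 x^2 + 1| \le \rho_1 + 1$, while $|g(x)| = |1 + \rho_1 + \frac{\la_2}{\mu_1}(1 - y)| \ge 1 + \rho_1 + \frac{\la_2}{\mu_1}\RealPart{1 - y} \ge 1 + \rho_1 + \frac{\la_2}{\mu_1}(1 - |y|) > 1 + \rho_1$, using $\RealPart{1-y} \ge 1 - |y| > 0$. Combining these gives $|g(x)| > |h(x)|$ on the whole circle. The stability condition $\rho = \rho_1 + \rho_2 < 1$ is not even needed here; only $\la_2 > 0$ and $0 < |y| < 1$ enter.

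The boundary case $|y| = 1$ is the main obstacle, since if $y = 1$ the inequality degenerates: at $x = 1$ we get $|g(1)| = 1 + \rho_1 = |h(1)|$, so Rouch\'e cannot be applied directly on $|x| = 1$. This is the same phenomenon encountered in \cref{lemQTF:random_walk_location_zeros_PGF} and in the Erlang-services example, and I would resolve it the same way: apply Rouch\'e on the slightly smaller circle $|x| = 1 - \epsilon$ and let $\epsilon \downarrow 0$. Concretely, for $y$ with $|y| = 1$, $y \ne 1$, one checks $\RealPart{1 - y} > 0$ still holds, so the strict-interior argument above already applies without modification; the only genuinely delicate point is $y = 1$ itself. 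For $y = 1$, $f(x) = \rho_1 x^2 - (1 + \rho_1)x + 1 = (1 - x)(1 - \rho_1 x)$, whose roots are $x = 1$ and $x = 1/\rho_1 > 1$; so the unique root in the \emph{closed} disk is $x = 1 = \rootx(1)$, which sits on the boundary rather than strictly inside. One should phrase the lemma's conclusion to accommodate this: for $0 < |y| < 1$ the root $\rootx(y)$ lies strictly inside $|x| < 1$, and as $y \to 1$ it approaches the boundary root $\rootx(1) = 1$; the map $y \mapsto \rootx(y)$ is analytic on the open disk and continuous up to the boundary, by analytic dependence of simple roots on parameters (the discriminant of $f$ does not vanish for $|y| \le 1$ except possibly at isolated points, which one checks separately). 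I would close by noting that $\rootx(y)$ is the root obtained by taking the branch of the quadratic formula that stays bounded, namely the one with the minus sign in front of the square root, so $\rootx(y) = \frac{1}{2\rho_1}\Bigl(1 + \rho_1 + \frac{\la_2}{\mu_1}(1-y) - \sqrt{\bigl(1 + \rho_1 + \frac{\la_2}{\mu_1}(1-y)\bigr)^2 - 4\rho_1}\Bigr)$, with the square root chosen to be positive for real $y < 1$.
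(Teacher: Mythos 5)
Your proof is correct and is built on the same Rouch\'e decomposition the paper uses, namely $f(x) = g(x) + h(x)$ with $g(x) = -\bigl(1 + \rho_1 + \frac{\la_2}{\mu_1}(1 - y)\bigr)x$ and $h(x) = \rho_1 x^2 + 1$; the difference is in how the boundary $|y| = 1$ is handled, and your route is arguably tidier. The paper lower-bounds $|g(x)|$ on $|x|=1$ by the crude reverse triangle inequality $\bigl|1 + \rho_1 + \frac{\la_2}{\mu_1} - \frac{\la_2}{\mu_1}y\bigr| \ge 1 + \rho_1 + \frac{\la_2}{\mu_1}(1 - |y|)$, which depends only on $|y|$ and therefore degenerates to equality with $|h(1)|$ for \emph{every} $y$ on the unit circle; it then resolves this by evaluating the bounding functions on the dilated circle $|x| = 1 + \epsilon$, comparing Taylor coefficients in $|x|$ at $1$ (using $\rho_1 < 1$ to get $1 + \rho_1 > 2\rho_1$), and letting $\epsilon \downarrow 0$. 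You instead bound $|g|$ via $|z| \ge \RealPart{z}$, giving $|g(x)| \ge 1 + \rho_1 + \frac{\la_2}{\mu_1}\RealPart{1-y}$, which is sharp enough to make Rouch\'e go through on $|x|=1$ directly for every $y$ in the closed disk except $y = 1$ itself; for $y = 1$ you factor $f(x) = (1-x)(1-\rho_1 x)$ by inspection and use $\rho_1 < 1$ to identify $x = 1$ as the unique root in $|x| \le 1$. This sidesteps the circle-expansion argument entirely, isolates the genuinely degenerate point $y = 1$ (rather than treating all of $|y| = 1$ as problematic, as the paper's modulus bound forces it to), and makes it transparent that the root lands on the boundary only at $y = 1$. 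Your observation that stability is not needed in the strict-interior case, and the Vieta remark that the two roots multiply to $1/\rho_1$, are both correct and good sanity checks, though they are not strictly necessary for the proof. The concluding material on analytic dependence and branch choice goes beyond what the lemma asserts and could be trimmed.
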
%

\begin{proof}%
For now, fix a $y$ with $0 < |y| < 1$. We use Rouch\'e's theorem, see \cref{thm:Rouche}, to prove that \eqref{eqnPRIO:pgf_kernel} has a unique solution within the closed unit disk. Denote the closed unit disk by $\closedunitdisc$ and the unit circle by $\unitcircle$. Define the functions
\begin{equation}%
f_1(x,y) \defi - \bigl(1 + \rho_1 + \frac{\la_2}{\mu_1}(1 - y) \bigr) x, \quad g(x) \defi \rho_1 x^2 + 1.
\end{equation}%
Clearly, $f_1(x,y)$ has only one root $x = 0$ in $\closedunitdisc$. We aim to show that
\begin{equation}%
|f_1(x,y)| > |g(x)|, \quad x \in \unitcircle,
\end{equation}%
so that it follows from Rouch\'e's theorem that $f_1(x,y) + g(x)$ also has one root in $\closedunitdisc$.

Then,
\begin{align}%
|f_1(x,y)| &= \bigl| 1 + \rho_1 + \frac{\la_2}{\mu_1} - \frac{\la_2}{\mu_1} y \bigr| |x| \notag \\
&\ge \bigl( 1 + \rho_1 + \frac{\la_2}{\mu_1} - \frac{\la_2}{\mu_1} |y| \bigr) |x| \ifed f_2(|x|,|y|)
\end{align}%
and
\begin{equation}%
|g(x)| = | \rho_1 x^2 + 1 | \le \rho_1 |x|^2 + 1 = g(|x|).
\end{equation}%
It suffices to show that $f_2(|x|,|y|) > g(|x|)$ for $|x| = 1$ and $0 < |y| < 1$, which is clearly the case.

However, when $|x| = |y| = 1$ we have $f_2(1,1) = g(1)$. In order to use Rouch\'e's theorem for that particular case, we essentially evaluate $f_2(|x|,1)$ and $g(|x|)$ on the circle $|x| = 1 + \epsilon$ with $\epsilon$ small and positive. To accomplish this, we use the Taylor expansion $f_2(1,1 + \epsilon) = f_2(1,1) + \epsilon f_2'(1,1) + o(\epsilon)$ and verify that $f_2(1,1 + \epsilon) > g(1 + \epsilon)$. Since $f_2(1,1) = g(1)$ we are left to show that $f_2'(1,1) > g'(1)$. Now,
\begin{equation}%
f_2'(1,1) = \frac{\dinf}{\dinf |x|} f_2(|x|,1) \Big\vert_{|x| = 1} = 1 + \rho_1
\end{equation}%
and
\begin{equation}%
g'(1) = \frac{\dinf}{\dinf |x|} g(|x|) \Big\vert_{|x| = 1} = 2\rho_1,
\end{equation}%
which proves $f_2'(1,1) > g'(1)$ since $\rho_1 < 1$. So, for sufficiently small $\epsilon > 0$ we have that $f_2(|x|,1) > g(|x|)$ for $|x| \in (1,1 + \epsilon]$, which proves the claim.
\end{proof}%

The unique solution $x = \rootx(y)$ within the closed unit disk can easily be computed from the second-degree polynomial \eqref{eqnPRIO:pgf_kernel}:
\begin{equation}%
\rootx(y) = \frac{\la_1 + \mu_1 + \la_2(1 - y) - \sqrt{(\la_1 + \mu_1 + \la_2(1 - y))^2 - 4 \la_1 \mu_1}}{2\la_1}. \label{eqnPRIO:pgf_root_xi(y)}
\end{equation}%
We proceed by plugging $x = \rootx(y)$ and $\PGF{0,0} = 1 - \rho$ into \eqref{eqnPRIO:pgf_functional} to obtain
\begin{equation}%
\PGF{0,y} = \frac{-h_3(\rootx(y),y) \PGF{0,0}}{h_2(\rootx(y),y)} = \frac{\mu_2 \rootx(y)(1 - y)(1 - \rho)}{-\mu_1 y (1 - \rootx(y)) + \mu_2 \rootx(y)(1 - y)}
\end{equation}%
so that \eqref{eqnPRIO:pgf_kernel} gives the expression
\begin{equation}%
\PGF{x,y} = \frac{1 - \rho}{h_1(x,y)} \Bigl( \frac{h_2(x,y) \mu_2 \rootx(y)(1 - y)}{-\mu_1 y (1 - \rootx(y)) + \mu_2 \rootx(y)(1 - y)} + h_3(x,y) \Bigr). \label{eqnPRIO:pgf_P(x,y)}
\end{equation}%

We have converted the balance equations \eqref{eqnPRIO:balance_equations_interior}--\eqref{eqnPRIO:balance_equations_origin} into the functional equation \eqref{eqnPRIO:pgf_functional} and found a solution for $\PGF{x,y}$ in \eqref{eqnPRIO:pgf_P(x,y)}. So we took the direct, explicit relations between the equilibrium probabilities to the transform domain to find an indirect description of the equilibrium probabilities in terms of a complex-valued function $\PGF{x,y}$. Given this bivariate PGF of the joint equilibrium distribution, we can use \cref{algQTF:numerical_inversion_bivariate_PGF} to numerically invert $\PGF{x,y}$ to obtain any $p(i,j)$. In \cref{tblPRIO:numerical_inversion_pgf} we demonstrate how the algorithm parameters $j_1$ and $j_2$ influence the accuracy of the solution and the computation time required to obtain this solution.

\begin{table}%
\centering%
\begin{tabular}{*{4}{c}}
$j$ & $\bar{p}(0,0)$ & error bound \eqref{eqnQTF:bivariate_PGF_numerical_inversion_error_bound} & time (\si{\micro\second}) \\
\hline
1  & 0.1899979672       & 0.7777777777 & 9.36 \\
2  & 0.1694992925       & 0.1377777777 & 29.6 \\
3  & 0.1671319561       & 0.0319979843 & 56.2 \\
5  & 0.1666811419       & 0.0019559897 & 151 \\
10 & 0.1666666694       & 0.0000019073 & 563 \\
$\infty$ & 0.1666666666 &              &
\end{tabular}%
\caption{Comparing $\approximate{p}(0,0)$ obtained from \protect\cref{algQTF:numerical_inversion_bivariate_PGF} with the exact expression. Parameter values are $\la_1 = 1$, $\la_2 = 2$, $\mu_1 = 3$ and $\mu_2 = 4$. Algorithm settings are $\radiusNI_1(0) = \radiusNI_2(0) = 0.5$ and $j_1 = j_2 = j$.}
\label{tblPRIO:numerical_inversion_pgf}
\end{table}%

\begin{remark}[Distribution of the number of class-2 jobs]\label{remPRIO:distribution_class-2_jobs}%
Setting $x = 1$ in \eqref{eqnPRIO:pgf_P(x,y)} gives
\begin{equation}%
\PGF{1,y} = \frac{1 - \rho}{\rho_2} \frac{\mu_1 (1 - \rootx(y))}{-\mu_1 y (1 - \rootx(y)) + \mu_2 \rootx(y) (1 - y)}. \label{eqnPRIO:pgf_P(1,y)}
\end{equation}%
Denote the equilibrium distribution of the number of class-2 jobs by $p_2(\cdot)$ so that $\PGF{1,y} = \sum_{k \ge 0} p_2(k) y^k$. In \cref{secPRIO:relations_busy_period_transforms} we will see that the root $\rootx(y)$ is a PGF: $\rootx(y) = \sum_{k \ge 0} g_k y^k$ with $\{ g_k \}_{k \ge 0}$ the elements of the auxiliary matrix $G$ for which we have an exact expression, see \cref{secPRIO:matrix-geometric_and_matrix-analytic_methods} and \cref{propPRIO:first_passage_probabilities}. We derive a recursion for the probabilities $p_2(\cdot)$ by matching coefficients of the generating functions on both sides of \eqref{eqnPRIO:pgf_P(1,y)}. Substituting in \eqref{eqnPRIO:pgf_P(1,y)} the series expression $\rootx(y) = \sum_{k \ge 0} g_k y^k$ and $\PGF{1,y} = \sum_{k \ge 0} p_2(k) y^k$ gives
\begin{align}%
&\Bigl( \mu_2 g_0 - \mu_1 y + \sum_{l \ge 1} \bigl( \mu_2 g_l + (\mu_1 - \mu_2) g_{l - 1} \bigr) y^l \Bigr) \sum_{k \ge 0} p_2(k) y^k \notag \\
&= \frac{1 - \rho}{\rho_2} \mu_1 \Bigl( 1 - \sum_{l \ge 0} g_l y^l \Bigr). \label{eqnPRIO:pgf_P(1,y)_recursion_1}
\end{align}%
For convenience, define
\begin{align}%
a_l &\defi \begin{cases}%
\frac{1 - \rho}{\rho_2} \mu_1 (1 - g_0), & l = 0, \\
- \frac{1 - \rho}{\rho_2} \mu_1 g_l, & l \ge 1,
\end{cases}%
\\
b_l &\defi \begin{cases}%
\mu_2 g_0, & l = 0, \\
\mu_2 g_1 + (\mu_1 - \mu_2) g_0 - \mu_1, & l = 1, \\
\mu_2 g_l + (\mu_1 - \mu_2) g_{l - 1}, & l \ge 2,
\end{cases}%
\end{align}%
so that \eqref{eqnPRIO:pgf_P(1,y)_recursion_1} becomes
\begin{equation}%
\sum_{k \ge 0} \sum_{l \ge 0} b_l p_2(k) y^{k + l} = \sum_{m \ge 0} \sum_{n = 0}^m b_{m - n} p_2(n) y^m = \sum_{m \ge 0} a_m y^m
\end{equation}%
which, by coefficient matching, leads to the recursion
\begin{align}%
b_0 p_2(0) &= a_0, \label{eqnPRIO:pgf_p_2(0)}\\
b_0 p_2(m) &= a_m - \sum_{n = 0}^{m - 1} b_{m - n} p_2(n), \quad m \ge 1. \label{eqnPRIO:pgf_p_2(m)}
\end{align}%

The numerical inversion algorithm shown in \cref{algQTF:numerical_inversion_univariate_PGF} can also be used to determine the equilibrium probabilities. We show the equilibrium distribution for an example in \cref{figPRIO:equilibrium_distribution_class-2_jobs}. From the ratio $p_2(j + 1)/p_2(j)$ in \cref{figPRIO:equilibrium_distribution_class-2_jobs} it is clear that the distribution of the number of class-2 jobs is not geometric. \qedhere

\begin{figure}
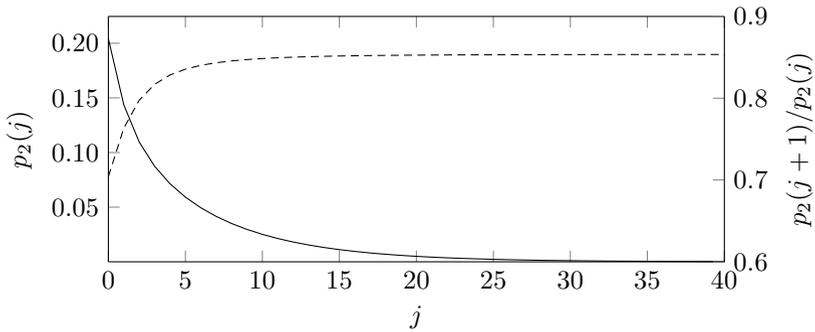
%
\centering%
\includestandalone{Chapters/PRIO/TikZFiles/equilibrium_distribution_class-2_jobs}%
\caption{Equilibrium distribution of the number of class-2 jobs determined using \protect\cref{algQTF:numerical_inversion_univariate_PGF}. Parameter values are $\la_1 = 1$, $\la_2 = 2$, $\mu_1 = 3$ and $\mu_2 = 4$. Algorithm setting is $d = 10$. Solid line belongs to the left $y$-axis.}%
\label{figPRIO:equilibrium_distribution_class-2_jobs}
\end{figure}%
\end{remark}%


\section{QBD approaches}%
\label{secPRIO:matrix-geometric_and_matrix-analytic_methods}%

The two-dimensional Markov process $\{ X(t) \}_{t \ge 0}$ is a QBD process with levels $\lvl{i} = \{ (i,0),(i,1),\ldots \}, ~ i \ge 0$ and an infinite number of phases per level. The infinite-dimensional transition matrix $Q$ can be partitioned into levels as
\begin{equation}%
Q = \begin{bmatrix}%
\La_{0}^{(0)} & \La_{1}  \\
\La_{-1}      & \La_{0}  & \La_1 \\
              & \La_{-1} & \La_0    & \La_1  \\
              &          & \La_{-1} & \La_0  & \La_1  \\
              &          &          & \ddots & \ddots & \ddots \\
\end{bmatrix},%
\end{equation}%
where $\La_{-1} = \mu_1 \I$, $\La_1 = \la_1 \I$, with $\I$ the infinite-dimensional identity matrix, and
\begin{equation}%
\La_{0} = - (\la + \mu_1) \I + \begin{bmatrix}%
0 & \la_2 \\
  & 0     & \la_2 \\
  &       & \ddots & \ddots
\end{bmatrix}
\end{equation}%
and
\begin{equation}%
\La_{0}^{(0)} = -\la \I + \begin{bmatrix}%
0     & \la_2 \\
\mu_2 & -\mu_2 & \la_2   \\
      & \mu_2  & -\mu_2 & \la_2 \\
      &        & \ddots & \ddots & \ddots
\end{bmatrix}.%
\end{equation}%

We first use the matrix-geometric method\endnote{Miller \cite{Miller1981_M-M-1_2-class_prio_matrix-geometric} was the first to model the single-server priority system with two classes as a QBD process. He exploits the structure of the transition rate diagram to determine the joint equilibrium distribution.} to determine the equilibrium distribution. Define the vectors $\pb_i \defi \begin{bmatrix} p(i,0) & p(i,1) & \cdots \end{bmatrix}$. The rate matrix $R$ satisfies the matrix-quadratic equation
\begin{equation}%
R^2 \La_{-1} + R\La_0 + \La_1 = 0 \label{eqnPRIO:mgm_R}
\end{equation}%
and the equilibrium probabilities follow from
\begin{equation}%
\pb_{i + 1} = \pb_i R, \quad i \ge 0, \label{eqnPRIO:mgm_p_i}
\end{equation}%
where the boundary probabilities are computed as
\begin{equation}%
\pb_0 \La_0^{(0)} + \pb_1 \La_{-1} = \zerob \quad \Leftrightarrow \quad \pb_0 \bigr( \La_0^{(0)} + R \La_{-1} \bigl) = \zerob \label{eqnPRIO:mgm_p_0}
\end{equation}%
and the normalization condition is $\pb_0 (\I - R)^{-1} \oneb = 1$, where $\oneb$ is a vector of ones.

At this point we can already obtain highly accurate approximations of the equilibrium distribution by truncating all matrices in \eqref{eqnPRIO:mgm_R} to size $n \times n$ with $n$ large and use successive substitutions to determine $R$ (see \cref{algQBD:matrix-geometric_successive_substitutions}). However, we can do better than that by exploiting the specific structure of the transition rate diagram.

Since the transitions within levels $i \ge 1$ are strictly upward in the vertical direction and the transition rate diagram is homogeneous, we already know from the probabilistic interpretation of the elements of the rate matrix (see \cref{secQBD:matrix-geometric_method}) that
\begin{equation}%
R = \begin{bmatrix}%
r_0 & r_1 & r_2 & \cdots \\
    & r_0 & r_1 & \cdots \\
    &     & r_0 & \cdots \\
    &     &     & \ddots
\end{bmatrix}.%
\end{equation}%
It is precisely this structure that makes it possible to solve for the elements of $R$ using a recursive procedure. That is, component-wise the equations \eqref{eqnPRIO:mgm_R} read
\begin{align}%
\mu_1 r_0^2 - (\la + \mu_1)r_0 + \la_1 &= 0, \label{eqnPRIO:mgm_r_0} \\
\mu_1 \sum_{l = 0}^{k} r_{k - l} r_l - (\la + \mu_1) r_k + \la_2 r_{k - 1}  &= 0, \quad k \ge 1. \label{eqnPRIO:mgm_r_k}
\end{align}%
Since $R$ is the minimal non-negative solution of \eqref{eqnPRIO:mgm_R} we know that the solution of \eqref{eqnPRIO:mgm_r_0} is given by
\begin{equation}%
r_0 = \frac{\la_1 + \mu_1 + \la_2 - \sqrt{(\la_1 + \mu_1 + \la_2)^2 - 4 \la_1 \mu_1}}{2 \mu_1}. \label{eqnPRIO:mgm_r_0_explicit}
\end{equation}%
Substituting the solution for $r_0$ into \eqref{eqnPRIO:mgm_r_k} gives the recursion
\begin{equation}%
r_k = \frac{\la_2 r_{k - 1} + \mu_1 \sum_{l = 1}^{k - 1} r_{k - l} r_l}{\sqrt{(\la + \mu_1)^2 - 4 \la_1 \mu_1}}, \quad k \ge 0, \label{eqnPRIO:mgm_r_k_explicit}
\end{equation}%
where the empty sum $\sum_{j = 1}^0$ is zero.

The boundary probabilities can also be determined recursively due to the specific form of $R$ and $p(0,0) = 1 - \rho$. In particular,
\begin{align}%
\mu_2 p(0,1) &= (\la - \mu_1 r_0) p(0,0) = (\la - \mu_1 r_0) (1 - \rho), \label{eqnPRIO:mgm_p(0,1)} \\
\mu_2 p(0,j + 1) &= \Bigl( (\la + \mu_2)p(0,j) - \la_2 p(0,j - 1) \notag \\
&\hspace{0.10\linewidth} - \mu_1 \sum_{k = 0}^j p(0,j - k) r_k \Bigr), \quad j \ge 1. \label{eqnPRIO:mgm_p(0,j)}
\end{align}%
A similar simplification as for $p(0,j)$ is obtained for the equilibrium probabilities $p(i,j)$:
\begin{equation}%
p(i + 1,j) = \sum_{k = 0}^j p(i,j - k) r_k. \label{eqnPRIO:mgm_p(i,j)}
\end{equation}%

The numerical implementation of the matrix-geometric method is explained in \cref{algPRIO:matrix-geometric_method}. The positive integers $i_{\textup{max}}$ and $j_{\textup{max}}$ determine the subset of the state space $\statespace' \defi \{ (i,j) \in \statespace : 0 \le i \le i_{\textup{max}}, ~ 0 \le j \le j_{\textup{max}} \}$ for which the equilibrium probabilities are determined exactly.

\begin{algorithm}%
\caption{Numerical implementation matrix-geometric method}%
\label{algPRIO:matrix-geometric_method}%
\begin{algorithmic}[1]%
\State Pick positive integers $i_{\textup{max}}$ and $j_{\textup{max}}$
\State Set $r_0$ according to \eqref{eqnPRIO:mgm_r_0_explicit}
\For{$k = 1,2,\ldots,j_{\textup{max}}$}
    \State Determine $r_k$ from \eqref{eqnPRIO:mgm_r_k_explicit}
\EndFor
\State Set $p(0,0) = 1 - \rho$ and determine $p(0,1)$ from \eqref{eqnPRIO:mgm_p(0,1)}
\For{$j = 1,2,\ldots,j_{\textup{max}} - 1$}
    \State Determine $p(0,j + 1)$ from \eqref{eqnPRIO:mgm_p(0,j)}
\EndFor
\For{$i = 0,1,\ldots,i_{\textup{max}} - 1$}
    \For{$j = 0,1,\ldots,j_{\textup{max}}$}
        \State Determine $p(i + 1,j)$ from \eqref{eqnPRIO:mgm_p(i,j)}
    \EndFor
\EndFor
\end{algorithmic}%
\end{algorithm}%

For a QBD process, the matrix-geometric and matrix-analytic method are nearly identical. For that reason we do not describe how to determine the auxiliary matrix $G$ of the matrix-analytic method. Instead, we focus on the probabilistic interpretation of the elements of the $G$ (see \cref{secQBD:matrix-analytic_method}) to immediately derive a recursion for the equilibrium probabilities.\endnote{Sleptchenko et al \cite{Sleptchenko2015_M-M-1_priority} developed the method that exploits the transition structure to determine the elements $\{ g_k \}_{k \ge 0}$ and use these elements to write the excursions in two ways as we have done in \cref{secPRIO:matrix-geometric_and_matrix-analytic_methods} for the matrix-analytic method.}

The auxiliary matrix $G$ is given by
\begin{equation}%
G = \begin{bmatrix}%
g_0 & g_1 & g_2 & \cdots \\
    & g_0 & g_1 & \cdots \\
    &     & g_0 & \cdots \\
    &     &     & \ddots
\end{bmatrix}.%
\end{equation}%
The element $g_k$ can be interpreted as a \textit{first passage probability}: it is the probability that, starting at level $i \ge 1$ in state $(i,j)$, the first passage to level $i - 1$ occurs in state $(i - 1,j + k)$. The first passage probabilities do not depend on the starting state due to the homogeneous transition behavior in the interior of the state space. The $\{ g_k \}_{k \ge 0}$ are obtained from a recursion relation similar to the one for $\{ r_k \}_{k \ge 0}$ and given by
\begin{align}%
g_0 &= \frac{\mu_1}{\la + \mu_1} + \frac{\la_1}{\la + \mu_1} g_0^2, \label{eqnPRIO:fpp_g_0}\\
g_k &= \frac{\la_2}{\la + \mu_1} g_{k - 1} + \frac{\la_1}{\la + \mu_1} \sum_{l = 0}^k g_{k - l} g_l, \quad k \ge 1. \label{eqnPRIO:fpp_g_k}
\end{align}%

We now use the first passage probabilities to derive an expression for the equilibrium probabilities in the interior of the state space. Let an \textit{excursion} refer to a sample path of the Markov process that starts in level $\lvl{i}$, reaches levels higher than $\lvl{i}$ and ends on first passage to level $\lvl{i}$. The number of excursions per time unit that ends in state $(i,j)$ is $p(i + 1,j) \mu_1$. Alternatively, this rate is also given by the number of excursions starting from level $\lvl{i}$ per time unit that end in state $(i,j)$. The number of excursions per time unit that starts in state $(i,j - k)$ is $p(i,j - k) \la_1$, a fraction $g_k$ of which ends its excursion in $(i,j)$. Since these two rates are equal, we obtain the recursion
\begin{equation}%
\mu_1 p(i + 1,j) = \la_1 \sum_{k = 0}^j p(i,j - k) g_k, \quad i,j \ge 0. \label{eqnPRIO:fpp_p(i,j)}
\end{equation}%
It remains to determine the equilibrium probabilities on the vertical boundary. To that end, we censor the Markov process to $\lvl{0}$. This leads to the transition rate diagram in \cref{figPRIO:transition_rate_diagram_single-server_prio_censored}. For the censored Markov process we can simply equate the number of transitions per time unit that enter and leave the set $\{ (0,0),(0,1),\ldots,(0,j) \}$, which yields
\begin{equation}%
\mu_2 p(0,j + 1) = \la_2 p(0,j) + \la_1 \sum_{k = 0}^j p(0,j - k) \Bigl( 1 - \sum_{l = 0}^{k} g_l \Bigr). \label{eqnPRIO:fpp_p(0,j)}
\end{equation}%
Starting from $p(0,0) = 1 - \rho$, all equilibrium probabilities can be obtained through \eqref{eqnPRIO:fpp_p(i,j)}--\eqref{eqnPRIO:fpp_p(0,j)}.

\begin{figure}
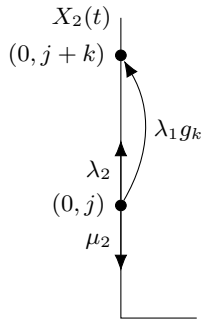
%
\centering%
\includestandalone{Chapters/PRIO/TikZFiles/transition_rate_diagram_single-server_prio_censored}%
\caption{Transition rate diagram of the Markov process censored on $\lvl{0}$.}%
\label{figPRIO:transition_rate_diagram_single-server_prio_censored}
\end{figure}%

\begin{remark}[Alternative levels]%
An alternative choice for a level is the vertically aligned set of states $\lvl{j} = \{ (0,j),(1,j),\ldots \}, ~ j \ge 0$. We write variables with a bar to reflect that they belong to the alternative choice for the levels.

The transition matrix $\bar{Q}$ can be partitioned into these levels as
\begin{equation}%
\bar{Q} = \begin{bmatrix}%
\bar{\La}_{0}^{(0)} & \bar{\La}_{1}  \\
\bar{\La}_{-1}      & \bar{\La}_{0}  & \bar{\La}_1    \\
                    & \bar{\La}_{-1} & \bar{\La}_0    & \bar{\La}_1 \\
                    &                & \bar{\La}_{-1} & \bar{\La}_0 & \bar{\La}_1  \\
                    &                &                & \ddots      & \ddots & \ddots \\
\end{bmatrix}.%
\end{equation}%
The matrix $\bar{\La}_{-1}$ has zeroes everywhere except $(\bar{\La}_{-1})_{0,0} = \mu_2$. The Markov process can only go from level $\lvl{j + 1}$ to level $\lvl{j}$ by using the transition from $(0,j + 1)$ to $(0,j)$. Due to the probabilistic interpretation of the elements of the auxiliary matrix $\bar{G}$ we can immediately write that
\begin{equation}%
\bar{G} = \begin{bmatrix}%
1 & 0 & \cdots \\
1 & 0 & \cdots \\
\vdots & \vdots & \ddots
\end{bmatrix}.%
\end{equation}%

Denote the vectors $\bar{\pb}_j \defi \begin{bmatrix} p(0,j) & p(1,j) & \cdots \end{bmatrix}$. By censoring the Markov process to the set of states $\lvl{0} \cup \lvl{1} \cup \cdots \cup \lvl{j}$ we can write the balance equations for level $\lvl{j}$ as
\begin{equation}%
\bar{\pb}_{j - 1} \bar{\La}_1 + \bar{\pb}_j \bigl( \bar{\La}_0 + \bar{\La}_1 \bar{G} \bigr) = \zerob, \quad j \ge 1,
\end{equation}%
Finally, censoring the Markov process to level $\lvl{0}$ shows that
\begin{equation}%
\bar{\pb}_{0} \bigl( \bar{\La}_0^{(0)} + \bar{\La}_1 \bar{G} \bigr) = \zerob.
\end{equation}%
These balance equations can be solved recursively in essentially the same way as the difference equations approach of \cref{secPRIO:difference_equations} solves the balance equations.
\end{remark}%


\section{Busy period transforms}%
\label{secPRIO:relations_busy_period_transforms}%

Key elements of both the generating function approach and the QBD approach are related to busy periods in a single-server system. We explain what a busy period is, derive its LST and mention where busy periods play a role in both approaches.

A busy period in a single-server system is a length of time that starts when a first job arrives to an empty system and ends when a departing job leaves the system empty again. Let $B_{\la,\mu}$ denote the length of a busy period in an $M/M/1$ queue with arrival rate $\la$ and service rate $\mu$. In \cref{exBD:MM1_busy_period} we have proven that the LST of $B_{\la,\mu}$---now denoted by $\LST{\la,\mu}{\LSTarg}$---is given by
\begin{equation}%
\LST{\la,\mu}{\LSTarg} = \frac{\la + \mu + \LSTarg - \sqrt{(\la + \mu + \LSTarg)^2 - 4\la\mu}}{2\la}, \quad \RealPart{\LSTarg} > 0.
\end{equation}%

We now mention a few relations between the busy period $B_{\la,\mu}$ and its transform $\LST{\la,\mu}{\LSTarg}$ and key elements of the approaches used in this chapter to determine the equilibrium distribution.

The unique root $\rootx(y)$ of the generating function approach can be expressed in terms of the busy period transform as
\begin{equation}%
\rootx(y) = \LST{\la_1,\mu_1}{\la_2(1 - y)}.
\end{equation}%
The root has an interpretation in terms of the PGF of the number of class-2 jobs $A$ that arrives during a busy period of class-1 jobs. Condition on the length $S_1$ of the service of the first class-1 job to obtain
\begin{equation}%
\E{y^A} = \int_0^\infty \E{y^A \mid S_1 = t} f_{S_1}(t) \, \dinf t. \label{eqnPRIO:pgf_number_class-2_arrivals_BP_class-1_1}
\end{equation}%
A number $K_2$ of class-2 jobs joins the queue during this first service, but the busy period of class-1 jobs might not have ended yet. In particular, during this first service a number $K_1$ of class-1 jobs has joined the queue and each class-1 job induces a busy period of class-1 jobs that generates a number of class-2 arrivals, which is statistically identical to $A$. Note that $K_1 \sim \Poisson{\la_1 t}$ and $K_2 \sim \Poisson{\la_2 t}$. By conditioning on the number of class-1 and class-2 arrivals in the interval $[0,t]$, we see that
\begin{align}%
\E{y^A} &= \int_0^\infty \sum_{i \ge 0} \sum_{j \ge 0} \E{y^A \mid S_1 = t, K_1 = i, K_2 = j} \notag \\
& \hspace{0.2\linewidth} \cdot f_{S_1}(t) \Prob{K_1 = i} \Prob{K_2 = j} \, \dinf t \notag \\
&= \int_0^\infty \sum_{i \ge 0} \sum_{j \ge 0} \E{y^{j + A^{(1)} + \cdots + A^{(i)}}} \notag \\
& \hspace{0.2\linewidth} \cdot f_{S_1}(t) \Prob{K_1 = i} \Prob{K_2 = j} \, \dinf t, \label{eqnPRIO:pgf_number_class-2_arrivals_BP_class-1_2}
\end{align}%
where $A^{(k)}$ denotes the number of class-2 arrivals during the class-1 busy period started by the $k$-th class-1 job during the service of the first class-1 job. The random variables $A^{(k)}$ are i.i.d. We can now substitute the probability density and mass functions of $S_1$, $K_1$ and $K_2$ to obtain
\begin{align}%
\E{y^A} &= \int_0^\infty \sum_{i \ge 0} \sum_{j \ge 0} \E{y^A}^i y^j \mu_1 \euler^{-\mu_1 t} \frac{(\la_1 t)^i}{i!} \euler^{-\la_1 t} \frac{(\la_2 t)^j}{j!} \euler^{-\la_2 t} \, \dinf t \notag \\
&= \mu_1 \int_0^\infty \euler^{-t(\la_1(1 - \E{y^A}) + \la_2(1 - y) + \mu_1)} \, \dinf t \notag \\
&= \frac{\mu_1}{\la_1(1 - \E{y^A}) + \la_2(1 - y) + \mu_1}. \label{eqnPRIO:BP_E[y^A]}
\end{align}%
From \eqref{eqnPRIO:pgf_kernel}, we see that $\rootx(y)$ satisfies the exact same equation as \eqref{eqnPRIO:BP_E[y^A]} and since $|\E{y^A}| < 1$ we conclude that $\rootx(y) = \E{y^A}$.

The first passage probabilities $g_k$ can be given in terms of a busy period $B_{\la_1,\mu_1}$ and a Poisson process $\{ N_{\la_2}(t) \}_{t \ge 0}$ with rate $\la_2$. By examining the transition rate diagram in \cref{figPRIO:transition_rate_diagram_single-server_prio}, we see that
\begin{equation}%
g_k = \Prob{ N_{\la_2}(B_{\la_1,\mu_1}) = k},
\end{equation}%
which is the probability that during a class-1 busy period, exactly $k$ class-2 jobs arrive. The PGF of $\{ g_k \}_{k \ge 0}$ is given by
\begin{align}%
\sum_{k \ge 0} g_k y^k &= \sum_{k \ge 0} \Prob{ N_{\la_2}(B_{\la_1,\mu_1}) = k} y^k \notag \\
&= \sum_{k \ge 0} \Efxd{ \ind{N_{\la_2}(B_{\la_1,\mu_1}) = k} y^k } \notag \\
&= \Efxd{ y^{N_{\la_2}(B_{\la_1,\mu_1})}}.
\end{align}%
Conditioning on the length of the busy period and using the PGF of a Poisson distribution with parameter $\la_2 t$ yields
\begin{align}%
\sum_{k \ge 0} g_k y^k &= \int_0^\infty \Efxd{ y^{N_{\la_2}(B_{\la_1,\mu_1})} \mid B_{\la_1,\mu_1} = t} f_{B_{\la_1,\mu_1}}(t) \, \dinf t \notag \\
&= \int_0^\infty \euler^{-\la_2(1 - y)t} f_{B_{\la_1,\mu_1}}(t) \, \dinf t \notag \\
&= \LST{\la_1,\mu_1}{\la_2(1 - y)},
\end{align}%
which shows that $\sum_{k \ge 0} g_k y^k = \E{y^A} = \rootx(y)$.

\begin{proposition}\label{propPRIO:first_passage_probabilities}%
The first passage probabilities are explicitly given by $g_0 = \LST{\la_1,\mu_1}{\la_2}$ and for $k \ge 1$,
\begin{equation}%
g_k = \gamma_1^k \LST{\la_1,\mu_1}{\la_2} \sum_{l = 0}^{k - 1} C_l \binom{k - 1 + l}{k - 1 - l} \gamma_2^l,
\end{equation}%
where $C_l \defi 1/(l + 1) \binom{2l}{l}$ are the Catalan numbers and
\begin{align}%
\gamma_1 &= \frac{\la_2}{\la_1(1 - 2 \LST{\la_1,\mu_1}{\la_2}) + \mu_1 + \la_2}, \\
\gamma_2 &= \frac{\la_1 \LST{\la_1,\mu_1}{\la_2}}{\la_1(1 - 2 \LST{\la_1,\mu_1}{\la_2}) + \mu_1 + \la_2}.
\end{align}%
\end{proposition}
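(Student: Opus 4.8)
The plan is to pass to the generating function $G(y) \defi \sum_{k \ge 0} g_k y^k$. By the discussion preceding the proposition, $G(y) = \rootx(y) = \LST{\la_1,\mu_1}{\la_2(1-y)}$, so $G$ is the root in the closed unit disk of the quadratic \eqref{eqnPRIO:pgf_kernel}, which after clearing $\mu_1$ reads $\la_1 G(y)^2 - (\la_1 + \la_2 + \mu_1) G(y) + \la_2 y\, G(y) + \mu_1 = 0$ (equivalently, this quadratic is the coefficient-wise statement \eqref{eqnPRIO:fpp_g_0}--\eqref{eqnPRIO:fpp_g_k}). Setting $y = 0$ and comparing with the explicit formula \eqref{eqnPRIO:pgf_root_xi(y)} for $\rootx(y)$ gives at once $g_0 = G(0) = \LST{\la_1,\mu_1}{\la_2}$, the first assertion.

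Next I would write $G(y) = g_0 + H(y)$ with $H(0) = 0$, substitute into the quadratic, and subtract off the $y=0$ identity $\la_1 g_0^2 - (\la_1+\la_2+\mu_1) g_0 + \mu_1 = 0$. What remains is a quadratic in $H$ whose coefficients are affine in $y$,
\[
\la_1 H(y)^2 + \bigl(2\la_1 g_0 - (\la_1+\la_2+\mu_1) + \la_2 y\bigr) H(y) + \la_2 g_0 y = 0 .
\]
Abbreviating $D \defi \la_1(1 - 2 g_0) + \la_2 + \mu_1$, so that the linear coefficient at $y=0$ is $-D$, the branch vanishing at $y = 0$ is $H(y) = \bigl(D - \la_2 y - \sqrt{(D - \la_2 y)^2 - 4 \la_1 \la_2 g_0 y}\,\bigr)/(2\la_1)$; this is the correct branch because $-D$ equals the derivative at its smaller root $g_0$ of the upward parabola $g \mapsto \la_1 g^2 - (\la_1+\la_2+\mu_1) g + \mu_1$, hence is negative. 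Since $\gamma_1 = \la_2/D$ and $\gamma_2 = \la_1 g_0/D$ in the notation of the proposition, a short rearrangement turns this into
\[
H(y) = \tfrac{g_0}{2\gamma_2}\bigl[(1-\gamma_1 y) - \sqrt{(1-\gamma_1 y)^2 - 4\gamma_1\gamma_2 y}\,\bigr].
\]

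The final step is to read off the coefficients. Factoring $(1-\gamma_1 y)^2 - 4\gamma_1\gamma_2 y = (1-\gamma_1 y)^2(1 - 4t)$ with $t \defi \gamma_1\gamma_2 y/(1-\gamma_1 y)^2$, and using the Catalan generating function $\sum_{l \ge 0} C_l t^l = (1 - \sqrt{1-4t})/(2t)$, the closed form collapses to $H(y) = g_0 \gamma_1 y\,(1-\gamma_1 y)^{-1}\sum_{l \ge 0} C_l t^l$. Substituting $t$ back, expanding each $(1-\gamma_1 y)^{-(2l+1)} = \sum_{n \ge 0}\binom{n+2l}{n}(\gamma_1 y)^n$, and collecting the coefficient of $y^k$ via the index change $k = 1 + l + n$ (so $n = k-1-l \ge 0$) gives precisely $g_k = \gamma_1^k g_0 \sum_{l=0}^{k-1} C_l \binom{k-1+l}{k-1-l}\gamma_2^l$. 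An equivalent but cleaner route, if the forward computation becomes unwieldy, is to run it backwards: sum the proposed series to recover the same closed form for $H$ and verify it solves the quadratic.

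I expect the main obstacle to be the bookkeeping in this last step — spotting the substitution $t = \gamma_1\gamma_2 y/(1-\gamma_1 y)^2$ that exposes the Catalan structure, keeping the nested summations straight, and choosing consistent branches of the square roots (all functions in play are analytic near $y=0$, where the discriminant equals $D^2>0$, which legitimizes the formal power series identities). The algebra identifying $D$, $\gamma_1$ and $\gamma_2$ with the stated combinations of $\la_1$, $\la_2$, $\mu_1$ and $\LST{\la_1,\mu_1}{\la_2}$ is routine.
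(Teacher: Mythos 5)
Your proof is correct, and every line I checked goes through: the quadratic $\la_1 H^2 - (D-\la_2 y)H + \la_2 g_0 y = 0$ with $D = \la_1(1-2g_0)+\la_2+\mu_1$ is right, the branch choice is correctly justified by $D > 0$ (negative of the derivative of the upward parabola at its smaller root), the identities $\gamma_1 = \la_2/D$, $\gamma_2 = \la_1 g_0/D$, $D/(2\la_1) = g_0/(2\gamma_2)$ hold, and the final index change $k = 1+l+n$ delivers exactly the stated coefficient. The one genuine difference from the paper is direction. The paper runs the argument backward: it takes the claimed formula for $g_k$, forms $\sum_k g_k y^k$, interchanges the double sum, applies the negative-binomial and Catalan generating functions, and simplifies the resulting closed form until it matches $\LST{\la_1,\mu_1}{\la_2(1-y)}$ by checking that $\LSTvar$ satisfies its own quadratic — a pure verification that presupposes the answer. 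You run it forward: starting from the quadratic for $\rootx(y)$, you solve for the correct branch of $H(y) = G(y) - g_0$, expose the Catalan structure via the factorization of the discriminant $(1-\gamma_1 y)^2 - 4\gamma_1\gamma_2 y$ and the substitution $t = \gamma_1\gamma_2 y/(1-\gamma_1 y)^2$, then expand and read off $g_k$. The combinatorial machinery (Catalan GF, $\sum_n\binom{n+K}{n}z^n = (1-z)^{-(K-1)-2}$, the discriminant factoring) is identical, but your route is constructive — it derives the formula rather than checking it — which is arguably more informative, since it explains where the Catalan numbers and the particular combinations $\gamma_1,\gamma_2$ come from. You also note the backward verification as a fallback, which is precisely the paper's route.
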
%

\begin{proof}%
We prove the claim by verifying that $\sum_{k \ge 0} g_k y^k = \LST{\la_1,\mu_1}{\la_2(1 - y)}$. For now, abbreviate $\LST{\la_1,\mu_1}{\la_2}$ as $L$.

We have
\begin{align}%
\sum_{k \ge 0} g_k y^k &= \LSTvar \Bigl( 1 + \sum_{k \ge 1} (\gamma_1 y)^k \sum_{l = 0}^{k - 1} C_l \binom{k - 1 + l}{k - 1 - l} \gamma_2^l \Bigr) \notag \\
&= \LSTvar \Bigl( 1 + \gamma_1 y \sum_{k \ge 0} (\gamma_1 y)^k \sum_{l = 0}^{k} C_l \binom{k + l}{k - l} \gamma_2^l \Bigr). \label{eqnPRIO:first_passage_probabilities_proof_1}
\end{align}%
Interchanging the two summations gives
\begin{align}%
\sum_{k \ge 0} (\gamma_1 y)^k \sum_{l = 0}^{k} C_l \binom{k + l}{k - l} \gamma_2^l &= \sum_{l \ge 0} C_l \gamma_2^l \sum_{k \ge l} \binom{k + l}{k - l} (\gamma_1 y)^k \notag \\
&= \sum_{l \ge 0} C_l (\gamma_1 \gamma_2 y)^l \sum_{k \ge 0} \binom{k + 2l}{k} (\gamma_1 y)^k.
\end{align}%
From the negative binomial distribution we know that the generating function of the binomial coefficient is
\begin{equation}%
\sum_{k \ge 0} \binom{k + K}{k} \PGFarg^k = \frac{1}{(1 - \PGFarg)^{K + 1}},
\end{equation}%
so that
\begin{equation}%
\sum_{l \ge 0} C_l (\gamma_1 \gamma_2 y)^l \sum_{k \ge 0} \binom{k + 2l}{k} (\gamma_1 y)^l = \frac{1}{1 - \gamma_1 y} \sum_{l \ge 0} C_l \Bigl( \frac{\gamma_1 \gamma_2 y}{(1 - \gamma_1 y)^2} \Bigr)^l.
\end{equation}%
Now use the generating function of the Catalan numbers
\begin{equation}%
\sum_{k \ge 0} C_k \PGFarg^k = \frac{1 - \sqrt{1 - 4\PGFarg}}{2\PGFarg}.
\end{equation}%
to get
\begin{equation}%
\frac{1}{1 - \gamma_1 y} \sum_{l \ge 0} C_l \Bigl( \frac{\gamma_1 \gamma_2 y}{(1 - \gamma_1 y)^2} \Bigr)^l = \frac{1 - \sqrt{1 - 4\frac{\gamma_1 \gamma_2 y}{(1 - \gamma_1 y)^2}}}{2\frac{\gamma_1 \gamma_2 y}{1 - \gamma_1 y}}
\end{equation}%
Substituting this back into \eqref{eqnPRIO:first_passage_probabilities_proof_1} yields
\begin{align}%
\sum_{k \ge 0} g_k y^k &= \LSTvar \Bigl( 1 + \frac{1 - \sqrt{1 - 4\frac{\gamma_1 \gamma_2 y}{(1 - \gamma_1 y)^2}}}{2\frac{\gamma_2}{1 - \gamma_1 y}} \Bigr) \notag \\
&= \LSTvar \frac{2 \frac{\gamma_2}{1 - \gamma_1 y} + 1 - \sqrt{1 - 4\frac{\gamma_1 \gamma_2 y}{(1 - \gamma_1 y)^2}}}{2\frac{\gamma_2}{1 - \gamma_1 y}}. \label{eqnPRIO:first_passage_probabilities_proof_2}
\end{align}%
Substituting
\begin{align}%
\frac{\gamma_1 \gamma_2 y}{(1 - \gamma_1 y)^2} &= \frac{\la_1 \la_2 \LSTvar y}{(\la_1(1 - 2 \LSTvar) + \mu_1 + \la_2(1 - y))^2}, \\
\frac{\gamma_2}{1 - \gamma_1 y} &= \frac{\la_1 \LSTvar}{\la_1(1 - 2 \LSTvar) + \mu_1 + \la_2(1 - y)},
\end{align}%
into \eqref{eqnPRIO:first_passage_probabilities_proof_2}, multiplying the numerator and denominator of \eqref{eqnPRIO:first_passage_probabilities_proof_2} by $\la_1(1 - 2 \LSTvar) + \mu_1 + \la_2(1 - y)$ and recognizing that $\LSTvar$ satisfies $\la_1 \LSTvar^2 - (\la + \mu_1) \LSTvar + \mu_1 = 0$ finally proves the claim.
\end{proof}%


\section{Takeaways}%
\label{secPRIO:what_have_we_learned}%

The Markov process associated with the single-server priority system has no downward transitions in the interior of the state space. This structure allowed for a simple solution using the generating function approach and while modeling the Markov process as a QBD process. A Markov process with a structure in which there are no upward transitions is amenable to the same solution approaches. The single-server priority system is one of many models that possesses this structure; a few others can be found in \cite{Gandhi2014_RRR,Gandhi2010_Server_farms_setup_costs,Selen2016_Snowball,Houdt2011_Triangular_R}.

Due to the upward structure the balance equations could be solved recursively by treating them as second-order difference equations. The balance equations for $i > 0$ and $j = 0$ are homogeneous difference equations and were easily solved by substituting a product-form solution. The balance equations for $i,j > 0$ are nonhomogeneous difference equations, where the constant term is the rate at which the process enters the state from the state directly below. The nonhomogeneous difference equations could also be solved and the final expressions for the equilibrium probabilities involve coefficients that needed to be calculated recursively.

For the generating function approach the upward structure meant that the functional equation for $\PGF{x,y}$ did not involve $\PGF{x,0}$ associated with the equilibrium probabilities of the states on the horizontal axis. This allowed for a direct determination of $\PGF{0,y}$ as a function of the root $\rootx(y)$ and ultimately led to an explicit expression for $\PGF{x,y}$.

In terms of the QBD approach the upward structure ensured that the infinite-dimensional rate matrix $R$ and auxiliary matrix $G$ were upper triangular. Both $R$ and $G$ satisfied a matrix-quadratic equation, that, due to the upper triangular structure, could be solved recursively. The upward structure was used once more to derive recursions for the equilibrium probabilities on the vertical boundary and in the interior of the state space.

The transition behavior in the interior of the state space has some additional structure: on top of being strictly upward in the vertical direction, the transition behavior in the horizontal direction mimics the transition behavior of an $M/M/1$ queue with arrival rate $\la_1$ and service rate $\mu_1$. The relations between busy periods of an $M/M/1$ queue and key elements of the generating function approach and the QBD approach came as no surprise.

The approaches we saw in this chapter are not restricted to Markov process with no downward transitions in the interior of the state space. Specifically, the approaches work whenever there are no downward, upward, leftward or rightward transitions, see \cref{figPRIO:possible_transitions_interior_state_space} and \cite{Leeuwaarden2009_Lattice_path_counting,Leeuwaarden2006_Explicit_R}. For example, when there are no upward transitions---the second case in \cref{figPRIO:possible_transitions_interior_state_space}---$\PGF{x,0}$ appears in the functional equation \eqref{eqnPRIO:pgf_functional} instead of $\PGF{0,y}$ and the matrices $R$ and $G$ are lower triangular instead of upper triangular. By swapping the two coordinates the third case with no rightward transitions in \cref{figPRIO:possible_transitions_interior_state_space} reduces to the first case with no upward transitions and the fourth case with the leftward transitions reduces to the second case with no downward transitions.

\begin{figure}
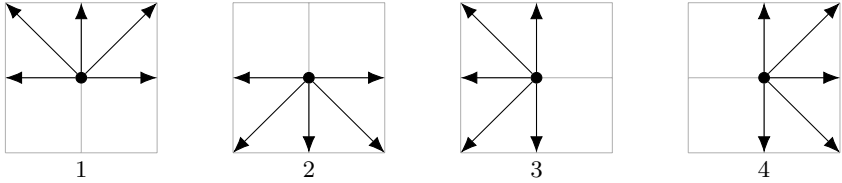
%
\centering%
\includestandalone{Chapters/PRIO/TikZFiles/possible_transitions_interior_state_space}%
\caption{Types of transitions in the interior of the state space for which the approaches of this chapter are suited to solve for the equilibrium distribution.}%
\label{figPRIO:possible_transitions_interior_state_space}
\end{figure}%


\printendnotes

\chapter{Gated systems}%
\label{ch:gated_single-server}%

In this chapter we consider an exponential single-server queueing system where access to the system is regulated by a gate. Jobs arrive according to a Poisson process and first have to wait behind this gate. Whenever there are no jobs left in the system, the gate opens and all waiting jobs are transferred to the system without further delay. The gate closes immediately after the transfer and the server starts service. If there are no jobs in the system nor behind the gate, then the gate remains open until a job arrives. An arriving job is then immediately transferred to the system, the gate closes and service starts. Notice that the system cannot be empty unless there are no jobs behind the gate. We are interested in the joint distribution of the number of jobs behind the gate and in the system.

The system can be described as a two-dimensional Markov process with as dimensions the number of jobs behind the gate and in the system. We shall determine the equilibrium distribution of this Markov process using three different approaches. The first approach casts the balance equations into the generating function domain and determines the generating function of the joint equilibrium distribution using an iterative approach.\endnote{Rietman and Resing \cite{Rietman2004_Gated_MG1} study the gated system with general service times using the generating function approach. In the exponential case that we consider in this chapter, we are able to develop an explicit expression for the equilibrium distribution in terms of an infinite sum of product forms.} The second approach uses the matrix-geometric method and exploits the downward structure in the interior of the state space to explicitly determine the elements of the rate matrix $R$. However, the second approach strands here and numerical approximations are required to determine the equilibrium probabilities. The third approach is called the compensation approach\endnote{Resing and Rietman \cite{Resing2004_Gated} introduced the gated single-server system and determined the equilibrium distribution using the compensation approach of Adan, Wessels and Zijm \cite{Adan1990_JSQ_symmetric}.} and exploits the fact that a product-form solution satisfies the balance equations for the states in the interior of the state space. These product-form solutions are linearly combined to also satisfy the remaining balance equations. The first and third approach both lead to infinite sum expressions for the equilibrium probabilities.


\section{Model description and balance equations}%
\label{secGTD:model_description}%

Jobs arrive according to a Poisson process with rate $\la$. Each job requires an exponentially distributed service time with rate $\mu$. Denote by $\rho \defi \la/\mu$ the amount of work brought into the system per time unit.

Let $X_1(t)$ be the number of jobs behind the gate at time $t$ and let $X_2(t)$ be the number of jobs in the system at time $t$. Further, denote the state of the system by $X(t) \defi (X_1(t),X_2(t))$. Then $\{ X(t) \}_{t \ge 0}$ is a Markov process with state space
\begin{equation}%
\statespace \defi \{ (i,j) : i \in \Nat_0, ~ j \in \Nat \} \cup \{ (0,0) \}.
\end{equation}%
It is apparent from the transition rate diagram in \cref{figGTD:transition_rate_diagram_gated_single-server} that the state space is irreducible. To guarantee positive recurrence and the existence of the equilibrium distribution we assume that $\rho < 1$. Let $p(i,j)$ denote the equilibrium probability of being in state $(i,j)$.

\begin{figure}
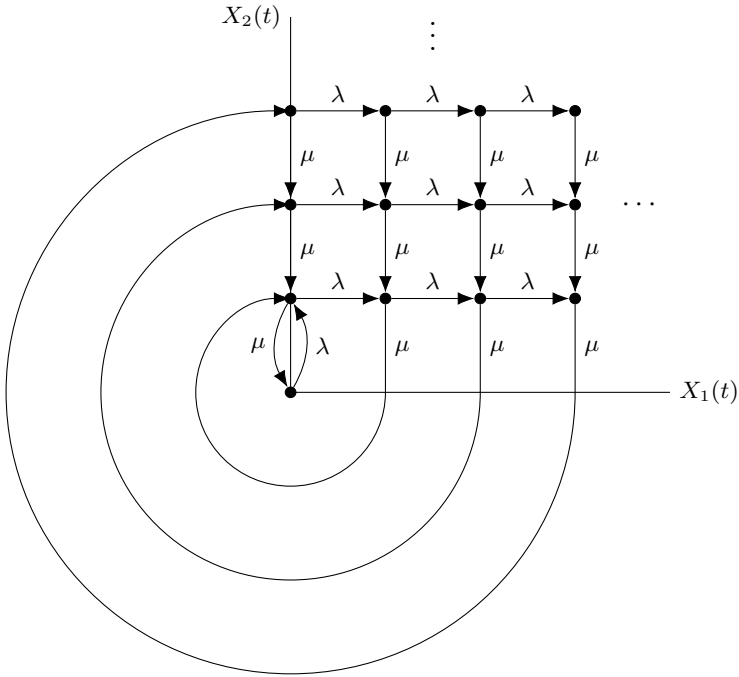
%
\centering%
\includestandalone{Chapters/GTD/TikZFiles/transition_rate_diagram_gated_single-server}%
\caption{Transition rate diagram of the Markov process associated with the gated single-server system.}%
\label{figGTD:transition_rate_diagram_gated_single-server}%
\end{figure}%

The balance equations for the interior of the state space are given by
\begin{equation}%
(\la + \mu) p(i,j) = \la p(i - 1,j) + \mu p(i,j + 1), \quad i,j \ge 1. \label{eqnGTD:balance_equations_interior}
\end{equation}%
For the vertical axis we have
\begin{align}%
(\la + \mu) p(0,1) &= \la p(0,0) + \mu p(0,2) + \mu p(1,1), \label{eqnGTD:balance_equations_vertical_p(0,1)} \\
(\la + \mu) p(0,j) &= \mu p(0,j + 1) + \mu p(j,1), \quad j \ge 2, \label{eqnGTD:balance_equations_vertical_p(0,j)}
\end{align}%
and the balance equation at the origin is
\begin{equation}%
\la p(0,0) = \mu p(0,1). \label{eqnGTD:balance_equations_origin}
\end{equation}%
Combining \eqref{eqnGTD:balance_equations_vertical_p(0,1)} with \eqref{eqnGTD:balance_equations_origin} gives
\begin{equation}%
\la p(0,1) = \mu p(0,2) + \mu p(1,1). \label{eqnGTD:balance_equations_vertical_p(0,1)_changed}
\end{equation}%

Observe that $\{ X_1(t) + X_2(t) \}_{t \ge 0}$ is also a Markov process. More specifically, it is the Markov process associated with an $M/M/1$ queue with arrival rate $\la$ and service rate $\mu$ and equilibrium probabilities $p(k)$, where
\begin{equation}%
p(k) = \sum_{i + j = k} p(i,j) = (1 - \rho) \rho^k. \label{eqnGTD:MM1_equilibrium_probabilities}
\end{equation}%
Since an empty system can only occur when there are no jobs behind the gate we clearly have $p(0,0) = 1 - \rho$ and therefore
\begin{equation}%
\sum_{i \ge 0} \sum_{j \ge 1} p(i,j) = \rho. \label{eqnGTD:normalization_condition}
\end{equation}%
%


\section{Generating function approach}%
\label{secGTD:pgf_approach}%

Define the bivariate generating function
\begin{equation}%
\PGF{x,y} \defi \sum_{i \ge 0} \sum_{j \ge 1} p(i,j) x^i y^{j - 1}, \quad |x| \le 1, ~ |y| \le 1. \label{eqnGTD:pgf_P(x,y)_definition}
\end{equation}%
Since $\PGF{1,1} = \rho < 1$, $\PGF{x,y}$ is not a \textit{probability} generating function. From \eqref{eqnGTD:balance_equations_origin} or \eqref{eqnGTD:MM1_equilibrium_probabilities} we derive that $\PGF{0,0} = p(0,1) = (1 - \rho) \rho$.

We shall now perform a series of operations on the balance equations to obtain an expression for $\PGF{x,y}$. Multiply both sides of \eqref{eqnGTD:balance_equations_interior} by $x^i y^{i - 1}$ and sum over all $i,j \ge 1$ to obtain
\begin{align}%
&(\la + \mu) \sum_{i \ge 1} \sum_{j \ge 1} p(i,j) x^i y^{j - 1} \notag \\
&= \la \sum_{i \ge 1} \sum_{j \ge 1} p(i - 1,j) x^i y^{j - 1} + \mu \sum_{i \ge 1} \sum_{j \ge 1} p(i,j + 1) x^i y^{j - 1}. \label{eqnGTD:pgf_balance_equations_summed_interior} \\
\intertext{Multiply both sides of \eqref{eqnGTD:balance_equations_vertical_p(0,j)} by $y^{j - 1}$, sum over all $j \ge 2$, and add \eqref{eqnGTD:balance_equations_vertical_p(0,1)_changed} to obtain}
&\la \sum_{j \ge 1} p(0,j) y^{j - 1} + \mu \sum_{j \ge 2} p(0,j) y^{j - 1} \notag \\
&= \mu \sum_{j \ge 1} p(0,j + 1) y^{j - 1} + \mu \sum_{j \ge 1} p(j,1) y^{j - 1}. \label{eqnGTD:pgf_balance_equations_summed_horizontal}
\end{align}%
Summing \eqref{eqnGTD:pgf_balance_equations_summed_interior}--\eqref{eqnGTD:pgf_balance_equations_summed_horizontal}, multiplying both sides by $y/\mu$ and using simplifications based on the definition of $\PGF{x,y}$ such as
\begin{align}%
\sum_{i \ge 1} \sum_{j \ge 1} p(i,j + 1) x^i y^{j - 1} &= \sum_{i \ge 1} \sum_{j \ge 0} p(i,j + 1) x^i y^{j - 1} - \frac{1}{y} \sum_{i \ge 1} p(i,1) x^i \notag \\
&= \frac{1}{y} \bigl( \PGF{x,y} - \PGF{0,y} - \PGF{x,0} + \PGF{0,0} \bigr),
\end{align}%
shows that $\PGF{x,y}$ satisfies the functional equation
\begin{equation}%
h(x,y) \PGF{x,y} = \PGF{y,0} - \PGF{x,0} + (y - 1) \PGF{0,0} \label{eqnGTD:pgf_kernel}
\end{equation}%
with
\begin{equation}%
h(x,y) \defi y \bigl( 1 + \rho(1 - x) \bigr) - 1.
\end{equation}%

It is easily seen that for a fixed $x$ we have that $h(x,y) = 0$ for $y = \rooty(x)$ with
\begin{equation}%
\rooty(x) = \frac{1}{1 + \rho(1 - x)}.
\end{equation}%
Notice that $v(x)$ has a simple pole at $x = 1 + 1/\rho > 1$. Since $|\rooty(x)| \le 1$ if $|x| \le 1$ we have by substituting $\rooty(x)$ into \eqref{eqnGTD:pgf_kernel} that
\begin{equation}%
\PGF{x,0} = \PGF{\rooty(x),0} + (\rooty(x) - 1) \PGF{0,0}. \label{eqnGTD:pgf_P(x,0)}
\end{equation}%
We will iterate \eqref{eqnGTD:pgf_P(x,0)} to obtain an expression for $\PGF{x,0}$. First, define
\begin{equation}%
\rooty^{\circ k}(x) \defi ( \underbrace{\rooty \circ \rooty \circ \cdots \circ \rooty}_{\textup{$k$ times}})(x), \quad n \ge 1,
\end{equation}%
where the operator $\circ$ denotes a composition: $(f \circ g)(x) = f(g(x))$.

\begin{lemma}\label{lemGTD:pgf_composite_root}%
The composition $\rooty^{\circ k}(x)$ is explicitly given by
\begin{equation}%
\rooty^{\circ k}(x) = \frac{1 - \rho^k - x \rho (1 - \rho^{k - 1})}{1 - \rho^{k + 1} - x \rho (1 - \rho^k)}, \quad k \ge 1. \label{eqnGTD:pgf_composite_root}
\end{equation}%
\end{lemma}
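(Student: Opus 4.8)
The function $\rooty(x) = 1/(1+\rho(1-x))$ is a fractional-linear (Möbius) map in $x$, so every composition $\rooty^{\circ k}$ is again fractional-linear, and it suffices to track a numerator and a denominator. The plan is to prove \eqref{eqnGTD:pgf_composite_root} by induction on $k$. Introduce the shorthand $N_k \defi 1 - \rho^k - x\rho(1-\rho^{k-1})$ and $D_k \defi 1 - \rho^{k+1} - x\rho(1-\rho^k)$, so the claim is $\rooty^{\circ k}(x) = N_k/D_k$.

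\textbf{Base case.} For $k=1$ the numerator is $N_1 = 1-\rho - x\rho(1-\rho^0) = 1-\rho$ and the denominator is $D_1 = 1-\rho^2 - x\rho(1-\rho) = (1-\rho)\bigl(1+\rho(1-x)\bigr)$, so $N_1/D_1 = 1/\bigl(1+\rho(1-x)\bigr) = \rooty(x)$, as required.

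\textbf{Inductive step.} Assume $\rooty^{\circ k}(x) = N_k/D_k$. Then
$$
\rooty^{\circ (k+1)}(x) = \rooty\!\left(\frac{N_k}{D_k}\right) = \frac{1}{1+\rho - \rho\,N_k/D_k} = \frac{D_k}{(1+\rho)D_k - \rho N_k}.
$$
The induction now reduces to two algebraic identities. First, $N_{k+1} = 1-\rho^{k+1} - x\rho(1-\rho^k) = D_k$, which is immediate from the definitions. Second, $(1+\rho)D_k - \rho N_k = D_{k+1}$: separating the part independent of $x$ gives $(1+\rho)(1-\rho^{k+1}) - \rho(1-\rho^k) = 1-\rho^{k+2}$, and the coefficient of $x$ gives $-\rho\bigl[(1+\rho)(1-\rho^k) - \rho(1-\rho^{k-1})\bigr] = -\rho(1-\rho^{k+1})$, so together $(1+\rho)D_k - \rho N_k = 1-\rho^{k+2} - x\rho(1-\rho^{k+1}) = D_{k+1}$. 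Hence $\rooty^{\circ(k+1)}(x) = N_{k+1}/D_{k+1}$, closing the induction.

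\textbf{Remarks on difficulty and an alternative.} There is no real obstacle here: the computation is routine and the induction collapses cleanly because of the identity $N_{k+1}=D_k$, which says each step of the recursion simply ``shifts'' numerator to denominator. If a non-inductive derivation is preferred, one can instead represent $\rooty$ by the matrix $M \defi \begin{bmatrix} 0 & 1 \\ -\rho & 1+\rho \end{bmatrix}$, so that composition corresponds to matrix multiplication; since $\trace M = 1+\rho$ and $\det M = \rho$, the eigenvalues of $M$ are $1$ and $\rho$, which are distinct because $0<\rho<1$. Diagonalizing $M$ yields a closed form for $M^k$ whose entries coincide, up to a common scalar, with $N_k$ and $D_k$. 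I would present the inductive proof as the main argument and mention the matrix viewpoint only as a comment.
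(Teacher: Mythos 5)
Your inductive proof is correct and follows essentially the same route as the paper: verify the base case $k=1$, then substitute $\rooty^{\circ k}(x)$ into $\rooty$ and clear the denominator to match the claimed formula at $k+1$. The observation that $N_{k+1}=D_k$ cleanly organizes the algebra, and the Möbius/matrix remark is a nice aside, but the core argument is the same induction the paper uses.
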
%

\begin{proof}%
For $k = 1$ the claim is true. Assume that \eqref{eqnGTD:pgf_composite_root} holds for $k$. We show that it also holds for $k + 1$. We have
\begin{equation}%
\rooty^{\circ (k + 1)}(x) = \frac{1}{1 + \rho(1 - \rooty^{\circ k}(x))}
\end{equation}%
and by substituting in the expression for $\rooty^{\circ k}(x)$ and multiplying the denominator and numerator by $1 - \rho^{k + 1} - x \rho (1 - \rho^k)$ we get that $\rooty^{\circ (k + 1)}(x)$ is given by \eqref{eqnGTD:pgf_composite_root}. Hence, by induction, we conclude that the claim holds for all $k \ge 1$.
\end{proof}%

Iterating \eqref{eqnGTD:pgf_P(x,0)} gives
\begin{equation}%
\PGF{x,0} = \lim_{k \to \infty} \PGF{\rooty^{\circ k}(x),0} + \PGF{0,0} \sum_{k \ge 1} (\rooty^{\circ k}(x) - 1). \label{eqnGTD:pgf_P(x,0)_iterated}
\end{equation}%
Since $\rooty^{\circ k}(x) - 1 = \BigO(\rho^k)$ as $k$ tends to $\infty$, the infinite series in \eqref{eqnGTD:pgf_P(x,0)_iterated} is convergent. It is easily seen from \cref{lemGTD:pgf_composite_root} that $\lim_{k \to \infty} \rooty^{\circ k}(x) = 1$ independent of $x$ and therefore
\begin{equation}%
\PGF{x,0} = \PGF{1,0} + \PGF{0,0} \sum_{k \ge 1} (\rooty^{\circ k}(x) - 1). \label{eqnGTD:pgf_P(x,0)_iterated_2}
\end{equation}%
We determine $\PGF{1,0}$ by setting $x = 0$ in \eqref{eqnGTD:pgf_P(x,0)_iterated_2}. Since
\begin{equation}%
\rooty^{\circ k}(0) - 1 = - \frac{1 - \rho}{\rho} \frac{\rho^{k + 1}}{1 - \rho^{k + 1}},
\end{equation}%
we get
\begin{equation}%
\PGF{0,0} = \PGF{1,0} - \PGF{0,0} \frac{1 - \rho}{\rho} \sum_{k \ge 1} \frac{\rho^{k + 1}}{1 - \rho^{k + 1}},
\end{equation}%
which, by $\PGF{0,0} = (1 - \rho)\rho$, indicates that
\begin{equation}%
\PGF{1,0} = (1 - \rho)^2 \sum_{k \ge 1} \frac{\rho^k}{1 - \rho^k}.
\end{equation}%
Substituting the expressions for $\PGF{1,0}$ and $\PGF{0,0}$ into \eqref{eqnGTD:pgf_P(x,0)_iterated_2} yields
\begin{equation}%
\PGF{x,0} = (1 - \rho) \Bigl( (1 - \rho) \sum_{k \ge 1} \frac{\rho^k}{1 - \rho^k} + \rho \sum_{k \ge 1} (\rooty^{\circ k}(x) - 1) \Bigr). \label{eqnGTD:pgf_P(x,0)_iterated_3}
\end{equation}%
Define
\begin{equation}%
\Rooty(x) \defi \sum_{k \ge 1} ( \rooty^{\circ k}(x) - 1 )
\end{equation}%
and substitute $\PGF{x,0}$, $\PGF{y,0}$ and $\PGF{0,0}$ into \eqref{eqnGTD:pgf_kernel} to find that $\PGF{x,y}$ satisfies
\begin{align}%
\PGF{x,y} &= \frac{(1 - \rho) \rho}{h(x,y)} \bigl( \Rooty(y) - \Rooty(x) + y - 1 \bigr) \notag \\
&= \frac{(1 - \rho)\rho \rooty(x)}{\rooty(x) - y} \bigl( 1 - y + \Rooty(x) - \Rooty(y) \bigr).
\end{align}%
Since $\Rooty(x) = \Rooty(\rooty(x)) - 1 + \rooty(x)$ we can write the expression for $\PGF{x,y}$ as
\begin{equation}%
\PGF{x,y} = (1 - \rho)\rho \rooty(x) \Bigl( 1 + \frac{\Rooty(\rooty(x)) - \Rooty(y)}{\rooty(x) - y} \Bigr). \label{eqnGTD:pgf_P(x,y)_almost_explicit}
\end{equation}%
The fraction is
\begin{equation}%
\frac{\Rooty(\rooty(x)) - \Rooty(y)}{\rooty(x) - y} = \sum_{k \ge 1} \frac{\rooty^{\circ (k + 1)}(x) - \rooty^{\circ k}(y)}{\rooty(x) - y},
\end{equation}%
where the $k$-th summand is equal to
\begin{equation}%
\frac{(1 - \rho)^2 \rho^k \frac{1}{\rooty(x)}}{(1 - \rho^{k + 2})(1 - \rho^{k + 1})} \frac{1}{1 - x \rho \frac{1 - \rho^{k + 1}}{1 - \rho^{k + 2}}} \frac{1}{1 - y \rho \frac{1 - \rho^{k}}{1 - \rho^{k + 1}}}. \label{eqnGTD:pgf_P(x,y)_summands}
\end{equation}%
If we substitute $k = 0$ into \eqref{eqnGTD:pgf_P(x,y)_summands} we get 1. So, we find from \eqref{eqnGTD:pgf_P(x,y)_almost_explicit} that
\begin{equation}%
\PGF{x,y} = \sum_{k \ge 0} \frac{(1 - \rho)^3 \rho^{k + 1}}{(1 - \rho^{k + 2})(1 - \rho^{k + 1})} \frac{1}{1 - x \rho \frac{1 - \rho^{k + 1}}{1 - \rho^{k + 2}}} \frac{1}{1 - y \rho \frac{1 - \rho^{k}}{1 - \rho^{k + 1}}}.
\end{equation}%
Expanding the terms $(1 - x \rho (1 - \rho^{k + 1})/(1 - \rho^{k + 2}))^{-1}$ and $(1 - y \rho (1 - \rho^{k})/(1 - \rho^{k + 1}))^{-1}$ as geometric series shows that $\PGF{x,y}$ is given by, for $|x|,|y| \le 1$,
\begin{equation}%
\sum_{i \ge 0} \sum_{j \ge 1} \sum_{k \ge 0} \frac{(1 - \rho)^3 \rho^{k + 1}}{(1 - \rho^{k + 2})(1 - \rho^{k + 1})} \bigl( \rho \frac{1 - \rho^{k + 1}}{1 - \rho^{k + 2}} \bigr)^i \bigl( \rho \frac{1 - \rho^{k}}{1 - \rho^{k + 1}} \bigr)^{j - 1} x^i y^{j - 1}. \label{eqnGTD:pgf_P(x,y)_explicit}
\end{equation}%
Comparing \eqref{eqnGTD:pgf_P(x,y)_explicit} with the definition of $\PGF{x,y}$ in \eqref{eqnGTD:pgf_P(x,y)_definition} shows that the equilibrium probabilities are explicitly given by
\begin{equation}%
p(i,j) = \sum_{k \ge 0} c_k \al_k^i \be_k^{j - 1}, \quad i \ge 0, ~ j \ge 1
\end{equation}%
with
\begin{align}%
\al_k = \rho \frac{1 - \rho^{k + 1}}{1 - \rho^{k + 2}}, ~ \be_k = \rho \frac{1 - \rho^{k}}{1 - \rho^{k + 1}}, ~ c_k = \frac{(1 - \rho)^3 \rho^{k + 1}}{(1 - \rho^{k + 2})(1 - \rho^{k + 1})}.
\end{align}%
%


\section{Matrix-geometric method}%
\label{secGTD:matrix-geometric_approach}%

The two-dimensional Markov process $\{ X(t) \}_{t \ge 0}$ is a Markov process that is QSF to the right (also called a $G/M/1$-type Markov process) with levels $\lvl{i} = \{ (i,1),(i,2),\ldots \}, ~ i \ge 0$. We ignore state $(0,0)$ since it does not appear in the balance equations \eqref{eqnGTD:balance_equations_interior}, \eqref{eqnGTD:balance_equations_vertical_p(0,j)} and \eqref{eqnGTD:balance_equations_vertical_p(0,1)_changed}. Consistent with the indexing of levels, in this section the indexing of vectors and matrices starts at 1.

The infinite-dimensional transition matrix $Q$ can be partitioned into levels as
\begin{equation}%
Q = \begin{bmatrix}%
\La_{0}^{(0)} & \La_{1}  \\
\La_{-1}      & \La_{0} & \La_1 \\
\La_{-2}      &         & \La_0  & \La_1 \\
\La_{-3}      &         &        & \La_0 & \La_1  \\
\vdots        &         &        &       & \ddots & \ddots \\
\end{bmatrix},%
\end{equation}%
where $\La_{-k}$ has zeroes everywhere, except $(\La_{-k})_{1,k} = \mu$, $\La_1 = \la \I$, with $\I$ the infinite-dimensional identity matrix,
\begin{equation}%
\La_0 = -(\la + \mu)\I + \begin{bmatrix}%
0 \\
\mu & 0   \\
    & \mu & 0      \\
    &     & \ddots & \ddots
\end{bmatrix}%
\end{equation}%
and
\begin{equation}%
\La_0^{(0)} = \La_0 + \begin{bmatrix}%
\mu    & 0 & \cdots \\
0      & 0 &        \\
\vdots &   & \ddots
\end{bmatrix}.%
\end{equation}%

Define the vectors $\pb_i \defi \begin{bmatrix} p(i,1) & p(i,2) & \cdots \end{bmatrix}$. The rate matrix $R$ satisfies the matrix equation
\begin{equation}%
R \La_0 + \La_1 = 0 \label{eqnGTD:mgm_R}
\end{equation}%
and the equilibrium probabilities follow from
\begin{equation}%
\pb_{i + 1} = \pb_i R, \quad i \ge 0, \label{eqnGTD:mgm_p_i}
\end{equation}%
where the boundary probabilities are computed as
\begin{equation}%
\pb_0 \La_0^{(0)} + \sum_{k \ge 1} \pb_k \La_{-k} = \zerob \quad \Leftrightarrow \quad \pb_0 \bigl( \La_0^{(0)} + \sum_{k \ge 1} R^k \La_{-k} \bigr) = \zerob \label{eqnGTD:mgm_p_0}
\end{equation}%
and the normalization condition is $\pb_0 (\I - R)^{-1} \oneb = \rho$ since $p(0,0) = 1 - \rho$, where $\oneb$ is a vector of ones.

A highly accurate approximation of the equilibrium distribution can be obtained by truncating all matrices in \eqref{eqnGTD:mgm_R} to size $K \times K$ with $K$ large and using successive substitutions to determine $R$ (see \cref{algQBD:matrix-geometric_successive_substitutions}). To determine $\pb_0$ from \eqref{eqnGTD:mgm_p_0} both the matrices and the infinite sum must be truncated. However, we can do better than that by exploiting the specific structure of the transition rate diagram.

Since the transitions between the levels are not upward, we know from the probabilistic interpretation of the elements of the rate matrix (see \cref{secQBD:matrix-geometric_method}) that
\begin{equation}%
R = \begin{bmatrix}%
r_0    \\
r_1    & r_0    \\
r_2    & r_1    & r_0    \\
\vdots & \vdots & \vdots & \ddots
\end{bmatrix}.\label{eqnGTD:mgm_R_lower-triangular}%
\end{equation}%
The elements of $R$ can be determined explicitly. Component-wise the equations \eqref{eqnGTD:mgm_R} read
\begin{align}%
-(\la + \mu) r_0 + \la &= 0, \\
-(\la + \mu) r_k + \mu r_{k - 1} &= 0, \quad k \ge 1,
\end{align}%
so we obtain
\begin{equation}%
r_k = \bigl( \frac{\mu}{\la + \mu} \bigr)^k \frac{\la}{\la + \mu}, \quad k \ge 0. \label{eqnGTD:mgm_r_k_explicit}
\end{equation}%

Determining $\pb_0$ exactly is difficult. The balance equations \eqref{eqnGTD:mgm_p_0} for level $\lvl{0}$ involve infinite-dimensional matrices and an infinite sum. We propose the following approximation scheme for $\pb_0$: truncate the vector and all matrices in \eqref{eqnGTD:mgm_p_0} to have dimension $K$; truncate the infinite sum to $K$; replace one equation with the normalization condition $\pb_0 (\I - R)^{-1} \oneb = \rho$; and numerically solve for $\pb_0$. The inverse of $\I - R$ can be calculated exactly, see \cref{subsecQBD:R_exact_solutions}.

The vectors $\pb_i, ~ i \ge 1$ follow from \eqref{eqnGTD:mgm_p_i}, which reads as
\begin{equation}%
p(i + 1,j) = \sum_{k \ge 0} p(i,j + k) r_k.
\end{equation}%
Unfortunately, also this expression involves an infinite sum. Truncating the sum once more to $K$ finally gives an approximation for the equilibrium probabilities.

\cref{algGTD:mgm_approximation} shows how to derive the approximate equilibrium distribution using the matrix-geometric method. The parameter $K$ determines the accuracy of the obtained approximate equilibrium probabilities: the dimension of all matrices and infinite sums are truncated to $K$. So, increasing $K$ increases the accuracy of the results, but also requires more computation time.

\begin{algorithm}%
\caption{Matrix-geometric method}%
\label{algGTD:mgm_approximation}%
\begin{algorithmic}[1]%
\State Pick a large positive integer $K$
\State Calculate $\{ r_k \}_{0 \le k \le K}$ from \eqref{eqnGTD:mgm_r_k_explicit}
\State Construct the $K \times K$ matrix $R$ using \eqref{eqnGTD:mgm_R_lower-triangular}
\State Determine $(\I - R)^{-1}$ using the theory of \cref{subsecQBD:R_exact_solutions}
\State Construct the matrix
    \begin{equation}%
    A = \La_0^{(0)} + \sum_{k = 1}^K R^k \La_{-k},
    \end{equation}%
    replace any column (say column $i$) of $A$ by $(\I - R)^{-1} \oneb$ and construct the vector $b$ with zeroes everywhere, except $(b)_i = \rho$
\State Solve $\pb_0$ from $\pb_0 A = b$ using a standard numerical solver
\State Compute $\pb_i, ~ 1 \le i \le K$ from \eqref{eqnGTD:mgm_p_i}
\end{algorithmic}%
\end{algorithm}%

Recall that the total number of jobs behaves like an $M/M/1$ queue and therefore we have the exact equilibrium probabilities in \eqref{eqnGTD:MM1_equilibrium_probabilities}. Clearly, for $k \ge 1$,
\begin{equation}%
p(k) = \sum_{i = 0}^{k - 1} p(i,k - i).
\end{equation}%
In \cref{tblGTD:mgm_comparison_algorithm_exact} we compare $p(k)$ obtained using \cref{algGTD:mgm_approximation} with the exact values of \eqref{eqnGTD:MM1_equilibrium_probabilities}.

\begin{table}%
\centering%
\begin{tabular}{*{6}{c}}
         & \multicolumn{4}{c}{$k$} & \\
$K$      & 1 & 3 & 5 & 10 & time (\si{\milli\second}) \\
\hline
10       & 0.17049075 & 0.10911408 & 0.06983301 & 0.02288288 & 1.09 \\
20       & 0.16084709 & 0.10294214 & 0.06588296 & 0.02158853 & 3.43 \\
30       & 0.16007624 & 0.10244879 & 0.06556722 & 0.02148506 & 6.71 \\
40       & 0.16000709 & 0.10240454 & 0.06553890 & 0.02147578 & 32.8 \\
50       & 0.16000067 & 0.10240043 & 0.06553627 & 0.02147492 & 48.4 \\
$\infty$ & 0.16000000 & 0.10240000 & 0.06553600 & 0.02147483 &
\end{tabular}%
\caption{Comparing $p(k)$ obtained using \protect\cref{algGTD:mgm_approximation} for various values of $K$ and $k$ with the exact values \protect\eqref{eqnGTD:MM1_equilibrium_probabilities}. Parameter values are $\la = 0.8$ and $\mu = 1$.}
\label{tblGTD:mgm_comparison_algorithm_exact}
\end{table}%


\section{Compensation approach}%
\label{secGTD:compensation_approach}%

We make the educated guess that $p(i,j)$ is of the form $\al^i \be^{j - 1}$. Substitute this guess into the balance equations \eqref{eqnGTD:balance_equations_interior} and divide by common powers to obtain
\begin{equation}%
(\rho + 1) \al = \rho + \al \be \quad \Rightarrow \quad \al = \frac{\rho}{\rho + 1 - \be} \ifed f(\be). \label{eqnGTD:comp_interior}
\end{equation}%
Any pair $(\al,\be)$ that satisfies \eqref{eqnGTD:comp_interior}, satisfies the balance equations \eqref{eqnGTD:balance_equations_interior}. Moreover, any linear combination of product-form solutions that each, by itself, satisfies \eqref{eqnGTD:comp_interior} also satisfies \eqref{eqnGTD:balance_equations_interior}, which is a crucial property that we shall exploit. Since the equilibrium distribution must be normalized, only solution pairs $(\al,\be)$ with $|\al|,|\be| < 1$ are of interest.

We will construct a linear combination of solutions that satisfy the balance equations for the states in the interior to also satisfy the balance equations \eqref{eqnGTD:balance_equations_vertical_p(0,j)} on the vertical axis. If both \eqref{eqnGTD:balance_equations_interior} and \eqref{eqnGTD:balance_equations_vertical_p(0,j)} are satisfied, then the remaining balance equation \eqref{eqnGTD:balance_equations_vertical_p(0,1)_changed} is automatically satisfied, since the balance equations are dependent.

Rearrange \eqref{eqnGTD:balance_equations_vertical_p(0,j)} to
\begin{equation}%
(\rho + 1)p(0,j) - p(0,j + 1) = p(j,1), \quad j \ge 2. \label{eqnGTD:balance_equations_vertical_p(0,j)_rearranged}
\end{equation}%
Let us take as initial term $p(i,j) = c_0 \al_0^i \be_0^{j - 1}$ with $\be_0 = 0$, $\al_0 = f(\be_0) = \rho / (\rho + 1)$ and $c_0 > 0$ some constant. The choice $\be_0 = 0$ is essential; we argue why in \cref{remGTD:comp_alternative_beta_0}. Since the pair $(\al_0,\be_0)$ satisfies \eqref{eqnGTD:comp_interior}, the initial term $p(i,j)$ satisfies the balance equations of the interior, but does it also satisfy \eqref{eqnGTD:balance_equations_vertical_p(0,j)_rearranged}? Substitute $p(i,j) = c_0 \al_0^i \be_0^{j - 1}$ into \eqref{eqnGTD:balance_equations_vertical_p(0,j)_rearranged} to get
\begin{equation}%
0 = c_0 \al_0^j.
\end{equation}%
It is clear that the above equality does not hold, however, in this section we will abuse notation and write `$=$' anyway. Clearly, $p(i,j)$ does not satisfy \eqref{eqnGTD:balance_equations_vertical_p(0,j)_rearranged}. Let us therefore add another product-form term to compensate for the error $c_0 \al_0^j$. Set $p(i,j) = c_0 \al_0^i \be_0^{j - 1} + c_1 \al_1^i \be_1^{j - 1}$ and substitute this into \eqref{eqnGTD:balance_equations_vertical_p(0,j)_rearranged} to get
\begin{equation}%
c_1(\rho + 1) \be_1^{j - 1} - c_1 \be_1^j = c_0 \al_0^j + c_1 \al_1^j.
\end{equation}%
Since we want to compensate for the error introduced by the initial term, we chose $c_1$ and $\be_1$ such that
\begin{equation}%
c_1(\rho + 1) \be_1^{j - 1} - c_1 \be_1^j = c_0 \al_0^j. \label{eqnGTD:comp_second_term_eliminating_error_term}
\end{equation}%
Equation~\eqref{eqnGTD:comp_second_term_eliminating_error_term} must hold for all $j \ge 1$ and it is therefore immediate that we must choose $\be_1 = \al_0$. We want the pair $(\al_1,\be_1)$ to satisfy \eqref{eqnGTD:comp_interior} and therefore conclude that
\begin{equation}%
\be_1 = \al_0, \quad \al_1 = f(\be_1), \quad c_1 = c_0 \frac{\al_0}{\rho + 1 - \al_0}. \label{eqnGTD:comp_second_term}
\end{equation}%
By compensating once and choosing $c_1$, $\al_1$ and $\be_1$ as in \eqref{eqnGTD:comp_second_term} we have introduced a new error on the right-hand side of \eqref{eqnGTD:balance_equations_vertical_p(0,j)_rearranged}, namely $c_1 \al_1^j$. We compensate a second time: add a product-form term to the solution to get $p(i,j) = c_0 \al_0^i \be_0^{j - 1} + c_1 \al_1^i \be_1^{j - 1} + c_2 \al_2^i \be_2^{j - 1}$ and compensate for the error term $c_1 \al_1^j$ introduced by the previous compensation step. Similarly as for the previous compensation step, we set
\begin{equation}%
\be_2 = \al_1, \quad \al_2 = f(\be_2), \quad c_2 = c_1 \frac{\al_1}{\rho + 1 - \al_1}. \label{eqnGTD:comp_third_term}
\end{equation}%
Substituting this three-term solution $p(i,j)$ into \eqref{eqnGTD:balance_equations_vertical_p(0,j)_rearranged} gives zero on the left-hand side, but an error term $c_2 \al_2^j$ on the right-hand side.

The procedure is clear: compensation step $k$ adds a term $c_k \al_k^i \be_k^{j - 1}$ to the current solution to compensate for the error term $c_{k - 1} \al_{k - 1}^j$ introduced during compensation step $k - 1$. The terms are chosen according to
\begin{equation}%
\be_k = \al_{k - 1}, \quad \al_k = f(\be_k), \quad c_k = c_{k - 1} \frac{\al_{k - 1}}{\rho + 1 - \al_{k - 1}}. \label{eqnGTD:comp_general_term}
\end{equation}%

Now, if the error terms $c_k \al_k^j$ tend to zero sufficiently fast as $k \to \infty$, then the linear combination of product-form solutions
\begin{equation}%
p(i,j) = \sum_{k \ge 0} c_k \al_k^i \be_k^{j - 1}, \quad i \ge 0, ~ j \ge 1, \label{eqnGTD:comp_p(i,j)}
\end{equation}%
is finite and satisfies \eqref{eqnGTD:balance_equations_interior} and \eqref{eqnGTD:balance_equations_vertical_p(0,j)}. From $\al_k = f(\al_{k - 1})$ and $\be_k = \al_{k - 1}$ it can be verified that
\begin{align}%
\al_k &= \rho \frac{1 - \rho^{k + 1}}{1 - \rho^{k + 2}}, \quad \be_k = \rho \frac{1 - \rho^k}{1 - \rho^{k + 1}}, \label{eqnGTD:comp_alpha_k_beta_k} \\
c_k &= c_0 \prod_{l = 0}^{k - 1} \frac{\al_l}{\rho + 1 - \al_l} = c_0 \rho^k \frac{1 - \rho^2}{1 - \rho^{k + 2}} \frac{1 - \rho}{1 - \rho^{k + 1}}. \label{eqnGTD:comp_c_k_still_function_of_c_0}
\end{align}%
From these explicit expressions it is clear that $c_k \al_k^j \to 0$ for $k \to \infty$. In fact, the error terms $c_k \al_k^j$ tend to zero geometrically fast (with rate $\rho$). Since $0 < \al_k,\be_k < 1$ and $c_k > 0$, we know that \eqref{eqnGTD:comp_p(i,j)} is maximal if $i = 0$ and $j = 1$. Therefore, if \eqref{eqnGTD:comp_p(i,j)} is finite for $i = 0$ and $j = 1$, then it is finite for all $i \ge 0, ~ j \ge 1$. We have
\begin{equation}%
p(0,1) = \sum_{k \ge 0} c_k = c_0 \sum_{k \ge 0} \rho^k \frac{1 - \rho^2}{1 - \rho^{k + 2}} \frac{1 - \rho}{1 - \rho^{k + 1}} < c_0 \sum_{k \ge 0} \rho^k < \infty.
\end{equation}%
The constant $c_0$ follows from the normalization condition \eqref{eqnGTD:normalization_condition}:
\begin{equation}%
\rho = \sum_{i \ge 0} \sum_{j \ge 1} \sum_{k \ge 0} c_k \al_k^i \be_k^{j - 1} = \sum_{k \ge 0} c_k \frac{1}{1 - \al_k} \frac{1}{1 - \be_k} = c_0 \frac{1 - \rho^2}{(1 - \rho)^2}.
\end{equation}%
So,
\begin{equation}%
c_0 = \rho \frac{(1 - \rho)^2}{1 - \rho^2} \label{eqnGTD:comp_c_0}
\end{equation}%
and therefore
\begin{equation}%
c_k = \frac{(1 - \rho)^3 \rho^{k + 1}}{(1 - \rho^{k + 2})(1 - \rho^{k + 1})}. \label{eqnGTD:comp_c_k}
\end{equation}%
\cref{figGTD:generating_compensation_parameters} shows how the compensation parameters $\al_k$ and $\be_k$ are generated.

\begin{figure}%
\centering%
\includestandalone{Chapters/GTD/TikZFiles/generating_compensation_parameters}%
\caption{Generating the compensation parameters $\al_k$ and $\be_k$.}%
\label{figGTD:generating_compensation_parameters}%
\end{figure}%

\begin{remark}[Alternative $\be_0$]\label{remGTD:comp_alternative_beta_0}%
From \cref{figGTD:generating_compensation_parameters} it is clear that if $\be_0 > \rho$ then $\al_k \to 1$ and $\be_k \to 1$ for $k \to \infty$. This in turn means that by \eqref{eqnGTD:comp_c_k_still_function_of_c_0} $c_k \to \infty$ as $k \to \infty$ and the error terms $c_k \al_k^j \to \infty$. Hence, it is clear that $\be_0$ must satisfy $0 \le \be_0 < \rho$. However, we have made the specific choice $\be_0 = 0$. We demonstrate why that choice is essential.

\begin{figure}
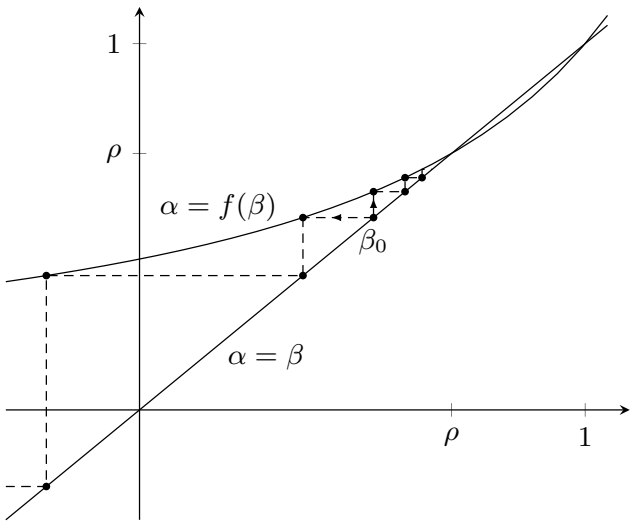
%
\centering%
\includestandalone{Chapters/GTD/TikZFiles/generating_compensation_parameters_alternative_beta_0}%
\caption{Generating the compensation parameters with initial value $0 < \be_0 < \rho$.}%
\label{figGTD:generating_compensation_parameters_alternative_beta_0}%
\end{figure}%

Let us fix an alternative $\be_0$ with $0 < \be_0 < \rho$. In that case, substituting the initial term $p(i,j) = c_0 \al_0^i \be_0^{j - 1}$ in \eqref{eqnGTD:balance_equations_vertical_p(0,j)_rearranged} results in two error terms: the term $c_0 \al_0^j$ on the right-hand side and the term $(\rho + 1) \be_0^{j - 1} - \be_0^j$ on the left-hand side. So, we would need to add two terms to compensate for the two errors. \cref{figGTD:generating_compensation_parameters_alternative_beta_0} shows that an infinite sequence of $\al_k$ and $\be_k$ is generated in two directions, where in one direction $\al_k$ and $\be_k$ tend to $-\infty$, thus leading to a divergent infinite series expression. Continuing in this way leads to a divergent infinite series expression for the equilibrium probabilities.

On the contrary, the choice $\be_0 = 0$ results in only one error term, which generates a convergent infinite series. As an edge case, choosing $\be_0 = f^{\circ k}(0)$ for some $k$, the sequence that is generated in the left- and downward direction terminates when the coordinate $(0,0)$ is hit (the correct initial value!).
\end{remark}%

We have seen that the compensation approach solves the balance equations by inserting a linear combination of product-form solutions. The linear combination contains a countably infinite number of product-form solutions and therefore a procedure is required to select the right product-form terms. These product-form solutions all have one thing in common: they satisfy the balance equations \eqref{eqnGTD:balance_equations_interior} of the states in the interior of the state space.

For numerical purposes the infinite sum expression must be truncated. We outline a simple procedure to determine an approximation of any equilibrium probability in \cref{algGTD:compensation_approach}. Just as for the matrix-geometric approach, we compare the values obtained for $p(k)$ from \cref{algGTD:compensation_approach} and the exact values in \eqref{eqnGTD:MM1_equilibrium_probabilities}. Comparing \cref{tblGTD:comp_comparison_algorithm_exact} with \cref{tblGTD:mgm_comparison_algorithm_exact}, it seems that the compensation approach produces better approximations of the equilibrium probabilities while requiring less computation time.

\begin{algorithm}%
\caption{Compensation approach}%
\label{algGTD:compensation_approach}%
\begin{algorithmic}[1]%
\State Pick a large positive integer $K$
\State Calculate $\{ \al_k \}_{0 \le k \le K}$ from \eqref{eqnGTD:comp_alpha_k_beta_k}.
\State Set $\be_0 = 0$ and $\be_k = \al_{k - 1}, ~ 1 \le k \le K$
\State Calculate $\{ c_k \}_{0 \le k \le n}$ from \eqref{eqnGTD:comp_c_k}
\State Compute any $p(i,j)$ from \eqref{eqnGTD:comp_p(i,j)} with the infinite sum truncated to $K$
\end{algorithmic}%
\end{algorithm}%

\begin{table}%
\centering%
\begin{tabular}{*{6}{c}}
         & \multicolumn{4}{c}{$k$} & \\
$K$      & 1 & 3 & 5 & 10 & time (\si{\milli\second}) \\
\hline
10       & 0.15704838 & 0.09683680 & 0.05971004 & 0.01782644 & 0.14 \\
30       & 0.15996828 & 0.10233911 & 0.06547107 & 0.02143230 & 0.21 \\
50       & 0.15999963 & 0.10239929 & 0.06553525 & 0.02147434 & 0.31 \\
100      & 0.15999999 & 0.10239999 & 0.06553599 & 0.02147483 & 0.78 \\
200      & 0.16000000 & 0.10240000 & 0.06553600 & 0.02147483 & 0.94 \\
$\infty$ & 0.16000000 & 0.10240000 & 0.06553600 & 0.02147483 &
\end{tabular}%
\caption{Comparing $p(k)$ obtained using \protect\cref{algGTD:compensation_approach} for various values of $K$ and $k$ with the exact values \protect\eqref{eqnGTD:MM1_equilibrium_probabilities}. Parameter values are $\la = 0.8$ and $\mu = 1$.}
\label{tblGTD:comp_comparison_algorithm_exact}
\end{table}%


\section{Takeaways}%
\label{secGTD:what_have_we_learned}%

The Markov process associated with the gated single-server system has no upward transitions in the interior of the state space. However, it does have transition from the states on the horizontal axis to states on the vertical axis, a property that makes the analysis of the gated single-server system challenging.

For the generating function approach, the transitions from the horizontal axis to the vertical axis ensured that both $\PGF{x,0}$ and $\PGF{y,0}$ appear, while $\PGF{0,y}$ did not appear in the functional equation for $\PGF{x,y}$. Substituting the root $\rooty(x)$ into the functional equation led to an expression of $\PGF{x,0}$ in terms of the same generating function evaluated in a different point, namely $\PGF{\rooty(x),0}$. By iteratively substituting $\PGF{\rooty(x),0},\PGF{\rooty(\rooty(x)),0},\ldots$ an infinite sum expression was obtained for $\PGF{x,0}$. In our case the function $\rooty(x)$ was easy to work with and allowed for an explicit determination of $\PGF{x,0}$. Finally, an explicit expression involving three infinite summations was obtained for $\PGF{x,y}$. The expression for $\PGF{x,y}$ revealed that each equilibrium probability has an explicit expression in terms of an infinite sum of product-form terms.

Even though the elements of the rate matrix $R$ of the matrix-geometric method were determined explicitly, it seems that this method suffered the most from the complex balance equations of the states on the vertical axis. We had to resort to numerical approximations of the equilibrium probabilities by truncating relevant matrices, vectors and infinite summations.

The compensation approach was well-suited for Markov processes with this structure in the transition rate diagram. The approach identified that a product-form solution satisfies the balance equations of the states in the interior of the state space. These product-form solutions were then linearly combined to also satisfy the balance equations of the states on the vertical axis. Finally, we showed that the error terms tend to zero and that the infinite sum expression is convergent so that the infinite sum expression indeed described the equilibrium probabilities.

The generating function approach and the compensation approach both led to the same product-form solution. Whereas the generating function approach can be used to obtain the generating function of the equilibrium probabilities for a broad class of Markov processes, the compensation approach is more limited in scope. However, if the compensation approach can be applied, then it leads to an explicit expression for the equilibrium probabilities. We consider another model where the compensation approach can be applied in \cref{ch:join_the_shortest_queue}.


\printendnotes

\chapter{Production systems}%
\label{ch:cyclic_production_systems}%

In this chapter we consider three production systems that give rise to two-dimensional Markov processes. The first system produces standard items to stock and non-standard items to demand. The second system produces items in two phases. When all demand for items is fulfilled, the system is allowed to complete the first phase of the production and place these half-finished items on stock.\endnote{The first and second system are hybrid systems that combine two production disciplines: make-to-order and make-to-stock. The analysis of these two systems is presented in Adan and van der Wal \cite{Adan1998_MTO_MTS}. Williams \cite{Williams1984_MTO_MTS}, Van Donk \cite{Donk2001_MTO_MTS} and Carr et al \cite{Carr1993_MTO_MTS} treat hybrid systems and answer questions such as which item to stock and which item to produce to order and what capacity is required.} The third system is a production line with two machines and three processing steps. The first and last step are both executed by machine one. Machine one works on step one items and immediately switches to items that require processing in the last step whenever they become available.\endnote{The third model is called a re-entrant line model and is analyzed in Adan and Weiss \cite{Adan2006_Push-pull} using the same techniques as in this book. The stability condition is derived in Weiss \cite{Weiss2004_Stability_push-pull}. Similar re-entrant line models (without the infinite supply of work) can be found in Chen and Meyn \cite{Chen1999_Multiclass_queueing_networks,Chen2003_Sensitivity_network_optimization} and Dai and Weiss \cite{Dai1996_Stability_reentrant_lines}.} For each system we present a tailor-made solution method to obtain the equilibrium distribution.

\section{Stocking standard items}%
\label{secCPM:model1}%

Consider a single-server system that produces both standard items to stock and non-standard items to demand. When there is no unfulfilled demand for either product, the server (machine or worker) produces standard items to stock in anticipation of future demand. We assume that at most $J$ units of standard items can be placed on stock. Demand for standard items are delivered from stock. However, if there is no stock, then the server produces standard items to satisfy the demand. Non-standard items are never delivered from stock, but are produced to order. Demand for standard and non-standard items arrives according to Poisson processes with rates $\la_1$ and $\la_2$. We denote $\la \defi \la_1 + \la_2$. The production times for both items are exponentially distributed with rate $\mu$. Producing items to satisfy demand preempts the production of standard items to stock. The sample paths of this system alternate between the server producing as many standard items to stock as possible in its otherwise idle time and the server satisfying demand for both standard and non-standard items.

Let $X_1(t)$ be the total number of unfulfilled demand (both standard and non-standard items) at time $t$ and let $X_2(t)$ be the number of standard items on stock. Denote the state of the system by $X(t) \defi (X_1(t),X_2(t))$. Then $\{ X(t) \}_{t \ge 0}$ is a Markov process with state space
\begin{equation}%
\statespace \defi \{ (i,j) \in \Nat_0^2 : 0 \le j \le J \}.
\end{equation}%
It is apparent from the transition rate diagram in \cref{figCPM:model1_transition_rate_diagram} that the state space is irreducible. To guarantee positive recurrence and the existence of the equilibrium distribution we require that
\begin{equation}%
\rho \defi \la / \mu < 1.
\end{equation}%
Let $p(i,j)$ denote the equilibrium probability of being in state $(i,j)$.

\begin{figure}
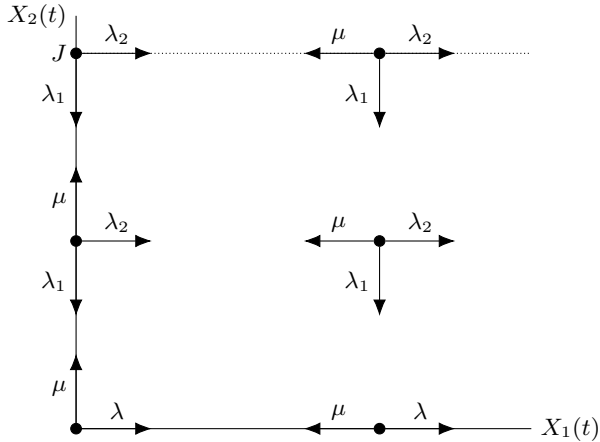
%
\centering%
\includestandalone{Chapters/CPM/TikZFiles/model1_transition_rate_diagram}%
\caption{Structure of the transition rate diagram of the Markov process associated with the first cyclic production system.}%
\label{figCPM:model1_transition_rate_diagram}%
\end{figure}%

The Markov process $\{ X(t) \}_{t \ge 0}$ is a QBD process with levels
\begin{equation}%
\lvl{i} \defi \{ (i,0),(i,1),\ldots,(i,J) \}, \quad i \ge 0. \label{eqnCPM:model1_definition_level}
\end{equation}%
We use the matrix-geometric method to determine the equilibrium distribution. To that end, define the vectors
\begin{equation}%
\pb_i \defi \begin{bmatrix} p(i,0) & p(i,1) & \cdots & p(i,J) \end{bmatrix}.
\end{equation}%

We display the balance equations in vector-matrix notation. The balance equations for the interior levels $\lvl{i}, ~ i \ge 1$ are
\begin{equation}%
\pb_{i - 1} \La_1 + \pb_i \La_0 + \pb_{i + 1} \La_{-1} = \zerob, \label{eqnCPM:model1_balance_equations_interior}
\end{equation}%
where $\La_{-1} = \mu \I$,
\begin{equation}%
\La_1 = \la_2 \I + \begin{bmatrix}%
\la_1  & 0 & \cdots & 0 \\
0      & 0 &        &   \\
\vdots &   & \ddots &   \\
0      &   &        & 0
\end{bmatrix}%
\end{equation}%
and
\begin{equation}%
\La_0 = -(\la + \mu) \I + \begin{bmatrix}%
0     &       \\
\la_1 & 0     \\
      & \la_1 & 0 \\
      &       & \ddots & \ddots \\
      &       &        & \la_1   & 0
\end{bmatrix}.%
\end{equation}%
For the boundary level $\lvl{0}$ we have the balance equation
\begin{equation}%
\pb_0 \La_0^{(0)} + \pb_1 \La_{-1} = \zerob, \label{eqnCPM:model1_balance_equations_boundary}
\end{equation}%
where
\begin{equation}%
\La_0^{(0)} = \La_0 + \begin{bmatrix}%
0 & \mu \\
  & 0   & \mu    \\
  &     & \ddots & \ddots \\
  &     &        & 0      & \mu \\
  &     &        &        & 0
\end{bmatrix} + \begin{bmatrix}%
0 &        &   & 0  \\
  & \ddots &   & \vdots \\
  &        & 0 & 0 \\
0 & \cdots & 0 & \mu
\end{bmatrix}.%
\end{equation}%

The rate matrix $R$ satisfies the matrix-quadratic equation
\begin{equation}%
R^2 \La_{-1} + R \La_0 + \La_1 = 0, \label{eqnCPM:model1_R_satisfies}
\end{equation}%
and the equilibrium probabilities follow from
\begin{equation}%
\pb_{i + 1} = \pb_i R, \quad i \ge 0.
\end{equation}%
The boundary probabilities $\pb_0$ are computed from \eqref{eqnCPM:model1_balance_equations_boundary} by inserting $\pb_1 = \pb_0 R$:
\begin{equation}%
\pb_0 \bigl( \La_0^{(0)} + R \La_{-1} \bigr) = \zerob \label{eqnCPM:model1_balance_equations_boundary_using_R}
\end{equation}%
and from either the normalization condition $\pb_0 (\I - R)^{-1} \oneb = 1$ or using $p(0,J) = 1 - \rho$. Clearly, we can use successive substitutions, see \cref{algQBD:matrix-geometric_successive_substitutions}, to determine the rate matrix $R$ from \eqref{eqnCPM:model1_R_satisfies}. However, we can do better and obtain exact expressions by exploiting the structure of the transition rate diagram.

In all levels except $\lvl{0}$ the process cannot move upwards. So, from the probabilistic interpretation (see \cref{secQBD:matrix-geometric_method}) we know that the rate matrix $R$ is a lower triangular matrix. We have seen this before in \cref{secPRIO:matrix-geometric_and_matrix-analytic_methods} and \cref{secGTD:matrix-geometric_approach}. Moreover, many of its elements are identical due to the homogeneous transition structure for phases $1$ until $J$. In particular, we can write
\begin{equation}%
R = \begin{bmatrix}%
b_0 \\
b_1 & r_0 \\
b_2 & r_1 & r_0 \\
b_3 & r_2 & r_1 & r_0 \\
 & \vdots & & & \ddots\\
b_J & r_{J - 1} & & \cdots & & r_0
\end{bmatrix}.%
\label{eqnCPM:model1_form_R}
\end{equation}%
With this representation in mind, the system of equations \eqref{eqnCPM:model1_R_satisfies} can be written component-wise as
\begin{align}%
\mu b_0^2 - (\la + \mu) b_0 + \la &= 0, \\
\mu r_0^2 - (\la + \mu) r_0 + \la_2 &= 0,
\end{align}%
and
\begin{align}%
\mu \bigl( b_i b_0 + \sum_{k = 1}^i r_{i - k} b_k \bigr) + \la_1 r_{i - 1} - (\la + \mu) b_i &= 0, \quad 1 \le i \le J, \label{eqnCPM:model1_elements_d_i} \\
\mu \sum_{k = j}^i r_{i - k} r_{k - j} + \la_1 r_{i - (j + 1)} - (\la + \mu) r_{i - j} &= 0, \quad 1 \le j < i \le J. \label{eqnCPM:model1_elements_r_i}
\end{align}%
Since $R$ is the minimal non-negative solution, we get that $b_0 = \rho$ and
\begin{equation}%
r_0 = \frac{\la + \mu - \sqrt{(\la + \mu)^2 - 4\la_2 \mu}}{2\mu}. \label{eqnCPM:model1_r_0}
\end{equation}%
Many of the equations \eqref{eqnCPM:model1_elements_r_i} are identical. We introduce $d = i - j$ in \eqref{eqnCPM:model1_elements_r_i} and find
\begin{equation}%
\mu \sum_{k = 0}^d r_{d - k} r_k + \la_1 r_{d - 1} - (\la + \mu) r_d = 0, \quad 1 \le d \le J - 1. \label{eqnCPM:model1_elements_r_i_subdiagonals}
\end{equation}%
Starting from the initial values $b_0$ and $r_0$ we can solve for $r_d$ for $1 \le d \le J - 1$ using \eqref{eqnCPM:model1_elements_r_i_subdiagonals} and then solve for $b_i$ for $1 \le i \le J$ using \eqref{eqnCPM:model1_elements_d_i}. Finally, we construct the matrix $R$ according to \eqref{eqnCPM:model1_form_R}.

We still need to solve for the boundary probabilities $\pb_0$. The balance equations \eqref{eqnCPM:model1_balance_equations_boundary_using_R} can be solved iteratively. Component-wise these equations read, for $1 \le j \le J - 1$,
\begin{align}%
0 &= \mu p(0,J - 1) - \la p(0,J) + \mu p(0,J) r_0, \label{eqnCPM:model1_balance_equations_boundary_phase_J} \\
0 &= \mu p(0,j - 1) - (\la + \mu) p(0,j) + \la_1 p(0,j + 1) \notag \\
&\quad + \mu \sum_{k = 0}^{J - j} p(0,j + k) r_k, \label{eqnCPM:model1_balance_equations_boundary_phase_j} \\
0 &= -(\la + \mu) p(0,0) + \la_1 p(0,1) + \mu \sum_{k = 0}^{J} p(0,k) b_k. \label{eqnCPM:model1_balance_equations_boundary_phase_0}
\end{align}%
Using $p(0,J) = 1 - \rho$ we can solve for all boundary probabilities by starting with the equation of phase $J$ and working our way down. Since we have the additional equation $p(0,J) = 1 - \rho$, equation \eqref{eqnCPM:model1_balance_equations_boundary_phase_0} is redundant, since we can determine $p(0,0)$ from \eqref{eqnCPM:model1_balance_equations_boundary_phase_j} for $j = 1$.

\cref{algCPM:model1_matrix_geometric_method} summarizes the matrix-geometric method for the model that combines production of standard items to stock and non-standard items to demand.

\begin{algorithm}%
\caption{Matrix-geometric method}%
\label{algCPM:model1_matrix_geometric_method}%
\begin{algorithmic}[1]%
\State Set $b_0 = \rho$ and $r_0$ according to \eqref{eqnCPM:model1_r_0}
\For{$d = 1,2,\ldots,J - 1$}
    \State Calculate $r_d$ from \eqref{eqnCPM:model1_elements_r_i_subdiagonals}
\EndFor
\For{$i = 1,2,\ldots,J$}
    \State Calculate $b_i$ from \eqref{eqnCPM:model1_elements_d_i}
\EndFor
\State Set $p(0,J) = 1 - \rho$ and calculate $p(0,J - 1)$ from \eqref{eqnCPM:model1_balance_equations_boundary_phase_J}
\For{$j = J - 1,J - 2,\ldots,0$}
    \State Calculate $p(0,j)$ from \eqref{eqnCPM:model1_balance_equations_boundary_phase_j}
\EndFor
\State Construct the matrix $R$ according to \eqref{eqnCPM:model1_form_R}
\State All equilibrium probabilities now follow from $\pb_i = \pb_0 R^i$
\end{algorithmic}%
\end{algorithm}%

The number of unfulfilled demand in equilibrium is denoted by $X_1$. Using \cref{algCPM:model1_matrix_geometric_method} we can determine key performance indicators such as the expected number off unfulfilled demand $\E{X_1}$ and the probability $\Prob{X_1 \ge 2}$ that two or more unfulfilled orders are in the system. We show both performance indicators in \cref{figCPM:model1_performance_indicators_unfulfilled_demand} as a function of the maximum stock level $J$. Clearly, increasing $J$ when $J$ is relatively small has a larger positive impact on these indicators than when $J$ is already relatively large.

\begin{figure}
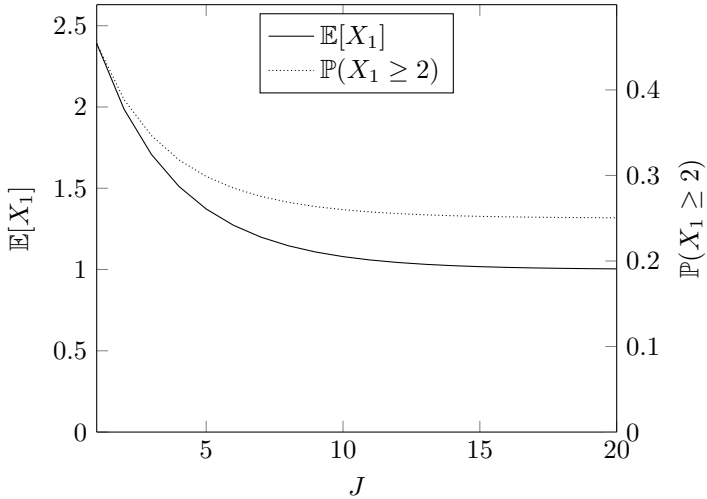
%
\centering%
\includestandalone{Chapters/CPM/TikZFiles/model1_EX1_and_ProbX1geq2}%
\caption{Performance indicators based on the number of unfulfilled demand $X_1$. Computed using \protect\cref{algCPM:model1_matrix_geometric_method}.}%
\label{figCPM:model1_performance_indicators_unfulfilled_demand}%
\end{figure}%


\section{Stocking half-finished items}%
\label{secCPM:model2}%

The next production system that we consider is one that produces items in two phases. The first and second phases take an exponential amount of time with parameters $\mu_1$ and $\mu_2$. There is a single server (machine or worker) that produces the items. The first phase is identical for all items. Therefore, some half-finished items (items for which only the first phase is completed) can be placed on stock in anticipation of future demand. We assume that at most $J$ units of half-finished items can be placed on stock. Demand for a single item arrives according to a Poisson process with rate $\la$. When demand arrives, the server immediately takes a half-finished item from stock and finishes its second phase, or, if there is no stock, starts immediately with the first phase. The behavior of the production system is cyclical: the server produces as much stock as possible in its otherwise idle time and then satisfies demand as it comes in until all demand is satisfied and the server returns to producing stock.

Let $X_1(t)$ be the number of unfulfilled demand at time $t$ and let $X_2(t)$ be the number of half-finished items in the system at time $t$. Denote the state of the system by $X(t) \defi (X_1(t),X_2(t))$. Then $\{ X(t) \}_{t \ge 0}$ is a Markov process with state space
\begin{equation}%
\statespace \defi \{ (i,j) \in \Nat_0^2 : 0 \le j \le J \}.
\end{equation}%
\cref{figCPM:model2_transition_rate_diagram} shows the transition rate diagram. The state space is irreducible because from each state all other states can be reached. The Markov process is positive recurrent if
\begin{equation}%
\rho \defi \la (\frac{1}{\mu_1} + \frac{1}{\mu_2}) < 1
\end{equation}%
and then the equilibrium distribution exists. Let $p(i,j)$ denote the equilibrium probability of being in state $(i,j)$.

\begin{figure}
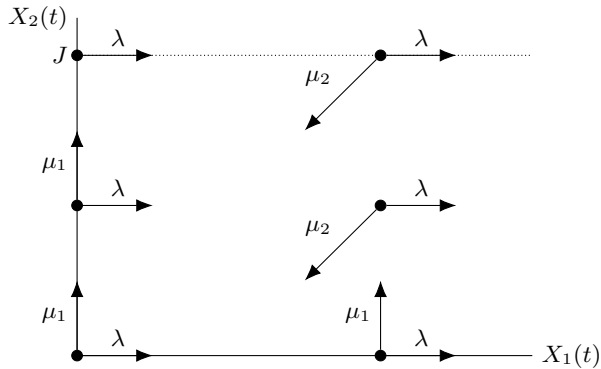
%
\centering%
\includestandalone{Chapters/CPM/TikZFiles/model2_transition_rate_diagram}%
\caption{Structure of the transition rate diagram of the Markov process associated with the second cyclic production system.}%
\label{figCPM:model2_transition_rate_diagram}%
\end{figure}%

The Markov process $\{ X(t) \}_{t \ge 0}$ is a QBD process with levels as in \eqref{eqnCPM:model1_definition_level}. We use generating functions to determine the equilibrium distribution. Since it is a QBD process, also other approaches such as the matrix-geometric or matrix-analytical methods are applicable, but we do not demonstrate them.

The balance equations for the interior levels $\lvl{i}, ~ i \ge 1$ are
\begin{align}%
(\la + \mu_2) p(i,J) &= \la p(i - 1,J), \label{eqnCPM:model2_balance_equations_interior_phase_J}\\
(\la + \mu_2) p(i,j) &= \la p(i - 1,j) + \mu_2 p(i + 1,j + 1), \quad 2 \le j \le J - 1, \label{eqnCPM:model2_balance_equations_interior_phase_j} \\
(\la + \mu_2) p(i,1) &= \la p(i - 1,1) + \mu_2 p(i + 1,2) + \mu_1 p(i,0), \label{eqnCPM:model2_balance_equations_interior_phase_1} \\
(\la + \mu_1) p(i,0) &= \la p(i - 1,0) + \mu_2 p(i + 1,1). \label{eqnCPM:model2_balance_equations_interior_phase_0}
\end{align}%
For $\lvl{0}$ we have the balance equations
\begin{align}%
\la p(0,J) &= \mu_1 p(0,J - 1), \label{eqnCPM:model2_balance_equations_boundary_phase_J}\\
(\la + \mu_1) p(0,j) &= \mu_1 p(0,j - 1) + \mu_2 p(1,j + 1), \quad 1 \le j \le J - 1, \label{eqnCPM:model2_balance_equations_boundary_phase_j} \\
(\la + \mu_1) p(0,0) &= \mu_2 p(1,1). \label{eqnCPM:model2_balance_equations_boundary_phase_0}
\end{align}%

Define the generating functions
\begin{equation}%
\PGF{j}{\PGFarg} \defi \sum_{i \ge 0} p(i,j) \PGFarg^i, \quad |\PGFarg| < 1, ~ j = 0,1,\ldots,J \label{eqnCPM:model2_definition_generating_function}
\end{equation}%
associated with the equilibrium probabilities of phase $j$. We derive expressions for these generating functions, starting with phase $J$ and working our way down.

Multiplying \eqref{eqnCPM:model2_balance_equations_interior_phase_J} by $\PGFarg^i$ and summing over all $i \ge 1$ yields
\begin{equation}%
(\la + \mu_2) \sum_{i \ge 1} p(i,J) \PGFarg^i = \la \sum_{i \ge 1} p(i - 1,J) \PGFarg^i.
\end{equation}%
Adding and subtracting $(\la + \mu_2)\PGF{J}{0}$ on the left-hand side and bringing all $\PGF{J}{\PGFarg}$ terms to one side allows us to write
\begin{equation}%
\PGF{J}{\PGFarg} = \PGF{J}{0} \frac{\la + \mu_2}{\la + \mu_2 - \la \PGFarg} = \PGF{J}{0} \frac{1}{1 - \frac{\la}{\la + \mu_2} \PGFarg}. \label{eqnCPM:model2_generating_function_phase_J}
\end{equation}%
Since $\PGF{J}{0} = p(0,J) = 1 - \rho$, we obtain an explicit expression for $\PGF{J}{\PGFarg}$. In \eqref{eqnCPM:model2_generating_function_phase_J} we recognize the geometric series
\begin{equation}%
p(i,J) = (1 - \rho) \bigl( \frac{\la}{\la + \mu_2} \bigr)^i.
\end{equation}%

Next, we multiply \eqref{eqnCPM:model2_balance_equations_interior_phase_j} by $\PGFarg^{i + 1}$ and sum over all $i \ge 1$ to obtain, for $2 \le j \le J - 1$,
\begin{align}%
&(\la + \mu_2) \PGFarg \sum_{i \ge 1} p(i,j) \PGFarg^i \notag \\
&= \la \PGFarg^2 \sum_{i \ge 1} p(i - 1,j) \PGFarg^{i - 1} + \mu_2 \sum_{i \ge 1} p(i + 1,j + 1) \PGFarg^{i + 1}.
\end{align}%
Using definition \eqref{eqnCPM:model2_definition_generating_function} we can write this as
\begin{align}%
& (\la + \mu_2) \PGFarg ( \PGF{j}{\PGFarg} - \PGF{j}{0} ) \notag \\
&= \la \PGFarg^2 \PGF{j}{\PGFarg}  + \mu_2 \Bigr( \PGF{j + 1}{\PGFarg} - \PGF{j + 1}{0} - \PGFarg \frac{\dinf}{\dinf y} \PGF{j + 1}{y} \Big\vert_{y = 0} \Bigr). \label{eqnCPM:model2_generating_function_phase_j_eq1}
\end{align}%
Equation \eqref{eqnCPM:model2_generating_function_phase_j_eq1} for $\PGF{j}{\PGFarg}$ involves $\PGF{j}{0}$, which is unknown at this point. We derive an additional equation to eliminate $\PGF{j}{0}$ from \eqref{eqnCPM:model2_generating_function_phase_j_eq1}. Define the set of states in phase $j$ as $\set{A}_j = \{ (0,j),(1,j),\ldots \}$ and the union of the sets in the first $j$ phases as $\set{A}_{\le j} = \sum_{k = 0}^j \set{A}_k$. Since the Markov process is in equilibrium, the rate at which the process enters and leaves the set of states $\set{A}_{\le j}$ is equal. For $1 \le j \le J - 1$, this balance equation reads
\begin{equation}%
\mu_1 p(0,j) = \mu_2 \sum_{k \ge 1} p(k,j + 1),
\end{equation}%
or, in terms of the generating functions,
\begin{equation}%
\mu_1 \PGF{j}{0} = \mu_2 (\PGF{j + 1}{1} - \PGF{j + 1}{0}). \label{eqnCPM:model2_balance_equations_between_phases}
\end{equation}%
Using \eqref{eqnCPM:model2_balance_equations_between_phases} to eliminate $\PGF{j}{0}$ from \eqref{eqnCPM:model2_generating_function_phase_j_eq1} yields
\begin{align}%
( \la + \mu_2 - \la \PGFarg) \PGFarg \PGF{j}{\PGFarg} 
&= \mu_2 \Bigr( \PGF{j + 1}{\PGFarg} - \PGF{j + 1}{0} - \PGFarg \frac{\dinf}{\dinf y} \PGF{j + 1}{y} \Big\vert_{y = 0} \Bigr) \notag \\
&\quad + ( \la + \mu_2 ) \frac{\mu_2}{\mu_1} \PGFarg (\PGF{j + 1}{1} - \PGF{j + 1}{0}). \label{eqnCPM:model2_generating_function_phase_j_eq2}
\end{align}%
As a result, the generating function $\PGF{j}{\PGFarg}$ is expressed in terms of $\PGF{j + 1}{\PGFarg}$ evaluated at some points.

We continue by examining \eqref{eqnCPM:model2_balance_equations_interior_phase_1}. Multiply both sides by $\PGFarg^{i + 1}$ and sum over all $i \ge 1$ to obtain
\begin{align}%
(\la + \mu_2) \PGFarg \sum_{i \ge 1} p(i,1) \PGFarg^i &= \la \PGFarg^2 \sum_{i \ge 1} p(i - 1,1) \PGFarg^{i - 1} + \mu_2 \sum_{i \ge 1} p(i + 1,2) \PGFarg^{i + 1} \notag \\
&\quad + \mu_1 \PGFarg \sum_{i \ge 1} p(i,0) \PGFarg^i.
\end{align}%
Adding \eqref{eqnCPM:model2_balance_equations_boundary_phase_j} for $j = 1$ and simplifying using the definition \eqref{eqnCPM:model2_definition_generating_function} yields
\begin{align}%
(\la + \mu_2 - \la \PGFarg) \PGFarg \PGF{1}{\PGFarg} &= (\mu_2 - \mu_1) \PGFarg \PGF{1}{0} + \mu_2 ( \PGF{2}{\PGFarg} - \PGF{2}{0} ) \notag \\
&\quad + \mu_1 \PGFarg \PGF{0}{\PGFarg}.
\end{align}%
Eliminate $\PGF{1}{0}$ using \eqref{eqnCPM:model2_balance_equations_between_phases} to derive
\begin{align}%
(\la + \mu_2 - \la \PGFarg) \PGFarg \PGF{1}{\PGFarg} &= (\mu_2 - \mu_1) \PGFarg \frac{\mu_2}{\mu_1} (\PGF{2}{1} - \PGF{2}{0}) \notag \\
&\quad + \mu_2 ( \PGF{2}{\PGFarg} - \PGF{2}{0} ) + \mu_1 \PGFarg \PGF{0}{\PGFarg}. \label{eqnCPM:model2_generating_function_phase_1}
\end{align}%

We derive a second expression for $\PGF{1}{\PGFarg}$ and $\PGF{0}{\PGFarg}$. Multiply both sides of \eqref{eqnCPM:model2_balance_equations_interior_phase_0} by $\PGFarg^{i + 1}$, sum over all $i \ge 1$ and add \eqref{eqnCPM:model2_balance_equations_boundary_phase_0} to obtain
\begin{align}%
(\la + \mu_1) \PGFarg \sum_{i \ge 0} p(i,0) \PGFarg^i &= \la \PGFarg^2 \sum_{i \ge 1} p(i - 1,0) \PGFarg^{i - 1} \notag \\
&\quad + \mu_2 \sum_{i \ge 0} p(i + 1,1) \PGFarg^{i + 1}.
\end{align}%
Simplify this expression by using definition \eqref{eqnCPM:model2_definition_generating_function} and balance equation \eqref{eqnCPM:model2_balance_equations_between_phases}:
\begin{equation}%
(\la + \mu_1 - \la \PGFarg) \PGFarg \PGF{0}{\PGFarg} = \mu_2 \PGF{1}{\PGFarg} - \frac{\mu_2^2}{\mu_1} (\PGF{2}{1} - \PGF{2}{0}). \label{eqnCPM:model2_generating_function_phase_0}
\end{equation}%

Substituting \eqref{eqnCPM:model2_generating_function_phase_0} into \eqref{eqnCPM:model2_generating_function_phase_1} then completes the system of equations for $\PGF{j}{\PGFarg}, ~ 0 \le j \le J$. Starting from the explicit expression of $\PGF{J}{\PGFarg}$ in \eqref{eqnCPM:model2_generating_function_phase_J} we iteratively solve $\PGF{j}{\PGFarg}$ for $j = J - 1, J - 2, \ldots, 0$ from \eqref{eqnCPM:model2_generating_function_phase_j_eq2}, \eqref{eqnCPM:model2_generating_function_phase_1} and \eqref{eqnCPM:model2_generating_function_phase_0}.

It must be noted here that it seems that the terms $\PGF{J - j}{\PGFarg}, ~ 0 \le j \le J - 2$ can be written as a polynomial of degree $j + 1$ in $1/(1 - \la/(\la + \mu_2) \PGFarg)$. However, an explicit expression of the coefficients in each polynomial is difficult to obtain, so we do not that discuss this here.

The equilibrium probabilities can be determined by taking derivatives, or by using a standard inversion algorithm for univariate generating function such as the one we presented in \cref{algQTF:numerical_inversion_univariate_PGF}.

Let $X_2$ be the number of half-finished items on stock in equilibrium. As a performance indicator of the system, we can compute, for $J$ even,
\begin{equation}%
\Prob{X_2 \le J/2} = \sum_{j = 0}^{J/2} \PGF{j}{1},
\end{equation}%
which is the probability to be low on stock. No inversion algorithm is required to determine $\Prob{X_2 \le J/2}$, since $\PGF{j}{1}$ can easily be computed from the solution $\PGF{j}{\PGFarg}$. In \cref{tblCPM:model2_probability_low_on_stock} we show this probability for various values of $J$. By increasing $J$, $\Prob{X_2 \le J/2}$ decreases, which indicates that for a larger fraction of orders, only the second processing phase remains at the arrival instant.

\begin{table}%
\centering%
\begin{tabular}{c|*{5}{c}}
$J$                & 2        & 4        & 6        & 8        & 10 \\
\hline
$\Prob{X_2 \le J/2}$ & 0.777778 & 0.652778 & 0.556424 & 0.477322 & 0.410824
\end{tabular}%
\caption{Probability to be low on stock for $\la = 1$, $\mu_1 = 2$ and $\mu_2 = 3$. Computed from the generating functions $\PGF{j}{\PGFarg}, ~ 0 \le j \le J/2$.}
\label{tblCPM:model2_probability_low_on_stock}
\end{table}%


\section{Re-entrant line}%
\label{secCPM:model3}%

The third system is a re-entrant line consisting of two machines that produce items. Each item undergoes three processing steps. In the first step it is processed by the first machine, in the second step by the second machine and it finally returns to the first machine for its third processing step. As is typical in a manufacturing environment, there are always items that can be processed in the first step. We therefore assume that there is an infinite number of items awaiting the first processing step. Service times in each step are exponentially distributed with rates $\mu_1$, $\mu_2$ and $\mu_3$. \cref{figCPM:model3_schematic_of_the_system} shows the re-entrant line.

\begin{figure}
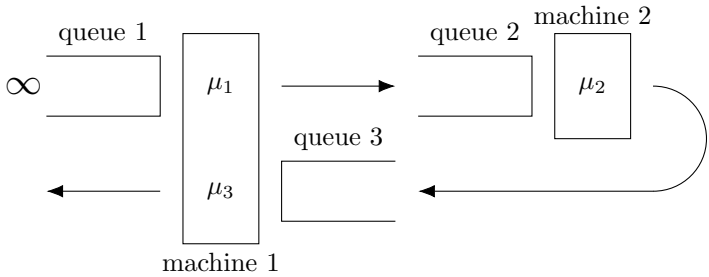
%
\centering%
\includestandalone{Chapters/CPM/TikZFiles/model3_schematic_of_the_system}%
\caption{A re-entrant line with an infinite number of items in the first queue.}%
\label{figCPM:model3_schematic_of_the_system}%
\end{figure}%

Since the first machine processes items for the first step and for the third step, we need a policy that dictates which item the first machine should serve whenever there are items in both queues. The policy we study here prioritizes processing the items in the third queue. More precisely, we assume that this priority is preemptive: whenever an item arrives in the third queue, machine 1 will stop processing an item from queue 1 and start processing the item from queue 3, only to resume processing items in queue 1 when queue 3 is empty. So, machine 1 will undergo cycles of work on items in queue 1, which are called \textit{push} periods (pushing items into the system), and work on items in queue 3, which are called \textit{pull} periods (pulling items from the system). The re-entrant line is therefore also sometimes called a push-pull system \cite{Adan2006_Push-pull}.

Let $X_2(t)$ and $X_3(t)$ be the number of items at the second and third queue at time $t$. Denote the state of the system by $X(t) \defi (X_2(t),X_3(t))$. Then $\{ X(t) \}_{t \ge 0}$ is a Markov process with state space $\statespace \defi \Nat_0^2$. The transition rate diagram in \cref{figCPM:model3_transition_rate_diagram} shows that the state space is irreducible. The states are positive recurrent if
\begin{equation}%
\frac{1}{\mu_1} + \frac{1}{\mu_3} > \frac{1}{\mu_2}. \label{eqnCPM:model3_stability_condition}
\end{equation}%
The intuition behind this condition is that if it does not hold, then the arrival rate to machine 2 will be $1/(1/\mu_1 + 1/\mu_3)$, which exceeds its service rate $\mu_2$. The proof of \eqref{eqnCPM:model3_stability_condition} is shown in \cite{Weiss2004_Stability_push-pull}. For now we assume that the condition holds and we prove that it is a sufficient condition later. Let $p(i,j)$ denote the equilibrium probability of being in state $(i,j)$.

\begin{figure}
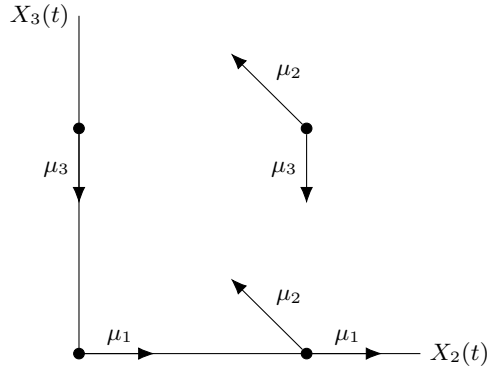
%
\centering%
\includestandalone{Chapters/CPM/TikZFiles/model3_transition_rate_diagram}%
\caption{Structure of the transition rate diagram of the Markov process associated with the re-entrant line model.}%
\label{figCPM:model3_transition_rate_diagram}%
\end{figure}%

The balance equations for the interior states are
\begin{equation}%
(\mu_2 + \mu_3) p(i,j) = \mu_3 p(i,j + 1) + \mu_2 p(i + 1,j - 1), \quad i,j \ge 1. \label{eqnCPM:model3_balance_equations_interior}
\end{equation}%
On the vertical axis we have the balance equations
\begin{equation}%
\mu_3 p(0,j) = \mu_2 p(1,j - 1) + \mu_3 p(0,j + 1), \quad j \ge 1. \label{eqnCPM:model3_balance_equations_vertical}
\end{equation}%
The balance equations on the horizontal axis are
\begin{equation}%
(\mu_1 + \mu_2) p(i,0) = \mu_1 p(i - 1,0) + \mu_3 p(i,1), \quad i \ge 1, \label{eqnCPM:model3_balance_equations_horizontal}
\end{equation}%
and at the origin we have
\begin{equation}%
\mu_1 p(0,0) = \mu_3 p(0,1). \label{eqnCPM:model3_balance_equations_origin}
\end{equation}%

We solve for the equilibrium probabilities by directly working with the balance equations. We attempt to solve the balance equations by inserting a product-form solution $\al^i \be^j$ with $0 < |\al|,|\be| < 1$ to ensure that we can normalize the solution. Substituting this product form in both \eqref{eqnCPM:model3_balance_equations_interior} and \eqref{eqnCPM:model3_balance_equations_horizontal} and dividing by common powers results in the following system of equations:
\begin{align}%
(\mu_2 + \mu_3) \be &= \mu_3 \be^2 + \mu_2 \al, \label{eqnCPM:model3_alpha_beta_satisfy_1} \\
(\mu_1 + \mu_2) \al &= \mu_1 + \mu_3 \al \be. \label{eqnCPM:model3_alpha_beta_satisfy_2}
\end{align}%
We determine $\al$ and $\be$ by solving this system of equations. From \eqref{eqnCPM:model3_alpha_beta_satisfy_2} we have that
\begin{equation}%
\al = \frac{\mu_1}{\mu_1 + \mu_2 - \mu_3 \be}. \label{eqnCPM:model3_alpha_in_terms_of_beta}
\end{equation}%
Substituting \eqref{eqnCPM:model3_alpha_in_terms_of_beta} in \eqref{eqnCPM:model3_alpha_beta_satisfy_1} and multiplying both sides by $\mu_1 + \mu_2 - \mu_3 \be$ yields a cubic equation in $\be$:
\begin{equation}%
f(\be) \defi \mu_3^2 \be^3 - \mu_3 (\mu_1 + 2\mu_2 + \mu_3) \be^2 + (\mu_1 + \mu_2) (\mu_2 + \mu_3) \be - \mu_1 \mu_2 = 0. \label{eqnCPM:model3_beta_cubic_equation}
\end{equation}%
One of the roots of this equation is $\be = \mu_2/\mu_3$. We can therefore factorize \eqref{eqnCPM:model3_beta_cubic_equation} as
\begin{equation}%
f(\be) = (\mu_3 \be - \mu_2) \bigl( \mu_3 \be^2 - (\mu_1 + \mu_2 + \mu_3) \be + \mu_1 \bigr) = 0. \label{eqnCPM:model3_beta_cubic_equation_2}
\end{equation}%
Let us study the function $f(\be)$ in more detail, see also \cref{figCPM:model3_cubic_function}. We know that $f(0) = - \mu_1 \mu_2 < 0$ and $\lim_{\be \to \infty} f(\be) = \infty$. At $\be = \mu_2/\mu_3$ we compute the derivative $f'(\be)$:
\begin{equation}%
f'\bigl( \frac{\mu_2}{\mu_3} \bigr) = \mu_1 \mu_3 - \mu_1 \mu_2 - \mu_2 \mu_3 = \mu_1 \mu_2 \mu_3 \bigl( \frac{1}{\mu_2} - \frac{1}{\mu_3} - \frac{1}{\mu_1} \bigr) < 0,
\end{equation}%
due to the stability condition \eqref{eqnCPM:model3_stability_condition}. Because of these properties and the fact that $f(\be) = 0$ is a cubic equation, we know that $f(\be) = 0$ has three positive roots: one at $\mu_2/\mu_3$, and one smaller and one larger than $\mu_2/\mu_3$. We now show that the smallest root is in $(0,1)$.

\begin{figure}
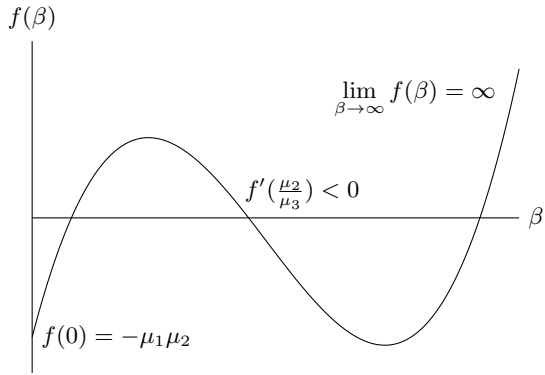
%
\centering%
\includestandalone{Chapters/CPM/TikZFiles/model3_cubic_function}%
\caption{Cubic function $f(\cdot)$ for $\mu_1 = 1$, $\mu_2 = 2$ and $\mu_3 = 3/2$.}%
\label{figCPM:model3_cubic_function}%
\end{figure}%

Define
\begin{equation}%
g(\be) \defi - (\mu_1 + \mu_2 + \mu_3) \be, \quad h(\be) \defi \mu_3 \be^2 + \mu_1.
\end{equation}%
Clearly, $g(\be)$ has a single root in $|\be| < 1$. Now, for $|\be| = 1$, we have $|g(\be)| = (\mu_1 + \mu_2 + \mu_3) |\be| = \mu_1 + \mu_2 + \mu_3$ and $|h(\be)| = |\mu_3 \be^2 + \mu_1| \le \mu_3 |\be^2| + \mu_1 = \mu_1 + \mu_3$. So, for $|\be| = 1$, we know that $|g(\be)| > |h(\be)|$ and according to Rouch\'e's theorem, see \cref{thm:Rouche}, $g(\be) + h(\be)$ has a single root in $|\be| < 1$. This root is given by
\begin{equation}%
\be = \frac{\mu_1 + \mu_2 + \mu_3 - \sqrt{(\mu_1 + \mu_2 + \mu_3)^2 - 4\mu_1 \mu_3}}{2\mu_3}. \label{eqnCPM:model3_root_beta}
\end{equation}%
Substituting the root $\be$ presented in \eqref{eqnCPM:model3_root_beta} into \eqref{eqnCPM:model3_alpha_in_terms_of_beta} yields after some manipulations
\begin{equation}%
\al = \frac{\mu_1}{\mu_2} (1 - \be). \label{eqnCPM:model3_alpha_final_expression}
\end{equation}%
Since the root $\be$ satisfies both $0 < \be < 1$ and $\be < \mu_2/\mu_3$, we know from the latter condition and \eqref{eqnCPM:model3_alpha_in_terms_of_beta} that also $0 < \al < 1$.

At this point we have a solution for the balance equations of the states $(i,j)$ with $i \ge 1$ and $j \ge 0$. We substitute this solution in \eqref{eqnCPM:model3_balance_equations_horizontal} to get
\begin{equation}%
\mu_3 p(0,j) = \mu_2 \al \be^{j - 1} + \mu_3 p(0,j + 1), \quad j \ge 1. \label{eqnCPM:model3_horizontal_boundary_what_type_of_solution}
\end{equation}%
Since this equation holds for all $j \ge 1$, we must have $p(0,j) = c \be^j, ~ j \ge 1$, where $c$ follows from substituting this solution in \eqref{eqnCPM:model3_horizontal_boundary_what_type_of_solution}:
\begin{equation}%
c = \frac{\mu_2 \al}{\mu_3 \be (1 - \be)} = \frac{\mu_1}{\mu_3 \be}.
\end{equation}%

The remaining (not yet normalized) equilibrium probability $p(0,0)$ is determined from \eqref{eqnCPM:model3_balance_equations_origin} and can be seen to equal 1.

At this point we return to the stability condition \eqref{eqnCPM:model3_stability_condition}. The solution of the balance equations is non-zero and since it is geometric, it is immediately seen to be absolutely convergent. So, as a result of \cref{thmMP:Foster}, the Markov process is positive recurrent. Since we assumed \eqref{eqnCPM:model3_stability_condition} to hold, we know that it is a sufficient condition for positive recurrence.

The solutions that we have obtained are not yet normalized. By multiplying them by the normalization constant $C$, we obtain
\begin{equation}%
p(i,j) = \begin{cases}%
C, & i = j = 0, \\
C \frac{\mu_1}{\mu_3} \be^{j - 1}, & i = 0, ~ j \ge 1, \\
C \al^i \be^j, & i \ge 1, ~ j \ge 0.
\end{cases}%
\end{equation}%
The normalization condition reads
\begin{equation}%
1 = \sum_{i \ge 0} \sum_{j \ge 0} p(i,j) = C \Bigl( 1 + \frac{\mu_1}{\mu_3} \frac{1}{1 - \be} + \frac{\al}{1 - \al} \frac{1}{1 - \be} \Bigr).
\end{equation}%
Using \eqref{eqnCPM:model3_alpha_final_expression} we write
\begin{align}%
1 &= C \Bigl( 1 + \frac{\mu_1}{\mu_2} \frac{1}{1 - \al} + \frac{\mu_1}{\mu_3} \frac{1}{1 - \be} \Bigr) \notag \\
&= C \frac{1}{\mu_2} \frac{1}{\mu_3} \frac{1}{1 - \al} \frac{1}{1 - \be} \notag \\
&\qquad\qquad \cdot \Bigl( \mu_2 \mu_3 (1 - \al)(1 - \be) + \mu_1 \mu_3 (1 - \be) + \mu_1 \mu_2 (1 - \al) \Bigr).
\end{align}%
We focus on the term in parentheses. Eliminate $\al$ using \eqref{eqnCPM:model3_alpha_final_expression} to obtain
\begin{align}%
&\mu_2 \mu_3 (1 - \frac{\mu_1}{\mu_2}(1 - \be))(1 - \be) + \mu_1 \mu_3(1 - \be) + \mu_1 \mu_2 (1 - \frac{\mu_1}{\mu_2}(1 - \be)) \notag \\
&= \mu_2 \mu_3 (1 - \be) + \mu_1 \mu_3 (\be - \be^2) + \mu_1 \mu_2 - \mu_1^2 (1 - \be) \notag \\
&= \mu_2 \mu_3 (1 - \be) + \mu_1 ( - \mu_3 \be^2 + (\mu_1 + \mu_3 ) \be - \mu_1 + \mu_2). \label{eqnCPM:model3_simplify_C_parentheses}
\end{align}%
Since $\be$ satisfies \eqref{eqnCPM:model3_beta_cubic_equation_2}, we can simplify \eqref{eqnCPM:model3_simplify_C_parentheses} and finally obtain
\begin{equation}%
C = \frac{\mu_3}{\mu_1 + \mu_3} (1 - \al).
\end{equation}%

In conclusion, provided \eqref{eqnCPM:model3_stability_condition} holds, the Markov process associated with the re-entrant line has the equilibrium probabilities
\begin{equation}%
p(i,j) = \begin{cases}%
\frac{\mu_3}{\mu_1 + \mu_3} (1 - \al), & i = j = 0, \\
\frac{\mu_1}{\mu_1 + \mu_3} (1 - \al) \be^{j - 1}, & i = 0, ~ j \ge 1, \\
\frac{\mu_3}{\mu_1 + \mu_3} (1 - \al) \al^i \be^j, & i \ge 1, ~ j \ge 0,
\end{cases}%
\label{eqnCPM:model3_normalized_equilibrium_distribution}
\end{equation}%
where $\al$ and $\be$ are given in \eqref{eqnCPM:model3_alpha_final_expression} and \eqref{eqnCPM:model3_root_beta}.

From the explicit expression \eqref{eqnCPM:model3_normalized_equilibrium_distribution} we can easily determine other key performance indicators. For example, the marginal distribution $p_2(\cdot)$ of the number of items at machine 2 is given by
\begin{equation}%
p_2(i) = \sum_{j \ge 0} p(i,j) = \begin{cases}%
\frac{\mu_3}{\mu_1 + \mu_3} (1 - \al) \bigl(1  + \frac{1}{1 - \be} \bigr), & i = 0, \\
\frac{\mu_3}{\mu_1 + \mu_3} (1 - \al) \frac{\al^i}{1 - \be}, & i \ge 1.
\end{cases}%
\end{equation}%
%


\section{Takeaways}%
\label{secCPM:what_have_we_learned}%

The three production systems in this chapter shared the common property that they produce items whenever they would otherwise be idle. For the first and second model this was clear: if the server would otherwise be idle, then in the first case standard items are produced to stock and in the second case the first phase of the production process is completed. In the third production system, machine one produces items from queue one whenever there are no items awaiting their third processing step.

The Markov processes associated with the first and second production system both led to two-dimensional Markov processes with one finite dimension and only nearest-neighbor transitions, which made them QBD processes. The transition rate diagrams of both systems had no upward transitions in all levels except for level 0. This structure was exploited to obtain exact expressions for the equilibrium distribution. In the first model we used the matrix-geometric method to determine the equilibrium distribution. But, instead of using the successive substitutions algorithm to determine $R$, we noticed that $R$ must be lower triangular and that many of its elements must be identical. We have seen this before in \cref{secPRIO:matrix-geometric_and_matrix-analytic_methods,secGTD:matrix-geometric_approach}. However, in this case the transition structure in phase $0$ was different from the transition structure in all other phases, which made that the boundary elements of $R$ are different from the other elements.

For the second model we again exploited the downward transition structure. By introducing a generating function for each phase, we could recursively determine all generating functions, starting from the known generating function for phase $J$. This approach allowed for an easy determination of the probability that there are $j$ half-finished items on stock, since this is equal to $\PGF{j}{1}$.

Both the first and the second model could be analyzed in multiple ways. The matrix-geometric (and matrix-analytic) method of the first model could be used to determine the equilibrium distribution of the second model. In \cite{Adan1998_MTO_MTS}, the two production systems are analyzed using the difference equations approach outlined in \cref{secPRIO:difference_equations}. Instead of starting in phase $0$ as in the priority systems of \cref{ch:single-server_priority}, we started with phase $J$ and worked our way down to phase 0. This is in line with the two approaches that we have seen in this chapter.

The third system has two infinite dimensions. To determine the equilibrium distribution of this model, we showed that the balance equations in the interior and on the horizontal axis were satisfied by a product-form solution. This product-form solution could be extended to also hold on the vertical axis and the origin by suitably multiplying it by a constant, see also \cite{Adan2006_Push-pull}.


\printendnotes

\chapter{Join the shortest queue}%
\label{ch:join_the_shortest_queue}%

In this chapter we consider a system consisting of two exponential single-server queues in parallel. Jobs arrive to the system according to a Poisson process and join the shortest of the two queues. If the queue lengths are equal, then the job joins either queue with equal probability. Once a job has joined one of the two queues, it stays there until it has completed service. We are interested in the joint distribution of the number of jobs in both queues.

This join the shortest queue system\endnote{The join the shortest queue system is a classical topic in queueing theory and can be analyzed in various ways. Haight \cite{Haight1958_Two_parallel_queues} originally introduced the problem. Kingman \cite{Kingman1961_JSQ} uses generating functions and complex analysis to determine the equilibrium distribution. Cohen \cite{Cohen1998_JSQ_asymmetric} and Cohen and Boxma \cite{Cohen2000_Boundary_value_problems} reduce the analysis of the equilibrium distribution to finding a solution of a boundary value problem. Hooghiemstra, Keane and Van De Ree \cite{Hooghiemstra1988_Power_series} introduce a power-series method to calculate the equilibrium distribution for a more general class of queueing systems. Blanc \cite{Blanc1992_Power-series_JSQ} applies this numerical algorithm to obtain results for the join the shortest queue model. Halfin \cite{Halfin1985_JSQ} obtains bounds for the equilibrium distribution by using linear programming techniques.} gives rise to a Markov process in two dimensions describing the joint queue-length distribution. However, this state description leads to a transition rate diagram that is inhomogeneous in the interior of the state space and thus, is difficult to analyze. We therefore transform the state space and create a Markov process on the positive half-plane, where we can eliminate one of the two quadrants due to symmetry. This leaves us to analyze a Markov process on the positive quadrant with a homogeneous transition rate diagram in the interior of this quadrant. We then use the compensation approach to determine the equilibrium distribution of this process in the form of an infinite series of product forms.


\section{Model description and balance equations}%
\label{secJSQ:model_description}%

Jobs arrive according to a Poisson process with rate $2\rho$ to two parallel queues. Each job requires an exponentially distributed service time with rate 1. Due to symmetry, $\rho$ is the average amount of work brought into each queue per time unit.

Let $X_1(t)$ and $X_2(t)$ be the number of jobs at the first and second queue at time $t$. Denote the state of the system of the system by $X(t) \defi (X_1(t),X_2(t))$. Then $\{ X(t) \}_{t \ge 0}$ is a Markov process with state space $\statespace \defi \Nat_0^2$. It is apparent from the transition rate diagram in \cref{figJSQ:transition_rate_diagram_join_the_shortest_queue} that the state space is irreducible. To guarantee positive recurrence and the existence of the equilibrium distribution we assume that $\rho < 1$.

\begin{figure}
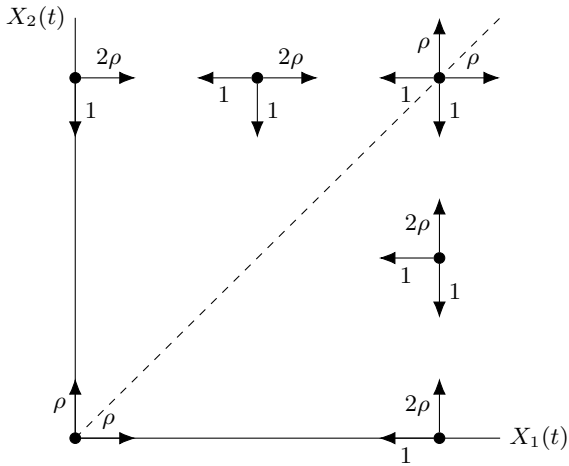
%
\centering%
\includestandalone{Chapters/JSQ/TikZFiles/transition_rate_diagram_join_the_shortest_queue_simplified}%
\caption{Structure of the transition rate diagram of the Markov process $\{ X(t) \}_{t \ge 0}$ associated with the join the shortest queue system.}%
\label{figJSQ:transition_rate_diagram_join_the_shortest_queue}%
\end{figure}%

\cref{figJSQ:transition_rate_diagram_join_the_shortest_queue} shows that the states on the diagonal $(i,i), ~ i \ge 0$ divide the state space into regions with different transition structures. In each state below the diagonal the first queue has more jobs and an arriving job joins the second queue. For the states above the diagonal the situation is reversed. This is why the Markov process has an inhomogeneous transition structure, which complicates the analysis of the equilibrium distribution. We make the analysis easier by moving to a different state description and using a symmetry argument. Define $Y_1(t) \defi \min(X_1(t),X_2(t))$ as the minimum queue length at time $t$ and $Y_2(t) \defi X_2(t) - X_1(t)$ as the difference between the two queue lengths at time $t$. The state of the system is now $Y(t) \defi (Y_1(t),Y_2(t))$ and the process $\{ Y(t) \}_{t \ge 0}$ is a Markov process on the state space $\statespace' \defi \Nat_0 \times \Int.$

Each element of $\statespace$ corresponds to exactly one element of $\statespace'$ and vice versa. For example, the state $(m,n) \in \statespace'$ corresponds to the state $(m,m + n) \in \statespace$ if $n > 0$ and to $(m - n,m)$ if $n \le 0$. The state space $\statespace'$ is irreducible because $\statespace$ is irreducible and $\{ Y(t) \}_{t \ge 0}$ is positive recurrent if $\rho < 1$. So, determining the equilibrium distribution of the Markov process $\{ Y(t) \}_{t \ge 0}$ gives us the equilibrium distribution of the Markov process $\{ X(t) \}_{t \ge 0}$. Let $p(m,n)$ denote the equilibrium probability of $\{ Y(t) \}_{t \ge 0}$ being in state $(m,n) \in \statespace'$.

The join the shortest queue policy does not favor any of the two servers in particular and the servers are identical, which makes the queue index interchangeable. As a result, the equilibrium probability that there are $i$ jobs in the first queue and $j$ jobs in the second queue is equal to the equilibrium probability that there are $j$ jobs in the first queue and $i$ jobs in the second queue. Hence, $p(m,n) = p(m,-n), ~ n > 0$ by symmetry. If we can calculate $p(m,n)$ for $m,n \ge 0$, then we know the complete equilibrium distribution.

\begin{figure}
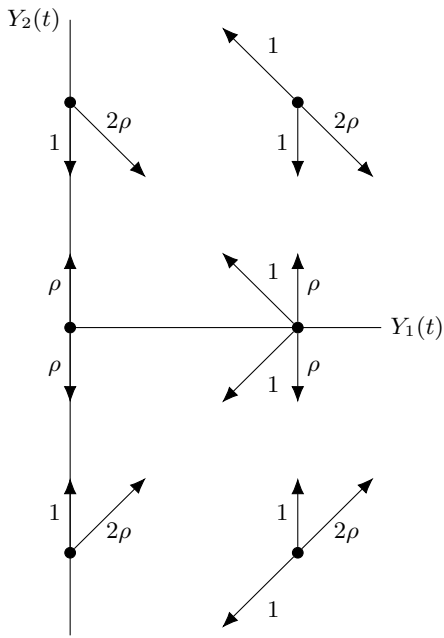
%
\centering%
\includestandalone{Chapters/JSQ/TikZFiles/transition_rate_diagram_join_the_shortest_queue_different_states_simplified}%
\caption{Structure of the transition rate diagram of the Markov process $\{ Y(t) \}_{t \ge 0}$ associated with the join the shortest queue system.}%
\label{figJSQ:transition_rate_diagram_join_the_shortest_queue_different_states}%
\end{figure}%

The transition rate diagram of $\{ Y(t) \}_{t \ge 0}$ is shown in \cref{figJSQ:transition_rate_diagram_join_the_shortest_queue_different_states}. Notice that the transition structure in each quadrant is homogeneous. Since determining $p(m,n), ~ m,n \ge 0$ is enough to obtain the complete equilibrium distribution, we only present the balance equations for the states in the positive quadrant ($(m,n)$ with $m,n \ge 0$). To that end, we exploit the symmetry property $p(m,n) = p(m,-n), ~ n > 0$ and consider balance equations that only involve the equilibrium probabilities $p(m,n), ~ m,n \ge 0$. For $m \ge 1, ~ n \ge 2$,
\begin{equation}%
2(1 + \rho) p(m,n) = 2\rho p(m - 1,n + 1) + p(m,n + 1) + p(m + 1,n - 1), \label{eqnJSQ:balance_equations_interior}
\end{equation}%
and for $m \ge 1$,
\begin{equation}%
2(1 + \rho) p(m,1) = 2\rho p(m - 1,2) + p(m,2) + \rho p(m,0) + p(m + 1,0). \label{eqnJSQ:balance_equations_interior_boundary}
\end{equation}%
For the vertical axis we have, for $n \ge 2$,
\begin{equation}%
(1 + 2\rho) p(0,n) = p(0,n + 1) + p(1,n - 1), \label{eqnJSQ:balance_equations_vertical}
\end{equation}%
and
\begin{equation}%
(1 + 2\rho) p(0,1) = p(0,2) + p(1,0) + \rho p(0,0). \label{eqnJSQ:balance_equations_vertical_boundary}
\end{equation}%
The balance equations for the horizontal axis are, for $m \ge 1$,
\begin{equation}%
(1 + \rho) p(m,0) = 2\rho p(m - 1,1) + p(m,1), \label{eqnJSQ:balance_equations_horizontal}
\end{equation}%
and at the origin
\begin{equation}%
\rho p(0,0) = p(0,1). \label{eqnJSQ:balance_equations_origin}
\end{equation}%
Substituting \eqref{eqnJSQ:balance_equations_horizontal} and \eqref{eqnJSQ:balance_equations_origin} into \eqref{eqnJSQ:balance_equations_interior_boundary} and \eqref{eqnJSQ:balance_equations_vertical_boundary} gives, for $m \ge 1$,
\begin{align}%
2(1 + \rho) p(m,1) &= 2\rho p(m - 1,2) + p(m,2) \notag \\
&\quad + \frac{\rho}{1 + \rho} \bigl( 2 \rho p(m - 1,1) + p(m,1) \bigr) \notag \\
&\quad + \frac{1}{1 + \rho} \bigl( 2 \rho p(m,1) + p(m + 1,1) \bigr), \label{eqnJSQ:balance_equations_interior_boundary_simplified} \\
(1 + 2\rho) p(0,1) &= p(0,2) + \frac{1}{1 + \rho} \bigl( 2 \rho p(0,1) + p(1,1) \bigr) + p(0,1). \label{eqnJSQ:balance_equations_vertical_boundary_simplified}
\end{align}%

The equations \eqref{eqnJSQ:balance_equations_interior}, \eqref{eqnJSQ:balance_equations_vertical}, \eqref{eqnJSQ:balance_equations_interior_boundary_simplified} and \eqref{eqnJSQ:balance_equations_vertical_boundary_simplified} together form the balance equations of the states $(m,n)$ with $m \ge 0, ~ n \ge 1$. These equations only involve the equilibrium probabilities $p(m,n), ~ m \ge 0, ~ n \ge 1$. If we can determine these equilibrium probabilities as a solution to \eqref{eqnJSQ:balance_equations_interior}, \eqref{eqnJSQ:balance_equations_vertical}, \eqref{eqnJSQ:balance_equations_interior_boundary_simplified} and \eqref{eqnJSQ:balance_equations_vertical_boundary_simplified}, then, through \eqref{eqnJSQ:balance_equations_horizontal} and \eqref{eqnJSQ:balance_equations_origin} and the symmetry $p(m,n) = p(m,-n), ~ n \ge 0$, we obtain $p(m,n)$ for all $(m,n)$.

\begin{remark}[Terminology]%
We adopt the following terminology for balance equations in three subsets of the state space $\{ (m,n) : m \ge 0, ~ n \ge 1 \}$. We refer to the balance equations \eqref{eqnJSQ:balance_equations_interior} as the balance equations of the \textit{interior}; to \eqref{eqnJSQ:balance_equations_vertical} as the balance equations of the \textit{vertical boundary}; and to
\eqref{eqnJSQ:balance_equations_interior_boundary_simplified} as the balance equations of the \textit{horizontal boundary}.
\end{remark}%

Define the bivariate PGF
\begin{equation}%
\PGF{x,y} \defi \sum_{m \ge 0} \sum_{n \ge 0} p(m,n) x^m y^n, \quad |x| \le 1, ~ |y| \le 1. \label{eqnJSQ:definition_probability_generating_function}
\end{equation}%
We can obtain an expression for $\PGF{x,y}$ by manipulating the balance equations \eqref{eqnJSQ:balance_equations_interior}--\eqref{eqnJSQ:balance_equations_origin}. Multiplying the balance equation of state $(m,n), ~ m,n \ge 0$ by $x^m y^n$ and summing over all $m,n \ge 0$ produces the functional equation
\begin{equation}%
h_1(x,y) \PGF{x,y} = h_2(x,y) \PGF{x,0} + h_3(x,y) \PGF{0,y} \label{eqnJSQ:functional_equation}
\end{equation}%
with
\begin{align}%
h_1(x,y) &\defi (1 + 2\rho x)x - 2(1 + \rho)xy + y^2, \\
h_2(x,y) &\defi (1 + 2\rho x)x - (1 + \rho)xy - \rho x y^2, \\
h_3(x,y) &\defi (y - x)y.
\end{align}%

We will not use the functional equation to determine $\PGF{x,y}$, but instead work directly with the balance equations to determine $p(m,n), ~ m,n \ge 0$ using the compensation approach. The functional equation will appear to be useful later on to determine the normalization constant, see also Kingman \cite{Kingman1961_JSQ}.


\section{Compensation approach}%
\label{secJSQ:compensation_approach}%

We have already seen the compensation approach\endnote{Adan, Wessels and Zijm \cite{Adan1990_JSQ_symmetric,Adan1991_JSQ_asymmetric_compensation_approach} develop the compensation approach which can be used to analyze the join the shortest queue model, but also many related models with state-dependent routing \cite{Adan2016_Polling_JSQ,Adan1996_SED_Erlang_servers,Selen2016_SED}.} in \cref{ch:gated_single-server}. Recall that the compensation approach linearly combines product-form solutions $\al^m \be^n$. Each product-form solution is chosen such that it satisfies the balance equations \eqref{eqnJSQ:balance_equations_interior} of the interior. In each compensation step a single product-form solution is added.

In a \textit{vertical} compensation step a product-form solution is added such that the resulting linear combination of product-form solutions satisfies the balance equations of both the states in the interior \eqref{eqnJSQ:balance_equations_interior} and on the vertical boundary \eqref{eqnJSQ:balance_equations_vertical}. However, in doing so, the resulting linear combination does not satisfy the balance equations \eqref{eqnJSQ:balance_equations_interior_boundary_simplified} on the horizontal boundary.

Each vertical compensation step is followed by a \textit{horizontal} compensation step. In this step, a product-form solution is added such that the resulting linear combination of product-form solutions satisfies the balance equations of both the states in the interior \eqref{eqnJSQ:balance_equations_interior} and on the horizontal boundary \eqref{eqnJSQ:balance_equations_interior_boundary_simplified}. Similarly to the vertical compensation step, the horizontal compensation step results in a linear combination that does not satisfy the balance equations \eqref{eqnJSQ:balance_equations_vertical} on the vertical boundary.

The procedure is repeated and each horizontal compensation step is followed by a vertical compensation step. We ultimately obtain two countably infinite linear combinations of product-form solution (one series each for the horizontal and vertical compensation steps). If the two series converge absolutely, then the error terms on each boundary converge sufficiently fast to zero. Finally, if the sum of the equilibrium probabilities is absolutely convergent, then by \cref{thmMP:Foster}, the solution can be normalized to obtain the equilibrium distribution.


\subsection{Constructing the equilibrium distribution}%
\label{subsecJSQ:constructing_equilibrium_probabilities}%

We make the educated guess that $p(m,n)$ in the interior is of the form $\al^m \be^n$. Substitute this guess into the balance equations \eqref{eqnJSQ:balance_equations_interior} and divide by common powers to obtain
\begin{equation}%
0 = \al^2 + 2 \rho \be^2 + \al \be^2 - 2(1 + \rho) \al \be. \label{eqnJSQ:interior_educated_guess}
\end{equation}%
We have the following result regarding roots of \eqref{eqnJSQ:interior_educated_guess}.

\begin{lemma}\label{lemJSQ:interior_product-form_solution}\hspace*{1em}%
\begin{enumerate}[label = \textup{(\roman*)}]%
\item For every fixed $\al$ with $|\al| \in (0,1)$, equation \eqref{eqnJSQ:interior_educated_guess} has exactly one root $\be$ inside the open circle of radius $|\al|$.
\item For every fixed $\be$ with $|\be| \in (0,1)$, equation \eqref{eqnJSQ:interior_educated_guess} has exactly one root $\al$ inside the open circle of radius $|\be|$.
\end{enumerate}%
\end{lemma}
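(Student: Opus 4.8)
\textbf{Proof proposal for Lemma~\ref{lemJSQ:interior_product-form_solution}.}
The plan is to prove the two statements symmetrically, treating \eqref{eqnJSQ:interior_educated_guess} as a quadratic polynomial in one variable with the other fixed, and applying Rouch\'e's theorem (\cref{thm:Rouche}) on a circle slightly smaller than the one claimed, then letting the radius shrink. Consider first part~(i). Fix $\al$ with $0 < |\al| < 1$. Regarding \eqref{eqnJSQ:interior_educated_guess} as a polynomial in $\be$, rewrite it as $(2\rho + \al)\be^2 - 2(1 + \rho)\al \be + \al^2 = 0$, which is genuinely quadratic since $2\rho + \al \neq 0$ for $|\al| < 1$. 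Split this into the ``dominant'' term and a ``perturbation'': I would set $f(\be) \defi -2(1+\rho)\al\be$ and $g(\be) \defi (2\rho + \al)\be^2 + \al^2$, so that $f + g$ is (up to sign) the polynomial of interest. The function $f$ has exactly one zero, $\be = 0$, in any disk around the origin. The goal is to show $|f(\be)| > |g(\be)|$ on the circle $|\be| = |\al|$, or rather on $|\be| = |\al| - \epsilon$ for small $\epsilon > 0$, so that Rouch\'e gives that $f + g$ also has exactly one zero in $|\be| < |\al| - \epsilon$; letting $\epsilon \downarrow 0$ then yields the claim (one must check separately that no root sits exactly on $|\be| = |\al|$, or absorb this into the strict-inequality argument).

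For the inequality, on $|\be| = |\al|$ I would estimate $|f(\be)| = 2(1+\rho)|\al|^2$ and $|g(\be)| \le (2\rho + |\al|)|\al|^2 + |\al|^2 = (1 + 2\rho + |\al|)|\al|^2$. Then $|f| > |g|$ reduces to $2(1+\rho) > 1 + 2\rho + |\al|$, i.e. $1 > |\al|$, which holds by assumption. So on $|\be| = |\al|$ we already have the strict inequality, and a small perturbation argument (Taylor expansion in the radius, as done in the proof of \cref{lemQTF:random_walk_location_zeros_PGF} and elsewhere in the book) handles the borderline evaluation; in fact here the inequality is already strict on the circle itself, so Rouch\'e applies directly with $\setUncountable{L}$ the open disk of radius $|\al|$, after noting the boundary is free of zeros. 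Part~(ii) is entirely analogous: fix $\be$ with $0 < |\be| < 1$, view \eqref{eqnJSQ:interior_educated_guess} as the quadratic $(1+\be^2/\al \cdot \text{(regrouped)})$ --- more precisely $\al^2 + (\be^2 - 2(1+\rho)\be)\al + 2\rho\be^2 = 0$ --- take $f(\al) \defi (\be^2 - 2(1+\rho)\be)\al$ as the dominant term and $g(\al) \defi \al^2 + 2\rho\be^2$ as the perturbation, and on $|\al| = |\be|$ check $|f| = |\be|\,|\be - 2(1+\rho)|\,|\be| \ge |\be|^2(2(1+\rho) - |\be|)$ against $|g| \le |\be|^2 + 2\rho|\be|^2 = (1 + 2\rho)|\be|^2$; the inequality $2(1+\rho) - |\be| > 1 + 2\rho$ is again just $|\be| < 1$.

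I expect the main obstacle to be the bookkeeping around which term plays the role of ``$f$'' in Rouch\'e and verifying the estimate on $|f(\be)|$ is a genuine \emph{lower} bound rather than just an upper bound: $|f(\be)| = 2(1+\rho)|\al|\,|\be|$ equals $2(1+\rho)|\al|^2$ on the circle $|\be| = |\al|$ with no loss, which is the clean case, but for part~(ii) the factor $|\be - 2(1+\rho)|$ must be bounded below by $2(1+\rho) - |\be|$ via the reverse triangle inequality, and one should double-check this does not degenerate. A secondary point of care is the degenerate situation $\al = 0$ or $\be = 0$, which is excluded by the hypotheses $|\al|, |\be| \in (0,1)$, and the verification that the leading coefficient of each quadratic is nonzero on the relevant range so that ``exactly one root inside, one root outside'' is the correct count. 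Once the lemma is in hand, the intended use is clear: starting from an initial product form $\al_0^m\be_0^n$, part~(i) and part~(ii) alternately produce the compensating factors in the vertical and horizontal compensation steps, with the ``root inside the smaller circle'' guaranteeing that the generated sequences of $\al$'s and $\be$'s decrease geometrically in modulus, which is exactly what will later force absolute convergence of the two series.
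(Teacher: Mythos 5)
Your proof is correct and takes essentially the same route as the paper: Rouch\'e's theorem applied to the same splitting of the quadratic into the linear term (as $f$) and the remaining two terms (as $g$), with the same triangle-inequality estimates reducing $|f|>|g|$ to $|\al|<1$ (resp.\ $|\be|<1$). The only cosmetic difference is that the paper first divides by $\al^2$ (resp.\ $\be^2$) and applies Rouch\'e on the unit circle in the variable $z=\be/\al$ (resp.\ $z=\al/\be$), whereas you work directly on the circle $|\be|=|\al|$ (resp.\ $|\al|=|\be|$), which is equivalent; you also correctly observe that the $\epsilon$-shrinking step is unnecessary because the strict inequality already holds on the circle itself.
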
%

\begin{proof}%
(i) Divide \eqref{eqnJSQ:interior_educated_guess} by $\al^2$ and set $z = \be/\al$ to obtain the second-degree polynomial
\begin{equation}%
0 = (2\rho + \al) z^2 - 2(1 + \rho) z + 1.
\end{equation}%
Define $f(z) \defi -2(1 + \rho) z$, $g(z) \defi (2\rho + \al) z^2 + 1$ and the region $\closedunitdisc$ as the unit disk with the unit circle as the boundary $\unitcircle$. Clearly, $f(z)$ has a single root in $\closedunitdisc$. Now, for $z \in \unitcircle$, or equivalently $|z| = 1$,
\begin{align}%
|f(z)| &= 2(1 + \rho)|z| = 2 + 2\rho, \\
|g(z)| &= | (2\rho + \al) z^2 + 1| \le (2\rho + |\al|) |z|^2 + 1 = 2\rho + |\al| + 1.
\end{align}%
Since $|\al| < 1$ we conclude that $|f(z)| > |g(z)|$ for $z \in \unitcircle$. Then, by Rouch\'e's theorem, see \cref{thm:Rouche}, $f(z) + g(z)$ has a single root inside the unit circle. This proves that \eqref{eqnJSQ:interior_educated_guess} has a single root $\be$ inside the circle with radius $|\al|$.

\noindent (ii) Divide \eqref{eqnJSQ:interior_educated_guess} by $\be^2$ and set $z = \al/\be$ to obtain the second-degree polynomial
\begin{equation}%
0 = z^2 + (\be - 2(1 + \rho))z + 2\rho.
\end{equation}%
Define $f(z) \defi (\be - 2(1 + \rho))z$, $g(z) \defi z^2 + 2\rho$ and the same $\closedunitdisc$ and $\unitcircle$ as in (i). Clearly, $f(z)$ has a single root in $\closedunitdisc$. Now, for $z \in \unitcircle$, or equivalently $|z| = 1$,
\begin{align}%
|f(z)| &= | \be - 2(1 + \rho) | |z|^2 \ge | |\be| - 2(1 + \rho) | > 1 + 2\rho, \\
|g(z)| &= |z^2 + 2\rho| \le |z|^2 + 2\rho = 1 + 2\rho,
\end{align}%
where the last inequality for $f(z)$ follows from $|\be| \in (0,1)$. So, $|f(z)| > |g(z)|$ for $z \in \closedunitdisc$ and Rouch\'e's theorem proves the claim.
\end{proof}%

Let us, for now, further assume that the equilibrium probabilities along the horizontal and vertical boundary are also satisfied by a product-form solution $p(m,n) = \al^m \be^n$. We can substitute this solution in the balance equations for the horizontal boundary \eqref{eqnJSQ:balance_equations_interior_boundary_simplified}:
\begin{align}%
0 = \al^2 + \al \bigl( \be(1 + \rho) + 3\rho - 2(1 + \rho)^2 \bigr) + 2\rho (\be(1 + \rho) + \rho) \label{eqnJSQ:horizontal_educated_guess}
\end{align}%
and for the vertical boundary \eqref{eqnJSQ:balance_equations_vertical}:
\begin{equation}%
0 = \be^2 - \be(1 + 2\rho) + \al. \label{eqnJSQ:vertical_educated_guess}
\end{equation}%
In \cref{figJSQ:pairs_alpha_beta} we show the curves $(\al,\be)$ satisfying \eqref{eqnJSQ:interior_educated_guess}, \eqref{eqnJSQ:horizontal_educated_guess} and \eqref{eqnJSQ:vertical_educated_guess}, respectively. Wherever two curves intersect, we know that that pair $(\al,\be)$ satisfies those balance equations simultaneously. We find four of such pairs. Three of them are not useful since they do not satisfy $0 < |\al|,|\be| < 1$. The remaining fourth pair satisfies simultaneously the balance equations of the interior \eqref{eqnJSQ:interior_educated_guess} and the horizontal boundary \eqref{eqnJSQ:horizontal_educated_guess}. In general we can state that there is no pair $(\al,\be)$ with $0 < |\al|, |\be| < 1$ that satisfies simultaneously the balance equations of the interior and the vertical boundary, but there is a single pair $(\al,\be)$ that satisfies simultaneously the balance equations of the interior and the horizontal boundary. It is easy to derive this pair from the system of equations \eqref{eqnJSQ:interior_educated_guess} and \eqref{eqnJSQ:horizontal_educated_guess}: $(\al,\be) = (\rho^2,\rho^2/(2 + \rho))$. In \cref{tblJSQ:simulated_equilibrium_probabilities} we numerically verify that this pair dictates the tail behavior of the equilibrium probabilities for $m$ and $n$ large.

\begin{figure}
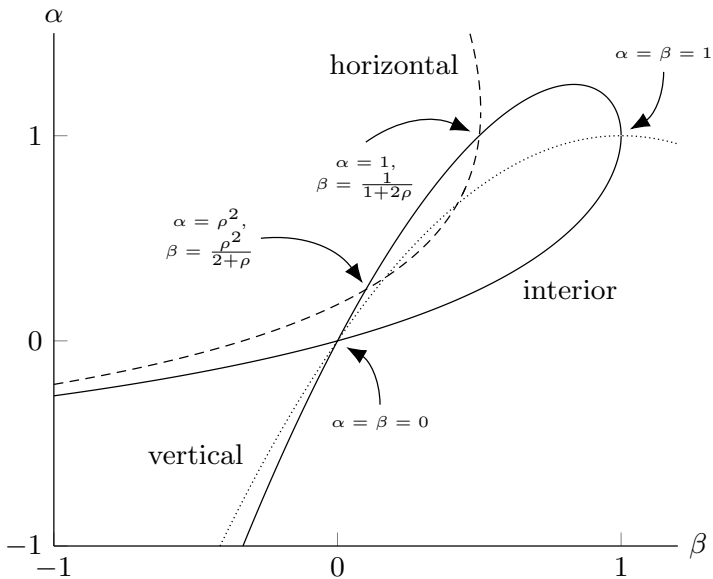
%
\centering%
\includestandalone{Chapters/JSQ/TikZFiles/product_form_solutions_interior_horizontal_vertical_balance_equations}%
\caption{Under the assumption $p(m,n) = \al^m \be^n$ for all $m \ge 0, ~ n \ge 1$ and $\rho = 1/2$, we plot the (solid) curve $(\al,\be)$ \protect\eqref{eqnJSQ:interior_educated_guess}, the (dashed) curve \protect\eqref{eqnJSQ:horizontal_educated_guess}, and the (dotted) curve \protect\eqref{eqnJSQ:vertical_educated_guess}.}%
\label{figJSQ:pairs_alpha_beta}%
\end{figure}%

We see that if $m$ and $n$ are large, then $p(m,n) \approx h_0 \al_0^m \be_0^n$ for some coefficient $h_0$, and parameters $\al_0$ and $\be_0$. We have simulated a join the shortest queue model with $\rho = 0.8$ to determine $\al_0$ and $\be_0$ from the ratios $p(m + 1,n)/p(m,n)$ and $p(m,n + 1)/p(m,n)$, see \cref{tblJSQ:simulated_equilibrium_probabilities}. The simulation confirms that $\al_0 = \rho^2$ and $\be_0 = \rho^2/(2 + \rho)$ describe the tail behavior for large $m$ and $n$. A rigorous derivation of $\al_0$ and $\be_0$ is given in, e.g., \cite[Theorem~5]{Kingman1961_JSQ}, but we do not show it here. Since normalization follows at the end of the compensation procedure, we can now set $h_0 = 1$.

\begin{table}%
\captionsetup[subfloat]{textfont = normalsize}
\centering%
\subfloat[$p(m + 1,n)/p(m,n)$]{%
\begin{tabular}{*{8}{c}}%
 & \multicolumn{7}{c}{$m$} \\
$n$ & 0 & 1 & 2 & 3 & 4 & 5 & 6 \\
\hline
0 &   &   &   &   &   &   \\
1 & 0.73 & 0.66 & 0.64 & 0.64 & 0.64 & 0.64 & 0.64 \\
2 & 0.57 & 0.62 & 0.64 & 0.64 & 0.64 & 0.64 & 0.64 \\
3 & 0.55 & 0.62 & 0.64 & 0.64 & 0.64 & 0.64 & 0.64 \\
4 & 0.54 & 0.62 & 0.63 & 0.64 & 0.64 & 0.64 & 0.64 \\
5 & 0.54 & 0.62 & 0.63 & 0.64 & 0.64 & 0.64 & 0.64 \\
6 & 0.54 & 0.62 & 0.63 & 0.64 & 0.64 & 0.63 & 0.65
\end{tabular}%
}%
\\%
\subfloat[$p(m,n + 1)/p(m,n)$]{%
\begin{tabular}{*{8}{c}}%
 & \multicolumn{7}{c}{$m$} \\
$n$ & 0 & 1 & 2 & 3 & 4 & 5 & 6 \\
\hline
0 &   &   &   &   &   &   & \\
1 & 0.31 & 0.24 & 0.23 & 0.23 & 0.23 & 0.23 & 0.23 \\
2 & 0.24 & 0.23 & 0.23 & 0.23 & 0.23 & 0.23 & 0.23 \\
3 & 0.23 & 0.23 & 0.23 & 0.23 & 0.23 & 0.23 & 0.23 \\
4 & 0.23 & 0.23 & 0.23 & 0.23 & 0.23 & 0.23 & 0.23 \\
5 & 0.23 & 0.23 & 0.23 & 0.23 & 0.23 & 0.23 & 0.23 \\
6 & 0.23 & 0.23 & 0.23 & 0.23 & 0.23 & 0.23 & 0.23
\end{tabular}%
}%
\caption{Simulated equilibrium probabilities for $\rho = 0.8$.}%
\label{tblJSQ:simulated_equilibrium_probabilities}%
\end{table}%

The pair $(\al_0,\be_0) = (\rho^2,\rho^2/(2 + \rho))$ is the only pair that satisfies simultaneously the balance equations of the interior and the horizontal boundary. In fact, this property characterizes the initial product-form solution $h_0 \al_0^m \be_0^n$. Since the initial solution does not satisfy the balance equations \eqref{eqnJSQ:balance_equations_vertical} on the vertical boundary---as we have already concluded from \cref{figJSQ:pairs_alpha_beta}---we need to compensate for the error introduced on the vertical boundary. It is important that in each compensation step---vertical or horizontal---the correction term that is added should be small compared to $h_0 \al_0^m \be_0^n$ in order to not disturb the asymptotic behavior for large $m$ or $n$.

In the vertical compensation step we add a single product-form term to the initial solution and construct $h_0 \al_0^m \be_0^n + v \al^m \be^n$. We refer to $v \al^m \be^n$ as the compensation term. We will choose $v$, $\al$ and $\be$ such that this linear combination satisfies both the balance equations of the interior \eqref{eqnJSQ:balance_equations_interior} and the vertical boundary \eqref{eqnJSQ:balance_equations_vertical}. Inserting it into \eqref{eqnJSQ:balance_equations_vertical} gives for all $n \ge 2$,
\begin{equation}%
(1 + 2\rho) \bigl( h_0 \be_0^n + v \be^n \bigr) = h_0 \be_0^{n + 1} + v \be^{n + 1} + h_0 \al_0 \be_0^{n - 1} + v \al \be^{n - 1}.
\end{equation}%
Since this equation holds for all $n \ge 2$, we must have that $\be = \be_0$. We further want the pair $(\al,\be_0)$ to satisfy the balance equations of the interior, so we pick $\al = \al_1$ as the root of \eqref{eqnJSQ:interior_educated_guess} for fixed $\be = \be_0$ satisfying $|\al_1| < |\be_0|$. There also exists the root $\al_0$ of \eqref{eqnJSQ:interior_educated_guess} for fixed $\be = \be_0$ satisfying $|\be_0| < |\al_0|$, but that would turn the compensation term into the initial term, which makes that root not useful. By choosing $\al = \al_1$ and $\be = \be_0$, we know that the linear combination $h_0 \al_0^m \be_0^n + v \al_1^m \be_0^n$ satisfies the balance equations of the interior. What remains is to choose $v = v_0$ in such a way that the linear combination $h_0 \al_0^m \be_0^n + v_0 \al_1^m \be_0^n$ satisfies \eqref{eqnJSQ:balance_equations_vertical}. We now describe the method of choosing this coefficient in a general setting.

\begin{lemma}[Vertical compensation step]\label{lemJSQ:vertical_compensation}%
Consider the product form $h \al^m \be^n$ with $0 < |\be| < |\al| < 1$ and some coefficient $h$, that satisfies the balance equations \eqref{eqnJSQ:balance_equations_interior} of the interior and stems from a solution that satisfies the balance equations of the interior and the horizontal boundary. For this fixed $\be$, let $\hat{\al}$ be the root that satisfies \eqref{eqnJSQ:interior_educated_guess} with $|\hat{\al}| < |\be|$. Then there exists a coefficient $v$ such that
\begin{equation}%
p(m,n) = h \al^m \be^n + v \hat{\al}^m \be^n
\end{equation}%
satisfies \eqref{eqnJSQ:balance_equations_interior} and \eqref{eqnJSQ:balance_equations_vertical}. The coefficient $v$ is given by
\begin{equation}%
v = - \frac{\hat{\al} - \be}{\al - \be} h. \label{eqnJSQ:vertical_compensation_coefficient}
\end{equation}%
\end{lemma}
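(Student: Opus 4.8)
The plan is to verify directly that the two-term linear combination $p(m,n) = h\al^m\be^n + v\hat{\al}^m\be^n$ with $v$ given by \eqref{eqnJSQ:vertical_compensation_coefficient} satisfies both sets of balance equations, and that this choice of $v$ is the unique one that works. The argument splits naturally into two checks: the interior equations \eqref{eqnJSQ:balance_equations_interior} and the vertical-boundary equations \eqref{eqnJSQ:balance_equations_vertical}.

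First I would dispense with the interior equations. Both $\al^m\be^n$ and $\hat{\al}^m\be^n$ are, by construction, product forms whose base pairs $(\al,\be)$ and $(\hat{\al},\be)$ satisfy \eqref{eqnJSQ:interior_educated_guess}: the pair $(\al,\be)$ does so by the hypothesis on $h\al^m\be^n$, and $(\hat{\al},\be)$ does so because $\hat{\al}$ was chosen precisely as the second root of \eqref{eqnJSQ:interior_educated_guess} for the fixed $\be$ (here \cref{lemJSQ:interior_product-form_solution}(ii) guarantees there is exactly one such root with $|\hat\al|<|\be|$, and since \eqref{eqnJSQ:interior_educated_guess} is quadratic in $\al$, $\al$ and $\hat\al$ are its two roots). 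Since \eqref{eqnJSQ:balance_equations_interior} is linear and homogeneous in the probabilities, any linear combination of solutions is again a solution, so $p(m,n)$ satisfies \eqref{eqnJSQ:balance_equations_interior} for every value of $v$. This is the easy half.

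The substance is in the vertical-boundary equations \eqref{eqnJSQ:balance_equations_vertical}. Substituting $p(0,n) = h\be^n + v\be^n$ and $p(1,n) = h\al\be^n + v\hat{\al}\be^n$ into $(1+2\rho)p(0,n) = p(0,n+1) + p(1,n-1)$ and dividing through by the common factor $\be^{n-1}$ yields a single scalar equation, independent of $n$, of the form $(h+v)\bigl((1+2\rho)\be - \be^2\bigr) = h\al + v\hat{\al}$. The key point is that the ``diagonal'' part of \eqref{eqnJSQ:interior_educated_guess} evaluated at a base pair gives a relation I can exploit: from \eqref{eqnJSQ:interior_educated_guess} one reads off $\al^2 - 2(1+\rho)\al\be + \al\be^2 = -2\rho\be^2$, i.e. for each root $\kappa \in \{\al,\hat\al\}$, $\kappa\bigl(\kappa - 2(1+\rho)\be + \be^2\bigr) = -2\rho\be^2$, whence $\kappa\bigl((1+2\rho)\be - \kappa\bigr) + \text{(terms)} $ can be massaged; more usefully, solving \eqref{eqnJSQ:interior_educated_guess} for the combination that appears, one gets $\kappa = \be\bigl(2(1+\rho) - \be\bigr) - 2\rho\be^2/\kappa$ — actually the cleanest route is: the scalar boundary equation rearranges to $(h+v)\bigl((1+2\rho)\be-\be^2\bigr) - h\al - v\hat\al = 0$, and I substitute for $h\al$ and $v\hat\al$ using identities derived from \eqref{eqnJSQ:interior_educated_guess}. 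Carrying this through collapses the equation to $h(\al-\be) + v(\hat\al-\be) = 0$, which is exactly \eqref{eqnJSQ:vertical_compensation_coefficient}. I would also note that since $\al\neq\be$ (as $0<|\be|<|\al|<1$), $v$ is well-defined and nonzero, and because the reduction is an equivalence the choice is unique.

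The main obstacle I anticipate is purely computational bookkeeping: getting the scalar boundary equation into the form $h(\al-\be)+v(\hat\al-\be)=0$ requires using \eqref{eqnJSQ:interior_educated_guess} for \emph{both} $\al$ and $\hat\al$ at the common $\be$, and it is easy to lose track of which root relation is being invoked. A clean way to organize this is to observe that $\al$ and $\hat\al$ are the two roots of the quadratic $Q(\kappa) \defi \kappa^2 - \bigl(2(1+\rho)\be - \be^2\bigr)\kappa + 2\rho\be^2 = 0$ (this is \eqref{eqnJSQ:interior_educated_guess} read as a quadratic in $\kappa$), so that $\al + \hat\al = 2(1+\rho)\be - \be^2$ and $\al\hat\al = 2\rho\be^2$ by Vieta's formulas; feeding these two symmetric-function identities into the boundary equation should make the cancellation transparent without touching the messy closed-form expressions for the roots. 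I would present the final verification using Vieta rather than brute substitution, since that is both shorter and more robust.
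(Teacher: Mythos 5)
Your proof follows essentially the same route as the paper's: linearity handles the interior equations automatically, and for the vertical boundary you substitute the two-term combination, divide by the common power of $\be$, and then invoke the Vieta sum identity $\al+\hat\al = 2(1+\rho)\be-\be^2$ (the paper uses exactly this identity, not both symmetric functions — the product $\al\hat\al = 2\rho\be^2$ is not needed here). One concrete slip to fix: after substituting the sum identity, the scalar boundary equation collapses to
\begin{equation}
h(\hat\al-\be) + v(\al-\be) = 0,
\end{equation}
not $h(\al-\be)+v(\hat\al-\be)=0$ as you wrote; your version gives $v = -\dfrac{\al-\be}{\hat\al-\be}\,h$, which is the reciprocal of \eqref{eqnJSQ:vertical_compensation_coefficient} rather than equal to it. Once $\al$ and $\hat\al$ are put in the right slots the derivation is correct and matches the paper.
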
%

\begin{proof}%
Notice that both $(\al,\be)$ and $(\hat{\al},\be)$ satisfy \eqref{eqnJSQ:interior_educated_guess}. So, the linear combination $h \al^m \be^n + v \hat{\al}^m \be^n$ satisfies \eqref{eqnJSQ:balance_equations_interior} for any $h$ and $v$.

Inserting the linear combination into \eqref{eqnJSQ:balance_equations_vertical} and dividing by common powers yields
\begin{equation}%
(h + v) \bigl( 2(1 + \rho)\be - \be^2 - \be \bigr) = h \al + v \hat{\al}. \label{eqnJSQ:vertical_compensation_proof_1}
\end{equation}%
Since $(\al,\be)$ and $(\hat{\al},\be)$ both satisfy \eqref{eqnJSQ:interior_educated_guess} we know that $\al + \hat{\al} = 2(1 + \rho)\be - \be^2$. Substituting this relation into \eqref{eqnJSQ:vertical_compensation_proof_1} proves the claim.
\end{proof}%

We apply \cref{lemJSQ:vertical_compensation} to find that we must choose
\begin{equation}%
v_0 = - \frac{\al_1 - \be_0}{\al_0 - \be_0} h_0.
\end{equation}%
With these choices for the coefficient and the parameters of the compensation term, the linear combination $h_0 \al_0^m \be_0^n + v_0 \al_1^m \be_0^n$ satisfies \eqref{eqnJSQ:balance_equations_interior} and \eqref{eqnJSQ:balance_equations_vertical}. However, adding the term $v_0 \al_1^m \be_0^n$ introduces an error on the horizontal boundary for which we need to compensate.

In a horizontal compensation step we add a compensation term to compensate for the error introduced during the vertical compensation step. So, we form the linear combination $h_0 \al_0^m \be_0^n + v_0 \al_1^m \be_0^n + h \al^m \be^n$. We will choose $h$, $\al$ and $\be$ such that this linear combination satisfies both the balance equations of the interior \eqref{eqnJSQ:balance_equations_interior} and on the horizontal boundary \eqref{eqnJSQ:balance_equations_interior_boundary_simplified}. We know that $h_0 \al_0^m \be_0^n$ already satisfies \eqref{eqnJSQ:balance_equations_interior} and \eqref{eqnJSQ:balance_equations_interior_boundary_simplified}, so we do not need to take this term into account. Substituting the sum of the remaining two terms into \eqref{eqnJSQ:balance_equations_interior_boundary_simplified} gives for $m \ge 1$,
\begin{align}%
&2(1 + \rho) \bigl( v_0 \al_1^m \be_0 + h \al^m \be \bigr) \notag \\
&= 2\rho \bigl( v_0 \al_1^{m - 1} \be_0^2 + h \al^{m - 1} \be^2 \bigr) + v_0 \al_1^m \be_0^2 + h \al^m \be^2 \notag \\
&\quad + \frac{\rho}{1 + \rho} \bigl( 2 \rho \bigl( v_0 \al_1^{m - 1} \be_0 + h \al^{m - 1} \be \bigr) + v_0 \al_1^m \be_0 + h \al^m \be \bigr) \notag \\
&\quad + \frac{1}{1 + \rho} \bigl( 2 \rho \bigl( v_0 \al_1^m \be_0 + h \al^m \be \bigr) + v_0 \al_1^{m + 1} \be_0 + h \al^{m + 1} \be \bigr).
\end{align}%
Since this equation holds for all $m \ge 1$, we must have that $\al = \al_1$. We want the pair $(\al_1,\be)$ to satisfy the balance equations of the interior, so we pick $\be = \be_1$ as the root of \eqref{eqnJSQ:interior_educated_guess} for fixed $\al = \al_1$ satisfying $|\be_1| < |\al_1|$. Just as in the vertical compensation step, we can discard the other root of \eqref{eqnJSQ:interior_educated_guess}. So, by choosing $\al = \al_1$ and $\be = \be_1$, we know that the linear combination $h_0 \al_0^m \be_0^n + v_0 \al_1^m \be_0^n + h \al_1^m \be_1^n$ satisfies the balance equations of the interior. What remains is to choose $h = h_1$ in such a way that the linear combination $h_0 \al_0^m \be_0^n + v_0 \al_1^m \be_0^n + h_1 \al_1^m \be_1^n$ satisfies \eqref{eqnJSQ:balance_equations_interior_boundary_simplified}. We now describe the method of choosing this coefficient in a general setting.

\begin{lemma}[Horizontal compensation step]\label{lemJSQ:horizontal_compensation}%
Consider the product form $v \al^m \be^n$ with $0 < |\al| < |\be| < 1$ and some coefficient $v$, that satisfies the balance equations \eqref{eqnJSQ:balance_equations_interior} of the interior and stems from a solution that satisfies the balance equations of the interior and the vertical boundary. For this fixed $\al$, let $\hat{\be}$ be the root that satisfies \eqref{eqnJSQ:interior_educated_guess} with $|\hat{\be}| < |\al|$. Then there exists a coefficient $h$ such that
\begin{equation}%
p(m,n) = v \al^m \be^n + h \al^m \hat{\be}^n
\end{equation}%
satisfies \eqref{eqnJSQ:balance_equations_interior} and \eqref{eqnJSQ:balance_equations_interior_boundary_simplified}. The coefficient $h$ is given by
\begin{equation}%
h = - \frac{(\rho + \al)/\hat{\be} - (1 + \rho)}{(\rho + \al)/\be - (1 + \rho)} v. \label{eqnJSQ:horizontal_compensation_coefficient}
\end{equation}%
\end{lemma}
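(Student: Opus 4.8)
The plan is to mirror the structure of the proof of \cref{lemJSQ:vertical_compensation}. First I would observe that since $(\al,\be)$ and $(\al,\hat{\be})$ both satisfy \eqref{eqnJSQ:interior_educated_guess} (the first by hypothesis, the second by the choice of $\hat{\be}$ as a root of \eqref{eqnJSQ:interior_educated_guess} for fixed $\al$), any linear combination $v \al^m \be^n + h \al^m \hat{\be}^n$ automatically satisfies the interior balance equations \eqref{eqnJSQ:balance_equations_interior} for all $h$; this is the part that requires no work. The content is entirely in choosing $h$ so that the remaining horizontal-boundary equations \eqref{eqnJSQ:balance_equations_interior_boundary_simplified} hold.

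Next I would substitute $p(m,n) = v \al^m \be^n + h \al^m \hat{\be}^n$ into \eqref{eqnJSQ:balance_equations_interior_boundary_simplified}, which after dividing by the common factor $\al^{m-1}$ becomes a relation of the form
\begin{equation}
v \bigl( A(\al)\be + B(\al) \bigr) + h \bigl( A(\al)\hat{\be} + B(\al) \bigr) = 0,
\end{equation}
where $A(\al)$ and $B(\al)$ are fixed (the same for both terms because the factor $\al^m$ and its shifts are shared) and all the $\be$- and $\hat{\be}$-dependence collects into the displayed linear terms. Here the key simplification, exactly parallel to the trick used in the proof of \cref{lemJSQ:vertical_compensation}, is that wherever a bare $\be^2$ (respectively $\hat{\be}^2$) appears I would use the interior relation \eqref{eqnJSQ:interior_educated_guess} — solved as $\be^2 = (2(1+\rho)\al\be - \al^2)/(2\rho + \al)$ and likewise for $\hat{\be}$ — to eliminate the quadratic terms and reduce everything to linear expressions in $\be$, $\hat{\be}$. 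After collecting, the coefficient of $v$ will be proportional to $(\rho + \al)/\be - (1 + \rho)$ and the coefficient of $h$ proportional to $(\rho + \al)/\hat{\be} - (1 + \rho)$ (the proportionality constants being identical), which immediately yields the claimed formula \eqref{eqnJSQ:horizontal_compensation_coefficient} for $h$, provided the denominator $(\rho + \al)/\be - (1+\rho)$ is nonzero.

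The main obstacle I anticipate is the bookkeeping in that elimination step: \eqref{eqnJSQ:balance_equations_interior_boundary_simplified} already involves several shifts in $m$ and rational coefficients $\rho/(1+\rho)$ and $1/(1+\rho)$, so one must carefully expand, clear the factor $1/(1+\rho)$, gather terms by power of $\al$, and then repeatedly apply \eqref{eqnJSQ:interior_educated_guess} to kill $\be^2$ and $\hat{\be}^2$ without sign errors. I would also need to briefly justify the two side conditions invoked implicitly: that $\hat{\be}$ with $|\hat{\be}| < |\al|$ is well-defined, which is exactly \cref{lemJSQ:interior_product-form_solution}(i) applied with the roles of the variables adapted (for fixed $\al$, the quadratic $(2\rho + \al)z^2 - 2(1+\rho)z + 1 = 0$ in $z = \be/\al$ has one root strictly inside $|z| < 1$ and, since the product of its roots is $1/(2\rho+\al) > 1$ in modulus when $|\al|$ is small, the second root lies outside, giving $|\hat{\be}| < |\al| < |\be|$); and that the denominator in \eqref{eqnJSQ:horizontal_compensation_coefficient} does not vanish, which holds because $(\rho+\al)/\be = 1+\rho$ would force $\be$ to coincide with a specific value incompatible with $(\al,\be)$ lying on the interior curve \eqref{eqnJSQ:interior_educated_guess} for the relevant range of $\rho$. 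Apart from these checks, once the elimination is done the formula for $h$ drops out by matching the two coefficients, exactly as $v$ dropped out in \cref{lemJSQ:vertical_compensation}.
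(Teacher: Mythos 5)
Your plan mirrors the paper's proof and the overall strategy is sound: interior balance comes for free since $(\al,\be)$ and $(\al,\hat{\be})$ both satisfy \eqref{eqnJSQ:interior_educated_guess}, and the work is to plug the combination into \eqref{eqnJSQ:balance_equations_interior_boundary_simplified}, divide by common powers of $\al$, and use the interior relation to collapse the quadratics in $\be$ and $\hat{\be}$. Your remarks on the existence of $\hat{\be}$ via \cref{lemJSQ:interior_product-form_solution} and on the non-vanishing of the denominator are useful side conditions that the paper's own proof omits.

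There is, however, a concrete algebraic error in your anticipated form that would lead you to the reciprocal of \eqref{eqnJSQ:horizontal_compensation_coefficient}. You predict that the coefficient of $v$ in the collected linear relation is proportional to $(\rho+\al)/\be - (1+\rho)$ and that the coefficient of $h$ is proportional to $(\rho+\al)/\hat{\be} - (1+\rho)$, i.e.\ each coefficient involves the $\be$-root attached to its own term. In fact, after the elimination one obtains
\begin{equation}
v \bigl( \al^2(1+\rho) - \be(2\rho+\al)(\rho+\al) \bigr) + h \bigl( \al^2(1+\rho) - \hat{\be}(2\rho+\al)(\rho+\al) \bigr) = 0,
\end{equation}
and only after invoking the Vieta product $\be\hat{\be}(2\rho+\al) = \al^2$ (which replaces $\al^2$ by an expression mixing both roots) does each bracket factor as $-\be\hat{\be}(2\rho+\al)\bigl((\rho+\al)/\hat{\be} - (1+\rho)\bigr)$ for the $v$-term and $-\be\hat{\be}(2\rho+\al)\bigl((\rho+\al)/\be - (1+\rho)\bigr)$ for the $h$-term: the roots swap sides. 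Your plan doesn't mention the Vieta step at all, and as written $h = -a_v v / a_h$ with your assignments gives the inverted ratio. So the elimination you describe is correct, but the ``matching'' step as you state it would not produce \eqref{eqnJSQ:horizontal_compensation_coefficient}; you need the extra substitution $\al^2 = \be\hat{\be}(2\rho+\al)$, and once you make it you will find the cross-pairing rather than the straight pairing you anticipated.
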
%

\begin{proof}%
Notice that both $(\al,\be)$ and $(\al,\hat{\be})$ satisfy \eqref{eqnJSQ:interior_educated_guess}. So, the linear combination $v \al^m \be^n + h \al^m \hat{\be}^n$ satisfies \eqref{eqnJSQ:balance_equations_interior} for any $v$ and $h$.

Inserting the linear combination into \eqref{eqnJSQ:balance_equations_interior_boundary_simplified} and dividing by common powers yields
\begin{align}%
&v \bigl( 2(1 + \rho) \al \be - 2\rho \be^2 - \al \be^2 \bigr) + h \bigl( 2(1 + \rho) \al \hat{\be} - 2\rho \hat{\be}^2 - \al \hat{\be}^2 \bigr) \notag \\
&= \frac{\rho}{1 + \rho} \bigl( 2\rho (v \be + h \hat{\be}) + v \al \be + h \al \hat{\be} \bigr) \notag \\
&\quad + \frac{1}{1 + \rho} \bigl( 2\rho (v \al \be + h \al \hat{\be}) + v \al^2 \be + h \al^2 \hat{\be} \bigr).
\end{align}%
Since $(\al,\be)$ and $(\al,\hat{\be})$ satisfy \eqref{eqnJSQ:interior_educated_guess} we can simplify the coefficients of $v$ and $h$ on the left-hand side to obtain
\begin{align}%
v \al^2 (1 + \rho) + h \al^2 (1 + \rho) &= \rho \bigl( 2\rho (v \be + h \hat{\be}) + v \al \be + h \al \hat{\be} \bigr) \notag \\
&\quad + \bigl( 2\rho (v \al \be + h \al \hat{\be}) + v \al^2 \be + h \al^2 \hat{\be} \bigr).
\end{align}%
So,
\begin{equation}%
h = - \frac{\be (2\rho + \al)(\rho + \al) - \al^2(1 + \rho)}{\hat{\be} (2\rho + \al)(\rho + \al) - \al^2(1 + \rho)} v.
\end{equation}%
Since $\be$ and $\hat{\be}$ are roots of \eqref{eqnJSQ:interior_educated_guess} we have the relation $\be \hat{\be} (2 \rho + \al) = \al^2$. Using this relation proves the claim.
\end{proof}%

Applying \cref{lemJSQ:horizontal_compensation} shows that we must choose
\begin{equation}%
h_1 = - \frac{(\rho + \al_1)/\be_1 - (1 + \rho)}{(\rho + \al_1)/\be_0 - (1 + \rho)} v_0
\end{equation}%
to ensure that the linear combination $h_0 \al_0^m \be_0^n + v_0 \al_1^m \be_0^n + h_1 \al_1^m \be_1^n$ satisfies \eqref{eqnJSQ:balance_equations_interior} and \eqref{eqnJSQ:balance_equations_interior_boundary_simplified}. Adding the compensation term $h_1 \al_1^m \be_1^n$, however, introduces an error on the vertical boundary for which another vertical compensation step needs to be performed.

It is clear how the compensation procedure works: after an initial product-form solution is constructed, it alternates between horizontal and vertical compensation steps to compensate for the error introduced on the vertical or horizontal boundary in the previous compensation step. In every vertical compensation step we just need to compensate for the error introduced by the compensation term of the previous horizontal compensation step; the linear combination of product-form solutions at the time of the previous vertical compensation step namely already satisfies the balance equations of the interior and on the vertical boundary! Obviously, the same statement can be made for the horizontal compensation step.

\cref{figJSQ:indexing_compensation_parameters} shows the indexing of the terms of the compensation procedure. \cref{algJSQ:compensation_approach} can be used to generate a finite number of compensation parameters and \cref{figJSQ:generating_compensation_parameters} shows how the compensation parameters $\al_i$ and $\be_i$ are generated.

\begin{figure}
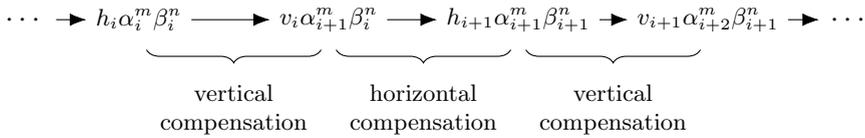
%
\centering%
\includestandalone{Chapters/JSQ/TikZFiles/indexing_compensation_parameters}%
\caption{Indexing of the terms of the compensation procedure.}%
\label{figJSQ:indexing_compensation_parameters}%
\end{figure}%

\begin{algorithm}%
\caption{Generating the compensation parameters}%
\label{algJSQ:compensation_approach}%
\begin{algorithmic}[1]%
\State Pick a large positive integer $K$
\State Set $h_0 = 1$, $\al_0 = \rho^2$ and $\be_0 = \rho^2/(2 + \rho)$
\State Calculate $\al_1$ from \eqref{eqnJSQ:interior_educated_guess} with fixed $\be = \be_0$ and $|\al_1| < |\be_0|$
\State Calculate $v_0$ using \cref{lemJSQ:vertical_compensation} with $h_0 \al_0^m \be_0^n$ as the original product form and $\hat{\al} = \al_1$
\For{$i = 1,2,\ldots,K$}
    \State Calculate $\be_i$ from \eqref{eqnJSQ:interior_educated_guess} with fixed $\al = \al_i$ and $|\be_i| < |\al_i|$
    \State Calculate $h_i$ using \cref{lemJSQ:horizontal_compensation} with $v_{i - 1} \al_i^m \be_{i - 1}^n$ as the original
    \Statex \hspace\algorithmicindent product form and $\hat{\be} = \be_i$
    \State Calculate $\al_{i + 1}$ from \eqref{eqnJSQ:interior_educated_guess} with fixed $\be = \be_i$ and $|\al_{i + 1}| < |\be_i|$
    \State Calculate $v_i$ using \cref{lemJSQ:vertical_compensation} with $h_i \al_i^m \be_i^n$ as the original product
    \Statex \hspace\algorithmicindent form and $\hat{\al} = \al_{i + 1}$
\EndFor
\end{algorithmic}%
\end{algorithm}%

\begin{figure}
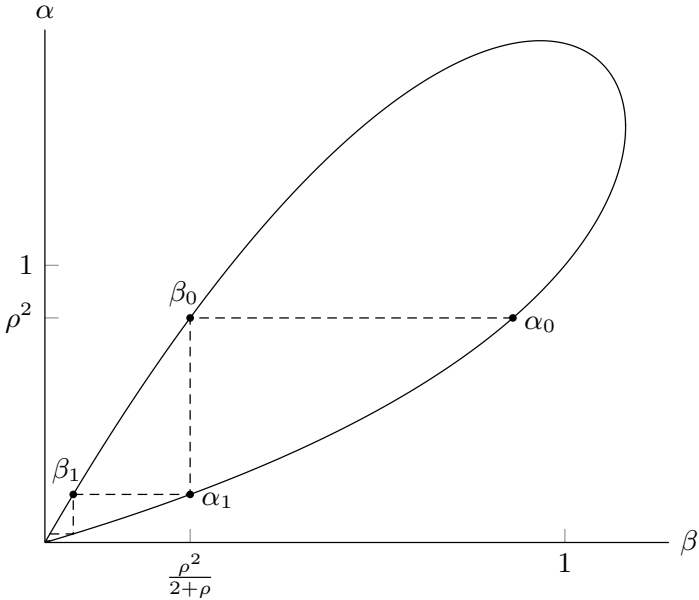
%
\centering%
\includestandalone{Chapters/JSQ/TikZFiles/generating_compensation_parameters}%
\caption{Generating the compensation parameters $\al_i$ and $\be_i$.}%
\label{figJSQ:generating_compensation_parameters}%
\end{figure}%

The compensation procedure ultimately leads to a series expression for the equilibrium probabilities:
\begin{equation}%
p(m,n) = \sum_{i \ge 0} h_i \al_i^m \be_i^n + \sum_{i \ge 0} v_i \al_{i + 1}^m \be_i^n, \quad m \ge 0, ~ n \ge 1. \label{eqnJSQ:series_expression_p(m,n)}
\end{equation}%
If the errors terms converge sufficiently fast to zero, then the series converges. Moreover, if the sum of $p(m,n)$ over all states is absolutely convergent, then it can be normalized to produce the equilibrium distribution and the balance equation \eqref{eqnJSQ:balance_equations_vertical_boundary_simplified} in state $(0,1)$ is also satisfied by this series expression, because we can sum over all other balance equations---which are already satisfied---to produce the balance equation in state $(0,1)$. Hence, what remains to be done is (i) to show that the two series in \eqref{eqnJSQ:series_expression_p(m,n)} converge absolutely and that
\begin{equation}%
\sum_{m \ge 0} \sum_{n \ge 1} |p(m,n)| < \infty;
\end{equation}%
and (ii) to determine the normalization constant.


\subsection{Proving convergence of the series}%
\label{subsecJSQ:convergence}%

We will study the absolute convergence of the two series in \eqref{eqnJSQ:series_expression_p(m,n)} by determining, for $m \ge 0, ~ n \ge 1$,
\begin{equation}%
R_1(m,n) \defi \lim_{i \to \infty} \biggl| \frac{h_{i + 1} \al_{i + 1}^m \be_{i + 1}^n}{h_i \al_i^m \be_i^n} \biggr|, ~ R_2(m,n) \defi \lim_{i \to \infty} \biggl| \frac{v_{i + 1} \al_{i + 2}^m \be_{i + 1}^n}{v_i \al_{i + 1}^m \be_i^n} \biggr|. \label{eqnJSQ:absolute_convergence_ratios_we_need_to_determine}
\end{equation}%
The coefficients $h_i$ and $v_i$ and the roots $\al_i$ and $\be_i$ are non-zero for all $i$, which allows us to divide by these quantities in \eqref{eqnJSQ:absolute_convergence_ratios_we_need_to_determine}. The coefficients cannot be zero, since this would indicate that there exists a product-form solution that satisfies the balance equations of the interior, horizontal boundary and the vertical boundary. From \cref{figJSQ:pairs_alpha_beta} we know that such solution does not exist. By inspecting \eqref{eqnJSQ:interior_educated_guess} we know that all roots $\al_i$ and $\be_i$ are non-zero.

If the limits \eqref{eqnJSQ:absolute_convergence_ratios_we_need_to_determine} exist and are strictly less than one, then we have proven that the two series in \eqref{eqnJSQ:series_expression_p(m,n)} converge absolutely. We can rewrite \eqref{eqnJSQ:absolute_convergence_ratios_we_need_to_determine} as
\begin{equation}%
R_1(m,n) = \lim_{i \to \infty} \biggl| \frac{\frac{h_{i + 1}}{v_i} \frac{\al_{i + 1}^m}{\be_{i + 1}^m} \frac{\be_{i + 1}^{m + n}}{\al_{i + 1}^{m + n}}}{\frac{h_i}{v_i} \frac{\al_i^m}{\be_i^m} \frac{\be_i^{m + n}}{\al_{i + 1}^{m + n}}} \biggr|, ~ R_2(m,n) = \lim_{i \to \infty} \biggl| \frac{\frac{v_{i + 1}}{h_{i + 1}} \frac{\al_{i + 2}^m}{\be_{i + 1}^m} \frac{\be_{i + 1}^{m + n}}{\al_{i + 1}^{m + n}}}{\frac{v_i}{h_{i + 1}} \frac{\al_{i + 1}^m}{\be_i^m} \frac{\be_i^{m + n}}{\al_{i + 1}^{m + n}}} \biggr|. \label{eqnJSQ:absolute_convergence_ratios_we_need_to_determine_in_ratios_of_compensation_parameters}
\end{equation}%
If we can determine the limits of the fractions present in \eqref{eqnJSQ:absolute_convergence_ratios_we_need_to_determine_in_ratios_of_compensation_parameters} as $i \to \infty$, then we can also determine $R_1(m,n)$ and $R_2(m,n)$.

First, let us study the sequence of $\al$'s and $\be$'s in greater detail. Each $\al_i$ generates a $\be_i$ through \eqref{eqnJSQ:interior_educated_guess} that satisfies $|\be_i| < |\al_i|$ and each $\be_i$ generates an $\al_{i + 1}$ through \eqref{eqnJSQ:interior_educated_guess} that satisfies $|\al_{i + 1}| < |\be_i|$. So, we have the ordering
\begin{equation}%
|\al_0| > |\be_0| > |\al_1| > |\be_1| > \cdots \label{eqnJSQ:ordering_al_be}
\end{equation}%
This indicates that $\al_i$ and $\al_{i + 1}$ are the two roots of \eqref{eqnJSQ:interior_educated_guess} for a fixed $\be = \be_i$ with $|\al_{i + 1}| < |\be_i| < |\al_i|$ and $\be_i$ and $\be_{i + 1}$ are the two roots of \eqref{eqnJSQ:interior_educated_guess} for a fixed $\al = \al_{i + 1}$ with $|\be_{i + 1}| < |\al_{i + 1}| < |\be_i|$. We therefore have that $\al_i$ and $\al_{i + 1}$ satisfy
\begin{equation}%
\al_i \al_{i + 1} = 2 \rho \be_i^2, \quad \al_i + \al_{i + 1} = 2(1 + \rho) \be_i - \be_i^2 \label{eqnJSQ:relation_al}
\end{equation}%
and $\be_i$ and $\be_{i + 1}$ satisfy
\begin{equation}%
\be_i \be_{i + 1} = \frac{\al_{i + 1}^2}{2\rho + \al_{i + 1}}, \quad \be_i + \be_{i + 1} = \frac{2(1 + \rho)}{2\rho + \al_{i + 1}}\al_{i + 1}. \label{eqnJSQ:relation_be}
\end{equation}%
Since $\al_0,\be_0 > 0$ it follows from \eqref{eqnJSQ:relation_al} and \eqref{eqnJSQ:relation_be} by induction that all $\al_i$ and $\be_i$ are positive. More importantly, the parameters $\al_i$ and $\be_i$ decrease geometrically fast, which we establish now.

\begin{lemma}\label{lemJSQ:geometric_decrease_al_and_be}%
There exists $c \in (0,1)$ such that $0 < \al_i, \be_i < c^i, ~ i \ge 0$.
\end{lemma}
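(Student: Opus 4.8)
The plan is to track the two sequences $\{\al_i\}$ and $\{\be_i\}$ through the quadratic relations \eqref{eqnJSQ:relation_al} and \eqref{eqnJSQ:relation_be}, and show that consecutive ratios are bounded away from $1$. By the ordering \eqref{eqnJSQ:ordering_al_be} we already know that both sequences are positive and strictly decreasing, so each converges to a nonnegative limit. First I would argue that the only possible common limit is $0$: suppose $\al_i \to a \ge 0$ and $\be_i \to b \ge 0$. Since $a \ge b \ge 0$ by \eqref{eqnJSQ:ordering_al_be}, and since consecutive $\al$'s and $\be$'s interlace, passing to the limit in the product relation $\al_i \al_{i+1} = 2\rho \be_i^2$ from \eqref{eqnJSQ:relation_al} gives $a^2 = 2\rho b^2$, while passing to the limit in $\be_i \be_{i+1} = \al_{i+1}^2/(2\rho + \al_{i+1})$ from \eqref{eqnJSQ:relation_be} gives $b^2 = a^2/(2\rho + a)$. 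Substituting the first into the second yields $b^2 = 2\rho b^2/(2\rho + a)$, hence $b^2 a = 0$, so either $b = 0$ (whence also $a = 0$ from $a^2 = 2\rho b^2$) or $a = 0$. Either way $a = b = 0$. So both $\al_i \downarrow 0$ and $\be_i \downarrow 0$.

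Next I would upgrade this to a geometric rate. From $\al_i \al_{i+1} = 2\rho \be_i^2 < 2\rho \al_i^2$ (using $\be_i < \al_i$) we get $\al_{i+1} < 2\rho \al_i$; but this is only useful if $2\rho < 1$, which need not hold. The cleaner route is to bound the ratio $\al_{i+1}/\al_i$ using the full relation: dividing $\al_i \al_{i+1} = 2\rho \be_i^2$ by $\al_i^2$ gives $\al_{i+1}/\al_i = 2\rho(\be_i/\al_i)^2$, so it suffices to show $\be_i/\al_i$ is bounded away from $1$ for all large $i$ by some $\theta < 1$, which would yield $\al_{i+1}/\al_i \le 2\rho\theta^2$ — and since $\be_i/\al_i \to$ (the limit of the root ratio), I'd compute that limit. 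From \eqref{eqnJSQ:interior_educated_guess}, writing $z = \be/\al$ with $\al \to 0$, the quadratic $(2\rho + \al)z^2 - 2(1+\rho)z + 1 = 0$ tends to $2\rho z^2 - 2(1+\rho)z + 1 = 0$, whose smaller root is $z_* = \bigl(2(1+\rho) - \sqrt{4(1+\rho)^2 - 8\rho}\bigr)/(4\rho) \in (0,1)$; this is the limiting value of $\be_i/\al_i$. Similarly, from \eqref{eqnJSQ:interior_educated_guess} with $w = \al/\be$ and $\be \to 0$, the ratio $\al_{i+1}/\be_i$ tends to the smaller root of $w^2 - 2(1+\rho)w + 2\rho = 0$, call it $w_*$, which lies in $(0,1)$ since the product of roots is $2\rho$ and the larger root exceeds $2(1+\rho) - 1 > 1$. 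Thus for $i$ large, $\be_i/\al_i \le \theta$ and $\al_{i+1}/\be_i \le \theta'$ for some $\theta,\theta' < 1$, giving $\al_{i+2}/\al_i = (\al_{i+2}/\be_{i+1})(\be_{i+1}/\al_{i+1})(\al_{i+1}/\be_i)(\be_i/\al_i) \le (\theta\theta')^2 < 1$, and likewise for $\be$. Choosing $c$ slightly larger than $\sqrt{\theta\theta'}$ and absorbing the finitely many initial terms by enlarging $c$ toward $1$ if necessary, one gets $0 < \al_i,\be_i < c^i$ for all $i \ge 0$.

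The main obstacle I anticipate is not the limit computation itself but the bookkeeping needed to pass from the \emph{limiting} ratio bounds (which only hold for large $i$) to a \emph{uniform} bound valid for all $i \ge 0$ of the clean form $c^i$; this requires noting that each $\al_i$ and $\be_i$ is strictly positive and strictly less than $1$, so the finitely many ratios $\al_{i+1}/\al_i$ and $\be_{i+1}/\be_i$ before the asymptotic regime kicks in are all strictly less than $1$, and then taking $c$ to be the maximum of these finitely many ratios together with $\sqrt{\theta\theta'}$ — all of which are $< 1$. A secondary subtlety is confirming that the limiting quadratics have their relevant roots strictly inside $(0,1)$ with strict inequality (not equal to $1$), which follows because at $z = 1$ the polynomial $2\rho z^2 - 2(1+\rho)z + 1$ evaluates to $2\rho - 2 - 2\rho + 1 = -1 \ne 0$, and similarly $w = 1$ gives $1 - 2(1+\rho) + 2\rho = -1 \ne 0$, so no root sits on the unit circle. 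With these pieces in place the statement follows.
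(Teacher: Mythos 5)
Your argument is correct and reaches the same destination via a close but somewhat longer route. The paper's proof is leaner: it defines $t(\al) \defi \be/\al$ (with $\be$ the smaller root of \eqref{eqnJSQ:interior_educated_guess} for fixed $\al$), notes $t(\al) < 1$ on $(0,\rho^2]$ from \cref{lemJSQ:interior_product-form_solution}, observes from \cref{lemJSQ:asymptotic_ratios_al_be} that $\lim_{\al \downarrow 0} t(\al) < 1$, and then invokes compactness of $[0,\rho^2]$ to set $c_1 \defi \max_{\al \in [0,\rho^2]} t(\al) < 1$ (and symmetrically $c_2$), giving a \emph{uniform} ratio bound for all $i$ at once. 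You instead first prove $\al_i, \be_i \to 0$ by passing to the limit in \eqref{eqnJSQ:relation_al}--\eqref{eqnJSQ:relation_be}, then derive the same limiting ratios, and then patch up the finitely many initial indices separately. That works, but the compactness argument buys two things: it avoids the preliminary step of proving convergence to zero (which becomes a corollary rather than an input), and it eliminates the need to argue about ``large $i$'' versus ``small $i$'' at all. On your side, the limit-of-recursion computation showing $a = b = 0$ is a clean self-contained observation even if logically redundant once the geometric bound is in hand.

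One small slip: from $\al_{i+1}/\al_i = (\al_{i+1}/\be_i)(\be_i/\al_i) \le \theta'\theta$ the natural geometric rate is $\theta\theta'$, not $\sqrt{\theta\theta'}$; since $\theta\theta' < \sqrt{\theta\theta'} < 1$, the choice you state still yields a valid (just non-optimal) $c$, and your ``enlarge $c$ toward $1$ if necessary'' clause absorbs the finitely many non-asymptotic indices in any case, so correctness is unaffected.
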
%

\begin{proof}%
For a fixed $\al$, let $\be$ be the root of \eqref{eqnJSQ:interior_educated_guess} satisfying $\be < \al$. Define $t(\al) \defi \be/\al$. In \cref{lemJSQ:asymptotic_ratios_al_be} we show that $\lim_{\al \downarrow 0} t(\al)$ exists and is less than 1, so that $t(\al) < 1$ for $\al \in [0,\rho^2]$ by \cref{lemJSQ:interior_product-form_solution}. Since the interval $[0,\rho^2]$ is closed and bounded, we have that $c_1 \defi \max_{\al \in [0,\rho^2]} t(\al) < 1$. Perform the same procedure for a fixed $\be$ to obtain a second bound $c_2$. So, $\be_i < \al_i c_1$ and $\al_{i + 1} < \be_i c_2$. Set $c \defi c_1 c_2$ to prove the claim.
\end{proof}%

A consequence of \cref{lemJSQ:geometric_decrease_al_and_be} is that $\al_i \to 0$ and $\be_i \to 0$ as $i \to \infty$.

The following results on the asymptotic behavior of $\be_i/\al_i$ and $\al_{i + 1}/\be_i$ will be used to evaluate \eqref{eqnJSQ:absolute_convergence_ratios_we_need_to_determine_in_ratios_of_compensation_parameters}.

\begin{lemma}[Asymptotic ratios $\al$ and $\be$]\label{lemJSQ:asymptotic_ratios_al_be}\hspace{1em}%
\begin{enumerate}[label = \textup{(\roman*)}]%
\item For a fixed $\al_i$, let $\be_i$ be the root of \eqref{eqnJSQ:interior_educated_guess} with $\be_i < \al_i$. Then, as $i \to \infty$ the ratio $\be_i/\al_i \to \ga_\smallminus$ with $\ga_\smallminus < 1$ the smaller root of
    \begin{equation}%
    0 = 2 \rho \ga^2 - 2(1 + \rho) \ga + 1, \label{eqnJSQ:limiting_ratio_be/al}
    \end{equation}%
    where the roots are
    \begin{equation}%
    \ga_\smallplusminus = \frac{1 + \rho \pm \sqrt{1 + \rho^2}}{2 \rho}.
    \end{equation}%
\item For a fixed $\be_i$, let $\al_{i + 1}$ be the root of \eqref{eqnJSQ:interior_educated_guess} with $\al_{i + 1} < \be_i$. Then, as $i \to \infty$ the ratio $\al_{i + 1}/\be_i \to 1/\ga_\smallplus$ with $\ga_\smallplus > 1$ the larger root of \eqref{eqnJSQ:limiting_ratio_be/al}.
\end{enumerate}%
\end{lemma}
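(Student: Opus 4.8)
The plan is to exploit the two-term recursions \eqref{eqnJSQ:relation_al} and \eqref{eqnJSQ:relation_be} together with the fact, established in \cref{lemJSQ:geometric_decrease_al_and_be}, that $\al_i \to 0$ and $\be_i \to 0$. The idea is that as the parameters vanish the quadratic \eqref{eqnJSQ:interior_educated_guess}, after dividing through by an appropriate power, degenerates into the $\al$- or $\be$-independent quadratic \eqref{eqnJSQ:limiting_ratio_be/al} for the limiting ratio.

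For part (i), I would start from \eqref{eqnJSQ:interior_educated_guess} written for the pair $(\al_i,\be_i)$, divide by $\al_i^2$, and set $z_i \defi \be_i/\al_i$, obtaining
\begin{equation}
(2\rho + \al_i) z_i^2 - 2(1 + \rho) z_i + 1 = 0
\end{equation}
(the same polynomial appeared in the proof of \cref{lemJSQ:interior_product-form_solution}(i)). By \cref{lemJSQ:interior_product-form_solution}(i) the relevant $z_i$ is the root inside the unit disk, i.e.\ the smaller root of this quadratic; since the sequence $\{\al_i\}$ is bounded (it lies in $[0,\rho^2]$ and converges to $0$), the coefficients of the quadratic depend continuously on $\al_i$ and converge to those of \eqref{eqnJSQ:limiting_ratio_be/al} as $i\to\infty$. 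Because the smaller root of a quadratic with a nonvanishing leading coefficient is a continuous function of the coefficients, $z_i \to \ga_\smallminus$, the smaller root of $2\rho\ga^2 - 2(1+\rho)\ga + 1 = 0$; the quadratic formula gives $\ga_\smallpm = (1 + \rho \pm \sqrt{1+\rho^2})/(2\rho)$, and one checks directly that $\ga_\smallminus < 1$ (e.g.\ evaluate the quadratic at $\ga = 1$: it equals $2\rho - 2(1+\rho) + 1 = -1 < 0$, and the leading coefficient is positive, so exactly one root lies below $1$, which by continuity from $\al_i$ small must be $\ga_\smallminus$). For part (ii) the argument is symmetric: divide \eqref{eqnJSQ:interior_educated_guess} for the pair $(\al_{i+1},\be_i)$ by $\be_i^2$, set $w_i \defi \al_{i+1}/\be_i$, and obtain $w_i^2 + (\be_i - 2(1+\rho)) w_i + 2\rho = 0$ (again this polynomial was used in \cref{lemJSQ:interior_product-form_solution}(ii)); with $\be_i \to 0$ the limiting quadratic is $w^2 - 2(1+\rho)w + 2\rho = 0$, whose roots are $1/\ga_\smallmp$ since that equation is the "reciprocal" of \eqref{eqnJSQ:limiting_ratio_be/al} (multiply \eqref{eqnJSQ:limiting_ratio_be/al} by $1/\ga^2$ and rename). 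The relevant root is the one inside the unit disk by \cref{lemJSQ:interior_product-form_solution}(ii), which is $1/\ga_\smallplus$ because $\ga_\smallplus > 1$.

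The only genuinely delicate point is the continuity-of-roots argument: one must ensure the leading coefficient of the quadratic does not degenerate (it is $2\rho + \al_i \ge 2\rho > 0$ in part (i), and $1$ in part (ii), so this is fine) and that the particular root we track — the one inside the unit disk, guaranteed to exist and be unique by \cref{lemJSQ:interior_product-form_solution} — varies continuously with the perturbed coefficient and does not jump to the other root in the limit. Since both limiting roots $\ga_\smallpm$ are real and distinct (the discriminant $1+\rho^2$ is strictly positive) and separated by $1$, and since $z_i$ stays in the open unit disk for all $i$ while $\ga_\smallplus > 1$, the limit of $z_i$ cannot be $\ga_\smallplus$; hence it must be $\ga_\smallminus$. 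The same separation argument handles part (ii). I expect this to be the main obstacle only in the sense of requiring care in the write-up; the computations themselves are short. A clean way to avoid any subtlety about which root is selected is to note that the product of the two roots of the quadratic in part (i) is $1/(2\rho+\al_i) \to 1/(2\rho) = \ga_\smallminus\ga_\smallplus$ and their sum is $2(1+\rho)/(2\rho+\al_i) \to (1+\rho)/\rho = \ga_\smallminus + \ga_\smallplus$; combined with $z_i < 1 < $ (the other root, which by \eqref{eqnJSQ:ordering_al_be} is $\al_i/\be_{i-1}>1$... ) one pins down $\lim z_i = \ga_\smallminus$ unambiguously. I would present the Vieta/continuity version as the primary argument and mention the root-separation check as the justification for the branch selection.
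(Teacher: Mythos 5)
Your proposal takes essentially the same route as the paper's proof: divide the quadratic \eqref{eqnJSQ:interior_educated_guess} by $\al_i^2$ (resp.\ $\be_i^2$), pass to the limit using $\al_i,\be_i\to 0$, and identify the in-disk root via \cref{lemJSQ:interior_product-form_solution}; you additionally spell out the continuity-of-roots and branch-selection details that the paper leaves implicit, which is a sound refinement. One small correction to your closing Vieta remark: the companion root of the quadratic in $z = \be/\al_i$ (for fixed $\al_i$) is $\be_{i-1}/\al_i > 1$, not $\al_i/\be_{i-1}$, which by the ordering \eqref{eqnJSQ:ordering_al_be} is in fact less than $1$.
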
%

\begin{proof}%
(i) In \eqref{eqnJSQ:interior_educated_guess}, set $\al = \al_i$ and $\be = \be_i$, divide by $\al_i^2$, set $\ga = \be_i/\al_i$ and let $i \to \infty$ to obtain \eqref{eqnJSQ:limiting_ratio_be/al}. It is easy to see that $\ga_\smallplus > 1/(2\rho)$ for $0 < \rho < 1$ and since $\ga_\smallminus \ga_\smallplus = 1/(2\rho)$ we conclude that $\ga_\smallminus < 1$.

\noindent (ii) In \eqref{eqnJSQ:interior_educated_guess}, set $\al = \al_{i + 1}$ and $\be = \be_i$, divide by $\be_i^2$, set $\zeta = \al_{i + 1}/\be_i$ and let $i \to \infty$ to obtain
\begin{equation}%
0 = \zeta^2 - 2(1 + \rho) \zeta + 2 \rho. \label{eqnJSQ:limiting_ratio_al/be}
\end{equation}%
We are interested in the root of \eqref{eqnJSQ:limiting_ratio_al/be} smaller than one, which is $1/\ga_\smallplus$, since $\zeta$ satisfies the same equation as $1/\ga$.
\end{proof}%

We can also determine $v_i/h_i$ and $h_{i + 1}/v_i$ as $i \to \infty$. This is the final ingredient in the evaluation of \eqref{eqnJSQ:absolute_convergence_ratios_we_need_to_determine_in_ratios_of_compensation_parameters}.

\begin{lemma}[Asymptotic ratios coefficients $h$ and $v$]\label{lemJSQ:asymptotic_ratios_h_v}\hspace{1em}%
\begin{enumerate}[label = \textup{(\roman*)}]%
\item Consider the setting of \textup{\cref{lemJSQ:vertical_compensation}}. Then, as $i \to \infty$,
    \begin{equation}%
    \frac{v_i}{h_i} \to \frac{1/(2\rho) - \ga_\smallminus}{\ga_\smallplus - 1/(2\rho)}.
    \end{equation}%
\item Consider the setting of \textup{\cref{lemJSQ:horizontal_compensation}}. Then, as $i \to \infty$,
    \begin{equation}%
    \frac{h_{i + 1}}{v_i} \to - \frac{\ga_\smallplus}{\ga_\smallminus}.
    \end{equation}%
\end{enumerate}%
\end{lemma}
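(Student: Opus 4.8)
The plan is to mimic the strategy already used in Lemmas on asymptotic ratios of the roots: start from the explicit coefficient formulas \eqref{eqnJSQ:vertical_compensation_coefficient} and \eqref{eqnJSQ:horizontal_compensation_coefficient}, rewrite each ratio $v_i/h_i$ and $h_{i+1}/v_i$ purely in terms of the ratios $\be_i/\al_i$ and $\al_{i+1}/\be_i$, and then pass to the limit $i \to \infty$ using \cref{lemJSQ:asymptotic_ratios_al_be}. Since all $\al_i, \be_i$ are positive and decrease geometrically by \cref{lemJSQ:geometric_decrease_al_and_be}, and the coefficients $h_i, v_i$ are non-zero, every division below is legitimate and every limit that exists can be computed term by term.

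For part (i), the vertical compensation step of \cref{lemJSQ:vertical_compensation} applied at stage $i$ uses the product form $h_i \al_i^m \be_i^n$ with $\hat\al = \al_{i+1}$ the root of \eqref{eqnJSQ:interior_educated_guess} for fixed $\be = \be_i$ satisfying $|\al_{i+1}| < |\be_i|$, so that \eqref{eqnJSQ:vertical_compensation_coefficient} gives
\begin{equation}
\frac{v_i}{h_i} = - \frac{\al_{i+1} - \be_i}{\al_i - \be_i} = - \frac{\al_{i+1}/\be_i - 1}{\al_i/\be_i - 1}.
\end{equation}
Now $\al_{i+1}/\be_i \to 1/\ga_\smallplus$ by \cref{lemJSQ:asymptotic_ratios_al_be}(ii), and $\al_i/\be_i = (\be_i/\al_i)^{-1} \to 1/\ga_\smallminus = \ga_\smallplus / ((1/(2\rho))/\ga_\smallminus)$; more simply $\al_i/\be_i \to 1/\ga_\smallminus$. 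Using $\ga_\smallminus \ga_\smallplus = 1/(2\rho)$ to replace $1/\ga_\smallplus$ by $2\rho\ga_\smallminus$ and $1/\ga_\smallminus$ by $2\rho\ga_\smallplus$, the limit becomes
\begin{equation}
\frac{v_i}{h_i} \to - \frac{2\rho\ga_\smallminus - 1}{2\rho\ga_\smallplus - 1} = \frac{1/(2\rho) - \ga_\smallminus}{\ga_\smallplus - 1/(2\rho)},
\end{equation}
after dividing numerator and denominator by $2\rho$ and flipping signs in both; the last equality is the claimed form. One should double-check the sign: since $\ga_\smallminus < 1 < 1/(2\rho) < \ga_\smallplus$ need not hold in general, the cleaner route is to verify directly that $2\rho\ga_\smallminus - 1 < 0$ and $2\rho\ga_\smallplus - 1 > 0$ from the explicit roots $\ga_\smallplusminus = (1+\rho \pm \sqrt{1+\rho^2})/(2\rho)$, which makes both the limit and its positivity transparent.

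For part (ii), the horizontal compensation step of \cref{lemJSQ:horizontal_compensation} applied at stage $i+1$ uses the product form $v_i \al_{i+1}^m \be_i^n$ with $\hat\be = \be_{i+1}$, so \eqref{eqnJSQ:horizontal_compensation_coefficient} gives
\begin{equation}
\frac{h_{i+1}}{v_i} = - \frac{(\rho + \al_{i+1})/\be_{i+1} - (1+\rho)}{(\rho + \al_{i+1})/\be_i - (1+\rho)}.
\end{equation}
Here I would use $\al_{i+1} \to 0$ (from \cref{lemJSQ:geometric_decrease_al_and_be}), so $(\rho + \al_{i+1}) \to \rho$, and rewrite $(\rho+\al_{i+1})/\be_{i+1} = (\rho+\al_{i+1})/\be_i \cdot \be_i/\be_{i+1}$. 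The ratio $\be_i/\be_{i+1}$ must be analyzed via the relation \eqref{eqnJSQ:relation_be}: $\be_i\be_{i+1} = \al_{i+1}^2/(2\rho+\al_{i+1})$ together with $\be_i + \be_{i+1} = 2(1+\rho)\al_{i+1}/(2\rho+\al_{i+1})$ shows $\be_{i+1}/\be_i \to 1$ after noting $\be_i \approx \be_{i+1}$ to leading order while $\al_{i+1}/\be_i \to 1/\ga_\smallplus$, so actually the two boundary terms behave differently and one needs $\be_i/\al_{i+1} \to \ga_\smallplus$ and $\be_{i+1}/\al_{i+1} \to \ga_\smallminus$ from \cref{lemJSQ:asymptotic_ratios_al_be}. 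Writing $(\rho+\al_{i+1})/\be_{i+1} - (1+\rho) = \al_{i+1}^{-1}\big[(\rho+\al_{i+1})(\al_{i+1}/\be_{i+1}) - (1+\rho)\al_{i+1}\big]$ and similarly for the $\be_i$ term, the factors $\al_{i+1}^{-1}$ cancel in the ratio, and the limit reduces to $-(\rho/\ga_\smallminus - 0)/(\rho/\ga_\smallplus - 0) = -\ga_\smallplus/\ga_\smallminus$, the claimed value.

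\textbf{Main obstacle.} The routine algebra is light; the delicate point is bookkeeping which root is being used in each compensation step (the ``small'' root relative to the fixed parameter) and keeping the asymptotic substitutions consistent — in particular in part (ii) one must be careful that $\be_i/\al_{i+1}$ and $\be_{i+1}/\al_{i+1}$ tend to different limits ($\ga_\smallplus$ and $\ga_\smallminus$ respectively, not their reciprocals), since $\be_{i+1} < \al_{i+1} < \be_i$. I would also verify that the limits genuinely exist (not merely that any limit point must equal the stated value) by checking that the sequences of ratios are themselves convergent, which follows because $t(\al) = \be/\al$ and its horizontal analogue are continuous on the relevant closed intervals and $\al_i \downarrow 0$. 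Once the limit values of $v_i/h_i$ and $h_{i+1}/v_i$ are in hand, substituting them together with \cref{lemJSQ:asymptotic_ratios_al_be} into \eqref{eqnJSQ:absolute_convergence_ratios_we_need_to_determine_in_ratios_of_compensation_parameters} will yield explicit expressions for $R_1(m,n)$ and $R_2(m,n)$, completing the convergence argument of the surrounding section — but that is the next lemma, not this one.
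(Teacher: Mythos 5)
Your proposal is correct and takes essentially the same approach as the paper: rewrite the indexed compensation coefficient formulas \eqref{eqnJSQ:vertical_compensation_coefficient_with_indices} and \eqref{eqnJSQ:horizontal_compensation_coefficient_with_indices} so that only the ratios $\al_{i+1}/\be_i$, $\al_i/\be_i$, $\al_{i+1}/\be_{i+1}$ appear, and pass to the limit via \cref{lemJSQ:asymptotic_ratios_al_be} and $\ga_\smallminus\ga_\smallplus = 1/(2\rho)$; the paper normalizes by $\be_i$ where you normalize by $\al_{i+1}$, which is equivalent since $\al_{i+1}/\be_i$ converges to a nonzero limit. One small wrinkle in your part (ii): the intermediate claim that $\be_{i+1}/\be_i \to 1$ is false (the limit is $\ga_\smallminus/\ga_\smallplus < 1$), and the phrase ``after noting $\be_i \approx \be_{i+1}$ to leading order'' is not justified and in fact contradicted by the ordering $\be_{i+1} < \al_{i+1} < \be_i$; you recover immediately afterward by correctly using $\be_i/\al_{i+1} \to \ga_\smallplus$ and $\be_{i+1}/\al_{i+1} \to \ga_\smallminus$, so the final computation stands, but the misstatement should simply be deleted rather than hedged with ``so actually.''
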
%

\begin{proof}%
(i) Using the indexing of the compensation parameters, \eqref{eqnJSQ:vertical_compensation_coefficient} becomes
\begin{equation}%
v_i = - \frac{\al_{i + 1} - \be_i}{\al_i - \be_i} h_i. \label{eqnJSQ:vertical_compensation_coefficient_with_indices}
\end{equation}%
Divide both sides of \eqref{eqnJSQ:vertical_compensation_coefficient_with_indices} by $h_i$ and multiply by $\be_i/\be_i$ to obtain
\begin{equation}%
\frac{v_i}{h_i} = \frac{1 - \al_{i + 1}/\be_i}{\al_i/\be_i - 1}.
\end{equation}%
For $i \to \infty$, we have by \cref{lemJSQ:asymptotic_ratios_al_be} that $\al_{i + 1}/\be_i \to 1/\ga_\smallplus$ and $\al_i/\be_i \to 1/\ga_\smallminus$. So, for $i \to \infty$,
\begin{equation}%
\frac{v_i}{h_i} \to \frac{1 - 1/\ga_\smallplus}{1/\ga_\smallminus - 1} = \frac{\ga_\smallminus \ga_\smallplus - \ga_\smallminus}{\ga_\smallplus - \ga_\smallminus \ga_\smallplus},
\end{equation}%
and then $\ga_\smallminus \ga_\smallplus = 1/(2\rho)$ proves the claim.

\noindent (ii)
Using the indexing of the compensation parameters, \eqref{eqnJSQ:horizontal_compensation_coefficient} becomes
\begin{equation}%
h_{i + 1} = - \frac{(\rho + \al_{i + 1})/\be_{i + 1} - (1 + \rho)}{(\rho + \al_{i + 1})/\be_i - (1 + \rho) } v_i. \label{eqnJSQ:horizontal_compensation_coefficient_with_indices}
\end{equation}%
Divide both sides of \eqref{eqnJSQ:horizontal_compensation_coefficient_with_indices} by $v_i$ and multiply by $\be_i/\be_i$ to obtain
\begin{equation}%
\frac{h_{i + 1}}{v_i} = - \frac{(\rho + \al_{i + 1})\be_i/\be_{i + 1} - (1 + \rho)\be_i}{(\rho + \al_{i + 1}) - (1 + \rho)\be_i}.
\end{equation}%
For $i \to \infty$ we have that $\al_{i + 1} \to 0$, $\be_i \to 0$ and $\be_i/\be_{i + 1} = \be_i/\al_{i + 1} \cdot \al_{i + 1}/\be_{i + 1} \to \ga_\smallplus / \ga_\smallminus$, which proves the claim.
\end{proof}%

We can now determine the limits \eqref{eqnJSQ:absolute_convergence_ratios_we_need_to_determine_in_ratios_of_compensation_parameters}.
Applying \cref{lemJSQ:asymptotic_ratios_al_be,lemJSQ:asymptotic_ratios_h_v} produces
\begin{align}%
R_1(m,n) = R_2(m,n) = \frac{1/(2\rho) - \ga_\smallminus}{\ga_\smallplus - 1/(2\rho)} \Bigl( \frac{\ga_\smallminus}{\ga_\smallplus} \Bigr)^{m + n - 1}.
\end{align}%
If we define $\theta_\smallplusminus \defi 2\rho \ga_\smallplusminus = 1 + \rho \pm \sqrt{1 + \rho^2}$, then it is easy to see that $\theta_\smallminus < 1$ and $\theta_\smallplus > 1$ for $0 < \rho < 1$. More importantly, for $m \ge 0, ~ n \ge 1$,
\begin{equation}%
R_1(m,n) = R_2(m,n) = \frac{1 - \theta_\smallminus}{\theta_\smallplus - 1} \Bigl( \frac{\theta_\smallminus}{\theta_\smallplus} \Bigr)^{m + n - 1} < 1,
\end{equation}%
because, for $0 < \rho < 1$,
\begin{equation}%
\frac{1 - \theta_\smallminus}{\theta_\smallplus - 1} = 1 + 2\rho \bigl( \rho - \sqrt{1 + \rho^2} \bigr) < 1.
\end{equation}%
Since $R_1(m,n)$ and $R_2(m,n)$ are both less than one, we know that the two series in \eqref{eqnJSQ:series_expression_p(m,n)} converge absolutely. For a series to converge, its summands must tend to zero. So, for $m \ge 0, ~ n \ge 1$,
\begin{equation}%
\lim_{i \to \infty} h_i \al_i^m \be_i^n = 0, \quad \lim_{i \to \infty} v_i \al_{i + 1}^m \be_i^n = 0.
\end{equation}%
This shows that the error terms introduced in each vertical and horizontal compensation step indeed tend to zero.

The continuous-time analog of a result from Foster \cite[Theorem~1]{Foster1953_Ergodicity_condition}, shown in \cref{thmMP:Foster}, states that if the solution $p(m,n)$ satisfies all balance equations, is non-zero, and
\begin{equation}%
\sum_{m \ge 0} \sum_{n \ge 1} |p(m,n)| \le \sum_{m \ge 0} \sum_{n \ge 1} \Bigl( \sum_{i \ge 0} | h_i \al_i^m \be_i^n | + \sum_{i \ge 0} | v_i \al_{i + 1}^m \be_i^n | \Bigr) < \infty, \label{eqnJSQ:normalization_condition_what_we_need_to_prove}
\end{equation}%
then the solution can be normalized to produce the equilibrium distribution. The solution is non-zero because
\begin{equation}%
p(m,n) = \al_0^m \be_0^n + \BigO( \al_1^m \be_0^n ), \quad m \ge 0, ~ n \ge 1,
\end{equation}%
and for $m$ large $p(m,n)$ is positive. We prove that \eqref{eqnJSQ:normalization_condition_what_we_need_to_prove} holds. Since the summands in \eqref{eqnJSQ:normalization_condition_what_we_need_to_prove} are positive, we can interchange the order of the summations to obtain
\begin{align}%
&\sum_{m \ge 0} \sum_{n \ge 1} \Bigl( \sum_{i \ge 0} | h_i \al_i^m \be_i^n | + \sum_{i \ge 0} | v_i \al_{i + 1}^m \be_i^n | \Bigr) \notag \\
&= \sum_{i \ge 0} \frac{|h_i|}{1 - |\al_i|} \frac{|\be_i|}{1 - |\be_i|} + \sum_{i \ge 0} \frac{|v_i|}{1 - |\al_{i + 1}|} \frac{|\be_i|}{1 - |\be_i|}.
\end{align}%
We show that the two series converge. To that end, define
\begin{equation}%
R_3 \defi \lim_{i \to \infty} \biggl| \frac{\frac{|h_{i + 1}|}{1 - |\al_{i + 1}|} \frac{|\be_{i + 1}|}{1 - |\be_{i + 1}|}}{\frac{|h_i|}{1 - |\al_i|} \frac{|\be_i|}{1 - |\be_i|}} \biggr|, ~ R_4 \defi \lim_{i \to \infty} \biggl| \frac{\frac{|v_{i + 1}|}{1 - |\al_{i + 2}|} \frac{|\be_{i + 1}|}{1 - |\be_{i + 1}|}}{\frac{|v_i|}{1 - |\al_{i + 1}|} \frac{|\be_i|}{1 - |\be_i|}} \biggr|,
\end{equation}%
which can be written as
\begin{align}%
R_3 &= \lim_{i \to \infty} \biggl| \frac{\frac{|h_{i + 1}|}{|v_i|} \frac{1}{1 - |\al_{i + 1}|} \frac{1}{1 - |\be_{i + 1}|} \frac{|\be_{i + 1}|}{|\al_{i + 1}|}}{ \frac{|h_i|}{|v_i|} \frac{1}{1 - |\al_i|} \frac{1}{1 - |\be_i|} \frac{|\be_i|}{|\al_{i + 1}|}} \biggr|, \\
R_4 &= \lim_{i \to \infty} \biggl| \frac{\frac{|v_{i + 1}|}{|h_{i + 1}|} \frac{1}{1 - |\al_{i + 2}|} \frac{1}{1 - |\be_{i + 1}|} \frac{|\be_{i + 1}|}{|\al_{i + 1}|}}{ \frac{|v_i|}{|h_{i + 1}|} \frac{1}{1 - |\al_{i + 1}|} \frac{1}{1 - |\be_i|} \frac{|\be_i|}{|\al_{i + 1}|}} \biggr|.
\end{align}%
By applying the results of \cref{lemJSQ:asymptotic_ratios_al_be,lemJSQ:asymptotic_ratios_h_v} and the fact that $\al_i \to 0$ and $\be_i \to 0$ as $i \to \infty$, we find
\begin{equation}%
R_3 = R_4 = \frac{1 - \theta_\smallminus}{\theta_\smallplus - 1} < 1,
\end{equation}%
so that \eqref{eqnJSQ:normalization_condition_what_we_need_to_prove} holds.

In conclusion, due to \cref{thmMP:Foster}, the series in \eqref{eqnJSQ:series_expression_p(m,n)} is the unique (up to a multiplicative constant) solution to the balance equations \eqref{eqnJSQ:balance_equations_interior}, \eqref{eqnJSQ:balance_equations_vertical}, \eqref{eqnJSQ:balance_equations_interior_boundary_simplified} and \eqref{eqnJSQ:balance_equations_vertical_boundary_simplified} and can be normalized to produce the equilibrium distribution. Divide \eqref{eqnJSQ:series_expression_p(m,n)} by the normalization constant $C$ and merge the two series to obtain
\begin{equation}%
p(m,n) = C^{-1} \sum_{i \ge 0} (h_i \al_i^m + v_i \al_{i + 1}^m) \be_i^n, \quad m \ge 0, ~ n \ge 1. \label{eqnJSQ:normalized_series_expression_p(m,n)}
\end{equation}%
%


\subsection{Normalization constant}%
\label{subsecJSQ:normalization_constant}%

We use the PGF $\PGF{x,y}$ to determine the normalization constant $C$. First, eliminate the $p(m,0), ~ m \ge 0$ in the definition of $\PGF{x,y}$ using \eqref{eqnJSQ:balance_equations_horizontal} and \eqref{eqnJSQ:balance_equations_origin} to get
\begin{align}%
\PGF{x,y} &= p(0,0) + \sum_{m \ge 1} p(m,0) x^m + \sum_{m \ge 0} \sum_{n \ge 1} p(m,n) x^m y^n \notag \\
&= \frac{1}{\rho} p(0,1) + \frac{1}{1 + \rho} \sum_{m \ge 1} \bigl( 2\rho p(m - 1,1) + p(m,1) \bigr) x^m \notag \\
&\quad + \sum_{m \ge 0} \sum_{n \ge 1} p(m,n) x^m y^n. \label{eqnJSQ:probability_generating_function_no_p(m,0)}
\end{align}%
Second, substituting the series expression \eqref{eqnJSQ:normalized_series_expression_p(m,n)} into \eqref{eqnJSQ:probability_generating_function_no_p(m,0)} gives
\begin{align}%
\PGF{x,y} &= C^{-1} \Bigl[ \, \frac{1}{\rho} \sum_{i \ge 0} (h_i + v_i) \be_i \notag \\
&\quad + \frac{1}{1 + \rho} \sum_{m \ge 1} \sum_{i \ge 0} \bigl( h_i (2\rho + \al_i) \al_i^{m - 1} + v_i (2\rho + \al_{i + 1}) \al_{i + 1}^{m - 1} \bigr) \be_i x^m \notag \\
&\quad + \sum_{m \ge 0} \sum_{n \ge 1} \sum_{i \ge 0} (h_i \al_i^m + v_i \al_{i + 1}^m) \be_i^n x^m y^n \Bigr].
\end{align}%
Third, changing the order of the summations and simplifying the geometric series finally gives
\begin{align}%
\PGF{x,y} &= C^{-1} \Bigl[ \, \frac{1}{\rho} \sum_{i \ge 0} (h_i + v_i) \be_i \notag \\
&\quad + \frac{1}{1 + \rho} \sum_{i \ge 0} \bigl( h_i \frac{(2\rho + \al_i) x}{1 - \al_i x} + v_i \frac{(2\rho + \al_{i + 1}) x}{1 - \al_{i + 1} x} \bigr) \be_i \notag \\
&\quad + \sum_{i \ge 0} \bigl( h_i \frac{1}{1 - \al_i x} + v_i \frac{1}{1 - \al_{i + 1} x} \bigr) \frac{\be_i y}{1 - \be_i y} \Bigr]. \label{eqnJSQ:probability_generating_function_in_terms_of_compensation_parameters}
\end{align}%
Notice that the PGF $\PGF{x,y}$ is valid for $|x| < 1/\al_0$ and $|y| < 1/\be_0$. The expression \eqref{eqnJSQ:probability_generating_function_in_terms_of_compensation_parameters} is called a \textit{partial fraction decomposition} of the PGF $\PGF{x,y}$. This decomposition shows that $x = 1/\al_i$ and $y = 1/\be_i$ are the simple poles of $\PGF{x,y}$, which implies that the function $\PGF{x,y}$ approaches infinity as $x$ approaches $1/\al_i$ or $y$ approaches $1/\be_i$.

We determine the normalization constant by deriving two expressions for the leading term in the asymptotic expansion of $\PGF{x,0}$ as $x \uparrow 1/\al_0$. To that end, we set $y = 0$ in \eqref{eqnJSQ:probability_generating_function_in_terms_of_compensation_parameters} to obtain
\begin{align}%
\PGF{x,0} &= C^{-1} \Bigl[ \, \frac{1}{\rho} \sum_{i \ge 0} (h_i + v_i) \be_i \notag \\
&\quad + \frac{1}{1 + \rho} \sum_{i \ge 0} \bigl( h_i \frac{(2\rho + \al_i) x}{1 - \al_i x} + v_i \frac{(2\rho + \al_{i + 1}) x}{1 - \al_{i + 1} x} \bigr) \be_i \Bigr].
\end{align}%
Now, as $x \uparrow 1/\al_0 = 1/\rho^2$,
\begin{align}%
\PGF{x,0} &= C^{-1} \frac{1}{1 + \rho} h_0 \frac{(2\rho + \al_0)\frac{1}{\rho^2}}{1 - \al_0 x} \be_0 + \BigO(1) \notag \\
&= \frac{1}{C \rho (1 + \rho) (\frac{1}{\rho^2} - x)} + \BigO(1), \label{eqnJSQ:P(x,0)_asymptotic_expansion_1}
\end{align}%
where we used that $h_0 = 1$ and $\be_0 = \rho^2/(2 + \rho)$ and property \eqref{eqnJSQ:ordering_al_be}.

For a second expression for the leading term, we investigate the functional equation \eqref{eqnJSQ:functional_equation}. If we pick the pair $(x,y)$ such that $h_1(x,y) = 0$ and $|x| < 1/\al_0$, $|y| < 1/\be_0$, then we find that $\PGF{x,0}$ and $\PGF{0,y}$ are related according to
\begin{equation}%
0 = h_2(x,y) \PGF{x,0} + h_3(x,y) \PGF{0,y}. \label{eqnJSQ:relation_P(x,0)_and_P(0,y)}
\end{equation}%
Apply relation \eqref{eqnJSQ:relation_P(x,0)_and_P(0,y)} to three pairs $(x,y)$ in the following order: $(1/(2\rho),1)$, $(1/(2\rho),1/\rho)$ and $(1/\rho^2,1/\rho)$. All three pairs satisfy $h_1(x,y) = 0$. For the first pair $(x,y) = (1/(2\rho),1)$ we have
\begin{equation}%
0 = h_2(\frac{1}{2\rho},1) \PGF{\frac{1}{2\rho},0} + h_3(\frac{1}{2\rho},1) \PGF{0,1}. \label{eqnJSQ:relation_P(1/2rho,0)_and_P(0,1)}
\end{equation}%
Notice that $\PGF{0,1}$ is the fraction of time the first server is idle. The offered load to the system is $2\rho$ per unit time, so that by symmetry we know that $\PGF{0,1} = 1 - \rho$. So, from \eqref{eqnJSQ:relation_P(1/2rho,0)_and_P(0,1)} we obtain that $\PGF{1/(2\rho),0} = 1 - \rho$. For the second pair $(x,y) = (1/(2\rho),1/\rho)$ we have
\begin{equation}%
0 = h_2(\frac{1}{2\rho},\frac{1}{\rho}) \PGF{\frac{1}{2\rho},0} + h_3(\frac{1}{2\rho},\frac{1}{\rho}) \PGF{0,\frac{1}{\rho}} \label{eqnJSQ:relation_P(1/2rho,0)_and_P(0,1/rho)}
\end{equation}%
and find $\PGF{0,1/\rho} = (1 - \rho)(2 - \rho)$. Now, for the third pair $(x,y) = (1/\rho^2,1/\rho)$, we let $x \uparrow 1/\rho^2$ and $y \to 1/\rho$. To that end, we need the solution of $h_1(x,y) = 0$ for a fixed $x$. This solution is given by $y = \upsilon(x)$ with
\begin{equation}%
\upsilon(x) = (1 + \rho) x - \sqrt{x(x(1 + \rho^2) - 1)}.
\end{equation}%
Observe that if $x \uparrow 1/\rho^2$, then $\upsilon(x) \to 1/\rho$. Substituting the pair $(x,y) = (x,\upsilon(x))$ into \eqref{eqnJSQ:relation_P(x,0)_and_P(0,y)} gives the relation
\begin{equation}%
\PGF{x,0} = - \frac{h_3(x,\upsilon(x))}{h_2(x,\upsilon(x))} \PGF{0,\upsilon(x)}. \label{eqnJSQ:relation_P(x,0)_and_P(0,upsilon(x))}
\end{equation}%
Then, as $x \uparrow 1/\rho^2$ we find that $\PGF{0,\upsilon(x)} \to \PGF{0,1/\rho} = (1 - \rho)(2 - \rho)$, $h_3(x,\upsilon(x)) \to h_3(1/\rho^2,1/\rho) = (1 - 1/\rho) / \rho^2$, and
\begin{equation}%
h_2(x,\upsilon(x)) = - \frac{(1 - \rho)(2 + \rho)}{2\rho} \bigl( x - \frac{1}{\rho^2} \bigr) + \SmallO(x - \frac{1}{\rho^2}).
\end{equation}%
By combining these asymptotic results, we obtain from \eqref{eqnJSQ:relation_P(x,0)_and_P(0,upsilon(x))} a second expression for the leading term in the asymptotic expansion of $\PGF{x,0}$. For $x \uparrow 1/\rho^2$,
\begin{equation}%
\PGF{x,0} = \frac{2(1 - \frac{1}{\rho})(2 - \rho)}{\rho (2 + \rho)(x - \frac{1}{\rho^2})} + \BigO(1). \label{eqnJSQ:P(x,0)_asymptotic_expansion_2}
\end{equation}%
Finally, combining \eqref{eqnJSQ:P(x,0)_asymptotic_expansion_1} and \eqref{eqnJSQ:P(x,0)_asymptotic_expansion_2} gives, as $x \uparrow 1/\rho^2$,
\begin{equation}%
\frac{1}{C \rho (1 + \rho) (\frac{1}{\rho^2} - x)} = \frac{2(1 - \frac{1}{\rho})(2 - \rho)}{\rho (2 + \rho)(x - \frac{1}{\rho^2})}.
\end{equation}%
Solving this relation for $C$ gives the explicit expression
\begin{equation}%
C = \frac{\rho(2 + \rho)}{2(1 - \rho^2)(2 - \rho)}.
\end{equation}%
%


\section{Comparison with random routing}%
\label{secJSQ:comparison_with_random_routing}%

The compensation procedure allows us to easily calculate the equilibrium distribution using \cref{algJSQ:compensation_approach}. From the equilibrium distribution we can determine performance measures such as the expected number of jobs in the system. Let $X$ denote the total number of jobs in the system in equilibrium. Then,
\begin{align}%
\Prob{X = 0} &= p(0,0), \label{eqnJSQ:equilibrium_number_of_jobs_prob_0}\\
\Prob{X = x} &= \sum_{m = 0}^x p(m,x - m) + \sum_{m = 0}^{x - 1} p(m,m - x)  \notag \\
&= p(x,0) + 2 \sum_{m = 0}^{x - 1} p(m,x - m), \quad x \ge 1, \label{eqnJSQ:equilibrium_number_of_jobs_prob_l}
\end{align}%
where we used $p(m,x - m) = p(m,m - x)$ by symmetry, and therefore
\begin{equation}%
\E{X} = \sum_{x \ge 0} x \bigl( p(x,0) + 2 \sum_{m = 0}^{x - 1} p(m,x - m) \bigr). \label{eqnJSQ:expected_number_of_jobs}
\end{equation}%

For numerical purposes the number of compensation steps needs to be finite and the infinite summation in \eqref{eqnJSQ:expected_number_of_jobs} should be truncated. We first present a simple method to perform an appropriate number of compensation steps, see \cref{algJSQ:number_of_compensation_steps}. Essentially, \cref{algJSQ:number_of_compensation_steps} is the same as \cref{algJSQ:compensation_approach}, but now selects the number $K$ according to some preset target level: when the relative change in the equilibrium probability $p(m,n)$ goes below a certain threshold $\epsilon$, the compensation procedure is terminated.

\begin{algorithm}%
\caption{Number of compensation steps}%
\label{algJSQ:number_of_compensation_steps}%
\begin{algorithmic}[1]%
\State Select $\epsilon$ small and positive and a state $(m,n), ~ m \ge 0, ~ n \ge 1$
\State Set $h_0 = 1$, $\al_0 = \rho^2$, $\be_0 = \rho^2/(2 + \rho)$
\State Perform a vertical compensation step
\State Calculate $p_0(m,n) = (h_0 \al_0^m + v_0 \al_1^m) \be_0^n$
\State Perform a horizontal and vertical compensation step
\State Calculate $p_1(m,n) = \sum_{i = 0}^1 (h_i \al_i^m + v_i \al_{i + 1}^m) \be_i^n$
\State $K = 1$
\While{$(|p_{K}(m,n) - p_{K - 1}(m,n)|)/(|p_{K - 1}(m,n)|) > \epsilon$}
    \State $K = K + 1$
    \State Perform a horizontal and vertical compensation step
    \State Calculate
    \begin{align}%
    p_K(m,n) &= p_{K - 1}(m,n) + (h_K \al_K^m + v_K \al_{K + 1}^m) \be_K^n \notag \\
    &= \sum_{i = 0}^K (h_i \al_i^m + v_i \al_{i + 1}^m) \be_i^n
    \end{align}%
\EndWhile
\end{algorithmic}%
\end{algorithm}%

One way to choose the truncation level of the infinite series \eqref{eqnJSQ:expected_number_of_jobs} is described in \cref{algJSQ:truncation_level_EL}. We base the truncation level on the criterion that almost all probability mass is captured in the distribution of $X$.

\begin{algorithm}%
\caption{Truncation level $\E{X}$}%
\label{algJSQ:truncation_level_EL}%
\begin{algorithmic}[1]%
\State Select $\epsilon$ small and positive
\State Use \cref{algJSQ:number_of_compensation_steps} to construct the required equilibrium probabilities
\State $K = 0$
\State Calculate $\Prob{X = 0}$ using \eqref{eqnJSQ:equilibrium_number_of_jobs_prob_0}
\While{$\sum_{x = 0}^K \Prob{X = x} < 1 - \epsilon$}
    \State $K = K + 1$
    \State Calculate $\Prob{X = K}$ using \eqref{eqnJSQ:equilibrium_number_of_jobs_prob_l}
\EndWhile
\end{algorithmic}%
\end{algorithm}%

\cref{algJSQ:number_of_compensation_steps,algJSQ:truncation_level_EL} allow us to determine $\E{X}$ to any prescribed accuracy. We can compare these results with a naive random routing policy and demonstrate that the join the shortest queue routing policy is superior.

Random routing means that each job joins either queue with equal probability, irrespective of the number of jobs at each server. Due to the Poisson splitting, random routing ensures that each queue operates as an $M/M/1$ queue with arrival rate $\rho$ and equilibrium probabilities $(1 - \rho)\rho^i$. We denote by $X_{\textup{RR}}$ the total number of jobs in the system with random routing and derive
\begin{equation}%
\Prob{X_{\textup{RR}} = x} = \sum_{k = 0}^x (1 - \rho) \rho^{x - k} (1 - \rho) \rho^k = (x + 1) (1 - \rho)^2 \rho^x.
\end{equation}%
Then, we get that
\begin{equation}%
\E{X_{\textup{RR}}} = \sum_{x \ge 0} x \Prob{X_{\textup{RR}} = x} = (1 - \rho)^2 \sum_{x \ge 0} x (x + 1) \rho^x = \frac{2\rho}{1 - \rho}.
\end{equation}%
This result is also easily derived from the fact that under random routing both servers have independent Poisson input and the expected total number of jobs is the sum of the expected number of jobs in each queue ($\rho/(1 - \rho)$).

\cref{figJSQ:JSQ_vs_RR} compares join the shortest queue routing to random routing for various values of $\rho$. In terms of the expected number of jobs in the system, join the shortest queue routing is superior to random routing. For small $\rho$, an arriving job usually finds an empty system. In that case, both routing policies operate equally well. For larger $\rho$, join the shortest queue routing outperforms random routing. This routing policy balances the number of jobs at each server, and therefore utilizes the servers more efficiently than the random routing policy. Moreover, as $\rho \uparrow 1$ the join the shortest queue system behaves as a pooled system, which means that it behaves as if there is a single queue served by two servers instead of two separate queues with one server each.


\begin{figure}
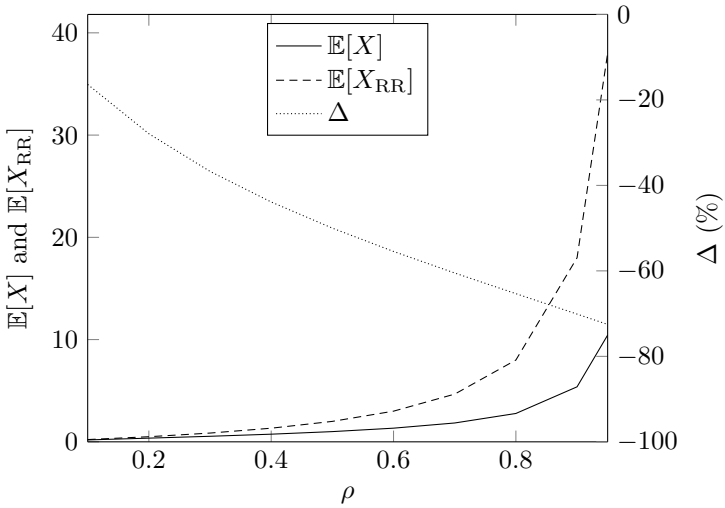
%
\centering%
\includestandalone{Chapters/JSQ/TikZFiles/comparison_JSQ_and_RR}%
\caption{Comparison of join the shortest queue routing and random routing with $\Delta = (\E{X} - \E{X_{\textup{RR}}})/\E{X_{\textup{RR}}} \cdot 100 \%$. We use \protect\cref{algJSQ:number_of_compensation_steps} with $\epsilon = 10^{-10}$ and $(m,n) = (0,1)$ and \protect\cref{algJSQ:truncation_level_EL} with $\epsilon = 10^{-10}$.}%
\label{figJSQ:JSQ_vs_RR}%
\end{figure}%


\section{Takeaways}%
\label{secJSQ:what_have_we_learned}%

The straightforward choice of taking the number of jobs at each queue as the dimensions of the Markov process led to an inhomogeneous transition rate structure. By performing a simple coordinate transformation and using the symmetry of the two servers and the join the shortest queue routing we were able to formulate a Markov process that did have a homogeneous transition rate structure in the interior. Due to this symmetry, we only needed to determine the equilibrium probabilities for the states $(m,n)$ with $m \ge 0$ and $n \ge 1$.

The compensation approach worked by linearly combining product-form solutions to satisfy all balance equations. These product-form solutions all satisfied the balance equations of the interior. In each step of the compensation procedure, a single product-form solution was added to the linear combination so that the resulting linear combination satisfied the balance equations on one of the two boundaries. In the next step, a single product-form solution was added to satisfy the balance equations on the other boundary. This process was repeated and finally led to an infinite sum of product-form solutions. Then, showing that this infinite sum converged, established that it was the unique equilibrium distribution.

For the gated single-server system in \cref{ch:gated_single-server}, compensation was only necessary on a single boundary. For the join the shortest queue model, however, we had to compensate on two boundaries. This creates two different, alternating compensation steps. The compensation approach applied to the gated single-server system is therefore inherently `simpler', which was demonstrated by the fact that the parameters $\al_i$ and $\be_i$ can be obtained explicitly, whereas this was not possible for the join the shortest queue system. Furthermore, for the gated single-server system $\al_i,\be_i$ did not tend to zero, while the coefficients $c_i$ did, and for the join the shortest queue system this is reversed: $\al_i,\be_i$ tended to zero, while the coefficients $h_i,v_i$ did not.

The compensation approach is not limited to the join the shortest queue system. It applies to a more general class of models, which we now briefly describe. For a Markov process in the positive quadrant, the compensation approach can be applied when it obeys the following conditions: (i) there should be only transitions to neighboring states; (ii) in the interior of the state space, there should be no transitions to the North, North-East, and East; and (iii) there should a homogeneous structure in terms of the transitions, i.e., the transition structure and the rate at which these transitions occur should be the same for all states in the interior, for all states on the vertical boundary, and for all states on the horizontal boundary. It can be shown that these conditions imply that $\al_i,\be_i \to 0$, which, as we saw in \cref{ch:gated_single-server}, is not necessary for convergence of the series expression for $p(m,n)$. For the gated single-server system of \cref{ch:gated_single-server}, the first and second condition are violated, but in this case convergence of the infinite sum of product forms is guaranteed by convergence to zero of the coefficients.

The compensation approach is also applied in \cite{Adan1996_SED_Erlang_servers}, which considers a system with Erlang-$r$ distributed service times and arriving jobs joining the queue with the least number of remaining service phases. The Markov process associated with this queueing system has transitions in the interior that are not restricted to neighboring states, but the compensation approach can be still be applied to determine the equilibrium probabilities. Hence, we know that the compensation approach also applies to some models that do not fit within the above class of models.


\printendnotes%
%


\addtocontents{toc}{\protect\vspace{1.25em}}



{\small%
}%


\chapter*[Notation index]{Notation index}%
\addcontentsline{toc}{chapter}{Notation index}
\label{ch:notation_index}%

Vectors are denoted by bold lowercase letters or numbers. Matrices are denoted by uppercase letters. Unless stated otherwise, indexing of vectors and matrices starts at 0. Aside from the named number sets, all sets are denoted by calligraphic letters such as $\set{A}$.

\begin{longtable}{ p{.18\textwidth} p{.73\textwidth} }
$\defi$ & defined as \\
$\dequal$ & equal in distribution \\
$\zerob$ & vector of zeros of appropriate dimension \\
$\ind{A}$ & indicator function of the event $A$ \\
$\oneb$ & vector of ones of appropriate dimension \\
$(A)_{i,j}$ & element $(i,j)$ of matrix $A$ \\
$A^{-1}$ or $(A)^{-1}$ & inverse of a matrix $A$ \\
$\set{A}^c$ & complement of a set $\set{A}$ \\
$\Complex$ & set of complex numbers \\
$\det(A)$ & determinant of a matrix $A$ \\
$\eb{i}$ & vector of zeros of appropriate dimension with a 1 at position $i$ \\
$\E{X}$ & expectation of a random variable $X$ \\
$\E{x}{f(X)}$ & expectation of a functional of a process $\{ X(t) \}_{t \ge 0}$ given $X(0) = x$ \\
$\Erl{n}{\la}$ & Erlang-$n$ distribution with parameter $\la$ \\
$\Exp{\la}$ & exponential distribution with parameter $\la$ \\
$f_X(\cdot)$ & probability density function of a random variable $X$ \\
$F_X(\cdot)$ & cumulative distribution function of a random variable $X$ \\
$\Geo{p}$ & geometric distribution with failure probability $p$ and support $\Nat_0$ (or sometimes $\Nat$)  \\
$\ImagPart{\PGFarg}$ & imaginary part of $\PGFarg \in \Complex$ \\
$\complexunit$ & complex unit \\
$\La_n$ & transition rate submatrices in a QBD or QSF process from level $i$ to level $i + n$, independent of $i$ \\
$\La_n^{(i)}$ & transition rate submatrices in a QBD or QSF process from level $i$ to level $i + n$ \\
$\LST{X}{\LSTarg}$ & Laplace-Stieltjes transform of the random variable $X$ evaluated at the point $\LSTarg$ \\
$\Nat$, $\Nat_0$ & $\Nat = \{ 1,2,3,\ldots \}$, $\Nat_0 = \{ 0 \} \cup \Nat$ \\
$\PGF{X}{\PGFarg}$ & probability generating function of the random variable $X$ evaluated at the point $\PGFarg$ \\
$\Poisson{\la}$ & Poisson distribution with parameter $\la$ \\
$\Prob{A}$ & probability of event $A$ \\
$\Prob{A \mid B}$ & conditional probability \\
$\Prob{x}{f(X)}$ & probability of a functional of a process $\{ X(t) \}_{t \ge 0}$ given $X(0) = x$ \\
$Q$ & transition rate matrix of a Markov process \\
$\Real$ & set of real numbers \\
$\RealPart{\PGFarg}$ & real part of $\PGFarg \in \Complex$ \\
$\statespace$ & state space of a Markov process \\
$\Std{X}$ & standard deviation of a random variable $X$ \\
$\closedunitdisc$, $\unitcircle$ & closed unit disc and unit circle \\
$\vca{v}^\transpose$ & transpose of a vector $\vca{v}$ \\
$\Var{X}$ & variance of a random variable $X$ \\
$X \sim \mu$ & the random variable $X$ has distribution $\mu$ \\
$\Int$ & set of integer numbers \\
\end{longtable} 

\chapter*[Abbreviation index]{Abbreviation index}%
\addcontentsline{toc}{chapter}{Abbreviation index}
\label{ch:abbreviation_index}%

\begin{longtable}{ p{.15\textwidth}  p{.75\textwidth} }
BD & birth--and--death \\
iff & if and only if \\
i.i.d. & independent and identically distributed \\
LST & Laplace-Stieltjes transform \\
LT & Laplace transform \\
PASTA & Poisson arrivals see time-averages \\
PGF & probability generating function \\
QBD & quasi-birth--and--death
\end{longtable} 


\begin{thebibliography}{100}%

\bibitem{Abate1992_Inversion_pgf}
J.~Abate and W.~Whitt.
\newblock Numerical inversion of probability generating functions.
\newblock {\em Operations Research Letters}, 12(4):245--251, 1992.

\bibitem{Abate1995_LT_inversion_probability_distributions}
J.~Abate and W.~Whitt.
\newblock Numerical inversion of {L}aplace transforms of probability
  distributions.
\newblock {\em ORSA Journal on computing}, 7(1):36--43, 1995.

\bibitem{Adan2016_Polling_JSQ}
I.J.B.F. Adan, O.J. Boxma, S.~Kapodistria, and V.G. Kulkarni.
\newblock The shorter queue polling model.
\newblock {\em Annals of Operations Research}, 241(1-2):167--200, 2016.

\bibitem{Adan1996_Erlang-Erlang-c}
I.J.B.F. Adan, W.A. van~de Waarsenburg, and J.~Wessels.
\newblock Analyzing {$E_k/E_r/c$} queues.
\newblock {\em European Journal of Operational Research}, 92(1):112--124, 1996.

\bibitem{Adan1998_MTO_MTS}
I.J.B.F. Adan and J.~van~der Wal.
\newblock Combining make to order and make to stock.
\newblock {\em OR Spektrum}, 20(2):73--81, 1998.

\bibitem{Adan2006_Push-pull}
I.J.B.F. Adan and G.~Weiss.
\newblock Analysis of a simple {M}arkovian re-entrant line with infinite supply
  of work under the {LBFS} policy.
\newblock {\em Queueing Systems}, 54(3):169--183, 2006.

\bibitem{Adan1996_SED_Erlang_servers}
I.J.B.F. Adan and J.~Wessels.
\newblock Shortest expected delay routing for {E}rlang servers.
\newblock {\em Queueing systems}, 23(1):77--105, 1996.

\bibitem{Adan1990_JSQ_symmetric}
I.J.B.F. Adan, J.~Wessels, and W.H.M. Zijm.
\newblock Analysis of the symmetric shortest queue problem.
\newblock {\em Communications in Statistics. Part C, Stochastic Models},
  6(4):691--713, 1990.

\bibitem{Adan1991_JSQ_asymmetric_compensation_approach}
I.J.B.F. Adan, J.~Wessels, and W.H.M. Zijm.
\newblock Analysis of the asymmetric shortest queue problem.
\newblock {\em Queueing Systems}, 8(1):1--58, 1991.

\bibitem{Asmussen2008_Applied_probability_and_queues}
S.~Asmussen.
\newblock {\em Applied Probability and Queues}.
\newblock Springer-Verlag, New York, NY, 2nd edition, 2008.

\bibitem{Bertsimas1995_Distributional_Little's_law}
D.~Bertsimas and D.~Nakazato.
\newblock The distributional {L}ittle's law and its applications.
\newblock {\em Operations Research}, 43(2):298--310, 1995.

\bibitem{Bini1996_Cyclic_Reduction}
D.~Bini and B.~Meini.
\newblock On the solution of a nonlinear matrix equation arising in queueing
  problems.
\newblock {\em SIAM Journal on Matrix Analysis and Applications},
  17(4):906--926, 1996.

\bibitem{Bini2000_Cyclic_reduction_M-G-1}
D.A. Bini, B.~Meini, and V.~Ramaswami.
\newblock Analyzing {$M/G/1$} paradigms through {QBD}s: {T}he role of the block
  structure in computing the matrix {$G$}.
\newblock In G.~Latouche and P.~Taylor, editors, {\em Advances in Algorithmic
  Methods for Stochastic Models}, pages 73--86, Branchburg, NJ, 2000. Notable
  Publications.

\bibitem{Blanc1992_Power-series_JSQ}
J.P.C. Blanc.
\newblock The power-series algorithm applied to the shortest-queue model.
\newblock {\em Operations Research}, 40(1):157--167, 1992.

\bibitem{Bremaud1999_Markov_chains}
P.~Br{\'e}maud.
\newblock {\em {M}arkov Chains: {G}ibbs Fields, {M}onte {C}arlo Simulation, and
  Queues}, volume~31.
\newblock Springer Science \& Business Media, 1999.

\bibitem{Bright1995_Level-DependentQBD}
L.~Bright and P.G. Taylor.
\newblock Calculating the equilibrium distribution in level dependent
  quasi-birth--and--death processes.
\newblock {\em Stochastic Models}, 11(3):497--525, 1995.

\bibitem{Bruneel1993_Discrete-time_communication}
H.~Bruneel and B.G. Kim.
\newblock {\em Discrete-Time Models for Communication Systems Including ATM},
  volume 205.
\newblock Springer Science \& Business Media, 1993.

\bibitem{Buzacott1993_Stochastic_models_of_manufacturing_systems}
J.A. Buzacott and J.G. Shanthikumar.
\newblock {\em Stochastic Models of Manufacturing Systems}, volume~4.
\newblock Prentice Hall Englewood Cliffs, NJ, 1993.

\bibitem{Carr1993_MTO_MTS}
S.~Carr, R.~Gullu, P.~Jackson, and J.~Muckstadt.
\newblock An exact analysis of a production-inventory stretegy for industrial
  suppliers.
\newblock Technical report, Cornell University Operations Research and
  Industrial Engineering, 1993.

\bibitem{Chen2001_Queueing_Networks}
H.~Chen and D.D. Yao.
\newblock {\em Fundamentals of Queueing Networks: {P}erformance, Asymptotics,
  and Optimization}.
\newblock Springer-Verlag New York, Inc., New York, NY, 2001.

\bibitem{Chen1999_Multiclass_queueing_networks}
R.-R. Chen and S.~Meyn.
\newblock Value iteration and optimization of multiclass queueing networks.
\newblock {\em Queueing Systems}, 32(1-3):65--97, 1999.

\bibitem{Chen2003_Sensitivity_network_optimization}
R.-R. Chen and S.~Meyn.
\newblock In search of sensitivity in network optimization.
\newblock {\em Queueing Systems}, 44(4):313--363, 2003.

\bibitem{Choudhury1994_Multidimensional_transform_inversion}
G.L. Choudhury, D.M. Lucantoni, and W.~Whitt.
\newblock Multidimensional transform inversion with applications to the
  transient {$M/G/1$} queue.
\newblock {\em The Annals of Applied Probability}, pages 719--740, 1994.

\bibitem{Chung1967_Markov}
K.L. Chung.
\newblock {\em {M}arkov Chains}.
\newblock Springer, 1967.

\bibitem{Chung2001_Course_in_probability_theory}
K.L. Chung.
\newblock {\em A Course in Probability Theory}.
\newblock Academic Press, 2001.

\bibitem{Cobham1954_Priority_assignment}
A.~Cobham.
\newblock Priority assignment in waiting line problems.
\newblock {\em Operations Research}, 2(1):70--76, 1954.

\bibitem{Cohen1969_Single_server_queue}
J.W. Cohen.
\newblock {\em The Single Server Queue}.
\newblock North-Holland Publishing Company, 1969.

\bibitem{Cohen1998_JSQ_asymmetric}
J.W. Cohen.
\newblock Analysis of the asymmetrical shortest two-server queueing model.
\newblock {\em Journal of Applied Mathematics and Stochastic Analysis},
  11(2):115--162, 1998.

\bibitem{Cohen2000_Boundary_value_problems}
J.W. Cohen and O.J. Boxma.
\newblock {\em Boundary Value Problems in Queueing System Analysis}.
\newblock Elsevier, 2000.

\bibitem{Crawford2014_BD}
F.W. Crawford and M.A. Suchard.
\newblock Birth-death processes.
\newblock arXiv preprint arXiv:1301.1305v2, 2014.

\bibitem{Dai1996_Stability_reentrant_lines}
J.G. Dai and G.~Weiss.
\newblock Stability and instability of fluid models for reentrant lines.
\newblock {\em Mathematics of Operations Research}, 21(1):115--134, 1996.

\bibitem{Davis1966_M_n-M-c_N-class_prio_waiting_time_distributions}
R.H. Davis.
\newblock Waiting-time distribution of a multi-server, priority queuing system.
\newblock {\em Operations Research}, 14(1):133--136, 1966.

\bibitem{Ethier1986_Markov_processes}
S.N. Ethier and T.G. Kurtz.
\newblock {\em {M}arkov Processes: {C}haracterization and Convergence}.
\newblock John Wiley \& Sons, Inc., 1986.

\bibitem{Evans1967_Matrix-geometric}
R.V. Evans.
\newblock Geometric distribution in some two-dimensional queuing systems.
\newblock {\em Operations Research}, 15(5):830--846, 1967.

\bibitem{Feller1968_Probability_theory}
W.~Feller.
\newblock {\em An Introduction to Probability Theory and Its Applications.
  {V}olume {I}.}
\newblock John Wiley \& Sons, Inc., 3rd edition, 1968.

\bibitem{Feller1971_Probability_theory_volume_II}
W.~Feller.
\newblock {\em An Introduction to Probability Theory and Its Applications.
  {V}olume {II}.}
\newblock John Wiley \& Sons, Inc., 2nd edition, 1971.

\bibitem{Fill2009_BD_passage_time_distribution}
J.A. Fill.
\newblock The passage time distribution for a birth-and-death chain: {S}trong
  stationary duality gives a first stochastic proof.
\newblock {\em Journal of Theoretical Probability}, 22(3):543--557, 2009.

\bibitem{Flajolet2009_Analytic_combinatorics}
P.~Flajolet and R.~Sedgewick.
\newblock {\em Analytic Combinatorics}.
\newblock Cambridge University Press, 2009.

\bibitem{Flanders1973_Differentiation_under_the_integral_sign}
H.~Flanders.
\newblock Differentiation under the integral sign.
\newblock {\em The American Mathematical Monthly}, 80(6):615--627, 1973.

\bibitem{Foster1953_Ergodicity_condition}
F.G. Foster.
\newblock On the stochastic matrices associated with certain queuing processes.
\newblock {\em The Annals of Mathematical Statistics}, pages 355--360, 1953.

\bibitem{Gandhi2014_RRR}
A.~Gandhi, S.~Doroudi, M.~Harchol-Balter, and A.~Scheller-Wolf.
\newblock Exact analysis of the {$M/M/k/$}setup class of {M}arkov chains via
  recursive renewal reward.
\newblock {\em Queueing Systems}, 77(2):177--209, 2014.

\bibitem{Gandhi2010_Server_farms_setup_costs}
A.~Gandhi, M.~Harchol-Balter, and I.J.B.F. Adan.
\newblock Server farms with setup costs.
\newblock {\em Performance Evaluation}, 67(11):1123--1138, 2010.

\bibitem{Gikhman1975_Theory_stochastic_processes}
I.I. Gikhman and A.V. Skorokhod.
\newblock {\em The Theory of Stochastic Processes II}.
\newblock Springer-Verlag, 1975.

\bibitem{Grimmett2001_Probability_and_random_processes}
G.~Grimmett and D.~Stirzaker.
\newblock {\em Probability and Random Processes}.
\newblock Oxford University Press, 2001.

\bibitem{Gross1974_Fundamentals_of_queueing_theory}
D.~Gross and C.M. Harris.
\newblock {\em Fundamentals of Queueing Theory}.
\newblock John Wiley \& Sons, Inc., 1974.

\bibitem{Haight1958_Two_parallel_queues}
F.A. Haight.
\newblock Two queues in parallel.
\newblock {\em Biometrika}, 45(3-4):401--410, 1958.

\bibitem{Haji1971_Distributional_Little's_law}
R.~Haji and G.F. Newell.
\newblock A relation between stationary queue and waiting time distributions.
\newblock {\em Journal of Applied Probability}, pages 617--620, 1971.

\bibitem{Halfin1985_JSQ}
S.~Halfin.
\newblock The shortest queue problem.
\newblock {\em Journal of Applied Probability}, 22(4):865--878, 1985.

\bibitem{Harchol2013_Computer_systems}
M.~Harchol-Balter.
\newblock {\em Performance Modeling and Design of Computer Systems: {Q}ueueing
  Theory in Action}.
\newblock Cambridge University Press, 2013.

\bibitem{Hooghiemstra1988_Power_series}
G.~Hooghiemstra, M.~Keane, and S.~Van De~Ree.
\newblock Power series for stationary distributions of coupled processor
  models.
\newblock {\em SIAM Journal on Applied Mathematics}, 48(5):1159--1166, 1988.

\bibitem{Jacod2003_Limit_theorems_stochastic_processes}
J.~Jacod and A.N. Shiryaev.
\newblock {\em Limit Theorems for Stochastic Processes}, volume 288.
\newblock Springer-Verlag Berlin Heidelberg, second edition, 2003.

\bibitem{Jaiswal1961_Preemptive_resume_priority}
N.K. Jaiswal.
\newblock Preemptive resume priority queue.
\newblock {\em Operations Research}, 9(5):732--742, 1961.

\bibitem{Jaiswal1968_Priority_queues}
N.K. Jaiswal.
\newblock {\em Priority Queues}, volume~50.
\newblock Academic Press New York, 1968.

\bibitem{Karlin1959_Random_walks}
S.~Karlin and J.~McGregor.
\newblock Random walks.
\newblock {\em Illinois Journal of Mathematics}, 3(1):66--81, 1959.

\bibitem{Karlin1962_Orthogonal_polynomials}
S.~Karlin and J.~McGregor.
\newblock Determinants of orthogonal polynomials.
\newblock {\em Bulletin of the American Mathematical Society}, 68(3):204--209,
  1962.

\bibitem{Karlin1975_First_course_stochastic_processes}
S.~Karlin and H.M. Taylor.
\newblock {\em A First Course in Stochastic Processes}.
\newblock Academic Press, San Diego, CA, 1975.

\bibitem{Kaufman1981_Blocking_recursion}
J.S. Kaufman.
\newblock Blocking in a shared resource environment.
\newblock {\em Communications, IEEE Transactions on}, 29(10):1474--1481, 1981.

\bibitem{Keilson1964_Transient_behavior_BD}
J.~Keilson.
\newblock A review of transient behavior in regular diffusion and birth-death
  processes.
\newblock {\em Journal of Applied Probability}, 1(2):247--266, 1964.

\bibitem{Keilson1971_Passage_time_densities}
J.~Keilson.
\newblock Log-concavity and log-convexity in passage time densities of
  diffusion and birth-death processes.
\newblock {\em Journal of Applied Probability}, 8(2):391--398, 1971.

\bibitem{Keilson1979_Markov_Chain_Models}
J.~Keilson.
\newblock {\em {M}arkov Chain Models---Rarity and Exponentiality}.
\newblock Springer, New York, NY, 1979.

\bibitem{Keilson1988_Distributional_Little's_law}
J.~Keilson and L.D. Servi.
\newblock A distributional form of {L}ittle's law.
\newblock {\em Operations Research Letters}, 7(5):223--227, 1988.

\bibitem{Kelly1979_Reversibility_and_stochastic_networks}
F.~Kelly.
\newblock {\em Reversibility and Stochastic Networks}.
\newblock Wiley, 1979.

\bibitem{Kelly2014_Stochastic_Networks}
F.~Kelly and E.~Yudovina.
\newblock {\em Stochastic Networks}.
\newblock Cambridge University Press, 2014.

\bibitem{Kharoufeh2011_Level-DependentQBD}
J.P. Kharoufeh.
\newblock Level-dependent quasi-birth--and--death processes.
\newblock In J.J. Cochran, L.A. Cox, P.~Keskinocak, J.P. Kharoufeh, and J.C.
  Smith, editors, {\em Wiley Encyclopedia of Operations Research and Management
  Science}. John Wiley \& Sons, New York, NY, 2011.

\bibitem{Khinchin1967_P-K_formula}
A.Y. Khinchin.
\newblock The mathematical theory of a stationary queue.
\newblock Technical report, DTIC Document, 1967.

\bibitem{Kingman1961_JSQ}
J.F.C. Kingman.
\newblock Two similar queues in parallel.
\newblock {\em The Annals of Mathematical Statistics}, 32(4):1314--1323, 1961.

\bibitem{Kiss2017_Epidemics_on_networks}
I.Z. Kiss, J.C. Miller, and P.L. Simon.
\newblock Mathematics of epidemics on networks.
\newblock 2017.

\bibitem{Kleinrock1975_Queueing_systems_volume_I}
L.~Kleinrock.
\newblock {\em Queueing Systems, Volume {I}: {T}heory}.
\newblock John Wiley \& Sons, Inc., 1975.

\bibitem{Latouche1993_Logarithmic_Reduction}
G.~Latouche and V.~Ramaswami.
\newblock A logarithmic reduction algorithm for quasi-birth-death processes.
\newblock {\em Journal of Applied Probability}, pages 650--674, 1993.

\bibitem{Latouche1999_Matrix-analytic}
G.~Latouche and V.~Ramaswami.
\newblock {\em Introduction to Matrix Analytic Methods in Stochastic Modeling}.
\newblock Society for Industrial and Applied Mathematics, Philadelphia, PA,
  1999.

\bibitem{Lavenberg1980_Arrival_Theorem}
S.S. Lavenberg and M.~Reiser.
\newblock Stationary state probabilities at arrival instants for closed
  queueing networks with multiple types of customers.
\newblock {\em Journal of Applied Probability}, pages 1048--1061, 1980.

\bibitem{Leeuwaarden2009_Lattice_path_counting}
J.S.H.~van Leeuwaarden, M.S. Squillante, and E.M.M. Winands.
\newblock Quasi-birth-and-death processes, lattice path counting, and
  hypergeometric functions.
\newblock {\em Journal of Applied Probability}, pages 507--520, 2009.

\bibitem{Leeuwaarden2006_Explicit_R}
J.S.H.~van Leeuwaarden and E.M.M. Winands.
\newblock Quasi-birth-and-death processes with an explicit rate matrix.
\newblock {\em Stochastic Models}, 22(1):77--98, 2006.

\bibitem{Liggett1985_Interacting_particle_systems}
T.M. Liggett.
\newblock {\em Interacting Particle Systems}, volume 276.
\newblock Springer-Verlag New York Inc., 1985.

\bibitem{Liggett2010_CTMC}
T.M. Liggett.
\newblock {\em Continuous Time {M}arkov Processes: {A}n Introduction}.
\newblock American Mathematical Society, Providence, RI, 2010.

\bibitem{Lyons2010_Understanding_digital_signal_processing}
R.G. Lyons.
\newblock {\em Understanding Digital Signal Processing}.
\newblock Prentice Hall, third edition, 2010.

\bibitem{Marsden1998_Basic_complex_analysis}
J.E. Marsden and M.~Hoffman.
\newblock {\em Basic Complex Analysis}.
\newblock Freeman, W. H. \& Company, third edition, 1973.

\bibitem{Miller1981_M-M-1_2-class_prio_matrix-geometric}
D.R. Miller.
\newblock Computation of steady-state probabilities for {$M/M/1$} priority
  queues.
\newblock {\em Operations Research}, 29(5):945--958, 1981.

\bibitem{Mitrani1995_Spectral_expansion}
I.~Mitrani and R.~Chakka.
\newblock Spectral expansion solution for a class of {M}arkov models:
  {A}pplication and comparison with the matrix-geometric method.
\newblock {\em Performance Evaluation}, 23(3):241--260, 1995.

\bibitem{Mitrani1992_Spectral_expansion}
I.~Mitrani and D.~Mitra.
\newblock A spectral expansion method for random walks on semi-infinite strips.
\newblock In R.~Beauwens and P.~de~Groen, editors, {\em Iterative Methods in
  Linear Algebra}, pages 141--149, Amsterdam, 1992. North-Holland.

\bibitem{Neuts1984_Matrix-analytic_review}
M.F. Neuts.
\newblock Matrix-analytic methods in queuing theory.
\newblock {\em European Journal of Operational Research}, 15(1):2--12, 1984.

\bibitem{Neuts1989_M-G-1_type}
M.F. Neuts.
\newblock {\em Structured Stochastic Matrices of {$M/G/1$} Type and Their
  Applications}.
\newblock Marcel Dekker, Inc., New York, NY, 1989.

\bibitem{Neuts1994_Matrix-geometric}
M.F. Neuts.
\newblock {\em Matrix-Geometric Solutions in Stochastic Models: {A}n
  Algorithmic Approach}.
\newblock Dover Publications, Mineola, NY, 1994.

\bibitem{Norris1997_Markov}
J.R. Norris.
\newblock {\em {M}arkov Chains}.
\newblock Cambridge University Press, New York, NY, 1997.

\bibitem{Pollaczek1930_P-K_formula}
F.~Pollaczek.
\newblock {\"U}ber eine {A}ufgabe der {W}ahrscheinlichkeitstheorie.
\newblock {\em Mathematische Zeitschrift}, 32:64--100, 1930.

\bibitem{Prabhu1997_Foundations_of_QT}
N.U. Prabhu.
\newblock {\em Foundations of Queueing Theory}.
\newblock Springer Science \& Business Media, 1997.

\bibitem{Ramaswami1988_Matrix-analytic_stable_recursion}
V.~Ramaswami.
\newblock A stable recursion for the steady state vector in {M}arkov chains of
  {$M/G/1$} type.
\newblock {\em Stochastic Models}, 4(1):183--188, 1988.

\bibitem{Ramaswami1986_Explicit_R}
V.~Ramaswami and G.~Latouche.
\newblock A general class of {M}arkov processes with explicit matrix-geometric
  solutions.
\newblock {\em OR Spektrum}, 8(4):209--218, 1986.

\bibitem{Reiser1980_MVA}
M.~Reiser and S.S. Lavenberg.
\newblock Mean-value analysis of closed multichain queuing networks.
\newblock {\em Journal of the ACM}, 27(2):313--322, 1980.

\bibitem{Resing2004_Gated}
J.~Resing and R.~Rietman.
\newblock The {$M/M/1$} queue with gated random order of service.
\newblock {\em Statistica Neerlandica}, 58(1):97--110, 2004.

\bibitem{Resnick1992_Adventures}
S.I. Resnick.
\newblock {\em Adventures in Stochastic Processes}.
\newblock Birkh\"auser, Boston, MA, 1992.

\bibitem{Rietman2004_Gated_MG1}
R.~Rietman and J.~Resing.
\newblock An {$M/G/1$} queueing model with gated random order of service.
\newblock {\em Queueing Systems}, 48(1):89--102, 2004.

\bibitem{Riska2002_M-G-1_tutorial}
A.~Riska and E.~Smirni.
\newblock {$M/G/1$}-type {M}arkov processes: {A} tutorial.
\newblock In M.C. Calzarossa and S.~Tucci, editors, {\em Performance Evaluation
  of Complex Systems: {T}echniques and Tools}, volume 2459 of {\em Lecture
  Notes in Computer Science}, pages 36--63. Springer, 2002.

\bibitem{Robert2013_Stochastic_networks_and_queues}
P.~Robert.
\newblock {\em Stochastic Networks and Queues}.
\newblock Springer Science \& Business Media, 2013.

\bibitem{Roberts1981_Blocking_Recursion}
J.W. Roberts.
\newblock A service system with heterogeneous user requirements.
\newblock In G.~Pujolle, editor, {\em Performance of Data Communications
  Systems and their Applications}, volume~29, pages 423--431. North-Holland
  Publishing Co., 1981.

\bibitem{Rogers1987_Diffusions_Markov_processes}
L.C.G. Rogers and D.~Williams.
\newblock {\em Diffusions, {M}arkov Processes and Martingales, Vol 2: {I}t\^{o}
  Calculus}.
\newblock Citeseer, 1987.

\bibitem{Ross1996_Stochastic_processes}
S.M. Ross.
\newblock {\em Stochastic Processes}.
\newblock John Wiley \& Sons, Inc., New York, NY, 2nd edition, 1996.

\bibitem{Selen2016_SED}
J.~Selen, I.J.B.F. Adan, S.~Kapodistria, and J.S.H.~van Leeuwaarden.
\newblock Steady-state analysis of shortest expected delay routing.
\newblock {\em Queueing Systems}, 84(3--4):309--354, 2016.

\bibitem{Selen2016_Snowball}
J.~Selen, I.J.B.F. Adan, V.G. Kulkarni, and J.S.H.~van Leeuwaarden.
\newblock The snowball effect of customer slowdown in critical many-server
  systems.
\newblock {\em Stochastic Models}, 32(3):366--391, 2016.

\bibitem{Selen2017_MMc_prio}
J.~Selen and B.~Fralix.
\newblock Time-dependent analysis of an {$M/M/c$} preemptive priority system
  with two priority classes.
\newblock {\em Queueing Systems}, pages 1--37, 2017.

\bibitem{Sevcik1981_Arrival_Theorem}
K.C. Sevcik and I.~Mitrani.
\newblock The distribution of queuing network states at input and output
  instants.
\newblock {\em Journal of the ACM}, 28(2):358--371, 1981.

\bibitem{Sleptchenko2015_M-M-1_priority}
A.~Sleptchenko, J.~Selen, I.J.B.F. Adan, and G.J.J.A.N.~van Houtum.
\newblock Joint queue length distribution of multi-class, single-server queues
  with preemptive priorities.
\newblock {\em Queueing Systems}, 81(4):379--395, 2015.

\bibitem{Takacs1962_Theory_Queues}
L.~Tak{\'a}cs.
\newblock {\em Introduction to the Theory of Queues}.
\newblock Oxford University Press, Inc., New York, NY, 1962.

\bibitem{Donk2001_MTO_MTS}
D.P. Van~Donk.
\newblock Make to stock or make to order: {T}he decoupling point in the food
  processing industries.
\newblock {\em International Journal of Production Economics}, 69(3):297--306,
  2001.

\bibitem{Houdt2011_Triangular_R}
B.~Van~Houdt and J.S.H.~van Leeuwaarden.
\newblock Triangular {$M/G/1$}-type and tree-like quasi-birth-death {M}arkov
  chains.
\newblock {\em INFORMS Journal on Computing}, 23(1):165--171, 2011.

\bibitem{Mieghem2009_Performance_analysis_communications}
P.~Van~Mieghem.
\newblock {\em Performance Analysis of Communications Networks and Systems}.
\newblock Cambridge University Press, 2009.

\bibitem{Wallace1969_QBD}
V.L. Wallace.
\newblock {\em The solution of quasi birth and death processes arising from
  multiple access computer systems}.
\newblock PhD thesis, The University of Michigan, 1969.

\bibitem{Wang2015_M-M-c_2-class_prio}
J.~Wang, O.~Baron, and A.~Scheller-Wolf.
\newblock {$M/M/c$} queue with two priority classes.
\newblock {\em Operations Research}, 63(3):733--749, 2015.

\bibitem{Weiss2004_Stability_push-pull}
G.~Weiss.
\newblock Stability of a simple re-entrant line with infinite supply of work:
  {T}he case of exponential processing times.
\newblock {\em Journal of the Operations Research Society of Japan},
  47(4):304--313, 2004.

\bibitem{Whitt2002_Stochastic-process_limits}
W.~Whitt.
\newblock {\em Stochastic-Process Limits: {A}n Introduction to
  Stochastic-Process Limits and Their Application to Queues}.
\newblock Springer Science \& Business Media, 2002.

\bibitem{Whittle2007_Networks}
P.~Whittle.
\newblock {\em Networks: {O}ptimisation and Evolution}, volume~21.
\newblock Cambridge University Press, 2007.

\bibitem{Williams1984_MTO_MTS}
T.M. Williams.
\newblock Special products and uncertainty in production/inventory systems.
\newblock {\em European Journal of Operational Research}, 15(1):46--54, 1984.

\bibitem{Wolff1982_PASTA}
R.W. Wolff.
\newblock Poisson arrivals see time averages.
\newblock {\em Operations Research}, 30(2):223--231, 1982.

\end{thebibliography}
\end{document}%